\newtheorem{thm}{Theorem}[section]
\newtheorem{cor}[thm]{Corollary}
\newtheorem{prop}[thm]{Proposition}
\newtheorem{lem}[thm]{Lemma}
\newtheorem{lemma}[thm]{Lemma}
\newtheorem{conj}[thm]{Conjecture}
\theoremstyle{definition}
\newtheorem{defn}[thm]{Definition}
\newtheorem{exmp}[thm]{Example}
\newtheorem{con}[thm]{Construction}
\theoremstyle{remark}
\newtheorem{rem}[thm]{Remark}
\let\c@equation\c@thm
\numberwithin{equation}{section}
\newcommand{\R}{\mathbb{R}} 
\newcommand{\Z}{\mathbb{Z}}   
\newcommand{\p}{\mathbb{P}}   
\newcommand{\Q}{\mathbb{Q}}
\newcommand{\V}{\mathbb{V}}
\newcommand{\sF}{\mathcal{F}}
\newcommand{\sG}{\mathcal{G}}
\newcommand{\sH}{\mathcal{H}}
\newcommand{\sM}{\mathscr{M}}
\newcommand{\G}{\mathbb{G}}
\newcommand{\A}{\mathbb{A}}
\newcommand{\X}{\mathbb{X}}
\newcommand{\pha}{\phantom{ }}
\newcommand{\bsl}{\backslash}
\newcommand{\extw}{\widetilde{W}}
\newcommand{\gm}{\mathbb{G}_m}
\newcommand{\ga}{\mathbb{G}_a}
\newcommand{\cmc}{{}_{\chi}\cM_{\chi'}}
\newcommand{\cwc}{{}_\chi\tilW_{\chi'}}
\newcommand{\ctc}{{}_{\chi}\cT_{\chi'}}
\newcommand{\cvc}{{}_{\chi}\mathbb{V}_{\chi'}}
\newcommand{\xvx}{\mathbb{V}_{\Xi}}
\newcommand{\xtx}{\cT_{\Xi}}
\newcommand{\cht}{\check{\theta}}
\newcommand{\dcm}{\delta^{\chi\mon}}
\newcommand{\fm}{\mathfrak{m}}
\newcommand{\I}{\on{IndCoh}}
\newcommand{\Sph}{\on{Sph}}
\newcommand{\Sphc}{\Sph_{G, \chi_c}}
\newcommand{\Sphh}{\Sph_{H}}
\newcommand{\gmc}{\gm^{\cen}}
\newcommand{\ra}{\rightarrow}
\newcommand{\sur}{\twoheadrightarrow}
\newcommand{\inj}{\hookrightarrow}
\newcommand{\xra}{\xrightarrow}
\newcommand{\on}{\operatorname}
\newcommand{\tLG}{\widetilde{LG}}
\newcommand{\tI}{\widetilde{I}}
\newcommand{\Tv}{T^\vee}
\DeclareMathOperator{\Aut}{Aut}
\DeclareMathOperator{\id}{id}
\DeclareMathOperator{\spec}{Spec}
\DeclareMathOperator{\ad}{ad}
\DeclareMathOperator{\mult}{mult}
\DeclareMathOperator{\Hom}{Hom}
\DeclareMathOperator{\End}{End}
\DeclareMathOperator{\Ad}{Ad}
\DeclareMathOperator{\aff}{aff}
\DeclareMathOperator{\Gal}{Gal}
\DeclareMathOperator{\Spec}{Spec}
\DeclareMathOperator{\Sym}{Sym}
\DeclareMathOperator{\Ch}{Ch}
\DeclareMathOperator{\CH}{CH}
\DeclareMathOperator{\Av}{Av}
\DeclareMathOperator{\Ind}{Ind}
\DeclareMathOperator{\Res}{Res}
\DeclareMathOperator{\Coh}{Coh}
\DeclareMathOperator*{\uralim}{\underrightarrow\lim}
\DeclareMathOperator{\pt}{pt}
\DeclareMathOperator{\RHom}{\textbf{R} \kern -2pt \Hom}
\DeclareMathOperator{\Rhom}{\textbf{R}\kern -2pt \hom}
\DeclareMathOperator{\RuHom}{\textbf{R} \underline{\kern -0pt \Hom}}
\DeclareMathOperator{\cen}{cen}
\DeclareMathOperator{\Gr}{Gr}
\DeclareMathOperator{\red}{red}
\DeclareMathOperator{\colim}{colim}
\DeclareMathOperator{\std}{\Delta}
\DeclareMathOperator{\costd}{\nabla}
\newcommand\cC{\mathcal{C}}
\newcommand\cD{\mathcal{D}}
\newcommand\cE{\mathcal{E}}
\newcommand\cF{\mathcal{F}}
\newcommand\cG{\mathcal{G}}
\newcommand\cH{\mathcal{H}}
\newcommand\cK{\mathcal{K}}
\newcommand\cL{\mathcal{L}}
\newcommand\cM{\mathscr{M}}
\newcommand\cO{\mathcal{O}}
\newcommand\cS{\mathscr{S}}
\newcommand\cT{\mathscr{T}}
\newcommand\cV{\mathcal{V}}
\newcommand{\B}{\mathbb{B}}
\renewcommand{\H}{\mathbb{H}}
\newcommand{\CC}{\mathbb{C}}
\newcommand{\ZZ}{\mathbb{Z}}
\newcommand{\QQ}{\mathbb{Q}}
\newcommand{\RR}{\mathbb{R}}
\newcommand{\FF}{\mathbb{F}}
\newcommand{\LL}{\mathbb{L}}
\newcommand{\cc}{\mathbb{c}}
\newcommand{\bgg}{\mathbb{g}}
\newcommand{\frf}{\mathfrak{f}}
\newcommand{\Fun}{\operatorname{Fun}}
\renewcommand{\H}{\mathscr{H}}
\renewcommand{\sH}{\mathscr{H}}
\newcommand{\stMst}{{^{\st}\hspace{-.2mm}}\H^{\st}}
\newcommand{\ssMss}{{^{\st}\hspace{-.2mm}}\sM^{\st}}
\newcommand{\stM}{{^{\st \hspace{-.2mm}}}\H}
\newcommand{\sds}{{}^{\st}\hspace{-.5mm}\delta^{\st}}
\newcommand{\usds}{{}^{\st}\hspace{-.2mm}\delta^{\st}}
\newcommand{\Mst}{\H^{\st}}
\newcommand{\ktc}{\on{K}^b \xtx}
\newcommand{\st}{\textup{s}}
\newcommand{\halpha}{\check{\alpha}}
\newcommand{\bG}{{\mathbb G}}
\newcommand{\coker}{\on{coker}}
\newcommand{\bs}{\backslash}
\newcommand{\isom}{\xar{\sim}}
\newcommand{\Oblv}{\mathrm{Oblv}}
\newcommand{\mon}{\text{\textendash}mon}
\newcommand{\Lie}{\on{Lie}}
\def\bF{\mathbf{F}}
\def\bG{\mathbf{G}}
\newcommand\fra{\mathfrak{a}}
\newcommand\frc{\mathfrak{c}}
\newcommand\fre{\mathfrak{e}}
\newcommand\frg{\mathfrak{g}}
\newcommand\frh{\mathfrak{h}}
\newcommand\fri{\mathfrak{i}}
\newcommand\frl{\mathfrak{l}}
\newcommand\frakm{\mathfrak{m}}
\newcommand\frs{\mathfrak{s}}
\newcommand\frt{\mathfrak{t}}
\newcommand\fru{\mathfrak{u}}
\newcommand\frz{\mathfrak{z}}
\newcommand\tilI{\widetilde{I}}
\newcommand\tilW{\widetilde{W}}
\newcommand\GL{\textup{GL}}
\newcommand\SL{\textup{SL}}
\newcommand\SO{\textup{SO}}
\newcommand\Spin{\textup{Spin}}
\newcommand\PSp{\textup{PSp}}
\newcommand{\Gm}{\mathbb{G}_m}
\newcommand\Ga{\mathbb{G}_a}
\newcommand{\der}{\textup{der}}
\newcommand\xch{\mathbb{X}^*}
\newcommand\xcoch{\mathbb{X}_*}
\numberwithin{equation}{section}
\newcommand{\ft}{\mathfrak{t}}
\newcommand{\fh}{\mathfrak{h}}
\newcommand{\Vect}{\on{Vect}}
\newcommand{\Dmod}{\on{D-mod}}
\renewcommand{\mod}{\text{\textendash}\operatorname{mod}}
\newcommand{\xar}[1]{\xrightarrow{#1}}
\renewcommand{\mon}{\on{-mon}}
\renewcommand{\and}{\quad \on{and} \quad}
\newcommand{\chG}{\check{G}}
\newcommand{\chI}{\check{I}}
\newcommand{\ckappa}{\check{\kappa}}
\renewcommand{\ft}{\mathfrak{t}}
\newcommand{\cLambda}{\check{\Lambda}}
\newcommand{\clambda}{\check{\lambda}}
\newcommand{\cmu}{\check{\mu}}
\newcommand{\cPhi}{\check{\Phi}}
\newcommand{\chGG}{\check{\GG}}
\newcommand{\chBB}{\check{\BB}}
\newcommand{\chAA}{\check{\AA}}
\newcommand{\chPP}{\check{\PP}}
\newcommand{\chgg}{\check{\gg}}
\newcommand{\chbb}{\check{\bb}}
\newcommand{\chaa}{\check{\aa}}
\newcommand{\chHH}{H^\vee}
\newcommand{\chhh}{\frh^\vee}
\newcommand{\incl}{\hookrightarrow}
\newcommand{\surj}{\twoheadrightarrow}
\newcommand{\Ql}{\Q_{\ell}}
\newcommand{\Qlbar}{\overline{\Q}_\ell}
\renewcommand{\j}[1]{\langle{#1}\rangle}
\newcommand{\wt}[1]{\widetilde{#1}}
\newcommand\quash[1]{}
\newcommand{\ov}{\overline}
\newcommand{\tl}[1]{[\![#1]\!]}
\newcommand{\lr}[1]{(\!(#1)\!)}
\newcommand\sss{\subsubsection}
\newcommand\xr{\xrightarrow}
\newcommand\op{\oplus}
\newcommand\ot{\otimes}
\newcommand\bt{\boxtimes}
\renewcommand\c\circ
\renewcommand\a\alpha
\renewcommand\b\beta
\renewcommand\d\delta
\newcommand{\e}{\epsilon}
\renewcommand{\th}{\theta}
\newcommand{\io}{\iota}
\renewcommand\r\rho
\newcommand{\s}{\sigma}
\newcommand{\Sig}{\Sigma}
\newcommand{\z}{\zeta}
\renewcommand{\l}{\lambda}
\renewcommand{\L}{\Lambda}
\newcommand{\om}{\omega}
\newcommand{\Om}{\Omega}
\DeclareMathOperator{\tors}{tors}
\DeclareMathOperator{\AS}{AS}
\DeclareMathOperator{\Bun}{Bun}
\newcommand\chk{\textup{char}(k)}
\newcommand\GG{\mathbb{G}}
\newcommand\TT{\mathbb{T}}
\newcommand\WW{\mathbb{W}}
\renewcommand\AA{\mathbb{A}}
\newcommand\PP{\mathbb{P}}
\newcommand\BB{\mathbb{B}}
\newcommand\II{\mathbb{I}}
\renewcommand\SS{\mathbb{S}}
\renewcommand\gg{\mathbb{g}}
\renewcommand\aa{\mathbb{a}}
\newcommand\bb{\mathbb{b}}
\newcommand\opp{\on{op}}
\newcommand\Wa{W_{\aff}}
\newcommand\Grot{\G_{m}^{\textup{rot}}}
\newcommand\Nm{\on{Nm}}
\newcommand\all{\textup{all}}
\renewcommand{\ge}{\geqslant}
\renewcommand{\le}{\leqslant}
\renewcommand{\geq}{\geqslant}
\renewcommand{\leq}{\leqslant}
\newcommand\Aff{\textup{Aff}}
\newcommand{\Corr}{\textup{Corr}}
\renewcommand{\Im}{\textup{Im}}
\newcommand\lmod{\mathrm{-mod}}
\newcommand\Out{\textup{Out}}
\newcommand{\Perf}{\textup{Perf}}
\newcommand{\Pic}{\textup{Pic}}
\newcommand\pr{\textup{pr}}
\newcommand\Proj{\textup{Proj}}
\newcommand{\Modu}{\operatorname{Mod}}
\newcommand{\lincat}{\mathrm{Lincat}}
\newcommand{\calg}{\mathrm{CAlg}}
\newcommand{\AlgSp}{\mathrm{AlgSp}}
\newcommand{\indalg}{\mathrm{IndAlgSp}}
\newcommand{\prestk}{\mathrm{PreStk}}
\newcommand{\mono}{\mathrm{mon}}
\newcommand{\qcqs}{\mathrm{qcqs}}
\newcommand{\verti}{\mathrm{vert}}
\newcommand{\fint}{\mathrm{fp}}
\newcommand{\indfp}{\mathrm{indfp}}
\newcommand{\indcoh}{\mathrm{IndCoh}}
\newcommand{\Lpol}{L_{\mathrm{pol}}}
\title{Endoscopy for metaplectic affine Hecke categories}
\author{Gurbir Dhillon, Yau Wing Li, Zhiwei Yun, and Xinwen Zhu}
\begin{document}
	\begin{abstract}
		For a possibly twisted loop group $LG$, and any character sheaf of its Iwahori subgroup, we identify the associated affine Hecke category with a combinatorial category of Soergel bimodules. In fact, we prove such results for affine Hecke categories arising from central extensions of the loop group $LG$. Our results work for mod $\ell$ or integral $\ell$-adic coefficients.

  As applications, we obtain endoscopic equivalences between affine Hecke categories, including the derived Satake equivalence for metaplectic groups, and a series of conjectures by Gaitsgory in quantum geometric Langlands.

	\end{abstract}
	
	\date{\today}
	\maketitle
	\tableofcontents
	
\section{Introduction}

Let $G$ be a quasi-split group over a local function field $K=k\lr{t}$. Affine Hecke categories play a basic role in the categorical representation theory of $G$ entirely similar to the role of affine Hecke algebras in the representation theory of $p$-adic groups, and indeed these two stories are related via categorical trace.

In this paper we establish some basic structural results about monodromic affine Hecke categories, in particular their relation to ones with trivial monodromy, and develop some first applications; further consequences will be described in a sequel. In particular, the present paper is a contribution to the depth zero part of the local geometric and local quantum Langlands correspondences.

In the next subsection, we provide some contextual and motivating discussion for our results, which will be described precisely in Sections \ref{ss:statements} and \ref{ss:apps} below. The reader may wish to skip directly there. 

\subsection{Some context and motivation}

\sss{Affine Hecke algebras} The Iwahori--Matsumoto affine Hecke algebra \cite{IM} controls the block of unramified principal series for the $p$-adic group, i.e., the Bernstein block containing the trivial representation. For general depth zero blocks, one has analogous Hecke algebras, whose structure was determined by Morris \cite{M}. For positive depth blocks, thanks to works of Howe, Adler, Bushnell, Kutzko, Moy, Prasad, Reeder, Roche, Yu, Kim, ..., culminating in the recent paper of Adler, Fintzen, Mishra and Ohara \cite{AFMO}, it is known that suitably defined Hecke algebras are always isomorphic to a depth zero Hecke algebra for a twisted Levi subgroup; therefore the study of general blocks is reduced to the depth zero case \cite{AFMO}, assuming that the residue characteristic $p$ does not divide the order of the absolute Weyl group of $G$. 

A new subtlety when passing from the usual affine Hecke algebra to a general depth zero Hecke algebra is that the latter may involve a two cocycle on the group $\Om$ controlling the non-Coxeter part of the Hecke algebra.

Now consider the more general setting of representations of {\em metaplectic covers} of $p$-adic groups. Here, one has a good understanding of the unramified principal series block, suggested by Kazhdan and proven by Savin \cite{Sa}. For general blocks many basic questions remain open.  As a first indication - whether one encounters nontrivial two cocycles for principal series, even in depth zero, was as far as we can tell an open question.

\sss{Affine Hecke categories} Let us now turn to the geometric theory. The affine Hecke category that geometrizes the Iwahori-Matsumoto affine Hecke algebra consists of sheaves on the loop group $LG$ equivariant with respect to the Iwahori subgroup on both sides. Here, the Kazhdan-Lusztig basis for the affine Hecke algebra correspond to the intersection complexes of Iwahori double cosets \cite{kazhdan1980schubert}. Soergel \cite{soergel1990kategorie} introduces a certain bimodules over the polynomial ring that allows one to recover the affine Hecke category via Coxeter combinatorics. A celebrated theorem of Bezrukavnikov \cite{B} gives a description of the affine Hecke category in terms of coherent sheaves on the Steinberg variety of the Langlands dual group, categorifying the isomorphism on the level of Grothendieck groups by Kazhdan and Lusztig \cite{KL}.

The next step is to consider geometric versions of the affine Hecke category attached to the depth zero principal series blocks of $p$-adics. They consist of sheaves on the loop group $LG$ that are equivariant on both sides with respect to the Iwahori subgroup $I$ together with a character sheaf $\chi$ (rank one local system) on its torus quotient $\AA$. The metaplectic analog of such a category replaces the loop group by its Kac-Moody central extension. We call all such monoidal categories {\em monodromic affine Hecke categories}, and they are the focus of the present paper. 

Based on the work of Morris, we should expect that every monodromic affine Hecke category to be the semi-direct product of the affine Hecke category associated to a smaller {\em endoscopic} group with trivial monodromy, along with an action of a semisimple monoidal category $\cM_\Om$ whose simple objects are indexed by a discrete group $\Omega$. The most optimistic hope would be that $\cM_\Om$ is monoidally equivalent to $\Om$-graded vector spaces. However, a priori the monoidal structure on $\cM_\Om$ may involve twisting by a $3$-cocycle of $\Om$. This is related to the possible $2$-cocycles that show up in the depth zero affine Hecke algebras.

One of our main results states that the above expectation for the twisted affine Hecke category is indeed true. In particular, the $3$-cocycle on $\Om$ is trivializable. One may compare this to a result of Roche \cite{Ro} that shows that for principal series of $p$-adic groups, the $2$-cocycle is indeed trivializable.

When we turn to geometrizations of general blocks, it is expected that every positive depth block should again be equivalent to a depth zero block for a twisted Levi subgroup of $G$, provided the characteristic of $p$ is not too small (see J.Xia's thesis \cite{Xia} for a special case, and D.Yang \cite{Yang} for a more general treatment). Moreover, as we will mention in Section \ref{sss:trace}, a miracle of taking categorical trace implies that general depth zero blocks, and not just the principal series ones, can also be recovered from the monodromic affine Hecke categories.  Therefore we believe that the geometric study of all blocks of a $p$-adic group reduces to the study of the monodromic affine Hecke categories.

\sss{From Hecke categories to representations}\label{sss:trace}

Fix a finite field $\mathbb{F}_q$ along with an algebraic closure $\bF$. For a reductive group $H$ defined over $\mathbb{F}_q$, 
by the works \cite{BFO, BN, LuTrunc} in various sheaf-theoretic contexts, the cocenter of the affine Hecke category $D(B_{\bF} \bs H_{\bF} / B_{\bF})$ recovers the category of {unipotent character sheaves}.  If instead one takes the Frobenius-twisted cocenter, Lusztig \cite{LuUnip} shows that one recovers all unipotent representations of $H(\mathbb{F}_q)$, and not just principal series representations.

Recall that the set of irreducible representations of $H(\mathbb{F}_q)$ breaks into Lusztig series, indexed essentially by semisimple conjugacy classes in the dual group $\check{H}(\FF_q)$, with the unipotent representations corresponding to the identity element. Lusztig showed that, at least when $H$ has connected center, the representations in the Lusztig series corresponding to the conjugacy class of $\s\in \check{H}(\FF_q)$ were in canonical bijection with unipotent representations of the centralizer $C_{\check{H}}(\s)(\FF_q)$. In the last statement, one can replace the centralizer of $\s$ by its Langlands dual, i.e., the endoscopic group $H_\s$, since the set of unipotent representations for a group and its dual are in canonical bijection.

It was shown by Lusztig--Y. \cite{LY} that the above result can be deduced from a categorical statement at the level of the Hecke categories. Namely, if we denote  by $\chi_\sigma$ the character sheaf on the abstract Cartan $T$ of $H$ corresponding to $\sigma$, as well as its pullback to a Borel $B$ of $H$, they obtained a monoidal equivalence
$$D((B_\bF, \chi_\sigma) \bs H_{\bF} / (B_{\bF}, \chi_\sigma)) \simeq D(B_{\sigma,\bF} \bs H_{\sigma,\bF} / B_{\sigma, \bF}). $$
Here $B_\s$ is a Borel subgroup of $H_\s$. By passing to Frobenius-twisted cocenters, this recovers the previous bijection between $\s$-Lusztig series of $H$ and unipotent representations of $H_\s$.

The trace construction has been carried out for $p$-adic groups  in recent work of Z. \cite{Zh}. It is shown in \emph{loc. cit.} that, among other things, the category of depth zero representations of $p$-adic groups can be obtained from the Frobenius-twisted cocenter of the twisted affine Hecke category (a similar construction for unipotent representations was previously obtained by Hemo--Z.). 

Lusztig \cite{Lupadic} gives a classification of unipotent representations of a quasi-split $p$-adic group in terms of Langlands dual data. 
For general depth zero representations of $G(\FF_q\lr{t})$, it is natural to expect a canonical bijection between the representations in a fixed Lusztig series and unipotent representations of a certain {\em endoscopic loop group}. For example, when $G$ is split, its Lusztig series corresponding to the semisimple conjugacy class $\s$ of $\check{G}(\FF_q)$ should be in bijection with the unipotent representations of $G_\s(\mathbb{F}_q(\!(t)\!))$ for the usual endoscopic group $G_\s$ attached to $(G,\s)$. More generally, for metaplectic covers,  one should replace the Langlands dual group $\check{H}$ with the metaplectic dual group \cite{FL, We}.

A major motivation for the present work is that its results contain many, but not all, of the steps necessary to establish such a correspondence, and its geometric refinement as \cite{LY}, for the depth zero representations for quasi-split groups and their metaplectic covers. Further results in this vein will appear in the sequel to this work.

\subsection{Statement of main results}
\label{ss:statements}

\sss{} Let $k$ be an algebraically closed field of characteristic zero or an algebraic closure of a finite field, and write $K := k(\!(t)\!).$ 

\sss{} Let $G$ be a connected, reductive, and quasi-split group over $K$. The set of $K$-points of $G$ are naturally the $k$-points of a group ind-scheme over $k$, the loop group $LG$. We write $I \subset LG$ for an Iwahori subgroup.  We consider a Kac--Moody central extension 
$$1 \rightarrow \mathbb{G}_m \rightarrow \widetilde{LG} \rightarrow LG \rightarrow 1,$$
cf. Section \ref{ss: cent ext} for the precise type of central extensions we consider. Write $\widetilde{I}\subset \wt{LG}$ for the preimage of $I$.

\sss{}\label{sss:intro sheaves} 
In this introduction, when discussing categories of sheaves on a stack $X$ over $k$, we will mean any of the following stable $\infty$-categories linear over a coefficient field $E$:

\begin{enumerate}[(i)]

\item For any base field $k$, consider constructible complexes on the \'etale site of $X$ with coefficient field $E$ an algebraic extension of either $\mathbb{F}_\ell$ or $\Ql$. Here $\ell$ is a prime number different from $\on{char}(k)$.

\item When $\on{char}(k) = 0$,  consider  complexes of D-modules, so that the coefficient field $E=k$.

\end{enumerate}

In the main body of the paper, we allow the coefficients $E$ to be a DVR in the context (i), see \S\ref
{SS: sheaf theory}.

\sss{} For a torus $H$ over $k$, a {\em character sheaf} $\chi$ on $H$ is a tame rank one local system on $H$ with a trivialization at the identity.  If the torus $H$ acts on a stack $X$, we may form the category $D((H,\chi)\bs X)$ of (strictly) $(H,\chi)$-equivariant sheaves on $X$. The category of $(H,\chi)$-monodromic sheaves on $X$ is the full subcategory of $D(X)$ generated under colimits by the essential image of the forgetful functor $D((H,\chi)\bs X)\to D(X)$. (In the main body of the article, we will use a different but equivalent definitions of these categories.)

\sss{} Let $I^+$ be the pro-unipotent radical of $\wt I$, and let $\AA=I/I^+$, $\wt\AA=\wt I/I^+$ be the quotient tori. Let $\chi$ be  a character sheaf on $\wt \AA$. Our main object of interest is the monoidal category of bi-$(\widetilde{I}, \chi)$-monodromic sheaves on $\widetilde{LG}$, which we denote by $_{\chi}\cM_{\chi}$. Concretely, $_{\chi}\cM_{\chi}$ is the full subcategory of sheaves on
\begin{equation}
    Z=I^+\bs \wt{LG}/I^+
\end{equation}
generated under colimits by $(\wt\AA,\chi)$-equivariant objects under both the left and right translations by $\wt\AA$.

Our basic structural theorem on ${}_\chi\cM_\chi$ asserts that it may be reconstructed as a monoidal category from certain combinatorial data naturally attached to it. 

\sss{}  To describe these combinatorial data, we first recall the description of ${}_\chi\cM_\chi$ when $\chi$ is trivial given in \cite{BY}.

Consider the extended affine Weyl group $\widetilde{W}$ associated to $\widetilde{LG}$ that acts on the cocharacter lattice $\xcoch(\wt\AA)$ of $\wt\AA=\wt I/I^+$. It has a normal subgroup $W_{\aff} \subset \wt{W}$ generated by affine simple reflections $S_{\aff}$. We recall that $(W_{\aff}, S_{\aff})$ is a Coxeter system. Consider the quadruple $$(\wt{W}, W_{\aff}, S_{\aff}, \xcoch(\wt\AA)).$$
  
Results from \cite{BY} show that when $\chi$ is trivial, one can recover ${}_\chi\cM_\chi$ (for $\chi$ trivial) as a monoidal category from the above quadruple.

\sss{} For a nontrivial character sheaf $\chi$, we will similarly attach a quadruple to ${}_\chi\cM_\chi$
\begin{equation} \label{e:mondata}(\wt{W}_\chi, \wt{W}_\chi^\circ, S_\chi, \frf_\chi).
\end{equation}

The action of $\widetilde{W}$ on $\wt\AA$ permutes the isomorphism classes of character sheaves on $\wt \AA$, and we denote by $\widetilde{W}_\chi$ the stabilizer of $\chi$.  Consider further the subgroup $$\widetilde{W}^\circ_\chi \subset \widetilde{W}_\chi$$ generated by the reflections $s_{\halpha}$ in $W_{\aff}$ corresponding to those affine coroots $\halpha$ such that the pullback of $\chi$ along $\halpha: \Gm\to \wt\AA$ is trivial. This is a normal subgroup equipped with a set of reflections $S_\chi$ making it a Coxeter system  $(\wt{W}^\circ_\chi, S_\chi)$. 

The last item $\frf_\chi$ in the quadruple \eqref{e:mondata} is a more flexible replacement for $\xcoch(\wt\AA)$, namely it is the formal group of the dual torus of $\wt\AA$ over $E$. It carries an action of $\wt{W}_\chi$ given by the restriction of the action of $\wt{W}$.

The formal scheme $\frf_\chi$ is unchanged if $\wt\AA$ undergoes an isogeny of degree prime to $\textup{char}(E)$. 
This flexibility is what ultimately will allow us to obtain applications such as the metaplectic derived Satake equivalence (Theorem \ref{t:dersat}).

\sss{} 
Given the quadruple $\eqref{e:mondata}$, we associate to it a monoidal category ${}_\chi\cS_\chi$ as follows.

First, let $\on{SBim}^\circ_\chi\subset \indcoh(\frf_\chi^2)$ be essentially the category of usual Soergel bimodules: it is monoidally generated by the Bott--Samelson sheaves $\beta(s)$ ($s \in S_\chi$) supported on the union of the diagonal and the graph of $s$ on $\frf_\chi$.

From $\on{SBim}^\circ_\chi$, we then construct a presentable stable monoidal $\infty$-category ${}_{\chi}\cS^\circ_\chi$; this is a certain combinatorially constructed localization of the ind-completion of the bounded homotopy category of $\on{SBim}^\circ_\chi$.

In general, we have
\begin{equation}\label{Wa rtimes Om}
    \wt{W}_\chi \simeq \Omega_\chi \ltimes \wt{W}_\chi^\circ
\end{equation}
where $\Om_\chi$ is the subgroup of $\wt W_\chi$ that normalizes the subset $S_\chi$. Let $\Vect_{\Om_\chi}$ denote the monoidal category of $\Om_\chi$-graded $E$-vector spaces. The action of $\Om_\chi$ on $(W^\circ_\chi, S_\chi)$ induces an action of $\Vect_{\Om_\chi}$ on ${}_\chi \cS^\circ_\chi$. We form the semi-direct product monoidal category
$${}_\chi \cS_\chi := \Vect_{\Omega_\chi} \ltimes {}_\chi \cS^\circ_\chi.$$

Writing ${}_\chi \cS_\chi $ as a direct sum $\oplus_{\om \in \Om_\chi}{}_\chi \cS^\circ_\chi$, we define $\on{SBim}_\chi\subset{}_\chi \cS_\chi $ to be the full subcategory equal to $\oplus_{\om \in \Om_\chi}\on{SBim}^\circ_\chi$ under the prior decomposition. We call objects in $\on{SBim}_\chi$ {\em Soergel bimodules}.

Our basic structure theorem then reads as follows. 

\begin{thm} \label{t:maintheo} \label{t:mainthm} Assume that the characteristic of $E$ is not two. There is an equivalence of monoidal $\infty$-categories
\begin{equation}\label{main eq}
    {}_{\chi}\cM_\chi \simeq {}_{\chi}\cS_{\chi}
\end{equation}
that sends cofree tilting sheaves to Soergel bimodules. 

In particular, the monodromic affine Hecke category ${}_\chi \cM_\chi$ may be reconstructed from the combinatorial datum $(\wt{W}_\chi, \wt{W}_\chi^\circ, S_\chi, \frf_{\chi})$. 
\end{thm}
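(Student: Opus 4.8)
The plan is to follow the template of \cite{BY} (which handles the trivial monodromy case) and \cite{LY}, localizing the problem on the extended affine Weyl group and then gluing. First I would set up the "stratified" picture: the category ${}_\chi\cM_\chi$ carries standard and costandard objects $\Delta_w, \nabla_w$ indexed by $w \in \wt W_\chi$ (the stabilizer, since only those strata support $(\wt\AA,\chi)$-bimonodromic sheaves), and it has a well-behaved family of tilting objects. The key local input is the rank-one calculation: for a simple reflection one must compute the convolution algebra governing the two adjacent strata. Here the dichotomy in \eqref{e:mondata} appears — for $s \in S_\chi$ (the coroot pulls back $\chi$ to the trivial sheaf) one gets the usual $\SL_2$/Soergel-bimodule $\beta(s)$ behavior, whereas for a reflection in $\wt W_\chi \setminus \wt W_\chi^\circ$ the two strata do not interact and contribute an invertible object, which is exactly what produces the $\Omega_\chi$-grading via \eqref{Wa rtimes Om} and \eqref{Wa rtimes Om}. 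This reduces the structural statement to: (a) identifying the endomorphisms of the monoidal unit with functions on $\frf_\chi$ (the completed/formal version of the monodromy torus, which is where the "integral/mod $\ell$" flexibility and the isogeny-invariance are used), and (b) a Soergel-theoretic generation-and-presentation argument showing ${}_\chi\cM_\chi$ is monoidally generated by the $\beta(s)$ together with $\Vect_{\Omega_\chi}$, subject to the braid and quadratic relations.

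Concretely I would proceed in the following order. \textbf{Step 1:} Establish the monoidal action of the (completed) monodromy data: using the $I^+$-equivariant picture on $Z = I^+\bs\wt{LG}/I^+$, identify $\End({}_\chi\cM_\chi}(\unit)$ and the convolution action of the maximal-stratum objects with $\indcoh(\frf_\chi)$, functorially in the $\wt W_\chi$-action. \textbf{Step 2:} Do the rank-one analysis stratum-by-stratum for each $s \in S_{\aff}$ fixing $\chi$, producing either a Bott--Samelson object (when $s \in S_\chi$) or an invertible object (the $\Omega_\chi$ case); deduce the list of generators and that ${}_\chi\cM_\chi$ is generated under colimits, shifts, and convolution by cofree tiltings. \textbf{Step 3:} Prove the relations among these generators (braid relations from associativity of convolution over longer Weyl elements, quadratic/Soergel relations from Step 2), so that there is a well-defined monoidal functor ${}_\chi\cS_\chi \to {}_\chi\cM_\chi$ sending Soergel bimodules to cofree tiltings. \textbf{Step 4:} Show this functor is an equivalence: fully faithfulness on Bott--Samelson/tilting objects via a Soergel-type $\Hom$-formula (the morphism space between tiltings is a free module over functions on $\frf_\chi$ of the predicted rank, matching the combinatorial side), and essential surjectivity/equivalence on the generated presentable categories by the localization characterization of ${}_\chi\cS^\circ_\chi$ as a combinatorial localization of $\mathrm{Ind}\,\mathrm{K}^b(\on{SBim}^\circ_\chi)$. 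The semidirect-product structure with $\Vect_{\Omega_\chi}$ is then bootstrapped from the $\wt W_\chi = \Omega_\chi \ltimes \wt W_\chi^\circ$ decomposition and the invertibility established in Step 2; in particular triviality of the putative $3$-cocycle on $\Omega_\chi$ comes for free because the equivalence is constructed on the nose rather than abstractly.

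I expect \textbf{Step 4}, specifically the fully faithfulness (the "Soergel Hom-formula" in the monodromic, integral-coefficient setting), to be the main obstacle. In the trivial-monodromy, characteristic-zero case this is classical, but here one must: control $\Hom$-spaces between tilting objects when the coefficients are only a field of characteristic $\neq 2$ (or a DVR in the body of the paper), where semisimplicity of various $\Hom$-complexes and the degeneration of relevant spectral sequences is delicate; handle the fact that $\frf_\chi$ is a formal rather than honest scheme, so one is working with $\indcoh$ and completions throughout; and verify that the combinatorially-defined localization producing ${}_\chi\cS^\circ_\chi$ matches the geometric localization (the passage from all of $D(Z)$ down to the monodromic subcategory) — this compatibility of two a priori different localization procedures is where I would anticipate the most technical work. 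The hypothesis $\operatorname{char}(E) \neq 2$ should enter precisely in the rank-one computation of Step 2, ensuring the quadratic relation has the expected form and the Bott--Samelson object is a tilting object of the right shape.
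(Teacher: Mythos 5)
There is a genuine gap, and it is precisely the point the paper identifies as its central task: you never construct the fiber (Soergel) functor. Your Steps 2--3 propose to build a monoidal functor ${}_\chi\cS_\chi \to {}_\chi\cM_\chi$ by generators and relations (Bott--Samelson objects for $s \in S_\chi$, invertibles for $\Omega_\chi$, then braid/quadratic relations), but for a monoidal $\infty$-functor out of a combinatorially presented category this requires coherence data that you have no way of producing, and even at the $1$-categorical level the morphism spaces between Bott--Samelson/tilting objects cannot be matched with the bimodule side without a functor realizing them as bimodules. The paper goes in the opposite direction: it constructs a monoidal action of ${}_\chi\cM_\chi$ on $D(\wt\AA/(\wt\AA,\chi\mon))$, i.e.\ a functor $\mathbb{V}_\chi:{}_\chi\cM_\chi \to \indcoh(\frf_\chi\times\frf_\chi)$, proves it is $t$-adapted and fully faithful on the additive category of cofree tilting sheaves (using the faithfulness of $W_{\aff}$ on $\wt\AA$), computes its values on (co)standards and on $\tau(s)_\chi$ (this rank-one computation is where $\mathrm{char}(E)\neq 2$ enters, as you guessed), and then recovers the whole presentable category from tiltings by the renormalization theorem. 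The hard new ingredient, on which you are silent, is that for a (disconnected) loop group the naive affine Whittaker model is a direct sum of $|\Omega|$ copies of $D(\wt\AA/(\wt\AA,\chi\mon))$; the paper must construct a subgroup $\Sigma\subset\wt M_\phi$ at $\infty$ (via the affine pinning, the group $M_\phi$, and a delicate cocycle analysis) and impose $\Sigma$-equivariance to cut this down to one copy before any Soergel-theoretic argument can start.

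Relatedly, your claim that ``triviality of the putative $3$-cocycle on $\Omega_\chi$ comes for free because the equivalence is constructed on the nose'' is backwards. Without a fiber functor, the minimal tilting objects $\tau(w^\beta)_\chi$ only satisfy $\tau(w^\beta)_\chi\star\tau(w^\gamma)_\chi\simeq\tau(w^{\beta\gamma})_\chi$ up to non-canonical isomorphism, and upgrading this to a monoidal functor $\Proj_{E,\Omega_\chi}\to{}_\chi\cT_\chi$ is obstructed by exactly the $3$-cocycle you wave away; the paper trivializes it by Whittaker-normalizing these tiltings (the groupoid of pairs $(\tau,\square)$ with $\square:\mathbb{V}_\chi(\tau)\simeq\omega(w^\beta)_\chi$ is contractible), which again requires $\mathbb{V}_\chi$. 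So the semidirect-product structure ${}_\chi\cT_\chi\simeq\Proj_{E,\Omega_\chi}\ltimes\on{SBim}^\circ_\chi$ is an output of the Soergel functor, not an input available before it is built. Your Steps 1 and 4 (identification of $\End(\delta^{\chi\mon})$ with $\Fun(\frf_\chi)$, and reconstruction of the ambient category from the homotopy category of tiltings by a base-change/localization) do match the paper's Lemma on $\End(\delta^{\chi\mon})$ and its renormalization theorem, but without the Whittaker construction and the resulting full faithfulness of $\mathbb{V}_\chi$ on tiltings the argument does not close.
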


\begin{rem}

\begin{enumerate}
    \item Cofree tilting sheaves are counterparts  of the free-monodromic tilting sheaves introduced in \cite{BY},  which are more convenient to work with in our categorical setting (presentable stable categories). Cofree tilting sheaves first appeared in the work \cite{CD} and also played an important role in \cite{Zh}.
    \item The equivalence \eqref{main eq} matches natural highest weight structures on both sides, and in particular exchanges standard sheaves in ${}_\chi\cM_\chi$ and Rouquier complexes in ${}_\chi\cS_\chi$ indexed by the same element in $\tilW_\chi$. 
    \item In \S\ref{s:mon vs strict} we will explain how to pass back and forth between strictly equivariant and monodromic categories via categorical constructions. The above result has a counterpart for the strictly equivariant affine Hecke category as well.  
    \item We believe that our results also hold in the Betti setting (i.e., $k=\CC$ with an arbitrary coefficient field $E$), and most of the argument goes through without change except those in Section \ref{s:Soergel}, where one can use "Kirillov model" to replace the use of the Artin-Schreier sheaf.
    \item When the characteristic of $E$ is two, the definitions of Bott-Samelson sheaves and ${}_{\chi}\cS_{\chi}$ must be modified for the theorem to hold. See Remark~\ref{rem:true BS sheaf} for details.
\end{enumerate}

\end{rem}

\sss{}  An immediate consequence of Theorem \ref{t:maintheo} for the internal structure of ${}_\chi \cM_\chi$ is a monoidal equivalence
\begin{equation}\label{intro all blocks}
    {}_\chi \cM_\chi \simeq \Vect_{\Omega_\chi} \ltimes {}_\chi \cM^\circ_\chi,
\end{equation}
where ${}_\chi \cM^\circ_\chi$ is the block (direct summand) of ${}_\chi \cM_\chi$ containing the monoidal unit. 

We emphasize that the equivalence  \eqref{intro all blocks} provides the simplest possible answer for the basic structure theory of monodromic affine Hecke categories. Namely, it shows all potential three cocycle twists which are {\em a priori} present are always trivializable. This appears to be a nontrivial assertion. For instance, it is not immediately implied by any of the standard conjectures in this area that we are aware of, e.g. spectral descriptions of the Hecke categories. 

\sss{} Another consequence of Theorem \ref{t:maintheo} is that the neutral block ${}_\chi \cM^\circ_\chi$ is always equivalent to the affine Hecke category of another group with trivial character sheaf, where the group in general is the product of a reductive group and the loop group of a semisimple, simply connected group. One may even hope to define an ind-group $\cH$ whose affine Hecke category (with the trivial character sheaf) is equivalent to the whole ${}_\chi\cM_\chi$. Such a group deserves to be called an {\em endoscopic loop group} for $(\wt{LG}, \chi)$.

Although it is unclear how to construct $\cH$ from the pair $(\wt{LG}, \chi)$ in general, we propose a special case here and will leave the general case for future investigation. When $G$ is split and $\chi$ is trivial along the central $\mathbb{G}_m$, $\chi$ defines an element $s$ in the dual torus $\mathbb{A}^\vee$ in the Langlands dual group $G^\vee$. Let $H$ be the usual endoscopic group of $(G,s)$: it is the Langlands dual to the centralizer $Z_{G^\vee}(s)$ (there is a way to define $H$ even when $Z_{G^\vee}(s)$ is disconnected). In this case, one may take $\cH$ to be the loop group of $H$.

\sss{Related work}
For the finite monodromic Hecke category attached to a reductive group $H$ and a character sheaf $\chi$ on its abstract Cartan $T_H$, the equivariant version of Theorem \ref{t:mainthm} with $\Qlbar$-coefficients is proved in \cite{LY}. The argument there relies on a different kind of Soergel functor that is an analog of taking equivariant cohomology. In the monodromic setting and with modular coefficients, the structure of the neutral block is described in terms of Soergel bimodules in the PhD thesis of V.Gouttard \cite{Go}. 

For loop groups $LG$ (without central extension), the structure of the neutral block $\cM_\chi^\circ$ (or rather its equivariant version) is described in terms of Soergel bimodules in the PhD thesis \cite{Li} of one of us. The method used in \cite{Li} is an affine analog of that of \cite{LY}. In the recent work of Eberhardt--Eteve \cite{EE}, among other interesting results, they give a Soergel-type description for the (K-theoretic) monodromic Hecke categories attached to any {\em connected} Kac--Moody group. Their result however does not cover all blocks of the Hecke category for a loop group. In the upcoming work of Sandvik, he proves an endoscopic equivalence for the neutral block of the equivariant Hecke categories for Kac--Moody groups with modular coefficients.

\sss{} Let us briefly comment on our method of proving Theorems \ref{t:maintheo}.  
By its definition, up to renormalization issues, the combinatorial category ${}_\chi \cS_\chi$ is essentially a full subcategory of the endofunctors of $\indcoh(\frf_\chi)$. The formal scheme $\frf_\chi$ is designed so that $\indcoh(\frf_\chi)$ is equivalent to $D(\wt{\mathbb{A}}/(\wt{\mathbb{A}}, \chi\mon))$, the category of $\chi$-monodromic sheaves on $\wt\AA$. The main content of Theorem \ref{t:mainthm} is therefore the existence of a suitable action of ${}_\chi \cM_\chi$ on $D(\wt{\mathbb{A}}/(\wt{\mathbb{A}},\chi\mon))$. 

In \cite{LY}, for the monodromic Hecke categories of a reductive group $H$, Lusztig--Y. used the natural action of $D((B, \chi) \bs H / (B, \chi))$ on the category of nondegenerate Whittaker sheaves 
$$D((N^-, \phi) \bs H / (B, \chi))$$
to prove an analogue of \eqref{intro all blocks}. 

While same construction works in the monodromic setting, its direct analogue is insufficient for loop groups which are disconnected in general: the naively defined affine nondegenerate Whittaker model now splits as a direct sum of copies of $D(\wt{\mathbb{A}} / (\wt{\mathbb{A}},\chi\mon))$, indexed by the component group of the loop group $LG$.

Finding a remedy for the above issue is therefore one of the central tasks of the present work. With some over-simplification, our method goes as follows. One has an identification of the affine nondegenerate Whittaker model with a category of sheaves on the moduli stack of $G$-bundles on $\mathbb{P}^1$ with level structure at two marked points $0$ and $\infty$, where our original Iwahori together with $\chi$ sits at $0$ and the generic additive character $\psi$ is imposed at $\infty$. We then impose a further equivariance for the action of a carefully chosen subgroup $\Sig$ at $\infty$, commuting with $\phi$ in a suitable sense,  
and show this pares the affine nondegenerate Whittaker model down to having the desired size, i.e., one copy of $D(\wt{\mathbb{A}} / (\wt{\mathbb{A}},\chi\mon))$. The actual story is slightly more complicated in that $\Sig$ may be slightly larger than what we exactly need, resulting in a mild choice in the construction of the equivalence \eqref{main eq} if $k$ is not the algebraic closure of a finite field.

\subsection{Applications}\label{ss:apps}
Theorem \ref{t:maintheo} is well suited to establishing equivalences between monodromic affine Hecke categories associated to different loop groups and characters, as long as their combinatorial quadruples are isomorphic. Such equivalences are anticipated broadly throughout the local geometric Langlands correspondence.

In this paper, we use this to settle some of the basic remaining conjectures in depth zero for the quantum local Langlands correspondence, due to Gaitsgory \cite{Ga}. 

\sss{} We work in the de Rham setting as in  \S\ref{sss:intro sheaves}(ii), i.e., with D-modules over an algebraically closed field $k$ of characteristic zero. 

Let $G$ be a split connected reductive group over $K=k\lr{t}$, i.e., it is the base change of a connected reductive group $\GG$ over $k$. Let $\kappa$ be a nondegenerate {level}, i.e., an Ad-invariant nondegenerate bilinear form on the Lie algebra of $\GG$. This gives rise to a monoidal category of $\kappa$-twisted D-modules on $LG$, which we denote by
$$\Dmod_\kappa(I \bs LG / I).$$

Let $\chG$ (resp. $\chGG$) denote the connected split reductive group over $K$ (resp. $k$) that is Langlands dual to $G$ (resp. $\chGG$).
\footnote{We use two notations for the Langlands dual groups: the $\check{}$ notation denotes a group defined over the same field as the automorphic side, such as $K$ or $k$, while the ${}^\vee$ notation denotes a group defined over the coefficient ring $E$.} One has a nondegenerate level $\ckappa$ for $\chG$, obtained by requiring that the forms restrict to dual nondegenerate forms on the Cartan subalgebras of $\GG$ and $\chGG$. In particular,  we may form the level $-\check{\kappa}$, and the associated Hecke category 
$$\Dmod_{-\ckappa}(\chI \bs L\chG / \chI).$$
The following was conjectured by Gaitsgory \cite[Conjecture 3.10]{Ga}. 

\begin{thm} There is a $t$-exact monoidal equivalence
    $$\Dmod_\kappa(I \bs LG / I) \simeq \Dmod_{-\ckappa}(\chI \bs L\chG / \chI).$$
\end{thm}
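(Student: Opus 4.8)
The plan is to deduce the theorem from Theorem~\ref{t:maintheo} by comparing combinatorial quadruples. Both sides are monodromic affine Hecke categories in the sense of this paper: the nondegenerate level $\kappa$ determines a Kac--Moody central extension $\wt{LG}$ of $LG$ of the type fixed in Section~\ref{ss: cent ext}, and $\kappa$-twisted D-modules on $I\bs LG/I$ are canonically identified with ${}_{\chi}\cM_{\chi}$ for a character sheaf $\chi=\chi_\kappa$ on $\wt\AA=\wt I/I^+$ determined by $\kappa$ (canonical along the central $\Gm$); likewise $-\check\kappa$ is a nondegenerate level on $\chG$, and $\Dmod_{-\check\kappa}(\chI\bs L\chG/\chI)\simeq{}_{\chi'}\cM_{\chi'}$ with $\chi'=\chi_{-\check\kappa}$ on the analogous torus $\widetilde{\chAA}$. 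Since we are in the de Rham setting we have $\operatorname{char}(E)=\operatorname{char}(k)=0\neq 2$, so Theorem~\ref{t:maintheo} applies to both categories, and it suffices to produce an isomorphism of combinatorial quadruples
\begin{equation*}
(\wt W_{\chi_\kappa},\ \wt W^\circ_{\chi_\kappa},\ S_{\chi_\kappa},\ \frf_{\chi_\kappa})\ \simeq\ (\wt W_{\chi_{-\check\kappa}},\ \wt W^\circ_{\chi_{-\check\kappa}},\ S_{\chi_{-\check\kappa}},\ \frf_{\chi_{-\check\kappa}})
\end{equation*}
respecting the Coxeter structures and the $\wt W_\chi$-actions on the formal tori. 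The sought equivalence is then the composite ${}_{\chi_\kappa}\cM_{\chi_\kappa}\simeq{}_{\chi_\kappa}\cS_{\chi_\kappa}\simeq{}_{\chi_{-\check\kappa}}\cS_{\chi_{-\check\kappa}}\simeq{}_{\chi_{-\check\kappa}}\cM_{\chi_{-\check\kappa}}$, the middle arrow being induced functorially by the isomorphism of quadruples.

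To compare the quadruples I would proceed as follows. For the formal tori: nondegeneracy of $\kappa$ gives a $W_f$-equivariant rational isomorphism $\xcoch(T)\otimes\Q\xrightarrow{\sim}\xch(T)\otimes\Q=\xcoch(\chT)\otimes\Q$, hence, after adjoining the central direction, an isogeny between $\wt\AA$ and $\widetilde{\chAA}$; since $\operatorname{char}(E)=0$ this isogeny has degree prime to $\operatorname{char}(E)$, so the formal groups $\frf_{\chi_\kappa}$ and $\frf_{\chi_{-\check\kappa}}$, which depend on these tori only up to such isogenies, become canonically and $W_f$-equivariantly identified. For the Weyl data: a direct computation should identify the triple $(\wt W_{\chi_\kappa},\wt W^\circ_{\chi_\kappa},S_{\chi_\kappa})$ together with its action on $\frf_{\chi_\kappa}$ with the extended affine Weyl group, affine Coxeter subgroup, simple reflections and canonical torus action of the ``$\kappa$-rescaled'' root datum of $G$ --- the root datum of $G$ with each coroot $\check\alpha$ rescaled by the metaplectic integer $n_\alpha$ attached to $\kappa$ (roughly $\tfrac12\kappa(\check\alpha,\check\alpha)$), so that $\wt W^\circ_{\chi_\kappa}$ is its affine Coxeter group and $\wt W_{\chi_\kappa}=\Omega_{\chi_\kappa}\ltimes\wt W^\circ_{\chi_\kappa}$ its length-zero extension (a metaplectic root datum, cf.\ \cite{FL,We}). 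Running the same computation for $(\chG,-\check\kappa)$ produces the dual rescaled root datum; since $-\check\kappa$ is dual to $\kappa$ up to a sign immaterial for root systems, the two rescaled data are Langlands dual to one another, hence carry the same affine Coxeter group with the same length-zero extension and act identically on the common formal torus $\frf$ --- the sign in $-\check\kappa$ rather than $+\check\kappa$ being exactly what makes the translation-lattice actions intertwine rather than be conjugated by inversion. This yields the isomorphism of quadruples.

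It then remains to verify $t$-exactness. By the remarks following Theorem~\ref{t:maintheo}, each equivalence ${}_\chi\cM_\chi\simeq{}_\chi\cS_\chi$ matches the natural highest weight structures, sending the standard (resp. costandard) object indexed by $w\in\wt W_\chi$ to the Rouquier complex (resp. its inverse) indexed by the same $w$, and the middle equivalence visibly preserves Rouquier complexes. Thus the composite is an exact monoidal equivalence carrying standard objects to standard objects and costandard objects to costandard objects; as the perverse $t$-structures are determined by these classes of objects, it is $t$-exact.

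The step I expect to be the main obstacle is the combinatorial identification in the second paragraph: making explicit the quadruple attached to $(G,\kappa)$ --- in particular the non-Coxeter group $\Omega_{\chi_\kappa}$ and the $\wt W_{\chi_\kappa}$-action on $\frf_{\chi_\kappa}$ --- and checking that the duality $\kappa\leftrightarrow-\check\kappa$ carries it, with all normalization constants and signs tracked precisely, onto the quadruple of $(\chG,-\check\kappa)$. Theorem~\ref{t:maintheo} supplies the categorical content, so what remains is this bookkeeping; but it is the bookkeeping on which the precise form of the statement (and the role of the sign) ultimately hinges.
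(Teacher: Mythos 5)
Your overall strategy --- reduce to the Soergel-theoretic structure theorem and compare combinatorial quadruples, then invoke the resulting endoscopic equivalence --- is the same as the paper's, but there are two genuine gaps. First, the opening identification is false as stated: $\Dmod_\kappa(I\bs LG/I)$ is the \emph{strictly} equivariant category (strict equivariance against $I$ and against the central $\Gm$ via the character determined by $\kappa$), whereas ${}_{\chi}\cM_{\chi}$ is the bi-monodromic category, which allows all infinitesimal deformations of the monodromy along $\wt\AA$; these differ already in the endomorphisms of the unit ($\Fun(\frf_\chi)$ versus an equivariant-cochain algebra), and passing between them is not automatic. Theorem \ref{t:mainthmsec7} only produces a monodromic equivalence; to deduce the stated theorem one must descend to strict equivariance via the machinery of Section \ref{s:mon vs strict}, i.e.\ verify the hypotheses of Proposition \ref{p:mon2strict} (in particular the compatibility of the two maps from the endomorphisms of the central cofree monodromic local system, condition (2) there). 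Your proposal skips this entirely because of the initial misidentification, and consequently also never addresses why the strict equivalence is $t$-exact and monoidal rather than just the monodromic one.

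Second, the combinatorial step is not mere bookkeeping in the form you propose, because $\kappa$ is an arbitrary nondegenerate level, not necessarily rational. For irrational $\kappa$ there is no isogeny of tori induced by $\kappa$ (it does not carry $\xcoch(\TT)$ into $\xch(\TT)$), so your mechanism for identifying $\frf_{\chi_\kappa}$ with $\frf_{\chi_{-\ckappa}}$ breaks down; the paper instead constructs a linear isomorphism of the dual affine Cartans (involving $-\ckappa$ as a linear map and the translation operators) and identifies $\frf$ with the formal completion of the dual affine Cartan via the logarithm, which makes sense for any $\kappa$ in the de Rham setting. Likewise, the description of $\tilW^\circ_{\chi_\kappa}$ as the affine Weyl group of a ``$\kappa$-rescaled'' (metaplectic) root datum presumes rationality: for a simple factor on which $\kappa$ is irrational the integral affine coroots have no imaginary part, the neutral block is a \emph{finite} Coxeter group, and matching the extended Coxeter presentations on the two sides is exactly the delicate point --- the abstract group isomorphism coming from the linear map need not respect simple reflections, and one needs the $\QQ$-form/alcove argument, with a residual ambiguity by products of Chevalley involutions on the finite factors, to correct it by conjugation by a neutral-block element before Theorem \ref{t:mainthmsec7} can be applied. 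You flag this step as the main obstacle, which is fair, but the specific route you sketch (isogeny plus Langlands duality of rescaled root data) does not cover the general case the theorem asserts.
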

If we write $L^+G$ for the arc group of $\GG\times_{\Spec k}\Spec \cO_K$, and similarly for $L^+ \chG$, the following spherical analogue was also conjectured by Gaitsgory \cite{Ga}. 
\begin{thm} There is a $t$-exact monoidal equivalence
$$\Dmod_\kappa(L^+G \bs LG / L^+G) \simeq \Dmod_{-\ckappa}(L^+\chG \bs L \chG / L^+ \chG).$$
\end{thm}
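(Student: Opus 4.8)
The plan is to derive the spherical equivalence from the Iwahori-level equivalence of the preceding theorem, by realising both spherical Hecke categories as the output of a single universal monoidal construction applied to the Iwahori Hecke categories together with their finite Hecke subcategories. First I would record the relation between the spherical and Iwahori sides. Since $\kappa$ is a level rather than a genuine central extension, the Kac--Moody extension at level $\kappa$ restricts canonically trivially over the arc group $L^+G$; thus $\Sph_\kappa := \Dmod_\kappa(L^+G\bs LG/L^+G)$ and the finite Hecke category $\cH := \Dmod(B\bs\GG/B)$ (for $B\subset\GG$ the Borel attached to $I$) are honestly equivariant categories, and $\cH \simeq \Dmod_\kappa(I\bs L^+G/I)$ embeds as a full monoidal subcategory of $\mathrm{Aff}_\kappa := \Dmod_\kappa(I\bs LG/I)$ via the closed embedding of the finite flag variety $L^+G/I\cong\GG/B$ into the affine flag variety $LG/I$. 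By the standard relation between spherical and Iwahori Hecke categories, $\Sph_\kappa$ is recovered from the pair $(\mathrm{Aff}_\kappa, \cH\hookrightarrow\mathrm{Aff}_\kappa)$ alone: the module $\Dmod_\kappa(I\bs LG/L^+G) \simeq \mathrm{Aff}_\kappa\otimes_{\cH}\Dmod_\kappa(I\bs L^+G/L^+G)$ carries commuting left $\mathrm{Aff}_\kappa$- and right $\Sph_\kappa$-actions, and, for instance, the right action identifies $\Sph_\kappa$ with $\End_{\mathrm{Aff}_\kappa}\!\big(\Dmod_\kappa(I\bs LG/L^+G)\big)$; equivalently $\Sph_\kappa$ is the category of modules in $\mathrm{Aff}_\kappa$ over the idempotent pro-object attached to the big projector of $\cH$. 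All of this is geometric and level-independent, and the same statements hold on the dual side with $\cH^\vee := \Dmod(\chB\bs\chGG/\chB)$.

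Next I would check that the Iwahori equivalence $F\colon \mathrm{Aff}_\kappa \rasim \Dmod_{-\ckappa}(\chI\bs L\chG/\chI) =: \mathrm{Aff}^{\chG}_{-\ckappa}$ carries $\cH$ to $\cH^\vee$ through the canonical monoidal equivalence of finite Hecke categories attached to the identification $\Wf(\GG)\cong\Wf(\chGG)$. This is where Theorem~\ref{t:maintheo} is genuinely used — it is not formal from $t$-exactness and monoidality. Recall that $F$ is assembled as ${}_\chi\cM_\chi \simeq {}_\chi\cS_\chi \simeq {}_{\chi'}\cS_{\chi'} \simeq {}_{\chi'}\cM_{\chi'}$, where $\chi, \chi'$ are the character sheaves on the Kac--Moody tori encoding $\kappa$ and $-\ckappa$ (both trivial on the finite Cartan), the outer equivalences are Theorem~\ref{t:maintheo}, and the middle one comes from an isomorphism of combinatorial quadruples $(\wt W_\chi, \wt W_\chi^\circ, S_\chi, \frf_\chi)\cong(\wt W_{\chi'}, \wt W_{\chi'}^\circ, S_{\chi'}, \frf_{\chi'})$. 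Because $\chi$ is trivial on the finite Cartan, every finite reflection lies in $\wt W_\chi^\circ$, so (for the natural choice of positive system) the finite Weyl group $\Wf$ is a standard parabolic of the Coxeter system $(\wt W_\chi^\circ, S_\chi)$; and by the clause of Theorem~\ref{t:maintheo} matching cofree tilting sheaves with Soergel bimodules (equivalently, standards with Rouquier complexes indexed by the same element of $\wt W_\chi$), $\cH$ is identified with the full monoidal subcategory of ${}_\chi\cS_\chi$ generated by the Bott--Samelson objects $\beta(s)$ for the finite simple reflections $s$. One is thereby reduced to arranging the quadruple isomorphism — which is induced by the Langlands duality $\xcoch(T)\otimes E \cong \xch(T)\otimes E$ intertwining $\Wf(\GG)\cong\Wf(\chGG)$ — so that it matches finite simple reflections on the two sides compatibly with $\frf_\chi \leftrightarrow \frf_{\chi'}$; this pins down $F|_{\cH} \simeq (\cH\simeq\cH^\vee)$.

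Granting this, applying the construction recalled above on both sides and transporting along $F$ produces a monoidal equivalence $\Sph_\kappa \simeq \Sph^{\chG}_{-\ckappa}$. For $t$-exactness: the perverse $t$-structure on $\Sph_\kappa$ is characterised by $t$-exactness of the smooth pullback (up to shift) along $\pi\colon I\bs LG/I \to L^+G\bs LG/L^+G$ into $\mathrm{Aff}_\kappa$; since $F$ is $t$-exact and, by the previous step, intertwines $\pi^!$ with its dual counterpart, the resulting spherical equivalence is $t$-exact as well.

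I expect the main obstacle to be the matching of the finite Hecke subcategories. As $F$ is not a priori of geometric origin, one cannot simply say that $\GG/B$-fibres go to $\chGG/\chB$-fibres; instead one must pin down $\cH\subset\mathrm{Aff}_\kappa$ purely combinatorially, as the Soergel block of the standard parabolic $\Wf\subset\wt W_\chi$, and then track it faithfully through each of the equivalences composing $F$ — in particular verifying that the combinatorial quadruple isomorphism can be normalised to be $\Wf$-equivariant, and that this normalisation is compatible with the identifications of realisations $\frf_\chi$. A secondary point is the mild non-canonicity of the equivalence \eqref{main eq} over a general base flagged in the introduction, which is present here since $k$ is a field of characteristic zero rather than $\overline{\FF_q}$: one must check the relevant choices can be taken compatibly on the two sides, which should be harmless in the de~Rham setting.
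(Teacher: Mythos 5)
Your overall architecture is the same as the paper's: deduce the spherical statement from the Iwahori one by reconstructing $\Dmod_\kappa(L^+G\bs LG/L^+G)$ categorically from the Iwahori Hecke category together with its finite-type data, the reconstruction being $\Sph\simeq\End$ of the $(I,L^+G)$-bimodule category. The paper packages exactly this in Section \ref{s:mon vs strict} (Lemma \ref{l:hitwithdelta}, Corollary \ref{c:monoidempotent}, Proposition \ref{p:monmorita}, Lemmas \ref{l:algbr} and \ref{l:doubcent}), but note it is carried out \emph{monodromically}, where parahoric averaging is an honest colocalization and the unit $\delta_{L^+G}$ is an actual idempotent, and only then strictified via Propositions \ref{p:avenh}, \ref{p:oblvenh}, \ref{p:mon2strict}; in the strictly equivariant category your ``big projector'' is not idempotent (its square is itself tensored with $\on{C}^*(\GG/\BB)$), and the double-centralizer identification $\Sph\simeq\End_{\Dmod_\kappa(I\bs LG/I)}(\Dmod_\kappa(I\bs LG/L^+G))$ is Lemma \ref{l:algbr}, not a formality. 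Where you genuinely diverge is the transfer mechanism: you propose to match the finite Hecke monoidal subcategory via its Bott--Samelson generators in the Soergel model, and then transport the reconstruction; the paper (proof of Theorem \ref{thm: parahoric equivalence}) instead matches the spherical idempotent directly, by observing that the image of the canonical map $j(w_\circ)_!\to j(w_\circ)_*$, for $w_\circ$ the longest element of $\Wf$, is the constant sheaf on $L^+G$ up to shift, so that $t$-exactness plus the matching of (co)standard objects already forces the idempotents, hence the module categories $\Dmod(I\bs LG/L^+G)$, to correspond. That route is uniform over all standard parahorics and sidesteps a point your version still owes: even granting $F(\cH)=\cH^\vee$, your reconstruction uses the trivial $\cH$-module $\Dmod_\kappa(I\bs L^+G/L^+G)$, and you must also match that module (equivalently the big projector) across the equivalence; this is not addressed in your plan, whereas the image-of-$\Delta\to\nabla$ argument does it for free.

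The item you defer as ``the main obstacle'' is in fact where most of the remaining content lies, and it is more than a normalisation to be arranged. The duality map is an isomorphism of abstract groups compatible with the reflection representations, and for $\theta=0$ it does restrict to the identity on $\WW$; but Theorem \ref{t:mainthmsec7} needs an isomorphism of \emph{extended Coxeter systems}, and producing one requires correcting by conjugation by an element $y$ of the neutral block — this is Steps 5--9 of the proof of Theorem \ref{t:GaitsgIwahori}, including the rational-form argument. That conjugation moves the finite simple reflections, so the fact that they are still carried to finite simple reflections (only up to the diagram automorphism $-w_\circ$ on the factors where $\kappa$ is positive, resp.\ the Chevalley ambiguity of Remark \ref{r:quantum2cat} at irrational levels) is exactly the identification of $y$ in the proof of Corollary \ref{c:quantLang2HEcke}; it is harmless for the spherical statement since $L^+G$ corresponds to $L^+\chG$ under any diagram automorphism, but it is not the ``canonical'' identification $\Wf(\GG)\cong\Wf(\chGG)$ you invoke. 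Your observation that the finite simple reflections lie in $S_\chi$ when $\theta=0$ is correct but does not by itself supply this; with that verification supplied (as the paper does), your plan goes through.
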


In fact, we more generally prove an equivalence, conjectured by Gaitsgory, matching the Hecke 2-categories, where one runs over all pairs of standard parahorics contained in the arc groups of $G$ and $\chG$. We also obtain similar statements when we further introduce a character twist on the Iwahori itself. 

Let us explicitly highlight a particularly useful instance of the above, namely the case of the metaplectic double cover of the symplectic group.  
\begin{exmp}  Consider a symplectic group $G = \on{Sp}_{2n}$, and the unique level giving the short coroots squared length {\em one}; denote this level by $\frac{1}{2}$. In this case, the dual level for $\chG = \on{SO}_{2n+1}$ is half integral, but integral for the Spin group, so the above equivalences simplify to 
\begin{align*}\Dmod_{\frac{1}{2}}(I \bs L\on{Sp}_{2n} / I) &\simeq \Dmod(I \bs L\on{SO}_{2n+1}/I)\\ \Dmod_{\frac{1}{2}}(L^+\on{Sp}_{2n} \bs L\on{Sp}_{2n} / L^+ \on{Sp}_{2n}) &\simeq \Dmod(L^+\SO_{2n+1} \bs L \SO_{2n+1} / L^+ \SO_{2n+1}).\end{align*}
The latter equivalence is used in the construction of Coulomb branches in non-cotangent types \cite{coloumb}.
\end{exmp}

\sss{} We are now back to the general sheaf-theoretic setting of Section \ref{sss:intro sheaves}.

Next we will state some analogues of the preceding in the case when the level $\kappa$ is rational. These are geometric analogues of the function-theoretic local Shimura correspondence of Savin \cite{Sa}. 

Let $G$ be a split connected reductive group over $K$, $\wt{LG}$ the corresponding centrally extended loop group, and $\chi_c$ a character sheaf of finite order on the central $\mathbb{G}_m$. Let us denote the category of $\chi_c$-equivariant sheaves on $\wt{LG}$ by $D_{\chi_c}(LG)$, and, using that the central extension canonically splits over $I$, we may form the $\chi_c$-twisted affine Hecke category 
$D_{\chi_c}(I \bs LG / I)$ of bi-$I$-equivariant objects of $D_{\chi_c}(LG)$. 

On the other hand, from the data of $G$ and $\chi_c$ we may canonically associate a split connected reductive group $H^\vee$ over $E$, the {\em metaplectic dual group} (see Sections \ref{ss:meta dual}-\ref{ss:meta dual2} for details). Let $H$ be a connected split reductive group over $K$ that is Langlands dual to $H^\vee$. \footnote{In some sources, the notation $H$ and $H^\vee$ are switched.} 
We have the following. 

\begin{thm}\label{t:dersat} There are $t$-exact monoidal equivalences
\begin{eqnarray}
    D_{\chi_c}(I \bs LG / I) \simeq D(I_H \bs LH / I_H),\\
\label{e:meta Sat}    D_{\chi_c}(L^+G \bs LG / L^+G) \simeq D(L^+H \bs L H / L^+ H).
\end{eqnarray}
\end{thm}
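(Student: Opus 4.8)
The plan is to deduce both equivalences from Theorem~\ref{t:maintheo}, by showing that the two monoidal categories in each line are governed by the same combinatorial quadruple. On the left, the central extension splits canonically over $I$, so $\chi_c$ extends to a character sheaf $\chi$ on $\wt\AA = \wt I/I^+$ that is trivial along the subtorus $\AA$ and restricts to $\chi_c$ on the central $\Gm$; then $D_{\chi_c}(I\bs LG/I)$ is precisely the strictly $(\wt\AA,\chi)$-equivariant incarnation of ${}_\chi\cM_\chi$. Since $LH$ carries no central extension, $D(I_H\bs LH/I_H)$ is likewise the strictly equivariant incarnation of ${}_{\triv}\cM_{\triv}$ for $LH$. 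Invoking the strictly equivariant counterpart of Theorem~\ref{t:maintheo} discussed in \S\ref{s:mon vs strict} on both sides, it suffices to construct an isomorphism of quadruples
\[
\bigl(\wt W_\chi,\ \wt W^\circ_\chi,\ S_\chi,\ \frf_\chi\bigr)\ \simeq\ \bigl(\wt W_{H},\ W_{H,\aff},\ S_{H,\aff},\ \frf_{H}\bigr)
\]
compatibly with the actions on the last entries; here the right-hand side is the quadruple attached by Theorem~\ref{t:maintheo} to $(LH,\triv)$, for which $\wt W^\circ_H = W_{H,\aff}$ and $S_H = S_{H,\aff}$ because the trivial character pulls back trivially along every affine coroot.

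The core step is then a direct computation of the left-hand quadruple, which I expect to reproduce the construction of the metaplectic dual group reviewed in \S\ref{ss:meta dual}--\ref{ss:meta dual2}. Through the Kac--Moody extension each affine coroot $\halpha$ of $\wt{LG}$ acquires a central ``level'' component proportional to the quadratic form defining the extension, and the condition that $\chi\circ\halpha$ be trivial then singles out exactly the affine roots whose $\delta$-part is divisible by the integer $n_\alpha$ (a divisor of the order of $\chi_c$, hence invertible in $E$) attached to the underlying finite root $\alpha$. This reproduces the affine root datum of $LH$, equivalently the root datum of the metaplectic dual $H^\vee$ of \S\ref{ss:meta dual}--\ref{ss:meta dual2}, so that $\wt W^\circ_\chi \simeq W_{H,\aff}$ carrying $S_\chi$ onto $S_{H,\aff}$; and, tracking the $\wt W$-action on the character group, one gets $\wt W_\chi\simeq\wt W_H$ together, via \eqref{Wa rtimes Om}, with a matching $\Omega_\chi\simeq\Omega_H$ respecting the semidirect decompositions. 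Finally, the lattices $\xcoch(\wt\AA)$ and $\xcoch(\AA_H)$ differ only through the rescalings $n_\alpha$ (and their duals on characters), hence are related by an isogeny of degree invertible in $E$ intertwining the two Weyl group actions; by the isogeny-invariance of $\frf_\chi$ recorded after \eqref{e:mondata}, this yields $\frf_\chi\simeq\frf_H$ equivariantly, completing the isomorphism of quadruples.

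Granting this, Theorem~\ref{t:maintheo} (in its strictly equivariant form) produces the first equivalence of Theorem~\ref{t:dersat}, and it is manifestly monoidal since each intermediate functor --- to ${}_\chi\cS_\chi$, the identification ${}_\chi\cS_\chi\simeq{}_{\triv}\cS^{(H)}_{\triv}$ induced by the isomorphism of quadruples, and back to ${}_{\triv}\cM_{\triv}$ --- is; it is $t$-exact because Theorem~\ref{t:maintheo} matches the highest weight (perverse) $t$-structures on both ends. The spherical equivalence \eqref{e:meta Sat} then follows by a standard categorical procedure applied to this one: $D(L^+G\bs LG/L^+G)$ is recovered from $D(I\bs LG/I)$ together with the finite parabolic subgroup $\Wf\subset\wt W_\chi$ (e.g.\ as the endomorphisms of the $D(I\bs LG/I)$-module $D(I\bs LG/L^+G)$, or as a convolution corner for the spherical idempotent supported on $L^+G/I$). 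Since rescaling coroots leaves reflections unchanged, the isomorphism of quadruples carries $\Wf$ onto the finite Weyl group of $H$, so the first equivalence --- being monoidal and matching the standard objects indexed by $\wt W_\chi\simeq\wt W_H$ --- restricts to the desired $t$-exact monoidal equivalence $D(L^+G\bs LG/L^+G)\simeq D(L^+H\bs LH/L^+H)$; running over arbitrary pairs of standard parahorics gives the corresponding $2$-categorical refinement.

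The step I expect to be the main obstacle is the explicit identification of the quadruple $(\wt W_\chi,\wt W^\circ_\chi,S_\chi,\frf_\chi)$ with the metaplectic dual datum: one must keep careful track of the central level component of the affine coroots of the Kac--Moody extension, of the resulting rescaled (co)character lattices and the twisted $\wt W$-action on them, and of the identification of the non-Coxeter groups $\Omega_\chi\simeq\Omega_H$ so that the $\Vect_\Omega$-factors and the semidirect products \eqref{Wa rtimes Om} match --- all uniformly over a general coefficient ring. Everything else is a formal consequence of Theorem~\ref{t:maintheo} and the comparison of \S\ref{s:mon vs strict}.
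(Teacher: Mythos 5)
Your overall strategy coincides with the paper's: match the combinatorial data of $(\wt{LG},\chi)$ with that of the metaplectic endoscopic group, apply the Soergel-theoretic Theorem~\ref{t:maintheo} (in the form of Theorem~\ref{t:mainthmsec7}), pass from monodromic to strictly equivariant categories as in \S\ref{s:mon vs strict}, and deduce the spherical case by a parahoric/idempotent argument. However, the central step — the isomorphism of quadruples — has a genuine gap as you have set it up. The formal scheme $\frf_\chi$ attached to $(\wt{LG},\chi)$ lives in $\mathrm{LS}^{t,\Box}_{\wt\AA^\vee}$, and $\wt\AA=\wt I/I^+$ is an extension of $\AA$ by the central $\Gm$, so $\frf_\chi$ has dimension $\dim\AA+1$; by contrast, since you take $LH$ with \emph{no} central extension, your $\frf_H$ has dimension $\dim\AA_H=\dim\AA$. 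Hence the asserted equivariant isomorphism $\frf_\chi\simeq\frf_H$, and the claim that $\xcoch(\wt\AA)$ and $\xcoch(\AA_H)$ are related by an isogeny, already fail on rank grounds, and the "isogeny-invariance of $\frf_\chi$" cannot bridge this. The paper's proof (Theorem~\ref{thm: twisted metaplectic}, then Theorems~\ref{thm: derived satake} and \ref{thm: metaplectic derived Satake}) avoids this by equipping $LH$ too with a Kac--Moody central extension carrying the \emph{trivial} central twist, comparing the two monodromic Hecke categories of the centrally extended groups, and only afterwards imposing strict equivariance along the central $\Gm$ (against $\chi_c$ on the $G$-side, trivially on the $H$-side) via Proposition~\ref{p:mon2strict}, which recovers $D_{\chi_c}(I\bs LG/I)$ and $D(I_H\bs LH/I_H)$.

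Even after that repair, the identification of the formal schemes is not produced by a lattice isogeny intertwining the Weyl actions: the translations in $\tilW$ act on $\wt\AA$ through the level pairing (Lemma~\ref{l:conj action}), and the natural comparison with the $H$-side is an affine $\QQ$-linear isomorphism of rational dual affine Cartans built from $\langle\cdot,\cdot\rangle_V$ and a rescaled form for $H$, not a map of cocharacter lattices. The paper constructs this isomorphism of extended Coxeter data explicitly (Steps 2--5 of the proof of Theorem~\ref{thm: twisted metaplectic}), fixes the residual ambiguity by an element of the neutral block (trivial here since there is no finite twist, which is what makes the finite Weyl groups match identically and later lets Theorem~\ref{thm: parahoric equivalence} pair $L^+G$ with $L^+H$), and transports it to an equivariant isomorphism of the $\frf$'s using the formal exponential — this is precisely where the standing characteristic-zero hypothesis on $E$ in \S\ref{ss:meta dual} enters, a hypothesis your isogeny formulation does not account for. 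One must also verify that this isomorphism respects the central direction, i.e.\ condition (2) of Proposition~\ref{p:mon2strict}, which is what legitimizes the passage to strict $\chi_c$-equivariance that you invoke as "the strictly equivariant counterpart" of Theorem~\ref{t:maintheo}; your proposal does not address this. Your treatment of the spherical equivalence (recovering parahoric categories from the Iwahori one via the idempotent attached to the finite Weyl group) is essentially the paper's \S\ref{s:mon vs strict} argument and is fine once the Iwahori equivalence is correctly established. Finally, note that for a nontrivial finite twist the two integral Weyl groups need not even be abstractly isomorphic and only unions of blocks can be matched (cf.\ the $\PSp_6$ example in \S\ref{sss: example}); this is harmless for the theorem at hand but shows the quadruple matching cannot be taken for granted in the generality your write-up suggests.
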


In particular, passing to the hearts of the $t$-structures in \eqref{e:meta Sat}, which are preserved under convolution, one recovers, at the level of monoidal categories, the abelian Satake equivalence of Finkelberg--Lysenko \cite{FL}. 

Combine \eqref{e:meta Sat} with the derived Satake equivalence of \cite[Theorem 5]{BF} (see also the cocomplete version \cite[Cor. 12.5.5]{AG}), we arrive at a spectral description of the full metaplectic Satake category in terms of the metaplectic dual group $H^\vee$.

\begin{cor}[Metaplectic derived Satake equivalence]
    Suppose the coefficient field $E$ has characteristic zero. There is an equivalence of monoidal categories
    \begin{equation}
        D_{\chi_c}(L^+G \bs LG / L^+G) \simeq \indcoh_{\on{nilp}}((\pt/H^\vee)\times_{\frh^{\vee}/H^\vee}(\pt/H^\vee)).
    \end{equation}
\end{cor}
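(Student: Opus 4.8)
The corollary follows by composing two monoidal equivalences. The plan is: first reduce, via the metaplectic Satake equivalence \eqref{e:meta Sat}, to the ordinary spherical Hecke category of the group $H$; then apply the derived geometric Satake equivalence of Bezrukavnikov--Finkelberg to pass to the spectral side, expressed through the dual group $H^\vee$.

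In detail, \eqref{e:meta Sat} of Theorem \ref{t:dersat} provides a ($t$-exact) monoidal equivalence $D_{\chi_c}(L^+G\bs LG/L^+G)\simeq D(L^+H\bs LH/L^+H)$, where $H$ is a split connected reductive group over $K$ whose Langlands dual group over $E$ is, by construction, the metaplectic dual group $H^\vee$ of $(G,\chi_c)$. It therefore remains to identify the spherical Hecke category of $H$, equipped with its convolution monoidal structure, with $\indcoh_{\on{nilp}}((\pt/H^\vee)\times_{\frh^\vee/H^\vee}(\pt/H^\vee))$. This is precisely the (cocomplete) derived geometric Satake equivalence: over a field of characteristic zero, \cite[Theorem 5]{BF} identifies the bounded spherical Hecke category of a connected reductive group $\Gamma$, monoidally, with a category of coherent sheaves on the derived self-intersection of $\pt/\Gamma^\vee$ inside $\operatorname{Lie}(\Gamma^\vee)/\Gamma^\vee$; since we work throughout with cocomplete presentable categories of sheaves, we pass to the ind-completed formulation of Arinkin--Gaitsgory \cite[Cor. 12.5.5]{AG}, which describes the full spherical category as $\indcoh$ with nilpotent singular support on that stack. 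Applying this with $\Gamma=H$, so that $\Gamma^\vee=H^\vee$ and $\operatorname{Lie}(\Gamma^\vee)=\frh^\vee$, and composing with \eqref{e:meta Sat}, yields the asserted equivalence.

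The hypothesis $\operatorname{char}(E)=0$ is used only at this last step: the derived Satake equivalence and its cocomplete spectral reformulation rely on formality and Koszul-duality arguments that are available only with characteristic-zero coefficients, in contrast with Theorems \ref{t:maintheo} and \ref{t:dersat}. The remaining points require care rather than presenting a genuine obstacle: one must check that the coefficient conventions and the convolution monoidal structures of \cite{BF} and \cite{AG} agree with those underlying \eqref{e:meta Sat}, in both the $\ell$-adic and the D-module settings, and one must track the duality normalizations so that the group appearing on the spectral side is $H^\vee$ itself, with no intervening Tate twist or further dualization. Granting these compatibilities, the corollary is immediate.
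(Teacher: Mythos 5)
Your proposal is correct and is essentially identical to the paper's argument: the paper likewise obtains the corollary by composing the metaplectic Satake equivalence \eqref{e:meta Sat} (equivalently, Theorem \ref{thm: metaplectic derived Satake}) with the derived geometric Satake equivalence of Bezrukavnikov--Finkelberg for $H$, in its cocomplete/ind-coherent formulation with nilpotent singular support due to Arinkin--Gaitsgory (see also Theorem \ref{t:specside} and Corollary \ref{c:metaplectic derived satake}). Your remark that characteristic zero is needed only for the spectral (BF/AG) step matches the paper's usage as well.
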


\sss{Future applications}
In the sequel to this paper, we will give a spectral description of the monodromic Hecke categories $\cM_\chi$ when no central extension is involved, confirming a conjecture of Bezrukavnikov \cite[Conjecture 58]{B}, i.e., the depth zero local geometric Langlands correspondence with restricted variation.

On the other hand, the equivalence \eqref{intro all blocks} expressing $\cM_\chi$ as a semidirect product without any cocycle twistings has consequences in the usual representation theory of metaplectic groups: it will allow us to show the triviality of cocycles in the Hecke algebras for metaplectic groups in certain cases, and to give a transparent treatment of the metaplectic Casselman--Shalika formula.

\subsection{Acknowledgments} It is a pleasure to thank Tomoyuki Arakawa, Roman Bezrukavnikov, Dougal Davis, Misha Finkelberg, Joakim F\ae rgeman,  Charles Fu, Dennis Gaitsgory, Mark Goresky, Jacob Lurie, Sergey Lysenko, Sam Raskin, Simon Riche, Jeremy Taylor, Kari Vilonen, and David Yang for useful conversations and correspondence. 

G.D. was supported by NSF Postdoctoral Fellowship under grant No. 2103387. 

Y.W.L. was supported by the University of Melbourne,
IAS School of Mathematics, the National Science Foundation under Grant No. DMS-1926686, and the ARC
grant FL200100141.

Z.Y. was supported by the Simons Foundation.

X.Z. was supported by NSF grant under DMS-2200940.

\section{Monodromic categories}\label{S: monodromic and equivariant categories}
\subsection{Category theory}\label{SS: category theory}
In this article, we will work within the $\infty$-categorical setting (in a mild way). The main advantage to work in this framework is that it allows us to pass between monodromic and equivariant settings, and between Iwahori and parahoric levels easily. 
Our main result on the Soergel bimodule type descriptions of the monodromic affine Hecke categories, from which one deduces the identification of the monodromic affine Hecke category of $G$ with the (unipotent monodromic) affine Hecke category of an endoscopic group,  can be phrased easily within the framework of the usual triangulated categories.
 
We will let $\lincat$ denote the category of presentable stable $\infty$-categories, with functors being continuous (a.k.a. colimit preserving) functors. Recall that $\lincat$ is equipped with Lurie's tensor product, making it a symmetric monoidal category. We recall that limits and colimits exist in $\lincat$, and the tensor products commute colimits separately in each variable. On the other hand, the forgetful functor from $\lincat$ to the (very big) category of all $\infty$-categories preserves limits (but not colimits).

For a commutative ring $E$, let $\Modu_E$ denote the stable category of $E$-modules. This is a commutative algebra object in $\lincat$.
Let $\lincat_E$ denote the the category of $\Modu_E$-modules in $\lincat$, equipped with the induced tensor product. Objects in $\lincat_E$ will be called as $E$-linear categories.
For $\cC,\cD\in \lincat_E$, unless specified, we write $\cC\otimes\cD$ for the tensor product in $\lincat_E$.
We recall that limits and colimits exist in $\lincat_E$, the tensor products commute colimits separately in each variable, and the forgetful functor $\lincat_E\to\lincat$ preserves both limits and colimits.

On the other hand, in this article, we do not need derived algebraic geometry. All the involved schemes and stacks are classical. 

\subsection{Sheaf theory}\label{SS: sheaf theory}
\subsubsection{}
Let $k$ be an algebraically closed field.
Let $\AlgSp^{\fint}_k$ be the category of algebraic spaces of finite presentation over $k$. In this article, we will consider the following association to $X\in \AlgSp^{\fint}_k$ an $E$-linear category $D(X)$ of sheaves on it:
\begin{enumerate}
\item For $k$ of arbitrary characteristic, let $\ell$ be a prime number different from $\textup{char}(k)$. Let $E$ be either a DVR finite over  $\mathbb{Z}_\ell$, or an algebraic field extension over $\FF_\ell$ or $\mathbb{Q}_\ell$. Let $D(X)$ be the $E$-linear category of $\ell$-adic sheaves on $X$;

\item If $k$ is a field of characteristic zero, we let $E=k$ and let $D(X)$ be the ind-completion of the category of holonomic algebraic D-modules on $X$.

\end{enumerate}

We will refer these cases as \'etale and de Rham setting. 
We note that in all the above cases, $E$ is a regular Noetherian local ring complete with respect to its maximal ideal $\fm$.  We denote the residue field by $$\fre := E / \fm.$$
In particular, if $E$ is a field, we have $E = \fre$. In addition, for ease of notation, we will refer to all the cases above when $E$ is not a field by simply saying that $E$ is {\em integral}.

Note that $D(\Spec k)=\Modu_E$. Also recall that if $f: X\to Y$ is a morphism in $\AlgSp^{\fint}_k$, we have pairs of adjoint functors 
\[
f^*:D(Y)\rightleftharpoons D(X):f_*,\quad f_!:D(X)\rightleftharpoons D(Y): f^!.
\]
Note that in the de Rham setting, there is also a ``bigger" category consisting of all algebraic D-modules on $X$. We do not use this version in this article.

\sss{} For our purpose, we need to extend the domain of sheaf theory $D$ in order to deal with some (mildly) infinite-dimensional geometric objects. In addition, to rigorously define the monoidal structure on the (monodromic) affine Hecke categories in the $\infty$-categorical setting, we need to make the assignment $X\leadsto D(X)$ in a highly functorial way.

Here is the standard process to make such extension (e.g. see \cite[Sect. 8]{Zh} for a detailed account): Let $\cC$ be a(n $\infty$-)category with finite products. Associated to $\cC$, there is a symmetric monoidal ($\infty$-)category $\Corr(\cC)$ of correspondences, whose objects the same as those of $\cC$ and with the morphisms from $X$ to $Y$ given by correspondences between $X$ and $Y$, i.e. a diagram of the form 
\begin{equation}\label{eq: correspondence}
   \xymatrix{
Z\ar_{f}[d] \ar^{g}[r]& X\\
   Y &
   }
\end{equation} 
To save space, we sometimes also write the above morphism as $X\xleftarrow{g} Z\xrightarrow{f} Y$.
Compositions of morphisms are given by compositions of correspondences.
The tensor product of $X$ and $Y$ in $\Corr(\cC)$ is given by the product $X\times Y$ in $\cC$. If $\verti$ is a class of morphisms in $\cC$ that is stable under compositions and base change, one can define a subcategory $\Corr(\cC)_{\verti;\all}\subset\Corr(\cC)$ with $f$ in \eqref{eq: correspondence} belonging to $\verti$. 

For later discussions of affine Hecke categories,
we recall the following general paradigm. Let $\cC$ be as above. Let $f:X\to Y$ be a morphism in $\cC$. Then $X\times_YX$ admits a canonical associative algebra structure as an object in $\Corr(\cC)$. Informally, the multiplication is given by the correspondence
\[
X\times_YX\times X\times_YX\xleftarrow{\id\times\Delta_X\times\id}X\times_YX\times_YX\xrightarrow{\id\times f\times \id}X\times_YY\times_YX=X\times_YX,
\]
and the unit is given by
\[
\Spec k\xleftarrow{p_X} X\xrightarrow{\Delta_{X/Y}}X\times_YX.
\]
If $f$ and $\Delta_{X/Y}$ belong to $\verti$, then $X\times_YX$ is an algebra object in $\Corr(\cC)_{\verti;\all}$. If $W\to Y$ is another morphism in $\cC$, then $X\times_YW$ is a left $(X\times_YX)$-module in $\Corr(\cC)_{\verti;\all}$.

\sss{} We apply the above paradigm to the case $\cC=\AlgSp^{\fint}_k$. We may now encode $X\leadsto D(X)$ as a lax symmetric monoidal functor
\begin{equation}\label{eq:sheaf theory finite}
D: \Corr(\AlgSp_k^{\fint})\to \lincat_E,
\end{equation}
which sends $X$ to $D(X)$ and a morphism as in \eqref{eq: correspondence} to $f_*\circ g^!:D(X)\to D(Y)$. We refer to \cite[\S 10.2]{Zh} for a careful construction of such sheaf theory in the first case (i.e. the $\ell$-adic setting), which literally applies to the second case.

Let $\AlgSp_k^{\qcqs}$ be the category of quasi-compact and quasi-separated (qcqs) algebraic spaces over $k$, and let $\fint$ be the class of finitely presented (fp) morphisms. 
Recall that every such algebraic space $X$ can be written as an inverse limit 
\begin{equation}\label{eq: noetherian approx}
    X=\lim_i X_i
\end{equation} with each $X_i$ of finite presentation over $k$, and with transitioning maps being affine morphisms, and every fp morphism $f:X\to Y$ arises as the base change of a morphism $(f_i: X_i\to Y_i)\in \AlgSp^{\fint}_k$. 
We thus can extend \eqref{eq:sheaf theory finite} as a functor
\begin{equation}\label{eq:sheaf theory qcqs}
    D: \Corr(\AlgSp_k^{\qcqs})_{\fint;\all}\to \lincat_E
\end{equation}
via the (operadic) left Kan extension along the fully faithful embedding $\Corr(
\AlgSp_k^{\fint})\subset \Corr(\AlgSp_k^{\qcqs})_{\fint;\all}$.

Explicitly, for every $X\in \AlgSp_k^{\qcqs}$, written as in \eqref{eq: noetherian approx}, we have
\[
D(X)=\colim_i D(X_i)
\]
with transitioning functors being $!$-pullbacks. 

\begin{rem}
    We warn the readers that for general $X$, $D(X)$ may not be what one naively would expect. For example, if $X=\Spec L$ is the spectrum of an algebraically closed field $L$ transcendental over $k$ (say $k$ is of characteristic $p>0$), and if $E=\Ql$, then $D(X)\neq \Modu_E$. But we shall not worry about this.
\end{rem}

Let $\prestk_k$ denote the category of prestacks over $k$, i.e. the category of (accessible) functors from the category $\calg_k$ of commutative $k$-algebras to the category of groupoids.

Let $\indalg_k\subset\prestk_k$ be the full subcategory of (strict) ind-algebraic spaces over $k$, by which we mean prestacks that can be written as $\colim X_i$, with each $X_i\in\AlgSp_k^{\qcqs}$ and with all transitioning maps $\iota_{i,j}:X_i\to X_j$ being finitely presented closed embeddings. If each $X_i$ in the above presentation can be chosen to be a qcqs scheme over $k$, then $X$ is called an ind-scheme.

We say a morphism $f:X\to Y$ of prestacks to be indfp if for every $Y'\to Y$ with $Y'\in\AlgSp_k^\qcqs$, the base change $X':=Y'\times_YX\to Y'$ can be presented as $X'=\colim X_i\to Y'$ with each $X_i\to Y'$ a fp morphism in $\AlgSp_k^{\qcqs}$ and with all transitioning maps $\iota_{i,j}: X_i\to X_j$ being fp closed embeddings.

We will let
\begin{equation}\label{eq:sheaf theory prestk}
    D: \Corr(\prestk_k)_{\indfp;\all}\to \lincat_E
\end{equation}
be the right Kan extension of \eqref{eq:sheaf theory qcqs} along the inclusion $\Corr(\AlgSp_k^{\qcqs})_{\fint;\all}\subset \Corr(\prestk_k)_{\indfp;\all}$.  Note that as the inclusion is not a full embedding, a priori the value of $D(X)$ on $X\in\AlgSp_k^{\qcqs}$ may change. But it is a non-trivial theorem that in fact, the restriction of \eqref{eq:sheaf theory prestk} to $\Corr(\AlgSp_k^{\qcqs})_{\fint;\all}$ is still \eqref{eq:sheaf theory qcqs}. (E.g. see \cite[Sect. 10.4.5]{Zh} for a detailed discussion.) On the other hand, for a general prestack $X$, the value of $D$ at $X$ is 
\[
D(X)=\lim_{S\in (\prestk_k)_{/X}\times_{\prestk_k}\AlgSp_k^{\qcqs}} D(S).
\]
Informally, this means that the limit taken over all qcqs algebraic spaces $S$ over $X$ with the transitioning functors being $!$-pullbacks. 

For those prestacks $X$ encountered in practice, the limit over the large index category $(\prestk_k)_{/X}\times_{\prestk_k}\AlgSp_k^{\qcqs}$ can often be reduced to the limit over a much smaller index category.

\begin{exmp}
Let $U$ be a qcqs scheme over $k$ with an action by an affine group scheme $H$ over $k$. Let $H^\bullet\times U$ denote the simplicial scheme arising from the action map.
Let $X=H\bs U$ be the \'etale quotient. 
Then
\[
D(X)=\lim D(H^\bullet\times U),
\]
with transitioning functors given by $!$-pullbacks. This follows from the \'etale descent.

One can show that if $H$ is pro-unipotent, i.e. $H=\lim_i H_i$ with each $H_i$ a unipotent algebraic group and transitioning maps $H_j\to H_i$ surjective with kernel unipotent, and if $H$ acts trivially on $U$, then $D(X)\cong D(U)$.
\end{exmp}

\begin{exmp}
Let $X$ be an ind-scheme over $k$. Let $X \simeq \colim X_i$ be a presentation of $X$ as a colimit of qcqs schemes where the all transition maps $\iota_{i,j}:X_i \ra X_j$ being fp closed immersions. Then 
\[
D(X)=\lim D(X_i)\cong \uralim_i D(X_i),
\]
with transitioning functors in the limit presentation given by $!$-pullbacks, and in the colimit presentation given by $*$-pushforwards. 
\end{exmp}

\sss{Monoidal structure}\label{sss:sheaf conv} Now let $f: X\to Y$ be an $\indfp$ morphism of prestacks, and assume that the relative diagonal $\Delta_{X/Y}: X\to X\times_YX$ also belongs to $\indfp$. Then $D(X\times_YX)$ acquires a canonical $E$-linear monoidal structure, and for every morphism $W\to Y$, $D(X\times_YW)$ is a left $D(X\times_YX)$-module.

Our major applications will be the construction (in the $\infty$-categorical setting) of the monoidal structure on the (monodromic) affine Hecke categories, and to define their module categories. Here are some simpler examples.

\begin{exmp}
    If $f: X\to X$ is the identity map, then $X\times_XX=X$ and the monoidal structure is just the underlying monoidal structure of the symmetric monoidal structure of $D(X)$. In addition, for $W\to Y=X$ the action of $D(X)$ on $D(W)$ is induced by the symmetric monoidal functor $D(X)\to D(W)$ by $!$-pullback.
\end{exmp}

\begin{exmp}\label{ex: monoidal structure of sheaf on group}
    Let $H$ be an algebraic group over $k$. Then $\pt=\spec k \to \mathbb{B} H$ is fp and the corresponding monoidal structure of $H=\pt\times_{\mathbb{B} H}\pt$ is the usual one on $D(H)$ induced by $*$-pushforward along the multiplication of $H$, usually called the $*$-convolution. Now suppose $X$ is equipped with an action of $H$. Then there is a tautological map $H\bs X\to \mathbb{B} H$, and we have $X=\pt\times_{\mathbb{B} H}H\bs X$. The corresponding action of $D(H)$ on $D(X)$ is the usual one induced by $*$-pushforward long the action map.
\end{exmp}

\subsection{Character sheaves}\label{sstorus}

\subsubsection{} As before, let $k$ be an algebraically closed field, and let $E$ be the coefficient ring of our sheaf theory $D$. We recall the notion of character sheaves.

\begin{defn}
    Let $H$ be an algebraic group over $k$ and let $\mult: H\times H\to H$ denote the multiplication. Let $E'$ be a finite $E$-algebra. By a {\em character $E'$-local system} or an {\em $E'$-linear character sheaf} of $H$, we mean a rank one $E'$-local system $\chi$ on $H$, equipped with an isomorphism which is associative 
\[\mult^*\chi \isom \chi \boxtimes_{E'} \chi.\]
\end{defn}

Note that such an isomorphism necessarily induces a rigidification of $\chi$ at the unit of $H$. We let $\Ch(H ; E')$ denote the groupoid of $E'$-character sheaves on $H$. We will omit $E'$ from the notation when it is clear. It naturally forms a Picard groupoid (and so its isomorphism classes form an abelian group).

Let $H_1, H_2$ be two algebraic groups. Then it is easy to check that the exterior tensor product induces an equivalence of groupoids 
\begin{equation}\label{eq:character sheaf tensor product}
\Ch(H_1; E')\times \Ch(H_2; E')\stackrel{\cong}{\to} \Ch(H_1\times H_2; E'),\quad (\chi_1,\chi_2)\mapsto \chi_1\boxtimes_{E'}\chi_2.
\end{equation}
Indeed, if $\chi$ is a character sheaf on $H_1\times H_2$. Let $\chi_1=\chi|_{H_1\times \{e\}}$ and $\chi_2=\chi|_{\{e\}\times H_2}$. Then using the isomorphism $H_1\times H_2\cong (H_1\times\{e\})\times(\{e\}\times H_2)\xrightarrow{\mult} H_1\times H_2$ and the character property of $\chi$, we see that $\chi\cong \chi_1\boxtimes_{E'}\chi_2$.

\sss{The case of a torus}
Let $H=T$ be an algebraic torus over $k$. In the \'etale setting, let $\pi_1^{t}(T)$ be the tame \'etale fundamental group of $T$ based at $1$. It is well-known that the groupoid $\Ch(T; E')$ is discrete, and 
there is a natural isomorphism
\[
\Ch(T;E')\cong \Hom_{cts}(\pi_1^{t}(T), {E'}^\times), 
\]
sending $\chi$ to its stalk at the unit of $T$, equipped with the action of $\pi_1^{t}(T)$. Here $\Hom_{cts}$ denotes the set of continuous homomorphisms.

It turns out that in both the \'etale and de Rham settings, $\Ch(T ; E')$ is the set of $E'$-points of an ind-scheme $\mathrm{LS}_{\Tv,K}^{t,\Box}$ classifying tame ``$\Tv$-local systems" on $\Spec K$ (where $K=k\lr{t}$), which will be discussed in Sect. \ref{sss:framed Loc}.

\sss{The case of $\Ga$}\label{SSS: char sheaf on Ga}
To construct the affine Whittaker model in Section \ref{ss:AffineWhittaker}, we need a particular rank one character sheaf on $\Ga$.

When $\chk=p$ and we are in the \'etale context, we suppose that $E$ contains a $p$-th root of unity and fix a nontrivial character $\psi: \FF_p\to E^{\times}$. Let $\AS_{\psi}$ be the corresponding Artin--Schreier on $\Ga$.  When $\chk=0$ and we are in the $D$-module context, $\AS_\psi$ should be replaced by the exponential $D$-module on $\Ga$. For notational simplicity, we still denote the exponential $D$-module by $\AS_\psi$.

\sss{Case of integral coefficients}\label{sss: int coef}
In this article, we allow the coefficient ring $E$ of our sheaf theory (in the \'etale setting) to be integral. Such generality introduces a well-known complication: There are different $E$-character sheaves with the same reduction as $\fre$-character sheaves. We now make some preliminary discussions in order to address this issue later on. Readers primarily interested in the case where $E$ is an algebraically closed field can skip the following discussion.

We assume that $H$ is connected. 
Let $\chi\in \Ch(H;\fre_\chi)$ be a character sheaf on $H$, where $\fre_\chi$ is a finite extension of $\fre$. We assume that $\fre_\chi$ is the field of the definition of $\chi$. We refer to such a character sheaf as {\em residual}. We let $E_\chi$ denote the unique unramified extension of $E$ with residue field $\fre_\chi$. Then $\chi$ lifts canonically to an $E_\chi$-linear character sheaf $[\chi]$ via the Teichm\"uller lifting $\fre_\chi^\times\to E_\chi^\times$.

Consider the collection 
of all $E'$-linear character sheaves $\wt{\chi}$, for all $E'$ local rings finite over $E$, whose reductions modulo the maximal ideal $\frakm'$ of $E'$ are $\chi$, and denote this by 
\begin{equation}\label{eq: lifting of char sheaves}
\mathscr{L}_\chi := \{ \wt{\chi}: \wt{\chi} \underset{E'}\otimes (E'/\frakm')=\chi  \}.
\end{equation}
If we let $u$ denote the trivial character sheaf on $H$, then there is a natural bijection 
$$ \mathscr{L}_u\simeq \mathscr{L}_\chi,\quad \wt{u}\mapsto \wt{u}\otimes_{E}[\chi].
$$

\subsection{Monodromic sheaves} \label{ss:monshvs}
We review the theory of monodromic sheaves in this subsection. For a more detailed discussion, we refer to \cite[\S 4.1]{Zh}.

\subsubsection{}Let $H$ be a connected algebraic group over $k$. Let $D_{\mono}(H)\subset D(H)$ denote the full $E$-linear subcategory generated by all $E'$-linear character sheaves with all finite $E$-algebra $E'$.
We call $D_{\mono}(H)$ the category of monodromic sheaves on $H$.

Note that if $H_1$ and $H_2$ are two connected algebraic groups, there exterior tensor product functor $D(H_1)\otimes D(H_2)\to D(H_1\times H_2)$ restricts to an equivalence
\begin{equation}\label{eq: monodromic exterior tensor product}
D_{\mono}(H_1)\otimes D_{\mono}(H_2)\cong D_{\mono}(H_1\times H_2).
\end{equation}
That is, the classical K\"unneth formula implies that the functor is fully faithful. On the other hand, \eqref{eq:character sheaf tensor product} implies that the functor is also essential surjective. 

Recall that $D(H)$ is naturally a  monoidal category, given by $*$-convolution (see Example \ref{ex: monoidal structure of sheaf on group}). We will write $\cF_1\star\cF_2:=\mult_*(\cF_1\boxtimes\cF_2)$ for the convolution product.

\begin{prop}\label{p: monoidal unit}
  The natural embedding $\iota^{\mono}:D_{\mono}(H)\subset D(H)$ admits a continuous right adjoint, which in addition admits a canonical monoidal structure.
\end{prop}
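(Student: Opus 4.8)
The plan is to separate the statement into the (essentially formal) existence of a continuous right adjoint and the (substantive) construction of its monoidal structure. For the first, I would observe that $D_{\mono}(H)$ is generated under colimits by the \emph{set} of all $E'$-linear character sheaves on $H$ (over finite $E$-algebras $E'$); each such sheaf is constructible (resp.\ holonomic), hence compact in $D(H)$, and stays compact after restriction of scalars to $E$ since $E'$ is perfect over $E$. Thus $D_{\mono}(H)$ is a compactly generated presentable stable category whose compact objects, being retracts of finite colimits of character sheaves, are already compact in $D(H)$; equivalently $\iota^{\mono}$ is a continuous functor preserving compact objects. The adjoint functor theorem then yields a right adjoint $P$, and preservation of compacts by $\iota^{\mono}$ forces $P$ to commute with filtered colimits, hence (being exact) with all colimits. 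This settles the first assertion.

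For the monoidal structure the first step is to show $D_{\mono}(H)$ is stable under $\star$-convolution. Since $\mult = \mathrm{pr}_1 \circ \tau$ for the automorphism $\tau(x,y) = (xy, y)$ of $H \times H$, and $\mathrm{pr}_{1*}$ preserves colimits (because $\mathrm{pr}_1^*$ preserves constructibility on the finite-type $H$), convolution is continuous in each variable, so it suffices to treat $\chi_1 \star \chi_2$ for character sheaves $\chi_i$. Using the associativity isomorphism $\mult^*\chi \cong \chi \boxtimes \chi$ and the coordinate change $(x,z) \mapsto (x, x^{-1}z)$ one computes
\[ \chi_1 \star \chi_2 \;\cong\; \mathrm{R}\Gamma(H, \chi_1 \otimes \chi_2^{-1}) \otimes_E \chi_2, \]
a perfect $E$-complex tensored with $\chi_2$, hence monodromic (and compact); the same computation gives $\cF \star \chi \cong \mathrm{R}\Gamma(H, \cF \otimes \chi^{-1}) \otimes_E \chi$ for arbitrary $\cF$, so convolving \emph{any} object with a monodromic one stays monodromic. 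In particular the convolution on $D_{\mono}(H)$ is the restriction of that on $D(H)$, so $\iota^{\mono}$ is a strict — in particular oplax — non-unital monoidal functor, and by the standard doctrinal adjunction $P$ inherits a canonical non-unital lax monoidal structure.

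To upgrade this to a genuine (unital, in fact strong) monoidal structure I would set $\one_{D_{\mono}(H)} := P(\delta_e)$, where $\delta_e \in D(H)$ is the $\star$-unit, and prove the projection formula $\iota^{\mono}P(\cF) \star \cG \xrightarrow{\sim} \cF \star \cG$ for all $\cF \in D(H)$ and $\cG \in D_{\mono}(H)$. Indeed, the cofiber $C$ of the counit $\iota^{\mono}P(\cF) \to \cF$ satisfies $\Hom(\iota^{\mono}(-), C) = 0$; on the other hand $C \star \cG$ is monodromic by the previous step, while for monodromic $d$ the space $\Hom(\iota^{\mono}d, C \star \cG) \cong \Hom(\mult^* d, C \boxtimes \cG)$ vanishes because $\mult^* d$ is monodromic on $H \times H$ and, by the Künneth isomorphism \eqref{eq: monodromic exterior tensor product} together with compactness of character sheaves, $C \boxtimes \cG$ is right-orthogonal to $D_{\mono}(H \times H) \cong D_{\mono}(H) \otimes D_{\mono}(H)$. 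Hence $C \star \cG$ is both monodromic and right-orthogonal to all monodromic sheaves, so $C \star \cG = 0$. Taking $\cF = \delta_e$ gives $\one_{D_{\mono}(H)} \star \cG \cong \cG$, and symmetrically $\cG \star \one_{D_{\mono}(H)} \cong \cG$, so $D_{\mono}(H)$ is a unital monoidal $\infty$-category with unit $P(\delta_e)$ and $P$ is unital; finally, substituting $\cG = \iota^{\mono}P(\cG')$ and applying $P$ once more to the counit triangle of $\cG'$ promotes the lax structure to an equivalence $P(\cF) \star P(\cG') \cong P(\cF \star \cG')$, i.e.\ $P$ is strong monoidal.

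The soft part is the existence of the adjoint; the real content is the interaction of the coreflection with convolution, and I expect the main obstacle to be exactly the two inputs to the last paragraph: that convolution of an arbitrary sheaf with a monodromic one remains monodromic — which rests on the character-sheaf identity $\mult^*\chi \cong \chi \boxtimes \chi$ and the explicit computation above — and the Künneth decomposition $D_{\mono}(H \times H) \cong D_{\mono}(H) \otimes D_{\mono}(H)$, which is what lets one control the right orthogonal complement of $D_{\mono}(H)$ inside $D(H)$ and thereby kill the error term $C \star \cG$.
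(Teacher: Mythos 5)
Your proposal is correct, but it gets to the monoidal structure by a somewhat different route than the paper, and the comparison is worth recording. The paper's proof, after the same compact-generation argument for continuity, establishes monoidality of the right adjoint $\Av^{\mono}$ directly by a chain of adjunction isomorphisms: for monodromic $\cF$ one has $\Hom(\cF,\Av^{\mono}(\cG_1\star\cG_2))\cong\Hom(\mult^*\cF,\cG_1\boxtimes\cG_2)\cong\Hom(\cF,\Av^{\mono}(\cG_1)\star\Av^{\mono}(\cG_2))$, using that $\mult^*\cF$ is monodromic on $H\times H$ and the K\"unneth compatibility $\Av^{\mono}(\cG_1)\boxtimes\Av^{\mono}(\cG_2)\cong\Av^{\mono}(\cG_1\boxtimes\cG_2)$ coming from \eqref{eq: monodromic exterior tensor product}; closure of $D_{\mono}(H)$ under $\star$ and the unit property of $\Av^{\mono}(\delta_e)$ are then read off, and the $\infty$-categorical coherence is delegated to \cite[Lemma 7.19]{Zh}. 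You instead (i) prove the stronger one-sided ideal property $D(H)\star D_{\mono}(H)\subset D_{\mono}(H)$ by the explicit formula $\cF\star\chi\cong \mathrm{R}\Gamma(H,\cF\otimes\chi^{-1})\otimes_E\chi$ — a fact the paper only records in the remark after the proposition, by reference to \cite[Proposition 4.17]{Zh} — (ii) obtain the lax structure on the right adjoint for free from doctrinal adjunction, and (iii) upgrade it to a unital strong monoidal structure via a projection formula, killing the error term $C\star\cG$ by the same K\"unneth-controlled orthogonality that underlies the paper's Hom chain. Your route buys a self-contained geometric proof of the ideal property (which is exactly the point where the paper's written deduction of closure under convolution is tersest) and pushes the coherence issues onto standard doctrinal-adjunction/unitalization facts rather than a citation; the paper's route is shorter once \eqref{eq: monodromic exterior tensor product} and \cite{Zh} are granted, since one chain of adjunctions produces the monoidality isomorphism in a single stroke.

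Two small points to tighten, neither a gap. First, with your stated factorization $\mult=\pr_1\circ\tau$, $\tau(x,y)=(xy,y)$, the output is $\chi_1\otimes_E\mathrm{R}\Gamma(H,\chi_1^{-1}\otimes\chi_2)$; the formula you wrote, with $\chi_2$ surviving, comes from the factorization through $\pr_2$ via $(x,y)\mapsto(x,xy)$ — either version is true and suffices, but the coordinate change and the formula should match, and the identification of $\pr_{2*}$ of a box product uses smooth base change plus the projection formula, which deserves a word. Second, your final sentence compresses the strong-monoidality step: spelled out, one needs $\Av^{\mono}(\cF\star C')=0$ for arbitrary $\cF$ and $C'$ in the right orthogonal of $D_{\mono}(H)$, which follows from your same $\boxtimes$-orthogonality argument applied to $\cF\boxtimes C'$ (the vanishing against $\chi_1\boxtimes\chi_2$ only needs one tensor factor to be orthogonal), and the right unit isomorphism $\cG\star\Av^{\mono}(\delta_e)\cong\cG$ requires the mirrored convolution formula $\chi\star\cF\cong\chi\otimes_E\mathrm{R}\Gamma(H,\chi^{-1}\otimes\cF)$, which holds by the same computation; finally, the passage from a non-unital monoidal structure with quasi-unit $\Av^{\mono}(\delta_e)$ to a genuine unital one should be pinned to the standard unitalization result, playing the role that \cite[Lemma 7.19]{Zh} plays in the paper.
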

Note that this proposition in particular says that $D_{\mono}(H)$ has a natural monoidal structure.
\begin{proof}
    As both $D_{\mono}(H)$ and $D(H)$ are compactly generated and the inclusion $D_{\mono}(H)\subset D(H)$ preserves compact objects, the right adjoint
    \[
    \Av^{\mono}:=(\iota^{\mono})^R: D(H)\to D_{\mono}(H)
    \]
    is continuous. We show it has a natural monoidal structure. 

    First by \eqref{eq: monodromic exterior tensor product}, we see that the natural map 
\[
\Av^{\mono}(\cG_1)\boxtimes\Av^{\mono}(\cG_2)\to \Av^{\mono}(\cG_1\boxtimes\cG_2)
\] 
is an isomorphism,  for $\cG_i\in D(H_i)$ for $i=1,2$.  Now for $\cF\in D_{\mono}(H)$, we have 
\begin{multline*}
\Hom(\cF, \Av^{\mono}(\cG_1\star\cG_2))=\Hom(\cF,\cG_1\star\cG_2)=\Hom(\mult^*\cF, \cG_1\boxtimes\cG_2)\\
=\Hom(\mult^*\cF,\Av^{\mono}(\cG_1\boxtimes\cG_2))=\Hom(\cF,\Av^{\mono}(\cG_1)\star\Av^{\mono}(\cG_2)).
\end{multline*}
In particular, we see that $\cG_1\star\cG_2\cong \Av^{\mono}(\cG_1\star\cG_2)$ if $\cG_1,\cG_2\in D_{\mono}(H)$. Therefore, $D_{\mono}(H)\subset D(H)$ is closed under the convolution product of $D(H)$. 

Next, let $i_e: \{e\}\to H$ be the inclusion and let $\delta_e=(i_e)_*E$. This is the monoidal unit of $D(H)$. Now for $\cF\in D_{\mono}(H)$, 
we have 
\[
\Av^{\mono}(\delta_e)\star\cF=\Av^{\mono}(\delta_e)\star\Av^{\mono}(\cF)=\Av^{\mono}(\delta_e\star\cF)=\Av^{\mono}(\cF)=\cF.
\]

The above arguments show that at the homotopy level, $D_{\mono}(H)$ has a monoidal structure, with $\Av^{\mono}(\delta_e)$ a monoidal unit. In addition, $\Av^{\mono}$ is monoidal. By \cite[Lemma 7.19]{Zh}, this can be upgraded at $\infty$-categorical level. 
\end{proof}

\begin{rem}
    In fact, $D_{\mono}(H)$ is a two-sided ideal of $D(H)$ and $\Av^{\mono}$ is $D(H)$-bilinear. See \cite[Proposition 4.17]{Zh}.
\end{rem}

In the sequel, we let
\[
\d^{\mono}:=\Av^{\mono}(\d_e)
\]
denote the monoidal unit of $D_{\mono}(H)$, and call it the cofree monodromic local system on $H$.

Now let $f: H_1\to H_2$ be a homomorphism of connected algebraic group over $k$. If $\chi_2\in \Ch(H_2;E')$, then $\chi_1:=f^*\chi_2\in \Ch(H_1;E')$. Therefore, the pullback functor $f^*: D(H_2)\to D(H_1)$ restricts to a pullback functor 
\begin{equation}\label{eq: star-pullback-mono}
f^*: D_{\mono}(H_2)\to D_{\mono}(H_1),
\end{equation}
which admits a (continuous) right adjoint, given by
\begin{equation}\label{eq: star-pushforward-mono}
f_*^{\mono}= \Av^{\mono}\circ (f_*|_{D_{\mono}(H_1)}).
\end{equation}

\subsubsection{$\chi$-monodromic subcategories}\label{sss: chi-monodromic category}
Let $\chi\in \Ch(H;\fre_\chi)$ be a character sheaf as in Sect. \ref{sss: int coef}. We define 
\begin{equation} \label{eq: inccharshv2}
\iota^{\chi\mon}: D_{\chi\mon}(H) \subset D(H; E)
\end{equation}
to be the full subcategory of $D(H;E)$ generated by the character sheaves $\wt{\chi} \in \mathscr{L}_\chi$, where $\mathscr{L}_\chi$ is defined in \eqref{eq: lifting of char sheaves}. As the latter sheaves are all compact objects of $D(H; E)$, this inclusion again admits a right adjoint, denoted by $\Av^{\chi\mon}$. In addition, let 
$$
\dcm=\Av^{\chi\mon}(\delta_e).
$$
Then Proposition \ref{p: monoidal unit} holds with $D_{\mono}(H)$ replaced by $D_{\chi\mon}(H)$ and $\Av^{\mono}$ replaced by $\Av^{\chi\mon}$, with the same proof. The monoidal unit of $D_{\chi\mon}(H)$ is $\dcm$. We call $\dcm$ the cofree $\chi$-monodromic local system on $H$. Note that the pullback functor  \eqref{eq: star-pullback-mono} restricts to a functor from $D_{\chi_2\mon}(H_2)$ to $D_{\chi_1\mon}(H_1)$.

To simplify expositions, in the sequel we will use the notation to $(\chi\on{-})\mono$ to denote either $\chi$-monodromic or monodromic version.

\sss{Spectral side}\label{sss:framed Loc}
In the rest of this subsection, we assume that $H=T$ is a torus. In this case, we can study $D_{(\chi\on{-})\mono}(H)$ via (ind-)coherent sheaves on an ind-scheme $\mathrm{LS}_{\Tv,K}^{t,\Box}$ over $E$ parameterizing (framed) ``tame" $T^\vee$-local systems on $\Spec K$, which we now introduce.

\begin{itemize}
\item In the \'etale setting, we define $\mathrm{LS}_{\Tv,K}^{t,\Box}$ via its functor of points. Let $\pi_1^t(\Spec K)$ be the tame fundamental group of $\Spec(K)$. 
For a commutative $E$-algebra $R$, 
$$\mathrm{LS}_{\Tv,K}^{t,\Box}(R)=\Hom_{cts}(\pi_1^t(\Spec K), T^\vee(R))$$ consisting of those $\rho: \pi_1^t(\Spec K)\to T^\vee(R)$ such that for every one dimensional representation of $T^\vee$ on $V$, $V$ is a union of finite $E$-submodules $V_i$, stable under the action of $\pi_1^t(\Spec K)$ and the resulting representation of $\pi_1^t(\Spec K)$ on $V_i$ is continuous in the usual sense. When $E$ is a field of characteristic zero, this is the moduli space of rigidified \'etale $T^\vee$-local systems on $\Spec K$ with restricted variation as defined in \cite[Section 1.4]{AGKRRV.restricted.local.systems}. With general coefficients, this is defined in \cite{zhu2020coherent}.

\item In the de Rham setting (so $E=k$), $\mathrm{LS}_{\Tv,K}^{t,\Box}$ is the moduli space of rigidified de Rham $T^\vee$-local systems on $\Spec K$ with restricted variation as defined in \cite[Section 1.4]{AGKRRV.restricted.local.systems}. 
\end{itemize}

We give more concrete descriptions of $\mathrm{LS}_{\Tv,K}^{t,\Box}$.

\begin{itemize}
    \item In the \'etale setting we may assume $E=\mathbb{Z}_\ell$, as other cases arise as the base change along $\mathbb{Z}_\ell\to E$. Once we fix a topological generator of $\pi_1^t(\Spec K)$, then $\mathrm{LS}_{\Tv,K}^{t,\Box}$ is a sub-indscheme of $T^\vee_{\mathbb{Z}_\ell}$ which is the union of all closed subschemes $Z\subset T^\vee_{\mathbb{Z}_\ell}$ that are finite over $\mathbb{Z}_\ell$ and such that $Z\otimes\mathbb{F}_\ell\subset T^\vee_{\mathbb{F}_\ell}$ is of prime-to-$p$ order. 
    \item In the de Rham setting (so $E=k$), $\mathrm{LS}_{\Tv,K}^{t,\Box}$ is canonically isomorphic to the formal completion of $\mathfrak t^\vee/\xcoch(\Tv)$ along all closed points. Note that when $E=\mathbb{C}$, via the exponential map, we can identify $\mathfrak t^\vee/\xcoch(\Tv)$ with $\Tv$ and so identify $\mathrm{LS}_{\Tv,K}^{t,\Box}$ with the formal completion of $\Tv$ along all closed points.    
\end{itemize}

In both cases,
note that the underlying reduced subscheme of $\mathrm{LS}_{\Tv,K}^{t,\Box}$ is just the union of points, each of which is of the form $\chi=\Spec \fre_\chi$ for a finite field extension $\fre_\chi$ of $\fre$, corresponding to a residual character sheaf $\chi\in \Ch(T;\fre_\chi)$. 

We let $\frf_\chi\subset \mathrm{LS}_{\Tv,K}^{t,\Box}$ be the union of all closed subschemes $Z$ of $\mathrm{LS}^{t, \square}_{T^\vee,K}$ such that $\chi\in Z$ is the unique closed point. Then 
\begin{equation}\label{eq: geometric block decomposition}
    \on{LS}^{t, \square}_{\Tv,K} \simeq \underset{\chi} \sqcup \hspace{.5mm} \frf_\chi.
\end{equation}

When $E=\fre$ is a field, $\frf_\chi$ is just the formal completion of 
$\on{LS}^{t, \square}_{T^\vee}$ at $\chi$. However, when $E$ is integral, the ind-scheme $\frf_\chi$ is a very complicated geometric object. For this reason, we also let $\frz_\chi$ denote the formal completion of $\on{LS}^{t, \square}_{T^\vee}$ at $\chi$. 
We always have a closed embedding
\begin{equation}\label{eq: z subset f}
\frz_\chi\subset\frf_\chi,
\end{equation}
which is an isomorphism if $E=\fre$ is a field. When $\chi=u$ corresponds to the trivial $\Tv$-local system, we shall also write $\frz_\chi\subset \frf_\chi$ simply as $\frz\subset\frf$.

Note that 
\begin{equation*}\label{eq:Soergel ring R}
R_\chi:=\Fun(\frz_\chi)
\end{equation*}
is isomorphic to a power series ring over $E_\chi$ and $\frz_\chi=\mathrm{Spf} R_\chi$. Explicitly, if we choose a (topological) generator of $\pi_1^t(\GG_m)$ and identify $\chi$ as a closed point of $\Tv\otimes\fre$, and if we write $\Tv=\Spec E[x_1^{\pm 1},\ldots,x_n^{\pm 1}]$, then 
    \begin{equation}\label{eq: ring R}
    R_\chi\simeq E_\chi\tl{x_1-x_1([\chi]),x_2-x_2([\chi]),\ldots,x_n-x_n([\chi])}.
    \end{equation}
The description in de Rham case is similar.

Although $\frf_\chi$ is complicated (when $E$ is integral), its ring of functions is well-understood.
\begin{lem}\label{lem: function on frf}
    The embedding \eqref{eq: z subset f} induces an isomorphism $\Fun(\frf_\chi)\cong \Fun(\frz_\chi)$. 
\end{lem}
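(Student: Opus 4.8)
The genuine content of the statement is in the integral \'etale setting; the plan is to reduce to it and then argue directly. When $E=\fre$ is a field --- the de Rham case, or the \'etale case with $E$ an algebraic extension of $\FF_\ell$ or $\QQ_\ell$ --- the embedding \eqref{eq: z subset f} is already an isomorphism and there is nothing to prove, so assume $E$ is integral, i.e.\ a complete DVR. Then $\on{LS}^{t,\square}_{\Tv,K}$ is a sub-ind-scheme of the affine scheme $\Tv$, presented as a filtered colimit of its closed subschemes finite over $E$; accordingly $\frf_\chi=\colim_Z Z$, the colimit over the poset --- filtered under scheme-theoretic union --- of closed subschemes $Z\hookrightarrow \Tv$ that are finite over $E$ and have $\chi$ as their unique closed point, so that $\Fun(\frf_\chi)=\lim_Z A_Z$ with $A_Z:=\cO(Z)$ a finite local $E$-algebra with residue field $\fre_\chi$. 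On the other side, $\frz_\chi=\mathrm{Spf}(R_\chi)=\colim_m\Spec(R_\chi/\fm_\chi^m)$, the subschemes $\Spec(R_\chi/\fm_\chi^m)$ occur among the $Z$'s, and by \eqref{eq: ring R} the ring $R_\chi$ is the $\fm_\chi$-adic completion of $\cO(\Tv)$ at the maximal ideal $\fm_\chi$ cutting out $\chi$.

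The first step is to show that every $Z$ as above is already a quotient of $R_\chi$. Since $\chi$ is the unique closed point of $Z$, the maximal ideal $\fm_\chi\subset\cO(\Tv)$ is the preimage of $\fm_{A_Z}$; and $A_Z$, being finite over the complete DVR $E$, is $\fm_{A_Z}$-adically complete. Hence the surjection $\cO(\Tv)\twoheadrightarrow A_Z$ factors uniquely through the completion, yielding a canonical surjection $R_\chi\twoheadrightarrow A_Z$ with kernel $I_Z$, so that $A_Z=R_\chi/I_Z$ and $I_{\Spec(R_\chi/\fm_\chi^m)}=\fm_\chi^m$. In particular $\Fun(\frf_\chi)=\lim_Z R_\chi/I_Z$ is an inverse limit of quotients of the one fixed Noetherian complete local ring $R_\chi$.

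The second step is to check that the restriction map $\rho\colon \Fun(\frf_\chi)=\lim_Z R_\chi/I_Z\to \lim_m R_\chi/\fm_\chi^m=R_\chi=\Fun(\frz_\chi)$ induced by \eqref{eq: z subset f}, which is simply the projection onto the subdiagram $\{\Spec(R_\chi/\fm_\chi^m)\}_m$, is an isomorphism with inverse $\sigma\colon g\mapsto (g\bmod I_Z)_Z$. The identity $\rho\circ\sigma=\mathrm{id}$ is immediate. For $\sigma\circ\rho=\mathrm{id}$: given $f=(f_Z)_Z$ and $g:=\rho(f)$, fix $Z$ and $m$ and form the scheme-theoretic union $Z_m$ of $Z$ and $\Spec(R_\chi/\fm_\chi^m)$ inside a common finite-over-$E$ closed subscheme of $\on{LS}^{t,\square}_{\Tv,K}$ (which exists by filteredness); then $Z_m$ again lies in the diagram, with $A_{Z_m}=R_\chi/(I_Z\cap\fm_\chi^m)$. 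Lifting $f_{Z_m}$ to some $h\in R_\chi$, the compatibilities in the family $f$ give $h\equiv f_Z\pmod{I_Z}$ and $h\equiv g\pmod{\fm_\chi^m}$, whence $f_Z-(g\bmod I_Z)$ lies in the image of $\fm_\chi^m$ in $A_Z$, i.e.\ in $\fm_{A_Z}^m$; as $m$ is arbitrary, the Krull intersection theorem for the Noetherian local ring $A_Z$ forces $f_Z=g\bmod I_Z$. Equivalently, this step records that $R_\chi$ is complete and separated for the linear topology generated by the ideals $I_Z$, which follows from its $\fm_\chi$-adic completeness together with the $\fm_\chi$-adic closedness of each $I_Z$.

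The main obstacle is the bookkeeping underlying the first step and the filteredness claim: pinning down exactly which closed subschemes of the (genuinely complicated, when $E$ is integral) ind-scheme $\on{LS}^{t,\square}_{\Tv,K}$ constitute the diagram for $\frf_\chi$, and checking that it is filtered and closed under the unions $Z_m$ used above --- this rests on the description of $\on{LS}^{t,\square}_{\Tv,K}$ as a union of finite pieces such as the torsion subschemes $\Tv[N]$ with $N$ prime to $\chk$. Once $\Fun(\frf_\chi)$ has been realized as $\lim_Z R_\chi/I_Z$, the remainder is routine commutative algebra.
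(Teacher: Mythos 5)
Your proof is correct and follows essentially the same route as the paper: both write $\Fun(\frf_\chi)$ as the inverse limit of the finite local $E$-algebras $\Fun(Z)$ over the closed subschemes $Z$ of $\mathrm{LS}^{t,\Box}_{\Tv}$ with unique closed point $\chi$, and use completeness of these algebras to identify that limit with $R_\chi=\Fun(\frz_\chi)$. Where the paper compresses the final identification into an $\fm$-adic double limit, you make it explicit by factoring each $\Fun(Z)$ as a quotient $R_\chi/I_Z$ and exhibiting a two-sided inverse via scheme-theoretic unions and the Krull intersection theorem --- a legitimate filling-in of the details rather than a different method.
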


\begin{proof}
To see this, recall from Section \ref{sstorus} that $\frf_\chi$ is a filtered colimit under closed embeddings of schemes $Z_\alpha$, each finite over $E$, and in particular $\fm$-adically complete, where $\fm$ denotes the maximal ideal of $E$. In particular, we have 
\begin{equation} \label{e:fvsz}\Fun(\frf_\chi) \simeq \varprojlim_\alpha \on{Fun}(Z_\alpha) \simeq \varprojlim_{\alpha, n} \on{Fun}(Z_\alpha) / (\fm^n) \simeq \on{Fun}(\frz_\chi),\end{equation}as desired. 
\end{proof}

\begin{rem}
 We note that using the natural group structure on $\mathrm{LS}_{\Tv,K}^{t,\Box}$, we have
\begin{equation}
\frf_\chi=(\frf\otimes_{E}E_\chi)\cdot [\chi].
\end{equation}
\end{rem}

In the sequel if $K$ is clear, we will just write  $\mathrm{LS}_{\Tv}^{t,\Box}$ instead of $\mathrm{LS}_{\Tv,K}^{t,\Box}$.

\subsubsection{Coherent description of monodromic sheaves}

Let $\indcoh(\mathrm{LS}_{\Tv}^{t,\Box})$ be the ind-completion of the category of coherent sheaves on $\mathrm{LS}_{\Tv}^{t,\Box}$. The following result can be regarded as a version of Mellin transform of monodromic sheaves on tori, or a tame geometric local Langlands correspondence for tori.

\begin{prop}\label{prop: tame geometric Langlands for tori}
\begin{enumerate}
    \item\label{prop: tame geometric Langlands for tori-1} There is a unique $t$-exact equivalence of monoidal categories
\[
\CH: \indcoh(\mathrm{LS}_{\Tv}^{t,\Box})\simeq D_{\mono}(T)
\]   
sending the skyscraper sheaf at $\chi\in \mathrm{LS}_{\Tv}^{t,\Box}(E')$ to the character $E'$-local system on $T$ corresponding to $\chi$. Here the monoidal structure of $D_{\mono}(T)$ is given by the convolution (provided by Proposition \ref{p: monoidal unit}), and the monoidal structure of $\indcoh(\mathrm{LS}_{\Tv}^{t,\Box})$ is given by the $!$-tensor product.

For $\chi$ a residual character sheaf, the functor $\CH$ restricts to an equivalence 
$$\indcoh(\frf_\chi)\simeq D_{\chi\mon}(T).$$ 

\item\label{prop: tame geometric Langlands for tori-2} Let $f^\vee:\mathrm{LS}_{\Tv_2}^{t,\Box}\to \mathrm{LS}_{\Tv_1}^{t,\Box}$ be the homomorphism of ind-schemes induced by $f$. Then under the equivalence $\CH$, the adjoint functors (see \eqref{eq: star-pullback-mono} and \eqref{eq: star-pushforward-mono})
$$
f^*:D_{\mono}(T_2)\rightleftharpoons D_{\mono}(T_1): f_*^{\mono}
$$ 
correspond to adjoint functors between ind-coherent sheaves
$$
(f^\vee)_*: \indcoh(\mathrm{LS}_{\Tv_2}^{t,\Box})\rightleftharpoons \indcoh(\mathrm{LS}_{\Tv_1}^{t,\Box}):(f^\vee)^!.
$$
\end{enumerate} 
\end{prop}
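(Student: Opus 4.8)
The plan is to construct the functor $\CH$ via a universal property of $\indcoh(\mathrm{LS}_{\Tv}^{t,\Box})$ as a (colimit of) categories of ind-coherent sheaves, and then identify it with $D_{\mono}(T)$ by matching generators. First I would recall that, by the block decomposition \eqref{eq: geometric block decomposition}, both sides decompose as (orthogonal) direct sums indexed by residual character sheaves $\chi$: on the geometric side $\indcoh(\mathrm{LS}_{\Tv}^{t,\Box}) \simeq \bigoplus_\chi \indcoh(\frf_\chi)$, and on the sheaf side $D_{\mono}(T) \simeq \bigoplus_\chi D_{\chi\mon}(T)$ (the latter because distinct residual characters have no extensions between their monodromic blocks). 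So it suffices to produce, for each residual $\chi$, a $t$-exact monoidal equivalence $\indcoh(\frf_\chi)\simeq D_{\chi\mon}(T)$ compatible with convolution/$!$-tensor, and then assemble. Twisting by $[\chi]$ (cf. the final Remark, $\frf_\chi = (\frf\otimes_E E_\chi)\cdot[\chi]$, and the torsor bijection $\mathscr L_u \simeq \mathscr L_\chi$) reduces this further to the case $\chi = u$ trivial, after an evident base change $E \rightsquigarrow E_\chi$.

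For $\chi = u$: the key point is that $\frf = \frz$ in terms of functions (Lemma \ref{lem: function on frf}), and more precisely $\indcoh(\frf)$ is the ind-completion of $\Coh$ of a formal scheme which is a filtered colimit, under closed embeddings, of finite $E$-schemes $Z_\alpha$. On the automorphic side, $D_{u\mon}(T)$ is generated by the cofree monodromic local system $\delta^{\mono}$, whose endomorphism algebra (computed via the pro-unipotent-type arguments in \S\ref{ss:monshvs}, or directly from cohomology of $T$ with the universal monodromic coefficients) is precisely $R = \Fun(\frz)$, a power series ring as in \eqref{eq: ring R}. Thus both categories are equivalent, as $E$-linear categories, to $R\mod$ in an appropriate ind-complete/finite-length sense; I would set up the equivalence by sending $\delta^{\mono} \mapsto \co_{\frz}$ (the structure sheaf, viewed in $\indcoh$), using that $\delta^{\mono}$ is the monoidal unit and its endomorphisms match $R = \End_{\indcoh(\frz)}(\co_\frz)$. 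The skyscraper $\chi' \in \mathrm{LS}_{\Tv}^{t,\Box}(E')$ then necessarily goes to the character $E'$-local system, since these are exactly the simple objects (in the heart) on both sides and the assignment is forced by the $R$-module structure. The $t$-exactness is checked on generators: $\delta^{\mono}$ and $\co_\frz$ both lie in the heart and the $t$-structures are the natural ones (monodromic perverse $t$-structure on one side, the standard one on $\indcoh$ of a formal scheme on the other). The monoidal structure: convolution of $\delta^{\mono}$ with itself is $\delta^{\mono}$ (Proposition \ref{p: monoidal unit}), matching $\co_\frz \otimes^! \co_\frz = \co_\frz$; and the unit/associativity/multiplicativity constraints are matched using the compatibility of convolution with $\mult^*$ as in the proof of Proposition \ref{p: monoidal unit}, together with the fact that $\mult$ on $T$ corresponds to the group structure on $\mathrm{LS}_{\Tv}^{t,\Box}$ under Mellin transform. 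Uniqueness of the $t$-exact monoidal $\CH$ with the prescribed behavior on skyscrapers follows because skyscrapers generate and a $t$-exact monoidal functor is determined by its values on a generating set of the heart together with the monoidal constraints.

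For part \eqref{prop: tame geometric Langlands for tori-2}, given a torus homomorphism $f\colon T_1 \to T_2$, the induced $f^\vee\colon \mathrm{LS}_{\Tv_2}^{t,\Box}\to \mathrm{LS}_{\Tv_1}^{t,\Box}$ is, on reduced points, the map permuting residual characters by pullback, and $f^*$ on character sheaves is literally pullback of local systems, so $\CH \circ (f^\vee)_* \simeq f^*\circ \CH$ on skyscrapers; since skyscrapers generate and all functors are continuous, this extends to an isomorphism of functors. Passing to right adjoints — which exist and are continuous because $(f^\vee)_*$ preserves compacts (it sends a skyscraper to a skyscraper, possibly along a finite extension) and likewise for $f^*$ by \eqref{eq: star-pullback-mono}–\eqref{eq: star-pushforward-mono} — gives $\CH\circ (f^\vee)^! \simeq f_*^{\mono}\circ \CH$, as claimed.

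\emph{Main obstacle.} The delicate point is the integral-coefficient case: when $E$ is not a field, $\frf_\chi$ is genuinely larger than the formal completion $\frz_\chi$ (cf. \eqref{eq: z subset f}), so the identification of $D_{\chi\mon}(T)$ with $\indcoh(\frf_\chi)$ rather than with $\indcoh(\frz_\chi)$ must be done with care — precisely because $\mathscr L_\chi$ contains character sheaves over ramified extensions $E'/E$, which see all of $\frf_\chi$ and not just the unramified part. One must verify that the compact generators of $D_{\chi\mon}(T)$ (the finitely many character sheaves over finite $E'$) exactly match the skyscrapers on $\frf_\chi$ parametrized by its finite $E$-subschemes, using the functor-of-points description from \S\ref{sss:framed Loc} and the torsor identification $\frf_\chi = (\frf\otimes_E E_\chi)\cdot[\chi]$. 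I expect the bookkeeping of these ramified lifts, and checking that convolution on the automorphic side is computed by the group law on $\mathrm{LS}_{\Tv}^{t,\Box}$ integrally (not just modulo $\fm$), to be where the real work lies; everything else is a generators-and-relations argument modeled on the field case and on \cite[\S 4.1]{Zh}.
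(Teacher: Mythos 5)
There is a genuine gap at the heart of your construction. You propose to set up the equivalence by sending $\delta^{\mono}\mapsto \cO_{\frz}$ and arguing that ``both categories are equivalent, as $E$-linear categories, to $R\mod$ in an appropriate ind-complete/finite-length sense.'' Neither claim is correct as stated. First, the monoidal unit of $(\indcoh,\otimes^!)$ is the \emph{dualizing} sheaf, not the structure sheaf (and $\cO$ of the formal/ind-scheme is an inverse limit of the $\cO_{Z_\alpha}$, not naturally an ind-coherent object at all); under the true equivalence $\dcm$ corresponds to $\omega_{\frf_\chi}$, cf.\ Lemma \ref{l:enddcm}. Second, $\indcoh(\frf_\chi)$ is the ind-completion of coherent sheaves finite over $E$ (i.e.\ of finite-length/torsion modules), which is a colocalization of $\Modu_{R_\chi}$ and not equivalent to it: $\dcm$ generates under colimits but is \emph{not compact}, so prescribing its image together with the $R$-algebra map on endomorphisms does not determine a continuous functor out of $D_{\chi\mon}(T)$ — one needs exactly the extra factorization condition recorded in Corollary \ref{c:compsincofree}, which is itself a consequence of the proposition you are trying to prove. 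Relatedly, your appeal to $\End(\dcm)\simeq\Fun(\frz)$ and to the block orthogonality of $D_{\mono}(T)$ uses statements the paper derives \emph{from} this proposition, so in your scheme they would need independent proofs.

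What is missing is the actual content of the statement: a comparison of \emph{derived} Hom-spaces between character sheaves on $T$ and Ext-groups of the corresponding coherent sheaves on $\mathrm{LS}_{\Tv}^{t,\Box}$. The paper's proof matches the compact generators directly: in the \'etale setting it identifies $\Coh(\mathrm{LS}_{\Tv}^{t,\Box})^{\heartsuit}$ with continuous $\pi_1^t(T)$-modules finite over $E$, and then uses the $K(\pi,1)$ property of the torus to show the resulting additive functor is fully faithful on all Exts, after which ind-extension, the identification of $\mathscr{L}_\chi$ with closed subschemes of $\frf_\chi$, and part (2) applied to $\mult\colon T\times T\to T$ give the block statement and the monoidality; in the de Rham setting it invokes the Mellin transform (which, contrary to your sketch, is not available in the \'etale case, so it cannot be used to match convolution with the group law there). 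Your ``main obstacle'' paragraph flags the bookkeeping of ramified lifts, which is indeed handled by matching $\mathscr{L}_\chi$ with finite closed subschemes of $\frf_\chi$, but the real crux — the Ext comparison and the fact that neither side is $\Modu_R$ — is absent, and your generator $\dcm$ is the wrong object to build the functor from. Secondary issues (skyscrapers over ramified $E'$ are not simple objects, and checking part (2) on objects does not by itself produce the natural isomorphism of functors) could be patched, but the construction step needs to be replaced along the lines above.
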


\begin{proof} We will sketch the proof in first the de Rham setting, and then second in the   \'etale setting.

In the de Rham setting, recall that the presentation of the algebra of differential operators on $T$ as the semi-direct product $$U(\ft) \ltimes \on{Fun}(T) \simeq  \Fun(\ft^\vee) \rtimes k[\X_*(T^\vee)]$$gives rise to a tautological {\em Mellin transform} equivalence 
$$M^r: D(T) \simeq \on{IndCoh}(\ft^\vee / \X_*(T^\vee)),$$
cf. the discussion in Section 2.1 of \cite{BZN}. We normalize this equivalence by a cohomological shift, so that it exchanges the delta D-module at the identity in $D(T)$, placed in cohomological degree zero,  with the dualizing sheaf on $\ft^\vee/\X_*(T^\vee)$. With this, it is standard that $M^r$ underlies a monoidal equivalence, exchanging $*$-convolution of D-modules with $!$-tensor product of ind-coherent sheaves.  

By construction, $M^r$ exchanges skyscraper sheaves on the spectral side with character sheaves on $T$, whence restricts to an equivalence of full subcategories
$$D_{\on{mon}}(T) \simeq \indcoh(\on{LS}_{T^\vee}^{t, \square})$$of the required form. 
Moreover, property (2) follows from noting that, by construction, the Mellin transform $M^r$ exchanges $$f^![2(\dim T_2 - \dim T_1)]: D(T_2) \rightarrow D(T_1),$$which agrees with $f^*$ on $D_{\on{mon}}(T_2)$, with $$(f^\vee)_*: \indcoh(\ft_2^\vee / \X_*(T_2^\vee)) \rightarrow \indcoh(\ft_1^\vee / \X_*(T_1^\vee)).$$

   It remains to sketch the proof of the equivalence $\CH$ in the \'etale context.  We first construct an exact additive functor between abelian categories
    \[\CH^{\heartsuit}:\Coh(\mathrm{LS}_{\Tv}^{t,\Box})^{\heartsuit}\to D(T)^{\heartsuit}.
    \]
Namely, the ring of regular functions of $\mathrm{LS}_{\Tv}^{t,\Box}$
is (suitably) completed group algebra of $\pi_1^t(T)$. Therefore, $\Coh(\mathrm{LS}_{\Tv}^{t,\Box})^{\heartsuit}$ can be identified with the abelian category of continuous $\pi_1^t(T)$-modules with underlying $E$-modules finite over $E$, which give $E$-local systems on $T$. This gives the desired functor $\CH^{\heartsuit}$.

Now, for $\cF_1,\cF_2\in \Coh(\mathrm{LS}_{\Tv}^{t,\Box})^{\heartsuit}$, we have
\[
\Hom_{\Coh(\mathrm{LS}_{\Tv}^{t,\Box})}(\cF_1,\cF_2)=\Hom_{\pi_1^t(T)}(\cF_1,\cF_2)=\Hom_{D(T)}(\CH(\cF_1),\CH(\cF_2)),
\]
where the last isomorphism uses the $K(\pi,1)$ property of $T$ (even in the \'etale setting). It follows that $\CH^{\heartsuit}$ can be promoted to a fully faithful embedding $\Coh(\mathrm{LS}_{\Tv}^{t,\Box})\to D(T)$. Clearly, the image of the functor are generated (by taking finite colimits and direct summands) by character sheaves on $T$ (and therefore in particular are compact objects in $D(T)$). It follows that we have a full faithful embedding $\indcoh(\mathrm{LS}_{\Tv}^{t,\Box})\to D(T)$ with the essential image $D_{\mono}(T)$. 

Note that if $Z\subset \frf_\chi$ is a closed subscheme, then $\Fun(Z)$ is a local finite $E$-algebra with residue field $\fre_\chi$. Therefore, $\CH(\cO_Z)$ is a character sheaf $\wt{\chi}$ belonging to $\mathscr{L}_\chi$. Clearly, every element in $\mathscr{L}_\chi$ arises in this way. It follows that $\CH$ restricts to an equivalence $\indcoh(\frf_\chi)\simeq D_{\chi\mon}(T)$ as desired. This proves Part \eqref{prop: tame geometric Langlands for tori-1} except the monoidality of $\CH$.

It is clear from the construction that $f^*: \Ch(T_2)\to \Ch(T_1)$ corresponds to $(f^\vee)_*: \Coh(\mathrm{LS}_{\Tv_2}^{t,\Box})^{\heartsuit}\to \Coh(\mathrm{LS}_{\Tv_1}^{t,\Box})^{\heartsuit}$. Then Part \eqref{prop: tame geometric Langlands for tori-2} follows.

Finally, applying Part \eqref{prop: tame geometric Langlands for tori-2} to $\mult: T\times T\to T$, we see that $\CH$ is monoidal.
\end{proof}

Here are some direct consequences of Proposition \ref{prop: tame geometric Langlands for tori}. 
\begin{cor}   (Block decomposition) \label{r:block} 
    We have that
$$D_{\on{mon}}(T) = \underset{\chi \mbox{ residual}} \bigoplus \hspace{.5mm} D_{\chi\mon}(T).$$
\end{cor}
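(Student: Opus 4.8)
The plan is to deduce this formally from Proposition~\ref{prop: tame geometric Langlands for tori} by transporting the geometric block decomposition~\eqref{eq: geometric block decomposition}, $\mathrm{LS}_{\Tv}^{t,\Box} \simeq \bigsqcup_{\chi} \frf_\chi$, through the monoidal equivalence $\CH$. Since $\CH$ is $E$-linear and, for every residual character sheaf $\chi$, restricts to an equivalence $\indcoh(\frf_\chi) \simeq D_{\chi\mon}(T)$ while identifying $\indcoh(\mathrm{LS}_{\Tv}^{t,\Box})$ with $D_{\on{mon}}(T)$, it suffices to prove the spectral statement
\[
\indcoh(\mathrm{LS}_{\Tv}^{t,\Box})\;\simeq\;\bigoplus_{\chi\ \mathrm{residual}}\indcoh(\frf_\chi).
\]

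For this I would use the presentation of $\mathrm{LS}_{\Tv}^{t,\Box}$ recalled in Section~\ref{sss:framed Loc} as a filtered colimit, along closed embeddings, of closed subschemes $Z$ finite over $E$; then $\indcoh(\mathrm{LS}_{\Tv}^{t,\Box}) = \colim_Z \indcoh(Z)$ and, correspondingly, $\indcoh(\frf_\chi) = \colim_Z \indcoh(Z_\chi)$, where $Z_\chi := Z \times_{\mathrm{LS}_{\Tv}^{t,\Box}} \frf_\chi$. For each such $Z = \Spec A$ the finite $E$-algebra $A$ splits as a finite product $\prod_j A_j$ of local rings, and writing $\chi_j$ for the residual character sheaf determined by the residue field of $A_j$ one gets $Z = \bigsqcup_j \Spec A_j$ with $\Spec A_j \subset \frf_{\chi_j}$. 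Hence $Z = \bigsqcup_\chi Z_\chi$ is a finite disjoint union, so $\indcoh(Z) = \bigoplus_\chi \indcoh(Z_\chi)$; passing to the colimit over $Z$ gives the displayed decomposition.

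The only point deserving a word of care is that the (in general infinite) direct sum above is an honest orthogonal decomposition. For distinct residual $\chi, \chi'$ the fiber product $\frf_\chi \times_{\mathrm{LS}_{\Tv}^{t,\Box}} \frf_{\chi'}$ is empty, so objects supported on $\frf_\chi$ and on $\frf_{\chi'}$ admit no maps between them; and every compact object of $\indcoh(\mathrm{LS}_{\Tv}^{t,\Box})$ is a coherent sheaf supported on some finite $Z$ as above, hence equals the finite direct sum of its components in the full subcategories $\indcoh(\frf_\chi)$. Since $\indcoh(\mathrm{LS}_{\Tv}^{t,\Box})$ is compactly generated, this exhibits it as the direct sum of the mutually orthogonal subcategories $\indcoh(\frf_\chi)$; transporting along $\CH$ and using $\CH(\indcoh(\frf_\chi)) = D_{\chi\mon}(T)$ concludes. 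I do not expect any genuine obstacle here: all of the content is carried by Proposition~\ref{prop: tame geometric Langlands for tori}, and the corollary is merely the corresponding unwinding of~\eqref{eq: geometric block decomposition}.
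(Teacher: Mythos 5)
Your proposal is correct and follows the same route as the paper, which simply deduces the corollary from Proposition \ref{prop: tame geometric Langlands for tori} together with the geometric decomposition \eqref{eq: geometric block decomposition}. The extra details you supply (decomposing each finite closed subscheme $Z$ according to its closed points and using compact generation and orthogonality to pass to the infinite direct sum) are exactly the routine unwinding the paper leaves implicit.
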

\begin{proof}
    This follows from Proposition \ref{prop: tame geometric Langlands for tori} and the decomposition \eqref{eq: geometric block decomposition}.
\end{proof}

We need the following result to construct Soergel bimodules.
\begin{cor}\label{cor: Obtaining Soergel}
    We have a canonical equivalence of monoidal categories
    \[
    \End_{\lincat_E} D_{\mono}(T)\cong \indcoh(\mathrm{LS}_{\Tv}^{t,\Box}\times \mathrm{LS}_{\Tv}^{t,\Box}),
    \]
    under which, the natural monoidal structure on $\End_{\lincat_E} D_{\mono}(T)$ corresponds to the following monoidal structure: 
    \[
    \indcoh(\mathrm{LS}_{\Tv}^{t,\Box}\times \mathrm{LS}_{\Tv}^{t,\Box})\otimes \indcoh(\mathrm{LS}_{\Tv}^{t,\Box}\times \mathrm{LS}_{\Tv}^{t,\Box})\to \indcoh(\mathrm{LS}_{\Tv}^{t,\Box}\times \mathrm{LS}_{\Tv}^{t,\Box})
    \]
    \[
    (\cF_1,\cF_2)\mapsto (p_{13})_*\Delta_{23}^!(\cF_1\boxtimes\cF_2).
    \]
\end{cor}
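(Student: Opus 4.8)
The plan is to reduce everything to Proposition~\ref{prop: tame geometric Langlands for tori} together with the self-duality and Künneth properties of ind-coherent sheaves on the ind-scheme $Y := \mathrm{LS}_{\Tv}^{t,\Box}$. First I would transport the statement along the monoidal equivalence $\CH \colon \indcoh(Y) \simeq D_{\mono}(T)$ of Proposition~\ref{prop: tame geometric Langlands for tori}, so that it suffices to produce a canonical monoidal equivalence $\End_{\lincat_E} \indcoh(Y) \simeq \indcoh(Y \times Y)$, where the right-hand side is given the ``convolution of kernels'' structure $(\cF_1,\cF_2)\mapsto (p_{13})_*\Delta_{23}^!(\cF_1\boxtimes\cF_2)$.

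Next I would establish the two structural facts about $\indcoh(Y)$ as an object of $\lincat_E$: (a) the Künneth equivalence $\indcoh(Y)\otimes\indcoh(Y)\simeq\indcoh(Y\times Y)$, and (b) self-duality $\indcoh(Y)^\vee\simeq\indcoh(Y)$, with evaluation the composite $\indcoh(Y)\otimes\indcoh(Y)\simeq\indcoh(Y\times Y)\xrightarrow{\Delta_Y^!}\indcoh(Y)\xrightarrow{\Gamma^{\indcoh}(Y,-)}\Modu_E$ and coevaluation the $!$-dualizing sheaf of $Y$ pushed forward along the diagonal $\Delta_Y$. Both reduce to the affine pieces: as recalled in Section~\ref{sstorus}, $Y=\colim_\alpha Z_\alpha$ with each $Z_\alpha$ a classical affine scheme finite over $E$ and the transition maps $\iota_{\alpha\beta}$ closed immersions; each $\indcoh(Z_\alpha)=\Ind(\Coh(Z_\alpha))$ is self-dual via Grothendieck--Serre duality $\mathbb{D}_{Z_\alpha}=\RuHom_{\co_{Z_\alpha}}(-,\omega_{Z_\alpha})$ on $\Coh(Z_\alpha)$ (available since $E$ is regular Noetherian, so $\omega_{Z_\alpha}=\iota_\alpha^!E$ is a dualizing complex), and satisfies Künneth over $E$. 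Grothendieck duality for the finite, hence proper, morphisms $\iota_{\alpha\beta}$ intertwines $(\iota_{\alpha\beta})_*$ with $\iota_{\alpha\beta}^!$ via the functors $\mathbb{D}_{Z_\bullet}$; passing to the colimit (equivalently the limit along $!$-pullbacks, using $Y\times Y=\colim_{\alpha,\beta}Z_\alpha\times Z_\beta$ and that $\otimes$ in $\lincat_E$ commutes with these colimits) then yields (a) and (b) for $Y$.

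Granting (a) and (b), I would conclude $\End_{\lincat_E}\indcoh(Y)\simeq\indcoh(Y)\otimes\indcoh(Y)^\vee\simeq\indcoh(Y)\otimes\indcoh(Y)\simeq\indcoh(Y\times Y)$. To match the monoidal structures, I would unwind the standard description of composition in $\End(\cC)\simeq\cC\otimes\cC^\vee$: it is the map $(\cC\otimes\cC^\vee)\otimes(\cC\otimes\cC^\vee)\to\cC\otimes\cC^\vee$ obtained by contracting the inner $\cC^\vee\otimes\cC$ via the evaluation. Substituting $\cC=\indcoh(Y)$ and inserting the evaluation of (b) and the Künneth equivalence of (a), the composition of kernels $\cF_1,\cF_2\in\indcoh(Y\times Y)$ becomes precisely $(p_{13})_*\Delta_{23}^!(\cF_1\boxtimes\cF_2)$, under the usual dictionary by which a kernel in $\indcoh(Y\times Y)$ acts on $\indcoh(Y)$.

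\textbf{The main obstacle} I anticipate is not the formal skeleton above, which is a routine application of the kernel calculus for $\indcoh$, but the careful verification of (b) over a base $E$ that is allowed to be a DVR rather than a field, together with its compatibility with the transition maps defining $Y$: one must check that Grothendieck--Serre duality on each $Z_\alpha$ is compatible both with the closed immersions $\iota_{\alpha\beta}$ and with products, so that the self-duality datum genuinely descends to the ind-scheme $Y$. Once this bookkeeping is in place, tracking the diagonals and projections in the monoidal comparison (including placing the $(-)^\vee$ on the correct tensor factor) is straightforward.
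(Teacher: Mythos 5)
Your proposal is correct and follows exactly the route the paper intends: the paper states this corollary as a direct consequence of Proposition~\ref{prop: tame geometric Langlands for tori} with no written proof, the implicit argument being precisely to transport along $\CH$ and invoke the standard kernel description of $\End_{\lincat_E}\indcoh$ via K\"unneth and Serre self-duality for the ind-finite (hence ind-proper over $\Spec E$) ind-scheme $\mathrm{LS}_{\Tv}^{t,\Box}$. Your filling-in of the details — Grothendieck--Serre duality on each finite piece $Z_\alpha$ over the regular base $E$, compatibility with the closed-immersion transition maps, and the contraction-of-kernels identification of composition with $(p_{13})_*\Delta_{23}^!(\cF_1\boxtimes\cF_2)$ — is the expected justification.
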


\sss{Cofree monodromic local systems} We establish a few properties of cofree monodromic local systems which will used later to study monodromic affine Hecke categories.

Let $\chi$ be a residual character sheaf of $T$, and let $\delta^{\chi\mon}$ be the cofree monodromic local system as in Section \ref{sss: chi-monodromic category}. 
Also recall the ind-schemes $\frz_\chi\subset\frf_\chi$ introduced in Section \ref{sss:framed Loc}.
\begin{lem} There are canonical isomorphisms of algebras
$$\End(\dcm) \simeq \Fun(\frf_\chi) \simeq \Fun(\frz_\chi)=R_{\chi}.$$
In particular, the leftmost algebra is concentrated in degree zero.  \label{l:enddcm}
\end{lem}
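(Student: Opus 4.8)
The plan is to reduce everything to the monoidal equivalence $\CH \colon \indcoh(\frf_\chi) \simeq D_{\chi\mon}(T)$ of Proposition~\ref{prop: tame geometric Langlands for tori}. Being monoidal, $\CH$ carries the unit $\omega_{\frf_\chi}$ of $(\indcoh(\frf_\chi),\otimes^!)$ to the unit of $(D_{\chi\mon}(T),\star)$, which by \S\ref{sss: chi-monodromic category} is precisely $\dcm$; and since $D_{\chi\mon}(T)\subset D(T)$ is a full subcategory, $\End_{D(T)}(\dcm)=\End_{D_{\chi\mon}(T)}(\dcm)$. Hence, as associative (indeed commutative) $E$-algebras, $\End(\dcm)\simeq \End_{\indcoh(\frf_\chi)}(\omega_{\frf_\chi})$, and it remains to identify the latter with $\Fun(\frf_\chi)$, after which Lemma~\ref{lem: function on frf} supplies $\Fun(\frf_\chi)\simeq\Fun(\frz_\chi)=R_\chi$ and the concentration in degree zero (as $R_\chi$ is a power series ring).

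To compute $\End_{\indcoh(\frf_\chi)}(\omega_{\frf_\chi})$ I would use the presentation $\frf_\chi=\colim_\alpha Z_\alpha$ recalled in \S\ref{sstorus}, a filtered colimit along closed immersions of closed subschemes $Z_\alpha=\Spec A_\alpha$ finite over $E$. Then $\indcoh(\frf_\chi)=\lim_\alpha\indcoh(Z_\alpha)$ with transition functors the $!$-pullbacks $\iota_{\alpha\beta}^!$ (the colimit along the fully faithful pushforwards equals the limit along their right adjoints), and the unit $\omega_{\frf_\chi}$ corresponds to the compatible family $(\omega_{Z_\alpha})_\alpha$, since $\iota_{\alpha\beta}^!$ is monoidal for $\otimes^!$ and $\iota_{\alpha\beta}^!\omega_{Z_\beta}\simeq\omega_{Z_\alpha}$ by functoriality of $(-)^!$. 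Computing mapping spectra in a limit of categories term by term then gives $\End_{\indcoh(\frf_\chi)}(\omega_{\frf_\chi})\simeq\lim_\alpha\End_{\indcoh(Z_\alpha)}(\omega_{Z_\alpha})$ as algebras.

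The local input is that for a classical affine Noetherian scheme $Z_\alpha$ there is a canonical algebra isomorphism $\End_{\indcoh(Z_\alpha)}(\omega_{Z_\alpha})\simeq\Gamma(Z_\alpha,\cO_{Z_\alpha})=A_\alpha$, concentrated in degree zero. Indeed $\omega_{Z_\alpha}\simeq\RHom_E(A_\alpha,E)$ (with its $A_\alpha$-module structure) is a bounded coherent complex, hence lies in $\Coh(Z_\alpha)$; the Serre--Grothendieck duality functor $\mathbb{D}_{Z_\alpha}$ is an anti-self-equivalence of $\Coh(Z_\alpha)$ with $\mathbb{D}_{Z_\alpha}(\cO_{Z_\alpha})\simeq\omega_{Z_\alpha}$, whence $\End_{\indcoh(Z_\alpha)}(\omega_{Z_\alpha})=\End_{\Coh(Z_\alpha)}(\mathbb{D}_{Z_\alpha}\cO_{Z_\alpha})\simeq\End_{\Coh(Z_\alpha)}(\cO_{Z_\alpha})^{\mathrm{op}}=A_\alpha^{\mathrm{op}}=A_\alpha$, the last equality since $A_\alpha$ is commutative, and degree zero since $Z_\alpha$ is affine. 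Using the duality compatibility $\iota_{\alpha\beta}^!\mathbb{D}_{Z_\beta}\simeq\mathbb{D}_{Z_\alpha}\iota_{\alpha\beta}^*$ for the finite immersion $\iota_{\alpha\beta}$, the transition maps become the quotient maps $A_\beta\twoheadrightarrow A_\alpha$, so $\End(\dcm)\simeq\lim_\alpha A_\alpha=\lim_\alpha\Fun(Z_\alpha)=\Fun(\frf_\chi)$, completing the argument modulo Lemma~\ref{lem: function on frf}.

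I expect the main obstacle to be the integral case: when $E$ is a DVR the schemes $Z_\alpha$ need be neither flat over $E$, reduced, nor Gorenstein, so the local identification $\End_{\indcoh(Z_\alpha)}(\omega_{Z_\alpha})\simeq A_\alpha$ genuinely requires Grothendieck duality (in particular biduality $\mathbb{D}_{Z_\alpha}\mathbb{D}_{Z_\alpha}\simeq\id$ on $\Coh(Z_\alpha)$) rather than an elementary computation, and one must take care that the resulting \emph{derived} inverse limit over $\alpha$ remains in degree zero --- which is, however, already guaranteed by Lemma~\ref{lem: function on frf}. Over a field the argument simplifies, since $\frf_\chi=\frz_\chi=\mathrm{Spf}\,R_\chi$ and each $Z_\alpha=\Spec R_\chi/\mathfrak{a}^n$ is a local Artinian $E$-algebra.
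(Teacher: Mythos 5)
Your proposal follows essentially the same route as the paper's proof: identify $\dcm$ with $\omega_{\frf_\chi}$ via the monoidal equivalence of Proposition \ref{prop: tame geometric Langlands for tori}, compute $\End(\omega_{\frf_\chi})$ as the inverse limit of $\End(\omega_{Z_\alpha})\simeq \Fun(Z_\alpha)$ over the closed subschemes $Z_\alpha$, and conclude with Lemma \ref{lem: function on frf}. The only difference is cosmetic: you phrase the limit through $\indcoh(\frf_\chi)=\lim_\alpha \indcoh(Z_\alpha)$ and justify the local identification $\End(\omega_{Z_\alpha})\simeq \Fun(Z_\alpha)$ by Grothendieck--Serre duality, a step the paper (working instead with $\omega_{\frf_\chi}=\colim_\alpha \omega_{Z_\alpha}$ and adjunction along the closed embeddings) treats as standard.
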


\begin{proof} 

Note that under the equivalence of Proposition \ref{prop: tame geometric Langlands for tori}, the monoidal units are identified. Therefore, the cofree monodromic local system $\dcm$ is exchanged with the dualizing sheaf on $\frf_\chi$. 

Note that for an ind-scheme $\fri=\colim_{\alpha}Z_\alpha$ which is a filtered colimit of schemes of finite type along closed embeddings, we have 
\[
\End(\omega_{\fri})=\Hom(\colim_{\alpha}\omega_{Z_\alpha},\omega_{\fri})=\lim_{\alpha}\End(\omega_{Z_\alpha})=\lim_{\alpha}\Fun(Z_\alpha)=\Fun(\fri).
\]
Now the statement follows from Lemma \ref{lem: function on frf}.
 \end{proof}

The following results will be used in Section \ref{s:tiltbasics} to study the cofree tilting sheaves.

\begin{cor} \label{l:cofreeprop}
\begin{enumerate}
\item\label{l:cofreeprop-1} The object $\delta^{\chi\mon}$ is indecomposable.

\item\label{l:cofreeprop-2} For $I$ a finite set, consider the corresponding direct sum $\oplus_I \dcm$. Then any object $S$ is which is a summand of $\oplus_I \dcm$ is again isomorphic to a finite direct sum of copies of $\dcm$. 

\item\label{l:cofreeprop-3} Any self-extension 
$$0 \rightarrow \dcm \rightarrow c \rightarrow \dcm \rightarrow 0$$
admits a splitting. 
\end{enumerate}
\end{cor}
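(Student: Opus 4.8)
The plan is to deduce all three statements from Lemma~\ref{l:enddcm}, which identifies the derived endomorphism algebra $\RHom(\dcm,\dcm)$ with $R_\chi=\Fun(\frz_\chi)$, a power series ring over $E_\chi$ concentrated in cohomological degree zero. In particular $R_\chi$ is a commutative Noetherian \emph{local} ring and $\Ext^i(\dcm,\dcm)=0$ for $i\neq 0$. I will also use repeatedly that $D_{\chi\mon}(T)$ is a presentable stable $\infty$-category, hence idempotent-complete, so that direct summands of an object $Y$ are the same data as idempotents in the ordinary ring $\End(Y)=H^0\RHom(Y,Y)$.

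For the indecomposability claim, I would recall that an object of an idempotent-complete additive category is indecomposable exactly when its endomorphism ring has no idempotents other than $0$ and $1$; a commutative local ring has no nontrivial idempotents, so $\dcm$ is indecomposable. For the splitting claim, a self-extension of $\dcm$ by $\dcm$ is classified by a class in $\Ext^1(\dcm,\dcm)$, which vanishes by Lemma~\ref{l:enddcm}; therefore the middle term is isomorphic to $\dcm\oplus\dcm$ compatibly with the given maps, which is the asserted splitting.

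For the claim about summands of $\oplus_I\dcm$, I would write such a summand $S$ as the image of an idempotent endomorphism $e$ of $\oplus_I\dcm$. Since $I$ is finite and $\Ext^{\neq 0}(\dcm,\dcm)=0$, there is a ring isomorphism $\End(\oplus_I\dcm)\simeq \mathrm{Mat}_I(R_\chi)$ under which $e$ becomes an idempotent matrix. The $R_\chi$-module $e\cdot R_\chi^{\oplus I}$ is then finitely generated projective over the commutative local ring $R_\chi$, hence free of some rank $k\le|I|$; equivalently, $e$ is conjugate in $\mathrm{GL}_I(R_\chi)$ to the diagonal idempotent $\mathrm{diag}(1,\dots,1,0,\dots,0)$. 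Transporting this conjugation back through the isomorphism $\End(\oplus_I\dcm)\simeq\mathrm{Mat}_I(R_\chi)$ shows $S\simeq\oplus_k\dcm$. (Alternatively, one may invoke the standard equivalence between the full subcategory of retracts of finite direct sums of $\dcm$ and the category of finitely generated projective $R_\chi$-modules, sending $\oplus_I\dcm$ to the free module $R_\chi^{\oplus I}$, and conclude in the same way.)

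Beyond Lemma~\ref{l:enddcm}, the only inputs are the elementary facts that a commutative local ring has no nontrivial idempotents and that finitely generated projective modules over it are free, together with the observation that the $\infty$-categorical endomorphism object is here an honest ring because it is concentrated in degree zero. Accordingly, I do not anticipate any real obstacle; essentially all the content of the corollary sits in the identification of the endomorphism algebra already established in Lemma~\ref{l:enddcm}.
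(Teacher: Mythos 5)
Your proof is correct and follows essentially the same route as the paper, which deduces (1) and (2) from the locality of $\End(\dcm)\simeq\Fun(\frf_\chi)$ and (3) from the vanishing of $\mathrm{H}^1\Hom(\dcm,\dcm)$, both supplied by Lemma \ref{l:enddcm}. You have merely spelled out the standard commutative-algebra details (no nontrivial idempotents in a local ring, finitely generated projectives over a local ring are free) that the paper leaves implicit.
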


\begin{proof} Assertions \eqref{l:cofreeprop-1} and \eqref{l:cofreeprop-2} follow from the locality of $\on{Fun}(\frf_\chi)$, and assertion \eqref{l:cofreeprop-3} is the vanishing of $\on{H}^1 \Hom(\dcm, \dcm)$. 
\end{proof}

Proposition \ref{prop: tame geometric Langlands for tori} has another consequence, which will be used in the proof of Proposition \ref{p:std2std}.

\begin{lem} \label{l:compsincofree} Within $D_{\chi\mon}(T)$, consider the full stable subcategory generated by the unit under taking cones and retracts 
$$\langle \dcm \rangle \hookrightarrow D_{\chi\mon}(T).$$
Then this contains all the compact objects, i.e., we have an inclusion
$$D_{\chi\mon}(T)^c \hookrightarrow \langle \dcm \rangle \hookrightarrow D_{\chi\mon}(T).$$ 
\end{lem}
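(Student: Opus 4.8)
The plan is to transport the statement to the spectral side via Proposition~\ref{prop: tame geometric Langlands for tori} and then verify it by a Koszul-type computation together with dévissage. Under the equivalence $\CH\colon\indcoh(\frf_\chi)\simeq D_{\chi\mon}(T)$, the unit $\dcm$ corresponds to the dualizing sheaf $\omega_{\frf_\chi}$ (as in the proof of Lemma~\ref{l:enddcm}), while the subcategory $D_{\chi\mon}(T)^c$ of compact objects corresponds to $\Coh(\frf_\chi)$. Writing $\frf_\chi=\colim_\alpha Z_\alpha$ as a filtered colimit of its finite closed subschemes, one has $\Coh(\frf_\chi)=\colim_\alpha\Coh(Z_\alpha)$; since every coherent complex on the finite affine scheme $Z_\alpha$ admits a finite filtration whose graded pieces are the structure sheaves of integral closed subschemes of $Z_\alpha$, the category $\Coh(\frf_\chi)$ is the thick subcategory of $\indcoh(\frf_\chi)$ generated by the structure sheaves $\mathcal O_Z$ of finite closed subschemes $Z\subseteq\frf_\chi$. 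Thus I would reduce to showing $\mathcal O_Z\in\langle\omega_{\frf_\chi}\rangle$ for each such $Z$.

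The key tool is the following observation. For $f\in R_\chi=\Fun(\frf_\chi)=\End(\omega_{\frf_\chi})$ --- a nonzerodivisor, since $R_\chi$ is a domain --- the fiber of multiplication $\omega_{\frf_\chi}\xrightarrow{\,f\,}\omega_{\frf_\chi}$ is, up to a shift, the pushforward of $\omega_{V(f)}$ along the closed embedding $V(f)\hookrightarrow\frf_\chi$, the conormal of the Cartier-type divisor $V(f)$ being trivialized by $f$. Iterating, for any regular sequence $f_1,\dots,f_m$ in $R_\chi$ the pushforward of $\omega_{V(f_1,\dots,f_m)}$ lies in $\langle\omega_{\frf_\chi}\rangle$. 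Since $R_\chi$ is regular, taking a regular system of parameters gives $\fre_\chi=\mathcal O_{\{\chi\}}\in\langle\omega_{\frf_\chi}\rangle$, and hence by dévissage every finite-length coherent complex lies in $\langle\omega_{\frf_\chi}\rangle$. When $E$ is a field this already concludes the argument, since then $\frf_\chi=\frz_\chi=\mathrm{Spf}\,R_\chi$ and $\Coh(\frf_\chi)$ is exactly the category of finite-length coherent complexes.

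When $E$ is integral I would also have to reach the coherent sheaves on $\frf_\chi$ that are flat over $E$. For $Z=\Spec A\subseteq\frf_\chi$, the $\fm$-torsion subsheaf of $\mathcal O_Z$ is finite-length, hence already treated, so the problem reduces to the flat-over-$E$ quotients; prime filtrations together with passage to normalizations (whose conductor quotients are again finite-length) reduce it to the case $A=\mathcal O_L$, a discrete valuation ring finite over $E$ with residue field $\fre_\chi$. Such an $\mathcal O_L$ is monogenic over $E$, hence a hypersurface, so $\omega_{\Spec\mathcal O_L}\cong\mathcal O_{\Spec\mathcal O_L}$ up to shift; realizing $\Spec\mathcal O_L$ inside $\frf_\chi$ as the zero locus of a regular sequence in $R_\chi$ and invoking the Koszul observation then puts $\mathcal O_{\Spec\mathcal O_L}$ in $\langle\omega_{\frf_\chi}\rangle$. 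I expect this last reduction to be the main obstacle: for an arbitrary finite closed subscheme of $\frf_\chi$ one must produce enough Gorenstein --- indeed complete intersection --- closed subschemes of $\frf_\chi$, which still respect the prime-to-$\chk$ constraint built into $\mathrm{LS}^{t,\Box}_{\Tv}$ (and for which one may invoke the invariance of $\frf_\chi$ under isogenies of degree prime to $\operatorname{char}(E)$), through which the structure sheaf of $Z$ can be assembled using only finitely many cones and retracts.
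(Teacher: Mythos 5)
Your opening moves match the paper's: pass to the spectral side via Proposition \ref{prop: tame geometric Langlands for tori} and reduce to putting generators of $\Coh(\frf_\chi)$ into $\langle\omega_{\frf_\chi}\rangle$; and your Koszul argument does settle the case where $E$ is a field, since there $\frf_\chi=\frz_\chi=\mathrm{Spf}\,R_\chi$ and the iterated fibres of a regular system of parameters acting on $\omega_{\frf_\chi}$ really do produce the residue field, after which d\'evissage finishes. But the lemma is stated for integral $E$, and there your argument has a genuine gap, which you partly flag yourself. The key identification ``$\mathrm{fib}(\omega_{\frf_\chi}\xrightarrow{\,f\,}\omega_{\frf_\chi})\simeq i_*\omega_{V(f)}$ up to shift, for $f\in R_\chi$ a nonzerodivisor'' is exactly where the difference between $\frf_\chi$ and $\frz_\chi$ bites: $\frf_\chi$ is a colimit of closed subschemes $Z_\alpha$ finite over $E$ (including many that are flat over $E$), with $A_\alpha=\Fun(Z_\alpha)$, and on each $Z_\alpha$ the element $f$ is typically a zerodivisor, so at finite level the fibre computes the Serre dual of the \emph{derived} quotient $A_\alpha/^{L}f$ rather than of $A_\alpha/fA_\alpha$. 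Whether this discrepancy disappears in the colimit over the large poset of all $Z_\alpha$ is precisely what needs proof, and you give no argument; the divisibility/injectivity of $\omega$ that makes the field case immediate is not available for free here. Your final reduction compounds the problem: to reach the flat-over-$E$ structure sheaves you must realize subschemes such as $\Spec\mathcal O_L$ as \emph{classical} zero loci of regular sequences of $R_\chi$ inside $\frf_\chi$, and your normalization step can even leave the ambient family, since the normalization of a local domain underlying a closed subscheme of $\frf_\chi$ may have residue field strictly larger than $\fre_\chi$ (e.g.\ $A=E+\frakm_E\,\mathcal O_L\subset\mathcal O_L$ with $L/\Frac(E)$ unramified quadratic), hence is not a closed subscheme of $\frf_\chi$ at all. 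So the integral case is not established.

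For comparison, the paper's proof avoids all of this by two choices you did not make. First, it uses Serre duality to take as generators of $\Coh(\frf_\chi)$ the dualizing sheaves $\omega_Z$ of closed subschemes rather than their structure sheaves; this matters because $\omega_Z=i_*i^!\omega_{\frf_\chi}$ is functorially built out of $\omega_{\frf_\chi}$, whereas $\mathcal O_Z$ is not. Second, it embeds $Z$ into the ambient regular scheme $\Tv_E$: since $E$ and hence $\Tv_E$ are regular, $\mathcal O_Z$ is a perfect complex there, so $\iota_*\iota^!$ is tensoring with a perfect complex, and therefore $i_*i^!\omega_{\frf_\chi}$ is a retract of a finite iterated extension of shifts of $\omega_{\frf_\chi}$ --- no regular sequences in $R_\chi$, no Gorenstein or complete-intersection subschemes, and no classical-versus-derived zero-locus issues. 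If you want to salvage your route in the integral case, the most economical fix is to adopt exactly this duality-plus-perfectness step.
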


\begin{proof} We must equivalently argue on the spectral side that, if we write $\omega_{\frf_\chi}$ for the dualizing sheaf on $\frf_\chi$, we have the inclusion 
$$\indcoh(\frf_\chi)^c =  \on{Coh}(\frf_\chi) \subset \langle \omega_{\frf_\chi} \rangle.$$

As a preliminary reduction, we claim that $ \on{Coh}(\frf_\chi)$ is generated as an idempotent complete stable category by the dualizing sheaves $\omega_Z$ for every closed subscheme $Z \subset \frf_\chi$. Indeed, we claim this holds for any ind-scheme $\mathfrak{i}$ over $\Spec E$ which is a filtered colimit of affine schemes of finite type along closed embeddings. Indeed, by applying Serre duality, it is enough to note that every object of $ \Coh(\fri)$ admits a finite filtration with associated graded given by shifts of the structure sheaves of its closed subschemes, but this is now evident.

Therefore, for any $\wt{\chi} \in \mathscr{L}_\chi$, with corresponding closed subscheme
$$i:  Z_{\wt{\chi}} \hookrightarrow \frf_\chi,$$we must show that $\omega_{Z_{\wt{\chi}}}$ lies in $\langle \omega_{\frf_{\chi}} \rangle$. To see this, note we have a tautological isomorphism
$$\omega_{Z_{\wt{\chi}}} \simeq i_* i^! (\omega_{\frf_\chi}).$$

 Let us choose a topological generator for the tame fundamental group of $K$, and thereby identify $\frf_\chi$ as a formal subscheme $$\frf_\chi \hookrightarrow T^\vee_E.$$ If we consider the  composite embedding $$\iota: Z_{\wt{\chi}} \hookrightarrow \frf_\chi \hookrightarrow T^\vee_E$$ note that the endofunctor $\iota_* \iota^!$ of $\I(T^\vee_E)$ is given by tensoring with a perfect complex, thanks to the regularity of $E$ and whence $T^\vee_E$. In particular, it follows that $i_* i^! (\omega_{\frf_\chi})$ is representable as a summand of a finite iterated extension of shifts of $\omega_{\frf_\chi}$, as desired. 
\end{proof}

We will use the above lemma repeatedly as follows. Let us write $\wt{D}_{\chi\mon}(T)$ for the ind-completion of $\langle \dcm \rangle$. Explicitly 
$$\wt{D}_{\chi\mon}(T)\simeq \Modu_{R_\chi}.$$
We then have a tautological adjunction 
$$i_!: D_{\chi\mon}(T) \rightleftharpoons  \wt{D}_{\chi\mon}(T) : i^!,$$
where $i_!$ is the fully faithful embedding obtained by ind-extending the inclusion $$D_{\chi\mon}(T)^c \hookrightarrow \langle \dcm \rangle,$$and its right adjoint is obtained via ind-extending the inclusion $\langle \dcm \rangle \hookrightarrow D_{\chi\mon}(T)$. 

\begin{cor} \label{c:compsincofree} For any category $\cC$ in $\lincat_E$, a functor  $\xi: D_{\chi\mon}(T) \rightarrow \cC$ is the same data as an object $c$ of $\cC$ and a map of $E$-algebras 
$$\End(\dcm) \rightarrow \End(c)$$
such that the induced map 
$\wt{D}_{\chi\mon}(T) \rightarrow \cC$ factors through the Verdier quotient $$\wt{D}_{\chi\mon}(T) \xrightarrow{i^!} D_{\chi\mon}(T) \rightarrow \cC.$$
\end{cor}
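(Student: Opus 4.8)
The plan is to deduce the statement from two formal inputs: the universal property of the module category $\wt{D}_{\chi\mon}(T)$, and the fact that $i^!$ exhibits $D_{\chi\mon}(T)$ as a Verdier quotient of $\wt{D}_{\chi\mon}(T)$. First I would invoke Lemma~\ref{l:enddcm}, which tells us that $R_\chi = \End(\dcm)$ is a discrete ring, together with the identification $\wt{D}_{\chi\mon}(T) \simeq \Modu_{R_\chi}$ under which $\dcm$ corresponds to the free rank-one module $R_\chi$. By the universal property of module categories, it follows that for any $\cC \in \lincat_E$ a continuous $E$-linear functor $\tilde\xi\colon \wt{D}_{\chi\mon}(T) \to \cC$ is exactly the datum of an object $c = \tilde\xi(\dcm)$ of $\cC$ together with a map of $E$-algebras $R_\chi = \End(\dcm) \to \End_\cC(c)$. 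This is precisely the ``object $c$ and algebra map'' appearing in the statement.

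Next I would use that $i_!$ is fully faithful and preserves compact objects: by construction its restriction to compacts is the tautological inclusion $D_{\chi\mon}(T)^c = \Coh(\frf_\chi) \hookrightarrow \langle \dcm \rangle = \wt{D}_{\chi\mon}(T)^c$. Consequently $i^!$ is continuous and the unit $\id \to i^! i_!$ is an equivalence. A short formal argument then shows that precomposition with $i^!$ is a fully faithful functor $\Fun_{\lincat_E}(D_{\chi\mon}(T), \cC) \to \Fun_{\lincat_E}(\wt{D}_{\chi\mon}(T), \cC)$ whose essential image consists exactly of the continuous functors $\tilde\xi$ inverting the counit $i_! i^! \to \id$ --- equivalently, those that factor through $i^!$ --- with quasi-inverse given by precomposition with $i_!$: if $\tilde\xi \simeq \xi \circ i^!$ then $\tilde\xi \circ i_! \simeq \xi \circ i^! i_! \simeq \xi$ and hence $\tilde\xi \circ (i_! i^!) \simeq \tilde\xi$, while conversely if $\tilde\xi$ inverts the counit then $\tilde\xi \simeq (\tilde\xi \circ i_!) \circ i^!$. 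Combining this with the previous paragraph identifies continuous $E$-linear functors $\xi\colon D_{\chi\mon}(T) \to \cC$ with pairs $(c, R_\chi \to \End_\cC(c))$ whose associated functor $\wt{D}_{\chi\mon}(T) \to \cC$ factors through $i^!$, which is the assertion.

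The one remaining bookkeeping point I would check is that the pair extracted from a given $\xi$ in this way is $(\xi(\dcm), \End(\dcm) \xrightarrow{\xi} \End(\xi\dcm))$. This follows because $i^!$ restricted to $\wt{D}_{\chi\mon}(T)^c = \langle \dcm \rangle$ is, again by construction, the inclusion into $D_{\chi\mon}(T)$; hence $i^!(\dcm) \simeq \dcm$ and $i^!$ induces the identity on $\End(\dcm) = R_\chi$, so the functor $\tilde\xi = \xi \circ i^!$ does send $\dcm$ to $\xi(\dcm)$ with the expected algebra action. I do not expect a real obstacle here: all of this is soft category theory. The point that most deserves care is the precise sense of ``factors through $i^!$'' and the assertion that $i^!$ is a Verdier quotient; I would phrase and prove these, as above, purely in terms of inverting the counit $i_! i^! \to \id$, which is exactly what full faithfulness of $i_!$ buys.
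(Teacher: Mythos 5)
Your proposal is correct and is essentially the argument the paper intends: the corollary is stated without proof as an immediate consequence of the identification $\wt{D}_{\chi\mon}(T)\simeq \Modu_{R_\chi}$ (via Lemma \ref{l:enddcm}) and the adjunction $i_!\dashv i^!$ with $i_!$ fully faithful, and you have simply spelled out the standard formal details (universal property of $\Modu_{R_\chi}$, full faithfulness of precomposition with $i^!$, and the identification of its image with functors inverting the counit). No gaps.
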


\sss{} Next we consider functors between monodromic sheaves induced by homomorphisms of tori. 

\begin{prop}\label{lem: functoriality depth zero geom Langlands for tori}
Let $f: T_1\to T_2$ be a \emph{surjective} homomorphism of tori. Then both of the usual $*$- and $!$-pushforwards of all sheaves restrict to functors between monodromic categories. In particular, $f_*^{\mono}=f_*|_{D_{\mono}(T_1)}$, where $f_*^{\mono}$ is from \eqref{eq: star-pushforward-mono}. In addition, there is a canonical isomorphism of functors
\begin{equation}\label{eq: pseudo-properness for monodromic pushforwards}
f_![\dim T_1-\dim T_2]\cong f_*: D_{\mono}(T_1)\to D_{\mono}(T_2).
\end{equation}
\end{prop}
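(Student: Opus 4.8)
The plan is to factor $f$ into elementary surjections, reduce the preservation statements to continuity plus a check on character sheaves, and reduce the isomorphism \eqref{eq: pseudo-properness for monodromic pushforwards} to a computation on $\GG_m$. Set $S := (\ker f)^\circ$, a subtorus of $T_1$, and $T_1' := T_1/S$; then $f = g\circ h$ with $h\colon T_1\twoheadrightarrow T_1'$ having connected kernel $S$ and $g\colon T_1'\to T_2$ an isogeny. All of the claimed statements are compatible with composition ($(g\circ h)_* = g_*h_*$, $(g\circ h)_! = g_!h_!$, and the shifts and the isomorphisms \eqref{eq: pseudo-properness for monodromic pushforwards} splice together once one knows that $h_*$ lands in $D_{\mono}(T_1')$), so it suffices to treat the two extreme cases. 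Two soft remarks will be used repeatedly: $f_*$ is continuous because $f$ is of finite presentation, $f_!$ is always continuous, and the inclusions $D_{\mono}(-)\hookrightarrow D(-)$ are continuous, being left adjoint to the functors $\Av^{\mono}$ of Proposition~\ref{p: monoidal unit}; hence $f_*|_{D_{\mono}(T_1)}$ and $f_!|_{D_{\mono}(T_1)}$ are continuous, so to see that they take values in $D_{\mono}(T_2)$ it is enough to check this on the compact generators (the character sheaves), and to produce the isomorphism \eqref{eq: pseudo-properness for monodromic pushforwards} it is enough to do so naturally on the full subcategory of compact objects.

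\emph{The isogeny case.} Here $f$ is finite, hence proper, so $f_!\cong f_*$ on all of $D(T_1)$; as $\dim T_1 = \dim T_2$, \eqref{eq: pseudo-properness for monodromic pushforwards} is then tautological. Moreover $f$ extends to a finite morphism of the smooth toric compactifications, so $f_*$ preserves lisse tame sheaves; and any lisse tame sheaf on a torus is monodromic, its monodromy factoring through the abelian profinite group $\pi_1^{t}$ (for integral $E$, the corresponding finitely generated $E'[\pi_1^{t}]$-module lies in the thick subcategory generated by the characters). Therefore $f_*$ carries character sheaves into $D_{\mono}(T_2)$, hence — by continuity — all of $D_{\mono}(T_1)$, and $f_*^{\mono} = f_*|_{D_{\mono}(T_1)}$ since $\Av^{\mono}$ acts as the identity on $D_{\mono}(T_2)$.

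\emph{The connected-kernel case.} Now $S = \ker f$ is a subtorus of dimension $d := \dim T_1 - \dim T_2$, and over the algebraically closed field $k$ the sequence $1\to S\to T_1\xrightarrow{f}T_2\to 1$ splits, so we may take $T_1 = S\times T_2$ and $f = \mathrm{pr}_2$. Under \eqref{eq: monodromic exterior tensor product} this gives $D_{\mono}(T_1) = D_{\mono}(S)\otimes D_{\mono}(T_2)$, and by the Künneth and projection formulas (valid since character sheaves are invertible) $f_*$ and $f_!$ restrict on compact objects to $p_*\boxtimes\mathrm{id}$ and $p_!\boxtimes\mathrm{id}$ for $p\colon S\to\Spec k$; as $D(\Spec k) = \Modu_E$ these manifestly land in $D_{\mono}(T_2)$, so by continuity $f_*,f_!$ preserve $D_{\mono}$ and $f_*^{\mono} = f_*|_{D_{\mono}(T_1)}$. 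For the remaining isomorphism it now suffices to produce a natural isomorphism $p_!\cF[d]\cong p_*\cF$ on $D_{\mono}(S)$. On a nontrivial block of the decomposition $D_{\mono}(S) = \bigoplus_\chi D_{\chi\mon}(S)$ (Corollary~\ref{r:block}) both sides vanish, because $R\Gamma(S,\cL_{\widetilde\chi}) = R\Gamma_c(S,\cL_{\widetilde\chi}) = 0$ for every lift $\widetilde\chi\in\mathscr{L}_\chi$ of a nontrivial residual character — reduce by Künneth to the vanishing of the cohomology of $\GG_m$ with coefficients in a rank one tame local system of nontrivially-reducing monodromy. On the neutral block $D_{u\mon}(S)\simeq\indcoh(\frf_u)$, both functors are continuous, so by Lemma~\ref{l:compsincofree} and Corollary~\ref{c:compsincofree} it is enough to give the isomorphism on compact objects compatibly with the $\End(\delta^{u\mon}) = R_u$-action; by a further Künneth reduction this comes down to $S = \GG_m$, where it is the classical identity $R\Gamma_c(\GG_m,E)[1]\cong R\Gamma(\GG_m,E)$ — or, spectrally via Proposition~\ref{prop: tame geometric Langlands for tori}, the standard comparison of $(f^\vee)^{*}$ and $(f^\vee)^{!}$ along the codimension-$d$ regular closed immersion $f^\vee\colon \mathrm{LS}^{t,\square}_{T_2^\vee}\hookrightarrow \mathrm{LS}^{t,\square}_{T_1^\vee}$, twisted by the shift coming from $f^!\cong f^*[2d](d)$. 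Together with the isogeny case and the composition-compatibility, this yields the proposition for general surjective $f$.

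The step I expect to require the most care is this last one: upgrading the objectwise identification on $\GG_m$ to a genuine natural isomorphism of functors on $D_{\mono}(S)$ — which is exactly what Lemma~\ref{l:compsincofree} and Corollary~\ref{c:compsincofree} are designed for — and pinning the cohomological shift down so that precisely $[\dim T_1 - \dim T_2]$, and no extra Tate twist, survives after Poincaré–Verdier duality (the determinant of the normal bundle that a priori enters the comparison of $(f^\vee)^{*}$ and $(f^\vee)^{!}$ being trivial in the cases at hand). The vanishing of cohomology of tori with nontrivial-character coefficients — including, over integral $E$, that of their nilpotent thickenings — and the various Künneth and projection-formula reductions are routine by comparison.
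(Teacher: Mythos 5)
Your proposal is correct and follows essentially the same route as the paper: the same reduction to an isogeny plus a split projection with torus kernel, preservation checked on the character-sheaf generators, and — for the shift isomorphism \eqref{eq: pseudo-properness for monodromic pushforwards} — the same spectral-side comparison of $(f^\vee)^*$ and $(f^\vee)^!$ along the closed embedding $\mathrm{LS}^{t,\Box}_{T_2^\vee}\hookrightarrow \mathrm{LS}^{t,\Box}_{T_1^\vee}$, which is exactly how the paper concludes. The only cosmetic differences are that in the isogeny case the paper argues directly that $f_*$ of a character sheaf is the induced $\pi_1^t(T_2)$-representation (rather than your detour through toric compactifications and tame lisse sheaves, whose "smooth finite extension" claim is not quite literal but whose conclusion is the same), and the paper bypasses your primary hands-on $\GG_m$ computation by going straight to the spectral side.
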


\begin{proof}We need to show that $f_*$ and $f_!$ send monodromic sheaves to monodromic sheaves and these two functors differ by a shift. We may assume that $f$ is either an isogeny, or $f$ is a surjective homomorphism with $\ker f$ a torus. 

We deal with these two cases separately.
When $f$ is an isogeny, then $f_*=f_!$ and they send a character sheaf $\chi$, corresponding to a representation of $\pi_1^t(T_1)$, to the representation $\pi_1^t(T)$ by induction, and therefore belongs to $D_{\mono}(T_2)$.

In the case $\ker f$ is connected, we may write $T_1=\ker(f)\times T_2$ so that $f$ becomes the projection to the second factor. Since the restriction of a character sheaf to $T_2$ is still a character sheaf,
we see both $f_*$ and $f_!$ send monodromic sheaves to monodromic sheaves. In addition, to see these two functors differ by a shift, it is enough to notice that $\mathrm{LS}_{T_2^\vee}^{t,\Box}\to \mathrm{LS}_{T_1^\vee}^{t,\Box}$ is a closed embedding of formal schemes. In this case $(f^\vee)^!$ is also the left adjoint of $(f^\vee)_*$ up to shifts. The claim follows.
\end{proof}

\begin{rem}
    In fact, the proposition holds when $T_1$ and $T_2$ are replaced by general connected affine algebraic groups. See \cite[Proposition 4.15]{Zh}.
\end{rem}

\begin{prop}\label{p:ch sh triv}
Let $f: T_1\to T_2$ be an 
isogeny of tori. That is, a surjective homomorphism with finite kernel $\Gamma$. 
Let $\chi_1\in\Ch(T_1)$. Then after possibly a finite extension of the coefficient ring $E$, $\chi_1$ descends to a character sheaf $\chi_2$ on $\Ch(T_2)$. In particular, $\chi_1|_{\Gamma}$ is trivial as a character sheaf.

If the order of $\Gamma(k)$ is invertible in $\fre$, then the pullback functor induces an equivalence $D_{\chi_2\mon}(T_2)\cong D_{\chi_1\mon}(T_1)$.
\end{prop}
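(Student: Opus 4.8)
The plan is to transport the whole statement to the spectral side via the Mellin-type equivalence $\CH$ of Proposition~\ref{prop: tame geometric Langlands for tori}. Let $f^\vee\colon \Tv_2\to \Tv_1$ be the isogeny of tori over $E$ dual to $f$. Its kernel is a finite flat group scheme over $E$ whose order equals the group-scheme order $|\Gamma|$ of $\Gamma$, and the prime factors of $|\Gamma|$ are those of $|\Gamma(k)|$ together with possibly $\on{char}(k)$. The morphism $f^\vee$ restricts to a map of ind-schemes $f^\vee\colon \mathrm{LS}_{\Tv_2}^{t,\Box}\to \mathrm{LS}_{\Tv_1}^{t,\Box}$, and by Proposition~\ref{prop: tame geometric Langlands for tori}\eqref{prop: tame geometric Langlands for tori-2} the pullback $f^*$ on monodromic categories corresponds under $\CH$ to the pushforward $(f^\vee)_*$ on ind-coherent sheaves. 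Everything below will be phrased in these terms.

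For the first assertion, I would first argue that the restriction of $f^\vee$ to $\mathrm{LS}^{t,\Box}$ is surjective on closed points after a finite extension of $E$. The point is that $\mathrm{LS}^{t,\Box}$ only sees prime-to-$\on{char}(k)$ torsion, and on the prime-to-$\on{char}(k)$ part of the torsion the kernel of $f^\vee$ is the prime-to-$\on{char}(k)$ part of $\Gamma$, which is étale; hence $f^\vee$ is a surjection of ind-finite-étale group schemes there and every closed point of the target acquires a preimage over a finite extension of $E$. Choosing such a preimage $\chi_2$ of $\chi_1$ gives a residual character sheaf $\chi_2$ on $T_2$ with $f^*\chi_2\simeq\chi_1$. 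Since the composite $\Gamma\hookrightarrow T_1\xrightarrow{f}T_2$ is the constant map to the identity of $T_2$, restricting this isomorphism along $\Gamma\hookrightarrow T_1$ exhibits $\chi_1|_\Gamma$ as the pullback of $\chi_2$ along a constant map, hence as the trivial character sheaf. (Equivalently, one could run this step on tame fundamental groups: $f_*\colon\pi_1^t(T_1)\to\pi_1^t(T_2)$ is injective with finite cokernel, and one extends the continuous character $\chi_1$ over this finite cokernel after adjoining suitable roots of unity.)

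For the second assertion, under the hypothesis that $|\Gamma(k)|$ is invertible in $\fre$ --- together with the standing requirement $\ell\neq\on{char}(k)$ in the étale case, and vacuously in the de Rham case --- every prime factor of $|\ker(f^\vee)|=|\Gamma|$ is invertible in $\fre$, hence in $E$; so $\ker(f^\vee)$ is étale over $E$ and $f^\vee$ is an étale isogeny. Consequently $\mathrm{LS}_{\Tv_2}^{t,\Box}\to\mathrm{LS}_{\Tv_1}^{t,\Box}$ is finite étale. I would then arrange the finite extension of $E$ from the first step so that $\chi_2$ and $\chi_1$ have the same residue field, whence the induced map on complete local rings at these points is an isomorphism and so $\frz_{\chi_2}\xrightarrow{\ \sim\ }\frz_{\chi_1}$. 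Using the description of $\frf_\chi$ as the union of the closed subschemes of $\mathrm{LS}^{t,\Box}$ finite over $E$ with unique closed point $\chi$ --- together with the identity $\frf_\chi=(\frf\otimes_E E_\chi)\cdot[\chi]$ and Lemma~\ref{lem: function on frf} --- the same étale isogeny should identify $\frf_{\chi_2}\xrightarrow{\ \sim\ }\frf_{\chi_1}$. Then pushforward along this isomorphism is an equivalence $\indcoh(\frf_{\chi_2})\xrightarrow{\ \sim\ }\indcoh(\frf_{\chi_1})$, which under $\CH$ is exactly the restriction of $f^*$ to $D_{\chi_2\mon}(T_2)\to D_{\chi_1\mon}(T_1)$; hence the latter is an equivalence.

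The step I expect to be the main obstacle is the last identification in the integral setting: when $E$ is not a field, $\frf_{\chi_i}$ is a genuinely complicated ind-scheme, much larger than the formal completion $\frz_{\chi_i}$, and one must verify that the étale isogeny matches these larger objects and not merely their formal completions. This is precisely where the description $\frf_\chi=(\frf\otimes_E E_\chi)\cdot[\chi]$ (reducing the claim to the case of the trivial character at the identity, where étaleness at the origin applies cleanly) has to be used carefully. Over a field, or in the de Rham setting, $\frf_\chi=\frz_\chi$ and the argument collapses to the elementary fact that an étale morphism induces an isomorphism on formal neighbourhoods.
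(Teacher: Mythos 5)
Your proposal is correct and follows essentially the same route as the paper's own (very terse) proof: pass to the dual isogeny $T_2^\vee\to T_1^\vee$ over $E$, use surjectivity of the induced map on $\mathrm{LS}^{t,\Box}$ (after a finite extension of $E$) for the descent of $\chi_1$, and use étaleness under the invertibility hypothesis to identify $\frf_{\chi_2}\cong\frf_{\chi_1}$, whence the equivalence via Proposition \ref{prop: tame geometric Langlands for tori}\eqref{prop: tame geometric Langlands for tori-2}. The extra care you take (tameness of the chosen preimage, $|\Gamma|$ versus $|\Gamma(k)|$, and $\frf_\chi$ versus $\frz_\chi$ over integral $E$) only fills in details the paper leaves implicit.
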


\begin{proof}
  The induced map ${T'}^\vee\to \Tv$ is also an isogeny of tori (over $E$), and therefore $\pi^\vee: \mathrm{LS}_{{T'}^\vee}^{t,\Box}\to \mathrm{LS}_{\Tv}^{t,\Box}$ is surjective. The first statement follows. If the order of $\Gamma(k)$ is invertible in $E$, $\pi^\vee$ is \'etale and therefore induces an isomorphism ind-schemes $\frf_{\chi_{1}} \cong \frf_{\chi_{2}}$. The second statement follows as well. 
\end{proof}

In our application, we only use the existence of a descent of the character sheaf. The full equivalence is not required.

\subsection{Monodromic categories} \label{ss: monodromic categories}
\sss{} Let $X$ be a prestack with an action of a connected algebraic group $H$. Then we have an action of $D(H)$ on $D(X)$ (see Example \ref{ex: monoidal structure of sheaf on group}), called $*$-convolution and denoted by $\star$ as before. On the other hand, via the monoidal functor $\Av^{\chi\mon}:D(H)\to D_{\chi\mon}(H)$, we may regard $D_{\chi\mon}(H)$ as a (left) $D(H)$-module. Similarly, we can regard $D_{\mono}(H)$ as a (left) $D(H)$-module by Proposition \ref{p: monoidal unit}.

We will let $\Modu_{D(H)}(\lincat_E)$ denote the ($2$-)category of left $D(H)$-modules in $\lincat_E$. Recall that all $D(H)$-linear functors between two $D(H)$-modules $M$ and $N$ form an $E$-linear category $\Hom_{D(H)}(M,N)$.

\begin{defn} \label{d:equivsheaves}
Let $X$ be a prestack with an action of a connected algebraic group $H$ (from the left).
    We let 
    \[
    D((H,\mono)\bs X):=\Hom_{D(H)}(D_{\mono}(H),D(X)),
    \]
    be the category of $H$-monodromic sheaves on $X$. For a residual character sheaf $\chi$, let
    \[
    D((H,\chi\mon)\bs X):=\Hom_{D(H)}(D_{\chi\mon}(H),D(X)),
    \]
    be the category of $(H,\chi)$-monodromic sheaves on $X$.
\end{defn}

Similarly, we define the above categories for right action: suppose $H$ acts on $X$ from the right, we define $D(X/ (H,\chi\mon))$ as $D((H,\chi^{\vee}\mon)\bs X)$, where we interpret $H$ acting on $X$ from the left by $hx := xh^{-1}$. 

As before, in the sequel we use $(\chi\on{-})\mono$ to denote either $\chi$-monodromic or all monodromic version.
Here are a few basic facts about monodromic categories of sheaves.

\begin{enumerate}
    \item Let $X=H$ equipped with the left $H$-action. Then we have an equivalence
    \begin{eqnarray*}
    \Hom_{D(H)}(D_{(\chi\on{-})\mono}(H),D(H))&\cong& D_{(\chi\on{-})\mono}(H),\\ F&\mapsto& F(\d^{(\chi\on{-})\mono})\\
    (-)\star\cF & \mapsfrom & \cF,
    \end{eqnarray*}
    of the right $D(H)$-modules. Here, as usual, the right $D(H)$-module structure on $\Hom_{D(H)}(-,D(H))$ comes from the right action of $D(H)$ on $D(H)$. That is to say, $D_{(\chi\on{-})\mono}(H)$ equipped with the right $D(H)$-module structure is a left dual of $D_{(\chi\on{-})\mono}(H)$ as a left $D(H)$-module. It follows that for general $X$, the category $D((H,(\chi\on{-})\mono)\bs X)$ of ($\chi$-)monodromic sheaves on $X$ can be identified with
    \[
    \Hom_{D(H)}(D_{(\chi\on{-})\mono}(H),D(X))\cong D_{(\chi\on{-})\mono}(H)\otimes_{D(H)}D(X).
    \]
    In particular, we have $D((H,(\chi\on{-})\mono)\bs H)\cong D_{(\chi\on{-})\mono}(H)$.
    
    \item As the adjoint pair of functors
    \[
    \iota^{(\chi\on{-})\mono}: D_{(\chi\on{-})\mono}(H) \rightleftharpoons D(H): \Av^{(\chi\on{-})\mono}
    \]
    realize $D_{(\chi\on{-})\mono}(H)$ as a colocalization of $D(H)$ as $D(H)$-modules, we see that we have a pair of adjoint functors 
    \[
    \iota_X^{\chi\mon}: D((H,\chi\mon)\bs X)\rightleftharpoons D(X): \Av_X^{\chi\mon}
    \]
    realizing $D((H,\chi\mon)\bs X)$ as a colocalization of $D(X)$. 

\end{enumerate}

\begin{rem}\label{rem: monodromy action}
    Since $D((H,\chi\mon)\bs X)$ is a $D_{\chi\mon}(H)$-module, the endomorphism algebra $\End(\dcm)$ acts on every object in $D((H,\chi\mon)\bs X)$ in a functorial way. We call such action the monodromic action.
\end{rem}

\sss{} We discuss some functoriality for monodromic categories.
\begin{lem}\label{lem: monodromic category, pullback}
    Let $f:X\to Y$ be an $H$-equivariant morphism of prestacks. Then $f^!: D(Y)\to D(X)$ restricts to $f^!: D((H,\chi\mon)\bs Y)\to D((H,\chi\mon)\bs X)$. 
\end{lem}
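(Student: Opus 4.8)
The plan is to deduce the lemma from the module-theoretic description of monodromic categories recorded above. Recall from fact~(1) that for any prestack $Z$ with an $H$-action there is a canonical equivalence
$$D((H,(\chi\on{-})\mono)\bs Z)\simeq D_{(\chi\on{-})\mono}(H)\otimes_{D(H)}D(Z),$$
under which the forgetful functor $\iota_Z^{(\chi\on{-})\mono}$ is identified with $\iota^{(\chi\on{-})\mono}\otimes_{D(H)}\id_{D(Z)}$, followed by the canonical equivalence $D(H)\otimes_{D(H)}D(Z)\simeq D(Z)$. Granting that $f^!\colon D(Y)\to D(X)$ upgrades to a morphism of left $D(H)$-modules, I would apply $D_{(\chi\on{-})\mono}(H)\otimes_{D(H)}(-)$ to $f^!$ to obtain a functor $D((H,(\chi\on{-})\mono)\bs Y)\to D((H,(\chi\on{-})\mono)\bs X)$; functoriality of the relative tensor product of $D(H)$-linear functors then shows this functor is compatible with the $\iota^{(\chi\on{-})\mono}_{\bullet}$ and with $f^!$, which is precisely the content of the lemma. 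The $\mono$ case is the same argument with $D_{\chi\mon}(H)$ replaced by $D_{\mono}(H)$.

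So the substantive point is that $f^!$ carries a natural left $D(H)$-module structure. For this I would use that, $f$ being $H$-equivariant, it descends to a morphism $H\bs X\to H\bs Y$ of prestacks over $\mathbb{B}H$, and $f$ is recovered as its base change along $\pt\to\mathbb{B}H$ (cf.\ Example~\ref{ex: monoidal structure of sheaf on group}, where $X=\pt\times_{\mathbb{B}H}(H\bs X)$). The $D(H)$-action on $D(X)$, resp.\ $D(Y)$, is exactly the one produced by the general paradigm of \S\ref{SS: sheaf theory} applied to $\pt\to\mathbb{B}H$ with the object $H\bs X$, resp.\ $H\bs Y$, of $(\prestk_k)_{/\mathbb{B}H}$; the functoriality of that paradigm in the module argument $W\in(\prestk_k)_{/\mathbb{B}H}$ then equips $f^!$ with the desired $D(H)$-linearity. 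Unwinding this, it amounts to combining base change along the Cartesian square relating the action maps $a_X,a_Y$ to $f$ with the K\"unneth isomorphism, yielding a natural identification $f^!(\cG\star\cH)\simeq\cG\star f^!\cH$ for $\cG\in D(H)$ and $\cH\in D(Y)$, together with the compatibility of this identification with the associativity and unit constraints.

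I expect the only place requiring actual care to be this last bookkeeping, i.e.\ promoting the pointwise isomorphism $f^!(\cG\star\cH)\simeq\cG\star f^!\cH$ to a genuine module-functor structure in the $\infty$-categorical framework; but this is of the same nature as the constructions already carried out in \cite[\S10]{Zh} and introduces no new ideas. Everything downstream of it --- passing between $\Hom_{D(H)}(D_{(\chi\on{-})\mono}(H),-)$ and $D_{(\chi\on{-})\mono}(H)\otimes_{D(H)}(-)$, and the compatibility with the forgetful functors --- is formal, so no genuine obstacle arises.
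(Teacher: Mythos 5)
Your proposal is correct and follows essentially the same route as the paper: the paper's proof is precisely the observation that $f^!$ is $D(H)$-linear (via base change along the Cartesian square formed by the action maps), after which the restriction to $(\chi\on{-})$monodromic subcategories is formal from the module-theoretic description. Your write-up merely expands the same two ingredients in more detail, so no new comparison is needed.
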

\begin{proof}
    This follows from the fact that the pullback functor $f^!$ is $D(H)$-linear (which in turn follows from the base change).
\end{proof}

\begin{lem}\label{lem: monodromic category, change group}
Let $f: H\to H'$ be an isogeny as in Proposition \ref{p:ch sh triv} and suppose that $X$ is an $H'$-prestack. Let $\chi'\in\Ch(H')$ and $\chi$ its pullback to $H$.
We have
\[
D((H',\chi'\mon)\bs X)\subset D((H,\chi\mon)\bs X)
\]
as full subcategories of $D(X)$.
\end{lem}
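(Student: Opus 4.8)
The plan is to reduce the statement to a computation about cofree monodromic local systems on tori. First I would observe that, since the $H$-action on $X$ is the restriction of the $H'$-action along $f$, the $D(H)$-module structure on $D(X)$ is the restriction of scalars of the $D(H')$-module structure along the monoidal pushforward functor $f_*\colon D(H)\to D(H')$: the action map factors as $H\times X\xrightarrow{f\times\id_X}H'\times X\to X$, and as $f$ is finite one has $(f\times\id_X)_*(\cG\boxtimes\cF)\simeq f_*\cG\boxtimes\cF$ by the projection formula, so that $\cG\star\cF\simeq f_*\cG\star\cF$ for $\cG\in D(H)$. Combining this with the colocalization description of the monodromic categories (the discussion following Definition~\ref{d:equivsheaves}), under which $D((H,\chi\mon)\bs X)$ is identified with the full subcategory of $\cF\in D(X)$ on which the counit $\delta^{\chi\mon}\star\cF\to\cF$ of the comonad $\delta^{\chi\mon}\star(-)$ (for the $D(H)$-action) is an isomorphism, and likewise for $(H',\chi')$, the claim reduces to: for every $\cF\in D((H',\chi'\mon)\bs X)$, i.e.\ with $\delta^{\chi'\mon}\star\cF\xrightarrow{\sim}\cF$, the counit $f_*\delta^{\chi\mon}\star\cF\to\cF$ (for the $D(H')$-action) is an isomorphism.

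Next I would identify the object $f_*\delta^{\chi\mon}\in D(H')$. Since $f$ is surjective, Proposition~\ref{lem: functoriality depth zero geom Langlands for tori} gives $f_*\delta^{\chi\mon}\in D_{\mono}(H')$, and Corollary~\ref{r:block} yields its decomposition $f_*\delta^{\chi\mon}\simeq\bigoplus_{\eta}\Av^{\eta\mon}(f_*\delta^{\chi\mon})$ over residual character sheaves $\eta$ of $H'$. Transporting through the equivalence $\CH$ of Proposition~\ref{prop: tame geometric Langlands for tori}---under which $\delta^{\chi\mon}$ corresponds to $\omega_{\frf_\chi}$ and $f_*$ to $(f^\vee)^!$ along the induced homomorphism $f^\vee\colon\mathrm{LS}^{t,\Box}_{{H'}^\vee}\to\mathrm{LS}^{t,\Box}_{H^\vee}$---one sees that $f_*\delta^{\chi\mon}$ is supported on the disjoint union $\bigsqcup_i\frf_{\chi'_i}$ running over the finitely many residual character sheaves $\chi'_i$ of $H'$ with $f^*\chi'_i=\chi$ (which includes the given $\chi'$, since $\chi=f^*\chi'$), with $\chi'_i$-block equal to $(f^\vee|_{\frf_{\chi'_i}})^!\omega_{\frf_\chi}$. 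The essential point is that $f^\vee|_{\frf_{\chi'_i}}\colon\frf_{\chi'_i}\to\frf_\chi$ is finite flat with trivial relative dualizing complex---being cut out of the torus homomorphism $f^\vee$, whose relative dualizing complex is translation-equivariant, hence constant---so that $(f^\vee|_{\frf_{\chi'_i}})^!\omega_{\frf_\chi}\simeq\omega_{\frf_{\chi'_i}}$, i.e.\ $\Av^{\chi'_i\mon}(f_*\delta^{\chi\mon})\simeq\delta^{\chi'_i\mon}$ compatibly with the counit maps.

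Granting this, for $\cF\in D((H',\chi'\mon)\bs X)$ write $\cF\simeq\delta^{\chi'\mon}\star\cF$. For each $\chi'_i\neq\chi'$ the block $\Av^{\chi'_i\mon}(f_*\delta^{\chi\mon})\simeq\delta^{\chi'_i\mon}$ lies in $D_{\chi'_i\mon}(H')$, which is orthogonal to $D_{\chi'\mon}(H')$ under convolution (Proposition~\ref{prop: tame geometric Langlands for tori} together with Corollary~\ref{r:block}), so $\delta^{\chi'_i\mon}\star\delta^{\chi'\mon}=0$ and that block acts by zero on $\cF$; the remaining block $\delta^{\chi'\mon}$ acts as the monoidal unit of $D_{\chi'\mon}(H')$. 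Hence the counit $f_*\delta^{\chi\mon}\star\cF\to\cF$ is identified with $\delta^{\chi'\mon}\star\cF\xrightarrow{\sim}\cF$, which proves $\cF\in D((H,\chi\mon)\bs X)$; full faithfulness of the inclusion is automatic, both sides being full subcategories of $D(X)$.

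The step I expect to be the main obstacle is the claim that $(f^\vee)^!$ carries $\omega_{\frf_\chi}$ to $\omega_{\frf_{\chi'_i}}$ on the relevant components. When $E$ is a field and $f^\vee$ is \'etale over $\frf_\chi$ (e.g.\ $\mathrm{char}(E)$ prime to $|\ker f|$) this is immediate; but for integral coefficients, or when $\mathrm{char}(E)$ divides $|\ker f|$, $f^\vee$ is merely finite flat, and one must invoke the Gorenstein property of finite flat group schemes together with the translation-invariance of the dualizing complex of a (formal) torus to trivialize the relative dualizing line bundle. A cleaner alternative, if one takes as available the description of $D((H,\chi\mon)\bs X)$ as the colimit-closure of the forgetful images of honestly $(H,\widetilde\chi)$-equivariant sheaves ($\widetilde\chi\in\mathscr{L}_\chi$), is to transport the equivariance datum of any $(H',\widetilde{\chi'})$-equivariant sheaf along $f$, making it $(H,f^*\widetilde{\chi'})$-equivariant with $f^*\widetilde{\chi'}\in\mathscr{L}_\chi$, and then pass to colimit-closures.
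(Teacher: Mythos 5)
Your argument is correct, and at its core it is the same colocalization argument as the paper's, presented at the object level rather than the module-category level. The paper treats the universal case $X=H'$ categorically: from the compatibility $f_*(\cG\star\cF)\simeq f_*(\cG)\star f_*(\cF)$ it deduces a fully faithful embedding $D_{\chi\mon}(H)\otimes_{D(H)}D(H')\subset D(H')$, then simply notes that $D_{\chi'\mon}(H')$ lies inside this subcategory, and finally tensors the resulting containment with $D(X)$ over $D(H')$. You instead work directly on $D(X)$ with the idempotent comonad $\delta^{\chi\mon}\star_H(-)$ and justify the step the paper leaves unargued by computing $f_*\delta^{\chi\mon}$ through the Mellin transform; this is the same yoga in different packaging, and your version has the merit of making the key containment explicit (the paper only records the relevant spectral geometry in the remark following the lemma, where $D_{\chi\mon}(H)\otimes_{D(H)}D(H')$ is identified with ind-coherent sheaves on $\frf_\chi\times_{\mathrm{LS}^{t,\Box}_{H^\vee}}\mathrm{LS}^{t,\Box}_{{H'}^\vee}$).

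Two adjustments. First, the step you flag as the main obstacle is not one: you are taking a $!$-pullback of a dualizing complex, and $p^!\omega_Y\simeq\omega_X$ for an arbitrary morphism, since $\omega=\pi^!E$ and $!$-pullback is compatible with composition; no Gorenstein or flatness input is needed. What does deserve a sentence is the identification of the $\chi'$-block of the preimage: any finite closed subscheme of $\mathrm{LS}^{t,\Box}_{{H'}^\vee}$ with unique closed point $\chi'$ lies in $\frf_{\chi'}$ by the very definition of $\frf_{\chi'}$, and conversely $f^\vee$ carries such a subscheme to a finite closed subscheme of $\mathrm{LS}^{t,\Box}_{H^\vee}$ with unique closed point $\chi$, hence into $\frf_\chi$; therefore the $\chi'$-component of $(f^\vee)^{-1}(\frf_\chi)\cap \mathrm{LS}^{t,\Box}_{{H'}^\vee}$ is exactly $\frf_{\chi'}$, whose dualizing sheaf corresponds under $\CH$ to $\delta^{\chi'\mon}$ (only this one block is actually needed, since the others annihilate $\cF$). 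Second, rather than verifying that your decomposition is compatible with the counit maps, it is cleaner to use that for an idempotent comonad the essential image coincides with the locus where the counit is invertible: the abstract isomorphism $\delta^{\chi\mon}\star_H\cF\simeq\delta^{\chi'\mon}\star_{H'}\cF\simeq\cF$ already places $\cF$ in $D((H,\chi\mon)\bs X)$.
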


\begin{proof}
By Proposition \ref{lem: functoriality depth zero geom Langlands for tori}, $f_*$ sends $D_{\chi\mon}(H)$ to $D_{\mono}(H')$. It is compatible with the (right) action of $D(H')$ and $D(H)$. That is, for $\cF\in D(H)$ and $\cG\in D_{\chi\mon}(H)$, we have
\[
 f_*(\cG\star \cF)\cong  f_*(\cG)\star f_*(\cF).
\]
Thus, it induces a fully faithful embedding
$D_{\chi\mon}(H)\otimes_{D(H)}D(H')\subset D(H')$. Note that $D_{\chi'\mon}(H')\subset D_{\chi\mon}(H)\otimes_{D(H)}D(H')$. Therefore, we have
\begin{multline*}
D((H',\chi'\mon)\bs X)=D_{\chi'\mon}(H')\otimes_{D(H')}D(X)\subset\\
 D_{\chi\mon}(H)\otimes_{D(H)}D(H')\otimes_{D(H')}D(X)=D_{\chi\mon}(H)\otimes_{D(H)}D(X)=
D((H,\chi\mon)\bs X), 
\end{multline*}
compatible with full embeddings into $D(X)$.
\end{proof}

\begin{rem}
    Note however that the above inclusion is strict in general. Indeed, when $T\to T'$ is an isogeny of tori, then under the equivalence in Proposition \ref{prop: tame geometric Langlands for tori}, the category $D_{\chi\mon}(T)\otimes_{D(T)}D(T')$ can be identified with the category of ind-coherent sheaves on $\frf_\chi\times_{\mathrm{LS}_{\Tv}^{t, \Box}} \mathrm{LS}_{{T'}^\vee}^{t,\Box}$. This ind-scheme usually strictly contains $\frf_{\chi'}$.
\end{rem}

\begin{lemma}\label{l:desc mon}
Let $\pi: H\to H'$ be an isogeny as in Proposition \ref{p:ch sh triv}. Let $\chi'\in \Ch(H')$, and let $\chi\in\Ch(H)$ be its pullback to $H$.

Let $X$ be an algebraic stack with an action of $H$, and let $X'=H'\times^{H}X=\ker\pi\bs X$. Write $\pi_X:X\to X'$.
\begin{enumerate}
\item The pullback $(\pi_X)^!: D(X')\to D(X)$ restricts to a functor $D((H',\chi'\mon)\bs X')\to D((H,\chi\mon)\bs X)$.
\item The resulting functor is an equivalence if the degree of $\pi$ is invertible in $E$.
\end{enumerate}
\end{lemma}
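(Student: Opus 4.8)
Set $\Gamma=\ker\pi$, so that $\pi_X\colon X\to X'$ is a $\Gamma$-torsor. Part~(1) will be immediate: viewing $X'$ as an $H$-prestack via $\pi$ makes $\pi_X$ an $H$-equivariant map, so $\pi_X^!$ carries $D((H,\chi\mon)\bs X')$ into $D((H,\chi\mon)\bs X)$ by Lemma~\ref{lem: monodromic category, pullback}; composing with the fully faithful embedding $D((H',\chi'\mon)\bs X')\hookrightarrow D((H,\chi\mon)\bs X')$ of Lemma~\ref{lem: monodromic category, change group} produces the functor $\Phi$.

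For Part~(2) I would use throughout that $\deg\pi$ is invertible in $E$, so $\Gamma$ is finite \'etale, $\pi_X$ is finite \'etale, and $\pi_X^!\simeq\pi_X^*$. The first step is a block refinement of Lemma~\ref{lem: monodromic category, change group}: since the $H$-action on $X'$ factors through $\pi$ and $\pi^\vee$ is \'etale of degree $\deg\pi$, the spectral picture of Proposition~\ref{prop: tame geometric Langlands for tori} identifies the preimage of $\frf_\chi$ under $\pi^\vee$ with $\bigsqcup_{\zeta\in\ker\pi^\vee}\frf_{\chi'\zeta}$, whence
$$D((H,\chi\mon)\bs X')\;\simeq\;\bigoplus_{\zeta\in\ker\pi^\vee}D((H',\chi'\zeta\mon)\bs X').$$
Through $\pi_X^!$, $\Phi$ extends to a functor $\widetilde\Phi$ out of this whole sum; since each nontrivial $\zeta\in\ker\pi^\vee$ has order invertible in $E$, its reduction is a nontrivial residual character sheaf, so the summands with distinct $\zeta$ land in distinct blocks of $D((H,\chi\mon)\bs X)$, and it suffices to show that each restriction $\Phi_\zeta\colon D((H',\chi'\zeta\mon)\bs X')\to D((H,\chi\mon)\bs X)$ — in particular $\Phi=\Phi_1$ — is an equivalence.

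Now $\Phi$ is conservative (being a restriction of the conservative $\pi_X^!$), and $\widetilde\Phi$ has the right adjoint $\pi_{X*}$, which preserves monodromic subcategories. For fully faithfulness I would use the projection formula $\pi_{X*}\pi_X^!\cF\simeq\cF\otimes\pi_{X*}E$ together with the decomposition $\pi_{X*}E\simeq\bigoplus_\psi L_\psi^{\oplus\dim\psi}$ of the associated local system into isotypic pieces of the $\Gamma$-torsor, with $L_1=E$. Tracking monodromies — which reduces to the case $X=H$, where $\pi_X=\pi$ and $\pi_*\underline E=\bigoplus_\psi\zeta_\psi$ is the standard decomposition of a pushforward along the isogeny, $\zeta_\psi$ exhausting $\ker\pi^\vee$ — shows $L_\psi$ lies in the block $D((H',\zeta_\psi\mon)\bs X')$ for a bijection $\psi\mapsto\zeta_\psi$. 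Since tensoring with an $\eta$-monodromic local system translates the monodromy block by $\eta$, for $\cF$ in the $\chi'\zeta$-block the $\chi'\zeta$-component of $\pi_{X*}\pi_X^!\cF\simeq\bigoplus_\psi(\cF\otimes L_\psi)^{\oplus\dim\psi}$ is $\cF\otimes L_1=\cF$; as this is the target of the unit of $\Phi_\zeta\dashv\bigl([\pi_{X*}(-)]_{\chi'\zeta\text{-block}}\bigr)$, that unit is an isomorphism and $\Phi_\zeta$ is fully faithful. For essential surjectivity of $\Phi_1$, the crucial point is that on $D((H,\chi\mon)\bs X)$ every translation $(a_\gamma)^!$, $\gamma\in\Gamma\subset Z(H)$, is isomorphic to the identity: it acts by convolution with $\Av^{\chi\mon}(\delta_\gamma)=\dcm\star\delta_\gamma$, which equals $\dcm$ because every character sheaf in $\mathscr{L}_\chi$ is $\Gamma$-translation invariant — $\chi|_\Gamma$ is trivial by Proposition~\ref{p:ch sh triv}, and invertibility of $\deg\pi$ makes the ensuing torsion line bundle on $\frf_\chi$ trivial, since $\Pic\frf_\chi=0$ (Lemma~\ref{lem: function on frf}). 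Thus $\pi_X^!\pi_{X*}\cG\simeq\bigoplus_{\gamma\in\Gamma}(a_\gamma)^!\cG\simeq\cG^{\oplus|\Gamma|}$ for $\cG\in D((H,\chi\mon)\bs X)$; writing $\pi_{X*}\cG=\bigoplus_\zeta[\pi_{X*}\cG]_\zeta$ and using that $\pi_X^!L_\psi$ is a trivial local system (the pulled-back torsor $X\times_{X'}X=X\times\Gamma$ is trivial), each $\Phi_\zeta([\pi_{X*}\cG]_\zeta)$ is, up to an innocuous trivial twist, a value of $\Phi_1$, so $\cG^{\oplus|\Gamma|}$ and hence its retract $\cG$ lies in the essential image of $\Phi_1$. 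Fully faithful plus essentially surjective gives the equivalence.

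The main obstacle is the monodromic bookkeeping of Part~(2): pinning down the block $\zeta_\psi$ of each $L_\psi$ (equivalently, the monodromy of the $\Gamma$-torsor $X\to X'$), and verifying $(a_\gamma)^!\simeq\id$ on the $\chi$-monodromic category. The latter is exactly where invertibility of $\deg\pi$ enters, and over integral coefficients it rests on the structural facts about $\frf_\chi$ from Lemmas~\ref{lem: function on frf} and~\ref{l:enddcm} rather than on $\frf_\chi$ being a formal disc.
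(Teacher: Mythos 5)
Part (1) of your proposal is exactly the paper's argument. For part (2), however, you take a genuinely different and much longer route than the paper, and as written it has real gaps. The paper's proof is a two-line adjunction argument: the right adjoint of $(\pi_X)^!=(\pi_X)^*$ is $\delta^{\chi'\mon}\star(\pi_X)_*(-)$, and base change along the square comparing the action maps for $H\curvearrowright X$ and $H'\curvearrowright X'$ gives $(\pi_X)^!(\delta^{\chi'\mon}\star(\pi_X)_*\cF)\simeq\delta^{\chi\mon}\star\cF\simeq\cF$ for $\cF\in D((H,\chi\mon)\bs X)$ (invertibility of $\deg\pi$ enters only through $\frf_{\chi'}\xrightarrow{\sim}\frf_\chi$, i.e.\ $\pi^!$ matches the two cofree monodromic units); conservativity of $(\pi_X)^!$ by descent then forces the unit $\cF\to\delta^{\chi'\mon}\star(\pi_X)_*(\pi_X)^!\cF$ to be an isomorphism as well. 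No block decomposition, no isotypic decomposition of $\pi_{X*}E$, and no analysis of central translations is needed.

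Concerning the gaps in your version: (i) the assertion that the summands $D((H',\chi'\zeta\mon)\bs X')$ for distinct $\zeta\in\ker\pi^\vee$ ``land in distinct blocks of $D((H,\chi\mon)\bs X)$'' is false --- since $\pi^*(\chi'\zeta)=\chi$ for every $\zeta$, they all land in the \emph{same} category; already for $X=H$ each $\Phi_\zeta$ is an equivalence onto all of $D_{\chi\mon}(H)$. This claim turns out not to be used in your later steps, but it reflects the wrong picture of why the restriction to the $\chi'$-block is necessary. (ii) Invertibility of $\deg\pi$ in $E$ does \emph{not} make $\Gamma$ or $\pi_X$ finite \'etale over $k$: in the \'etale setting with $\mathrm{char}\,k=p>0$ one may have $p\mid\deg\pi$ (e.g.\ $E=\Qlbar$ and $\pi=[p]$ on $\Gm$), so $\Gamma$ has an infinitesimal part, $|\Gamma(k)|<\deg\pi$, there are no tame $\zeta$ of $p$-power order, and your identities $\pi_{X*}E\simeq\bigoplus_\psi L_\psi$, $\pi_X^!\pi_{X*}\cG\simeq\bigoplus_{\gamma\in\Gamma}a_\gamma^!\cG\simeq\cG^{\oplus|\Gamma|}$ and the indexing of blocks by $\ker\pi^\vee$ all need to be re-indexed by $\Gamma(k)$ (with the radicial part treated as a universal homeomorphism). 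This is repairable but, as stated, a step fails in a case the lemma covers. (iii) In the fully-faithfulness step, producing an abstract isomorphism between $\cF$ and the $\chi'\zeta$-component of $\pi_{X*}\pi_X^!\cF$ does not show that the \emph{unit map} is an isomorphism; you must identify the composite $\cF\to\pi_{X*}\pi_X^!\cF\to\cF\otimes L_1$ with the identity (via the unit section $E\to\pi_{X*}E$), and similarly the triviality of the evaluation line bundle on $\frf_\chi$ (used for $(a_\gamma)^!\simeq\id$) is true but is not what Lemma \ref{lem: function on frf} says, so it needs its own short argument. With these points repaired your strategy would work, but it is considerably heavier than the paper's adjunction-plus-descent argument.
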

\begin{proof}
We consider the action of $H$ on $X'$ as the intermediate object. Then (1) follows from Lemma \ref{lem: monodromic category, pullback} and Lemma \ref{lem: monodromic category, change group}.

Note that the right adjoint of $(\pi_X)^!=(\pi_X)^*$ is given by $\d^{\chi'\mon}\star (\pi_X)_*(-)$. 
We consider the Cartesian diagram
\[
\xymatrix{
H\times X \ar[rr]\ar[d] && X\ar[d]\\
H'\times X\ar[r] & H'\times X'\ar[r] & X'.
}
\]
Then base change gives $(\pi_X)^!(\d^{\chi'\mon}\star (\pi_X)_*(\cF))=\d^{\chi\mon}\star \cF=\cF$ for $\cF\in D_{\chi\mon}(X)$. Therefore, $\d^{\chi'\mon}\star (\pi_X)_*(-)$ is fully faithful. 

On the other hand, for $\cF\in D_{\chi'\mon}(X')$, we have the adjunction
\[
\cF\to \d^{\chi'\mon}\star(\pi_X)_*((\pi_X)^!\cF),
\]
whose $!$-pullback to along $\pi_X$ is an isomorphism by what we have just shown.
As $(\pi_X)^!$ is conservative (by descent), this implies that $\cF\to \d^{\chi'\mon}\star(\pi_X)_*((\pi_X)^!\cF)$ is also an isomorphism. (2) follows.
\end{proof}

\section{Loop groups}\label{ss:loop group}
In this section, we review and discuss some basic properties of loop groups over $k$, their central extensions and the affine Weyl group combinatorics.
A reference for twisted loop groups is \cite{PR}.

In addition to standard material, we introduce a notion of affine pinning for loop groups in \S\ref{ss:Cphi}, and prove a result about its stabilizer under the adjoint action of the loop group (see Proposition \ref{p:Mphi to Om}), which seems to be new. The discussion of affine pinnings will be used to construct the Soergel functor in \S\ref{s:Soergel}.

\subsection{Twisted loop groups}\label{ss:tw loop}

\sss{}
For a group scheme $G$ over $K=k\lr{t}$, let $LG$ be the associated loop group whose $R$-points are $G(R\lr{t})$, for any $k$-algebra $R$.

On the other hand, for a $k$-group $\GG$ with $\gg$ its Lie algebra,  we use the notation $\GG\lr{t}$ to denote the loop group associated to the base change $G=\GG\times_{\Spec k} \Spec K$, with $\gg\lr{t}$ its Lie algebra.

\sss{}
Let $G$ be a connected reductive group over $K=k\lr{t}$ that splits over a tamely ramified extension $K'/K$. Any such $K'$ is of the form $K'=k\lr{t^{1/e}}$ for some $e$ prime to $p$. We do not require $K'$ to be a minimal splitting field for $G$. 

Let $\GG$ be the split form of $G$, defined over $k$. Then $G$ is the descent of $\GG_{K'}$ with respect to the $\Gal(K'/K)\cong \mu_{e}(k)$-descent datum given by the diagonal action on $\GG$ and on $K'$. Fix a pinning of $\GG$, which in particular contains a maximal torus $\TT$, a pair of opposite Borel subgroups  $\BB$ and $\BB^{\opp}$ such that $\BB\cap \BB^{\opp}=\TT$. Since $G$ is quasi-split, we may assume that the action of $\mu_{e}$ on $\GG$ is via a pinned automorphism $\s: \mu_{e}\to \Aut^{\textup{pin}}(\GG)$.

Then we have
\begin{equation*}
LG=\GG\lr{t^{1/e}}^{\mu_{e}}
\end{equation*}
where $\z\in \mu_{e}$ acts on $\GG$ by $\s(\z)$ and on $t^{1/e}$ by multiplication by $\z$.  Let $\II\subset \GG\tl{t^{1/e}}$ be the standard Iwahori subgroup of $\GG\lr{t^{1/e}}$ corresponding to $\BB$. Then the neutral component $I$ of $LG\cap \II$ is an Iwahori subgroup of $LG$.

Note that $LG$ contains the reductive group $\GG^{\mu_{e}}$. Let $\GG_{0}=\GG^{\s(\mu_{e}),\c}$ be its neutral component. Let $\AA=\TT^{\s(\mu_{e}), \c}$. Then $\AA$ is a maximal torus of $\GG_{0}$. The $K$-torus $\AA_{K}$ is a maximal split torus of $G$.

\subsection{Affine roots}

The Lie algebra $\Lie G$ is a $K$-vector space. When we think of it as a $k$-vector space, it can be identified with the Lie algebra of $LG$, which we denote by $L\frg$. 

We have an action of the one-dimensional torus $\Grot$ on $L\frg$ by scaling $t^{1/e}$. 

Let $\Phi_{\aff}$ be the set of affine roots of $LG$: its elements are nontrivial characters $\a\in \xch(\AA\times \Grot)$ that appear in the action of $\AA\times \Grot$ on $L\frg$. For $\a\in \Phi_{\aff}$, denote by $(L\frg)_{\a}$ is the corresponding root space. An affine root $\a$ is real if $\a|_{\AA}$ is nontrivial, in which case $(L\frg)_{\a}$ is one-dimensional over $k$.

Positive affine roots $\Phi^{+}_{\aff}$ are those that appear in the adjoint action of $\AA\times \Grot$ on $\Lie I$. This determines a set of simple affine roots $S_{\aff}$. Let $\Phi^{-}_{\aff}=-\Phi^{+}_{\aff}$ be the set of negative affine roots. The Lie algebra $\Lie I$ is the $t$-adic completion of the span of $\fra=\Lie \AA$ and the positive affine root spaces $(L\frg)_{\a}$, $\a\in \Phi_{\aff}^{+}$.

\subsection{(Extended) Affine Weyl group}\label{ss:Wa}
Let $T\subset G$ be the centralizer of $\AA$, which is a maximal torus in $G$ (over $K$). Consider the normalizer $N_{LG}(\AA)$. It fits into an exact sequence
\begin{equation}\label{NA ses}
1\to LT\to N_{LG}(\AA)\to W\to 1
\end{equation}
where $W=W(G,T)$ can be identified with $\WW^{\s(\mu_{e})}$ (where $\WW$ is the Weyl group of $\GG$ with respect to $\TT$). The inclusion $\GG_{0}\incl \GG$ induces an isomorphism 
\[
W(\GG_{0}, \AA)\isom \WW^{\s(\mu_{e})}=W.
\]

Let
\begin{equation*}
\tilW=\pi_{0}(N_{LG}(\AA))
\end{equation*}
be the extended affine Weyl group of $LG$. Using the canonical isomorphism $\pi_{0}(LT)\cong \xcoch(\TT)_{\mu_{e}}$, we get an exact sequence
\begin{equation}\label{eq: extended affine weyl}
1\to \xcoch(\TT)_{\mu_{e}}\to \tilW\to W\to 1.
\end{equation}

The affine Weyl group $\Wa\subset \tilW$ is generated by reflections $s_{\a}$ defined by affine real roots $\a$. It is a Coxeter group with simple reflections $s_{\a}$ for $\a\in S_{\aff}$. Let $\ell: W_{\aff}\to \ZZ_{\ge0}$ be the length function of the Coxeter group $\Wa$.

The group $\tilW$ permutes $\Phi_{\aff}$. The stabilizer of the set of affine simple roots $S_{\aff}$ is denoted by $\Om$. We have
\begin{equation*}
\tilW=\Wa\rtimes \Om.
\end{equation*}
We extend the length function to the whole $\tilW$ by declaring
\begin{equation*}
\ell(w\om)=\ell(w), \quad \forall w\in \Wa, \om\in \Om.
\end{equation*}
In particular, $\Om$ is the set of length zero elements in $\tilW$.

The group $\Om\cong \tilW/W_{\aff}$ is also identified with $\pi_{0}(LG)$.

\subsection{Globalization of $G$}\label{ss:glob G}

\sss{The group scheme $\cG$}
We use the notation from \S \ref{ss:tw loop}. In particular, $G$ is a connected reductive group over $K=k\lr{t}$ that splits over a tamely ramified extension $K'/K$. For such $G$, we can associate it with a group scheme $\cG$ over $\p^1=\p^{1}_{k}$ together with an isomorphism $\cG|_{\Spec K}\cong G$, as in \cite[Section 1.2]{HNY}. Here $\Spec K$ is identified with the formal puncture disk of $0$ in $\p^{1}$, such that $t$ is the standard coordinate on $\p^{1}$. 

Let $[e]: \p^{1}_{k}\to \p^{1}_{k}$ be the $e$-th power map in terms of standard coordinates. Let $[e]_{*}(\GG\times\p^{1})$ be the Weil restriction of the constant group scheme $\GG\times\p^{1}$ along $[e]$. Then  $\cG$ is defined as the fiberwise neutral component of $[e]_*(\GG\times \p^1)^{\mu_{e}}$, where $\mu_{e}$ acts diagonally on $\GG$ (via the pinned action $\s$) and on $\p^{1}$ by rotation.

\sss{Polynomial loop group}
We let $L_{\mathrm{pol}}G=\Res_{\GG_m/k}(\GG\times\GG_m)^{\mu_e}$ be the polynomial loop group of $G$, so $L_{\mathrm{pol}}G(R)=\cG(R[t,t^{-1}])=\GG(R[t^{1/e},t^{-1/e}])^{\mu_e}$. Note that $L_{\mathrm{pol}}G$ can be non-reduced.

\sss{The group $M$}\label{sss:M}
We let $M:=N_{L_{\mathrm{pol}}G}(I,\AA)^{\red}$ be the reduced normalizer of $(I, \AA)$ in $L_{\mathrm{pol}}G$, which is an ind-group over $k$. This group will only be used in the construction of the Soergel functor.

\begin{prop}\label{M ses}
We have an exact sequence
\begin{equation}\label{eq: AMOm}
1\to \AA\to M\to \Omega\to 1.
\end{equation}

\end{prop}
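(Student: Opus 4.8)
The plan is to realize $M\to\Om$ as a ``which–component'' map for the normalizer of $I$ in $LG$, and then to compute its kernel by a normalizer calculation inside the Iwahori in which the \emph{polynomiality} of elements of $M$ is essential.

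First I would recall the standard structure of $N_{LG}(I)$. From the Bruhat decomposition $LG=\bigsqcup_{w\in\tilW}I\dot w I$ together with the fact that $I\dot\om I=\dot\om I=I\dot\om$ for $\om\in\Om$ (the length zero elements), one obtains $N_{LG}(I)=\bigsqcup_{\om\in\Om}\dot\om I$, hence an isomorphism of ind-schemes $N_{LG}(I)/I\cong\Om$ (see \cite{PR}). By construction $M\subseteq N_{L_{\mathrm{pol}}G}(I,\AA)\subseteq N_{LG}(I)$, so composing with $N_{LG}(I)\twoheadrightarrow\Om$ yields a homomorphism $\phi\colon M\to\Om$; this is a morphism of ind-schemes since $\Om$ is discrete and $\phi$ merely records which coset $\dot\om I$ an element lies in. Its kernel is $M\cap I$.

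Next comes the crux, the identity $M\cap I=\AA$. An element of $M\cap I$ lies in $I=\AA\ltimes I^{+}$ and normalizes $\AA$, hence lies in $N_{I}(\AA)$; writing elements of $I$ as $au$ with $a\in\AA$, $u\in I^{+}$, one checks that $au$ normalizes $\AA$ precisely when $u$ centralizes $\AA$, so $N_{I}(\AA)=\AA\times Z_{I^{+}}(\AA)$. Since $G$ is quasi-split, $Z_{G}(\AA)$ is a maximal torus of $G$, so $Z_{\GG}(\AA)$ is a torus $\TT'$ with $\AA\subseteq\TT'\subseteq\TT$, and $Z_{I^{+}}(\AA)=I^{+}\cap L\TT'$ is the congruence part of $L\TT'$. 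But a unit of $k[t^{1/e},t^{-1/e}]$ lying in $1+t^{1/e}k[t^{1/e}]$ must equal $1$, so any polynomial loop into $\TT'$ congruent to $1$ modulo $t^{1/e}$ is trivial; hence $Z_{I^{+}}(\AA)\cap L_{\mathrm{pol}}G=1$, and therefore $M\cap I=(N_{I}(\AA)\cap L_{\mathrm{pol}}G)^{\red}=\AA$, as $\AA$ is already reduced.

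Finally I would check that $\phi$ is surjective: the group $\tilW$ is generated by the finite Weyl group, realized inside $\GG_{0}$, together with the translation elements $t^{\l}$, all of which are polynomial loops normalizing $\AA$; so each $\om\in\Om\subseteq\tilW$ lifts to some $\dot\om\in N_{L_{\mathrm{pol}}G}(\AA)$, and since $\dot\om$ acts on the apartment through $\om\in\Om$, which stabilizes the fundamental alcove, $\dot\om$ normalizes $I$. Thus $\dot\om$ is a $k$-point of $M$ with $\phi(\dot\om)=\om$. Consequently every fibre of $\phi$ is a trivial $\AA$-torsor, $M\cong\AA\times\Om$ as a scheme, and the sequence \eqref{eq: AMOm} is exact. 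I expect the one genuinely delicate point to be the triviality of $Z_{I^{+}}(\AA)\cap L_{\mathrm{pol}}G$: the ``imaginary-root'' directions normalizing $\AA$ do occur inside $I$, but only through power-series loops rather than polynomial ones, and it is precisely the polynomial condition in the definition of $M$ that cuts $M$ down to an extension of $\Om$ by $\AA$.
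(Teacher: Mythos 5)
Your identification of the kernel is correct, and it takes a genuinely more hands-on route than the paper: where the paper reduces the kernel to $(LT)^{\circ}\cap L_{\mathrm{pol}}T$ and proves this equals $\AA$ in its Step 1 via the norm diagram and a reductivity argument, you reduce to $N_{I}(\AA)\cap L_{\mathrm{pol}}G$ (the step $N_I(\AA)=\AA\cdot Z_{I^+}(\AA)$ is fine: if $u\in I^+$ normalizes $\AA$ then $\theta(a)a^{-1}\in \AA\cap I^+=\{1\}$) and then kill the congruence-polynomial part of the torus by the explicit observation that a unit of $k[t^{1/e},t^{-1/e}]$ congruent to $1$ mod $t^{1/e}$ is trivial. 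That computation is valid (a polynomial loop landing in $LT$ over the Laurent field is automatically a polynomial loop into the torus, since $k[t^{1/e},t^{-1/e}]$ injects into $k(\!(t^{1/e})\!)$), and it buys a more elementary kernel argument than the paper's.

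The genuine gap is in your surjectivity step, and it is exactly the point the paper's Step 1 exists to handle. The translation part of $\tilW$ is the coinvariant lattice $\xcoch(\TT)_{\mu_e}$, and for a general class $\lambda$ in these coinvariants the symbol $t^{\lambda}$ is not defined, let alone a polynomial loop: the obvious elements $t^{\clambda}$ with $\clambda\in\xcoch(\AA)=\xcoch(\TT)^{\mu_e}$ only generate a finite-index subgroup of the translations, and in the ramified case the translation parts of elements of $\Om$ need not lie in that subgroup. To lift an arbitrary class one must use the norm construction $\Nm(\cmu(t^{1/e}))=\prod_{\sigma\in\mu_e}(\sigma\cmu)(\sigma t^{1/e})\in L_{\mathrm{pol}}T$, together with the nontrivial input (\cite[Theorem 5.1.A]{PR}) that the norm map identifies $\pi_0(LT)$ with $\xcoch(\TT)_{\mu_e}$, so that this element really represents the class of $\cmu$; the finite Weyl group part via $\GG_0$ (or the Tits section) is fine. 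So for split $G$ your proof is complete, but in the twisted case the sentence ``the translation elements $t^{\lambda}$, all of which are polynomial loops normalizing $\AA$'' conceals precisely the content of the paper's torus computation and cannot be taken for granted.
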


\begin{proof}
For ease of reading, we break the proof into a sequence of steps. 

{\it Step 1.} We first show that the sequence \eqref{eq: AMOm} is short exact when $\GG$ is a torus.

In this case, we denote $\GG,\cG$ and $G$ by $\TT, \cT$ and $T$. We have  $Z\GG_{0}=\AA=\TT^{\mu_{e},\circ}$. The norm map relates the situation of the split torus $\TT\times \wt\Gm$ (where $\wt\Gm=\Spec k[t^{1/e}, t^{-1/e}]$, viewed as a $\mu_{e}$-cover of $\Gm$) and $\cT$:

\begin{equation*}
\xymatrix{\TT\ar[d]^{\Nm}\ar[r] & (R_{\wt\Gm/k}(\TT\times \wt\Gm))^{\red}\ar[d]^{\Nm}\ar[r]^-{\wt q} & \xcoch(\TT)\ar[d]^{\ov\Nm}\\
\AA\ar[r] & (R_{\Gm/k}\cT)^{\red}\ar[r]^-{q} & \pi_{0}(LT)
}
\end{equation*}
We have $R_{\wt\Gm/k}(\TT\times\wt\Gm)^{\red}\cong \TT\times \xcoch(\TT)$, where $(a,\l)\in \TT\times \xcoch(\TT)$ corresponds to the section $a\l: \wt\Gm\to \TT$. The map $\wt q$ is given by the natural projection. Therefore the first row is short exact. By \cite[Theorem 5.1, part A]{PR}, $\ov \Nm: \xcoch(\TT)\to \pi_{0}(LT)$ identifies the latter with the $\mu_{e}$-coinvariants of $\xcoch(\TT)$. In particular, $\ov\Nm$ is surjective, hence $q$ is surjective. 

It remains to show that $\ker(q)$, which obviously contains $\AA$, is exactly $\AA$. By definition, $\ker(q)$ is contained in the neutral component of $LT$, which is pro-algebraic with reductive quotient $\AA$. We also know that $\ker(q)$ is reductive because it is a finite type subgroup of the $\mu_{e}$-invariants of $(R_{\wt\Gm/k}(\TT\times \wt\Gm))^{\red}$. Therefore $\ker(q)$ has to be equal to $\AA$.

{\it Step 2.} We show that the natural map $M \to \Om$ is surjective for a general reductive group $\GG$.

Recall the exact sequence 
\begin{equation} \label{eq: NLGcomponent} 1 \to (LT)^{\circ} \to N_{LG}(\AA) \to \extw \to 1.
\end{equation}
We claim that there exists a set-theoretic section of the map $N_{LG}(\AA) \to \extw$ whose image lies in the polynomial loop group.  

Note that $(L^+\GG)^{\mu_e, \circ}$ is a special parahoric subgroup of $LG$, which provides a splitting of the exact sequence \eqref{eq: extended affine weyl}. Moreover we can identify $W$ with $\WW^{\mu_e}$. Under this identification, we lift $W$ to the polynomial loop group via the Tits section. The argument in Step 1 shows that $\xcoch(\TT)_{\mu_{e}}$ can also be lifted to the polynomial loop group. Hence, the map $N_{L_{\mathrm{pol}} G}(\AA)^{\red} \to \extw$ is surjective. 

By restricting to the length zero elements of $\extw$, we conclude that $M \to \Om$ is surjective. 

{\it Step 3.} We show that the kernel of $M \to \Om $ is exactly $\AA$. 

By the exact sequence \eqref{eq: NLGcomponent}, the kernel of $M \to \Om$ is contained in the intersection $(LT)^{\circ} \cap L_{\mathrm{pol}}G$.
Since this intersection coincides with $(LT)^{\circ} \cap L_{\mathrm{pol}}T$, it remains to verify that $(LT)^{\circ} \cap L_{\mathrm{pol}}T = \AA$, which was proved in Step 1. 
\end{proof}

\subsection{Central extension}\label{ss: cent ext}

We will need a certain central extension $\tLG$ of $LG$ by a one-dimensional torus $\gmc$
\begin{equation*}
	1 \ra \gmc \ra  \tLG \ra LG \ra 1.
\end{equation*}

\subsubsection{Basic central extension}
The construction of the central extension is by the well-known procedure of determinant line bundles, which we recall below. 

The starting point is the determinant line bundle $\cL_{\det}$ on the affine Grassmannian $\Gr_{\GL_n}$ for $G=\GL_n$: its fiber over an $R$-point $\L\in \Gr_{\GL_n}(R)$, viewed as a projective $R\tl{t}$-submodule $\L\subset R\lr{t}^n$, is the line $\det_R(\L_0/\L\cap \L_0)\ot \det_R(\L/\L\cap \L_0)^{-1}$. Here $\L_0=R\tl{t}^n\in \Gr_{\GL_n}(R)$ is the standard lattice, $\det_R(M)\in \Pic(R)$ means taking the top exterior power of a projective $R$-module $M$ of finite rank. We still denote the pullback of $\cL_{\det}$ to $LG$ by $\cL_{\det}$. It carries a canonical multiplicative structure: a canonical isomorphism $\mult^*\cL_{\det}\cong \cL_{\det}\bt\cL_{\det}$ over $LG\times LG$ (where $\mult$ is the multiplication map on $LG$) satisfying further associativity. The $\Gm$-torsor $\tLG\to LG$ obtained by removing the zero section from the total space of $\cL_{\det}$ thus gives a central extension of $LG$, called the {\em basic central extension} of $LG$ for $G=\GL_n$.

Now for a connected reductive group $G$ over $K$ as in \S\ref{ss:tw loop}, we choose an algebraic representation $V$ of $\GG$ (a finite-dimensional $K'$-vector space)
\begin{equation*}
    \rho_V: \GG \to \GL(V)
\end{equation*}
with finite kernel. Then $\rho_V$ induces a homomorphism of ind-groups
\begin{equation}
    LG \inj \GG\lr{t^{1/e}} \to \GL(V)\lr{t^{1/e}}.
\end{equation}
By pulling back the basic central extension of $\GL(V)\lr{t^{1/e}}$ constructed in the previous paragraph, we get a central extension $\tLG\to LG$, depending on the choice of $V$.

For notations having to do with the centrally extended loop group $\tLG$, we add a tilde to the corresponding notation for the loop group $LG$. For example, $\wt \AA$ and $\tI$ are the maximal torus and Iwahori of $\tLG$ respectively. 

\subsubsection{Commutator pairing} \label{sss:commutator}
We will compute the commutator pairing of this
central extension when restrict to the torus.  

Let $P(V)$ be the multi-set of weights of $V$. We define $\langle \cdot, \cdot \rangle = \langle \cdot, \cdot \rangle_V : \xcoch(\TT) \otimes \xcoch(\TT) \to \ZZ$, by declaring that 
$$
\langle \cmu , \clambda \rangle_V := \sum_{\om \in P(V)} \om(\cmu) \om(\clambda), \qquad \cmu, \clambda \in \xcoch(\TT).$$ Note that $\langle \cdot, \cdot \rangle_V$ is positive definite by our assumption on $V$. For $\cmu \in \xcoch(\TT)$, we define $\cmu^*$ to be the unique element in $\xch(\TT)$ such that $\cmu^*(-) = \langle \cmu, - \rangle$.

The central extension $\widetilde{L\TT}$ gives rise to a bilinear commutator pairing $$c: L\TT \times L\TT \ra \gmc,$$ which sends $(g_1, g_2)$ to the commutator $[\wt g_{1}, \wt g_{2}]=\wt g_{1}\wt g_{2}\wt g^{-1}_{1}\wt g^{-1}_{2}$, for any liftings $\wt g_i$ of $g_i$. From \cite[Section 9]{Li}, we know that the commutator pairing of 
\begin{equation}\label{eq: commutator pairing}
c(\cmu(t^{1/e}), \clambda(x)) = x^{\langle \cmu , \clambda \rangle_V}, \qquad x \in \gm.
\end{equation}

\sss{Action of $\tilW$ on $\wt\AA$}\label{SSS: action of tilW on wtAA}
Since $N_{\tLG}(\wt\AA)$ normalizes $\wt \AA$, we know that $\extw$ acts on $\xcoch(\wt \AA)$ and $\xch(\wt \AA)$. We now describe the action of $\xcoch(\TT)_{\mu_e} \subset \extw$ on $\xcoch(\wt \AA)$ and $\xch(\wt \AA)$ explicitly using the pairing $\langle \cdot, \cdot \rangle_V$.

Let $K_{c}$ (resp. $\Lambda_{c}$) be a generator of $\xcoch (\gmc)$ (resp. $\xch (\gmc)$) that are dual to each other. Note that the natural map $\AA \inj \TT$ induces an injection $\xcoch(\AA) \inj \xcoch(\TT)$ and a surjection $\xch(\TT) \surj \xch(\AA)$. Hence, it makes sense to define $\langle \sigma \cmu , \clambda \rangle$ and view $\cmu^*$ as an element in $\xch(\AA)$ in the following lemma. 

\begin{lem} \label{l:conj action}
    Let $\nu$ be the composition
    \begin{equation}
        \nu: \xcoch(\TT) \surj \xcoch(\TT)_{\mu_e}\subset \tilW
    \end{equation}
    where the first map is the natural projection. For $\cmu \in \xcoch(\TT)$ and $\clambda \in \xcoch(\AA)$, the element $\nu(\cmu)\in \tilW$ sends $\clambda$ to $\clambda + \sum_{\sigma \in \mu_e} \langle \sigma \cmu , \clambda \rangle K_{c}$. Dually, the element $\nu(\cmu)$ sends $\chi$ to $\chi - \chi(K_{c}) \sum_{\sigma \in \mu_{e}}\sigma\cmu^*$ for $\chi \in \xch(\wt \AA)$.
\end{lem}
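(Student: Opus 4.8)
The plan is to translate the conjugation action of $\nu(\cmu)$ on $\wt\AA$ into a commutator computation inside the central extension of the loop torus, and then to read off the answer from the explicit commutator pairing \eqref{eq: commutator pairing}. \emph{A polynomial representative and reduction to a commutator.} First I would produce an explicit representative of $\nu(\cmu)$ in $N_{LG}(\AA)$ lying in the polynomial loop group, following the proof of Proposition~\ref{M ses} (Step~1): under the identification $\pi_{0}(LT)\cong\xcoch(\TT)_{\mu_{e}}$ of \cite{PR}, the class $\nu(\cmu)$ is represented by the norm
\[
n_{\cmu}\ :=\ \prod_{\sigma\in\mu_{e}}{}^{\sigma}\!\big(\cmu(t^{1/e})\big)\ \in\ LT\ \subset\ \Lpol G,
\]
where ${}^{\sigma}(-)$ denotes the action of $\sigma\in\mu_{e}$ used to form $LG=\GG\lr{t^{1/e}}^{\mu_{e}}$ and $T=Z_{G}(\AA)$ is the centralizer torus; note that ${}^{\sigma}\!\big(\cmu(t^{1/e})\big)$ equals $(\sigma\cmu)(t^{1/e})$ up to a constant loop in $\TT(k)$. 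Since $LT$ is abelian, $n_{\cmu}$ centralizes the constant subtorus $\AA$; hence for a lift $\widetilde n_{\cmu}\in N_{\tLG}(\wt\AA)$ and $\clambda\in\xcoch(\AA)$, viewed inside $\xcoch(\wt\AA)=\xcoch(\AA)\oplus\ZZ K_{c}$ via the canonical splitting of $\tLG$ over $L^{+}\GG$, conjugation fixes $\clambda(x)$ up to the central commutator:
\[
\widetilde n_{\cmu}\cdot\clambda(x)\cdot\widetilde n_{\cmu}^{-1}=c\big(n_{\cmu},\clambda(x)\big)\cdot\clambda(x),\qquad x\in\gm,\ \ c\big(n_{\cmu},\clambda(x)\big)\in\gmc(k).
\]
Thus the whole action of $\nu(\cmu)$ on $\xcoch(\wt\AA)$ is controlled by the scalar $c(n_{\cmu},\clambda(x))$.

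\emph{Evaluating the commutator.} To compute $c(n_{\cmu},\clambda(x))$ I would push everything forward along $LG\hookrightarrow\GG\lr{t^{1/e}}\xrightarrow{\rho_{V}}\GL(V)\lr{t^{1/e}}$, along which $\tLG$ is by construction pulled back from the basic (determinant) extension; there the commutator pairing is defined on the entire loop torus (so all the factors ${}^{\sigma}(\cmu(t^{1/e}))$ are permitted), it restricts to $c$, and \eqref{eq: commutator pairing} computes it. Using bimultiplicativity, that constant loops pair trivially with the constant loop $\clambda(x)$, and \eqref{eq: commutator pairing} applied with $\cmu$ replaced by $\sigma\cmu$,
\[
c\big(n_{\cmu},\clambda(x)\big)=\prod_{\sigma\in\mu_{e}}c\big((\sigma\cmu)(t^{1/e}),\clambda(x)\big)=\prod_{\sigma\in\mu_{e}}x^{\langle\sigma\cmu,\clambda\rangle}=x^{\sum_{\sigma\in\mu_{e}}\langle\sigma\cmu,\clambda\rangle};
\]
(equivalently one may compute directly in the weight coordinates indexed by $P(V)$, where the $\mu_{e}$-sum appears for the same reason). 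Identifying $\gmc$-valued cocharacters with $\ZZ K_{c}$, this is exactly the assertion that $\nu(\cmu)$ sends $\clambda$ to $\clambda+\sum_{\sigma\in\mu_{e}}\langle\sigma\cmu,\clambda\rangle\,K_{c}$.

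\emph{The dual statement.} This is then formal: the action of $\nu(\cmu)$ on $\xch(\wt\AA)$ is the inverse transpose of its action on $\xcoch(\wt\AA)$, and $\nu(\cmu)$ fixes $K_{c}$ and acts trivially on $\xch(\AA)\subset\xch(\wt\AA)$. Since $\sum_{\sigma}\langle\sigma\cmu,\clambda\rangle=\big(\sum_{\sigma}\sigma\cmu^{*}\big)(\clambda)$ by the definition of $\cmu^{*}$ (using the $\mu_{e}$-invariance of $\langle\cdot,\cdot\rangle_{V}$), the element $\nu(\cmu)$ sends the dual generator $\Lambda_{c}$ to $\Lambda_{c}-\sum_{\sigma}\sigma\cmu^{*}$, and therefore a general $\chi$ to $\chi-\chi(K_{c})\sum_{\sigma\in\mu_{e}}\sigma\cmu^{*}$, as claimed.

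\emph{Main obstacle.} The delicate point is the tame descent in the first two steps: one must verify that $n_{\cmu}$ genuinely represents $\nu(\cmu)$ under the $\mu_{e}$-coinvariant description of $\pi_{0}(LT)$ (this is where the normalization of the norm map and \cite{PR} enter), and that \eqref{eq: commutator pairing}, recorded a priori only for the untwisted determinant central extension, may legitimately be applied factor-by-factor to the norm $n_{\cmu}$, whose individual factors $(\sigma\cmu)(t^{1/e})$ do not themselves lie in $LG$. Once these are in place, the rest is bookkeeping with the lattices $\xcoch(\wt\AA)$ and $\xch(\wt\AA)$.
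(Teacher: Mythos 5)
Your proposal is correct and follows essentially the same route as the paper: represent $\nu(\cmu)$ by the norm $\Nm(\cmu(t^{1/e}))\in LT(k)$, which differs from $\prod_{\sigma\in\mu_e}(\sigma\cmu)(t^{1/e})$ by a constant loop in $\TT$, then use bilinearity of the commutator pairing together with \eqref{eq: commutator pairing} applied to each factor $(\sigma\cmu)(t^{1/e})$ (the constant factor pairing trivially) to identify the central shift $\sum_{\sigma}\langle\sigma\cmu,\clambda\rangle K_c$, with the dual statement on $\xch(\wt\AA)$ then formal. The paper's proof is just a terser version of the same computation, leaving the dual statement and the factor-by-factor justification implicit.
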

\begin{proof}
    Let $\Nm: \TT(K')\to T(K)$ be the norm map. For  any $\cmu \in \xcoch(\TT)$, $\Nm(\cmu(t^{1/e}))\in T(K)=LT(k)$ is a representative of $\nu(\cmu)$. From \eqref{eq: commutator pairing}, we know that
    \begin{equation}\label{comm sigma}
        c((\sigma \cmu)(t^{1/e}), \clambda(x)) = x^{\langle \sigma \cmu , \clambda \rangle_V}, \qquad \sigma \in \mu_e, x \in \gm.
    \end{equation}
    Now $\Nm(\cmu(t^{1/e}))=h\prod_{\s\in \mu_e}(\sigma \cmu)(t^{1/e})$ for some $h\in \TT$. The bilinearity of the commutator pairing $c$ and \eqref{comm sigma} imply that 
    \begin{equation*}
        c(\Nm(\cmu(t^{1/e})), \clambda(x))=x^{\j{\sum_{\s\in\mu_e}\s\cmu, \clambda}_V}\in \gmc.
    \end{equation*}
    The left side above is $\nu(\cmu)(\clambda)-\clambda$. The first statement of the lemma now follows. 
\end{proof}

\begin{prop} \label{p: affine weyl faithful}
     The subgroup of $\extw$ that acts trivially on $\wt \AA$ coincides with the torsion part of $\xcoch(\TT)_{\mu_e}$. In other words, we have an exact sequence, $$1 \ra \xcoch(\TT)_{\mu_e, \tors} \ra \extw \ra \Aut(\wt \AA).$$ In particular, $W_{\aff}$ acts faithfully on $\wt \AA$. 
\end{prop}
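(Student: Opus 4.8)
The plan is to analyze the kernel of the action of $\extw$ on $\wt\AA$ directly via the explicit formulas of Lemma~\ref{l:conj action}, using the exact sequences \eqref{eq: extended affine weyl} and \eqref{NA ses}. First, I would recall that the torsion subgroup $\xcoch(\TT)_{\mu_e, \tors}$ acts trivially on $\wt\AA$: indeed, if $\cmu \in \xcoch(\TT)$ maps to a torsion element of $\xcoch(\TT)_{\mu_e}$, then some multiple $N\cmu$ lies in the augmentation submodule generated by $\sigma\cnu - \cnu$, and since the pairing appearing in Lemma~\ref{l:conj action} is through $\sum_{\sigma \in \mu_e}\sigma\cmu$, which vanishes on such elements (being $\mu_e$-invariant on a coinvariant-trivial class, so $N$-divisible in a torsion-free target $\ZZ$, hence zero), the displacement $\clambda \mapsto \clambda + \sum_{\sigma}\langle\sigma\cmu,\clambda\rangle K_c$ is trivial; dually on characters. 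This shows $\xcoch(\TT)_{\mu_e,\tors}$ is contained in the kernel.

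For the reverse inclusion, suppose $w \in \extw$ acts trivially on $\wt\AA$. Projecting to $W$ via \eqref{eq: extended affine weyl}, the image acts on $\AA$ (hence on $\wt\AA/\gmc$) and, since $W$ acts faithfully on $\AA$ (it is the Weyl group of the maximal split torus $\AA$ in the quasi-split $G$), we conclude $w \in \xcoch(\TT)_{\mu_e}$. Write $w = \nu(\cmu)$ for some $\cmu \in \xcoch(\TT)$. By Lemma~\ref{l:conj action}, $w$ acts on $\xcoch(\wt\AA)$ by $\clambda \mapsto \clambda + (\sum_{\sigma\in\mu_e}\langle\sigma\cmu,\clambda\rangle) K_c$; triviality forces $\sum_{\sigma}\langle\sigma\cmu,\clambda\rangle = 0$ for all $\clambda \in \xcoch(\AA)$. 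Since $\langle\cdot,\cdot\rangle_V$ is positive definite on $\xcoch(\TT)$ and is $\mu_e$-equivariant, the functional $\clambda \mapsto \langle \sum_\sigma \sigma\cmu, \clambda\rangle$ on $\xcoch(\AA)$ vanishing means $\sum_{\sigma}\sigma\cmu$ is orthogonal to $\xcoch(\AA) = \xcoch(\TT)^{\mu_e}$ (up to the natural identifications); but $\sum_\sigma \sigma\cmu$ is itself $\mu_e$-invariant, so by positive-definiteness it pairs nontrivially with itself unless it is zero. Hence $\sum_{\sigma\in\mu_e}\sigma\cmu = 0$ in $\xcoch(\TT)$, which is exactly the condition that the image of $\cmu$ in $\xcoch(\TT)_{\mu_e}$ is torsion (the norm map $\xcoch(\TT) \to \xcoch(\TT)$ has image a finite-index subgroup of the invariants after tensoring with $\QQ$, and its composite with projection detects the torsion-free quotient). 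Therefore $w \in \xcoch(\TT)_{\mu_e, \tors}$, giving the exact sequence.

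Finally, $W_{\aff}$ acts faithfully on $\wt\AA$ because $W_{\aff} \cap \xcoch(\TT)_{\mu_e}$ is the coroot lattice of the relative root system, which is torsion-free, so $W_{\aff} \cap \xcoch(\TT)_{\mu_e, \tors} = \{1\}$, and the intersection of $W_{\aff}$ with the kernel of the action is contained in this. I expect the main obstacle to be bookkeeping the passage between $\xcoch(\TT)$, its $\mu_e$-coinvariants, and $\xcoch(\AA)$ cleanly — in particular verifying that the pairing $\langle\cdot,\cdot\rangle_V$ descends correctly and that ``$\sum_\sigma\sigma\cmu$ orthogonal to all of $\xcoch(\AA)$'' really does force it to vanish rather than merely to be torsion; this is where positive-definiteness of $\langle\cdot,\cdot\rangle_V$ (guaranteed by the faithfulness assumption on $V$) is essential and must be invoked carefully.
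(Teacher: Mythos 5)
Your proof is correct and follows essentially the same route as the paper's: reduce to the translation lattice using faithfulness of $W$ on $\AA$, apply the formula of Lemma \ref{l:conj action}, and use positive-definiteness of $\langle\cdot,\cdot\rangle_V$ (pairing $\sum_{\sigma\in\mu_e}\sigma\cmu$ against itself) together with the averaging map $\xcoch(\TT)_{\mu_e}\to\xcoch(\TT)^{\mu_e}$ to identify the kernel with $\xcoch(\TT)_{\mu_e,\tors}$. The only immaterial difference is the easy direction, where the paper observes the action of $\xcoch(\TT)_{\mu_e}$ factors through the torsion-free group $\Hom(\AA,\gmc)$, whereas you compute directly that $\sum_{\sigma\in\mu_e}\sigma\cmu=0$ for torsion classes; both work.
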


\begin{proof}
    It is clear that $W$ acts on $\AA$ faithfully. In view of the exact sequence \eqref{eq: extended affine weyl}, if $w\in \tilW$ acts trivially on $\wt\AA$, we must have $w\in \xcoch(\TT)_{\mu_e}$. Thus  
    it suffices to show that the kernel of  $\xcoch(\TT)_{\mu_e}\to \Aut(\wt\AA)$ is precisely the torsion subgroup $\xcoch(\TT)_{\mu_e, \tors}$.

    The action of $\xcoch(\TT)_{\mu_e}$ preserves $\gmc$ and is trivial on $\gmc$ and $\AA=\wt\AA/\gmc$. Therefore the action of $\xcoch(\TT)_{\mu_e}$ on $\wt\AA$ factors through a homomorphism $\xcoch(\TT)_{\mu_e}\to \Hom(\AA, \gmc)$. Since the latter is torsion-free, $\xcoch(\TT)_{\mu_e,\tors}$ must act trivially on $\wt\AA$.

    Now suppose $\cmu\in \xcoch(\TT)$ and $\nu (\cmu)\in \tilW$ acts trivially on $\wt \AA$. Lemma \ref{l:conj action} implies that $$\sum_{\sigma \in \mu_e} \langle \sigma \cmu , \clambda \rangle_V =0 $$ for all $\clambda \in \xcoch(\AA)$. In particular, setting $\clambda =\sum_{\sigma \in \mu_e} \sigma \cmu$, we obtain $$\langle \sum_{\sigma \in \mu_e} \sigma \cmu , \sum_{\sigma \in \mu_e} \sigma \cmu \rangle_V =0. $$ Hence, $\sum_{\sigma \in \mu_e} \sigma \cmu = 0$ since $\j{\cdot, \cdot}_V$ is positive definite.  The averaging map $\xcoch(\TT)_{\mu_e}\to \xcoch(\TT)^{\mu_e}$ sending the class of $\cmu$ to $\sum_{\s\in \mu_e}\s\cmu$ is an isomorphism after tensoring with $\QQ$, therefore its kernel is $\xcoch(\TT)_{\mu_e,\tors}$, which implies the class of $\cmu$ in $\xcoch(\TT)_{\mu_e}$ is torsion. 
    
    The kernel of the surjection $\Wa\to W$ is torsion-free, therefore $\Wa\cap \xcoch(\TT)_{\mu_e, \tors}=\{1\}$, hence $\Wa$ acts faithfully on $\wt\AA$.
\end{proof}

A direct corollary of Proposition \ref{p: affine weyl faithful} is as follows. 

\begin{cor} \label{c: untwisted faithful}
    If $G$ is split over $K$, then $\extw$ acts faithfully on $\wt \AA$. 
\end{cor}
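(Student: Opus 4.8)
The plan is to read this off from Proposition~\ref{p: affine weyl faithful}. Its exact sequence $1 \to \xcoch(\TT)_{\mu_e,\tors} \to \extw \to \Aut(\wt\AA)$ identifies the kernel of the action of $\extw$ on $\wt\AA$ with the torsion subgroup of $\xcoch(\TT)_{\mu_e}$, so it suffices to check that this torsion subgroup is trivial when $G$ is split over $K$.

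First I would recall, from the setup of \S\ref{ss:tw loop}, that the form $G$ is obtained from $\GG_{K'}$ by $\mu_e$-descent along the pinned automorphism $\sigma\colon \mu_e \to \Aut^{\textup{pin}}(\GG)$, and that the resulting $*$-action of $\Gal(\overline{K}/K)$ on the based root datum of $G$ is the one induced by $\sigma$ via the isomorphism $\Gal(K'/K) \cong \mu_e$. Since a pinned automorphism is determined by its effect on the based root datum, $G$ being split — equivalently, its minimal splitting field being $K$ — forces $\sigma$ to be trivial; the freedom to choose $K'$ non-minimal is harmless, as $\sigma$ then simply factors through the trivial quotient of $\mu_e$. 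Hence $\mu_e$ acts trivially on $\GG$, in particular on $\xcoch(\TT)$, so $\xcoch(\TT)_{\mu_e} = \xcoch(\TT)$ is a finitely generated free abelian group and thus torsion-free. Therefore $\xcoch(\TT)_{\mu_e,\tors} = \{1\}$, and Proposition~\ref{p: affine weyl faithful} yields that $\extw$ acts faithfully on $\wt\AA$.

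I expect no real obstacle here; the only step meriting a line of care is the equivalence ``$G$ split $\iff \sigma$ trivial,'' which should be phrased so as to accommodate the convention that $K'$ need not be a minimal splitting field of $G$.
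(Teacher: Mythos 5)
Your proposal is correct and follows exactly the route the paper intends: the paper states this as a direct corollary of Proposition \ref{p: affine weyl faithful} without further argument, and your filling-in — split $G$ forces the pinned descent automorphism $\sigma$ to be trivial, so $\xcoch(\TT)_{\mu_e}=\xcoch(\TT)$ is torsion-free and the kernel $\xcoch(\TT)_{\mu_e,\tors}$ vanishes — is precisely the intended reasoning. Your care about non-minimal $K'$ (that $\sigma$ is trivial because pinned automorphisms are determined by their action on the based root datum) is the right point to flag, and it is handled correctly.
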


\subsection{Affine pinning}\label{ss:Cphi}
\begin{defn}
    An affine pinning of $LG$ consists of $(I,\AA,\phi)$, where $I$ is an Iwahori subgroup of $LG$, $\AA$ a maximal torus of $I$, and $\phi: I^+\to \GG_a$ is a group homomorphism such that its differential $d\phi$ restricts to an isomorphism $(L\frg)_\alpha\cong k$ for every simple affine root $\a\in S_{\aff}$.
\end{defn}

We fix a $\mu_e$-invariant pair $(\BB, \TT)$ of $\GG$ as in \S \ref{ss:tw loop}, along with a globalization of $\GG$ as in \S \ref{ss:glob G}. This provides a natural choice of $(I, \AA)$ and $L_{\mathrm{pol}}G$.

For $\a\in S_{\aff}$, we write 
\begin{equation*}
    0\neq (d\phi)_{\a}\in ((L\frg)_{\a})^*
\end{equation*}
for the isomorphism $(L\frg)_\alpha\cong k$.  We identify $d\phi$ with the sum $\sum_{\a\in S_{\aff}}(d\phi)_{\a}$ as an element in $(L\frg)^*$.

\sss{The group $M_\phi$}\label{sss:M phi}
Recall that $M=N_{L_{\mathrm{pol}}G}(I,\AA)^{\red}$. Let
\begin{equation*}
    M_\phi:=N_{L_{\mathrm{pol}}G}(I,\AA,\phi)^{\red}.
\end{equation*}
Then $M_\phi$ is the subgroup of $M$ consisting of those elements that fix $\phi$.

\begin{prop}
\label{p:Mphi to Om}
The obvious inclusion $Z(\GG)\cap \AA\incl M_{\phi}$ and the projection $M_\phi\to \pi_0(LG)\cong \Om$ give a short exact sequence 
\begin{equation}\label{Mphi ses}
1\to Z(\GG)\cap \AA\to M_{\phi}\to \Om\to 1
\end{equation}
compatible with the short exact sequence \eqref{eq: AMOm}.    
\end{prop}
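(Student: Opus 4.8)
The plan is to establish the two short exact sequences separately; their compatibility with \eqref{eq: AMOm} is then immediate, since both rows map onto $\Om$ by the projection to $\pi_0(LG)$ and the remaining vertical arrows are the inclusions $Z(\GG)\cap\AA\subset\AA$ and $M_\phi\subset M$. Because $M$ normalizes $(I,\AA)$ it normalizes the pro-unipotent radical $I^+$, and so acts on the set of group homomorphisms $I^+\to\Ga$ by $g\cdot\psi=\psi\circ\Ad(g)^{-1}$; by \S\ref{sss:M phi} the subgroup $M_\phi$ is the stabilizer of $\phi$ for this action. In particular, Proposition~\ref{M ses} gives $\ker(M_\phi\to\Om)=M_\phi\cap\AA$, so it remains to prove (i) $M_\phi\cap\AA=Z(\GG)\cap\AA$ and (ii) that $M_\phi\to\Om$ is surjective.

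For (i): any $a\in\AA$ normalizes $I^+$, and $d\phi$, viewed as a functional on $\Lie I^+=\bigoplus_{\alpha>0}(L\frg)_\alpha$, vanishes on every non-simple positive affine root space (these lie in $[\Lie I^+,\Lie I^+]$, which maps to $0$ in $\Ga$). Hence $a\cdot\phi=\phi$ if and only if $\Ad(a)$ acts as the identity on each $(L\frg)_\alpha$, $\alpha\in S_{\aff}$, i.e.\ if and only if $\alpha(a)=1$ for all $\alpha\in S_{\aff}$, where $\alpha$ is regarded as a character of $\AA$ by restriction. Thus $M_\phi\cap\AA=\bigcap_{\alpha\in S_{\aff}}\ker(\alpha|_\AA)$. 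The restrictions $\alpha|_\AA$ ($\alpha\in S_{\aff}$) generate in $\xch(\AA)$ the same subgroup as all relative roots $\Phi(G,\AA)$ (the affine nodes restrict to integral combinations of the simple relative roots), so $M_\phi\cap\AA=\bigcap_{\beta\in\Phi(G,\AA)}\ker\beta$. Since $\Phi(G,\AA)$ consists of the nonzero restrictions to $\AA\subset\TT$ of the absolute roots $\Phi(\GG,\TT)$, and $Z(\GG)=\bigcap_{\gamma\in\Phi(\GG,\TT)}\ker\gamma$ inside $\TT$, this equals $\AA\cap Z(\GG)$.

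For (ii): fix $\omega\in\Om$ and, using the proof of Proposition~\ref{M ses}, a lift $g\in M\subset L_{\mathrm{pol}}G$; we may take $g=\dot{w}\cdot\Nm(\cmu(t^{1/e}))$ with $w\in W$ the image of $\omega$, $\dot{w}$ its Tits lift, and $\cmu\in\xcoch(\TT)$. Since the image of $g$ in $\tilW$ is the length-zero element $\omega$, $\Ad(g)$ carries $(L\frg)_\alpha$ isomorphically onto $(L\frg)_{\omega\alpha}$; choosing $e_\alpha\in(L\frg)_\alpha$ with $(d\phi)_\alpha(e_\alpha)=1$ and writing $\Ad(g)e_\alpha=a_\alpha e_{\omega\alpha}$ with $a_\alpha\in\Gm$, the homomorphism $g\cdot\phi$ is the rescaling of $\phi$ determined by $c_{\omega\alpha}=a_\alpha^{-1}$. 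Replacing $g$ by $ga$ for $a\in\AA$ multiplies $c_\alpha$ by $\alpha(a)^{\pm1}$, so some lift of $\omega$ lies in $M_\phi$ if and only if the tuple $(c_\alpha)_{\alpha\in S_{\aff}}$ lies in the image of $\AA\to\Gm^{S_{\aff}}$, $a\mapsto(\alpha(a))_\alpha$. The kernel of the dual map $\ZZ^{S_{\aff}}\to\xch(\AA)$ is spanned by the relations $\sum_{\alpha\in C}n_\alpha(\alpha|_\AA)=0$, one for each connected component $C$ of the affine Dynkin diagram, where $\delta_C=\sum_{\alpha\in C}n_\alpha\alpha$ is the basic imaginary root of $C$ and the marks $n_\alpha$ have no common factor; hence it suffices to prove $\prod_{\alpha\in C}c_\alpha^{\,n_\alpha}=1$, equivalently $\prod_{\alpha\in C}a_\alpha^{\,n_\alpha}=1$, for every $C$.

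This last identity is the crux. First note $\prod_{\alpha\in C}a_\alpha^{\,n_\alpha}$ depends only on $\omega$: replacing $g$ by $ga$ with $a\in\AA$ multiplies it by $(\delta_C|_\AA)(a)=1$. Next, choosing a nonzero iterated bracket $E_C\in(L\frg)_{\delta_C}$ in the $e_\alpha$ ($\alpha\in C$, each occurring with multiplicity $n_\alpha$), and the corresponding $E_{\omega C}$, the Lie algebra automorphism $\Ad(g)$ carries $E_C$ to $\big(\prod_{\alpha\in C}a_\alpha^{\,n_\alpha}\big)E_{\omega C}$; on the other hand $E_C$ lies in the Cartan-valued part $\Lie LT$ of $L\frg$, on which $\Ad(\Nm(\cmu(t^{1/e})))$ acts trivially (a torus acting on its own Lie algebra) while $\Ad(\dot{w})$ acts coefficientwise through $w$, so $\Ad(g)E_C=w(E_C)$. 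Comparing the two expressions reduces the claim to the equality $w(E_C)=E_{\omega C}$ inside $(L\frg)_{\delta_C}$, which follows from the compatibility with the finite Weyl group action of the $\mu_e$-invariant pinning fixed in \S\ref{ss:tw loop}, used both to globalize $\GG$ (\S\ref{ss:glob G}) and to define $\phi$ (\S\ref{sss:M phi}). I expect this last step---pinning down the Chevalley/sign normalizations and verifying $w(E_C)=E_{\omega C}$---to be the only genuine difficulty; everything preceding it is formal given Proposition~\ref{M ses}.
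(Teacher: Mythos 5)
Your argument coincides with the paper's up to the crux: the identification $M_\phi\cap\AA=Z(\GG)\cap\AA$ and the reduction of surjectivity of $M_\phi\to\Om$ to the identity $\prod_{\a\in C}a_\a^{\,n_\a}=1$ (equivalently, that the canonical character $\l_\xi:\Om\to k^\times$ of \eqref{Om scalar} is trivial) are exactly Steps 1--2 of the paper's proof together with the exact sequence \eqref{A surj}. But the one step you leave unproven — the equality $w(E_C)=E_{\omega C}$, which you yourself call ``the only genuine difficulty'' — is precisely the entire content of the proposition; it is the paper's Lemma \ref{l:Om scalar}, which gets its own nontrivial proof. So the proposal has a genuine gap at the decisive point, and it is not a gap that closes by ``compatibility with the pinning'': the finite Weyl part $w$ of $\omega$ does \emph{not} permute $S_{\aff}$, so the standard Tits-section compatibilities (which control $\Ad(\dot w)$ only on simple root vectors sent to simple root vectors) say nothing directly about how $w$ acts on an iterated bracket landing in the imaginary root space; the comparison of $w(E_C)$ with $E_{\omega C}$ involves Chevalley structure constants of non-simple brackets, whose signs are exactly what must be controlled. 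There are further unaddressed issues in your sketch: the chosen iterated bracket $E_C\in(L\frg)_{\delta_C}$ with multiplicities $n_\a$ must be shown nonzero over $k$ of arbitrary characteristic prime to $e$ (structure constants $2,3$ can kill specific bracketings in small characteristic), and the twisted types ($e>1$) need separate care both for the existence of $E_C$ and for the identification of the $\Ad(\dot w)$-action on $L\frt$ with the ``coefficientwise'' $W$-action.

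For comparison, the paper avoids all of this sign and characteristic bookkeeping by a different route: it reduces Lemma \ref{l:Om scalar} to $\GG$ adjoint with irreducible affine root system, spreads the situation out over $\ZZ[\mu_e][1/e]$ so that triviality of the character $\L:\Om\to\ZZ[\mu_e][1/e]^\times$ can be checked over $\CC$, and then uses the Kostant section: the line $\LL$ is identified with $(\mathbf{f}+L\frg^{\mathbf e})\cap V$ and thereby embedded $\Om$-equivariantly into $L\frc$, on which $\Om$ acts trivially, forcing $\l=1$. If you want to salvage your direct approach you would need to supply a uniform construction of a nonzero $E_C$ together with a proof of $w(E_C)=E_{\omega C}$ valid for all (possibly twisted) types and all good characteristics — at which point you would essentially be reproving the lemma by hand; as written, the proposal does not establish the surjectivity of $M_\phi\to\Om$.
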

\begin{proof}
First we check that $M_{\phi}\cap \AA=Z(\GG)\cap \AA$. The inclusion $Z(\GG)\cap \AA\subset M_{\phi}$ is clear. Conversely, if $a\in \AA$ commutes with $d\phi$, then it commutes with the nonzero linear form $(d\phi)_\a\in (L\frg)_{\a}^*$ for each affine simple root $\a\in S_{\aff}$. Therefore $\a(a)=1$ for all $\a\in S_{\aff}$, which implies $a\in Z(\GG)$.

It remains to show that each $\om\in \Om$ can be lifted to an element $\dot\om\in M(k)$ that commutes with $d\phi$.

Let $S_{\aff}=\coprod_{\xi\in \Xi} S_{\xi}$ be the decomposition of affine simple roots according to the  irreducible components of the affine root system of $LG$.   Let $\ov\a\in \xch(\AA)$ be the restriction $\a|_{\AA}$. For each $\xi\in \Xi$,  $\{\ov\a\}_{\a\in S_{\xi}}$ satisfy a unique linear relation $\sum_{\a\in S_{\xi}}n_{\a}\ov\a=0$ with positive integer coefficients $\{n_{\a}\}_{\a\in S_{\xi}}$, such that the gcd of all $n_\a$ is equal to $1$. We have an exact sequence
\begin{equation}\label{A surj}
\AA\to \prod_{\a\in S_{\xi}}\Gm\xr{\nu} \prod_{\xi\in \Xi}\Gm\to 1
\end{equation}
where the $\xi$-coordinate of $\nu((c_{\a})_{\a\in S_{\aff}})$ is given by $\prod_{\a\in S_\xi}c_{\a}^{n_{\a}}$. Now let $\dot\om\in M(k)$ be any lifting of $\om$, and consider its coadjoint action on the $k$-line
\begin{equation*}
\Ad(\dot \om)\curvearrowright \LL_\xi:=\bigotimes_{\a\in S_{\xi}}((L\frg)_{\a}^*)^{\ot n_{\a}}.
\end{equation*}
Here we use that the permutation of $\om$ on $S_{\xi}$ preserves the function $\a\mapsto n_{\a}$. If we change $\dot \om$ to $a\dot \om $ for some $a\in \AA(k)$, the resulting action $\Ad(a\dot \om)$ on the line $\LL_\xi$ is remains unchanged, as $\prod_{\a\in S_{\xi}}\ov\a(a)^{n_{\a}}=1$. Therefore we get a well-defined scalar $\l_{\xi}(\om)\in k^{\times}$ by which $\Ad(\dot \om)$ acts on $\LL_\xi$, which is independent of the lifting $\dot \om$. In other words, there is a canonical action of $\Om$ on the line $\LL_\xi$ given by a homomorphism
\begin{equation}\label{Om scalar}
\l_\xi: \Om\to k^{\times}.
\end{equation}

By Lemma \ref{l:Om scalar} below, this homomorphism $\l_\xi$ is always trivial. Let $\dot \om\in M(k)$ be any lifting of $\om$. Write $\Ad(\dot \om)(d\phi)_{\a}=c_{\a}(d\phi)_{\om(\a)}$ for $\a\in S_{\aff}$. Then $\prod_{\a\in S_\xi}c_{\a}^{n_{\a}}=\l_\xi(\om)=1$ for each $\xi\in \Xi$. By \eqref{A surj}, there exists $a\in \AA(k)$ such that $\a(a)=c_{\a}^{-1}$ for all $\a\in S_{\aff}$. Then $\Ad(\dot \om a)$ sends $\phi_{\a}$ to $\phi_{\om(\a)}$. We conclude that $\dot \om a\in M_{\phi}(k)$ has image $\om \in \Om$. This completes the proof of the lemma.
\end{proof}

\begin{lemma}\label{l:Om scalar}
Recall $p=\chk$ is coprime to $e$. Then the homomorphism $\l_\xi$ in \eqref{Om scalar} is always trivial for all $\xi\in \Xi$. 
\end{lemma}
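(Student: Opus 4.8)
The plan is to reduce to the case where $G$ is simple (over a tame splitting field), and then, for each $\om\in\Om$, to exhibit a lift $\dot\om\in M(k)$ whose adjoint action permutes the canonical generators of the simple affine root subgroups with no scalar distortion. For such a lift $\Ad(\dot\om)(d\phi)_\a=(d\phi)_{\om(\a)}$ for all $\a\in S_{\aff}$, so all the scalars $c_\a$ occurring in the proof of Proposition~\ref{p:Mphi to Om} equal $1$, and therefore $\l_\xi(\om)=\prod_{\a\in S_\xi}c_\a^{n_\a}=1$.

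\textbf{Reductions.} The scalar $\l_\xi(\om)$ depends only on the conjugation action of a lift $\dot\om\in M(k)$ --- which exists by the exact sequence \eqref{eq: AMOm} --- on the real affine root subgroups $U_\a\cong\Ga$, $\a\in S_{\aff}$, of $L_{\mathrm{pol}}G$. The central torus of $G$ contributes a free summand to $\Om$ whose elements lift to elements of $\TT(K)$ central in $G(K)$, on which $\Ad$ is trivial; and for $G$ semisimple the affine root datum, the set $\Xi$, and $\Om$ decompose compatibly (over a tame splitting field) along the simple factors, with $\Om$ preserving each $\xi$. Hence it suffices to treat $G$ simple. We also record the a priori constraint $\l_\xi(\om)^{\mathrm{ord}(\om)}=1$, which is automatic since $\l_\xi$ is a homomorphism; the content of the lemma is that $\l_\xi$ is actually trivial.

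\textbf{Main step.} For $G$ simple the affine root system of $LG$ is an irreducible, possibly twisted, affine root system, and $\Om\cong\extw/\Wa$ acts on its affine Dynkin diagram through the rotation automorphisms coming from $\xcoch(\TT)_{\mu_e}$. The point is that each such rotation is realized as $\Ad(\dot\om)$ for a lift $\dot\om\in M(k)$ sending the canonical affine root vector $e_\a$ precisely to $e_{\om(\a)}$ for every $\a\in S_{\aff}$; equivalently, the canonical affine pinning of $L_{\mathrm{pol}}G$ is stable under a suitable set of representatives for $\Om$. This is the loop-group incarnation of the classical fact that a diagram automorphism of an affine Kac--Moody root datum preserves its canonical pinning. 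In classical types one checks this directly by writing down the representatives: in type $A$, say for $G$ of adjoint type, the generator of $\Om$ is represented by the polynomial-loop matrix with $1$'s on the superdiagonal and $t$ in the lower-left entry, conjugation by which cyclically permutes the standard affine simple root vectors $e_{12},\dots,e_{n-1,n},\,t\,e_{n1}$ with no scalars (and for an isogenous $G$ the group $\Om$ is generated by suitable powers of this element); analogous explicit elements and their tame twists handle types $B$, $C$, $D$. In the exceptional types $\Om$ is trivial or of order at most $4$ and a finite case check suffices.

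\textbf{Main obstacle.} The real work is the uniform construction of the pinned representatives, above all for the tamely ramified groups: there one must produce $\dot\om$ inside $\GG\lr{t^{1/e}}^{\mu_e}$ and verify compatibility of the relevant norm maps and Tits-type representatives with the twisted affine pinning. This is exactly where the hypothesis $p\nmid e$ is used, ensuring that $\mu_e$ is linearly reductive and that the twisted affine root datum together with its pinning is defined over a ring in which $e$ is invertible. Given a pinned representative, $c_\a=1$ for all $\a\in S_{\aff}$, and hence $\l_\xi\equiv 1$.
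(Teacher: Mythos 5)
Your strategy---produce for each $\omega\in\Omega$ a lift $\dot\omega\in M(k)$ whose adjoint action permutes the affine pinning on the nose---is genuinely different from the paper's, but as written it has a real gap: the existence of such pinned lifts is essentially equivalent to the lemma itself, not a tool for proving it. Indeed, given an arbitrary lift $\dot\omega$, the tuple of scalars $(c_\alpha)_{\alpha\in S_{\aff}}$ can be absorbed into $\mathbb{A}(k)$, by the exact sequence \eqref{A surj}, precisely when $\prod_{\alpha\in S_\xi}c_\alpha^{n_\alpha}=\lambda_\xi(\omega)=1$; this is exactly how Proposition \ref{p:Mphi to Om} is deduced \emph{from} the lemma in the paper. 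So your ``main step'' carries the whole content of the statement, and you only verify it for split adjoint type $A$ (that computation is correct). The appeal to the classical fact that a diagram automorphism of an affine Kac--Moody algebra preserves the Chevalley pinning does not close the gap: that fact supplies an \emph{abstract} automorphism fixing the generators, not one of the form $\Ad(\dot\omega)$ with $\dot\omega\in M(k)\subset L_{\mathrm{pol}}G$, and the obstruction to matching the two by correcting with $\mathbb{A}$ is, again by \eqref{A surj}, precisely the vanishing of $\lambda_\xi$. The ramified cases, which you yourself flag as ``the real work,'' are not carried out at all---invoking $p\nmid e$ and linear reductivity of $\mu_e$ does not by itself produce pinned representatives inside $\GG\lr{t^{1/e}}^{\mu_e}$. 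Finally, the reduction is imprecise: $\Omega$ does not in general split off a central-torus summand nor decompose along simple factors (consider $GL_n$, or quotients of a product by a diagonal central subgroup); the correct reduction, used in the paper, is that $\lambda_\xi$ only depends on the adjoint action and hence factors through $\Omega\to\Omega_{G^{\ad}}$.

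For contrast, the paper's proof avoids all case analysis: after reducing to the adjoint, irreducible situation, it notes that $\lambda_\xi$ in \eqref{Om scalar} is defined over $\ZZ[\mu_e][1/e]$ and so may be computed over $\CC$, and then uses an $\Out$-invariant Kostant section to embed the line $\LL_\xi=V\sslash\AA$ equivariantly into $L\frc$, on which $\Ad(\dot\omega)$ acts trivially; triviality of $\lambda_\xi$ follows uniformly, including in the twisted types. To salvage your approach you would have to exhibit pinned representatives in $M(k)$ explicitly in every type, including the twisted affine ones---which amounts to re-proving the lemma case by case rather than deducing it.
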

\begin{proof}
Let $G\to G^{\ad}$ be the adjoint quotient, and we similarly have a homomorphism $\l^{\ad}_{\xi}: \Om_{G^{\ad}}\to k^{\times}$. From the construction of $\l_\xi$ it is clear that it factors through $\l^{\ad}_\xi$ via the natural map $\Om\to \Om_{G^{\ad}}$. Therefore it suffices to treat the case where $G$ is of adjoint type. We can then further reduce to the case where the affine root system of $LG$ is irreducible. We drop the subscript $\xi$ in the rest of the proof.

The loop group $LG$ has a model over $\ZZ[\mu_e][1/e]$, using which we get an integral version of $\l$, a homomorphism $\L: \Om\to  \ZZ[\mu_e][1/e]^\times$ such that when $k$ has characteristic $p>0$, $\l$ is obtained from $\L$ by reduction mod $p$ (and embedding $\FF_p^\times\incl k^\times$). It therefore suffices to show that $\L$ is trivial, or equivalently, that  $\L_\CC: \Om\xr{\L}\ZZ[\mu_e][1/e]^\times \subset \CC^\times$ is trivial. We may therefore assume that the base field $k=\CC$.

We recall the Kostant section of $\bgg$. Fix a $\Out(\bgg)$-invariant regular nilpotent element $\mathbf{f}$ as a sum of elements in $\bgg_{-\b}$ for simple roots $\b$. Extend it to a principal $\frs\frl_2$-triple $\{\mathbf{e},\mathbf{h},\mathbf{f}\}$ in $\bgg^{\Out(\bgg)}$. Then the Kostant section (of $\mathbf{f}$ inside $\bgg$) is defined as $\mathbf{f}+\bgg^{\mathbf{e}}$. The composed map $\mathbf{f}+\bgg^\mathbf{e}\to \bgg\to \cc=\bgg \sslash \GG$ is an isomorphism, equivariant with respect to the $\Out(\bgg)$-action. 
It follows that we have an isomorphism $\mathbf{f}+\frg^\mathbf{e}\cong\frc$ over $K$ and therefore an isomorphism
\[\kappa: \mathbf{f}+L\frg^\mathbf{e}\cong L\frc.\]

We use the Killing form to identify $(L\frg)^*_{\alpha}$ with $(L\frg)_{-\alpha}$. Let $V=\oplus_{\alpha\in S_{\aff}} (L\frg)_{-\alpha}$. We have an isomorphism $V\sslash \AA\isom \LL$ given by the formula $\nu(x_{-\a})=\ot (x_{-\a})^{\ot n_{\a}}$, where $x_{-\a}\in (L\frg)_{-\a}$ for $\a\in S_{\aff}$. On the other hand, $(\mathbf{f}+\bgg^\mathbf{e})\cap V$ consists of elements of the form $\mathbf{f}+(L\frg)_{-\a_0}$, where $\a_0\in S_{\aff}$ is the unique affine simple root that is not a root of $\bgg_0$. The composition
\begin{equation*}
    \b: (\mathbf{f}+L\frg^\mathbf{e})\cap V\subset V\to V\sslash \AA\cong \LL
\end{equation*}
sends $\mathbf{f}+x_0$ to $x_0^{n_{\a_0}}=x_0\in (L\frg)_{-\a_0}$. In particular, $\b$ is an isomorphism. We have the following commutative diagram
\[
\xymatrix{
(\mathbf{f}+L\frg^\mathbf{e})\cap V\ar[r]^-{\b}\ar[d]^{i}& V\sslash \AA\cong \LL\ar[d]^{\gamma}\\
\mathbf{f}+L\frg^\mathbf{e} \ar[r]^-{\kappa}& L\frc
}
\]

We have already seen that both $\b$ and  $\kappa$ are isomorphisms. Since $i$ is an embedding, so is $\gamma$. Since $\gamma$ is equivariant under $\Ad(\dot\om)$, which acts trivially on $L\frc$, therefore $\Ad(\dot\om)$ acts trivially on $\LL$. 
\end{proof}

We record one more property of $M_\phi$.
\begin{prop}
    The group $M_\phi$ is commutative.
\end{prop}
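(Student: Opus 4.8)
The plan is to deduce this from the short exact sequence \eqref{Mphi ses} of Proposition~\ref{p:Mphi to Om}. Since $Z(\GG)\cap\AA$ consists of constant loops valued in the center of $\GG$, it is central in $L_{\mathrm{pol}}G$, hence in $M_\phi$; and $\Om\cong\pi_0(LG)$ is a finitely generated \emph{abelian} group (cf.\ \cite{PR}). Consequently $[M_\phi,M_\phi]\subseteq Z(\GG)\cap\AA$, and passing to the quotient the commutator induces an alternating bimultiplicative pairing
$$c\colon \Om\times\Om\longrightarrow Z(\GG)\cap\AA,\qquad c(\om_1,\om_2)=[\dot\om_1,\dot\om_2],$$
where $\dot\om_i\in M_\phi$ is any lift of $\om_i$; this is well defined because any two lifts of a fixed $\om_i$ differ by an element of the central subgroup $Z(\GG)\cap\AA$. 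It therefore suffices to prove that $c\equiv 1$.

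The first reduction is to the case where $\GG$ is semisimple. Applying the homomorphism $LG\to L(\GG/\GG_{\der})$ induced by $\GG\to\GG/\GG_{\der}$ and using that $\GG/\GG_{\der}$ is a torus, so that $L(\GG/\GG_{\der})$ is commutative, we see that $c(\om_1,\om_2)$ maps to $1$ there; hence $c$ takes values in $L\GG_{\der}\cap(Z(\GG)\cap\AA)$, which, being a constant loop in $Z(\GG)\cap\GG_{\der}=Z(\GG_{\der})$, is a finite group. The second reduction is to the case of an irreducible affine root system: using the decomposition $S_{\aff}=\coprod_{\xi\in\Xi}S_\xi$ into irreducible components and the resulting almost-direct-product structure of the polynomial loop group, the commutator of two lifts attached to distinct components vanishes, while $\Om$ is assembled from the component stabilizers and the permutation action on $\Xi$; a short argument then reduces the computation of $c$ to the pairing intrinsic to a single irreducible affine root system, where $\GG$ may be taken almost simple (possibly with a tame twist).

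Finally, in the irreducible case one has a dichotomy. Either $\Om$ is cyclic --- which one reads off from the list of affine Dynkin diagrams, including the twisted types, since in this situation $\Om$ is a subquotient of $\pi_1$ of a semisimple group and the only non-cyclic possibility forces $\GG$ to be adjoint --- and then any alternating pairing on $\Om$ is automatically trivial; or $\GG$ is adjoint, in which case $Z(\GG)\cap\AA=\{1\}$ and $c$ is trivial for want of a target. Either way $c\equiv 1$, so $M_\phi$ is commutative. The main obstacle is the middle step: one must carefully identify $c$ with a pairing belonging to one irreducible affine root system and then verify the $\Om$-versus-center dichotomy uniformly across split and twisted types. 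The rigidity furnished by the Kostant section used in Lemma~\ref{l:Om scalar} can be invoked here to normalize the lifts $\dot\om$ and thereby make this identification explicit.
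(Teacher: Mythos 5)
Your first step---reducing the proposition to the triviality of the commutator pairing $c\colon \Om\times\Om\to Z(\GG)\cap\AA$ attached to the central extension \eqref{Mphi ses}---is exactly how the paper's proof begins, but from there the paper takes a short, uniform route with no case analysis: after reducing to $k=\CC$, it shows that $d\phi$ is a \emph{regular} element of $\Lie G$ over $K$ (base changing to $K'$, it lies in $\bgg\tl{t^{1/e}}$ and is regular nilpotent modulo $t^{1/e}$), so $M_\phi$ is contained in the loop group of the centralizer $C_G(d\phi)$, which is commutative. Your proposal replaces this with a classification argument, and the two steps you yourself flag as the crux are where it has genuine gaps.

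First, the reduction to a single irreducible affine root system is not a ``short argument.'' A semisimple $\GG$ is only an \emph{almost}-direct product of the factors indexed by $\Xi$: a central subgroup may mix the factors, so an element of $\Om\cong\pi_0(LG)$ (the $\mu_e$-coinvariants of $\pi_1(\GG)$) and its lift to $M_\phi$ need not decompose into pieces attached to single components, and controlling ``commutators of lifts attached to distinct components'' does not suffice. Passing to the adjoint quotient to force such a decomposition destroys precisely the central target $Z(\GG)\cap\AA$ in which $c$ takes values, so the mixing case is where the real content sits. (Also, $\Om$ acts by inner automorphisms of $LG$, hence preserves each component of $\Xi$ rather than permuting them.) Second, the final dichotomy ``either $\Om$ is cyclic or $\GG$ is adjoint'' is false in the twisted setting the paper allows: take $\GG=(\Spin_{4n}\times\Spin_{4n})/\Delta Z$, where $Z=Z(\Spin_{4n})\cong\Z/2\times\Z/2$ is embedded diagonally and $\mu_2$ acts by the swap (a pinned automorphism). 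Then the swap acts trivially on $\pi_1(\GG)\cong\Delta Z$, so $\Om\cong\Z/2\times\Z/2$ is non-cyclic, while $Z(\GG)\cong(Z\times Z)/\Delta Z\neq 1$, so $\GG$ is not adjoint. (In this example one can check directly that $Z(\GG)\cap\AA=1$, so the conclusion survives, but only after an additional computation of $\AA=\TT^{\mu_2,\circ}$ that your dichotomy does not provide; to salvage your route you would need to prove ``$\Om$ non-cyclic $\Rightarrow Z(\GG)\cap\AA=1$'' for all quasi-split $G$ with irreducible affine root system, a further classification statement that does not follow from the split tables.) As written, therefore, the middle reduction and the final dichotomy both have holes; the paper's regular-centralizer argument sidesteps them entirely.
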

\begin{proof}
    Clearly \eqref{Mphi ses} is a central extension of $\Omega$ by $Z(\GG_0)\cap\AA$. It is enough to show that the induced commutator pairing $\Omega\times\Omega\to Z(\GG_0)\cap\AA$ is trivial. As in the proof of Lemma \ref{l:Om scalar}, this pairing is in fact defined over a suitable localization of $\ZZ[\mu_e]$ and there we can check the triviality for $k=\CC$. Then as before, we regard $d\phi$ as an element in $L\frg$. We claim that it is a regular element, regarded as an element in the Lie algebra of $G$. (Recall that $G$ is a reductive group over the local field $K=k\lr{t}$.) If this is the case, then $M_\phi$ is a subgroup of the loop group of the centralizer $C_G(d\phi)$ of $d\phi$ in $G$, and therefore it is commutative.
    
    To prove the claim, we may even base change to $K'$. Then $d\phi$ is an element in $\bgg\lr{t^{1/e}}$. In fact, it belongs to $\bgg\tl{t^{1/e}}$, is a regular nilpotent element modulo $t^{1/e}$, as it is a sum over simple roots of $\GG$ of non-zero root vectors. This implies that $d\phi$ is regular, as desired.  
\end{proof}

\begin{rem}
    In fact, one can show that when the characteristic $p$ of $k$ is zero or larger than the Coxeter number of each simple factor of $\GG$, then $d\phi$ is regular semisimple. Its centralizer in $G$ is a (non-split) maximal torus $A_\phi$. Then $I_\phi:=(LA_\phi\cap I)^\circ$ is the unique Iwahori subgroup of $LA_\phi$. If we let $I_\phi^+$ denote the pro-unipotent radical of $I_\phi$, then $M_\phi\cong LA_\phi/I_\phi^+$.
\end{rem}

\subsection{A subgroup of $\wt M_\phi$}
Fix a determinantal central extension $\tLG \to LG$ as in \S\ref{ss: cent ext}.  Let $\wt M_\phi\subset \wt M$ be the central extension obtained by pulling back $\tLG \to LG$ along the embedding $M_\phi\subset \Lpol G\to LG$. By Proposition \ref{p:Mphi to Om}, there is a surjection
\begin{equation}\label{wt Mphi to Om}
    \wt M_\phi\surj \Om.
\end{equation}
For the purpose of constructing the Soergel functor in \S\ref{s:Soergel}, it would be ideal if this section has a homomorphic section. While we cannot guarantee that, in this subsection we construct a multi-section to \eqref{wt Mphi to Om}, i.e., a subgroup $\Sig\subset \wt M_\phi$ that surjects onto $\Om$ with finite kernel well under control. The result here will only be used in the proof of Theorem \ref{thm: monodromic-action}.

\sss{}\label{sss:section xcoch S}
Recall the connected center $\SS$ of $\GG$. The lattice $\xcoch(\SS)^{\mu_e}$ is the cocharacter lattice of the split part of the connected center $(ZG)^\circ$.  For $\cmu\in \xcoch(\SS)^{\mu_e}$ we have $t^{\cmu}\in M_\phi$. This gives a homomorphism
\begin{equation}
    \tau: \xcoch(\SS)^{\mu_e}\to M_\phi(k).
\end{equation}
By our assumption on the central extension in \S\ref{ss: cent ext}, the commutator pairing of the central extension $\wt{L'\GG}$ restricted to $(t^{1/e})^{\xcoch(\SS)}$ is trivial. In particular, the commutator pairing of the central extension $\wt M_\phi$ of $M_\phi$ is trivial when restricted to the image of $\tau$. Since $\xcoch(\SS)^{\mu_e}$ is free abelian, there exists a homomorphism
\begin{equation}\label{wt tau}
    \wt\tau: \xcoch(\SS)^{\mu_e}\to \wt M_\phi(k)
\end{equation}
lifting $\tau$.

\begin{con}\label{cons: Sigma}
    Fix the choice of a lifting $\wt\tau$ of $\tau$ as in \eqref{wt tau}, we construct  canonically a subgroup $\Sig\subset \wt M_\phi(k)$ containing $\Im(\wt\tau)$ with the following properties:
    \begin{itemize}
        \item The natural projection
        \begin{equation*}
            \pi_{\Sig}: \Sig\incl \wt M_\phi(k) \to \Om
        \end{equation*}
        is surjective.
        \item The kernel  $\Sig_{1}:=\ker(\pi_{\Sig})$ is a finite abelian group.
    \end{itemize}
\end{con}

The rest of the subsection is devoted to the construction of $\Sigma$.

\sss{Step 1}\label{sss:ov tau fin coker} Composing $\tau$ with the projection to $\pi_0(M_\phi)$ we get a homomorphism
\begin{equation}
    \ov\tau: \xcoch(\SS)^{\mu_e}\to \pi_0(M_\phi).
\end{equation}
We claim that $\ov\tau$ is injective with finite cokernel. Indeed, if we further project to $\Om$:
\begin{equation*}
        \ov\tau_\Om: \xcoch(\SS)^{\mu_e}\xr{\ov\tau}\pi_0(M_\phi)\surj\Om=\xcoch(\TT)_{\mu_e}/\xcoch(\TT^{\on{sc}})_{\mu_e},
\end{equation*}
the map $\ov\tau_\Om$ becomes an isomorphism after tensoring with $\QQ$. Since $\xcoch(\SS)^{\mu_e}$ is free abelian, $\ov\tau$ is injective with finite cokernel. Denote by $N$ the exponent of $\coker(\ov\tau)$.

\begin{rem}
    With a more careful analysis, one can show that the exponent of  $\coker(\ov\tau)$ divides $e^2f_{\GG}$, where $f_{\GG}$ is the determinant of the Cartan matrix of $\GG$. However, we do not need this information for the construction.
\end{rem}

\sss{Step 2} Consider the abelian group
\begin{equation}
    M^\flat_\phi(k)= M_\phi (k)/ \Im(\tau).
\end{equation}
Rewrite \eqref{Mphi ses} as a short exact sequence of abelian groups
\begin{equation}\label{Mphi split}
1\to (Z\GG\cap \AA)^{\c}(k)\to M_{\phi}(k)\to \pi_{0}(M_{\phi})\to 1.
\end{equation} 
By construction we have an exact sequence
\begin{equation}
    1\to (Z\GG\cap \AA)^\circ(k)\to M_\phi^\flat(k)\to \coker(\ov\tau)\to 1.
\end{equation}
Let
\begin{equation}
    \Gamma^\flat=M_\phi^\flat(k)[N]
\end{equation}
be the $N$-torsion subgroup of $M_\phi^\flat(k)$.

Using the fact that $(Z\GG\cap \AA)^\circ(k)$ is divisible, the following lemma is easy to see.
\begin{lemma}\label{l:Gamma flat}
    The group $\Gamma^\flat$ is finite and surjects onto $\pi_0(M_\phi^\flat)=\coker(\ov\tau)$.
\end{lemma}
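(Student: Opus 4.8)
The plan is to exhibit $\Gamma^\flat$ as a finite group by combining the two ``ends'' of the short exact sequence
\[
1\to (Z\GG\cap \AA)^\circ(k)\to M_\phi^\flat(k)\to \coker(\ov\tau)\to 1,
\]
and then to check surjectivity onto $\coker(\ov\tau)$ using divisibility of the torus. First I would record that $(Z\GG\cap \AA)^\circ$ is a torus over $k$, hence its group of $k$-points is a divisible abelian group, and its $N$-torsion $(Z\GG\cap\AA)^\circ(k)[N]$ is isomorphic to $(\ZZ/N)^{\oplus r}$ where $r=\dim (Z\GG\cap\AA)^\circ$ (using that $\chk$ is prime to $e$; if $\chk=\ell>0$ divides $N$ one uses that a torus has $(\ZZ/\ell)$-torsion of rank the dimension, so in any case the $N$-torsion is finite). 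Meanwhile $\coker(\ov\tau)$ is finite by Step 1 (\S\ref{sss:ov tau fin coker}). So the two outer terms of the sequence, when restricted to $N$-torsion, are finite.

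The key step is the snake lemma applied to multiplication by $N$ on the above exact sequence. Writing $B:=(Z\GG\cap\AA)^\circ(k)$, $E:=M_\phi^\flat(k)$, $C:=\coker(\ov\tau)$, the snake lemma gives an exact sequence
\[
0\to B[N]\to E[N]\to C[N]\to B/NB\to E/NE\to C/NC\to 0.
\]
Since $B$ is divisible, $B/NB=0$; and $N$ annihilates $C$ by the definition of $N$ as the exponent of $\coker(\ov\tau)$, so $C[N]=C$. Therefore the sequence collapses to a short exact sequence
\[
0\to B[N]\to \Gamma^\flat\to C\to 0.
\]
Here $\Gamma^\flat=E[N]$ by definition. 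The outer terms $B[N]$ and $C$ are both finite, hence $\Gamma^\flat$ is finite; and the surjection $\Gamma^\flat=E[N]\twoheadrightarrow C=C[N]$ is exactly the statement that $\Gamma^\flat$ surjects onto $\pi_0(M_\phi^\flat)=\coker(\ov\tau)$, where the identification $\pi_0(M_\phi^\flat)=\coker(\ov\tau)$ comes from the fact that $B$ is connected (being a torus) so that $\pi_0(E)=C$.

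The only place requiring a small amount of care — the ``main obstacle,'' such as it is — is the divisibility input: one must be sure that $B/NB=0$ as an \emph{abstract} group of $k$-points, which holds because $k$ is algebraically closed and $B$ is a torus, so $B(k)=(k^\times)^r$ is divisible regardless of $\chk$. (In positive characteristic $B(k)$ still has $N$-torsion even when $\chk\mid N$, but it remains divisible, which is all that is used.) Everything else is a formal diagram chase with the snake lemma, so the lemma follows.
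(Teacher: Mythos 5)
Your proof is correct and is essentially the paper's argument: the paper also deduces finiteness from the finiteness of $(Z\GG\cap\AA)^\circ(k)[N]$ and $\coker(\ov\tau)$, and deduces surjectivity from divisibility of the torus points (phrased there as ``every element of $\coker(\ov\tau)$ lifts to an element of $M_\phi^\flat(k)$ of the same order,'' which is exactly what your snake-lemma sequence with $B/NB=0$ encodes). One harmless slip: in characteristic $\ell>0$ a torus has \emph{no} $\ell$-torsion in its $k$-points (the $N$-torsion is $\mu_N(k)^r$, of order the prime-to-$\ell$ part of $N^r$), not ``$(\ZZ/\ell)$-torsion of rank the dimension,'' but since your argument only uses finiteness of $B[N]$ and divisibility of $B$, nothing is affected.
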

\begin{proof}
    Finiteness follows from that both $(Z\GG\cap \AA)^\circ[N]$ and $\coker(\ov\tau)$ are finite. Since $(Z\GG\cap \AA)^\circ$ is a torus, $(Z\GG\cap \AA)^\circ(k)$ is divisible. Therefore every element of $\coker(\ov\tau)$ lifts to an element in $M_\phi^\flat(k)$ of the same order. This implies that $\Gamma^\flat\to \coker(\ov\tau)$ is surjective.
\end{proof}

Let $\Gamma\subset M_\phi(k)$ be the preimage of $\Gamma^\flat$ under the projection $M_\phi(k)\to  M^\flat_\phi(k)$. 

\begin{lemma}
    The projection $\Gamma\to \pi_0(M_\phi)$ is surjective with finite kernel of exponent dividing $N$.
\end{lemma}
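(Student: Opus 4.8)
The plan is to deduce this final lemma from the already-established short exact sequence \eqref{Mphi ses} together with Lemma \ref{l:Gamma flat}. Recall $\Gamma\subset M_\phi(k)$ is by definition the preimage of $\Gamma^\flat\subset M_\phi^\flat(k)=M_\phi(k)/\Im(\tau)$ under the quotient map $q: M_\phi(k)\to M_\phi^\flat(k)$. First I would observe that the composition $\Gamma\hookrightarrow M_\phi(k)\to\pi_0(M_\phi)$ fits into a commutative diagram relating the three quotients $M_\phi(k)\to M_\phi^\flat(k)$, $M_\phi(k)\to\pi_0(M_\phi)$, and $M_\phi^\flat(k)\to\pi_0(M_\phi^\flat)=\coker(\ov\tau)$, using $\pi_0(M_\phi)\twoheadrightarrow\coker(\ov\tau)$ from Step 1 (the latter being the cokernel of the map induced by $\ov\tau$ on $\pi_0$).

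For surjectivity: given $\om\in\pi_0(M_\phi)$, lift it to $x\in M_\phi(k)$; then $q(x)\in M_\phi^\flat(k)$ has image in $\coker(\ov\tau)$ equal to the image of $\om$. By Lemma \ref{l:Gamma flat}, the finite group $\Gamma^\flat$ surjects onto $\coker(\ov\tau)$, so choose $\gamma^\flat\in\Gamma^\flat$ with the same image in $\coker(\ov\tau)$ as $q(x)$. Then $q(x)(\gamma^\flat)^{-1}$ lies in the kernel $(Z\GG\cap\AA)^\circ(k)$ of $M_\phi^\flat(k)\to\coker(\ov\tau)$; lifting this divisible-group element along $M_\phi(k)\to M_\phi^\flat(k)$ (whose kernel $\Im(\tau)$ injects into $(Z\GG\cap\AA)^\circ(k)$, and using that $(Z\GG\cap\AA)^\circ(k)$ is the connected-center part which is the image of $(Z\GG\cap\AA)^\circ(k)\subset M_\phi(k)$) and adjusting $x$ accordingly, one arrives at an element of $\Gamma$ still mapping to $\om$. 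Slightly more cleanly: $q^{-1}(\Gamma^\flat)=\Gamma$ surjects onto $\pi_0(M_\phi)$ because $\Gamma^\flat$ surjects onto $\coker(\ov\tau)$ and the kernel $(Z\GG\cap\AA)^\circ(k)$ of $M_\phi(k)\to\coker(\ov\tau)$ (which contains $\ker(M_\phi(k)\to\pi_0(M_\phi))$ up to the connected center) already surjects onto $\ker(\pi_0(M_\phi)\to\coker(\ov\tau))$ after intersecting with $\Gamma$; here one uses divisibility once more to guarantee that torsion elements of $\pi_0$ lift into $\Gamma$.

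For the kernel: $\ker(\Gamma\to\pi_0(M_\phi))=\Gamma\cap(Z\GG\cap\AA)^\circ(k)$. An element $z$ of this kernel satisfies $q(z)\in\Gamma^\flat=M_\phi^\flat(k)[N]$, so $N z\in\Im(\tau)\cap(Z\GG\cap\AA)^\circ(k)$; but $\Im(\tau)$ maps injectively to $\pi_0(M_\phi)$ modulo $(Z\GG\cap\AA)^\circ(k)$ is not quite the point — rather, $\tau$ has image a lattice complementary to $(Z\GG\cap\AA)^\circ(k)$ in the relevant sense (it is the split torus part $t^{\xcoch(\SS)^{\mu_e}}$, disjoint from the connected center torus), so $\Im(\tau)\cap(Z\GG\cap\AA)^\circ(k)$ is trivial, giving $Nz=1$. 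Hence the kernel has exponent dividing $N$, and it is finite since it is an $N$-torsion subgroup of the group of $k$-points of an algebraic torus (or more directly: it sits inside $\Gamma^\flat$'s preimage of the identity, a subgroup of $(Z\GG\cap\AA)^\circ(k)[N]$-by-$\Im(\tau)[N]$, both finite).

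The main obstacle I anticipate is bookkeeping the interaction of the three groups $\Im(\tau)$, $(Z\GG\cap\AA)^\circ(k)$, and $\pi_0(M_\phi)$ correctly — in particular verifying that $\Im(\tau)$ meets $(Z\GG\cap\AA)^\circ(k)$ trivially (so that $q$ restricted to $(Z\GG\cap\AA)^\circ(k)$ is injective) and that divisibility of $(Z\GG\cap\AA)^\circ(k)$ can be invoked to split off the torsion in $\pi_0$. These are elementary once one writes down the lattice $\xcoch(\SS)^{\mu_e}$ explicitly as the cocharacters of the split part of the connected center and recalls from Step 1 that $\ov\tau$ is injective with finite cokernel of exponent $N$; I would present the argument as a short diagram chase rather than spelling out every group-theoretic manipulation.
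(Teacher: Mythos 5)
Your proof is correct and takes essentially the same route as the paper: the paper assembles the same ingredients (the exact rows $1\to\Im(\tau)\to\Gamma\to\Gamma^\flat\to 1$ and $1\to\Im(\ov\tau)\to\pi_0(M_\phi)\to\coker(\ov\tau)\to 1$, surjectivity and finiteness of $\Gamma^\flat\to\coker(\ov\tau)$ from Lemma~\ref{l:Gamma flat}, and the triviality of $\Im(\tau)\cap(Z\GG\cap\AA)^\circ(k)$ coming from the injectivity of $\ov\tau$ in Step~1) into a two-row commutative diagram and reads off that $\gamma$ is surjective with $\ker(\gamma)\cong\ker(\gamma^\flat)$, whereas you chase the same diagram element-wise, reaching the same exponent-$N$ and finiteness conclusions. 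The only blemish is the parenthetical claiming that the kernel $\Im(\tau)$ of $M_\phi(k)\to M_\phi^\flat(k)$ ``injects into $(Z\GG\cap\AA)^\circ(k)$'' --- these two subgroups in fact meet trivially; what your chase actually uses, and what the displayed exact sequence in Step~2 provides, is that $\ker\bigl(M_\phi^\flat(k)\to\coker(\ov\tau)\bigr)$ is the isomorphic image of $(Z\GG\cap\AA)^\circ(k)$, so the argument is unaffected.
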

\begin{proof}
    We have a commutative diagram where both rows are exact
\begin{equation}
    \xymatrix{1\ar[r] &  \Im(\tau)\ar[r]\ar[d]^{\cong} & \Gamma\ar[r] \ar[d]^{\gamma} & \Gamma^\flat\ar[r] \ar[d]^{\gamma^\flat} & 1\\
    1\ar[r] &  \Im(\ov\tau)\ar[r] & \pi_0(M_\phi)\ar[r] & \coker(\ov\tau) \ar[r] & 1}
\end{equation}
By Lemma \ref{l:Gamma flat}, $\gamma^\flat$ is surjective, which implies $\gamma$ is also surjective. Moreover, since the left vertical map is an isomorphism, $\ker(\gamma)\cong \ker(\gamma^\flat)$, which is finite and $N$-torsion.
\end{proof}

\sss{Step 3} In this final step, we construct a multi-section of $\Gamma$ to $\wt M_\phi$ containing $\Im(\wt\tau)$. Let $\wt\Gamma$ be the preimage of $\Gamma$ in $\wt M_\phi$. Then $\wt\Gamma$ is a two-step nilpotent group that fits into a short exact sequence
\begin{equation*}
1 \ra k^\times \ra  \wt\Gamma \ra \Gamma \ra 1.
\end{equation*}
Attached to this is the (alternating) commutator pairing 
\begin{equation*}
c:\Gamma\times \Gamma\to k^\times.
\end{equation*}

\begin{lemma}\label{l:Theta normal}
\begin{enumerate}
    \item The subgroup $\Theta:=\mu_N(k)\cdot \Im(\wt\tau)\subset \wt\Gamma$ is normal. 
    \item The larger subgroup $\Theta' :=\mu_{N^2}(k)\cdot \Im(\wt\tau)\subset \wt\Gamma$ is normal and the quotient $\wt\Gamma/\Theta'$ is abelian.
\end{enumerate}
\end{lemma}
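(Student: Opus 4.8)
The plan is to reduce everything to the biadditive alternating commutator pairing $c\colon\Gamma\times\Gamma\to k^\times$ of the central extension $1\to k^\times\to\wt\Gamma\to\Gamma\to 1$. Two inputs are already in hand: (a) since $\Gamma/\Im(\tau)=\Gamma^\flat$ has exponent dividing $N$ by construction, every $g\in\Gamma$ satisfies $g^N\in\Im(\tau)$; and (b) the commutator pairing restricts trivially to $\Im(\tau)\times\Im(\tau)$, by the hypothesis on the determinantal central extension in \S\ref{ss: cent ext} and the construction of $\wt\tau$ in \S\ref{sss:section xcoch S}, together with the fact that $c$ is the restriction of the commutator pairing of $\wt M_\phi$. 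Because $k^\times$ is central in $\wt\Gamma$, for any lifts $\tilde g,\tilde h$ of $g,h\in\Gamma$ one has $\tilde g\tilde h\tilde g^{-1}=c(g,h)\,\tilde h$ and $[\tilde g,\tilde h]=c(g,h)\in k^\times$; these are the only identities I will need.

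For part (1): since $\mu_N(k)\subset k^\times$ is central in $\wt\Gamma$, it suffices to check that conjugation by an arbitrary $\tilde g\in\wt\Gamma$, lying over $g\in\Gamma$, carries $\Im(\wt\tau)$ into $\mu_N(k)\cdot\Im(\wt\tau)$. First I would compute $\tilde g\,\wt\tau(\cmu)\,\tilde g^{-1}=c(g,\tau(\cmu))\cdot\wt\tau(\cmu)$ for $\cmu\in\xcoch(\SS)^{\mu_e}$, and then observe that, by biadditivity of $c$, $c(g,\tau(\cmu))^N=c(g^N,\tau(\cmu))=1$ using (a) and (b). Hence $c(g,\tau(\cmu))\in\mu_N(k)$ and $\Theta=\mu_N(k)\cdot\Im(\wt\tau)$ is normal.

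For part (2): normality of $\Theta'$ is immediate from the same computation, since $\mu_N(k)\subset\mu_{N^2}(k)$ and $\mu_{N^2}(k)$ is central. For the assertion that $\wt\Gamma/\Theta'$ is abelian I would show $[\wt\Gamma,\wt\Gamma]\subset\mu_{N^2}(k)$; as $k^\times$ is central, $[\wt\Gamma,\wt\Gamma]$ is the subgroup of $k^\times$ generated by the values $c(g,h)$ with $g,h\in\Gamma$, and $c(g,h)^{N^2}=c(g^N,h^N)=1$, again by biadditivity together with (a) and (b). The appearance of $N^2$ rather than $N$ is exactly because now \emph{both} arguments must be driven into $\Im(\tau)$ before invoking (b). Since $\mu_{N^2}(k)\subset\Theta'$, this gives the claim.

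There is no serious obstacle here: the content is a two-line pairing computation. The only points requiring a moment of care are verifying that the commutator pairing on $\wt\Gamma$ is the restriction of that on $\wt M_\phi$, so that the triviality statement (b) genuinely applies to elements of $\Im(\tau)\subset M_\phi(k)$, and noting that $\mu_N(k)$ and $\mu_{N^2}(k)$ are honest subgroups of $k^\times$ with the property that $x^N=1$ forces $x\in\mu_N(k)$ — which holds over any field, in particular over our algebraically closed $k$.
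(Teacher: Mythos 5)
Your proof is correct and uses exactly the same ingredients as the paper: bilinearity of the central commutator pairing $c$, its triviality on $\Im(\tau)\times\Im(\tau)$, and the fact that $\Gamma/\Im(\tau)\cong\Gamma^\flat$ is $N$-torsion, which for part (1) reproduces the paper's argument verbatim. The only (cosmetic) difference is in part (2): the paper passes to the quotient $\wt\Gamma/\Theta$ and observes that its induced commutator pairing takes values in $\mu_{N^2}(k)/\mu_N(k)$, whereas you compute directly in $\wt\Gamma$ that $c(g,h)^{N^2}=c(g^N,h^N)=1$, an equivalent reformulation of the same two-step application of bilinearity.
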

\begin{proof}
(1) We have mentioned in \S\ref{sss:section xcoch S} that $c$ is trivial when restricted to $\Im(\tau)\times \Im(\tau)$. Therefore $c|_{\Gamma\times \Im(\tau)}$ factors through $\Gamma/\Im(\tau)\times \Im(\tau)=\Gamma^\flat\times \Im(\tau)$. Since $\Gamma^\flat$ is $N$-torsion, $c|_{\Gamma\times \Im(\tau)}\subset \mu_N(k)$. This implies that, for any $\wt\gamma\in \wt\Gamma$ and any $\s\in \Im(\wt\tau)$, $\wt\gamma \s \wt\gamma^{-1}\in \mu_N(k)\cdot \s$. Hence $\mu_N(k)\cdot \Im(\wt\tau)$ is a normal subgroup of $\wt\Gamma$.

(2) The quotient $\wt\Gamma/\Theta$ is a $2$-step nilpotent group that fits into an exact sequence
\begin{equation}\label{wt Gamma mod Theta}
    1\to k^\times/\mu_N(k)\to \wt\Gamma/\Theta\to \Gamma^\flat\to 1,
\end{equation}
where $k^\times/\mu_N(k)\cong k^\times$ is central. Since $\Gamma^\flat$ is $N$-torsion, the commutator pairing for \eqref{wt Gamma mod Theta} has values in the $N$-torsion of $k^\times/\mu_N(k)$, which is $\mu_{N^2}(k)/\mu_N(k)$. Therefore $(\wt\Gamma/\Theta)/(\mu_{N^2}(k)/\mu_N(k))=\wt\Gamma/\Theta'$ is abelian. 
\end{proof}

Now let 
\begin{equation}
    \Sigma^\flat:=(\wt\Gamma/\Theta')[N]
\end{equation}
be the $N$-torsion subgroup of the abelian group $\wt\Gamma/\Theta'$. Let $\Sig\subset \wt\Gamma$ be the preimage of $\Sig^\flat$. We check that $\Sig$ satisfies the requirements of Construction \ref{cons: Sigma}.

\begin{lemma}
    The canonical map $\s: \Sigma\to \pi_0(M_\phi)$ is surjective, and $\ker(\s)$ is a finite abelian group with exponent dividing $N^4$.
\end{lemma}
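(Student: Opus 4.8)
The plan is to track everything through the central extension $1\to k^\times\to\wt\Gamma\xrightarrow{\rho}\Gamma\to1$ and the subgroup tower $\Im(\wt\tau)\subseteq\Theta'\subseteq\Sig\subseteq\wt\Gamma$, using the identification $\Gamma/\Im(\tau)=\Gamma^\flat$ and the surjection $\Gamma^\flat\to\coker(\ov\tau)$ established above. Write $\gamma\colon\Gamma\to\pi_0(M_\phi)$ for the map of the preceding lemma, which is surjective with finite kernel of exponent dividing $N$; unwinding the definitions one also sees that $\ker\gamma$ is contained in the constant subgroup $(Z\GG\cap\AA)^\circ(k)\subset\AA(k)$ (in fact it is its $N$-torsion). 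Since $\s$ is the restriction to $\Sig$ of $\gamma\circ\rho$, we have $\ker(\s)=\Sig\cap\rho^{-1}(\ker\gamma)$, and I would organize the argument around computing $\rho(\Sig)$ and $\Sig\cap k^\times$.

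First I would show $\rho(\Sig)=\Gamma$, which gives surjectivity of $\s$ because $\gamma$ is onto. On the one hand $\Sig\supseteq\Theta'\supseteq\Im(\wt\tau)$ yields $\rho(\Sig)\supseteq\Im(\tau)=\ker(\Gamma\to\Gamma^\flat)$. On the other hand the composite $\Sig\to\Sig/\Theta'=\Sig^\flat=(\wt\Gamma/\Theta')[N]\to\Gamma^\flat$ is surjective: the kernel of $\wt\Gamma/\Theta'\to\Gamma^\flat$ is $k^\times/(k^\times\cap\Theta')\cong k^\times/\mu_{N^2}(k)$, which is divisible since $k$ is algebraically closed, so every element of the $N$-torsion group $\Gamma^\flat$ lifts to an $N$-torsion element of the abelian group $\wt\Gamma/\Theta'$, i.e.\ to $\Sig^\flat$; and $\Sig\to\Sig^\flat$ is onto since $\Sig$ is by definition the preimage of $\Sig^\flat$. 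Combining, $\rho(\Sig)$ both contains $\ker(\Gamma\to\Gamma^\flat)$ and maps onto $\Gamma^\flat$, whence $\rho(\Sig)=\Gamma$.

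Next I would bound $\ker(\s)$. There is an extension $1\to\Sig\cap k^\times\to\ker(\s)\to\rho(\ker(\s))\to1$ whose rightmost term is a subgroup of $\ker\gamma$, of exponent dividing $N$. For the leftmost term I would first record that $k^\times\cap\Theta'=\mu_{N^2}(k)$ --- this follows from the injectivity of $\wt\tau$ together with $\Im(\wt\tau)\cap k^\times=\{1\}$, so no nontrivial element of $\Im(\wt\tau)$ can absorb a root of unity --- and then note, writing $q\colon\wt\Gamma\to\wt\Gamma/\Theta'$, that for $z\in k^\times$ one has $z\in\Sig$ iff $q(z^N)=1$ iff $z^N\in\mu_{N^2}(k)$ iff $z\in\mu_{N^3}(k)$; thus $\Sig\cap k^\times=\mu_{N^3}(k)$, finite of exponent dividing $N^3$. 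It remains to see that $\ker(\s)$ is abelian: it lies inside $\rho^{-1}(\ker\gamma)$, which is a central extension of the finite abelian group $\ker\gamma$ by $k^\times$ whose commutator pairing is the restriction of $c$ to $\ker\gamma\times\ker\gamma$; as $\ker\gamma\subset(Z\GG\cap\AA)^\circ(k)\subset\TT(k)$ consists of constant loops and the determinantal central extension $\tLG\to LG$ splits canonically over the arc group $L^+G\supset\TT$, this restriction vanishes and $\rho^{-1}(\ker\gamma)$ is abelian. An abelian group that is an extension of an $N$-torsion group by an $N^3$-torsion group has exponent dividing $N^4$, and here it is finite since both $\Sig\cap k^\times$ and $\rho(\ker(\s))$ are finite; this is the lemma.

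The step I expect will need genuine care --- everything else being bookkeeping with torsion along $\Sig\to\Sig^\flat\to\Gamma^\flat\to\coker(\ov\tau)$ and the divisibility of $k^\times$ --- is the vanishing of $c$ on $\ker\gamma\times\ker\gamma$. Here one must use that $\ker\gamma$ sits in the constant subtorus $(Z\GG\cap\AA)^\circ(k)$ and not among loops of the form $t^{\cmu}$, for which the commutator pairing \eqref{eq: commutator pairing} is typically nonzero; triviality on constant loops is then exactly the statement that the determinantal central extension is canonically split over $L^+G$.
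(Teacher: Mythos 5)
Your proof is correct and follows essentially the same route as the paper's: surjectivity comes from divisibility of $k^\times/\mu_{N^2}(k)$ forcing the $N$-torsion $\Sigma^\flat$ to surject onto $\Gamma^\flat$, the torsion bound comes from the computation $\Sigma\cap k^\times=\mu_{N^3}(k)$ together with the $N$-torsion of $\ker\gamma\cong (Z\GG\cap\AA)^\circ[N](k)$, exactly the two ingredients of the paper's exact sequence $1\to\mu_{N^3}(k)\to\ker(\sigma)\to (Z\GG\cap\AA)^\circ[N](k)\to 1$. The only differences are organizational — you phrase surjectivity via $\rho(\Sigma)=\Gamma$ and the preceding lemma's $\gamma$ instead of the paper's diagram with $\Theta''$ and $\gamma^\flat$ — and you make explicit the abelianness of $\ker(\sigma)$ (vanishing of the commutator pairing on constant central-torus elements, using the splitting of the central extension over $L^+G$), which the paper asserts implicitly by saying $\ker(\sigma)\subset\wt{Z\GG\cap\AA}(k)$.
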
 
\begin{proof} Since $\wt\Gamma/\Theta'$ is an extension of $\Gamma^\flat$ by $k^\times/\mu_{N^2}(k)$, which is divisible, we have an exact sequence
\begin{equation}\label{exact seq Sigma flat}
    1\to \mu_{N^3}(k)/\mu_{N^2}(k)\to \Sigma^\flat\to \Gamma^\flat\to1.
\end{equation}
Let $\Theta''=\mu_{N^3}(k)\cdot\Im(\wt\tau)\subset \wt\Gamma$, \eqref{exact seq Sigma flat} implies an exact sequence
\begin{equation}
    1\to \Theta''\to \Sigma\to \Gamma^\flat\to1.
\end{equation}
It fits into a commutative diagram
\begin{equation}
    \xymatrix{1\ar[r] &  \Theta''\ar[r]\ar[d] & \Sigma\ar[r] \ar[d]^{\sigma} & \Gamma^\flat\ar[r] \ar[d]^{\gamma^\flat} & 1\\
    1\ar[r] &  \Im(\ov\tau)\ar[r] & \pi_0(M_\phi)\ar[r] & \coker(\ov\tau) \ar[r] & 1}
\end{equation}
Both the left and right vertical maps are surjective by construction, we conclude that $\sigma: \Sig\to \pi_0(M_\phi)$ is surjective. The above diagram also gives an exact sequence
\begin{equation}
    1\to \mu_{N^3}(k)\to \ker(\s)\to \ker(\gamma^\flat)\cong (Z\GG\cap\AA)^\circ[N](k)\to 1.
\end{equation}
In particular, $\ker(\s)\subset \wt{Z\GG\cap \AA}(k)$ is finite abelian and $N^4$-torsion.
\end{proof}

The above lemma implies a fortiori that the further projection $\pi_\Sig: \Sigma\xr{\s} \pi_0(M_\phi)\surj \Om$ is surjective. The kernel $\ker(\pi_\Sig)$ is a subgroup of $\wt{Z\GG \cap \AA}(k)$, hence is abelian. Moreover, $\ker(\pi_\Sig)$ is an extension of $\ker(\pi_0(M_\phi)\to \Om)=\pi_0(Z\GG\cap \AA)$ by $\ker(\s)$, hence is finite. This finishes the construction of $\Sigma$.

\subsection{Integral affine coroots} \label{s:comboblock} For a real affine root $\a$ of $LG$, we can associate to a homomorphism $$\phi_\a: SL_2 \rightarrow \tLG,$$ integrating a choice of corresponding $\mathfrak{sl}_2$-triple. The restriction of $\phi_\a$ to the diagonal matrices, which is independent of the choice, yields a homomorphism $$\halpha : \gm \ra \wt\AA,$$ which is the (real) coroot corresponding to $\a$. We denote the obtained set of (real) affine coroots, and its subset of positive affine coroots, by
$$\cPhi_{\aff}^{+} \subset \cPhi_{\aff} \subset \mathbb{X}_*(\wt{\mathbb{A}}).$$ 

Since $N_{\tLG}(\wt\AA)$ acts on $\wt\AA$ by conjugation, $\tilW$ acts on the group of character sheaves on $\wt\AA$. For $\chi, \chi' \in \Ch(\wt\AA)$ two residual character sheaves (see Section \ref{sss: int coef} for this notion), let 
\begin{equation*}
	_{\chi} \tilW_{\chi'} := \{w \in \tilW : w\chi'=\chi \}.  
\end{equation*}
For ease of notation, we will write $\tilW_\chi$ instead of $_\chi \tilW_\chi$.

Within $\tilW_\chi$, we have a subgroup which plays a basic role in the present paper, constructed as follows. Associated to $\chi$ is the subset of {\em integral} coroots 
\begin{equation*}
	\cPhi_\chi = \{ \halpha \in \cPhi_{\aff}: (\halpha)^*(\chi) \text{ is trivial}\},  
\end{equation*}
and similarly its subset of positive integral coroots
$$\cPhi^{+}_\chi := \cPhi^{+}_{\aff} \cap \cPhi_\chi.$$

By the passing to the group generated by the corresponding reflections, we obtain a normal subgroup of $\tilW_\chi$, namely
\begin{equation*}
    \tilW_\chi^\circ = \langle s_\a : \a \in \cPhi_\chi \rangle \triangleleft \tilW_\chi. 
\end{equation*}
The results in \cite{Bo} show that $\tilW_\chi^\circ$ is naturally a Coxeter group $(\tilW_\chi^\circ, S_\chi)$, with the simple reflections $S_\chi$ corresponding to the indecomposable positive affine coroots in $\cPhi_\chi$, i.e., the elements of $\cPhi_\chi^{+}$ not expressible as a sum of two other elements. 

Finally, we set up some terminology for the arising group of cosets, and its variants for varying characters. Namely, for $\chi' \in \tilW \chi$, we consider the set 
\begin{equation*}
	_{\chi} \Omega_{\chi'} = \tilW_{\chi}^\circ \bsl {}_{\chi} \tilW_{\chi'}  = {}_{\chi} \tilW_{\chi'} / \tilW_{\chi'}^\circ.
\end{equation*}
When $\chi = \chi'$, for ease of notation we will write $\Omega_\chi$ instead of ${}_\chi \Omega_\chi$. Note that $\Omega_\chi$ is a quotient group of $\tilW_\chi$, and more generally ${}_{\chi}\Omega_{\chi'}$ is naturally a bitorsor over $\Omega_{\chi}$ and $\Omega_{\chi'}$. For each element $\beta$ in ${}_{\chi} \Om_{\chi'}$, we denote its corresponding coset by ${}_{\chi} \tilW_{\chi'}^{\beta}$.

We observe that there is a natural section $\sigma$ of the tautological projection $${}_{\chi} \tilW_{\chi'} \twoheadrightarrow {}_{\chi} \Omega_{\chi'}.$$Namely, each element $\beta$ in $_{\chi} \Omega_{\chi'}$ contains a unique element $w^{\beta}$ of minimal length, and we set $\sigma(\beta) = w^\beta$. Explicitly, $w^\beta$ is the unique element of the coset satisfying
\begin{equation} \label{e:charminimal}w^\beta( \cPhi^{+}_{\chi'}) \xrightarrow{\sim} \cPhi^{ +}_{\chi},\end{equation}
which shows minimal length elements are moreover closed under multiplication. Namely, we have the following lemma. 

\begin{lem}\label{l:minmult} Given cosets $\beta \in {}_\chi \Omega_{\chi'}$ and $\gamma \in {}_{\chi'}\Omega_{\chi''}$, define $\beta \star \gamma$ to be the subset of $\tilW$ given by the image of 
$$\beta \times \gamma \hookrightarrow \tilW \times \tilW \xrightarrow{\on{mult}} \tilW.$$
Then $\beta \star \gamma$ is a coset in ${}_\chi \Omega_{\chi''}$, and we have an equality of minimal length representatives
$$w^\beta w^\gamma = w^{\beta \star \gamma}.$$
\end{lem}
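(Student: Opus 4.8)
The plan is to reduce everything to the characterization \eqref{e:charminimal} of minimal length representatives, which says $w^\beta$ is the unique element of the coset $\beta$ satisfying $w^\beta(\cPhi^+_{\chi'}) = \cPhi^+_\chi$, and similarly for $w^\gamma$ and $w^{\beta\star\gamma}$. So the first step is to verify that $\beta\star\gamma$ is indeed a single coset in ${}_\chi\Omega_{\chi''}$: since $\beta = \tilW_\chi^\circ w^\beta$ and $\gamma = w^\beta \tilW_{\chi'}^\circ (w^\beta)^{-1}\cdot w^\beta w^\gamma$... more cleanly, $\beta = w^\beta \tilW_{\chi'}^\circ$ (using that $\tilW_\chi^\circ$ is normal and $w^\beta$ conjugates $\tilW_{\chi'}^\circ$ to $\tilW_\chi^\circ$, which follows from \eqref{e:charminimal} since conjugation by $w^\beta$ carries the reflection in an integral coroot of $\chi'$ to the reflection in an integral coroot of $\chi$) and $\gamma = w^\gamma\tilW_{\chi''}^\circ$. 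Hence $\beta\gamma = w^\beta\tilW_{\chi'}^\circ w^\gamma \tilW_{\chi''}^\circ = w^\beta w^\gamma \tilW_{\chi''}^\circ$, using again that $w^\gamma$ conjugates $\tilW_{\chi''}^\circ$ into $\tilW_{\chi'}^\circ$. This is a single left coset of $\tilW_{\chi''}^\circ$ lying in ${}_\chi\tilW_{\chi''}$, i.e. an element of ${}_\chi\Omega_{\chi''}$, and it equals $\beta\star\gamma$ as defined.

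The second step is to identify the minimal length representative of this coset. Set $w := w^\beta w^\gamma$; we have just shown $w \in \beta\star\gamma$, so it suffices to check $w$ satisfies the defining property \eqref{e:charminimal} for $\beta\star\gamma$, namely $w(\cPhi^+_{\chi''}) = \cPhi^+_\chi$. This is immediate from the chain
\[
w^\beta w^\gamma(\cPhi^+_{\chi''}) = w^\beta(\cPhi^+_{\chi'}) = \cPhi^+_\chi,
\]
where the first equality is \eqref{e:charminimal} applied to $w^\gamma \in \gamma$ and the second is \eqref{e:charminimal} applied to $w^\beta\in\beta$. By uniqueness in \eqref{e:charminimal}, $w = w^{\beta\star\gamma}$, which is exactly the claimed equality $w^\beta w^\gamma = w^{\beta\star\gamma}$.

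The only genuinely substantive point, which I would want to state carefully, is the compatibility of the integral coroot systems under the action of minimal-length coset representatives: that $w^\beta$, characterized by sending $\cPhi^+_{\chi'}$ bijectively onto $\cPhi^+_\chi$, actually lies in ${}_\chi\tilW_{\chi'}$ (so that $w^\beta\chi' = \chi$) and conjugates $\tilW_{\chi'}^\circ$ onto $\tilW_\chi^\circ$ preserving simple reflections. For the first: $w\chi'$ and $\chi$ are residual characters whose integral positive coroot sets coincide (being $w(\cPhi^+_{\chi'})$ and $\cPhi^+_\chi$ respectively), and within the coset $\beta\subset{}_\chi\tilW_{\chi'}$ we already know one representative sends $\chi'$ to $\chi$; any element of $\beta$ differs from it by an element of $\tilW_\chi^\circ$, which fixes $\chi$, so every element of $\beta$ sends $\chi'$ to $\chi$, in particular $w^\beta$ does. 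For the conjugation claim: $w^\beta s_\halpha (w^\beta)^{-1} = s_{w^\beta\halpha}$, and $w^\beta\halpha$ runs over $\cPhi_\chi$ as $\halpha$ runs over $\cPhi_{\chi'}$ by \eqref{e:charminimal} (applied to both positive and, taking negatives, negative coroots), and moreover carries indecomposable elements to indecomposable elements since $w^\beta$ preserves the partial order induced by positivity; hence $S_{\chi'} \mapsto S_\chi$. I expect this bookkeeping with the coroot systems to be the main obstacle, though it is entirely formal given the results of \cite{Bo} and \eqref{e:charminimal} cited just above; no estimates or case analysis are needed.
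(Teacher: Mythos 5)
Your proof is correct and follows essentially the same route as the paper: rewrite the product of cosets as a single coset of $\tilW_{\chi''}^\circ$ using the fact that elements of ${}_\chi\tilW_{\chi'}$ conjugate $\tilW_{\chi'}^\circ$ onto $\tilW_\chi^\circ$, and then identify $w^\beta w^\gamma$ as the minimal element via the characterization \eqref{e:charminimal}. The extra bookkeeping you include (that every element of $\beta$ sends $\chi'$ to $\chi$, and the remark about simple reflections) is harmless but not needed for this lemma, since $w^\beta$ lies in ${}_\chi\tilW_{\chi'}$ by definition of the coset.
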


\begin{proof} To see the first assertion, if we pick coset representatives $y \in \beta$ and $z \in \gamma$, we note that
\begin{align*}\beta \star \gamma =  (y\tilW_{\chi'}^\circ) (\tilW_{\chi'}^\circ z) & = y \tilW_{\chi'}^\circ z = \tilW_{\chi}^\circ yz. \end{align*}
The second assertion is then immediate from \eqref{e:charminimal}.
\end{proof}

In particular, when $\chi =\chi'$, the section $\sigma$ provides a group homomorphism $\Omega_\chi \rightarrow \tilW_\chi$, whence 
$$\tilW_\chi \simeq \Omega_\chi \ltimes \tilW_\chi^\circ,$$
and moreover this is an extended Coxeter presentation, i.e., the conjugation action of $\Omega_\chi$ on $\tilW^\circ_\chi$ is by automorphisms as a Coxeter system. 

For a coset $\beta \in {}_\chi \Omega_{\chi'}$, we define a partial order $\leq_{\beta}$ and length function $\ell_{\beta}$ on ${}_{\chi} \tilW_{\chi'}^{\beta}$ by transport of structure via the isomorphism $$\tilW_\chi^\circ \simeq {}_{\chi} \tilW_{\chi'}^{\beta}, \quad \quad w \mapsto w w^{\beta}.$$ In other words, we have $$w_1w^{\beta} \leq_{\beta} w_2w^{\beta} \Leftrightarrow w_1 \leq_{\chi} w_2, \text{ and}$$
$$\ell_{\beta}(w_1w^{\beta})= \ell_{\chi}(w_1), $$ where $\ell_\chi: \tilW_\chi^\circ \rightarrow \mathbb{Z}_{\geqslant 0}$ is the usual length function attached to the Coxeter group $\tilW_\chi^\circ$ and $\leq_{\chi}$ denotes the standard partial order of $\tilW_\chi^\circ$. 

It is straightforward to check that these agree with the similarly defined length functions and Bruhat orders using instead 
$${}_{\chi} \tilW_{\chi'}^{\beta} \simeq \tilW_{\chi'}^\circ, \quad \quad w^{\beta} x \mapsfrom x.$$
Indeed, for the Bruhat order, this follows from the fact that conjugation by $w^{\beta}$ is an isomorphism of Coxeter systems $\tilW_\chi^\circ \simeq \tilW_{\chi'}^\circ,$ which in turn follows from Equation \eqref{e:charminimal}. For the length, one notes that for $z \in {}_{\chi} \tilW_{\chi'}^{\beta}$, $$\ell_{\beta}(z) = \# \{ \halpha \in \cPhi_{\chi'}^{+}: z(\halpha) \in -\cPhi_{\chi}^{+}\},$$  
which makes the claim manifest. Let us summarize the results above. 

\begin{prop}\label{p:conjmin}
    For a coset $\beta \in {}_\chi \Omega_{\chi'}$, left (resp. right) multiplication by the minimal element $w^{\beta}$ preserves the Bruhat order and the length function. In particular, conjugation by a minimal element is an isomorphism between Coxeter systems $\tilW_\chi^\circ$ and $\tilW_{\chi'}^\circ$.
\end{prop}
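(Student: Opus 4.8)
The plan is to derive every assertion by transport of structure from the defining property \eqref{e:charminimal} of the minimal representative together with Lemma \ref{l:minmult}; in fact much of the needed computation has already been recorded in the discussion preceding the proposition, so the proof is mostly a matter of assembling it.

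I would first dispatch the ``in particular'' clause. By \eqref{e:charminimal}, $w^\beta$ carries $\cPhi^+_{\chi'}$ bijectively onto $\cPhi^+_\chi$, hence (as these root systems are symmetric) carries $\cPhi_{\chi'}$ onto $\cPhi_\chi$ and indecomposable positive elements to indecomposable positive elements; applying \eqref{e:charminimal} to $(w^\beta)^{-1}$ also shows $(w^\beta)^{-1} = w^{\beta^{-1}}$. Since $w^\beta\chi' = \chi$, conjugation $\Ad(w^\beta)$ is a group isomorphism $\tilW_{\chi'} \xrightarrow{\sim} \tilW_\chi$ carrying the normal subgroup $\tilW^\circ_{\chi'}$ onto $\tilW^\circ_\chi$ and the simple system $S_{\chi'}$ onto $S_\chi$; thus it is an isomorphism of Coxeter systems, and in particular it intertwines $\ell_{\chi'}$ with $\ell_\chi$ and $\leq_{\chi'}$ with $\leq_\chi$.

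For the statement about one-sided multiplication, fix a coset $\gamma \in {}_{\chi'}\Om_{\chi''}$ and write a general element of ${}_{\chi'}\tilW^\gamma_{\chi''}$ as $z = x\,w^\gamma$ with $x \in \tilW^\circ_{\chi'}$, so that by definition $\ell_\gamma(z) = \ell_{\chi'}(x)$ and $z_1 \leq_\gamma z_2 \Leftrightarrow x_1 \leq_{\chi'} x_2$. By Lemma \ref{l:minmult},
\[ w^\beta z = \bigl(w^\beta x (w^\beta)^{-1}\bigr)\bigl(w^\beta w^\gamma\bigr) = \Ad(w^\beta)(x)\cdot w^{\beta\star\gamma}, \]
with $\Ad(w^\beta)(x) \in \tilW^\circ_\chi$; hence $\ell_{\beta\star\gamma}(w^\beta z) = \ell_\chi(\Ad(w^\beta)(x))$ and $w^\beta z_1 \leq_{\beta\star\gamma} w^\beta z_2 \Leftrightarrow \Ad(w^\beta)(x_1) \leq_\chi \Ad(w^\beta)(x_2)$, and we conclude by the previous paragraph. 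Right multiplication is handled symmetrically, using instead the presentation ${}_{\chi''}\tilW^\gamma_{\chi'} = w^\gamma\,\tilW^\circ_{\chi'}$ and the remark, already made above, that $\ell_{(-)}$ and $\leq_{(-)}$ are computed identically through either presentation of a coset; composing a left and a right multiplication then gives a second derivation of the conjugation statement.

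I do not anticipate a genuine obstacle; the only points needing care are the bookkeeping of the bitorsors ${}_\chi\Om_{\chi'}$ and the identities $(w^\beta)^{-1} = w^{\beta^{-1}}$ and $w^\beta w^\gamma = w^{\beta\star\gamma}$, both immediate from \eqref{e:charminimal} and Lemma \ref{l:minmult}. If one prefers to avoid passing through $\Ad(w^\beta)$, the length statement drops out in one line from the reflection formula $\ell_\beta(z) = \#\{\halpha\in\cPhi^+_{\chi'}: z\halpha \in -\cPhi^+_\chi\}$ recorded just above together with $w^\beta(\cPhi^+_{\chi'}) = \cPhi^+_\chi$, and the Bruhat-order statement then follows from the standard subword characterization transported across $\Ad(w^\beta)$.
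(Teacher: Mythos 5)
Your proposal is correct and follows essentially the same route as the paper: the paper gives no separate proof but states the proposition as a summary of the preceding discussion, which derives everything from \eqref{e:charminimal} (conjugation by $w^\beta$ carries $\cPhi^+_{\chi'}$ onto $\cPhi^+_\chi$, hence is an isomorphism of Coxeter systems) together with Lemma \ref{l:minmult} and the reflection-counting formula $\ell_\beta(z)=\#\{\halpha\in\cPhi^+_{\chi'}: z(\halpha)\in-\cPhi^+_\chi\}$. Your writeup just makes the same bookkeeping (including $(w^\beta)^{-1}=w^{\beta^{-1}}$ and $w^\beta w^\gamma=w^{\beta\star\gamma}$) explicit.
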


The following proposition is crucial to the induction arguments for results in the next sections. 

\begin{prop} \label{p:conjsimple}
     For a simple reflection $r$ in $\tilW_{\chi}^\circ$, there exists a minimal element $w^{\beta}$ such that $w^{\beta} r w^{\beta, -1}$ is a simple reflection in both $\tilW_{w^{\beta}\chi}^\circ$ and $\tilW$.
\end{prop}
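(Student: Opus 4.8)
The plan is to argue by an induction on length, reducing the simple reflection $r$ to an affine simple reflection of $\tilW$ via conjugation by minimal-length coset representatives. Let me set up the induction. Write $r = s_{\halpha}$ for an indecomposable positive integral coroot $\halpha \in \cPhi_\chi^+$, with corresponding positive real affine root $\a$. If $\a$ is already a simple affine root in $S_{\aff}$, then $r$ is a simple reflection in $\tilW$ in the naive sense, and we may take $\beta$ to be the trivial coset (so $w^\beta = e$); one still needs to check $r$ is simple in $\tilW_\chi^\circ$, which is precisely the hypothesis. So the content is in the case where $\a$ is not affine simple. The idea is that although $\a$ is indecomposable \emph{within the integral subsystem} $\cPhi_\chi$, it need not be indecomposable within the full affine root system; conjugating by a suitable Weyl group element will move $\a$ ``closer'' to the affine simple roots while keeping $r$ simple in the relevant integral Coxeter group by Proposition \ref{p:conjmin}.

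Here is the key step in more detail. Since $\a$ is a positive real affine root but not affine simple, there is an affine simple reflection $s_\gamma \in S_{\aff}$ (for $\gamma$ an affine simple root of $\tilW$) such that $\ell(s_\gamma s_\a s_\gamma) < \ell(s_\a)$ in $\tilW$, equivalently $s_\gamma(\a)$ is a positive real affine root of strictly smaller height. The plan is then to \emph{choose} such an $s_\gamma$ and pass to the character $\chi' := s_\gamma \chi$ and the reflection $r' := s_\gamma r s_\gamma = s_{s_\gamma(\halpha)}$. Two things must be verified: first, that $r'$ is still a simple reflection in $\tilW^\circ_{\chi'}$ — this is where one uses that $s_{s_\gamma(\halpha)}$ lies in $\tilW^\circ_{\chi'}$ (because $(s_\gamma(\halpha))^*(\chi') = (s_\gamma(\halpha))^*(s_\gamma \chi) = \halpha^*(\chi)$ is trivial) and that conjugation by $s_\gamma$, which need \emph{not} be a morphism of Coxeter systems $\tilW_\chi^\circ \to \tilW_{\chi'}^\circ$ in general, nonetheless sends the indecomposable integral coroot $\halpha$ to an indecomposable one. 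The latter requires a small argument: since $s_\gamma$ shortens $\a$, the coroot $s_\gamma(\halpha)$ is positive, and any decomposition $s_\gamma(\halpha) = \hbeta_1 + \hbeta_2$ with $\hbeta_i \in \cPhi_{\chi'}^+$ would, upon applying $s_\gamma$ again, give a decomposition of $\halpha$ unless one of the $\hbeta_i$ equals $\gamma^\vee$ (the coroot of $\gamma$); but $\gamma^\vee$ being an affine simple coroot of $\tilW$ lies in $\cPhi_{\chi'}$ only if $\gamma^*(\chi')$ is trivial, in which case one handles this degenerate case directly (it forces $s_\gamma \in \tilW_{\chi'}^\circ$, and then conjugation by $s_\gamma$ \emph{is} an isomorphism of Coxeter systems by Proposition \ref{p:conjmin}, applied to the length-zero coset of $s_\gamma$ relative to itself, so simplicity is preserved trivially). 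Second, we need to track the coset: by Lemma \ref{l:minmult}, minimal-length representatives compose, so if by induction we have found a minimal element $w^{\beta'}$ (relative to $\chi'$) with $w^{\beta'} r' w^{\beta', -1}$ affine simple in $\tilW$, then $w^{\beta} := w^{\beta'} \cdot s_\gamma$ — with $s_\gamma$ the (length-$\leq 1$, minimal) representative of its coset in ${}_{\chi'}\Omega_\chi$ — is the minimal representative of the composite coset, and $w^\beta r w^{\beta,-1} = w^{\beta'} r' w^{\beta',-1}$ is affine simple in $\tilW$; it remains simple in $\tilW_{w^\beta \chi}^\circ$ by Proposition \ref{p:conjmin} applied to $w^{\beta'}$, since $w^{\beta} \chi = w^{\beta'}\chi'$ and conjugation by a minimal element is an isomorphism of Coxeter systems.

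The induction terminates because $\ell(s_\a)$ in $\tilW$ strictly decreases at each step (since $\ell(s_\gamma(\a)) < \ell(\a)$), and the base case $\ell(s_\a) = 1$ is exactly when $\a$ is affine simple. I expect the \textbf{main obstacle} to be the verification that indecomposability of the integral coroot is preserved under conjugation by an affine simple reflection of the ambient group — i.e., controlling the ``degenerate'' case where $\gamma^\vee$ itself is an integral coroot and disentangling whether this can actually obstruct simplicity, versus being absorbed by Proposition \ref{p:conjmin}. A clean way to package this is to observe that one only ever needs $s_\gamma$ with $\gamma$ \emph{not} integral (so that $\cPhi_\chi$ and $\cPhi_{\chi'}$ are genuinely permuted non-trivially and $\gamma^\vee \notin \cPhi_{\chi'}$); such a $\gamma$ exists whenever $\a \notin S_{\aff}$ because the reflections $s_\gamma$ for integral affine simple $\gamma$ generate $\tilW_\chi^\circ$, inside which $\a$ is already simple and hence cannot be shortened — so any height-reducing $s_\gamma$ must have $\gamma$ non-integral. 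Once this observation is in place, the decomposition argument above goes through cleanly and the induction closes.
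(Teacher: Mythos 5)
Your overall route is the same as the paper's: induct on the height of the affine coroot $\halpha_r$, conjugate by an ambient affine simple reflection that lowers the height, and compose minimal coset representatives via Lemma \ref{l:minmult}. The genuine gap sits exactly at the crux of the argument, namely what happens when the height-reducing simple reflection $s_\gamma$ is integral. Your main text handles this ``degenerate case'' by asserting that $s_\gamma$ then lies in $\tilW^\circ_{\chi'}$ and that conjugation by $s_\gamma$ is an isomorphism of Coxeter systems ``by Proposition \ref{p:conjmin} applied to the length-zero coset of $s_\gamma$ relative to itself''. This is a misapplication: Proposition \ref{p:conjmin} concerns conjugation by the \emph{minimal} element of a coset, and if $s_\gamma \in \tilW^\circ_{\chi}$ its coset is the neutral one, whose minimal element is $e$, not $s_\gamma$. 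Conjugation by a reflection inside a Coxeter group is an inner automorphism and in general does not preserve the set of simple reflections, so simplicity of $r'$ is not ``preserved trivially''; moreover in this case $s_\gamma$ is not minimal, so your appeal to Lemma \ref{l:minmult} to keep $w^{\beta'} s_\gamma$ minimal also breaks down.

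Your closing ``clean packaging'' --- that the degenerate case never occurs --- is the right statement, and it is precisely the paper's key observation, but your justification for it is false: the reflections $s_\gamma$ with $\gamma \in S_{\aff}$ integral do \emph{not} generate $\tilW^\circ_\chi$ in general (typically no affine simple root is integral while $\tilW^\circ_\chi$ is large), so the assertion that $\halpha_r$ ``cannot be shortened'' inside the group they generate proves nothing. The correct argument, which is what the paper uses, is a direct decomposition check: if $\gamma$ were integral and lowered the height of $\halpha_r$, write $\halpha_r = s_\gamma(\halpha_r) + n\,\halpha_\gamma$ with $n>0$; since $s_\gamma$ stabilizes $\chi$ it preserves $\cPhi_\chi$, so both $s_\gamma(\halpha_r)$ and $\halpha_\gamma$ lie in $\cPhi^{+}_\chi$, and $\halpha_r$ is then a positive combination of other positive integral coroots, contradicting its indecomposability, i.e.\ the simplicity of $r$ in $\tilW^\circ_\chi$. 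This is the mirror image of the decomposition argument you already run in the non-degenerate case, so the repair is small; but as written, both your treatment of the degenerate case and your stated reason for excluding it are incorrect. Once $s_\gamma$ is known to be non-integral, it is the minimal element of its coset, Proposition \ref{p:conjmin} applies, and the rest of your induction goes through as in the paper.
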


\begin{proof}
Recall that, given a positive affine coroot $\halpha$, we may speak of its {\em height} $\on{ht}(\halpha)$. Namely, if we write $\halpha$ uniquely as a sum of simple coroots 
$$\halpha = \underset{i} \Sigma \hspace{.5mm} n_i \cdot \halpha_i, \quad \quad n_i \in \mathbb{Z}_{\geqslant 0},$$
where $\halpha_i \in S_{\aff}$ run over the simple positive affine coroots, we have 
$$\on{ht}(\halpha) := \underset{i} \Sigma \hspace{.5mm} n_i.$$

For an integral simple reflection $r$ as in the proposition, let us denote by $\halpha_r$ the corresponding positive real affine coroot, and proceed by induction on $\on{ht}(\halpha_r)$.
    
So, suppose that $\on{ht}(\halpha_r) > 1$. In this case, there exists a simple reflection $t$ in $\tilW$ such that $t(\halpha_r)$ is of strictly lower height, i.e., $$\halpha_r = t(\halpha_r) + n \cdot \halpha_t, \quad \quad n > 0.$$
In this case, it follows that $t \notin {}_{\chi}\tilW_\chi^\circ,$ otherwise $r$ would not be simple. If we consider the conjugation map 
$$\tilW \simeq \tilW, \quad \quad w \mapsto twt,$$
Proposition \ref{p:conjmin} implies that this restricts to an isomorphism of Coxeter systems 
$${}_\chi\tilW_\chi^\circ \simeq {}_{t\chi}\tilW_{t\chi}^\circ.$$
In particular, we deduce that $trt$ is a simple reflection in ${}_{t\chi}\tilW_{t\chi}^\circ$. Since $\ell(trt)$ is smaller than $\ell(r)$, the induction step is complete, as the product of minimal elements remains minimal, which is precisely the content of Lemma \ref{l:minmult}.
\end{proof}

\sss{A particular example} \label{sss: example} 
Since the group $\Om_{\chi}$ plays an important role to describe all blocks of the monodromic Hecke categories as mentioned in Theorem \ref{t:mainthm}, let us give a particularly interesting example to illustrate the group $\Om_{\chi}$ can be complicated. This example shows that the condition on the connectedness of the center in Theorem \ref{thm: twisted metaplectic} is indeed necessary for extending the equivalence to all blocks. More computations about $\Om_{\chi}$ and $\tilW_{\chi}$ can be found in \S \ref{s: Quantum Langlands}.

We will construct a pair $(\tLG, \chi)$ satisfying the following properties:
\begin{enumerate}
    \item $\tilW^\circ_\chi$ is trivial;
    \item $\Om_\chi$ is not abelian;
    \item $\Om_\chi$ has no finite order elements.
\end{enumerate} 
As a consequence, $\Om_\chi = \tilW_\chi$ cannot be isomorphic to a semi-direct product of a finite group and a lattice. 

Let $G$ be $\PSp_{6}$. We adopt the convention that the roots of $G$ are $\pm L_i \pm L_j, \pm 2L_i$ for $1 \leq i \neq j \leq 3$, and the coroots are $\pm e_i \pm e_j, \pm e_i$, where $\{e_i\}$ and $\{L_i\}$ are dual bases.  Then $\xcoch(\TT)$ is generated by the coroot lattice $\check{Q}$ together with the vector $\cmu :=(e_1+e_2+e_3)/2$, while $\xch(\TT)$ coincides with the root lattice $Q$. 

Via simple roots $L_1-L_2, L_2-L_3$ and $2L_3$, we have an isomorphism
\begin{equation*}
    \TT\isom \Gm^3.
\end{equation*}
Let $\th_i\in \Ch(\Gm)$ and let $\th=\th_1\bt \th_2\bt \th_3\in \Ch(\TT)$ under the above isomorphism. 

We choose any central extension $\tLG$ of $LG$ as described in Section \ref{ss: cent ext}, and denote the associated commutator pairing on $\xcoch(\TT)$ (see \S\ref{sss:commutator}) by $\langle \cdot, \cdot \rangle$. Write the rational number $\frac{2}{\j{e_i,e_i}}$ (which is independent of the choice of $i\in \{1,2,3\}$) in lowest term
\begin{equation*}
    \frac{2}{\j{e_i,e_i}}=\frac{a}{b}\in \QQ.
\end{equation*}
Let $\chi_{\cen}\in \Ch(\Gm^{\cen})$ be a character sheaf of order exactly $b$. Consider the character sheaf
\begin{equation*}
    \chi=\chi_{\cen}\bt\th\in \Ch(\wt\TT).
\end{equation*}

\begin{prop}
    Suppose $\th_3$ has order 4, and $\th_1,\th_2$ each has odd order as elements in $\Ch(\Gm)$. Then $\wt W_\chi$ fits into an exact sequence
    \begin{equation}\label{PSp6 W chi exact sequence}
        1 \ra \check{Q} \ra \extw_{\chi} \ra \Z /2 \ra 1
    \end{equation}
    which does not split. Moreover, $\wt W_\chi$ satisfies the three conditions stated in the beginning of Section \ref{sss: example}.
\end{prop}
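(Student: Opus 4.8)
The plan is to compute $\wt W_\chi$ explicitly using Lemma~\ref{l:conj action}, identify the integral coroot system to see $\tilW_\chi^\circ$ is trivial, and then analyze the group structure. First I would set up coordinates: under the isomorphism $\TT \simeq \Gm^3$ via $L_1-L_2, L_2-L_3, 2L_3$, a character sheaf $\wt\chi = \chi_\cen \boxtimes \th_1 \boxtimes \th_2 \boxtimes \th_3$ on $\wt\TT = \Gm^\cen \times \TT$ is recorded by the orders of $\th_i$ together with $\chi_\cen$. Using Lemma~\ref{l:conj action}, the action of $\nu(\cmu) \in \tilW$ (for $\cmu \in \xcoch(\TT)$) on $\chi \in \xch(\wt\AA)$ is $\chi \mapsto \chi - \chi(K_c)\,\cmu^*$ (here $\mu_e$ is trivial since $G = \PSp_6$ is split), where $\cmu^*(-) = \langle \cmu, -\rangle_V$. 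Since $\chi_\cen$ has order exactly $b$ where $\frac{2}{\langle e_i, e_i\rangle} = \frac{a}{b}$, one computes which translations $t^{\cmu}$ stabilize $\chi$: the condition is that $\chi_\cen(K_c)\cdot \langle \cmu, \clambda\rangle_V \in \th_{\bullet}^{-1}(\text{trivial})$ for all $\clambda$, which becomes a congruence condition on $\cmu$ modulo the lattice determined by the orders of the $\th_i$. With $\th_3$ of order $4$ and $\th_1, \th_2$ of odd order, I expect the translation part of $\wt W_\chi$ to be precisely the coroot lattice $\check Q$ (the key point being that the ``half'' vector $\cmu = (e_1+e_2+e_3)/2$ and its Weyl translates do not lie in the stabilizer, but an appropriate order-2 combination involving a Weyl element does).

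Next I would determine the finite/Weyl part. The point is to find which $w \in \WW = \WW(\PSp_6) = \WW(C_3)$, possibly composed with a translation, stabilize $\chi$. Because $\th_1, \th_2$ have odd order and $\th_3$ has order $4$ (so in particular $\th_3 \neq \th_3^{-1}$), the naive finite stabilizer in $\WW$ acting on $\th = \th_1 \boxtimes \th_2 \boxtimes \th_3$ will be small; but combined with translations $t^{\cmu}$ that shift $\chi$ by elements of $\cmu^*$, additional elements appear. I would show that exactly one nontrivial class survives, giving the quotient $\Z/2$ in \eqref{PSp6 W chi exact sequence}, and that a representative $\dot r$ of this class is a translation composed with a Weyl element $w_0$ (e.g.\ a reflection or a product realizing $\th_i \leftrightarrow \th_i^{\pm 1}$ after a shift). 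The fact that $\Z/2$ is generated by such a ``glide'' element — a Weyl element times a half-lattice translation — is exactly what forces the extension \eqref{PSp6 W chi exact sequence} to be non-split: its square is a nonzero element of $\check Q$ (a nonzero translation), so it has infinite order, and hence $\wt W_\chi$ has no element of order $2$ mapping onto the generator of $\Z/2$, i.e.\ no splitting. To see that $\tilW_\chi^\circ$ is trivial, I would check that no affine \emph{coroot} $\halpha$ has $\halpha^*(\chi)$ trivial: the relevant pairings $\halpha^*(\chi) = \th_{\bullet}(\text{something}) \cdot \chi_\cen(\langle \halpha, \cdot\rangle)$ are all nontrivial because the quantum parameter $\chi_\cen$ was chosen of order $b$ precisely so that the short coroots are non-integral, and the choices of $\th_i$ make the finite coroots non-integral as well; hence $\cPhi_\chi = \emptyset$ and $\tilW_\chi^\circ = \{1\}$, so $\Om_\chi = \wt W_\chi$.

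Finally I would verify the three listed properties of $\wt W_\chi = \Om_\chi$. Property (1), triviality of $\tilW_\chi^\circ$, is the coroot computation just described. Property (3), that $\Om_\chi = \wt W_\chi$ has no finite-order elements: any torsion element would map to either $0$ or the generator in $\Z/2$; if it maps to $0$ it lies in $\check Q$, which is torsion-free, so it is trivial; if it maps to the generator, its square lies in $\check Q$ and is nonzero by non-splitness of \eqref{PSp6 W chi exact sequence} (this is exactly the content of non-splitting — the order-2 coset contains no involution), contradicting torsion. Property (2), non-abelianness: I would exhibit the conjugation action of the $\Z/2$ on $\check Q$. The generator acts on $\xcoch(\TT) \supset \check Q$ through its image $w_0 \in \WW$, which acts nontrivially on $\check Q$ (it is, say, a nontrivial signed permutation of the $e_i$), so the extension \eqref{PSp6 W chi exact sequence} is a non-trivial (indeed non-split) extension with non-trivial action, hence $\wt W_\chi$ is non-abelian. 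Assembling these, $\Om_\chi = \wt W_\chi$ cannot be written as a semidirect product of a finite group and a lattice, since such a semidirect product with a torsion-free normal lattice would have its finite complement consisting of torsion elements, of which there are none besides the identity — forcing $\Om_\chi = \check Q$ abelian, a contradiction. The main obstacle I anticipate is the bookkeeping in the pairing computations: correctly tracking the factor $\chi_\cen(K_c)$ and the orders of $\th_i$ through Lemma~\ref{l:conj action} to pin down that the translation subgroup is exactly $\check Q$ (not a sublattice or a slightly larger lattice) and that precisely one Weyl-type coset survives with the stated square — this is where a sign or index-of-lattice error could derail the non-splitting conclusion, so I would do this step carefully with explicit bases rather than abstractly.
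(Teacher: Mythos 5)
Your route is essentially the paper's: use Lemma \ref{l:conj action} to compute $\tilW_\chi$ explicitly as the $\check Q$-translations together with a single coset of ``glide'' elements $t^{\cmu+\clambda}\om$ (half-lattice translation times a Weyl reflection), read off the exact sequence, get non-splitting from squares of coset elements being nonzero translations, and get non-abelianness from the Weyl part acting nontrivially on $\check Q$. The paper pins down $\om$ as the reflection fixing $L_1,L_2$ and negating $L_3$, and uses $\om(e_3)=-e_3$ (so $t^{e_3}$ fails to commute with the glide coset), which matches your conjugation argument; like you, it leaves the stabilizer computation itself as a direct check.

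One inference in your sketch, as written, does not follow: you derive non-splitting from the fact that your chosen glide element has infinite order (``its square is a nonzero element of $\check Q$, so it has infinite order, and hence $\wt W_\chi$ has no element of order $2$ mapping onto the generator''). A single coset representative of infinite order does not rule out an involution elsewhere in the coset: in $\Z^2\rtimes\Z/2$ with the swap action, the element $((1,0),\sigma)$ has infinite order, yet the extension splits. You must control the square of a \emph{general} element of the coset, and this is exactly the paper's computation: $(t^{\cmu+\clambda}\om)^2 = t^{\cmu+\clambda+\om(\cmu+\clambda)}$, and since $\om$ fixes $e_1,e_2$ while $\cmu$ has $e_1,e_2$-coordinates $\tfrac12$, this translation is of the form $(2m+1)e_1+(2n+1)e_2\neq 0$ for \emph{every} $\clambda\in\check Q$. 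That one parity computation simultaneously gives torsion-freeness of the nontrivial coset (hence of all of $\tilW_\chi$, since $\check Q$ is torsion-free), non-splitting, and — because reflections are involutions — triviality of $\tilW^\circ_\chi$; so your separate coroot-by-coroot verification that $\cPhi_\chi=\emptyset$, while workable, is unnecessary. The remaining points (non-abelianness via $\om(e_3)=-e_3$, and the concluding ``no semidirect product of a finite group and a lattice'' deduction) are correct as you state them.
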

\begin{proof}
    Let $\om \in \WW$ be the  element that fixes $L_1$ and $L_2$ while sending $L_3$ to $-L_3$.
    By Lemma \ref{l:conj action} and direct computation, it follows that $\extw_{\chi}$ is given by $$\extw_{\chi} = \{t^{\clambda}; \clambda\in \check{Q}\} \cup \{t^{\cmu +\clambda}\om;\clambda\in \check{Q}  \}.$$ 
    From this we get the exact sequence \eqref{PSp6 W chi exact sequence}.

Note that $(t^{\cmu+\clambda}\om)^2=t^{\cmu +\clambda + \om (\cmu +\clambda)}$, and $\cmu +\clambda + \om (\cmu +\clambda)$ is of the form $(2m+1)e_1+(2n+1)e_2$ for some $m,n \in \ZZ$. Therefore, $t^{\cmu +\clambda}\om$ is not of finite order. As a result, $\tilW_\chi$ contains no elements of finite order. This implies that \eqref{PSp6 W chi exact sequence} does not split. It also implies that $\tilW_\chi$ contains no reflections, and hence $\tilW^\circ_\chi$ is trivial.

To see that $\tilW_\chi$ is not abelian, we observe that $\om$ sends $e_3$ to $-e_3$. Hence, $t^{e_3}$ does not commute with $t^{\cmu + \clambda}$ for any $\clambda \in \check{Q}$. 
\end{proof}

A similar phenomenon happens for $\PSp_{4n+2}$ for $n \geq 1$.

\section{Monodromic Hecke category}

Now we introduce the main player of this article.

\label{s:heckecat}
\subsection{Monodromic Hecke category} 

\sss{General Hecke categories}\label{sss:gen Hk}
Let $\bG$ be an affine smooth integral model of $G$ over $\cO_K$, with arc group $L^+\bG$, i.e., $L^+\bG=\bG(R\tl{t})$ for  $k$-algebras $R$. Consider the indfp morphism
\[
X=\mathbb{B} L^+\bG\to Y=\mathbb{B} LG.
\]
Note that
\[
L^+\bG\bs LG/L^+\bG\cong X\times_Y X.
\]
On the other hand, the relative diagonal $X\to X\times_YX$ is a (finitely-presented) closed embedding. Therefore, we may apply the 
paradigm in Section \ref{sss:sheaf conv} to obtain a monoidal category
\[
D(L^+\bG\bs LG/L^+\bG).
\]
The monoidal structure on this category will be called the convolution product, and will be denoted as $\star$.

\begin{rem}
    The morphism $\mathbb{B} L^+\bG\to \mathbb{B} LG$ is representable by ind-algebraic spaces. But we do not know whether it is representable by ind-schemes. This is the reason at the very beginning we work with algebraic spaces of finite presentation over $k$ as the domain of our sheaf theories. 
\end{rem}

If in addition the embedding $L^+\bG\incl LG$ is equipped with a lifting to $\tLG$ (for example, when $L^+\bG$ is pro-unipotent, such a lifting exists and is unique), we may apply the above procedure to the indfp morphism
\[
X=\mathbb{B} L^+\bG\to \wt Y=\mathbb{B} \tLG
\]
to obtain the monoidal category
\[
D(L^+\bG\bs \tLG/L^+\bG).
\]

\sss{Affine flag variety}
Take $L^+\bG=I$ the Iwahori subgroup, the double cosets $I\bs LG/I$ are indexed by $\tilW$. For $w\in \tilW$, let $LG_{w}$ denote the $(I \times I)$-orbit on $LG$ containing the coset of $w$ in $N_{LG}(\AA)$. The closure relation of the $LG_w$'s is described by the Bruhat order of $\tilW$. We denote by $LG_{\le w}$ the closure of $LG_{w}$ in $LG$. 

Let $I^+$ be the pro-unipotent radical of $I$, which has a unique lifting to $\tLG$. For $w \in \tilW$, we let
\begin{equation*} 
Z = I^{+}\bs \tLG /I^{+}; \qquad Z_{\le w} = I^{+}\bs \tLG_{\le w}/I^{+}; \qquad Z_ w=I^{+}\bs \tLG_{w}/I^{+}. 
\end{equation*}
We let 
$$j_{(\le) w}: Z_{(\le) w}\hookrightarrow Z$$
denote the associated (locally) closed embeddings.

Applying the discussion in \S\ref{sss:gen Hk} to $L^+\bG=I^+$, we obtain a monoidal category
\[
D(I^+ \bs \tLG/I^+).
\]

We will make use of the following perverse $t$-structure on $D(I^+ \bs \tLG/ I^+)$. First as $\tLG/I^+$ is a strict ind-scheme of ind-finite type, there is a standard perverse $t$-structure on $D(\tLG/I^+)$. 
Let us denote the coconnective part by $D(\tLG/I^+)^{p,\ge 0}$. Then
 $D(I^+ \bs \tLG/ I^+)^{p,\ge 0}$ consist of those $\cF$ whose $!$-pullback to $\tLG/ I^+$ belonging to $D(\tLG/I^+)^{p,\ge\dim \wt\AA}$.

\sss{Monodromic affine Hecke categories}
We have the action of $\wt\AA\times \wt\AA$ on $Z$ from left and right, and therefore we have full subcategory
\[
\cM:=D((\wt \AA,\mono)\bs Z /(\wt \AA,\mono))\subset D(I^+ \bs \tLG/I^+),
\]
which is closed under monoidal product $\star$.
In addition, $\cM$ admits a unit, and therefore itself is a monoidal category. Note, however, that the above embedding is non-unital. 

For $w\in\widetilde{W}$, we let $\cM(w)$ and $\cM(\leq \!w)$ denote $$D((\wt \AA,\mono)\bs Z_w/(\wt \AA,\mono)) \quad \text{and} \quad D((\wt \AA,\mono)\bs Z_{\leq w}/(\wt \AA,\mono)),$$
respectively. 
For $?=*$ or $!$, we have fully faithful embedding
\begin{equation}\label{eq-w-semiorthogonal-piece}
(j_{w})_?: \cM(w)\to \cM,\quad (j_{\le w})_?: \cM(\le w)\to \cM.
\end{equation}

For $\chi,\chi'\in \Ch(\wt\AA)$ two residual character sheaves as in Corollary \ref{r:block}, we similarly let
\[
_{\chi}\cM_{\chi'}:=D((\wt \AA, \chi \mon)\bs Z /(\wt \AA, \chi' \mon))\subset \cM.
\]
Observe that $_{\chi}\cM_{\chi}$ is closed under the convolution product and admits a unit, and therefore is a monoidal category.

If $\Xi$ denotes a $\tilW$-orbit on $\Ch(\wt\AA)$, define
\begin{eqnarray}
\cM_\Xi:= \underset{\chi, \chi' \in \Xi} \bigoplus \hspace{1.5mm} {}_{\chi}\cM_{\chi'},
\end{eqnarray}
and note this is again a full subcategory of $\cM$ by Corollary \ref{r:block}. It is easy to see that $\sF \star \sG \in \pha _{\chi} \cM_{\chi''}$  for $\sF \in \pha _{\chi} \cM_{\chi'}$, $\sG \in \pha _{\chi'} \cM_{\chi''}$,  In particular, it induces a monoidal structure on the full subcategory $_\chi \cM_\chi$, which is also denoted by $\star$. 

\begin{rem}\label{rem:block decomp of monodromic hecke}
On the other hand, one easily check that if $\cF\in \pha _{\chi} \cM_{\chi'}, \cG\in \pha _{\chi''} \cM_{\chi'''}$, and if $\chi'\neq \chi''$ (as closed points in $\mathrm{LS}_{\wt{\AA}^\vee}^{t,\Box}$), then $\cF \star \cG=0$.  
\end{rem}

We note that a choice of a lifting $\dot{w}$ to $\tLG$ defines a morphism $$\iota_{\dot{w}}: \wt\AA\to Z_w,\quad t\mapsto t\dot{w},$$which admits a left inverse $\iota_{\dot{w}}^{-1}$. It induces an equivalence 
$(\iota_{\dot{w}^{-1}})^!:  D((\wt\AA,\mono)\bs\wt\AA)\cong \cM(w)$. To put certain objects introduced later in the heart of the perverse $t$-structure, it is convenient to consider a cohomological shift of $(\iota_{\dot{w}^{-1}})^!$.
Below, by a mild abuse of notation, we will write
\begin{equation} \label{e:onestratumunivmon}
\iota_{\dot{w}}:=(\iota_{\dot{w}^{-1}})^!\langle -\ell(w) \rangle:  D((\wt\AA,\mono)\bs\wt\AA)\cong \cM(w).
\end{equation}  
In particular,
$\cM(e)$ is canonically identified with $D((\wt\AA,\mono)\bs\wt\AA)$, via using the identity element of $\wt{LG}$; this naturally underlies a monoidal equivalence. For $?=*$ or $!$, composing with the functors $(j_w)_?$ from \eqref{eq-w-semiorthogonal-piece}, we obtain fully faithful embeddings
\begin{equation}\label{eq-extension-of-monodromic}
(j_{\dot{w}})_?:D((\wt{\mathbb{A}}, \on{mon}) \bs \wt{\mathbb{A}}) \overset{\iota_{\dot{w}}} \simeq \cM(w) \xrightarrow{(j_w)_?} \cM.
\end{equation}

Similarly for $\chi,\chi'\in \Ch(\wt\AA)$, we have ${}_{\chi}\cM(w)_{\chi'}$ and ${}_{\chi}\cM(\le \! w)_{\chi'}$. In this case, fixing a lifting $\dot{w}$ as before induces an equivalence
\begin{equation}\label{eq: stratum}
    D((\wt\AA,\chi\mon)\bs\wt\AA)\cong {}_{\chi}\cM(w)_{w^{-1}\chi},
\end{equation}
and $(j_{\dot{w}})_?$ restricts to a fully faithful embedding $ D((\wt\AA,\chi\mon)\bs\wt\AA)\to {}_{\chi}\cM_{w^{-1}\chi}$.

\subsubsection{Compact generators} \label{ss:compgens}Let us explicitly describe a tautological set of compact generators for $\cM.$

Recalling that $D((\wt{\mathbb{A}}, \on{mon}) \bs \wt{\mathbb{A}})$ is by definition compactly generated by the $E'$-linear character sheaves $\wt{\chi}$ on $\wt{\mathbb{A}}$, where $E'$ runs over all finite $E$-algebras, for any such character sheaf, we obtain a compact object of $\cM$, namely
$$j(w)_{!,\wt{\chi}} := (j_{\dot{w}})_!( \wt{\chi}).$$
Note in particular that the resulting object, up to non-canonical isomorphism, is independent of the choice of coset representative $\dot{w}$, justifying our suppression of it from our notation.

The following then is an immediate consequence of the definitions. 

\begin{lemma} The category $\cM$ is compactly generated by the sheaves $$j(w)_{!, \wt{\chi}}, $$for all $w \in \tilW,$ and all $E'$-linear character sheaves $\wt{\chi}$ on $\wt{\AA}$.

Moreover, the perverse $t$-structure on $\cM$ may be characterized by an object $\xi$ lies in $\cM^{p, \geqslant 0}$ if and only if 
$$\Hom(j(w)_{!, \wt{\chi}}, \xi) \in \Modu_E^{\geqslant 0},$$for all $w \in \tilW,$ and all $E'$-linear character sheaves $\wt{\chi}$ on $\wt{\AA}$.

\end{lemma}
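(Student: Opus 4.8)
The plan is to deduce both assertions directly from the colocalization $(\iota_Z^{\mon}, \Av_Z^{\mon})$ realizing $\cM$ as a colocalization of $D(I^+\bs\tLG/I^+)$, together with the analogous statement for $\wt\AA$, and then reduce to the already-recorded compact generation of $D((\wt\AA,\mon)\bs\wt\AA)$ by character sheaves.

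First I would treat the compact generators. Stratifying $Z$ by the $(\wt\AA\times\wt\AA)$-orbits $Z_w$, $w\in\tilW$, the recollement associated with the locally closed embeddings $j_w$ shows that $\cM$ is generated under colimits by the essential images of the fully faithful functors $(j_{\dot w})_!\colon D((\wt\AA,\mon)\bs\wt\AA)\to\cM$ as $w$ ranges over $\tilW$. (Each such functor is fully faithful, and an object of $\cM$ killed by $(j_{\dot w})^!$ for all $w$ is itself zero, by the exhaustiveness of the stratification and left-completeness of the perverse $t$-structure; alternatively one uses the gluing of semiorthogonal pieces $\cM(\le w)$.) Now $D((\wt\AA,\mon)\bs\wt\AA)$ is by construction compactly generated by the $E'$-linear character sheaves $\wt\chi$, with $E'$ running over finite $E$-algebras; since $(j_{\dot w})_!$ preserves compact objects (its right adjoint $(j_{\dot w})^!$ is continuous, being a composite of a continuous restriction and a continuous colocalization functor), each $j(w)_{!,\wt\chi}=(j_{\dot w})_!(\wt\chi)$ is compact. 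A set of compact generators of a set of subcategories whose union generates is a set of compact generators of the total category, which gives the first claim. (Independence of the choice of $\dot w$ up to non-canonical isomorphism is exactly the remark preceding the lemma, via the $\wt\AA$-translation.)

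Next I would characterize the perverse $t$-structure. By the construction of the perverse $t$-structure on $D(I^+\bs\tLG/I^+)$ recalled above, $\xi\in D(\tLG/I^+)$ lies in the coconnective part iff its $!$-restriction to every stratum $\tLG_w/I^+$ lies in the appropriate shifted coconnective part; and $\Av^{\mon}$, being $t$-exact on $\wt\AA$ after the normalization, interacts well with the shift $\langle-\ell(w)\rangle$ in $\iota_{\dot w}$. Concretely, $\xi\in\cM^{p,\ge0}$ iff $(j_{\dot w})^!\xi\in D((\wt\AA,\mon)\bs\wt\AA)^{\ge0}$ for all $w$ (using that $(j_{\dot w})^!$ is built from $j_w^!$ followed by $\langle-\ell(w)\rangle$ and $\Av^{\mon}$, which is $t$-exact). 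But by adjunction $\Hom(j(w)_{!,\wt\chi},\xi)=\Hom(\wt\chi,(j_{\dot w})^!\xi)$, and as $\wt\chi$ ranges over all character sheaves these compute the $t$-amplitude of $(j_{\dot w})^!\xi$ in $D((\wt\AA,\mon)\bs\wt\AA)$: indeed on $\wt\AA$ an object is coconnective iff its $\Hom$ out of every character sheaf lands in $\Modu_E^{\ge0}$, since the $\wt\chi$ are a set of compact projective-in-the-heart generators (being local systems). Chaining these equivalences gives the stated criterion.

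The main obstacle I anticipate is bookkeeping the normalization of the perverse $t$-structure through the shift $\langle-\ell(w)\rangle$ built into $\iota_{\dot w}$ and through $\Av^{\mon}$: one must check that the definition of $D(I^+\bs\tLG/I^+)^{p,\ge0}$ (via $!$-pullback to $\tLG/I^+$ landing in degrees $\ge\dim\wt\AA$) matches, stratum by stratum, the condition "$(j_{\dot w})^!\xi$ coconnective for the standard $t$-structure on $D((\wt\AA,\mon)\bs\wt\AA)$." This is a routine but slightly fiddly comparison of the dimension of $Z_w$ with $\ell(w)+2\dim\wt\AA$ and of the normalization conventions for monodromic sheaves on a torus; once it is in place, both halves of the lemma are formal consequences of adjunction and the compact generation of $D((\wt\AA,\mon)\bs\wt\AA)$ by character sheaves.
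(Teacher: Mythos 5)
Your proposal is correct, and it is essentially the argument the paper has in mind: the paper states this lemma as an immediate consequence of the definitions (stratification of $Z$ by the $Z_w$, generation of each monodromic stratum category $D((\wt\AA,\mon)\bs\wt\AA)$ by character sheaves, compactness via continuity of the right adjoint, and the adjunction $\Hom(j(w)_{!,\wt\chi},\xi)\simeq\Hom(\wt\chi,(j_{\dot w})^!\xi)$ for the $t$-structure criterion), and your write-up is just a careful unpacking of exactly this, including the correct flag that the only remaining check is the normalization bookkeeping between $\langle-\ell(w)\rangle$ and the shift by $\dim\wt\AA$ in the definition of the perverse $t$-structure.
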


By projecting onto ${}_{\chi}\cM_{\chi'}$, we now obtain a similar convenient set of compact generators.  Recall that to a residual character sheaf $\chi$, we attached a set of character sheaves $\mathscr{L}_\chi$, cf. \eqref{eq: lifting of char sheaves}.

\begin{cor}\label{c:compgensfixedmon} For any fixed $\chi, \chi'$, the category ${}_\chi\cM_{\chi'}$ is compactly generated by the objects $$j(w)_{!, \wt{\chi}}, \quad \text{for } w \in {}_\chi\tilW_{\chi'}, \wt{\chi} \in \mathscr{L}_\chi.$$ 
Moreover, the usual perverse $t$-structure on $\cmc$ may be characterized by an object $\xi$ lies in $\cmc^{p, \geqslant 0}$ if and only if 
$$\Hom(j(w)_{!, \wt{\chi}}, \xi) \in \Modu_E^{\geqslant 0}, \quad \quad w \in \cwc, \wt{\chi} \in \mathscr{L}_\chi.$$

\end{cor}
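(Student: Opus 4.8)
The plan is to deduce Corollary \ref{c:compgensfixedmon} directly from the preceding Lemma describing compact generators of $\cM$, using the block decomposition of the monodromic Hecke category (Remark \ref{rem:block decomp of monodromic hecke}) together with the block decomposition of $D_{\mono}(\wt\AA)$ furnished by Corollary \ref{r:block} and Proposition \ref{prop: tame geometric Langlands for tori}. First I would observe that, by definition, $\cmc$ is a direct summand of $\cM$ cut out by imposing $(\wt\AA,\chi\mon)$-equivariance on the left and $(\wt\AA,\chi'\mon)$-equivariance on the right. Since the ambient category $\cM$ is compactly generated by the objects $j(w)_{!,\wt\chi}$ for $w\in\tilW$ and $\wt\chi$ an $E'$-linear character sheaf on $\wt\AA$, it suffices to determine the image of each such generator under the projection $\cM\to\cmc$, and to show the nonzero images among them already generate $\cmc$.

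For the second step, I would analyze $j(w)_{!,\wt\chi} = (j_{\dot w})_!(\wt\chi)$. By construction (see \eqref{eq-extension-of-monodromic} and \eqref{eq: stratum}), the object $(j_{\dot w})_!(\wt\chi)$ lies in ${}_{\wt\chi_L}\cM_{\wt\chi_R}$-type pieces where the left monodromy is governed by $\wt\chi$ itself and the right monodromy by $w^{-1}\wt\chi$ (after translating the character sheaf by $w$); more precisely, if $\wt\chi \in \mathscr{L}_\chi$, then $(j_{\dot w})_!(\wt\chi)$ is already left-$(\wt\AA,\chi\mon)$-equivariant, and it is right-$(\wt\AA, w^{-1}\chi\mon)$-equivariant. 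Hence its image under the projection to $\cmc$ is nonzero only if $w^{-1}\chi$ and $\chi'$ have the same reduction, i.e. $w^{-1}\chi = \chi'$ as residual character sheaves, which is exactly the condition $w\in{}_\chi\tilW_{\chi'}$; and in that case the projection acts as the identity on it, so $j(w)_{!,\wt\chi}$ itself lies in $\cmc$. Combining with the generation statement for $\cM$ and the fact that $\cmc$ is a (co)localization, this yields that $\{j(w)_{!,\wt\chi} : w\in{}_\chi\tilW_{\chi'},\ \wt\chi\in\mathscr L_\chi\}$ compactly generates $\cmc$. I should also note that it suffices to run $\wt\chi$ over $\mathscr L_\chi$ rather than all character sheaves, because the left-monodromy summand decomposition of $\cM$ into blocks indexed by residual characters (Corollary \ref{r:block}) forces the left monodromy of a nonzero object of $\cmc$ to lie in the block of $\chi$, and $\mathscr L_\chi$ is exactly the set of character sheaves in that block.

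For the $t$-structure characterization, I would simply transport the corresponding statement from the Lemma: an object $\xi\in\cmc$ lies in $\cmc^{p,\geqslant 0}$ if and only if, viewed in $\cM$, it lies in $\cM^{p,\geqslant 0}$ (the perverse $t$-structure on $\cmc$ is inherited, since $\cmc$ is a summand closed under perverse truncation — this follows from the description of the $t$-structure via $!$-pullback to $\tLG/I^+$ together with the fact that monodromicity conditions are $t$-exact). By the Lemma, this is detected by $\Hom(j(w)_{!,\wt\chi},\xi)\in\Modu_E^{\geqslant 0}$ for all $w\in\tilW$ and all character sheaves $\wt\chi$. But for $\xi\in\cmc$, the Hom from a generator outside $\{w\in{}_\chi\tilW_{\chi'},\ \wt\chi\in\mathscr L_\chi\}$ vanishes by the block orthogonality just established (the relevant character sheaves live in different blocks of $D_{\mono}(\wt\AA)$, and Hom between objects supported on different blocks is zero). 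Hence it is enough to test against $j(w)_{!,\wt\chi}$ for $w\in\cwc$ and $\wt\chi\in\mathscr L_\chi$, which is the asserted characterization.

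The main obstacle, such as it is, is bookkeeping: carefully matching the left/right monodromy conventions (including the twist of character sheaves by Weyl group elements in \eqref{eq: stratum}) so that the condition "$j(w)_{!,\wt\chi}$ survives projection to $\cmc$" comes out precisely as "$w\in{}_\chi\tilW_{\chi'}$ and $\wt\chi\in\mathscr L_\chi$", and confirming that restricting $\wt\chi$ to $\mathscr L_\chi$ loses nothing. There is no serious analytic or homological difficulty here — everything reduces to the already-proven block decomposition (Corollary \ref{r:block}, Remark \ref{rem:block decomp of monodromic hecke}) and the generation statement for $\cM$.
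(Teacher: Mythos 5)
Your argument is correct and is essentially the paper's own: the corollary is obtained from the preceding lemma simply ``by projecting onto ${}_{\chi}\cM_{\chi'}$'', i.e., via exactly the block-decomposition and orthogonality bookkeeping (Corollary \ref{r:block}, Remark \ref{rem:block decomp of monodromic hecke}, and the identification \eqref{eq: stratum} of left/right monodromies of $(j_{\dot w})_!(\wt\chi)$) that you spell out. The only point you gloss over, harmlessly, is that a generator $j(w)_{!,\wt\chi}$ with $\wt\chi$ defined over a non-local finite $E$-algebra splits into block components which need not literally lie in $\mathscr{L}_\chi$ but are finite extensions of objects $j(w)_{!,\wt\chi''}$ with $\wt\chi''\in\mathscr{L}_\chi$, which affects neither the generation claim nor the reduction of the Hom-vanishing test.
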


\begin{rem} Note that as each stratum contains only finitely many strata in its closure, we could equally well have worked with the $*$-extensions of character sheaves, instead of $!$-extensions, to obtain a set of compact generators. \end{rem}

\subsubsection{Standard and costandard objects}

For setting up Soergel theory, we will need additionally the following distinguished set of non-compact objects. In the following definition, $w, \iota,$ and $j$ are as in the preceding subsection. 
 
\begin{defn}
	 We define the {\em cofree monodromic standard sheaf} and {\em cofree monodromic costandard sheaf} by  
     $$\std(w) := j_{\dot{w},!} (\d^{\on{mon}})  \quad \text{and} \quad \costd(w) := j_{\dot{w},*} (\d^{\on{mon}}),$$
     respectively. 
\end{defn}

Similarly, if we fix a character sheaf $\chi$, by applying the projector $\iota_\chi^!$ on the left, or equivalently the projector $\iota_{w^{-1}\chi}^!$ on the right, to the previous objects we obtain the associated cofree (co)standard objects 
$$\std(w)_{\chi} := j_{\dot{w},!} (\dcm)  \quad \text{and} \quad \costd(w)_{\chi} := j_{\dot{w},*} (\dcm) ,$$
which belong to the category ${}_\chi \cM_{w^{-1}\chi}.$
As above, here we shall only consider these objects up to isomorphism; we will explain how to construct canonical elements within each isomorphism class in the subsequent sections.

\subsection{First properties of (co)standard objects} We now collect some basic assertions regarding (co)standard objects and their convolutions. These are monodromic extensions of some well-known facts for the (co)standard objects in the usual affine Hecke category
When there is no central extension, results in this subsection have appeared in \cite[Proposition 4.47]{Zh} with detailed proofs. So we will sometimes be sketchy in the sequel.

Recall the action of $\tilW$ on $\wt\AA$ as discussed in \S \ref{SSS: action of tilW on wtAA}. For $w\in \tilW$, we simply denote the induced automorphism of $\wt\AA$ as $w: \wt\AA\to\wt\AA$. If $\cL\in D_{\mono}(\wt\AA)$, we will write $w(\cL)$ instead of $w_*(\cL)$ for simplicity.

\subsubsection{Some rank one calculations}
Let $s$ be a simple reflection in $\tilW$. Let $\alpha_s$ (resp. $\halpha_s$) be the corresponding simple affine root (resp. coroot). Let $U_{\alpha_s}$ be the corresponding root subgroup and let $\phi_s: \GG_a\simeq U_{\alpha_s}$ denote the isomorphism determined by the affine pinning.
Let $\dot{s}$ be a lifting to $s$ to $\tLG$ so that $\dot{s}^{2}=\halpha_{s}(-1)\in \wt\AA$.

Our goal is to study certain object of the form $\cF\star \nabla(s)$, where $\cF$ is a sheaf on $\tLG/I^+$ obtained from the $*$- or $!$-pushforward from certain sheaf $\cF_0\in \tLG_s/I^+$. Clearly, $\cF\star\costd(s)$ is supported on $\tLG_{\leq s}/I^+$. We shall study its ($!$-)restriction to $\tLG_s/I^+$ and to $\wt{I}/I^+$ respectively.

For this purpose, it is convenient to use  the coordinate 
$$
\GG_a\times \wt\AA\simeq \tLG_{s}/I^+, \quad (x,t)\mapsto \phi_s(x)t\dot{s}.
$$ 
Then the map $ \tLG_{s}/I^+\to I^+\bs  \tLG_{s}/I^+\xrightarrow{\iota_{\dot{s}}^{-1}} \wt\AA$ corresponds to the second projection $\GG_a\times \wt\AA\to \wt\AA$. We let $j:\mathring{\tLG}_{s}/I^+\subset \tLG_{s}/I^+$ be the open embedding corresponding to $\gm\times\wt\AA\subset\GG_a\times\wt\AA$. 

The sheaf $\cF_0\in D(\tLG_s/I^+)$ we will be interested in is of the form $\cL[1]\boxtimes \cF_1\in D(\GG_a\times\wt\AA)$, where $\cL$ is a character sheaf on $\GG_a$, and $\cF_1\in D(\wt\AA/(\wt\AA\mon))$.

We first study $(\cF\star j_{\dot{s},*}(\delta^{\mono}))|_{\tLG_s/I^+}$, which must be of the form $$\cL[1]\boxtimes \cK_{?,\cL,\cF_1}\in D(\GG_a\times \wt\AA),$$ for some $\cK_{?,\cL,\cF_1}$ which we shall compute. Here $?=*$ or $!$, depending on whether $\cF$ is the $*$- or $!$-extension of $\cF_0$.

Consider the following diagram with all the commutative squares being Cartesian
\[
\xymatrix{
\wt\AA\times \mathring{\tLG}_{s}/I^+  \ar@{^{(}->}[d]_{j'}\ar[rr] && \tLG_{s}\times^{I^+}\tLG_{s}/I^+\ar@{^{(}->}[d]_{j'}\ar[r] & \tLG_{s}/I^+ \times I^+\bs \tLG_{s}/I^+ \ar@{^{(}->}[d]\\
 \wt\AA\times \tLG_{s}/I^+ \ar_{\pr_{1}}[d]\ar^-{(t,g)\mapsto}_-{(t\dot{s}g, g^{-1})}[rr] &&  \tLG_{\le s}\times^{I^+}\tLG_{s}/I^+\ar[r]\ar^{\mult}[d] &  \tLG_{\le s}/I^+ \times I^+\bs \tLG_{ s}/I^+\\
 \wt\AA \ar^{t\mapsto t\dot{s}}[rr] &&    \tLG_{\le s} /I^+&
}\]

Using the above coordinate of $\tLG_{s}/I^+$ and the map $\iota_{\dot{s}}^{-1}: I^+\bs \tLG_s/I^+\to \wt\AA$, and by an explicit $\SL_2$-calculation, we see that the composed horizontal map in the first row of the above diagram is given by
$$f:  \wt\AA\times \GG_m\times \wt\AA\to \GG_a\times\wt\AA\times \wt\AA,\quad (t,y, t')\mapsto (-\alpha_s(t)y^{-1}, t\halpha_s(-y^{-1})t' ,s(t')^{-1}\halpha_s(-1)).$$
Here $\alpha_s$ (resp. $\halpha_s$) is the affine simple root (resp. coroot) associated to $s$. The vertical maps of the left column of the diagram is given by $ \wt\AA\times \GG_m\times \wt\AA\xrightarrow{j'} \wt\AA \times \GG_a\times \wt\AA\xrightarrow{\pr_{1}} \wt\AA$. 

By base change, we have $$
\cK_{?,\cL,\cF_1}\simeq (\pr_{1})_* (j'_?(f^!(\cL\boxtimes \cF_1\boxtimes \delta^{\mono})))[1](1).
$$
We can simplify the r.h.s. a little bit. Consider the following isomorphism of tori
\[
h: \wt\AA\times\GG_m\to \GG_m\times \wt\AA, \quad (t,y)\mapsto (\alpha_s(t)y^{-1}, s(t)\halpha_s(y)),
\]
Using the Cartesian diagram
$$
\xymatrix{
\wt\AA\times \GG_m\times \wt\AA\ar^-f[rrrr]\ar_{\pr_{12}}[d]&&&& \GG_a\times\wt\AA\times \wt\AA\ar^{(x,t,t')\mapsto (x, s(t)t')}[d]\\
\wt\AA\times \GG_m\ar^-{h}[r] & \GG_m\times\wt\AA\ar@{^{(}->}[r] & \GG_a\times\wt\AA\ar^-{(x,t)\mapsto (-x,t)}[rr] &&\GG_a\times\wt\AA,
}
$$
the base change, and the fact $\delta^{\mono}\star\delta^{\mono}=\delta^{\mono}$, we see that
$$
pr_{12,*}f^!(\cL\boxtimes \cF_1\boxtimes \delta^{\mono}) \simeq h^!((\cL^{-1}|_{\GG_m})\boxtimes s(\cF_1)), 
$$ hence, \begin{equation} \label{eq: K?LF}
    \cK_{?,\cL,\cF_1}\simeq (\pr_{1})_*(j_?(h^!((\cL^{-1}|_{\GG_m})\boxtimes s(\cF_1))))[1](1).
\end{equation}

Now we specialize to some cases needed in the sequel. 
\begin{enumerate}
\item We suppose $?=*$. Then we may combine $(\pr_1)_* j_*$ as $(\pr_1)_*$, where $\pr_1: \wt\AA\times\GG_m\to\wt\AA$ is the first projection. Then we can rewrite \eqref{eq: K?LF} as
\begin{eqnarray*}
\cK_{*,\cL,\cF_1} & \simeq & \Av^{\mono}((\pr_1)_*(h^!((\cL^{-1}|_{\GG_m})\boxtimes s(\cF_1))[1](1))\\
      & \simeq & (\pr_1)_*(h^!( \Av^{\mono}(\cL^{-1}|_{\GG_m})\boxtimes s(\cF_1))[1](1).
\end{eqnarray*}
The first equivalence is due to the fact that $\cK_{*,\cL,\cF_1}$ is monodromic, hence $\cK_{*,\cL,\cF_1}=\Av^{\mono}(\cK_{*,\cL,\cF_1})$. The second equivalence follows from the fact that both $(\pr_1)_*$ and $h^!$ commute with $\Av^{\mono}$. Using Proposition \ref{prop: tame geometric Langlands for tori}, we can compute $\cK_{*,\cL,\cF_1}$ explicitly via the maps in spectral side
\[
\mathrm{LS}_{\wt\AA^\vee}^{t,\Box}\stackrel{\pr_1^\vee}{\longrightarrow} \mathrm{LS}_{\wt\AA^\vee\times\GG_m}^{t,\Box}\stackrel{h^\vee}{\longleftarrow} \mathrm{LS}_{\GG_m\times \wt\AA^\vee}^{t,\Box}.
\]

Namely, if $\cL=E$ is the constant sheaf on $\GG_a$ and $\cF_1=\delta^{\mono}$, then $\Av^{\mono}(\cL^{-1}|_{\GG_m})\boxtimes\delta^{\mono}$ corresponds to the dualizing sheaf of $\mathrm{LS}_{\{1\}\times \wt\AA^\vee}^{t,\Box}\subset \mathrm{LS}_{\GG_m\times \wt\AA^\vee}^{t,\Box}$. If we let 
\begin{equation}\label{eq: kernel of halphasvee}
\ker(\halpha_s)^\vee:=\mathrm{LS}_{\wt\AA^\vee}^{t,\Box}\times_{\mathrm{LS}_{\GG_m}^{t,\Box}}\{1\}=\mathrm{LS}_{\wt\AA^\vee}^{t,\Box}\times_{\pr_1^\vee,\mathrm{LS}_{\wt\AA^\vee\times\GG_m}^{t,\Box},h^\vee}\mathrm{LS}_{\{1\}\times \wt\AA^\vee}^{t,\Box},
\end{equation}
it follows from Proposition \ref{prop: tame geometric Langlands for tori} \eqref{prop: tame geometric Langlands for tori-2} that
\begin{equation}\label{eq: shrek-stalk-conv-costad-s-strata}
    \cK_{*,E,\delta^{\mono}}\simeq \CH(\omega_{\ker(\halpha_s)^\vee})[1](1).
\end{equation}

Next assume that $\cL=\AS_\psi$ is an Artin-Schreier sheaf. We need the following lemma. Let $\chi$ be a closed point of $\mathrm{LS}_{\wt\AA^\vee}^{t,\Box}$ of residue field $\fre_\chi$ and let $E_\chi$ be the unramified extension of $E$ with residue field $\fre_\chi$. Recall that $\chi$ can be regarded as an $\fre_\chi$-linear character sheaf, which admits a canonical lifting as an $E_\chi$-linear character sheaf $[\chi]$ (via the Teichm\"uller lifting $[\cdot]:\fre_\chi^\times\to E_\chi^\times$). 

\begin{lem}
    We have 
    \[
    \Av^{\mono}(\AS_\psi|_{\GG_m}[1])\simeq\bigoplus_{\chi} \dcm\otimes_{E_\chi} \cG(\chi^{-1},\psi),
    \]
    where $\chi$ runs through all residue character sheaves on $\gm$, and $\cG(\chi,\psi):=H^1(\GG_m, [\chi]\otimes \AS_\psi)$ is the rank one $E_\chi$-module.
\end{lem}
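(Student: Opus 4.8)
The plan is to compute the monodromic averaging of the shifted Artin--Schreier sheaf on $\gm$ by using the tame geometric Langlands correspondence for tori (Proposition \ref{prop: tame geometric Langlands for tori}) together with the block decomposition of Corollary \ref{r:block}. First I would recall that $\Av^{\mono}$ is the right adjoint of the inclusion $D_{\mono}(\gm) \subset D(\gm)$, so that for any $\cH \in D(\gm)$ and any residual character sheaf $\chi$ of $\gm$, one has
$$\Hom_{D_{\chi\mon}(\gm)}(\dcm, \Av^{\mono}(\cH)) \simeq \Hom_{D(\gm)}(\dcm, \cH),$$
using that $\dcm$ is the monoidal unit of $D_{\chi\mon}(\gm)$ and that $\langle \dcm \rangle$ generates the compacts of $D_{\chi\mon}(\gm)$ (Lemma \ref{l:compsincofree}). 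Thus $\Av^{\mono}(\cH)$ is determined, block by block, by the $R_\chi$-module $\Hom(\dcm, \cH)$; since $D_{\mono}(\gm) = \bigoplus_\chi D_{\chi\mon}(\gm)$ it suffices to identify each summand.

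Next I would take $\cH = \AS_\psi|_{\gm}[1]$. The key input is the standard computation that for a residual character sheaf $\chi$ on $\gm$ with Teichm\"uller lift $[\chi]$, the cohomology $H^*(\gm, [\chi] \otimes \AS_\psi)$ is concentrated in degree one (a Gauss sum computation: the Artin--Schreier sheaf has no invariants or coinvariants after twisting, and the Euler characteristic calculation forces concentration in the middle degree), giving the rank one $E_\chi$-module $\cG(\chi,\psi) = H^1(\gm, [\chi] \otimes \AS_\psi)$. Translating via $\CH$: the skyscraper at $\chi$ corresponds to $[\chi]$, and $\Hom_{D(\gm)}([\chi], \AS_\psi[1]) = H^1(\gm, [\chi]^{-1} \otimes \AS_\psi) = \cG(\chi^{-1}, \psi)$ after accounting for the duality in the Hom (the dual local system of $[\chi]$ is $[\chi^{-1}]$ up to the canonical identification). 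More precisely, one uses that $\dcm$ corresponds to the dualizing sheaf $\omega_{\frz_\chi}$ on the formal completion, and $\RHom(\omega_{\frz_\chi}, -) = \Gamma(\frz_\chi, -)$ applied to (the completion of) $[\chi]^{-1} \otimes \AS_\psi$-cohomology; carrying out this identification over each $\frf_\chi$ yields that the $\chi$-block of $\Av^{\mono}(\AS_\psi|_{\gm}[1])$ is $\dcm \otimes_{E_\chi} \cG(\chi^{-1}, \psi)$, with no higher cohomology contributions so that the object is just a (shifted-free) rank one monodromic local system twisted by the one-dimensional $E_\chi$-module.

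Finally I would assemble the blocks: since only finitely many $\chi$ can contribute a nonzero Gauss sum module in any given context (and the sum is locally finite), we obtain
$$\Av^{\mono}(\AS_\psi|_{\gm}[1]) \simeq \bigoplus_\chi \dcm \otimes_{E_\chi} \cG(\chi^{-1}, \psi),$$
as claimed. The main obstacle I expect is bookkeeping with the integral coefficients: when $E$ is a DVR rather than a field, $\frf_\chi$ differs from the formal completion $\frz_\chi$, and one must check that the Gauss sum module $\cG(\chi,\psi)$ is genuinely free of rank one over $E_\chi$ (not merely after inverting $\fm$) and that no extra $\fm$-torsion phenomena appear in $\Av^{\mono}$; this is where the hypothesis that $E$ contains a $p$-th root of unity and the nondegeneracy of $\psi$ are used, via the fact that $H^*_c(\gm, [\chi]\otimes\AS_\psi)$ is a perfect $E_\chi$-complex concentrated in a single degree with invertible determinant. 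A secondary point requiring care is the precise sign/inverse convention ($\chi$ versus $\chi^{-1}$), which should be pinned down by tracing through the identification of $\CH$ with character sheaves and the direction of the Hom pairing.
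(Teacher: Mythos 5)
Your computational core is the same as the paper's: the Gauss-sum fact that $R\Gamma(\gm,[\chi]^{-1}\otimes\AS_\psi)$ is concentrated in degree one and free of rank one over $E_\chi$, fed through the adjunction defining $\Av^{\mono}$ and the Mellin-type dictionary of Proposition \ref{prop: tame geometric Langlands for tori}. The gap is the reduction step you lean on before that computation: ``Thus $\Av^{\mono}(\cH)$ is determined, block by block, by the $R_\chi$-module $\Hom(\dcm,\cH)$.'' This does not follow from what you cite. Lemma \ref{l:compsincofree} says the compact objects lie in $\langle \dcm\rangle$, i.e.\ $\dcm$ \emph{generates} $D_{\chi\mon}(\gm)$; that gives conservativity of $\Hom(\dcm,-)$, but not that the single $R_\chi$-module $\Hom(\dcm,\cF)$ reconstructs $\cF$. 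Since $\dcm$ corresponds to $\omega_{\frf_\chi}$, which is not compact, the standard ``Hom from a compact generator determines the object'' principle is unavailable; indeed the paper itself stresses that $D_{\chi\mon}(T)$ is only a colocalization of $\Modu_{R_\chi}=\on{Ind}\langle\dcm\rangle$, so $\RHom(\dcm,-)\colon \indcoh(\frf_\chi)\to\Modu_{R_\chi}$ is far from an equivalence (Corollary \ref{c:compsincofree}, which you implicitly gesture at, concerns functors \emph{out of} $D_{\chi\mon}(T)$ --- the opposite variance). Compounding this, you never construct a comparison map $\dcm\otimes_{E_\chi}\cG(\chi^{-1},\psi)\to\Av^{\mono}(\AS_\psi|_{\gm}[1])$, so even conservativity of $\Hom(\dcm,-)$ cannot be brought to bear: matching values of one corepresented functor, with no morphism in hand, identifies nothing.

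The repair is exactly what the paper does: test against the \emph{compact} generators of the block, namely the character sheaves $\wt{\chi}\in\mathscr{L}_\chi$ (the paper writes the residual $\chi$). By the same adjunction, $\Hom(\wt{\chi},\Av^{\mono}(\AS_\psi|_{\gm}[1]))=H^1(\gm,\wt{\chi}^{-1}\otimes\AS_\psi)$, which is free of rank one, functorially in $\wt{\chi}$; the identical Hom spaces computed out of $\dcm\otimes_{E_\chi}\cG(\chi^{-1},\psi)$ (Serre duality against $\omega_{\frf_\chi}$ on the spectral side) agree, and since the $\wt{\chi}$ compactly generate $D_{\chi\mon}(\gm)$ this genuinely pins down the object. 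Alternatively, you could first produce the canonical map --- e.g.\ using $\Av^{\chi\mon}(\cH)\simeq\dcm\star\cH$, or the evaluation map out of $[\chi]\otimes_{E_\chi}\Hom([\chi],\Av^{\mono}(\cH))$ extended over $\frf_\chi$ --- and then check it is an isomorphism after applying $\Hom(\wt{\chi},-)$ for all $\wt{\chi}\in\mathscr{L}_\chi$. Your closing remarks about integral coefficients (freeness of $\cG(\chi,\psi)$ over $E_\chi$, degree concentration) are the right things to worry about, but they are secondary to this missing identification step.
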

\begin{proof}
    This follows directly from the fact that 
    $$\Hom(\chi, \Av^{\mono}(\AS_\psi|_{\GG_m}[1]))=\Hom(\chi, \AS_\psi|_{\GG_m}[1])=H^1(\GG_m, \chi^{-1}\otimes\AS_\psi)$$ 
    is a one-dimensional $\fre_\chi$-module.
\end{proof}

Using the lemma, it follows that we have a natural isomorphism
\begin{equation}\label{eq: shrek stalk of average of AS}
    \cK_{*,\AS_\psi,\cF_1}\simeq \bigoplus_\chi(\dcm\star s(\cF_1))\otimes \cG(\halpha_s^{\vee}(\chi)^{-1}, \psi^{-1}).
\end{equation}

\item We suppose $?= {}!$. Here we will only consider the case when $\cL$ is the constant sheaf on $\GG_a$. In this case, 
$h ^!(\cL\boxtimes \delta^{\mono})$ is a monodromic sheaf on $\wt\AA\times\GG_m$.
Recall the following well-known result.
\begin{lem}
Let $\chi$ be a character sheaf on $\GG_m$ and $j: \gm \ra \ga$ be the open embedding. Then $\on{C}^*(\GG_a, j_!\chi)=0$. 
\end{lem}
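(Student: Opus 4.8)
The plan is to place $j_!\chi$ inside the standard recollement triangle for the stratification $\GG_m \hookrightarrow \GG_a \hookleftarrow \{0\}$ and to recognize the resulting boundary term as a purely local cohomology computation which is an isomorphism precisely because $\chi$ is tame.

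Write $i\colon \{0\}\hookrightarrow \GG_a$ for the origin, and abusively keep writing $j$ for the open embedding $\GG_m\hookrightarrow \GG_a$. Applying the recollement triangle $j_!j^*\to \id\to i_*i^*$ to the sheaf $j_*\chi$, and using $j^*j_*\chi\simeq\chi$, one gets a distinguished triangle $j_!\chi\to j_*\chi\to i_*i^*j_*\chi\xrightarrow{+1}$ on $\GG_a$. Pushing forward along $a\colon\GG_a\to\pt$, using $a_*j_* \simeq (aj)_*$ and $a_*i_* \simeq \id$, this becomes a triangle
$$\on{C}^*(\GG_a, j_!\chi)\;\longrightarrow\;\on{C}^*(\GG_m,\chi)\;\xrightarrow{\ \mathrm{res}\ }\;i^*j_*\chi\;\xrightarrow{+1}\;,$$
in which $\mathrm{res}$ is the restriction of a cohomology class on $\GG_m$ to the stalk of $j_*\chi$ at the origin, i.e.\ to $\on{C}^*$ of the punctured (henselian, equivalently formal) disk $\mathring D_0$ at $0$. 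Hence it suffices to prove that $\mathrm{res}$ is an equivalence, for then $\on{C}^*(\GG_a, j_!\chi)\simeq\mathrm{fib}(\mathrm{res})=0$.

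The decisive input is that $\mathring D_0\hookrightarrow \GG_m$ induces an isomorphism on tame fundamental groups: writing $\GG_m = \mathbb{P}^1\setminus\{0,\infty\}$, its tame $\pi_1$ (with respect to this compactification) is procyclic and topologically generated by the inertia at $0$, so the inertia subgroup at $0$ maps isomorphically onto it. Since $\chi$, being a character sheaf on the torus $\GG_m$, is tame (cf.\ \S\ref{sstorus}), and since both $\GG_m$ (a smooth affine curve) and $\mathring D_0$ are $K(\pi,1)$'s for lisse coefficients, both sides of $\mathrm{res}$ compute the cohomology of the respective tame fundamental groups with coefficients in the underlying module of $\chi$, and $\mathrm{res}$ is induced by the above isomorphism of groups; therefore it is an equivalence.

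I do not expect a genuine obstacle here — the above is uniform in $\chi$, and as it only compares the cohomology of a profinite group with that of a subgroup mapping isomorphically onto its tame quotient, it is insensitive to the coefficient ring and in particular applies when $E$ is integral. (A case division into $\chi$ trivial — disposed of directly by the recollement triangle for the constant sheaf on $\GG_a$ together with the cohomological triviality of $\GG_a$ — and $\chi$ nontrivial is also available, but over an integral base $i^*j_*\chi$ need not vanish, so the triangle argument above is the more robust route.)
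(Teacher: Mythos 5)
Your proof is correct in substance but takes a genuinely different route from the paper's. The paper reduces to finite coefficients, hence to $\chi$ of finite order $n$ (prime to $p$ by tameness), realizes $\chi$ as a direct summand of the Kummer pushforward $[n]_*E$, and deduces the vanishing from the constant-sheaf case $\on{C}^*(\GG_a, j_!E)=0$. You instead use the recollement triangle $j_!\chi \to j_*\chi \to i_*i^*j_*\chi$ to identify $\on{C}^*(\GG_a, j_!\chi)$ with the fibre of the restriction map from $\on{C}^*(\GG_m,\chi)$ to the cohomology of the punctured (strictly) henselian disc at $0$, and then show that restriction is an equivalence because the tame inertia at $0$ maps isomorphically onto $\pi_1^t(\GG_m)$. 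The paper's argument is shorter and stays at the level of elementary sheaf operations, piggybacking on the constant-sheaf computation; yours is more structural, exhibiting the vanishing as a purely local statement at $0$, and it is indifferent to whether the order of $\chi$ is invertible in $E$ --- a point at which the direct-summand step of the Kummer argument needs some care when the order is divisible by the residue characteristic $\ell$ of $E$.

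The one step you assert too quickly is the passage from the $K(\pi,1)$ property of $\GG_m$ --- which computes $\on{C}^*(\GG_m,\chi)$ as continuous cohomology of the \emph{full} fundamental group $\pi_1(\GG_m)$ --- to cohomology of the tame quotient $\pi_1^t(\GG_m)$. The kernel of $\pi_1(\GG_m)\to\pi_1^t(\GG_m)$ is the closed normal subgroup generated by the wild inertia subgroups; it is not itself pro-$p$, so the vanishing of its higher cohomology with $\ell$-torsion coefficients is not automatic and the claimed identification is not a formal consequence of $K(\pi,1)$-ness. It is nonetheless true and easy to supply: after reducing (as one may for integral $E$, writing $\chi$ as the limit of its finite-coefficient truncations) to a finite module $M$, the comparison map $H^*(\pi_1^t(\GG_m),M)\to H^*(\GG_m,M)$ is an isomorphism in degree $0$ trivially; in degree $1$ by inflation--restriction, since any continuous homomorphism from the wild kernel to the $\ell$-group $M$ kills the dense subgroup generated by conjugates of the pro-$p$ wild inertia, hence vanishes; and in degrees $\geqslant 2$ both sides vanish, by Artin vanishing for the affine curve $\GG_m$ on one side and by $\mathrm{cd}_\ell\bigl(\pi_1^t(\GG_m)\bigr)=1$ (a procyclic prime-to-$p$ group) on the other. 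With this supplement, and the standard identification of the stalk $i^*j_*\chi$ with the Galois cohomology of the punctured henselian disc (where discarding the genuinely pro-$p$ wild inertia is legitimate), your argument is complete.
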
 
\begin{proof}
We prove this in the \'etale setting. 
We may assume that $\chi$ is of finite coefficient and therefore of finite order. In particular, $\chi$ is a direct summand of $(\GG_m\xrightarrow{[n]}\GG_m)_*E$ for some positive integer $n$. Then $\on{C}^*(\GG_a, j_!\chi)$ is a direct summand of $\on{C}^*(\GG_a, j_!E)=0$.
\end{proof}
We thus see that 
\begin{equation}\label{eq: support on s-strata}
\cK_{!,E,\cF_1}=0
\end{equation}
by \eqref{eq: K?LF}.
\end{enumerate}

Next, we study $(\cF\star j_{\dot{s},*}(\delta^{\mono}))|_{\wt{I}/I^+}$, which is in fact simpler. As before, the restriction is via the $!$-pullback. We have an analogous diagram
\[
\xymatrix{
\wt\AA\times \tLG_{s}/I^+  \ar@{=}[d]\ar[rr] && \tLG_{s}\times^{I^+}\tLG_{s}/I^+\ar@{^{(}->}[d]_{j'}\ar[r] & \tLG_{s}/I^+ \times I^+\bs \tLG_{s}/I^+ \ar@{^{(}->}[d]\\
 \wt\AA\times \tLG_{s}/I^+ \ar_{\pr_{1}}[d]\ar^-{(t,g)\mapsto(tg, g^{-1})}[rr] &&  \tLG_{\le s}\times^{I^+}\tLG_{s}/I^+\ar[r]\ar^{\mult}[d] &  \tLG_{\le s}/I^+ \times I^+\bs \tLG_{ s}/I^+\\
 \wt\AA \ar^{t\mapsto t}[rr] &&    \tLG_{\le s} /I^+&
}\]
Then using the coordinate as before and base change, it is easy to see that for $\cF_0=\cL[1]\boxtimes\delta^{\mono}$, the $!$-restriction of $\cF\star j_{\dot{s},*}(\delta^{\mono})$ to $\wt{I}/I^+=\wt\AA$ is given by 
\begin{equation}\label{eq: support on unit strate}
(\cF\star j_{\dot{s},*}(\delta^{\mono}))|_{\wt{I}/I^+}\simeq \delta^{\mono}\otimes C^\bullet(\GG_a,\cL).
\end{equation}

\subsubsection{Some properties of (co)standard objects}
The first assertion we need is the behavior of convolutions of (co)standard objects in the case when the lengths add. 

\begin{lem}\label{multi}
	For $w_1,w_2 \in \extw$ such that $\ell(w_1)+\ell(w_2)=\ell(w_1w_2)$, and for $\cL_1,\cL_2\in D((\wt\AA,\mono)\bs \wt\AA)$, there are natural isomorphisms 
    \[
     j_{\dot{w}_1,!}(\cL_1)\star j_{\dot{w}_2,!}(\cL_2)\simeq j_{\dot{w}_1\dot{w}_2,!}(\cL_1\star w_1(\cL_2)),
     \] 
     \[
     j_{\dot{w}_1,*}(\cL_1)\star j_{\dot{w}_2,*}(\cL_2)\simeq j_{\dot{w}_1\dot{w}_2,*}(\cL_1\star w_1(\cL_2)).
    \]
    In particular, we have
    \begin{equation} \label{e:allmons}\std(w_1) \star \std(w_2) \simeq \std(w_1w_2), \quad \quad \costd(w_1) \star \costd(w_2) \simeq \costd(w_1w_2), \end{equation}
    and for any fixed monodromy $\chi$
    \begin{equation} \label{e:onemon}\std(w_1)_{\chi} \star \std(w_2)_{w_1^{-1}\chi} \simeq \std(w_1w_2)_\chi, \quad \quad \costd(w_1)_{\chi} \star \costd(w_2)_{w_1^{-1} \chi} \simeq \costd(w_1w_2)_\chi. \end{equation} 
\end{lem}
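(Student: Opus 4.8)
The plan is to prove Lemma \ref{multi} by reducing the statement about convolutions to the local rank-one computations carried out earlier in the subsection, proceeding by induction on $\ell(w_2)$. The base case $\ell(w_2) = 0$, i.e. $w_2 \in \Om$, is immediate: then $\tLG_{w_2}$ is a single $\wt\AA$-bitorsor, convolution with $j_{\dot w_2, ?}(\cL_2)$ is an equivalence $\cM \to \cM$ shifting orbits by $w_2$, and one checks directly on the relevant $(I^+ \times I^+)$-orbit that $j_{\dot w_1, ?}(\cL_1) \star j_{\dot w_2, ?}(\cL_2) \simeq j_{\dot w_1 \dot w_2, ?}(\cL_1 \star w_1(\cL_2))$ by the same coordinate bookkeeping used to normalize $\iota_{\dot w}$ in \eqref{e:onestratumunivmon}. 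For the inductive step, write $w_2 = s w_2'$ with $s \in S_\aff$ a simple reflection and $\ell(w_2') = \ell(w_2) - 1$, so that $\ell(w_1) + \ell(s) + \ell(w_2') = \ell(w_1 w_2)$ and in particular $\ell(w_1 s) = \ell(w_1) + 1$; by associativity of $\star$ and the inductive hypothesis applied twice, it suffices to treat the case $w_2 = s$ a single simple reflection with $\ell(w_1 s) = \ell(w_1) + 1$.

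So the crux is: for $\ell(w_1 s) = \ell(w_1) + 1$, prove $j_{\dot w_1, !}(\cL_1) \star j_{\dot s, !}(\cL_2) \simeq j_{\dot w_1 \dot s, !}(\cL_1 \star w_1(\cL_2))$ and the analogous $*$-statement. I would first handle the $*$-case, since the rank-one calculations above are phrased for $j_{\dot s, *}(\d^{\mono})$ and generalize readily: the sheaf $\cF := j_{\dot w_1, *}(\cL_1) \star j_{\dot s, *}(\d^{\mono})$ (more generally with $w_1(\cL_2)$ inserted) is supported on $\tLG_{\le w_1 s}/I^+$, and I must identify its restrictions to the two relevant strata. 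The restriction to the open stratum $\tLG_{w_1 s}/I^+$ is computed by the "associativity + length-additivity" geometry: the convolution map $\tLG_{w_1} \times^{I^+} \tLG_s \to \tLG_{w_1 s}$ is an isomorphism onto its image when $\ell(w_1 s) = \ell(w_1) + 1$, so the open-stratum restriction is exactly $j_{\dot w_1 \dot s, *}(\cL_1 \star w_1(\cL_2))$ restricted there. The content is then that there is no contribution from the smaller strata $\tLG_{\le w_1}/I^+$; this is where I invoke the vanishing established in the rank-one analysis — specifically \eqref{eq: support on s-strata} / \eqref{eq: support on unit strate} give that the "error terms" coming from $?={}!$ extension of the constant sheaf vanish, and dually the $*$-extension statement follows because $C^*_c(\GG_a, \cL)$ with $\cL$ nontrivial, or the open-stratum comparison with $\cK_{*, E, \d^{\mono}}$, controls exactly the boundary behaviour. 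Assembling: the natural map $j_{\dot w_1 \dot s, *}(\cL_1 \star w_1(\cL_2)) \to j_{\dot w_1, *}(\cL_1) \star j_{\dot s, *}(\cL_2)$ (adjunction from the open stratum) is an isomorphism after $!$-restriction to every stratum, hence an isomorphism.

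The $!$-case I would deduce either by the same method applied with $?={}!$ — here \eqref{eq: support on s-strata} is precisely the statement that $\cK_{!, E, \cF_1} = 0$, so the convolution is automatically clean — or, more slickly, by Verdier duality: the Verdier dual of $\std(w)_\chi$ is (up to twist and the involution on monodromy) $\costd(w^{-1})_{?}$, and duality is anti-monoidal, so the $!$-statement for $(w_1, w_2)$ with lengths adding follows from the $*$-statement for $(w_2^{-1}, w_1^{-1})$, whose lengths also add. The displayed special cases \eqref{e:allmons} and \eqref{e:onemon} are then immediate: take $\cL_1 = \cL_2 = \d^{\mono}$ and use $\d^{\mono} \star w_1(\d^{\mono}) \simeq \d^{\mono}$ (the unit is $\tilW$-invariant and idempotent under $\star$), respectively take $\cL_1 = \dcm$, $\cL_2 = \d^{w_1^{-1}\chi\text{-}\mon}$ and project. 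The main obstacle is the boundary-vanishing step in the $*$-case: one must be careful that inserting a general monodromic $\cL_1$ (rather than $\d^{\mono}$) does not introduce new contributions on the small strata, but this is handled by the same $h^!$-pullback-to-a-torus argument underlying \eqref{eq: K?LF}, since monodromicity is preserved and the vanishing lemmas ($C^*(\GG_a, j_! \chi) = 0$ and its $*$-counterpart) are used only after reducing to $\GG_a$-cohomology of a character sheaf, independently of the $\wt\AA$-factor.
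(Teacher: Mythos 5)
Your reduction to the case $w_2=s$ a simple reflection (together with the length-zero base case) is fine, but the justification of the crux --- cleanness of $j_{\dot w_1,?}(\cL_1)\star j_{\dot s,?}(\cL_2)$ when $\ell(w_1s)=\ell(w_1)+1$ --- does not hold up. The rank-one formulas you invoke, \eqref{eq: support on s-strata} and \eqref{eq: support on unit strate}, compute $\cF\star j_{\dot s,*}(\d^{\mono})$ for $\cF$ supported on $\tLG_{\le s}/I^+$; they govern the length-\emph{cancelling} situation $s\cdot s$ (they are the inputs for Lemma \ref{inverse} and Lemma \ref{lem: conv of costd by costd}), not the length-additive one, and they have $\costd(s)$ on the right, whereas your $!$-statement has $j_{\dot s,!}(\cL_2)$ there. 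In the length-additive case no $\GG_a$-cohomology of a character sheaf is available or needed: $\ell(w_1s)=\ell(w_1)+1$ means $w_1(\alpha_s)>0$, so no root line is integrated out; and the map $\tLG_{w_1}\times^{I^+}\tLG_s/I^+\to\tLG_{w_1s}/I^+$ is not an isomorphism, as you assert, but an $\wt\AA$-torsor $q$ (the isomorphism holds only for the $\tI$-twisted product). The whole content of the lemma sits exactly there: the convolution is the pushforward along $\mult\circ q$ of the $?$-extended twisted external product, $\mult$ is ind-proper, and one needs that pushforward along the torsor $q$ commutes with both $*$- and $!$-extensions of monodromic sheaves; for the $!$-case this is the non-formal input $q_!\simeq q_*$ up to shift (Proposition \ref{lem: functoriality depth zero geom Langlands for tori}), which never appears in your argument. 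Granting it, the lemma follows for arbitrary $w_1,w_2$ with additive lengths by base change in the Cartesian diagram comparing the $I^+$- and $\tI$-twisted products, with $\cL_1\star w_1(\cL_2)$ coming from $\mult_*(\cL_1\boxtimes\cL_2)$ on $\wt\AA\dot w_1\times\wt\AA\dot w_2$; this is the paper's proof, with no induction and no stratumwise boundary check.

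Your fallback for the $!$-case via Verdier duality also fails as stated: $\mathbb{D}$ exchanges $j_!$ and $j_*$, but the Verdier dual of the cofree monodromic local system $\d^{\mono}$ (hence of $\std(w)$, $\costd(w)$) is a \emph{free}-monodromic pro-object, not again an object of $D((\wt\AA,\mono)\bs\wt\AA)$, so duality does not preserve the categories and objects in question --- this is precisely why the paper works with cofree monodromic objects and never dualizes here. If you wish to keep a stratumwise argument for the $*$-case, the boundary vanishing should come from $i^!j_*=0$ on the twisted product together with proper base change along $\mult$ and the behaviour of $q_*$ on monodromic sheaves, not from Artin--Schreier-type vanishing on $\GG_a$, which plays no role in this lemma.
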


\begin{proof} 
Consider the composition $$\tLG \times^{I^+} \tLG / I^+ \xra{q} \tLG \times^{\tI} \tLG / I^+ \xra{\mult} \tLG / I^+.$$
Note that the second map is ind-proper. By our assumption on the lengths, we know that $j_{\dot{w}_1,?}(\cL_1)\star j_{\dot{w}_2,?}(\cL_2)$ supports on the closure of the $w_1w_2$-stratum by a standard argument, as well as the multiplication map $$\tLG_{w_1} \times^{\tI} \tLG_{w_2}/I^{+} \ra \tLG_{w_1w_2}/I^{+}$$
is an isomorphism. As a result, we have the following diagram with both squares Cartesian
$$\xymatrix{ \wt \AA\dot{w}_1 \times \wt \AA\dot{w}_2 \ar[rr]^-{\on{mult}}_-{(a_1\dot{w}_1,a_2\dot{w}_2)\mapsto a_1w_1(a_2)\dot{w}_1\dot{w}_2} \ar[d]^\iota && \wt \AA\dot{w}_1\dot{w}_2 \ar[d]^{\iota} \\ \tLG_{w_1} \times^{I^+} \tLG_{w_2}/I^{+} \ar[rr]^{q}\ar^{j_{w_1,w_2}}[d] && \tLG_{w_1} \times^{\tI} \tLG_{w_2}/I^{+}\ar^{j_{w_1,w_2}}[d]\\
\tLG \times^{I^+} \tLG/I^{+} \ar[rr]^{q} && \tLG \times^{\tI} \tLG/I^{+}.}$$ 
Notice that $\on{mult}_{*}(\cL_1\boxtimes\cL_2)=\cL_1\star ((w_{1})_*\cL_2)$. In addition, notice that $q_*$ commutes with both $(j_{w_1,w_2})_*$ and $(j_{w_1,w_2})_!$ when restricted to the monodromic sheaves (for the $!$-pushforward case, one uses Proposition \ref{lem: functoriality depth zero geom Langlands for tori}). The lemma then follows.
\end{proof}

The next assertion we need is the standard fact that (co)standard objects are inverse to one another under convolution.

\begin{lem} \label{inverse}
	For $w \in \extw$ and $\cL_1,\cL_2\in D((\wt\AA,\mono)\bs \wt\AA)$, there is a natural isomorphism 
    \[
    j_{\dot{w},!}(\cL_1)\star j_{\dot{w}^{-1},*}(\cL_2)\cong j_{\dot{w},*}(\cL_1)\star j_{\dot{w}^{-1},!}(\cL_2)\cong j_{e,!}(\cL_1\star w(\cL_2)).
    \]
    In particular, we have
    \begin{equation} \label{e:kb1} \std(w) \star \costd(w^{-1}) \simeq \costd(w)\star \std(w^{-1})\simeq \Delta(e),\end{equation}
	and for any fixed monodromy $\chi$, there is an isomorphism \begin{equation} \label{e:kb2} \std(w)_{\chi} \star \costd(w^{-1})_{w^{-1}\chi} \simeq \costd(w)_{\chi} \star \std(w^{-1})_{w^{-1}\chi}\simeq  \Delta(e)_\chi. \end{equation}
\end{lem}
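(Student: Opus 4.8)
The plan is to reduce Lemma \ref{inverse} to the already-proven Lemma \ref{multi}, exactly as in the classical (non-monodromic, no central extension) situation. The key observation is that the statement is local on $\tLG$: both $\std(w)\star\costd(w^{-1})$ and $\costd(w)\star\std(w^{-1})$ are supported on $\tLG_{\le e}/I^+=\wt\AA$, so it suffices to compute the $!$-restriction to the identity stratum and show it agrees with $j_{e,!}(\cL_1\star w(\cL_2))$. The support claim follows from the usual argument: using the factorization $\tLG\times^{I^+}\tLG/I^+\xra{q}\tLG\times^{\tI}\tLG/I^+\xra{\mult}\tLG/I^+$ with $\mult$ ind-proper, and writing $w=s_1\cdots s_n$ as a reduced word, one sees that $\std(w)\star\costd(w^{-1})$ is supported on $\overline{\tLG_{w}\cdot\tLG_{\le w^{-1}}}/I^+$, whose intersection with the orbits is controlled by the Bruhat order; the only orbit surviving is the identity one.

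The core computation is then the rank-one case: for a simple reflection $s$, one must show $\std(s)\star\costd(s)\simeq\Delta(e)$ (with the appropriate twist of local systems). First I would establish this using the explicit calculations already carried out in the preceding subsection. Indeed, $\std(s)=j_{\dot s,!}(\delta^{\mono})$ is the $!$-extension of $\cF_0 = E[1]\boxtimes\delta^{\mono}$ on $\tLG_s/I^+\cong\GG_a\times\wt\AA$ (the constant sheaf case), so $\std(s)\star\costd(s)$ is precisely a sheaf of the form treated there. By \eqref{eq: support on s-strata}, $\cK_{!,E,\delta^{\mono}}=0$, so the convolution has zero $!$-restriction to the $s$-stratum $\tLG_s/I^+$; hence it is supported on $\wt I/I^+$. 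By \eqref{eq: support on unit strate}, its $!$-restriction to $\wt I/I^+=\wt\AA$ equals $\delta^{\mono}\otimes C^\bullet(\GG_a,E)=\delta^{\mono}$, since $C^\bullet(\GG_a,E)=E$. This identifies $\std(s)\star\costd(s)\simeq\Delta(e)$, and tracking the $\wt\AA$-actions via the coordinate gives the twist $\cL_1\star s(\cL_2)$ in general (and likewise $\costd(s)\star\std(s)\simeq\Delta(e)$ by the dual computation, or by applying the same argument with $*$ and $!$ swapped, using \eqref{eq: shrek-stalk-conv-costad-s-strata}, \eqref{eq: shrek stalk of average of AS} not needed here since $\cL=E$).

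With the rank-one case in hand, the general case follows by induction on $\ell(w)$ using Lemma \ref{multi}. Write $w=s w'$ with $\ell(w)=\ell(w')+1$ and $s$ a simple reflection, so that $\ell(w^{-1})=\ell(w'^{-1})+1$ with $w^{-1}=w'^{-1}s$. Then by Lemma \ref{multi},
\begin{align*}
j_{\dot w,!}(\cL_1)\star j_{\dot w^{-1},*}(\cL_2) &\simeq j_{\dot s,!}(\cL_1')\star j_{\dot w',!}(\cL_1'')\star j_{\dot w'^{-1},*}(\cL_2')\star j_{\dot s,*}(\cL_2'')
\end{align*}
for suitable local systems $\cL_1',\cL_1'',\cL_2',\cL_2''$ obtained by decomposing $\cL_1$ and $\cL_2$ through the length-additive factorizations (using $\cL_1\star w_1(\cdot)$ in Lemma \ref{multi} to push the $\wt\AA$-twists around); the middle two factors give $j_{e,!}(\cdots)$ by the inductive hypothesis, and then the outer two collapse by the rank-one case. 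Bookkeeping the twists — which amounts to repeatedly applying the identity $w_1(\cL\star\cL')=w_1(\cL)\star w_1(\cL')$ and commuting monodromic local systems past one another — yields the stated formula $j_{e,!}(\cL_1\star w(\cL_2))$. The specializations \eqref{e:kb1} and \eqref{e:kb2} are then immediate by taking $\cL_1=\cL_2=\delta^{\mono}$, resp. by applying $\iota_\chi^!$.

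The main obstacle I anticipate is not conceptual but organizational: correctly tracking the $\wt\AA$-equivariance twists through the inductive step. The presence of the central extension means the lift $\dot s$ satisfies $\dot s^2=\halpha_s(-1)\in\wt\AA$ rather than $\dot s^2=1$, so the identification of strata and the induced maps on $\wt\AA$ carry extra factors; one must check these are absorbed correctly into the local system arguments $\cL_1\star w(\cL_2)$, and in particular that the answer is genuinely independent of the choices of lifts $\dot w$ up to the canonical isomorphisms. This is exactly the kind of verification that is routine but error-prone; the explicit coordinates $(x,t)\mapsto\phi_s(x)t\dot s$ and the Cartesian diagrams from the preceding subsection make it manageable, and the cited result \cite[Proposition 4.47]{Zh} handles the no-central-extension case with full details, so here one can be relatively brief.
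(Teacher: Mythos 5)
Your proposal is correct and follows essentially the same route as the paper: reduce via the length-additivity Lemma \ref{multi} to the rank-one case with $\cL_1=\cL_2=\delta^{\mono}$, then conclude from the earlier rank-one computations — vanishing on the $s$-stratum by \eqref{eq: support on s-strata} and identification of the restriction to the unit stratum by \eqref{eq: support on unit strate}. The only cosmetic quibble is that for the reversed product $\costd(s)\star\std(s)$ the cited formula \eqref{eq: shrek-stalk-conv-costad-s-strata} is not the relevant one (it computes $\costd(s)\star\costd(s)$); the clean way is your "dual computation", e.g. applying the anti-involution induced by inversion on $\tLG$, which the paper also leaves implicit.
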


\begin{proof} 
By Lemma \ref{multi} we may reduce to the case where $w$ is length zero or a simple reflection, and $\cL_1=\cL_2=\delta^{\mono}$. That is, we need to prove \eqref{e:kb1}.
The case of length zero is clear. It therefore remains to verify, for a simple reflection $s$, that one has an isomorphism
$$\std(s) \star \costd(s) \simeq \std(e).$$

By \eqref{eq: support on s-strata}, we see that the convolution $\std(s) \star \costd(s)$ is supported on the neutral double coset $I^+\bs\tI/I^+$. By \eqref{eq: support on unit strate}, we know that $\std(s) \star \costd(s)|_{Z_e} \simeq \delta^{\mono}$. The desired isomorphism follows.
\end{proof} 

\begin{lem}\label{lem: conv of costd by costd}
    Let $s$ be a simple reflection in $\extw$. There is the following cofiber sequence
    \[
j_{e,*}(\delta^{\mono})\to j_{\dot{s},*}(\delta^{\mono})\star j_{\dot{s},*}(\delta^{\mono})\to j_{\dot{s},*}(\CH(\omega_{\ker\halpha_s^\vee}))[1]\to
    \]
    and similarly
        \[
j_{\dot{s},!}(\CH(\omega_{\ker\halpha_s^\vee}))\to  j_{\dot{s},!}(\delta^{\mono})\star j_{\dot{s},!}(\delta^{\mono})\to 
j_{e,!}(\delta^{\mono})\to. 
    \]
    Here $\ker\halpha_s^\vee$ is as in \eqref{eq: kernel of halphasvee}.
\end{lem}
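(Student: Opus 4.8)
The plan is to compute the convolution $\costd(s) \star \costd(s) = j_{\dot{s},*}(\delta^{\mono}) \star j_{\dot{s},*}(\delta^{\mono})$ directly by restricting it to the two strata $Z_e$ and $Z_s$ in its support, and then assembling the resulting pieces into the claimed cofiber sequence; the second statement will follow by a dual argument (or by applying Verdier duality, which exchanges $*$- and $!$-extensions and the roles of the two sequences).

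First I would observe that, since $\ell(s) = 1$, by the standard support estimate the convolution $\costd(s) \star \costd(s)$ is supported on $Z_{\le s} = Z_e \sqcup Z_s$. I then apply the rank-one calculations carried out earlier in this subsection with the choice $\cF_0 = E[1] \boxtimes \delta^{\mono}$ (i.e.\ $\cL = E$ the constant sheaf on $\GG_a$, $\cF_1 = \delta^{\mono}$), so that $\cF = j_{\dot{s},*}(\delta^{\mono})$. From \eqref{eq: shrek-stalk-conv-costad-s-strata}, the $!$-restriction of $\costd(s)\star\costd(s)$ to the $s$-stratum $\tLG_s/I^+$ is $E[1] \boxtimes \cK_{*,E,\delta^{\mono}} \simeq E[1]\boxtimes \CH(\omega_{\ker(\halpha_s)^\vee})[1](1)$, which under the identification \eqref{eq: stratum} corresponds to $j_{\dot s, *}(\CH(\omega_{\ker\halpha_s^\vee}))[1]$ up to the normalizing twist built into $\iota_{\dot s}$. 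From \eqref{eq: support on unit strate} with $\cL = E$, the $!$-restriction to the neutral stratum $\wt I/I^+ = \wt\AA$ is $\delta^{\mono} \otimes C^\bullet(\GG_a, E) \simeq \delta^{\mono}$, i.e.\ $j_{e,*}(\delta^{\mono})|_{Z_e}$.

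Next I would turn these stalk computations into the asserted triangle. Applying $(j_{\le s})_!$ and the recollement for the stratification $Z_e \hookrightarrow Z_{\le s} \hookleftarrow Z_s$, there is a canonical cofiber sequence $(j_e)_! (j_e)^! (\costd(s)\star\costd(s)) \to \costd(s)\star\costd(s) \to (j_s)_* (j_s)^* (\costd(s)\star\costd(s))$ — wait, more precisely the gluing triangle reads $j_{s,!} j_s^* (-) \to (-) \to j_{e,*} j_e^!(-)$, so I want the other recollement triangle $j_{e,!}j_e^*(-) \to (-) \to j_{s,*}j_s^*(-)$; here the left term is supported on the closed stratum $Z_e$ where $!$- and $*$-restriction agree up to the already computed value, giving $j_{e,*}(\delta^{\mono})$, and the right term is $j_{\dot s,*}$ of the $s$-stratum restriction computed above, namely $j_{\dot s,*}(\CH(\omega_{\ker\halpha_s^\vee}))[1]$. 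This is exactly the first cofiber sequence. (One must check the gluing triangle one uses has the closed stratum on the left and open on the right with a $*$-pushforward; since $\costd(s)$ is a $*$-extension this is the natural one, and the identification of the connecting maps with the stated ones is forced once the two outer terms are pinned down, as $\costd(s)\star\costd(s)$ has no negative self-extensions in the relevant degrees.)

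The main obstacle I anticipate is bookkeeping rather than conceptual: keeping track of the cohomological and Tate twists introduced by the normalization \eqref{e:onestratumunivmon} of $\iota_{\dot s}$ and by the shifts $[1](1)$ appearing in \eqref{eq: shrek-stalk-conv-costad-s-strata}, so that the term $j_{\dot s,*}(\CH(\omega_{\ker\halpha_s^\vee}))[1]$ comes out with exactly the shift claimed and not an off-by-one. For the second cofiber sequence I would either redo the computation with $?={}!$ — using \eqref{eq: support on s-strata} (which gives $\cK_{!,E,\cF_1}=0$, so after a Verdier-duality-type argument the $s$-stratum restriction of $\std(s)\star\std(s)$ picks up $\CH(\omega_{\ker\halpha_s^\vee})$ without the extra shift) together with \eqref{eq: support on unit strate} again for the neutral stratum — or simply apply Verdier duality to the first sequence, noting that $\mathbb{D}\costd(s) \simeq \std(s)$ up to twist and that $\mathbb{D}$ is exact and reverses triangles, and that $\omega_{\ker\halpha_s^\vee}$ is Verdier self-dual up to the appropriate shift. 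I would present the first sequence in full and indicate the second follows by duality.
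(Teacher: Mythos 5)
Your treatment of the first cofiber sequence is essentially the paper's own argument: one computes the restrictions of $\costd(s)\star\costd(s)$ to the open stratum $Z_s$ via \eqref{eq: shrek-stalk-conv-costad-s-strata} and to the closed stratum $Z_e$ via \eqref{eq: support on unit strate}, and assembles the two with the gluing triangle. One small correction there: the triangle you wrote down, with $j_{e,!}j_e^{*}$ on the closed stratum and $j_{s,*}j_s^{*}$ on the open one, is not a recollement triangle, and $!$- and $*$-restriction to the closed stratum certainly do not agree in general; what makes the argument work is that \eqref{eq: support on unit strate} is by construction the $!$-restriction, which is exactly what the correct triangle $i_*i^!\to\mathrm{id}\to j_*j^*$ asks for.

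The genuine gap is in your second sequence, where both proposed routes break down. The rank-one formulas of this subsection compute $\cF\star j_{\dot s,*}(\dcm)$ for $\cF$ extended ($*$ or $!$) off the $s$-stratum; in particular \eqref{eq: support on s-strata} is a statement about $\std(s)\star\costd(s)$, so ``redoing the computation with $?={}!$'' tells you nothing about $\std(s)\star\std(s)$ --- no computation of $-\star\std(s)$ has been set up. As for Verdier duality: $\mathbb{D}(\dcm)$ is \emph{not} $\dcm$ up to shift and twist; duality takes the cofree (ind-)monodromic unit out of the category, to a free-monodromic pro-object, and moreover it exchanges $*$- and $!$-convolution, so the identity $\mathbb{D}(\costd(s))\simeq\std(s)$ is not available in the setting of $\cM$. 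The paper avoids duality precisely for this reason: it deduces the second sequence from the first by convolving on the left with $j_{\dot s,!}(\dcm)\star j_{\dot s,!}(\dcm)$ and applying Lemma \ref{inverse} (together with Lemma \ref{multi} and the $s$-invariance of $\ker\halpha_s^\vee$), which turns the three terms into $\std(s)\star\std(s)$, $j_{e,!}(\dcm)$, and $j_{\dot s,!}(\CH(\omega_{\ker\halpha_s^\vee}))[1]$ respectively; rotating the resulting triangle gives exactly the stated sequence. Replacing your duality step with this convolution argument repairs the proof.
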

\begin{proof}
    The first cofiber sequence follows from \eqref{eq: shrek-stalk-conv-costad-s-strata} and \eqref{eq: support on unit strate}. The second one follows from the first one by convolving it with $j_{\dot{s},!}(\delta^{\mon})\star j_{\dot{s},!}(\delta^{\mon})$ and Lemma \ref{inverse}.
\end{proof}

The following lemma concerns the cleanness of objects associated to simple reflections.

\begin{lem}\label{stdeqcostd}
	Let $s$ be a simple reflection in $\extw$, and $j_{s}: Z_s \rightarrow Z$ as before.     
    Then there is a natural cofiber sequence
    $$
   j_{e,!}(\CH(\omega_{\ker\halpha_s^\vee}))\to j_{\dot{s},!}(\delta^{\mono})\to j_{\dot{s},*}(\delta^{\mono})\to . $$
    In particular, 
     $$\std(s)_\chi \simeq \costd(s)_\chi$$
     if and only if $s \notin \extw^\circ_{\chi}$.  
\end{lem}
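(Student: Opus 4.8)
The plan is to first produce the cofiber sequence as a consequence of Lemma~\ref{lem: conv of costd by costd}, and then extract the $(\co)$standard comparison by projecting to a fixed block and inspecting the boundary stratum. For the sequence, observe that --- after identifying its left-hand term --- it is the gluing triangle $i_{e,*}i_e^{!}\std(s)\to\std(s)\to\costd(s)$ attached to the open--closed decomposition $Z_{\le s}=Z_s\sqcup Z_e$, so the content is the computation of the ``corner'' $i_e^{!}\std(s)\simeq\CH(\omega_{\ker\halpha_s^\vee})$ together with the fact that the displayed arrow is the adjunction counit. I would get this by convolving the first triangle of Lemma~\ref{lem: conv of costd by costd} with the exact functor $(-)\star\std(s)$ and simplifying the three terms via Lemma~\ref{inverse}: they become $\costd(e)\star\std(s)=\std(s)$, then $\costd(s)\star\costd(s)\star\std(s)=\costd(s)$ (using $\costd(s)\star\std(s)=\std(e)$), and $j_{\dot s,*}(\CH(\omega_{\ker\halpha_s^\vee}))\star\std(s)=j_{e,!}(\CH(\omega_{\ker\halpha_s^\vee}))$; here the $s$-twist and the replacement of $\dot s^{-1}$ by $\dot s$ occurring in Lemma~\ref{inverse} are harmless, since $s$ fixes the monoidal unit $\delta^{\mono}$ and $\delta^{\mono}$ is translation invariant. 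After rotating this produces a triangle $j_{e,!}(\CH(\omega_{\ker\halpha_s^\vee}))\to\std(s)\xrightarrow{b}\costd(s)\to$. That $b$ may be replaced by the adjunction counit then follows by restricting along $j_{\dot s}^{*}$: the first term dies, so $b|_{Z_s}$ is an automorphism of $\delta^{\mono}$, and under the tautological isomorphism $\Hom(\std(s),\costd(s))\cong\End(\delta^{\mono})$, $\,b\mapsto b|_{Z_s}$, this exhibits $b$ as a unit multiple of the counit, so the triangles are isomorphic.

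Next I would apply the left $(\wt\AA,\chi)$-monodromic projection to this cofiber sequence. Since the projection is equivariant for left translation by $\wt\AA$, it commutes with $j_{\dot s,!}$, $j_{\dot s,*}$ and $j_{e,!}$ and sends $\delta^{\mono}$ to $\dcm$; combined with Proposition~\ref{prop: tame geometric Langlands for tori}, which identifies the $\chi$-block of $\CH(\omega_{\ker\halpha_s^\vee})$ with $\CH(\omega_{\ker\halpha_s^\vee\cap\frf_\chi})$, it yields
\[
j_{e,!}(\CH(\omega_{\ker\halpha_s^\vee\cap\frf_\chi}))\to\std(s)_\chi\to\costd(s)_\chi\to .
\]
The left-hand term vanishes iff $\ker\halpha_s^\vee\cap\frf_\chi=\varnothing$, i.e. iff $\chi\notin\ker\halpha_s^\vee$, which by \eqref{eq: kernel of halphasvee} says exactly that the pullback $\halpha_s^{*}\chi$ is a nontrivial character sheaf on $\gm$. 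By the definitions of the integral coroots $\cPhi_\chi$ and of the simple reflections $S_\chi$ recalled in \S\ref{s:comboblock}, triviality of $\halpha_s^{*}\chi$ is equivalent to $\halpha_s\in\cPhi_\chi$; and since $\halpha_s$ is a simple affine coroot, whenever it lies in $\cPhi_\chi$ it is indecomposable in $\cPhi_\chi^{+}$, whence $s=s_{\halpha_s}\in S_\chi\subset\extw^\circ_\chi$, while conversely if $s\in\extw^\circ_\chi$ then, being a reflection, $s=s_{\hat\beta}$ for some $\hat\beta\in\cPhi_\chi$, forcing $\hat\beta=\pm\halpha_s$ and $\halpha_s\in\cPhi_\chi$. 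Hence the left-hand term vanishes precisely when $s\notin\extw^\circ_\chi$.

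It remains to read off the conclusion. If $s\notin\extw^\circ_\chi$, the triangle exhibits $\std(s)_\chi\to\costd(s)_\chi$ as an isomorphism. If $s\in\extw^\circ_\chi$, the left-hand term is nonzero; applying $i_e^{!}$ to the triangle and using $i_e^{!}\costd(s)_\chi=i_e^{!}j_{\dot s,*}(\dcm)=0$ (the standard vanishing $i_e^{!}j_{\dot s,*}=0$, coming from $j_{\dot s}^{*}i_{e,*}=0$) together with $i_e^{!}j_{e,!}=\id$, one finds $i_e^{!}\std(s)_\chi\simeq\CH(\omega_{\ker\halpha_s^\vee\cap\frf_\chi})\neq 0$ whereas $i_e^{!}\costd(s)_\chi=0$, so $\std(s)_\chi$ and $\costd(s)_\chi$ cannot be isomorphic. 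The only step here that is not purely formal is the combinatorial dictionary of \S\ref{s:comboblock} translating the geometric condition $\chi\in\ker\halpha_s^\vee$ into $s\in\extw^\circ_\chi$; the remainder is just the recollement triangle for $Z_{\le s}=Z_s\sqcup Z_e$ together with Lemmas~\ref{lem: conv of costd by costd} and~\ref{inverse}, plus the bookkeeping needed to see the produced triangle realizes the \emph{natural} map.
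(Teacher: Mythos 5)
Your proof is correct and follows essentially the same route as the paper: the cofiber sequence is obtained by convolving the first triangle of Lemma~\ref{lem: conv of costd by costd} with $\std(s)$ (the paper convolves on the left, you on the right — a cosmetic difference) and simplifying via Lemma~\ref{inverse}, and the criterion $\std(s)_\chi\simeq\costd(s)_\chi \Leftrightarrow s\notin\extw^\circ_\chi$ is read off from whether $\chi$ lies in $\ker\halpha_s^\vee$, exactly as in the paper. Your additional care about the map being the adjunction counit, the $i_e^!$ argument for the converse, and the dictionary $s\in\extw^\circ_\chi\Leftrightarrow\halpha_s^*\chi$ trivial (which, as in the paper, ultimately rests on the recalled reflection-subgroup fact from \S\ref{s:comboblock}) are consistent with, and slightly more detailed than, the paper's treatment.
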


\begin{proof} 
The cofiber sequence follows from Lemma \ref{inverse} by convolving the first cofiber sequence in Lemma \ref{lem: conv of costd by costd} with $j_{\dot{s},!}(\delta^{\mono})$. 

Recall that $s\in \extw^\circ_{\chi}$ if and only if the pullback $\halpha_s^*(\chi)$ is the trivial character sheaf on $\GG_m$. In the spectral side, this means exactly when $\chi\in \ker\halpha_s^\vee$. Thus if $s \notin \extw^\circ_{\chi}$, then $\dcm\star \CH(\omega_{\ker\halpha_s^\vee})=0$ and therefore $\std(s)_\chi \simeq \costd(s)_\chi$. Conversely, if $\std(s)_\chi \simeq \costd(s)_\chi$ then $\CH(\omega_{\ker\halpha_s^\vee})\star \dcm=0$. This implies that $\chi\not\in \ker\halpha_s^\vee$, or equivalently $s \notin \extw^\circ_{\chi}$.
\end{proof}

\subsection{Block decomposition}
\label{s:block}
We now apply the results of the previous section to obtain the block decompositions of the categories ${}_{\chi}\cM_{\chi'}$. These will play a basic role in the Soergel theoretic description of the categories we will obtain in the remainder of the paper.

We first set up some relevant notations. Let $\beta$ be a block in ${}_{\chi} \Omega_{\chi'}$, i.e., an $\tilW_{\chi}^{\circ} \times \tilW_{\chi'}^{\circ}$ orbit in ${}_{\chi}\tilW_{\chi'}$, cf. Section \ref{s:comboblock}.

\begin{defn} For $\beta$ as above, we define $$i_\beta:  {}_{\chi} \cM _{\chi'}^{\beta} \hookrightarrow {}_{\chi}\cM_{\chi'}$$ to be the full subcategory generated under colimits and shifts by the objects  $$j(w)_{!, \wt{\chi}}, \quad \quad w \in \beta, \wt{\chi} \in \mathscr{L}_\chi.$$ 
\end{defn}

When $\chi = \chi'$ and $\beta$ is the block containing the identity element, we write ${}_{\chi} \cM _{\chi}^{\circ}$ instead of ${}_{\chi} \cM _{\chi}^{\beta}$.

\begin{rem} It follows from Lemma \ref{l:compsincofree} that  ${}_{\chi} \cM _{\chi'}^{\beta}$ is also generated under colimits and shifts by the non-compact objects $$\Delta(w)_\chi, \quad w \in \beta.$$
\end{rem}

The statement of the block decomposition is then given as follows.

\begin{prop}\label{blockdec} The direct sum of the inclusions $i_\beta$, for $\beta \in {}_{\chi}\Omega_{\chi'}$, is an equivalence, i.e., 
\begin{equation} \label{e:blockdec}\underset{\beta} \oplus \hspace{.5mm} i_\beta: \underset{\beta} \oplus \hspace{.5mm} {}_{\chi} \cM_{\chi'}^\beta \xrightarrow{\sim} {}_{\chi}\cM_{\chi'}. \end{equation}
\end{prop}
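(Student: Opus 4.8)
The plan is to establish \eqref{e:blockdec} by verifying, in order, the three standard conditions that guarantee an orthogonal decomposition of a compactly generated stable category indexed by a set: that the subcategories $\pha_\chi\cM_{\chi'}^\beta$ are mutually orthogonal, that they together generate $\pha_\chi\cM_{\chi'}$ under colimits, and that each $i_\beta$ is fully faithful. The first two are essentially bookkeeping given what has been proved; the third requires a genuine argument about $\Hom$-computations between standard objects supported in the same block, which I expect to be the main obstacle.

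First I would reduce everything to the compact generators $j(w)_{!,\wt\chi}$ for $w \in \pha_\chi\tilW_{\chi'}$ and $\wt\chi \in \mathscr{L}_\chi$, which by Corollary \ref{c:compgensfixedmon} generate $\pha_\chi\cM_{\chi'}$. Since the index set $\pha_\chi\tilW_{\chi'}$ is partitioned into blocks $\beta \in \pha_\chi\Omega_{\chi'}$ (the $\tilW_\chi^\circ \times \tilW_{\chi'}^\circ$-orbits), the generation statement --- that $\oplus_\beta i_\beta$ hits a generating family, hence is essentially surjective onto the cocomplete subcategory they generate, i.e.\ all of $\pha_\chi\cM_{\chi'}$ --- is immediate. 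For orthogonality, I would show $\Hom(j(w)_{!,\wt\chi}, j(w')_{*,\wt\chi'}) = 0$ whenever $w, w'$ lie in distinct blocks; here it is convenient to pass to the $*$-extensions on the target side as noted in the Remark after Corollary \ref{c:compgensfixedmon}. The key input is the standard computation of $\Hom$ between a $!$-standard and a $*$-costandard object on the affine flag variety: such a $\Hom$ is controlled by the stratum-by-stratum restriction, and using Lemma \ref{inverse} one reduces $\Hom(\Delta(w)_\chi, \nabla(w')_{w'^{-1}\chi''}) $ to $\Hom(\Delta(e)_\chi, \Delta(w^{-1})_{w^{-1}\chi} \star \nabla(w')_{\dots})$, which vanishes unless $w^{-1}w'$ can be reached inside the stratum poset, forcing $w$ and $w'$ to lie in the same $\tilW_\chi^\circ$-coset, i.e.\ the same block. (One must be slightly careful to track which character sheaves $\wt\chi \in \mathscr{L}_\chi$ occur; but the block is detected already at the level of the underlying residual data by Proposition \ref{p:conjmin} and the discussion of $\pha_\chi\Omega_{\chi'}$ in Section \ref{s:comboblock}.)

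Granting orthogonality and generation, full faithfulness of each $i_\beta$ is then automatic: a fully faithful colimit-preserving functor from a compactly generated category is determined by its behavior on compacts, and the inclusion $\pha_\chi\cM_{\chi'}^\beta \hookrightarrow \pha_\chi\cM_{\chi'}$ is fully faithful by construction (it is the ind-extension of the inclusion of a full subcategory of compacts). More precisely, I would argue that $\oplus_\beta i_\beta$ is fully faithful by checking it on the compact generators $\oplus_\beta (j(w)_{!,\wt\chi})_{w\in\beta}$: the $\Hom$ between generators in different summands vanishes on both sides by the orthogonality just established, and within a single summand it is an equality by fullness of the defining inclusion. Combined with essential surjectivity, this gives the equivalence.

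The main obstacle, as flagged, is the orthogonality computation --- specifically making the ``$\Hom$ is supported where the strata meet'' argument precise in the monodromic, centrally-extended setting with integral coefficients, where one has the genuinely infinite-dimensional formal schemes $\frf_\chi$ rather than ordinary formal completions. The saving grace is that all the needed rank-one convolution computations have already been carried out in the previous subsection (Lemmas \ref{multi}, \ref{inverse}, \ref{lem: conv of costd by costd}, \ref{stdeqcostd}) and the relevant fact is purely combinatorial once those are in hand: a nonzero map between standards $\Delta(w)$ and $\nabla(w')$ forces $w \le w'$ in the Bruhat order \emph{and} $w^{-1}w' \in \tilW_\chi^\circ$, by an induction on length using Proposition \ref{p:conjsimple} to reduce to simple reflections and then invoking Lemma \ref{stdeqcostd} for the rank-one vanishing. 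I would spell out this induction carefully, as it is the one place where the block-theoretic content (as opposed to formal nonsense) actually enters.
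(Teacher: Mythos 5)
There is a genuine gap, and it lies exactly where you flagged the main difficulty. The orthogonality you propose to verify --- vanishing of $\Hom(j(w)_{!,\wt\chi}, j(w')_{*,\wt\chi'})$ for $w,w'$ in distinct blocks --- is both trivially true and useless for the block decomposition: by adjunction this $\Hom$ vanishes whenever $w \neq w'$ (this is exactly \eqref{e:homstdcostd}), so it carries no block information whatsoever. What actually has to be proven is the vanishing of $\Hom(j(y)_{!,\ov{\chi}}, j(w)_{!,\wt{\chi}})$ for $y,w$ in \emph{distinct} blocks, i.e.\ between two $!$-extensions; this is not formal, since the $*$-restriction of $j(w)_!$ to smaller strata is nonzero in general. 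Your reduction "pass to $*$-extensions on the target side" silently assumes that ${}_{\chi}\cM_{\chi'}^{\gamma}$ is also generated by the $*$-extensions $j(w')_{*,\wt\chi'}$ with $w'\in\gamma$; but the block subcategory was defined via $!$-extensions, and knowing that $j(w')_*$ lies in it is essentially equivalent to the statement being proven. Likewise, your closing inductive claim ("a nonzero map $\std(w)\to\costd(w')$ forces $w\le w'$ and $w^{-1}w'\in\tilW_\chi^\circ$") is aimed at the wrong pairing: such a map exists only for $w=w'$, so the induction as described proves nothing about blocks. Consequently the final step --- "full faithfulness of $\oplus_\beta i_\beta$ is automatic from orthogonality plus fullness of each inclusion" --- does not go through, because the cross-block $\Hom$-vanishing between $!$-generators was never established.

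The missing ingredient is precisely the paper's Lemma \ref{l:b2b}: convolution with $\std(s)$ (equivalently $\costd(s)$) for a simple reflection $s$ sends blocks to blocks; its proof splits into the case $s\notin\tilW^\circ_{\chi'}$ (cleanness, Lemma \ref{stdeqcostd}, plus Lemma \ref{multi}) and the case $s\in\tilW^\circ_{\chi'}$ (the triangle $\std(s)\to\costd(s)\to\xi$ with $\xi$ supported on the unit stratum). With that in hand, the paper's argument is an induction on $\ell(w)$ for $\Hom(j(y)_{!,\ov\chi}, j(w)_{!,\wt\chi})$: write $w=xs$ with $x<w$, use Lemmas \ref{multi} and \ref{inverse} to rewrite the $\Hom$ as $\Hom(j(y)_{!,\ov\chi}\star\costd(s), j(x)_{!,\wt\chi})$, and invoke Lemma \ref{l:b2b} to see that the two arguments are still in distinct blocks, so the inductive hypothesis applies. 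Your sketch contains several of the right ingredients (induction on length, Lemmas \ref{multi}, \ref{inverse}, \ref{stdeqcostd}, Proposition \ref{p:conjsimple}), but without the block-preservation statement for convolution, and with the $\Hom$-computation directed at the $!$-versus-$*$ pairing instead of the $!$-versus-$!$ one, the argument does not close.
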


To prove the proposition, we will need one more lemma.

\begin{lemma} Given a simple reflection $s$, the convolution functor
$$ - \star \std(s): {}_\chi\cM_{\chi'} \rightarrow {}_\chi\cM_{s\chi'}$$
sends blocks to blocks. That is, given a block ${}_\chi\cM_{\chi'}^\beta$, its essential image under $- \star \std(s)$ is a block of ${}_{\chi}\cM_{s\chi'}$. \label{l:b2b}
\end{lemma}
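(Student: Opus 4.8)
The plan is to exploit the rank-one convolution calculations already established, together with the combinatorial input from Lemma \ref{l:minmult}. First, observe that it suffices to check the claim on a set of generators of the block $_\chi\cM_{\chi'}^\beta$. By definition this block is generated under colimits and shifts by the standard objects $\std(w)_\chi$ with $w\in\beta$, and even (by the remark following the definition of $i_\beta$, which uses Lemma \ref{l:compsincofree}) these suffice. So I would reduce to showing: for $w\in\beta$, the object $\std(w)_{\chi}\star\std(s)$ lies in a single block of $_\chi\cM_{s\chi'}$, and moreover that block depends only on $\beta$ and not on $w$.

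Next I split into the two familiar cases according to whether $\ell_\beta(ws)>\ell_\beta(w)$ or not, i.e.\ whether $s'':=w^{-1}sw$ (reading appropriately on the right) lengthens or shortens $w$ relative to the Bruhat order on $_\chi\tilW_{\chi'}^{\beta}$. Actually the cleaner bookkeeping is: let $\beta'\in{}_\chi\Omega_{s\chi'}$ be the coset $\beta\star\gamma_s$ where $\gamma_s$ is the coset of $s$ in $_{\chi'}\Omega_{s\chi'}$; by Lemma \ref{l:minmult} this is a well-defined coset, with minimal representative $w^{\beta'}=w^\beta w^{\gamma_s}$. If $\ell(ws)=\ell(w)+1$, then by Lemma \ref{multi} (more precisely \eqref{e:onemon}) we have $\std(w)_\chi\star\std(s)_{\chi'}\simeq\std(ws)_\chi$, and one checks $ws\in\beta'$, so the image is contained in $_\chi\cM_{s\chi'}^{\beta'}$. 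If instead $\ell(ws)=\ell(w)-1$, write $w=w's$ with $\ell(w')=\ell(w)-1$; then $\std(w)_\chi\star\std(s)_{\chi'} \simeq \std(w')_\chi \star \std(s)_{s\chi'} \star \std(s)_{\chi'}$, and I apply Lemma \ref{lem: conv of costd by costd} (the $!$-version, rewritten via Lemma \ref{inverse} in terms of $\std$'s) to the middle convolution $\std(s)\star\std(s)$: it sits in a cofiber sequence whose two outer terms are $\std(e)$ and $j_{\dot s,!}(\CH(\omega_{\ker\halpha_s^\vee}))$. Convolving $\std(w')_\chi$ against this cofiber sequence exhibits $\std(w)_\chi\star\std(s)$ as an extension of $\std(w')_\chi$ (which lies in the block of $w'$) and of $\std(w')_\chi\star j_{\dot s,!}(\CH(\omega_{\ker\halpha_s^\vee}))$, which by Lemma \ref{multi} and the support of $\CH(\omega_{\ker\halpha_s^\vee})$ on the unit stratum lies in the block of $w'$ as well — and $w'\in\beta'$ since $w'=ws$. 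So again the image lies in $_\chi\cM_{s\chi'}^{\beta'}$.

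It then remains to see that the assignment $\beta\mapsto\beta'$ is injective, so that distinct blocks map to distinct blocks; but this is immediate because convolution with $\std(s)$ is invertible, with inverse convolution with $\costd(s)$ by Lemma \ref{inverse} (equation \eqref{e:onemon} of Lemma \ref{inverse}), and the same argument applied to $\costd(s)$ shows it too carries blocks into blocks. Hence $-\star\std(s)$ and $-\star\costd(s)$ are mutually inverse equivalences permuting the finite (or at least discrete) set of blocks, and in particular each block maps onto a block. This gives the lemma.

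The main obstacle I anticipate is the length-decreasing case: one needs to be careful that the "error term" $\std(w')_\chi\star j_{\dot s,!}(\CH(\omega_{\ker\halpha_s^\vee}))$ genuinely stays within the block of $w'$ rather than spreading across several blocks. This comes down to the fact that $\CH(\omega_{\ker\halpha_s^\vee})$ is a monodromic sheaf supported at the identity coset (so convolving by it on the right only changes the right monodromy, not the $\tilW$-component), combined with Lemma \ref{multi} which controls convolution when lengths add — here $\ell(w')+\ell(e)=\ell(w')$ trivially. A secondary point of care is matching up the combinatorics: verifying that $ws$, in the length-increasing case, indeed lies in the coset $\beta\star\gamma_s$, which is exactly the content of Lemma \ref{l:minmult} once one notes $w\in\beta$ forces $w = w_1 w^\beta$ with $w_1\in\tilW_\chi^\circ$ and hence $ws = w_1 w^\beta s$, with $w^\beta s$ of the form $w_1' w^{\beta'}$. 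None of this requires new ideas beyond the lemmas already proved; it is a matter of assembling them in the right order.
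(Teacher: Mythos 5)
Your overall strategy is viable and in fact runs parallel to the paper's own proof (reduce to standard generators, use Lemma \ref{multi} when lengths add, use the rank-one cofiber sequences otherwise, and use Lemma \ref{inverse} to upgrade ``image contained in a block'' to ``essential image is a block''). The reduction to the generators $\std(w)_\chi$, the length-increasing case via \eqref{e:onemon} together with Lemma \ref{l:minmult}, and the closing invertibility remark are all fine.

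However, there is a concrete error in your length-decreasing case. The error term in the cofiber sequence of Lemma \ref{lem: conv of costd by costd} for $\std(s)\star\std(s)$ is $j_{\dot s,!}(\CH(\omega_{\ker\halpha_s^\vee}))$, which is supported on the \emph{$s$-stratum}, not on the identity coset; you have transplanted the unit-stratum support of the term $j_{e,!}(\CH(\omega_{\ker\halpha_s^\vee}))$ appearing in Lemma \ref{stdeqcostd} (the $\std(s)\to\costd(s)$ triangle) to the wrong sequence. Consequently your justification that convolving with it ``only changes the right monodromy, not the $\tilW$-component'' fails: by Lemma \ref{multi}, $\std(w')_\chi\star j_{\dot s,!}(\CH(\omega_{\ker\halpha_s^\vee}))$ is supported on the stratum of $w's=w$, not of $w'$, and $w$ lies in the target block $\beta'=\beta\star\gamma_s$ only when $s\in\tilW_{\chi'}^\circ$ (when $s\notin\tilW_{\chi'}^\circ$ one has $w\notin\beta'$). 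The repair is exactly the dichotomy your write-up avoids: if $s\in\tilW_{\chi'}^\circ$ then $\beta'=\beta$ contains $w$ and the error contribution is harmless; if $s\notin\tilW_{\chi'}^\circ$ then the relevant monodromic projection of $\CH(\omega_{\ker\halpha_s^\vee})$ vanishes (equivalently, $\std(s)_{\chi'}\simeq\costd(s)_{\chi'}$ is clean, Lemma \ref{stdeqcostd}), so the error term is zero and $\std(w)_\chi\star\std(s)_{\chi'}\simeq\std(w')_\chi$. This integrality case distinction, resting on Lemma \ref{stdeqcostd}, is precisely how the paper organizes its proof (it splits on $s\in\tilW_{\chi'}^\circ$ rather than on length), and it cannot be bypassed by a purely length-based argument as written. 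With that correction inserted, your proof goes through.
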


\begin{rem} 
An immediate consequence of Lemma \ref{l:b2b} and Lemma \ref{multi} is that convolution with any standard sheaf $$\std(w), \quad \quad w \in \tilW,$$ sends blocks to blocks, and similarly for their inverse costandard sheaves, cf. Lemma \ref{inverse}. 
\end{rem}

\begin{proof} First suppose that $s \notin \tilW_{\chi'}^\circ$. In this case, using that $\tilW_{s\chi'}^\circ = s\tilW^\circ_{\chi'} s$, note that the map 
$${}_\chi\tilW_{\chi'} \xrightarrow{\sim} {}_\chi\tilW_{s\chi'}, \quad \quad w \mapsto ws $$
tautologically sends a block $\beta = w\tilW_{\chi'}^\circ$ to 
$$w\tilW_{\chi'}^\circ s = ws \tilW_{s\chi'}^\circ =: \beta s.$$

Passing to the categories, it is therefore enough to show for any $\wt{\chi} \in \mathscr{L}_\chi$ and $w \in {}_\chi\tilW_{\chi'}$ that one has an isomorphism
$$j(w)_{!, \wt{\chi}} \star \Delta(s) \simeq j(ws)_{!, \wt{\chi}}.$$
If $w < ws$, this is a case of Lemma \ref{multi}. If $w > ws$, we must equivalently show 
$$j(w)_{!, \wt{\chi}} \simeq j(ws)_{!, \wt{\chi}} \star \costd(s).$$
However, by Lemma \ref{stdeqcostd}, we have $$j(ws)_{!, \wt{\chi}} \star \costd(s) \simeq j(ws)_{!, \wt{\chi}} \star \std(s),$$
whence we are again done by Lemma \ref{multi}. 

It remains to address the case where $s \in \tilW_{\chi'}^\circ$. In this case, the map 
$${}_\chi\tilW_{\chi'} \rightarrow {}_\chi\tilW_{\chi'}, \quad \quad w \mapsto ws$$
tautologically sends any block $\beta$ bijectively onto itself. Passing to categories, we will therefore show that $- \star \std(s)$ restricts to an autoequivalence
$$- \star \std(s): {}_{\chi}\cM_{\chi'}^\beta \simeq {}_{\chi}\cM_{\chi'}^\beta.$$
More precisely, if we fix $x \in \beta$, and without loss of generality assume that $x < xs$, we will show that for any $\wt{\chi} \in \mathscr{L}_\chi$ that $- \star \Delta(s)$ restricts to an autoequivalence of the subcategory generated under shifts and colimits by the objects
$$ j(x)_{!, \wt{\chi}} \quad \text{and} \quad j(xs)_{!, \wt{\chi}}.$$
Indeed, by Lemma \ref{multi}, we have that \begin{equation}\label{e:puzz1}j(x)_{!, \wt{\chi}} \star \std(s) \simeq j(xs)_{!, \wt{\chi}}.\end{equation} In addition, consider the tautological triangle
$$\std(s) \rightarrow \costd(s) \rightarrow \xi \xrightarrow{+1}$$
as in Lemma \ref{stdeqcostd}. We therefore obtain a distinguished triangle 
\begin{equation}  \label{e:puzz2} j(xs)_{!, \wt{\chi}} \star \std(s) \rightarrow j(xs)_{!, \wt{\chi}} \star \costd(s) \rightarrow j(xs)_{!, \wt{\chi}} \star \xi \xrightarrow{+1}.
\end{equation}
As by Lemma \ref{multi} the middle term is $j(xs)_{!, \wt{\chi}} \star \costd(s) \simeq j(x)_{!, \wt{\chi}},$ and $j(xs)_{!, \wt{\chi}} \star \xi$ lies in the full subcategory generated by $j(xs)_{!, \wt{\chi}}$, the claim follows by combining \eqref{e:puzz1} and \eqref{e:puzz2}. 
\end{proof}

Finally, we are ready to prove the block decomposition. 
\begin{proof}[Proof of Proposition \ref{blockdec}.] As the essential image of the functor \eqref{e:blockdec} contains a set of compact generators, cf. Corollary \ref{c:compgensfixedmon}, it is enough to see fully faithfulness. I.e., we must show that given distinct blocks $\beta, \gamma$, elements $y \in \beta, w \in \gamma$, and elements $ \overline{\chi}, \wt{\chi} \in \mathscr{L}_\chi$, we have the vanishing 
$$\Hom( j(y)_{!, \overline{\chi}},  j(w)_{!, \wt{\chi}}) \simeq 0.$$ 
As in \cite[Proposition 4.11]{LY}, we prove this, for all possible twists on the right $\chi'$ simultaneously, via an induction on the length of $w$. 

If $w$ is of length zero, then the claim is clear, as for support reasons the above Hom space is nonvanishing if and only if $y = w$. For the inductive step, as $w$ is of positive length, we may write $w = xs$, where $x < w$ and $s$ is a simple reflection. By Lemma \ref{multi} and Lemma \ref{inverse}, we obtain that
\begin{align*} \Hom( j(y)_{!, \overline{\chi}},  j(w)_{!, \wt{\chi}}) &\simeq \Hom(j(y)_{!, \overline{\chi}}, j(x)_{!, \wt{\chi}} \star \Delta(s)) \\ &\simeq \Hom(j(y)_{!, \overline{\chi}} \star \nabla(s), j(x)_{!, \wt{\chi}}).
\end{align*}
We are therefore done by our inductive hypothesis and Lemma \ref{l:b2b}. \end{proof}

\section{Cofree tilting sheaves}

In this section, we collect some basic properties of cofree tilting sheaves in monodromic Hecke categories, including the classification of indecomposable objects, and the closure of cofree tilting sheaves under convolution. Then we will describe in Theorem \ref{t:renorm} a renormalization procedure that reconstructs the Hecke category from the ordinary addtive monoidal category of cofree tilting sheaves. Many results in this section are variants or generalizations of similar results in the unipotent monodromy situation.

\subsection{Definition of cofree tilting sheaves} \label{s:tiltbasics} 

\sss{} Let us begin with the relevant (standard) definition.

\begin{defn}[Cofree tilting sheaves] Let $\tau$ be an object of  $\cmc$.
\begin{enumerate}
\item We say $\tau$ {\em admits a  $\std$-flag} if it has a finite step filtration with associated graded isomorphic to cofree standard sheaves. 
\item We say $\tau$ {\em admits a $\costd$-flag} if it has a finite step filtration with associated graded isomorphic to cofree costandard sheaves. 
\item We say $\tau$ is a {\em cofree tilting sheaf} if it admits both a $\std$-flag and a $\costd$-flag.
\end{enumerate}
\end{defn}

Let us note a first useful consequence of the definition. 

\begin{lem} \label{l:duals} If an object $\tau \in \cmc$ admits a $\std$-flag or $\costd$-flag, then it is dualizable with respect to convolution, and its dual $\tau^\vee \in {}_{\chi'}\cM_{\chi}$ admits a $\costd$-flag, or $\std$-flag, respectively.

In particular, any cofree tilting sheaf is dualizable, and its dual is again cofree tilting. 
\end{lem}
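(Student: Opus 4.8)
The key point is that convolution with a standard or costandard sheaf is an equivalence, and that a $\Delta$-flag (or $\nabla$-flag) is built out of pieces that are manifestly dualizable. I would first reduce to the case of a single standard sheaf $\Delta(w)_\chi$: by Lemma~\ref{inverse} we have $\Delta(w)_\chi \star \nabla(w^{-1})_{w^{-1}\chi} \simeq \Delta(e)_\chi$ and likewise on the other side, and $\Delta(e)_\chi$ is the monoidal unit of $_\chi\cM_\chi$ (recall the monodromic unit is $\delta^{\chi\mon}$ under the identification $\cM(e)\simeq D((\wt\AA,\chi\mon)\bs\wt\AA)$). Hence each $\Delta(w)_\chi$ is invertible with respect to convolution, with inverse $\nabla(w^{-1})_{w^{-1}\chi}$; in particular it is dualizable, and its (two-sided) dual is a shift-free object which is $\nabla(w^{-1})_{w^{-1}\chi}$, a costandard sheaf. (One checks the evaluation and coevaluation maps come from the isomorphisms of Lemma~\ref{inverse} together with their mates; the associativity/unit constraints are formal since $_\chi\cM_\chi$ is a monoidal $\infty$-category and invertible objects are automatically dualizable.)

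Next I would handle a general object $\tau$ admitting a $\Delta$-flag, say with a finite filtration $0 = \tau_0 \subset \tau_1 \subset \cdots \subset \tau_n = \tau$ and $\tau_i/\tau_{i-1} \simeq \Delta(w_i)_\chi$. The class of dualizable objects in a closed monoidal stable $\infty$-category is closed under retracts, shifts, and cofiber sequences in the following sense: if in a cofiber sequence $a \to b \to c$ two of the three terms are dualizable and the connecting structure is compatible, then so is the third, and moreover the dual sequence $c^\vee \to b^\vee \to a^\vee$ is again a cofiber sequence. Applying this inductively along the filtration, $\tau$ is dualizable and $\tau^\vee \in {}_{\chi'}\cM_\chi$ carries the dual filtration with associated graded the duals $\Delta(w_i)_\chi^\vee \simeq \nabla(w_i^{-1})_{w_i^{-1}\chi}$, i.e.\ cofree costandard sheaves. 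This is exactly a $\nabla$-flag, so $\tau^\vee$ admits a $\nabla$-flag. The case of a $\nabla$-flag is identical with the roles of $\Delta$ and $\nabla$ interchanged, using the other isomorphism in Lemma~\ref{inverse}. The final assertion about cofree tilting sheaves is then immediate: if $\tau$ admits both a $\Delta$-flag and a $\nabla$-flag, then $\tau^\vee$ admits both a $\nabla$-flag and a $\Delta$-flag, hence is again cofree tilting.

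The main thing to be careful about — and what I'd spend the most effort on — is the bookkeeping of duals of cofiber sequences in the $\infty$-categorical setting: one must check that the filtration on $\tau$ really does induce a filtration on $\tau^\vee$ with the expected associated graded, rather than merely that $\tau^\vee$ is dualizable. This follows from the fact that internal-hom out of $\tau$ (equivalently $(-)^\vee$ applied to a dualizable object) sends the given cofiber sequences to cofiber sequences, since $(-)\star(-)$ preserves colimits separately in each variable and the pieces $\Delta(w_i)_\chi$ are invertible (so $(-)\star\Delta(w_i)_\chi^{-1}$ is an equivalence, reducing the computation of the dual piece-by-piece to the already-established rank-one statement $\Delta(w)_\chi^\vee \simeq \nabla(w^{-1})_{w^{-1}\chi}$). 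Everything else is formal.
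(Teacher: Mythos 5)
Your proof is correct and follows essentially the same route as the paper: the paper's own argument is simply that the claim "follows from the definitions and Lemma~\ref{inverse}", i.e.\ that $\std(w)_\chi$ and $\costd(w^{-1})_{w^{-1}\chi}$ are mutually inverse under convolution, combined with d\'evissage along the flag. Your write-up just makes explicit the standard thick-subcategory bookkeeping (dualizable objects are closed under cofiber sequences, and dualization sends the flag to the dual flag), which the paper leaves implicit.
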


\begin{proof} This follows from the definitions and Lemma \ref{inverse}.
\end{proof}

\sss{} We next establish some basic properties of cofree tilting sheaves.  

\begin{lem}\label{c:proptilt} The following hold. 
\begin{enumerate}
    \item\label{c:proptilt-1} An object $\cmc$ admits a $\Delta$-flag (resp. $\costd$-flag) if and only if its $*$-restriction (resp. $!$-restriction) to each stratum is isomorphic to a finite direct sum of copies of the cofree monodromic local system, concentrated in perverse degree zero, and is zero for all but finitely many strata.
    
    \item\label{c:proptilt-2} A direct summand of a cofree tilting sheaf is again cofree tilting sheaf. 

     \item\label{c:proptilt-3} Given two objects $\tau_1, \tau_2$ of $\cmc$, if $\tau_1$ admits a $\std$-flag, and $\tau_2$ admits a $\costd$-flag, then $\Hom(\tau_1, \tau_2)$ is concentrated in degree zero, and admits a filtration with associated graded isomorphic to $\Hom(\Delta(w)_\chi,\nabla(w)_\chi)$. In particular, it is a finite free $\Fun(\frf_\chi)$-module.     

\end{enumerate}
\end{lem}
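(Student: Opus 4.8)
The plan is to prove the three parts of Lemma~\ref{c:proptilt} in order, with each part feeding into the next. For part~\eqref{c:proptilt-1}, the ``only if'' direction is immediate: if $\tau$ has a $\std$-flag, then its $*$-restriction to a stratum $Z_w$ is computed by the (finite) filtration, and by base change each graded piece is $\iota_{\dot w}^*\std(w')_\chi$, which is $\dcm$ placed in perverse degree zero when $w' = w$ and zero otherwise; finiteness of the flag gives the vanishing on all but finitely many strata. For the ``if'' direction I would argue by induction on the number of strata in the support. Pick a maximal element $w$ in the support; then $j_{\dot w}^*\tau \simeq \oplus_I \dcm$ for a finite set $I$, and since $Z_w$ is open in the support, there is a counit map $j_{\dot w, !}j_{\dot w}^* \tau \to \tau$, i.e.\ a map from a direct sum of copies of $\std(w)_\chi$. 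One checks (using adjunction and the computation of $*$-stalks of $\std(w)_\chi$ from Lemma~\ref{stdeqcostd} and the rank-one calculations, plus Corollary~\ref{c:compgensfixedmon} to see there are no negative-degree maps) that the cofiber has strictly smaller support and still satisfies the hypothesis on $*$-restrictions, so induction applies. The $\costd$-flag case is dual, using $j_{\dot w,*}$ and minimal elements.

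For part~\eqref{c:proptilt-2}, suppose $\tau$ is cofree tilting and $\tau \simeq \sigma \oplus \sigma'$. By part~\eqref{c:proptilt-1}, the $*$-restriction of $\tau$ to each stratum is a finite direct sum of copies of $\dcm$ in perverse degree zero; hence so is the $*$-restriction of the summand $\sigma$, \emph{provided} we know that a summand of $\oplus_I \dcm$ is again a finite direct sum of copies of $\dcm$ --- but this is exactly Corollary~\ref{l:cofreeprop}\eqref{l:cofreeprop-2}. (One also needs that the perverse degree-zero and finiteness conditions are inherited by summands, which is clear.) So $\sigma$ satisfies the criterion of part~\eqref{c:proptilt-1} for a $\std$-flag, and symmetrically for a $\costd$-flag; thus $\sigma$ is cofree tilting.

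For part~\eqref{c:proptilt-3}, I would induct on the lengths of the flags. If $\tau_1 \simeq \std(w)_\chi$ and $\tau_2 \simeq \costd(w')_{\chi}$, then by Lemma~\ref{inverse} (convolving $\costd(w')$ with $\std(w'^{-1})$, or directly by adjunction $\Hom(\std(w)_\chi, \costd(w')_\chi) \simeq \Hom(\dcm, \iota_{\dot w}^* j_{\dot w',*}\dcm)$ up to the appropriate shift) the Hom is $\Hom_{D_{\chi\mon}(\wt\AA)}(\dcm,\dcm) = \End(\dcm) = \Fun(\frf_\chi) = R_\chi$ when $w = w'$, concentrated in degree zero, and is zero otherwise (for support reasons, or because $\std(w)_\chi \star \costd(w'^{-1})_{\ldots}$ has no $\std(e)$-contribution when $w \ne w'$). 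Lemma~\ref{l:enddcm} identifies this with a free rank-one $R_\chi$-module. For the inductive step, one filters $\tau_1$ and $\tau_2$ and uses the long exact sequences in $\Hom$: each associated graded contribution is $\Hom(\std(w)_\chi, \costd(w)_\chi) \simeq R_\chi$ (degree zero) or a vanishing $\Hom(\std(w)_\chi,\costd(w')_\chi)$ with $w \ne w'$; since all the relevant groups are concentrated in degree zero, the long exact sequences degenerate into short exact sequences, so $\Hom(\tau_1,\tau_2)$ is concentrated in degree zero and carries a finite filtration with graded pieces free $R_\chi$-modules, hence is itself a finite free $R_\chi$-module (freeness because $R_\chi$ is local and the module is finitely generated with a filtration by frees, so it is projective, hence free).

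The main obstacle I anticipate is the ``if'' direction of part~\eqref{c:proptilt-1}: making the inductive devissage rigorous requires knowing that the natural map $j_{\dot w,!}(\oplus_I\dcm) \to \tau$ genuinely ``splits off'' a copy of $\std(w)_\chi^{\oplus I}$ in the sense that the cofiber still meets the $*$-restriction hypothesis --- this uses that $\Hom^{<0}$ between the relevant standard objects vanishes and the explicit stalk computations, and one has to be a little careful that the counit is surjective on the top stratum's $*$-restriction, which follows from the projection formula. The freeness assertion in part~\eqref{c:proptilt-3} is the other mild subtlety but is handled cleanly by local algebra once degree-zero concentration is established.
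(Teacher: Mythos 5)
Your proposal is essentially the paper's own argument: both parts rely on the Cousin/recollement d\'evissage over the stratification together with the basic properties of $\dcm$ (Corollary~\ref{l:cofreeprop}, Lemma~\ref{l:enddcm}), and part~\eqref{c:proptilt-3} in both cases reduces to the computation $\Hom(\std(y)_\chi,\costd(w)_\chi)\cong \Fun(\frf_\chi)$ for $y=w$ and $0$ otherwise, concentrated in degree zero; your flag-by-flag long-exact-sequence induction and the paper's stratum-by-stratum Cousin filtration are the same d\'evissage packaged differently, and your recollement triangle $j_!j^*\tau\to\tau\to i_*i^*\tau$ makes the ``standard Cousin filtration argument'' of the paper explicit.

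One small point you should add: in the ``only if'' direction of part~\eqref{c:proptilt-1}, the $\std$-flag only gives you, a priori, that $j_{\dot w}^*\tau$ carries a finite filtration with graded pieces $\dcm$; to conclude it is \emph{isomorphic to a finite direct sum} of copies of $\dcm$ you must split this filtration, which requires the vanishing of self-extensions of $\dcm$, i.e.\ Corollary~\ref{l:cofreeprop}\eqref{l:cofreeprop-3} (this is exactly the ingredient the paper highlights). With that one-line citation added, your argument is complete; the ``if'' direction as you set it up does not need it, since there the direct-sum decomposition of the stalks is part of the hypothesis.
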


\begin{proof} The assertion \eqref{c:proptilt-1} follow from standard Cousin filtration arguments along with Corollary \ref{l:cofreeprop} \eqref{l:cofreeprop-3}, which ensures a filtration by cofree monodromic sheaves on a given stratum may be split.

Assertion \eqref{c:proptilt-2} follows from \eqref{c:proptilt-1}, as by Corollary \ref{l:cofreeprop} \eqref{l:cofreeprop-2} their conditions are inherited by passing to direct summands. 

Finally, the Cousin filtration gives a finite filtration of $\Hom(\tau_1,\tau_2)$ with associated graded being $\Hom((j_y)^*\tau_1, (j_w)^!\tau_2)$. Then
assertion \eqref{c:proptilt-3} follows straightforwardly by d\'evissage from Lemma \ref{l:enddcm}, or more carefully from its consequence that
\begin{equation}\Hom( \std(y)_\chi, \costd(w)_\chi ) \cong \begin{cases} \Fun(\frf_\chi), & y = w, \\ 0, & y \neq w. \end{cases} \label{e:homstdcostd}\end{equation}
\end{proof}

\begin{rem} Let us explicitly mention that for cofree tilting $\tau$, the space $\Hom(\Delta(w)_\chi,\tau)$ is a finite free $\Fun(\frf_\chi)$-module. Therefore the problem of computing tilting characters, i.e., the multiplicities in the standard flags of indecomposable objects, are equivalent for $E$ and $\fre$, as indeed one should expect from the `deformation philosophy'. \end{rem}

\begin{defn} Let us denote the full subcategory of $\cmc$ consisting of tilting objects by $$\ctc  \hookrightarrow \cmc.$$
Similarly, we have the full subcategories ${}_\chi\cT_{\chi'}^{\beta} \inj {}_\chi\cM_{\chi'}^{\beta}  $ and ${}_\chi\cT_{\chi}^{\circ} \inj {}_\chi\cM_{\chi}^{\circ}$.
\end{defn}

We have the following basic property of $\ctc$.
\begin{prop}
The category $\ctc$ forms a discrete, Krull-Schmidt subcategory of $\cmc$.
Every object in $\cmc$ is a finite direct sum of indecomposable objects.
\end{prop}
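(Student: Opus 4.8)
The statement has three parts: (a) $\ctc$ is discrete (Hom-complexes between cofree tilting sheaves are concentrated in degree zero), (b) $\ctc$ is Krull--Schmidt, and (c) every object of $\cmc$ is a finite direct sum of indecomposables. Part (a) is already in hand: it is exactly Lemma~\ref{c:proptilt}\eqref{c:proptilt-3}, which moreover tells us that for cofree tilting $\tau_1,\tau_2$ the module $\Hom(\tau_1,\tau_2)$ is a finite free $\Fun(\frf_\chi)$-module. I would begin by recording this, and in particular noting that $\End(\tau)$ for a cofree tilting sheaf $\tau$ is a finite $\Fun(\frf_\chi)$-algebra, and $\Fun(\frf_\chi)\simeq R_\chi$ is a complete Noetherian local ring with residue field $\fre_\chi$ (Lemma~\ref{l:enddcm}, Lemma~\ref{lem: function on frf}).

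For part (b), the key point is that $\End(\tau)$ is a \emph{semiperfect} ring: it is module-finite over the complete local Noetherian ring $R_\chi$, hence $\End(\tau)/\on{rad}$ is a finite-dimensional $\fre_\chi$-algebra and idempotents lift along $\End(\tau)\to\End(\tau)/\on{rad}$ by completeness (this is the standard fact that module-finite algebras over complete local rings are semiperfect). Consequently $\tau$ decomposes into finitely many indecomposable summands each with local endomorphism ring, and the Krull--Schmidt property (uniqueness of such a decomposition) follows from the classical Azumaya--Krull--Schmidt theorem applied in the additive category $\ctc$, whose Hom-spaces are finite $R_\chi$-modules. I would spell this out: $\ctc$ is a $R_\chi$-linear additive category, idempotent complete by Lemma~\ref{c:proptilt}\eqref{c:proptilt-2}, with all endomorphism rings semiperfect, which is precisely the hypothesis under which Krull--Schmidt holds.

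For part (c), I would argue that an arbitrary object $F\in\cmc$ is a finite direct sum of indecomposables by a support/length induction. By Corollary~\ref{c:compgensfixedmon} the support of any $F\in\cmc$ meets only finitely many strata; on the open stratum in its support the $*$- or $!$-restriction is a monodromic sheaf on $\wt\AA$, to which one applies the block/finiteness structure of $D_{\chi\mon}(\wt\AA)$ coming from Proposition~\ref{prop: tame geometric Langlands for tori} together with Corollary~\ref{l:cofreeprop} (indecomposability and finite-direct-sum behaviour of $\dcm$, and the structure of $\End(\dcm)=R_\chi$, a complete local ring). More precisely, $\End(F)$ is again a module-finite $R_\chi$-algebra --- here one uses that $\cmc$ is compactly generated with compact objects having perfect Hom-complexes over $R_\chi$, so that $\End(F)$ is finite over the semiperfect, hence Noetherian complete local, base --- so idempotents lift and $F$ splits into summands with local endomorphism rings; finiteness of the number of summands follows because the ``rank'' (the $\fre_\chi$-dimension of the restriction to each of the finitely many strata in the support, using Corollary~\ref{l:cofreeprop}\eqref{l:cofreeprop-2}) is additive and bounded.

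\textbf{Main obstacle.} The delicate point is part (c): establishing that $\End(F)$ is module-finite over $R_\chi$ for a \emph{general} (not necessarily tilting) object $F\in\cmc$, rather than just for compact objects. The cleanest route is to note that for support reasons any such $F$ lies in the subcategory generated by finitely many strata, reduce to a recollement, and use Corollary~\ref{l:cofreeprop} to control the restriction to each stratum as a finite sum of copies of $\dcm$ up to shift; then the endomorphism ring is built from finitely many $R_\chi$-modules of finite type, hence is module-finite over $R_\chi$, and the semiperfectness argument applies verbatim. I expect verifying this finiteness — i.e.\ that $\cmc$, despite being a large category, has the property that its objects supported on finitely many strata have module-finite endomorphism rings over $R_\chi$ — to be where the real work lies, with everything else being a formal consequence of Krull--Schmidt for additive categories over complete local rings.
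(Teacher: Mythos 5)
Your parts (a) and (b) are correct and essentially coincide with the paper's argument: discreteness is Lemma \ref{c:proptilt}\eqref{c:proptilt-3}, idempotent completeness is Lemma \ref{c:proptilt}\eqref{c:proptilt-2}, and the locality of $\End(\tau)$ for an indecomposable tilting object comes from the fact that it is finite free over $\Fun(\frf_\chi)\simeq R_\chi$, a complete Noetherian local ring, so it is semiperfect and, having no idempotents besides the identity, local; Krull--Schmidt then follows. Likewise, for a tilting object the finite decomposition into indecomposables is obtained exactly as you suggest for that case: $\tau$ is supported on finitely many strata (its $\Delta$-flag is finite), each stratum-wise restriction is a finite direct sum of copies of $\dcm$, and Corollary \ref{l:cofreeprop} controls summands and bounds their number.

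The genuine gap is your part (c). The second sentence of the proposition is a typo for ``every object in $\ctc$'': the paper's proof only treats $\tau\in\ctc$, and the literal statement for $\cmc$ is false, so no argument can close it. Indeed $\cmc$ is a presentable (cocomplete) stable category; it contains, for instance, $\bigoplus_{n\geq 0}\Delta(e)_\chi$ or $\bigoplus_{w\in{}_\chi\tilW_{\chi'}}\Delta(w)_\chi$, which are not finite direct sums of indecomposables. Two specific steps of your strategy fail: Corollary \ref{c:compgensfixedmon} only exhibits a set of compact generators each supported on a single stratum, and does not imply that an arbitrary object of $\cmc$ is supported on finitely many strata; and $\End(F)$ is not module-finite over $R_\chi$ for a general object $F$ (already the endomorphisms of an infinite direct sum of copies of $\dcm$ on the unit stratum form an infinite product of copies of $R_\chi$). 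The module-finiteness you correctly identify as ``where the real work lies'' is precisely what is available only for tilting objects, via Lemma \ref{c:proptilt}\eqref{c:proptilt-3}, which is why both the intended statement and the paper's proof are confined to $\ctc$.
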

We recall that a full subcategory of an $\infty$-category is called discrete if all homomorphism spaces in this subcategory are discrete.

We also recall that an (ordinary) additive category is called Krull-Schmidt if it is idemponent complete, every object is a finite direct sum of indecomposable objects, and the endomorphism ring of every indecomposable object is a local ring.
\begin{proof}
     That $\ctc$ is discrete follows from Lemma \ref{c:proptilt} \eqref{c:proptilt-3}. By Lemma \ref{c:proptilt} \eqref{c:proptilt-2}, it is also idempotent complete.

    Now let $\tau\in \ctc$. Then it is supported on finitely many strata and its restriction to each stratum (either via $*$- or $!$-pullback) is isomorphic to a finite direct sum of cofree monodromic local systems. Therefore, by Corollary \ref{l:cofreeprop}, $\tau$ is a finite direct sum of indecomposable objects.
    
    Now suppose $\tau$ is indecomposable. Then $\End(\tau)$ does not contain idemponent other than the identity. But
    by Lemma \ref{c:proptilt} \eqref{c:proptilt-3}, $\End(\tau)$ as a module is finite free over $\Fun(\frf_\chi)$, which is a complete local ring. Therefore, $\End(\tau)$ is local.
\end{proof}

\subsection{Classification by support} In this subsection, we will prove that indecomposable cofree tilting sheaves are classified by their support.

\sss{} We start with the construction many cofree tilting sheaves.

\begin{lem} \label{l: tilt simple reflection}
Let $s$ be a simple reflection in $\extw$. 
There exists an indecomposable cofree tilting sheaf $\tau(s)_{\chi}$ supported on $Z_{\le s}$ whose restriction to $Z_s$ is nonzero.
\end{lem}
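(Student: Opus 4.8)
The statement is the rank-one building block of the whole Soergel-theoretic story, so the natural strategy is to construct $\tau(s)_\chi$ explicitly on the two-stratum space $Z_{\le s}$, using the extension-by-zero and $*$-extension objects associated to $s$ together with the Bott--Samelson-type gluing that is already implicit in the rank-one calculations of \S5.2. Concretely, I would first note that by Lemma~\ref{stdeqcostd} there is a natural map $\std(s)_\chi \to \costd(s)_\chi$, whose cone is controlled by $\dcm \star \CH(\omega_{\ker\halpha_s^\vee})$; when $s \in \extw^\circ_\chi$ this cone is nonzero, and when $s\notin\extw^\circ_\chi$ the map is already an isomorphism and one simply sets $\tau(s)_\chi := \std(s)_\chi \simeq \costd(s)_\chi$, which is manifestly supported on $Z_{\le s}=Z_s$ with nonzero restriction there, and is tilting since it has both a (length-one) $\std$-flag and a $\costd$-flag. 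So the content is in the case $s\in\extw^\circ_\chi$.

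In that case the plan is to build the ``big tilting'' object directly. I would consider the convolution $\costd(s)_{w\text{-}} \star \std(s)_{\text{-}}$ appropriately monodromy-bookkept, or more symmetrically analyze the object $j_{\dot s,*}(\dcm) \star j_{\dot s,!}(\dcm)$-type combinations; but the cleanest route is to use the cofiber sequences of Lemma~\ref{lem: conv of costd by costd} applied with monodromy $\chi$. Specifically, convolving the first cofiber sequence there with $\dcm$ on the appropriate side and using that $s\in\extw^\circ_\chi$ means $\chi\in\ker\halpha_s^\vee$, so $\dcm\star\CH(\omega_{\ker\halpha_s^\vee})$ is nonzero and in fact (by Proposition~\ref{prop: tame geometric Langlands for tori} and the local structure of $\frf_\chi$) a shift/twist of $\dcm$ placed so that the resulting object $\tau$ fits into short exact sequences exhibiting both a $\std$-flag with subquotients $\std(e)_\chi, \std(s)_\chi$ and a $\costd$-flag with subquotients $\costd(s)_\chi, \costd(e)_\chi$. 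I would verify tilting via Lemma~\ref{c:proptilt}\eqref{c:proptilt-1}: compute the $*$-restriction to each of the two strata $Z_e, Z_s$ using \eqref{eq: support on unit strate} and \eqref{eq: support on s-strata}/\eqref{eq: shrek-stalk-conv-costad-s-strata}, checking it is a finite sum of copies of $\dcm$ in perverse degree zero, and symmetrically for $!$-restriction (using Lemma~\ref{l:duals} to halve the work). Nonvanishing of the restriction to $Z_s$ is immediate from the $\std$-flag, since $\std(s)_\chi$ appears as a subquotient and its $*$-restriction to $Z_s$ is $\dcm\neq 0$.

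Finally, indecomposability: by the Krull--Schmidt property just established for $\ctc$, it suffices to show $\End(\tau)$ is local, equivalently that $\tau$ has no nontrivial idempotent. Using Lemma~\ref{c:proptilt}\eqref{c:proptilt-3}, $\End(\tau)$ is a finite free $\Fun(\frf_\chi)=R_\chi$-module concentrated in degree zero; I would compute its rank by counting the filtration subquotients $\Hom(\std(w)_\chi,\costd(w)_\chi)$, getting rank $2$ coming from $w\in\{e,s\}$, and then argue — by a direct examination of the Hom from the $\std$-flag into the $\costd$-flag, i.e.\ that the extension classes are nonsplit — that $\End(\tau)/\fm_{R_\chi}$ is $1$-dimensional over $\fre$, hence $\End(\tau)$ is local. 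This nonsplitness is where the hypothesis $s\in\extw^\circ_\chi$ is really used, via the nonvanishing of $\dcm\star\CH(\omega_{\ker\halpha_s^\vee})$.

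\textbf{Main obstacle.} The hard part is the last step: pinning down that the two-step extensions realizing $\tau$ are genuinely nonsplit on both sides, so that $\End(\tau)$ is local rather than splitting off a summand. This requires carefully tracking the boundary maps in the cofiber sequences of Lemma~\ref{lem: conv of costd by costd} after convolving with $\dcm$ and identifying them with the canonical nonzero element of $\Ext^1(\std(e)_\chi,\std(s)_\chi)$ (equivalently, the nonvanishing of a certain cohomology group computed via the Artin--Schreier/exponential sheaf in \eqref{eq: shrek stalk of average of AS}); the subtlety is that with integral coefficients $\frf_\chi$ is complicated, so one must work with $R_\chi = \Fun(\frz_\chi)$ via Lemma~\ref{lem: function on frf} and Corollary~\ref{c:compsincofree} rather than naively.
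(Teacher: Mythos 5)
Your handling of the case $s\notin\extw^\circ_\chi$ and your verification/indecomposability plan are reasonable in outline, but the construction in the essential case $s\in\extw^\circ_\chi$ has a genuine gap, located exactly at the step you flag as the key identification. The claim that $\dcm\star\CH(\omega_{\ker\halpha_s^\vee})$ is a shift/twist of $\dcm$ is false: this convolution is $\CH(\omega_{\ker\halpha_s^\vee\cap\frf_\chi})$, the dualizing sheaf of the \emph{proper} closed formal subscheme $\ker\halpha_s^\vee\cap\frf_\chi\subset\frf_\chi$. Precisely because $s\in\extw^\circ_\chi$ (i.e.\ $\chi\in\ker\halpha_s^\vee$), this intersection is a codimension-one ``hyperplane'' through $\chi$, not all of $\frf_\chi$; its endomorphism ring is a proper quotient of $R_\chi$, so it cannot be a shift of $\dcm$. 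Consequently the object obtained by convolving the first cofiber sequence of Lemma~\ref{lem: conv of costd by costd} with $\dcm$, namely $\nabla(s)_\chi\star\nabla(s)_\chi$, is \emph{not} tilting: by \eqref{eq: shrek-stalk-conv-costad-s-strata} its restriction to $Z_s$ is (up to twist) $\CH(\omega_{\ker\halpha_s^\vee\cap\frf_\chi})[1]$, which sits in the wrong perverse degree and is not a finite direct sum of copies of $\dcm$, so the criterion of Lemma~\ref{c:proptilt}\eqref{c:proptilt-1} fails and the two-step $\std$- and $\costd$-flags you assert do not exist. The rest of your argument (stratumwise checks, Krull--Schmidt, locality of $\End$) therefore has no object to apply to.

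What is missing is a different gluing step, which is how the paper proceeds: applying $\CH$ to the canonical map $\omega_{\ker\halpha_s^\vee}\to\omega_{\mathrm{LS}^{t,\Box}_{\wt\AA^\vee}}$ and convolving with $\dcm$ gives a short exact sequence
\begin{equation*}
0\to \CH(\omega_{\ker\halpha_s^\vee\cap\frf_\chi})\to \dcm\to \dcm\to 0,
\end{equation*}
hence a map $j_{e,!}(\CH(\omega_{\ker\halpha_s^\vee\cap\frf_\chi}))\to\Delta(e)_\chi$ whose cofiber is again $\Delta(e)_\chi$. One then pushes out the cofiber sequence $j_{e,!}(\CH(\omega_{\ker\halpha_s^\vee\cap\frf_\chi}))\to\Delta(s)_\chi\to\nabla(s)_\chi$ of Lemma~\ref{stdeqcostd} along this map; the pushout $\tau(s)_\chi$ then visibly carries a $\std$-flag with pieces $\Delta(s)_\chi,\Delta(e)_\chi$ and a $\costd$-flag with pieces $\nabla(e)_\chi,\nabla(s)_\chi$, is supported on $Z_{\le s}$, and restricts nontrivially to $Z_s$. (Your cofiber sequences from Lemma~\ref{lem: conv of costd by costd} play no role here; the relevant input is the short exact sequence on the torus above together with Lemma~\ref{stdeqcostd}.) Finally, your locality-of-$\End$ computation for indecomposability could be carried out, but it is more work than necessary: if $\tau(s)_\chi$ decomposed, the summand meeting $Z_s$ would exhibit $\Delta(s)_\chi$ as clean, contradicting Lemma~\ref{stdeqcostd} since $s\in\extw^\circ_\chi$.
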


\begin{proof}
When $s \notin \extw_{\chi}^{\circ}$, Lemma \ref{stdeqcostd} implies that $\std(s)_{\chi} \simeq \costd(s)_{\chi}$ is clean. Hence, we can take $\tau(s)_{\chi}$ to be the (co)standard sheaf. We may therefore assume that $s \in \extw_{\chi}^{\circ}$.

Recall the closed subscheme $\ker\halpha_s^\vee\subset\mathrm{LS}^{t,\Box}_{\wt\AA^\vee}$. Applying the functor $\CH$ to the canonical map $\omega_{\ker\halpha_s^\vee}\to \omega_{\mathrm{LS}^{t,\Box}_{\wt\AA^\vee}}$, and convolving with $\dcm$, we obtain a short exact sequence of monodromic sheaves on $\wt\AA$,
    \[
    0\to \CH(\omega_{\ker\halpha_s^\vee\cap\frf_\chi})\to \dcm\to \dcm\to 0.
    \]
    This gives a cofiber sequence $$j_{e,!}(\CH(\omega_{\ker\halpha_s^\vee\cap\frf_\chi}))\to \Delta(e)_\chi\to\Delta(e)_\chi.$$
Push-out this sequence by the cofiber sequence from Lemma \ref{stdeqcostd} gives
\[
\xymatrix{
j_{e,!}(\CH(\omega_{\ker\halpha_s^\vee\cap\frf_\chi}))\ar[r]\ar[d]& \Delta(e)_\chi\ar[r]\ar[d]&\Delta(e)_\chi\ar@{=}[d]\\
\Delta(s)_\chi\ar[r]\ar[d]& \tau(s)_\chi\ar[r]\ar[d]& \Delta(e)_\chi\\
\nabla(s)_\chi\ar@{=}[r] & \nabla(s)_\chi & 
}
\]
We thus construct a cofree tilting sheaf $\tau(s)_\chi$ with the desired support condition. Moreover, since $\std(s)_\chi$ is not clean, cf. Lemma \ref{stdeqcostd}, it follows that $\tau(s)_\chi$ must be indecomposable.
\end{proof}

We can now construct further cofree tilting sheaves via convolution with $\tau(s)_{\chi}$.

\begin{lem} \label{l: conv tilting s}
    Let $\tau \in {}_{\chi'} \cT_{\chi}$ be a cofree tilting sheaf and $\tau(s)_{\chi}$ as in Lemma \ref{l: tilt simple reflection}. Then $\tau \star \tau(s)_{\chi}$ is tilting.
\end{lem}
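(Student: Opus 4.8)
\textbf{Proof plan for Lemma \ref{l: conv tilting s}.}

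The plan is to reduce to the case where $\tau$ is a single cofree standard (resp. costandard) sheaf, and then to analyze the convolution $\Delta(w)_\chi \star \tau(s)_\chi$ directly using the rank-one calculations already established. First I would use the fact that $\tau$ admits a $\costd$-flag to show that $\tau\star\tau(s)_\chi$ admits a $\costd$-flag, and the fact that $\tau$ admits a $\std$-flag to show it admits a $\std$-flag; since convolution is exact (it is a composition of $*$- and $!$-pushforwards along the maps in Lemma \ref{multi}, which are exact on the relevant monodromic subcategories), it suffices to treat the associated graded pieces. So I am reduced to proving: for each $w\in{}_{\chi''}\tilW_\chi$, the object $\costd(w)_{\chi''}\star\tau(s)_\chi$ admits a $\costd$-flag, and $\std(w)_{\chi}\star\tau(s)_{\chi}$ admits a $\std$-flag. (Here I'm relabelling monodromies appropriately; note $\tau(s)_\chi\in{}_\chi\cM_{s\chi}$ by Lemma \ref{l: tilt simple reflection}, since $s\in\tilW^\circ_\chi$ fixes $\chi$ — and if $s\notin\tilW^\circ_\chi$ the statement is trivial by cleanness, so assume $s\in\tilW^\circ_\chi$, $\tau(s)_\chi\in{}_\chi\cM_\chi$.)

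Next I would recall the defining distinguished triangle for $\tau(s)_\chi$ from the proof of Lemma \ref{l: tilt simple reflection}, namely the two exact sequences
$$\Delta(s)_\chi\to\tau(s)_\chi\to\Delta(e)_\chi\xrightarrow{+1}\quad\text{and}\quad \nabla(e)_\chi\to\tau(s)_\chi\to\nabla(s)_\chi\xrightarrow{+1},$$
expressing $\tau(s)_\chi$ as a two-step extension in both the $\std$- and $\costd$-directions. Convolving the first triangle on the left with $\std(w)_\chi$ and using Lemma \ref{multi} (the length-additive case) together with Lemma \ref{stdeqcostd} (to handle the length-decreasing case, writing $\std(w)_\chi\star\std(s)_{s\chi}\simeq\std(ws)_\chi$ when $\ell(ws)>\ell(w)$, or else $\std(w)_\chi\star\std(s)_{s\chi}$ fits in a triangle with $\std(ws)_\chi$ and $\std(w)_\chi\star\CH(\omega_{\ker\halpha_s^\vee})$-type terms which after convolving by a standard sheaf are again built from cofree standards), I obtain that $\std(w)_\chi\star\tau(s)_\chi$ is an iterated extension of cofree standard sheaves. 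Dually, convolving the second (costandard) triangle on the left with $\costd(w)_\chi$ and using the analogous costandard version of Lemma \ref{multi} and Lemma \ref{lem: conv of costd by costd} gives that $\costd(w)_\chi\star\tau(s)_\chi$ is an iterated extension of cofree costandard sheaves. Assembling these over the $\std$-flag of $\tau$ (for the first claim) and the $\costd$-flag of $\tau$ (for the second) shows $\tau\star\tau(s)_\chi$ admits both a $\std$-flag and a $\costd$-flag, hence is cofree tilting.

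The main obstacle is the bookkeeping in the length-decreasing case $\ell(ws)<\ell(w)$: there $\std(w)_\chi\star\std(s)_{s\chi}$ is not simply $\std(ws)_\chi$, and I need to check that $\std(w)_\chi\star\tau(s)_\chi$ still has a $\std$-flag. The clean way to see this is to write $w=w's$ with $\ell(w')<\ell(w)$, so $\std(w)_\chi\star\tau(s)_\chi\simeq \std(w')_\chi\star(\std(s)_{\chi}\star\tau(s)_\chi)$ by Lemma \ref{multi}, and then establish the key rank-one fact that $\std(s)_\chi\star\tau(s)_\chi$ has a $\std$-flag (with subquotients among $\std(s)_\chi$ and $\std(e)_\chi$) — this follows by convolving the defining triangle of $\tau(s)_\chi$ with $\std(s)_\chi$ and using Lemma \ref{stdeqcostd} and Lemma \ref{lem: conv of costd by costd} to identify $\std(s)_\chi\star\std(s)_\chi$ and $\std(s)_\chi\star\nabla(s)_\chi\simeq\Delta(e)_\chi$. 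Then $\std(w')_\chi\star(-)$ applied to this $\std$-flag stays in $\std$-flag range because $\ell(w')<\ell(w)$ and one proceeds by induction on $\ell(w)$. The costandard side is entirely dual. I expect no genuine difficulty beyond this inductive organization, since all the needed rank-one computations are already in hand from Lemmas \ref{multi}, \ref{inverse}, \ref{lem: conv of costd by costd}, and \ref{stdeqcostd}.
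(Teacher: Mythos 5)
Your proof is correct and follows essentially the paper's own argument: reduce to the (co)standard subquotients of $\tau$, then for $\std(w)\star\tau(s)_\chi$ use the $\std$-flag of $\tau(s)_\chi$ together with Lemma \ref{multi} when $w<ws$, and the $\costd$-flag of $\tau(s)_\chi$ together with Lemmas \ref{multi} and \ref{inverse} when $w>ws$ (your factorization $w=w's$ and the rank-one computation of $\std(s)_\chi\star\tau(s)_\chi$ is exactly this second case repackaged, and in fact no induction on $\ell(w)$ is needed). One caution: the parenthetical suggestion that, when lengths drop, $\std(w)_\chi\star\std(s)_{s\chi}$ can be handled through its triangle involving $j_{\dot{s},!}(\CH(\omega_{\ker\halpha_s^\vee}))$ because such terms are ``again built from cofree standards'' is false --- $\CH(\omega_{\ker\halpha_s^\vee})$ is not a cofree monodromic local system, so this would not produce a $\std$-flag --- but your final argument via the costandard flag of $\tau(s)_\chi$ (i.e.\ $\std(s)_\chi\star\costd(e)_\chi$ and $\std(s)_\chi\star\costd(s)_\chi\simeq\std(e)_\chi$) avoids this route entirely.
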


\begin{proof}
    We claim more generally that the functor
$$- \star \tau(s)_{\chi}: {}_{\chi'}\cM_{\chi} \rightarrow {}_{\chi'}\cM_{s\chi}$$
sends (co)standardly filtered objects to (co)standardly filtered objects. Let us show that $\Delta(w)_{\chi'} \star \tau(s)_{\chi}$ is filtered by copies of $\Delta(w)_{\chi'}$ and $\Delta(ws)_{\chi'}$, for any $w \in {}_{\chi'}\tilW_{\chi}$; a similar argument will apply for costandard objects. Indeed, if $w < ws$, this follows from using that $\tau(s)_{\chi}$ is filtered by copies of $\Delta(e)_{\chi}$ and $\Delta(s)_{\chi}$ and Lemma \ref{multi}. If $w > ws$, this follows from using that $\tau(s)_{\chi}$ is filtered by copies of $\costd(e)_{\chi}$ and $\costd(s)_{\chi}$ and Lemma \ref{multi}. 
\end{proof}

\sss{}  
We can now prove that, as usual, indecomposable cofree tilting sheaves are classified by their support.

\begin{prop} \label{p:classtilts}
For each $w \in \cwc$, there exists a unique up to non-unique isomorphism indecomposable cofree tilting sheaf $\tau(w)_{\chi}$ supported on $Z_{\le w}$ whose restriction to $Z_w$ is isomorphic to $\iota_{\dot{w}}(\dcm)$, where we recall $\iota_{\dot{w}}$ is as in \eqref{e:onestratumunivmon}.

Moreover, any indecomposable object of $\ctc$ is isomorphic to exactly one $\tau(w)_\chi$.
\end{prop}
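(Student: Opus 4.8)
The strategy is the standard one for constructing indecomposable tilting objects in highest weight categories: produce a tilting sheaf supported on $Z_{\le w}$ whose restriction to the open stratum $Z_w$ is exactly one copy of the cofree monodromic local system, then extract the indecomposable summand hitting $Z_w$. Concretely, I would fix a reduced expression $w = \omega s_1 s_2 \cdots s_{\ell}$ with $\omega$ of length zero and $s_i$ simple reflections in $\tilW$, being careful about the character twists: at each stage the right monodromy changes, so one convolves $\tau(\omega)_{\omega^{-1}\chi}$ — which one takes to be the clean object $\Delta(\omega)_{\chi} = \nabla(\omega)_{\chi}$ — successively with the objects $\tau(s_i)_{(\cdots)}$ built in Lemma~\ref{l: tilt simple reflection}, matching up the monodromies along the chain. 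By Lemma~\ref{l: conv tilting s} the resulting object
$$\tau(\omega)_{\chi} \star \tau(s_1)_{\omega^{-1}\chi} \star \cdots \star \tau(s_\ell)_{(\cdots)}$$
is a cofree tilting sheaf. Using Lemma~\ref{multi} and induction on $\ell$ one checks that this convolution is supported on $Z_{\le w}$, that its $*$-restriction to the open stratum $Z_w$ is a single copy of $\iota_{\dot w}(\dcm)$ (this is where one uses that length adds in the reduced expression, so only the ``top" standard term $\Delta(w)$ contributes on $Z_w$, with multiplicity one, by the clean normalization \eqref{e:onestratumunivmon}), and that on every other stratum it is a finite direct sum of copies of the cofree monodromic local system in perverse degree zero (Lemma~\ref{c:proptilt}~\eqref{c:proptilt-1}). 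Since $\ctc$ is Krull--Schmidt, this tilting sheaf decomposes into indecomposables; exactly one summand, call it $\tau(w)_\chi$, has nonzero restriction to $Z_w$, and by Lemma~\ref{l:cofreeprop}~\eqref{l:cofreeprop-1} that restriction is forced to be precisely $\iota_{\dot w}(\dcm)$ (not a larger sum), since the whole convolution restricted to $Z_w$ was only one copy.

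\textbf{Uniqueness.} Given two indecomposable cofree tilting sheaves $\tau, \tau'$ both supported on $Z_{\le w}$ with restriction to $Z_w$ isomorphic to $\iota_{\dot w}(\dcm)$, I would build a morphism $\tau \to \tau'$ inducing an isomorphism on $Z_w$ and show it is an isomorphism. The existence of such a morphism comes from the highest weight formalism: since $\tau$ has a $\Delta$-flag with top term $\Delta(w)_\chi$ and $\tau'$ has a $\nabla$-flag, and $\mathrm{Hom}(\Delta(w)_\chi, \nabla(v)_\chi) = 0$ for $v \ne w$ with a free rank-one $\Fun(\frf_\chi)$-module for $v = w$ (this is \eqref{e:homstdcostd}), one produces a map $\tau \to \tau'$ restricting to the identity on $Z_w$ (pick the map $\Delta(w)_\chi \to \tau'$ hitting the top of the $\nabla$-flag, then lift along the $\Delta$-flag of $\tau$, obstructions vanishing by the concentration of $\mathrm{Hom}$ spaces in degree zero from Lemma~\ref{c:proptilt}~\eqref{c:proptilt-3}). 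Its cone is a tilting sheaf (cone of map of tilting objects with matching flag contributions) supported on $Z_{\le w} \setminus Z_w$; by the classification on smaller strata — which is where the induction on the length of $w$ enters — the map on each smaller stratum is also an isomorphism, so the cone vanishes. A symmetric argument gives a map $\tau' \to \tau$, and composing shows both are isomorphisms (using that endomorphisms of an indecomposable tilting object form a local ring, so a self-map that is the identity on $Z_w$ is invertible). The ``exactly one'' clause follows since any indecomposable tilting sheaf has some open stratum $Z_w$ in its support with restriction a direct sum of copies of $\dcm$; indecomposability plus Lemma~\ref{l:cofreeprop} forces that restriction to be a single copy, matching $\tau(w)_\chi$.

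\textbf{Main obstacle.} The genuinely delicate point is bookkeeping the character twists through the iterated convolution: each factor $\tau(s_i)$ lives in a category ${}_{\chi_{i-1}}\cM_{\chi_i}$ with $\chi_i = s_i \chi_{i-1}$, and one must verify that the support and open-stratum restriction behave correctly, in particular that the object constructed is independent (up to isomorphism) of the reduced expression chosen — which ultimately follows from uniqueness of the indecomposable summand, so the argument is not circular but the order of steps matters. Establishing that the convolution is supported on exactly $Z_{\le w}$ and restricts to a \emph{single} copy of $\iota_{\dot w}(\dcm)$ on $Z_w$, rather than something larger, is the crux; everything else is a routine application of the lemmas already assembled. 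I would also need to invoke Proposition~\ref{p:conjsimple} implicitly to ensure that the simple reflections appearing in $\tilW$ can be chosen compatibly with those in $\tilW^\circ_\chi$ when the relevant reflection is integral, but at the level of this proposition the reduced expression in $\tilW$ suffices.
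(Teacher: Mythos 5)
Your overall strategy is the paper's: existence comes from convolving the rank-one tiltings $\tau(s)$ along an expression for $w$ (the paper organizes this as an induction on $\ell(w)$, forming $\tau(ws)_{\chi}\star\tau(s)_{sw^{-1}\chi}$ and extracting the unique indecomposable summand nonzero on $Z_w$, the multiplicity-one statement on the open stratum coming from length-additivity as in Lemma \ref{multi}); uniqueness comes from producing maps in both directions lifting prescribed maps on the open stratum and invoking locality of the endomorphism ring. The lifting step is justified in the paper by the observation that $\Hom(j_w^*\tau_1, j_w^!\tau_2)$ is the \emph{last} quotient of the Cousin filtration of $\Hom(\tau_1,\tau_2)$, so restriction to the open stratum is surjective on $\Hom$'s; your flag-lifting with the degree-zero concentration of Lemma \ref{c:proptilt}(3) is an acceptable substitute for this.

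However, two claims in your uniqueness paragraph do not hold as stated. First, the cone of your map $\tau\to\tau'$ is in general \emph{not} a cofree tilting sheaf (already on a single stratum, the cone of $0:\dcm\to\dcm$ has cohomology in two degrees), and there is no reason the map should restrict to an isomorphism on the smaller strata: a priori $\tau$ and $\tau'$ could have different multiplicities of $\dcm$ there, which is exactly what must be ruled out, so the "cone vanishes" argument is circular/unjustified. Fortunately it is redundant: the symmetric-maps-plus-locality argument you give in the same breath (compose $\tau\to\tau'\to\tau$, note the composite restricts to a unit on $Z_w$, hence is invertible because $\End(\tau)$ is local and units restrict to units, so $\tau$ is a summand of $\tau'$ and indecomposability finishes) is correct and is precisely the paper's argument. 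Second, for the "exactly one" clause, the deduction "indecomposability plus Lemma \ref{l:cofreeprop} forces the open-stratum restriction to be a single copy of $\dcm$" is a non sequitur: indecomposability of the total object does not by itself prevent its restriction to the maximal stratum of its support from being $\dcm^{\oplus n}$ with $n\geqslant 2$. The correct route, which the paper takes, is to use Corollary \ref{l:cofreeprop}(2) to split $\iota_{\dot{w}}(\dcm)$ into $\tau'|_{Z_w}$, lift the two resulting stratum maps to global maps $\tau(w)_\chi\to\tau'\to\tau(w)_\chi$, and conclude by locality that $\tau(w)_\chi$ is a summand of, hence equal to, $\tau'$; the single-copy statement then follows a posteriori. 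With these two repairs your proof coincides with the paper's.
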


\begin{proof}
{\em Step 1.}    We first prove the existence of $\tau(w)_{\chi}$ and simultaneously show that $\std(w)_{\chi}$ appears exactly once in the $\std$-flag of $\tau(w)_{\chi}$, meaning that the restriction of $\tau(w)_{\chi}$ to $Z_w$ is isomorphic to $\iota_{\dot{w}}(\dcm)$. The proof proceeds by induction on $\ell(w)$, allowing $\chi$ and $\chi'$ to vary.

When $\ell(w)=0$, we know that $\std(w)_{\chi} \simeq \costd(w)_{\chi}$ is clean by support reason. Hence, $\tau(w)_{\chi}$ exists and is isomorphic to the (co)standard sheaf. 

When $w$ is a simple reflection in $\tilW$, we have shown the existence of $\tau(w)_{\chi}$ and the multiplicity of $\std(w)_{\chi}$ in $\tau(w)_{\chi}$ is one in Lemma \ref{l: tilt simple reflection}.

Let $s$ be a simple reflection in $\extw$ such that $w > ws$. Assuming the tilting sheaf $\tau(ws)_{\chi}$ has been constructed, we now establish the existence of $\tau(w)_{\chi}$. By Lemma \ref{l: conv tilting s}, the convolution $\tau(ws)_{\chi} \star \tau(s)_{sw^{-1}\chi}$ is tilting. By the support reason and the inductive hypothesis, the multiplicity of $\std(w)_{\chi}$ in $\tau(ws)_{\chi} \star \tau(s)_{sw^{-1}\chi}$ is one. Hence, there is a unique indecomposable summand $\tau$ of $\tau(ws)_{\chi} \star \tau(s)_{sw^{-1}\chi}$ that is nonzero on $Z_w$. This summand is our desired $\tau(w)_{\chi}$, completing the proof of its existence.

{\em Step 2.} Let $\tau_1,\tau_2$ be two cofree tilting sheaves supported on $Z_{\le w}$. We notice that in associated graded of the filtration of $\Hom(\tau_1,\tau_2)$ as in (the proof of) Lemma \ref{c:proptilt} \eqref{c:proptilt-3}, $\Hom((j_w)^*\tau_1, (j_w)^!\tau_2)$ appears as the last quotient. Therefore, every map $(j_w)^*\tau_1\to (j_w)^!\tau_2$ lifts to a map $\tau_1\to \tau_2$.

This particularly implies that for a cofree tilting sheaf $\tau$ supported on $Z_{\le w}$ and whose restriction to $Z_w$ is isomorphic to $\iota_{\dot{w}}\dcm$, the restriction to $Z_w$ induces a surjective map
\[
\End(\tau(w)_\chi)\to \Fun(\frf_\chi).
\]
Note that by locality of $\End(\tau(w)_\chi)$, any lifting of $1\in \Fun(\frf_\chi)$ is an invertible element in $\End(\tau(w)_\chi)$.

{\em Step 3.} Now let $\tau(w)_\chi,\tau'$ be two indecomposable cofree tilting sheaves supported on $Z_{\le w}$ whose restrictions to $Z_w$ are non-zero. In addition, suppose $\tau(w)_\chi|_{Z_w}\simeq \iota_{\dot{w}}\dcm$. Then there exists a split injective map $\tau(w)_\chi|_{Z_w}\hookrightarrow \tau'|_{Z_w}$ by Corollary \ref{l:cofreeprop}. By Step 2, we obtain maps
$$
\tau(w)_\chi\to \tau'\to \tau(w)_\chi
$$
whose composition is an automorphism of $\tau(w)_\chi$. It follows that $\tau(w)_\chi$ is a direct summand of $\tau'$. Therefore, $\tau(w)_\chi\simeq \tau'$, as desired.
\end{proof}

\begin{rem}
    We remark that the statement is known for the case when $E$ is a field. See for example \cite[Proposition 5.12]{BR}, \cite[Proposition 3.14]{IY}, and \cite[Proposition 5.4.5]{CD}. Note the last reference is explicitly phrased in the language of cofree monodromic (rather than free monodromic) objects.
\end{rem}

\subsection{Properties of cofree tiltings} Having constructed the cofree tilting sheaves, let us now collect some of their properties.

\sss{} We begin with describing the indecomposable tilting sheaf associated to a minimal element $w^{\beta}$. 

\begin{lem} \label{l:minclean}
The indecomposable tilting sheaves $\tau(w^{\beta})$ is clean for any coset $\beta$ in ${}_{\chi} \extw _{\chi'}$. In other words, we have $$\std(w^{\beta})_\chi \simeq \tau(w^{\beta})_\chi \simeq \costd(w^{\beta})_\chi.$$ 
\end{lem}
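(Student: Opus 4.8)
\textbf{Proof plan for Lemma \ref{l:minclean}.} The claim is that for a minimal-length coset representative $w^{\beta}$, the cofree standard, tilting, and costandard sheaves supported on $Z_{\le w^{\beta}}$ all coincide; equivalently $\std(w^{\beta})_\chi$ is clean. My first move is to reduce to the length function attached to the relevant Coxeter system. By Proposition \ref{p:conjmin}, left multiplication by $w^{\beta}$ carries the Bruhat order and length function of $\tilW_{\chi'}^\circ$ isomorphically onto those of the coset ${}_\chi \tilW_{\chi'}^{\beta}$ (equivalently, onto $\tilW_\chi^\circ$ under $x \mapsto x w^{\beta}$). In particular there are \emph{no} elements of ${}_\chi\tilW_{\chi'}^{\beta}$ strictly below $w^{\beta}$ in the relevant Bruhat order, so the stratum closure $Z_{\le w^{\beta}}$, intersected with the blocks governed by $\chi$ and $\chi'$, contains only the open stratum $Z_{w^{\beta}}$ among the strata relevant to ${}_\chi\cM_{\chi'}$. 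However, $Z_{\le w^{\beta}}$ as a variety does contain lower strata indexed by $\tilW$; the point is that after the block decomposition (Proposition \ref{blockdec}) and monodromic projection, only $Z_{w^{\beta}}$ survives.

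Concretely, I would argue as follows. Write $w^{\beta} = r_1 \cdots r_k$ as a reduced word, but chosen carefully: by Proposition \ref{p:conjsimple} (and its inductive use), one can choose a reduced expression of $w^\beta$ in $\tilW$ such that each prefix is again a minimal coset representative, so that at each step the relevant simple reflection $s$ is \emph{not} in the integral Weyl group $\tilW^\circ_{\chi''}$ of the intermediate character $\chi''$. Then, inductively, $\std(w^{\beta})_\chi$ is a convolution $\std(r_1)_{\chi_0} \star \std(r_2)_{\chi_1} \star \cdots \star \std(r_k)_{\chi_{k-1}}$ by the length-additivity isomorphism \eqref{e:onemon} of Lemma \ref{multi}. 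At each stage, because $r_i \notin \tilW^\circ_{\chi_i}$, Lemma \ref{stdeqcostd} gives $\std(r_i)_{\chi_{i-1}} \simeq \costd(r_i)_{\chi_{i-1}}$, i.e. this rank-one standard is clean. Convolving clean standards along a length-additive product, using Lemma \ref{multi} again (and its costandard half), one gets simultaneously $\std(w^{\beta})_\chi \simeq \costd(w^{\beta})_\chi$. Since this common object has a $\std$-flag and a $\costd$-flag (each a single step), it is a cofree tilting sheaf, and by Proposition \ref{p:classtilts} (its restriction to $Z_{w^\beta}$ being $\iota_{\dot w^\beta}(\dcm)$, hence indecomposable by Corollary \ref{l:cofreeprop}) it must be $\tau(w^{\beta})_\chi$. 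This gives the chain of isomorphisms $\std(w^\beta)_\chi \simeq \tau(w^\beta)_\chi \simeq \costd(w^\beta)_\chi$.

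An alternative, perhaps cleaner, route avoids choosing a special reduced word: directly compute the $*$-restriction of $\std(w^{\beta})_\chi$ to each stratum $Z_v$ with $v <_{?} w^{\beta}$ in $Z_{\le w^{\beta}}$. For $v$ not lying in the relevant coset ${}_\chi\tilW_{\chi'}^{\beta}$, the $*$-stalk is automatically annihilated after applying $\Av^{\chi\mon}$ on the appropriate side, by Remark \ref{rem:block decomp of monodromic hecke}: the character sheaf along $Z_v$ does not match $\chi$ (resp. $\chi'$). For $v$ in the coset, minimality of $w^\beta$ via Proposition \ref{p:conjmin} forces $v = w^\beta$. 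Hence $\std(w^\beta)_\chi$ is supported on the single stratum $Z_{w^\beta}$, so it is clean, and cleanness immediately yields $\std(w^\beta)_\chi \simeq \costd(w^\beta)_\chi \simeq \tau(w^\beta)_\chi$.

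\textbf{Main obstacle.} The subtle point is bookkeeping the interaction between the ambient Bruhat order on $\tilW$ (which governs the geometry of $Z_{\le w^{\beta}}$) and the Bruhat order on the Coxeter system $\tilW^\circ_\chi$ attached to the monodromy (which governs the category ${}_\chi\cM_{\chi'}$): one must be sure that the "extra" strata $Z_v \subset Z_{\le w^\beta}$ with $v \notin {}_\chi\tilW_{\chi'}^\beta$ genuinely contribute nothing after monodromic projection, which is where Remark \ref{rem:block decomp of monodromic hecke} and the block decomposition Proposition \ref{blockdec} do the real work. The inductive approach via Proposition \ref{p:conjsimple} sidesteps this by never leaving the world of length-additive products of clean rank-one standards, but then one must justify that a suitable reduced word with all prefixes minimal exists — which is exactly the content of the repeated application of Proposition \ref{p:conjsimple} together with Lemma \ref{l:minmult}.
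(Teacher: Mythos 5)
Your main argument is essentially the paper's proof. The paper runs an induction on $\ell(w^{\beta})$, peeling off a simple reflection $s$ of $\tilW$ with $w^{\beta}s<w^{\beta}$: minimality of $w^{\beta}$ forces $s\notin \tilW^{\circ}_{w^{\beta,-1}\chi}$, so $\std(s)\simeq\tau(s)\simeq\costd(s)$ by Lemma \ref{stdeqcostd}, while $w^{\beta}s$ is again minimal by Lemma \ref{l:minmult}; one then concludes by length-additive convolution (Lemma \ref{multi}). Your "unrolled" reduced-word version is the same argument with the same ingredients. One correction of attribution: the fact that each prefix of a reduced word of $w^{\beta}$ is again minimal, with each peeled-off letter non-integral, is not Proposition \ref{p:conjsimple} (which is about conjugating simple reflections of $\tilW^{\circ}_\chi$ into simple reflections of $\tilW$); it follows directly from the characterization \eqref{e:charminimal} of minimal elements (if $w^{\beta}(\halpha_s)$ is negative and $\halpha_s$ were integral, minimality would fail) together with Lemma \ref{l:minmult}, exactly as in the paper. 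Your concluding step, identifying the resulting clean object with $\tau(w^{\beta})_\chi$ via Proposition \ref{p:classtilts}, is fine and is if anything slightly more direct than the paper's use of the equivalence $-\star\tau(s)$.

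Your "alternative, cleaner route," however, does not work as stated. First, the $*$-restriction of $\std(w^{\beta})_\chi=j_{\dot{w}^{\beta},!}(\dcm)$ to any lower stratum vanishes for trivial sheaf-theoretic reasons ($i^{*}j_{!}=0$ for the open embedding of $Z_{w^\beta}$ into its closure), for \emph{every} $w$, minimal or not; cleanness is the vanishing of the $!$-restriction of $\std(w^{\beta})$ (equivalently of the $*$-restriction of $\costd(w^{\beta})$) on the boundary, so the computation you propose proves nothing. Second, even after switching to the correct restriction, the case analysis is incomplete: the monodromy-mismatch argument only kills strata $Z_v$ with $v\notin{}_{\chi}\tilW_{\chi'}$, and minimality only rules out $v<w^{\beta}$ for $v$ in the same coset $\beta$; elements of ${}_{\chi}\tilW_{\chi'}$ lying in \emph{other} cosets can perfectly well be strictly below $w^{\beta}$ in the ambient Bruhat order (for instance $v=e$ when $\chi'=\chi$ and $\beta$ is a non-neutral block), and for those strata the vanishing is a genuine statement requiring the block decomposition of Proposition \ref{blockdec} (placing $\costd(w^{\beta})_\chi$ in the block $\beta$), not a monodromy consideration. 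Your "main obstacle" paragraph shows you sensed this, but the argument as written does not supply it; the convolution argument of your first route is what actually carries the proof.
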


\begin{proof}
We will prove via an induction, over all cosets $\beta$ in $\tilW_\chi^\circ \bs \tilW$ simultaneously, on $\ell(w^{\beta})$, i.e., with respect to the usual length function on $\tilW$. 

If $\ell(w^{\beta}) = 0$, the claim is clear as $\tilI w^{\beta} \tilI$ is a closed stratum in $\wt{LG}$. 

If $\ell(w^{\beta}) > 0$, we may choose a simple reflection $s$ in $\tilW$ such that $w^{\beta} s < w^{\beta}$. By the minimality of $w^{\beta}$ in $$\tilW_\chi^\circ w^{\beta} = w^{\beta} \tilW_{w^{\beta,-1} \chi}^\circ,$$it follows that $s \notin \tilW_{w^{\beta,-1} \chi}^\circ$. In particular, by Lemma \ref{stdeqcostd}, we have the cleanness \begin{equation} \label{e:clean}\std(s)_{w^{\beta, -1}\chi} \simeq \tau(s)_{w^{\beta, -1}\chi} \simeq \costd(s)_{w^{\beta, -1}\chi}.\end{equation}
Moreover, by the minimality of $w^{\beta}$ in $\tilW_\chi^\circ w^{\beta}$, and the tautological minimality of $s$ in $\tilW_{w^{\beta,-1}\chi}^\circ s$, it follows that $w^{\beta} s$ is minimal in $\tilW_\chi^\circ w^{\beta}s$, cf. Lemma \ref{l:minmult}. By the cleanness \eqref{e:clean}, we have 
$$- \star \tau(s)_{w^{\beta, -1}\chi}: {}_\chi \cT_{w^{\beta,-1}\chi} \simeq {}_{\chi} \cT_{sw^{\beta,-1}\chi},$$
so in particular we deduce  
\begin{equation} \label{eq: mintiltconv}
    \tau(w^{\beta})_\chi \simeq \tau(w^{\beta}s)_\chi \star \tau(s)_{sw^{\beta, -1}\chi},
\end{equation}
whence the result follows by induction. 
\end{proof}

We next check that convolving with a minimal tilting sheaf preserves indecomposable tilting sheaves. 

\begin{lem}\label{l: convmintilt}
    Let $\beta$ be a coset in ${}_{\chi} \extw _{\chi'}$ and $x \in \extw$. Then we have $$\tau(x)_{x\chi} \star \tau(w^{\beta})_{\chi} \simeq \tau(xw^{\beta})_{x\chi}.$$ Similarly, we have $$\tau(w^{\beta})_{\chi} \star \tau(x)_{\chi'} \simeq \tau(w^{\beta}x)_{\chi}.$$
\end{lem}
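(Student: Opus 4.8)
\textbf{Proof proposal for Lemma \ref{l: convmintilt}.}

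The plan is to deduce both equalities from Lemma \ref{l:minclean} together with the associativity of convolution and the length-additivity results already established. First I would reduce to the first equality, since the second follows from the first by applying the duality of Lemma \ref{l:duals} (which exchanges $\Delta$-flags and $\nabla$-flags, sends cofree tilting sheaves to cofree tilting sheaves, and reverses the order of convolution), noting that $\tau(x)^\vee \simeq \tau(x^{-1})$ by Proposition \ref{p:classtilts} since the dual of an indecomposable cofree tilting sheaf supported on $Z_{\le x}$ with nonzero restriction on $Z_x$ is again an indecomposable cofree tilting sheaf, now supported on $Z_{\le x^{-1}}$. Alternatively one can just rerun the argument for the first equality on the other side.

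For the first equality, I would proceed by induction on $\ell(x)$ (the usual length in $\tilW$), allowing $\chi$, $\chi'$ and the coset $\beta$ to vary. The base case $\ell(x)=0$ is immediate: then $\tau(x)_{x\chi} \simeq \Delta(x)_{x\chi}$ is clean, convolution with it is the equivalence $\Delta(x)_{x\chi}\star(-): {}_\chi\cM_{\chi'} \simeq {}_{x\chi}\cM_{\chi'}$ with inverse $\nabla(x^{-1})_{\cdot}\star(-)$, cf. Lemma \ref{inverse}, and $\Delta(x)_{x\chi}\star\Delta(w^\beta)_\chi \simeq \Delta(xw^\beta)_{x\chi}$ by Lemma \ref{multi} since $\ell(x) + \ell(w^\beta) = \ell(xw^\beta)$; moreover $xw^\beta$ is again minimal in its coset $\tilW^\circ_{x\chi}xw^\beta$ by Lemma \ref{l:minmult} (or directly, since left multiplication by a length-zero element is an isomorphism of Coxeter systems), so $\tau(xw^\beta)_{x\chi}\simeq \Delta(xw^\beta)_{x\chi}$ by Lemma \ref{l:minclean}, giving the claim. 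For the inductive step with $\ell(x) > 0$, I would pick a simple reflection $s$ in $\tilW$ with $sx < x$. If $s\chi = \chi$ one distinguishes whether $s\in\tilW^\circ_{x\chi}$: writing $x = s\cdot(sx)$ with $\ell(s) + \ell(sx) = \ell(x)$, Lemma \ref{multi} gives $\tau(x)_{x\chi}$ as a summand of $\tau(s)_{x\chi}\star\tau(sx)_{sx\chi}$ (cf. the proof of Lemma \ref{l: conv tilting s} and Step 1 of Proposition \ref{p:classtilts}, using the support and multiplicity-one properties to pin down the indecomposable summand). Then I would compute
$$\tau(x)_{x\chi}\star\tau(w^\beta)_\chi \ \Big|_{\text{as a summand of}}\ \tau(s)_{x\chi}\star\tau(sx)_{sx\chi}\star\tau(w^\beta)_\chi \simeq \tau(s)_{x\chi}\star\tau(sxw^\beta)_{sx\chi}$$
using the inductive hypothesis on $\tau(sx)_{sx\chi}\star\tau(w^\beta)_\chi$, and then analyze $\tau(s)_{x\chi}\star\tau(sxw^\beta)_{sx\chi}$ by the same mechanism: $sxw^\beta$ is minimal in its coset (Lemma \ref{l:minmult}, as $w^\beta$ is minimal and $sx$ contributes only length on the outside — more precisely $\ell(sxw^\beta) = \ell(sx) + \ell(w^\beta)$ since $\ell(xw^\beta) = \ell(x) + \ell(w^\beta)$ and $sx < x$), and whether $sxw^\beta < xw^\beta$ or $sxw^\beta > xw^\beta$ — the latter holding since $sx < x$ forces $s\cdot(sxw^\beta) = xw^\beta > sxw^\beta$ — one uses Lemma \ref{multi} and the support/multiplicity-one argument to identify $\tau(s)_{x\chi}\star\tau(sxw^\beta)_{sx\chi}$ with $\tau(xw^\beta)_{x\chi}$, and matches up the summands. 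A cleaner packaging: since $\ell(xw^\beta) = \ell(x) + \ell(w^\beta)$, choose any reduced expression of $x$ as a product of simple reflections and iterate Lemma \ref{l:minclean} / Lemma \ref{l: tilt simple reflection} one reflection at a time, reducing everything to the rank-one statement $\tau(s')_{\cdot}\star\tau(v)_{\cdot}\simeq\tau(s'v)_{\cdot}$ when $\ell(s'v) = \ell(s')+\ell(v)$ and $v$ is minimal in its $\tilW^\circ$-coset; this last is precisely \eqref{eq: mintiltconv} read in the other direction, or follows from Lemma \ref{l: conv tilting s} combined with the multiplicity-one support bookkeeping of Proposition \ref{p:classtilts}, Step 1.

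The main obstacle I anticipate is not the length combinatorics — which is governed entirely by Lemma \ref{l:minmult}, Proposition \ref{p:conjmin} and Lemma \ref{multi} — but rather ensuring at each inductive step that the convolution $\tau(s)_{x\chi}\star\tau(v)$ contains $\tau(sv)$ as a summand \emph{with multiplicity one}, so that the indecomposable summand with full support on $Z_{\le sv}$ is unambiguously $\tau(sv)$ and equals the full convolution when $sv$ happens to be minimal (so that there are no lower-support summands at all). This is exactly the argument of Step 1 of the proof of Proposition \ref{p:classtilts}, and the point is that when $\ell(xw^\beta) = \ell(x) + \ell(w^\beta)$ every intermediate product is again length-additive, so by Lemma \ref{multi} the $\Delta$-flag of the convolution has the relevant standard object appearing exactly once; combined with $w^\beta$ minimal this forces the convolution to be indecomposable tilting and hence equal to $\tau(xw^\beta)$, with no genuine "error terms" to discard. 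Once this bookkeeping is set up carefully the proof is a routine induction.
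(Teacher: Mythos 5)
There is a genuine gap, and it sits exactly where you flagged the "main obstacle." Your entire bookkeeping rests on the claim that $\ell(xw^{\beta}) = \ell(x) + \ell(w^{\beta})$ (and its corollaries, e.g.\ that $sx < x$ forces $sxw^{\beta} < xw^{\beta}$ and that $\ell(sxw^{\beta}) = \ell(sx) + \ell(w^{\beta})$). This is false for general $x \in \extw$: minimality of $w^{\beta}$ only says that $w^{\beta}$ maps positive \emph{integral} affine coroots to positive ones (see \eqref{e:charminimal}); it says nothing about the non-integral affine roots, so lengths in $\extw$ need not add. A blunt counterexample is $x = w^{\beta,-1}$ with $\ell(w^{\beta})>0$, where $xw^{\beta}=e$. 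Without length-additivity, Lemma \ref{multi} no longer controls the $\std$-flag of the convolution, the standard $\Delta(xw^\beta)$ need not appear with multiplicity one in $\tau(s)_{x\chi}\star\tau(sxw^{\beta})_{sx\chi}$ for the reason you give, and your "cleaner packaging" via a reduced expression of $x$ collapses (also because the intermediate products $s_i\cdots s_n w^{\beta}$ are generally not minimal in their cosets, so your rank-one statement does not apply to them). Moreover, even where the convolution is understood, knowing that $\tau(x)_{x\chi}\star\tau(w^{\beta})_{\chi}$ is \emph{a} summand of $\tau(s)_{x\chi}\star\tau(sxw^{\beta})_{sx\chi}$ does not identify which summand it is: when $s\in\extw^{\circ}_{x\chi}$ that convolution has extra summands isomorphic to $\tau(sxw^{\beta})$, and singling out $\tau(xw^{\beta})$ requires already knowing that $\tau(x)\star\tau(w^{\beta})$ is indecomposable with full support on $Z_{\leq xw^{\beta}}$ --- which is essentially the statement being proved.

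The fix is to run the induction on $\ell(w^{\beta})$ rather than on $\ell(x)$, which is what the paper does. If $\ell(w^{\beta})>0$, choose a simple reflection $s$ of $\extw$ with $w^{\beta}s<w^{\beta}$; minimality forces $s\notin\extw^{\circ}_{w^{\beta,-1}\chi}$, so $\tau(s)$ is clean by Lemma \ref{stdeqcostd} and $w^{\beta}s$ is again minimal (Lemma \ref{l:minmult}). Then, using \eqref{eq: mintiltconv},
$$\tau(x)_{x\chi}\star\tau(w^{\beta})_{\chi}\simeq \tau(x)_{x\chi}\star\tau(w^{\beta}s)_{\chi}\star\tau(s) \simeq \tau(xw^{\beta}s)_{x\chi}\star\tau(s)\simeq \tau(xw^{\beta})_{x\chi},$$
where the middle step is the inductive hypothesis and the last step uses that convolution with the clean, hence invertible, $\tau(s)=\Delta(s)=\nabla(s)$ is an equivalence of tilting categories sending $\tau(v)\mapsto\tau(vs)$ (Lemma \ref{inverse} plus support considerations). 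The point of this arrangement is that one only ever convolves on the right with invertible objects, so indecomposability is preserved automatically and no multiplicity or summand identification is ever needed; your route, by contrast, peels simple reflections off $x$ and is thereby forced to convolve with non-invertible $\tau(s)$ and to disambiguate summands, which is precisely where the argument breaks. (Your reduction of the second identity to the first via duality and $\tau(x)^{\vee}\simeq\tau(x^{-1})$ is fine in spirit, but it is not needed: the mirrored induction works verbatim.)
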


\begin{proof}
    As in Lemma \ref{l:minclean}, we prove via an induction on $\ell(w^\beta)$. If $\ell(w^\beta)=0$, the statement follows from Lemma \ref{multi}. If $\ell(w^\beta)>0$, we may choose a simple reflection $s$ in $\tilW$ such that $w^{\beta} s < w^{\beta}$. Using \eqref{eq: mintiltconv} and the inductive hypothesis, we obtain 
    $$\tau(x)_{x\chi} \star \tau(w^{\beta})_{\chi} \simeq \tau(x)_{x\chi} \star \tau(w^{\beta}s)_{\chi} \star \tau(s)_{s\chi} \simeq \tau(xw^{\beta}s)_{x\chi} \star \tau(s)_{s\chi}.$$ The induction is completed by applying \eqref{eq: mintiltconv} once more.
\end{proof}

For the future use, we prove the following basic lemma concerning tilting sheaves and the partial order $\leq_{\beta}$.

\begin{lem} \label{l:supptilt} Let $\beta$ be a coset in ${}_{\chi} \extw _{\chi'}$. For any $x \in {}_{\chi} \extw _{\chi'}^{\beta}$, consider the indecomposable tilting sheaf $\tau(x)_\chi$. Then any $w \in \tilW$ for which $\std(w)_\chi$ or $\costd(w)_\chi$ occurs in a $\std$-flag or $\costd$-flag of $\tau(x)_\chi$ satisfies the inequality $$w \leq_{\beta} x.$$ 
\end{lem}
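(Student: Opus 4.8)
\textbf{Proof proposal for Lemma \ref{l:supptilt}.}

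The plan is to reduce the statement to the case of a minimal-length coset element and an arbitrary indecomposable tilting sheaf, by exploiting the convolution factorization established in Lemma \ref{l: convmintilt}. More precisely, write $x = w^{\beta} y$ for $y \in \tilW_{\chi'}^{\circ}$, so that the isomorphism $\tilW_{\chi}^{\circ} \simeq {}_{\chi}\tilW_{\chi'}^{\beta}$, $z \mapsto z w^{\beta}$, which defines the order $\leq_{\beta}$, reads as follows: $w \leq_{\beta} x$ precisely when, upon writing $w = w^{\beta} v$ with $v \in \tilW_{\chi'}^{\circ}$, one has $v \leq_{\chi'} y$ in the standard Bruhat order of the Coxeter group $\tilW_{\chi'}^{\circ}$ (using Proposition \ref{p:conjmin} to transport between the two presentations of the coset). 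So the claim is that any $w$ indexing a (co)standard object in a (co)standard flag of $\tau(w^{\beta}y)_{\chi} = \tau(w^{\beta})_{\chi} \star \tau(y)_{w^{\beta,-1}\chi}$ (Lemma \ref{l: convmintilt}) satisfies this bound.

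First I would reduce to the neutral block: by Lemma \ref{l:minclean}, $\tau(w^{\beta})_{\chi} \simeq \std(w^{\beta})_{\chi}$ is clean, hence convolution with it is an equivalence ${}_{w^{\beta,-1}\chi}\cT_{\chi'} \simeq {}_{\chi}\cT_{\chi'}$ (with inverse convolution by $\tau(w^{\beta,-1})$), and by Lemma \ref{multi} it shifts a $\std$-flag indexed by $v \in \tilW_{\chi'}^{\circ}$ to one indexed by $w^{\beta}v$, preserving the relevant partial order by Proposition \ref{p:conjmin}. Thus it suffices to treat the case $\beta$ trivial, i.e., to show that for $y \in \tilW_{\chi}^{\circ}$ (renaming $\chi' = \chi$ and working inside the neutral block ${}_{\chi}\cM_{\chi}^{\circ}$, appealing to the block decomposition Proposition \ref{blockdec}), any $w$ occurring in a $\std$- or $\costd$-flag of $\tau(y)_{\chi}$ satisfies $w \leq_{\chi} y$. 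This is now the statement that the support of the indecomposable tilting sheaf $\tau(y)_{\chi}$ is contained in $Z_{\leq y}$ intersected with the neutral block strata, which is built into its definition (Proposition \ref{p:classtilts}): $\tau(y)_\chi$ is supported on $Z_{\leq y}$. So the only thing requiring genuine argument is that the strata appearing in the flags lie in the Coxeter-theoretic closure $\{w \in \tilW_\chi^\circ : w \leq_\chi y\}$, not merely in the ambient Bruhat interval $\{w \in \tilW : w \leq y\} \cap \beta$.

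For that last point I would induct on $\ell(y)$, using the inductive construction in Proposition \ref{p:classtilts}. If $\ell(y) = 0$ the sheaf is (co)standard and there is nothing to prove. Otherwise pick a simple reflection $s \in S_\chi$ with $ys < y$; then $\tau(y)_\chi$ is a direct summand of $\tau(ys)_\chi \star \tau(s)_{sy^{-1}\chi}$, and by the inductive hypothesis the flags of $\tau(ys)_\chi$ are supported on $\{v \leq_\chi ys\}$. Convolving with $\tau(s)$ and invoking Lemma \ref{multi} and Lemma \ref{stdeqcostd}, a $\std(v)_\chi$ in the flag of $\tau(ys)_\chi$ contributes either $\std(v)_\chi$ (if $vs > v$, a copy of $\std(vs)_\chi$) or, if $vs < v$, copies of $\std(v)_\chi$ and $\std(vs)_\chi$; in all cases the indices lie in $\{v \leq_\chi ys\} \cup \{vs : v \leq_\chi ys\}$, which by the elementary Bruhat-order inequality in the Coxeter group $\tilW_\chi^\circ$ is contained in $\{w \leq_\chi y\}$. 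The same argument applies verbatim to $\costd$-flags. The main obstacle — though it is more bookkeeping than depth — is keeping the monodromy labels consistent through the convolutions and making sure the reduction to the neutral block via Lemma \ref{l:minclean} genuinely respects $\leq_\beta$; once Proposition \ref{p:conjmin} is invoked this is routine.
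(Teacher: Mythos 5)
There is a genuine gap, and it sits exactly at the point your argument treats as routine. After reducing to the neutral block you induct on $\ell_\chi(y)$ by picking a simple reflection $s \in S_\chi$ with $ys <_\chi y$ and asserting that $\tau(y)_\chi$ is a summand of $\tau(ys)_\chi \star \tau(s)_{sy^{-1}\chi}$, with $\tau(s)_\chi$ filtered by $\std(e)$ and $\std(s)$ only. But a simple reflection of $\tilW_\chi^\circ$ is in general \emph{not} a simple reflection of the ambient $\tilW$ (its integral coroot can have large height), and for such $s$ neither ingredient is available: the inductive factorization in Proposition \ref{p:classtilts} and the explicit construction in Lemma \ref{l: tilt simple reflection} (and Lemma \ref{stdeqcostd}) are only proved for simple reflections of $\tilW$, and the summand statement for general $r \in S_\chi$ is Lemma \ref{l:tiltgens}, which in the paper is proved \emph{after} and by the same device as the present lemma. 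Worse, the assertion that the flags of $\tau(s)_\chi$ involve only $e$ and $s$ is itself the case $\ell_\chi = 1$ of the statement you are proving — $\tau(s)_\chi$ is defined only by its support in the ambient closure $Z_{\leq s}$, which can contain many strata of the block — and your induction offers no smaller instance to fall back on. The paper's proof addresses precisely this: it inducts on $\ell_\beta(x)$ and, when the chosen integral simple reflection $r$ is not simple in $\tilW$, it invokes Proposition \ref{p:conjsimple} to find a minimal element $u$ with $uru^{-1}$ simple in both $\tilW$ and $\tilW_{ux^{-1}\chi}^\circ$, then transports the problem by convolving with the clean minimal tiltings (Lemma \ref{l: convmintilt}) and uses Proposition \ref{p:conjmin} to pull the Bruhat-order bound back. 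Your proposal never uses Proposition \ref{p:conjsimple} or any equivalent conjugation device, so the core difficulty is not addressed.

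A secondary, more repairable issue: in your reduction to the neutral block you claim via Lemma \ref{multi} that convolving a $\std$-flag with the clean $\std(w^\beta)_\chi$ re-indexes $\std(v)$ as $\std(w^\beta v)$. Lemma \ref{multi} requires additivity of \emph{ambient} lengths, $\ell(w^\beta v) = \ell(w^\beta) + \ell(v)$, which need not hold for $v \in \tilW_{\chi'}^\circ$ (the inversion set of $v$ contains non-integral affine roots, which $w^\beta$ is not required to keep positive). This step can be patched using Lemma \ref{l: convmintilt} and the invertibility of the clean minimal (co)standards, but as written the justification is insufficient; note the paper avoids the issue by never separating off a "neutral block reduction" and instead running the conjugation argument at each inductive step.
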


\begin{proof} Let $\Xi$ be the $\extw$-orbit of $\chi$. Let ${}_{\Xi} \tilW_{\Xi}$ be the union of ${}_{\chi'} \tilW_{\chi''}$ for $\chi', \chi'' \in \Xi$. We will proceed via induction, over all cosets $\beta$ in ${}_{\Xi} \tilW_{\Xi}$ simultaneously, on $\ell_{\beta}(x)$. 

We begin with the base case of $\ell_{\beta}(x) = 0$, i.e., $x = w^{\beta}$, which follows directly from Lemma \ref{l:minclean}. Suppose now that $\ell_{\beta}(x) > 0$. In this case, we may choose a simple reflection $r$ in $\tilW_{x^{-1} \chi}^\circ$ such that $xr <_{\beta} x.$ 

If $r$ is also a simple reflection of $\extw$, we know that $\tau(x)_\chi$ is a summand in 
$$\tau(xr)_\chi \star \tau(r)_{rx^{-1}\chi}.$$
By using the inductive hypothesis for $\tau(xr)_\chi$, and the fact that $\tau(r)_{rx^{-1}\chi}$ is supported on the strata corresponding to $1$ and $r$, it is straightforward to deduce the desired result for $\tau(xr)_\chi$. 

In general, there exists a minimal element $u \in \extw$ for some block, such that $uru^{-1}$ is a simple reflection in both $\extw$ and $\extw_{ux^{-1}\chi}^{\circ}$ by Proposition \ref{p:conjsimple}. Thanks to Lemma \ref{l: convmintilt}, we have the equivalence
$$\tau(u)_{u \chi} \star - \star \tau(u^{-1})_{x^{-1} \chi}: {}_{\chi} \cT_{x^{-1} \chi} \simeq {}_{u\chi} \cT_{ux^{-1} \chi}$$
and the resulting identities 
\begin{equation}\label{eq: conjminx}\tau(u)_{u \chi} \star \tau(x)_\chi \star \tau(u^{-1})_{x^{-1} \chi} \simeq \tau(uxu^{-1})_{u\chi}.
\end{equation}
\begin{equation} \label{eq: conjminxr}
    \tau(u)_{u \chi} \star \tau(xr)_\chi \star \tau(r)_{rx^{-1}\chi} \star \tau(u^{-1})_{x^{-1} \chi} \simeq \tau(uxru^{-1})_{u\chi} \star \tau(uru^{-1})_{urx^{-1}\chi}.
\end{equation}
Since $uru^{-1}$ is a simple reflection of $\extw$, the discussion above and the inductive hypothesis prove the statement for $\tau(uxu^{-1})_{u\chi}$.

To prove the desired result for $\tau(x)_{\chi}$, we note that conjugation by the minimal element $u$ preserves Bruhat order, as stated in Proposition \ref{p:conjmin}.
\end{proof}

\sss{}
For later purposes in studying the Soergel functor in our set-up, we need to establish some further properties of $\tau(s)_\chi$, for $s$ a simple reflection in $\extw$ which is in $\extw_\chi^\circ$.

Recall from Remark \ref{rem: monodromy action} that the left and the right actions of $D_{\chi\mon}(\wt\AA)$ on $\cM$ induce a homomorphism
\begin{equation}\label{eq: mono of tilting}
\Fun(\frf_\chi)\hat\otimes_{E_\chi} \Fun(\frf_{\chi})=\Fun(\frf_\chi\times_{E_\chi}\frf_\chi)\to \End(\tau(s)_\chi).
\end{equation}

For $\gamma\in \{1,s\}=W_s$, we let $\frf_\chi\simeq \Gamma^\gamma\subset \frf_\chi\times_{E_\chi}\frf_\chi$ denote the graph of the action map given by $\gamma$. For example, when $\gamma=1$ is the identity element, then $\Gamma^1$ is the diagonal. We note that $\Gamma^1\cap\Gamma^s\simeq \frf_\chi^s$ is the closed subscheme of $s$-fixed points. On the other hand, let $\alpha_s$ (resp. $\halpha_s$) be the affine root (resp. coroot) associated to $s$. 
We have 
$$
\frf_\chi\cap \ker \halpha_s^\vee\subset \frf_\chi\cap \ker (\alpha_s^\vee\circ \halpha_s^\vee)=\frf_\chi^s.
$$
Note that if the characteristic of $\fre$ is two, and if the affine simple root $\alpha_s$ is the twice of a character of $\wt\AA$, then the inclusion is strict. Otherwise,
$\frf_\chi\cap \ker \halpha_s^\vee= \frf_\chi^s$.

\begin{lem}\label{lem: endo of taus}
    We have the short exact sequence of $\Fun(\frf_\chi)\hat\otimes_{E_\chi} \Fun(\frf_{\chi})$-modules
    \[
    0\to \End(\tau(s)_\chi)\to \Fun(\Gamma^1)\oplus \Fun(\Gamma^s)\to \Fun(\frf_\chi\cap \ker\halpha_s^\vee)\to 0
    \]
    In particular, if the characteristic of $\fre$ is not two, or if the affine simple root $\alpha_s$ is not twice of a character of $\wt\AA$, then \eqref{eq: mono of tilting} is surjective, identifying $\End(\tau(s)_\chi)$ with $\Fun(\Gamma^1\cup \Gamma^s)$.
\end{lem}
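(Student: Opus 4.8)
The plan is to compute $\End(\tau(s)_\chi)$ by exploiting the two-step $\Delta$-flag of $\tau(s)_\chi$, namely the short exact sequence
\[
0 \to \std(s)_\chi \to \tau(s)_\chi \to \std(e)_\chi \to 0
\]
from the construction in Lemma \ref{l: tilt simple reflection}, together with the dual $\nabla$-flag $0 \to \costd(e)_\chi \to \tau(s)_\chi \to \costd(s)_\chi \to 0$. Applying $\Hom(\tau(s)_\chi, -)$ to the $\nabla$-flag, and using that $\Hom(\tau(s)_\chi, -)$ kills nothing in degree zero on costandardly filtered objects by Lemma \ref{c:proptilt} \eqref{c:proptilt-3}, gives a short exact sequence
\[
0 \to \Hom(\tau(s)_\chi, \costd(e)_\chi) \to \End(\tau(s)_\chi) \to \Hom(\tau(s)_\chi, \costd(s)_\chi) \to 0.
\]
Now I would resolve the two outer terms: using the $\Delta$-flag of $\tau(s)_\chi$ and \eqref{e:homstdcostd}, one gets $\Hom(\tau(s)_\chi, \costd(w)_\chi) \cong \Fun(\frf_\chi)$ for $w \in \{e,s\}$ (the other graded piece vanishes by \eqref{e:homstdcostd}). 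The point is to identify these copies of $\Fun(\frf_\chi)$ with $\Fun(\Gamma^1)$ and $\Fun(\Gamma^s)$ as bimodules, i.e. to track the left and right monodromy actions; this is where the graphs of $\gamma \in W_s$ enter.

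The main work is then the identification of the boundary/cokernel map. First I would establish the left-hand inclusion $\End(\tau(s)_\chi) \hookrightarrow \Fun(\Gamma^1) \oplus \Fun(\Gamma^s)$: this comes from the two restriction maps, namely $\tau(s)_\chi \mapsto (j_e)^! \tau(s)_\chi$ to the unit stratum (giving $\Fun(\Gamma^1) = \Fun(\frf_\chi)$ with the diagonal bimodule structure, since the left and right monodromies agree on the identity coset) and $\tau(s)_\chi \mapsto (j_s)^* \tau(s)_\chi$ to the $s$-stratum (giving $\Fun(\Gamma^s)$, the twist by $s$ coming from $\iota_{\dot{s}}$ conjugating the right $\wt\AA$-action by $s$, compare \eqref{e:onestratumunivmon} and Lemma \ref{multi}). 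Injectivity of the combined map follows because any endomorphism inducing zero on both the bottom and top of the $\Delta$-flag is nilpotent on a tilting sheaf supported on $Z_{\le s}$, hence zero by Lemma \ref{c:proptilt} \eqref{c:proptilt-3} (the associated-graded of $\End$ has length two). For surjectivity onto the kernel of the map to $\Fun(\frf_\chi \cap \ker\halpha_s^\vee)$, I would argue that a pair $(f_1, f_s) \in \Fun(\Gamma^1) \oplus \Fun(\Gamma^s)$ glues to an endomorphism of $\tau(s)_\chi$ precisely when $f_1$ and $f_s$ agree after restriction to the overlap of the two strata closures; the relevant overlap, detected by the attaching map in the Cousin/gluing triangle, is governed by $\CH(\omega_{\ker\halpha_s^\vee})$ appearing in Lemma \ref{stdeqcostd} and Lemma \ref{lem: conv of costd by costd}, whose support is $\frf_\chi \cap \ker\halpha_s^\vee$. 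Concretely, the cofiber sequence $j_{e,!}(\CH(\omega_{\ker\halpha_s^\vee \cap \frf_\chi})) \to \std(s)_\chi \to \costd(s)_\chi$ shows that the discrepancy between the $\std$- and $\costd$-structures on the bottom piece of $\tau(s)_\chi$ is exactly measured by $\Fun(\frf_\chi \cap \ker\halpha_s^\vee)$, which I would feed into a long exact sequence to pin down the cokernel.

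The hardest step, and the one I would spend the most care on, is the precise bimodule identification of the gluing map $\Fun(\Gamma^1) \oplus \Fun(\Gamma^s) \to \Fun(\frf_\chi \cap \ker\halpha_s^\vee)$ — in particular checking it is $(f_1, f_s) \mapsto \bar f_1 - \bar f_s$ (up to sign/unit) under the identifications $\Gamma^1 \cap \Gamma^s \simeq \frf_\chi^s$ and the containment $\frf_\chi \cap \ker\halpha_s^\vee \subseteq \frf_\chi^s$. This requires unwinding the explicit rank-one computations \eqref{eq: shrek-stalk-conv-costad-s-strata}, \eqref{eq: support on unit strate} and the description of $\ker(\halpha_s)^\vee$ in \eqref{eq: kernel of halphasvee}, and being careful that in characteristic two with $\alpha_s \in 2\mathbb{X}^*(\wt\AA)$ the inclusion $\frf_\chi \cap \ker\halpha_s^\vee \subset \frf_\chi^s$ is strict, so that the target of the gluing map is the smaller scheme $\ker\halpha_s^\vee \cap \frf_\chi$ and not $\frf_\chi^s$. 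Once the short exact sequence is established, the final sentence of the lemma is immediate: when $\frf_\chi \cap \ker\halpha_s^\vee = \frf_\chi^s$, the cokernel $\Fun(\frf_\chi^s) = \Fun(\Gamma^1 \cap \Gamma^s)$ is precisely the obstruction to gluing a function on $\Gamma^1 \cup \Gamma^s$, so $\End(\tau(s)_\chi) \cong \Fun(\Gamma^1 \cup \Gamma^s)$ and the map \eqref{eq: mono of tilting} — whose image contains both $\Fun(\Gamma^1)$ and $\Fun(\Gamma^s)$ and hence all of $\Fun(\Gamma^1 \cup \Gamma^s)$ by the Mayer–Vietoris square for the union of the two graphs — is surjective.
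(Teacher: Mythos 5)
Your proposal assembles the correct ingredients (the two-step $\Delta$- and $\nabla$-flags of $\tau(s)_\chi$, the identifications of $\Hom(\Delta(w)_\chi,\tau(s)_\chi)$ with $\Fun(\Gamma^w)$ as bimodules, and the role of $\CH(\omega_{\ker\halpha_s^\vee})$ from Lemma \ref{stdeqcostd}), but it stops short of the step that constitutes the actual proof: producing the three-term exact sequence with middle term $\Fun(\Gamma^1)\oplus\Fun(\Gamma^s)$, third term $\Fun(\frf_\chi\cap\ker\halpha_s^\vee)$, and the difference-of-restrictions map. You explicitly defer this (``the hardest step, and the one I would spend the most care on'') to an unspecified gluing analysis of attaching maps between the two strata, and that analysis is precisely where all the content lies; applying $\Hom(\tau(s)_\chi,-)$ to the $\nabla$-flag, as you do, only exhibits $\End(\tau(s)_\chi)$ as an extension of two copies of $\Fun(\frf_\chi)$, which is weaker than the stated presentation. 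The paper gets the sequence in one stroke: the construction in Lemma \ref{l: tilt simple reflection} realizes $\tau(s)_\chi$ as the pushout of
\[
\Delta(e)_\chi \longleftarrow j_{e,!}\bigl(\CH(\omega_{\ker\halpha_s^\vee\cap\frf_\chi})\bigr)\longrightarrow \Delta(s)_\chi,
\]
so applying $\Hom(-,\tau(s)_\chi)$ to this cocartesian square immediately yields the fiber sequence
\[
\End(\tau(s)_\chi)\to \Hom(\Delta(e)_\chi,\tau(s)_\chi)\oplus\Hom(\Delta(s)_\chi,\tau(s)_\chi)\to \Hom\bigl(j_{e,!}(\CH(\omega_{\ker\halpha_s^\vee\cap\frf_\chi})),\tau(s)_\chi\bigr),
\]
with the correct bimodule maps built in; the last term is computed by adjunction as $\Hom(\omega_{\ker\halpha_s^\vee\cap\frf_\chi},\omega_{\frf_\chi})\simeq\Fun(\ker\halpha_s^\vee\cap\frf_\chi)$, and degree-zero concentration together with surjectivity of restriction of functions gives exactness at both ends. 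Your outline never invokes this pushout square (even though you cite the construction for the flags), so the cokernel identification remains a plan rather than an argument.

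A secondary but genuine flaw is the injectivity step: ``an endomorphism inducing zero on both the bottom and top of the $\Delta$-flag is nilpotent \dots hence zero'' is not valid reasoning — nilpotence would not imply vanishing, and such an endomorphism is not obviously nilpotent in the first place. What is actually true is that an endomorphism killed by both stratum restrictions corresponds, after adjunction, to a map $(j_e)^*\tau(s)_\chi\to(j_e)^!\tau(s)_\chi$ precomposed with the canonical map $(j_e)^!\tau(s)_\chi\to(j_e)^*\tau(s)_\chi$, which is multiplication by a nonzerodivisor of $R_\chi$ (essentially the equation of $\ker\halpha_s^\vee\cap\frf_\chi$), so it vanishes; in the paper's argument injectivity is instead automatic from the vanishing of negative-degree Homs into $\Fun(\ker\halpha_s^\vee\cap\frf_\chi)$. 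In summary, the route you sketch could be completed, but as written the proposal has a gap at its central step and an incorrect justification at the injectivity step.
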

\begin{proof}
Note that by the construction of $\tau(s)_\chi$, we have the following cofiber sequence of $\Fun(\frf_\chi)\hat\otimes\Fun(\frf_\chi)$-modules
\begin{multline*}
\End(\tau(s)_\chi)\to \Hom(\Delta(e)_\chi,\tau(s)_\chi)\oplus \Hom(\Delta(s)_\chi,\tau(s)_\chi)\to \\
\Hom(j_{e,!}(\CH(\omega_{\ker\halpha_s^\vee}\cap\frf_\chi)),\tau(s)_\chi)\to. 
\end{multline*}
We note that
\begin{multline*}
\Hom(j_{e,!}(\CH(\omega_{\ker\halpha_s^\vee}\cap\frf_\chi)),\tau(s)_\chi)=\Hom(\omega_{\ker\halpha_s^\vee\cap\frf_\chi}, \omega_{\frf_\chi})\\
=\Hom(\omega_{\ker\halpha_s^\vee\cap \frf_\chi}, \omega_{\ker\halpha_s^\vee\cap \frf_\chi})=\Fun(\ker\halpha_s^\vee\cap \frf_\chi).
\end{multline*}
Thus the cofiber sequence gives the desired short exact sequence.
\end{proof}

\subsection{Monoidal structure} 
Finally, using results in previous subsections, we can discuss the monoidal structure for tilting sheaves. 

\sss{}We first show that cofree tilting sheaves are closed under convolution. 
	
\begin{prop}\label{p:conv tilt} Given three character sheaves $\chi_i$, $i = 1, 2, 3$, and tilting objects
$$\tau_{12} \in {}_{\chi_1}\cT_{\chi_2} \quad \text{and} \quad \tau_{23} \in {}_{\chi_2}\cT_{\chi_3},$$
the convolution $\tau_{12} \star \tau_{23}$ is again tilting, i.e., 
$$\tau_{12} \star \tau_{23} \in {}_{\chi_1}\cT_{\chi_3}.$$
\end{prop}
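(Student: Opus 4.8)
\textbf{Proof plan for Proposition \ref{p:conv tilt}.} The strategy is to reduce the general claim to the special cases already established, namely convolution with $\tau(s)_\chi$ for $s$ a simple reflection in $\extw$ (Lemma \ref{l: conv tilting s}) and convolution with a minimal tilting sheaf $\tau(w^\beta)_\chi$ (Lemma \ref{l: convmintilt}). Since every object of $\ctc$ is a finite direct sum of indecomposables $\tau(w)_\chi$ (Proposition \ref{p:classtilts}), and convolution is exact and additive, it suffices to treat the case $\tau_{12} = \tau(x)_{\chi_1}$ and $\tau_{23} = \tau(y)_{\chi_2}$ for $x \in {}_{\chi_1}\extw_{\chi_2}$, $y \in {}_{\chi_2}\extw_{\chi_3}$.

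First I would handle $\tau_{23}$ by induction on the length $\ell_\beta(y)$, where $\beta$ is the coset of $y$ in ${}_{\chi_2}\Omega_{\chi_3}$. In the base case $\ell_\beta(y) = 0$, i.e.\ $y = w^\beta$ is minimal, Lemma \ref{l: convmintilt} gives $\tau(x)_{\chi_1} \star \tau(w^\beta)_{\chi_2} \simeq \tau(xw^\beta)_{\chi_1}$, which is tilting by Proposition \ref{p:classtilts}. For the inductive step, choose a simple reflection $r$ in $\extw_{y^{-1}\chi_2}^\circ$ with $yr <_\beta y$. If $r$ happens to be a simple reflection of $\extw$ itself, then $\tau(y)_{\chi_2}$ is a direct summand of $\tau(yr)_{\chi_2} \star \tau(r)_{ry^{-1}\chi_2}$ (by the construction in Step 1 of Proposition \ref{p:classtilts}), so $\tau(x)_{\chi_1} \star \tau(y)_{\chi_2}$ is a summand of $(\tau(x)_{\chi_1} \star \tau(yr)_{\chi_2}) \star \tau(r)_{ry^{-1}\chi_2}$; the first factor is tilting by the inductive hypothesis and the whole expression is tilting by Lemma \ref{l: conv tilting s}, hence so is the summand by Lemma \ref{c:proptilt} \eqref{c:proptilt-2}. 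In the general case, I would invoke Proposition \ref{p:conjsimple} to find a minimal element $u \in \extw$ such that $uru^{-1}$ is a simple reflection of both $\extw$ and $\extw_{uy^{-1}\chi_2}^\circ$, then conjugate: using Lemma \ref{l: convmintilt}, the equivalence $\tau(u)_{u\chi_2} \star (-) \star \tau(u^{-1})_{y^{-1}\chi_2}$ carries ${}_{\chi_2}\cT_{y^{-1}\chi_2}$ to ${}_{u\chi_2}\cT_{uy^{-1}\chi_2}$, and $\tau(y)_{\chi_2}$ is carried to $\tau(uyu^{-1})_{u\chi_2}$. One then reduces the convolution $\tau(x)_{\chi_1}\star\tau(y)_{\chi_2}$ — after inserting $\tau(u^{-1})_{u\chi_2}\star\tau(u)_{u\chi_2} \simeq \std(e)$-type identities via Lemma \ref{l: convmintilt} and Lemma \ref{inverse} — to a convolution involving $\tau(uyu^{-1})_{u\chi_2}$, where $uru^{-1}$ is now a genuine simple reflection, so the previous paragraph applies.

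Alternatively, and perhaps more cleanly, one can run a single induction on $\ell(y)$ using the ordinary length function on $\extw$: if $\ell(y) > 0$, pick a simple reflection $s$ of $\extw$ with $ys < y$; then $\tau(y)_{\chi_2}$ is a summand of $\tau(ys)_{\chi_2} \star \tau(s)_{sy^{-1}\chi_2}$, so $\tau(x)_{\chi_1} \star \tau(y)_{\chi_2}$ is a summand of $\tau(x)_{\chi_1} \star \tau(ys)_{\chi_2} \star \tau(s)_{sy^{-1}\chi_2}$. By induction $\tau(x)_{\chi_1} \star \tau(ys)_{\chi_2}$ is tilting, and then Lemma \ref{l: conv tilting s} shows the further convolution with $\tau(s)_{sy^{-1}\chi_2}$ is tilting; Lemma \ref{c:proptilt} \eqref{c:proptilt-2} finishes. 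This avoids the conjugation argument entirely, provided one is content to convolve on the right by $\tau(s)$ for $s$ simple in $\extw$ (not necessarily integral), which is exactly what Lemma \ref{l: conv tilting s} provides.

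\textbf{Main obstacle.} The only real subtlety is bookkeeping the character sheaf subscripts correctly through each convolution, since the monodromy on the right of each factor must match the monodromy on the left of the next — this is why the statement is phrased with three characters $\chi_1, \chi_2, \chi_3$. Once one is careful that $\tau(s)_{sy^{-1}\chi_2}$ lives in ${}_{y^{-1}\chi_2}\cT_{sy^{-1}\chi_2}$ and composes on the right of ${}_{\chi_1}\cT_{y^{-1}\chi_2}$, the inductive structure goes through mechanically. I expect the second (simpler) induction to be the one to write up, with the decomposition of $\tau(y)_{\chi_2}$ as a summand of $\tau(ys)_{\chi_2}\star\tau(s)_{sy^{-1}\chi_2}$ — which requires $\ell(ys) < \ell(y)$ and the support/multiplicity analysis from Proposition \ref{p:classtilts} — being the one non-formal input.
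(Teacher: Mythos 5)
Your second induction is exactly the paper's proof: reduce to $\tau_{23}$ indecomposable, induct on the ordinary length of $w$ in $\extw$ (simultaneously over the characters), use that $\tau(w)_{\chi_2}$ is a summand of $\tau(ws)_{\chi_2}\star\tau(s)$ coming from Proposition \ref{p:classtilts}, then apply Lemma \ref{l: conv tilting s} and closure of tilting sheaves under direct summands. The heavier first route via minimal elements and Proposition \ref{p:conjsimple} is not needed, as you yourself anticipated by recommending the simpler induction for the write-up.
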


\begin{proof} Without loss of generality, we may assume that $\tau_{23}$ is indecomposable, i.e., $\tau_{23} \simeq \tau(w)_{\chi_2}$ for some $w \in {}_{\chi_2}\tilW_{\chi_3}$. We will induct, simultaneously over all choices of $\chi_2$ and $\chi_3$, on the length of $w$. 

If $w$ is length zero, the result is clear. If $w = s$ is a simple reflection, it is the content of Lemma \ref{l: conv tilting s}. 

Finally, for the inductive step, as $w$ has positive length, we may write $w = xs$, for $x < w$, and $s$ a simple reflection. It follows from Proposition \ref{p:classtilts} that $\tau(w)_{\chi_2}$ is a summand in $\tau(x)_{\chi_2} \star \tau(s)_{x \chi_2}$. The result therefore follows by induction and Corollary \ref{c:proptilt}(3). \end{proof}    
     
The following is an immediate corollary of Proposition \ref{p:conv tilt}. 
    
\begin{cor} \label{c:xtx}If for a $\tilW$-orbit of monodromies $\Xi$  we consider the direct sum of additive categories
$$\xtx := \underset{\chi, \chi' \in \Xi} \oplus \hspace{1mm} \ctc$$
then $\xtx$ inherits the structure of a(n ordinary, non-unital) monoidal category for which the inclusion functor $\xtx \rightarrow \cM_\Xi$ is monoidal. Similarly, the analogous statement holds for ${}_\chi\cT_{\chi}^{\circ} \inj {}_\chi\cM_{\chi}^{\circ}$.
\end{cor}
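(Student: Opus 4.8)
The statement to prove is Corollary \ref{c:xtx}: the direct sum $\xtx = \oplus_{\chi,\chi'\in\Xi}\ctc$ of the additive categories of cofree tilting sheaves inherits a (non-unital, ordinary additive) monoidal structure, compatibly with the inclusion into $\cM_\Xi$.

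\begin{proof}[Proof plan]
The plan is to deduce this directly from the results already established, with only bookkeeping left to do. The monoidal structure on $\cM_\Xi$ is $\star$, which by Remark~\ref{rem:block decomp of monodromic hecke} and the observation preceding it restricts on the summands to functors $\star: {}_\chi\cM_{\chi'}\times{}_{\chi'}\cM_{\chi''}\to{}_\chi\cM_{\chi''}$ (and is zero between summands with mismatched inner monodromy). The content of Proposition~\ref{p:conv tilt} is precisely that $\star$ carries $\ctc\times{}_{\chi'}\cT_{\chi''}$ into ${}_\chi\cT_{\chi''}$, so the full subcategory $\xtx\subset\cM_\Xi$ is closed under $\star$. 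Thus the first step is simply to record that $\xtx$ is a non-unital monoidal subcategory of $\cM_\Xi$ at the level of $\infty$-categories: closure under $\star$ plus inheritance of the associativity constraint and all higher coherences from $\cM_\Xi$ is automatic for a full subcategory.

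The second, and only substantive, step is to check that $\xtx$, with this induced $\star$, is an \emph{ordinary} (i.e.\ $1$-categorical) additive monoidal category, not merely an $\infty$-categorical one. For this I would invoke that $\ctc$ is a discrete, Krull--Schmidt category: this is exactly the Proposition stated just after Lemma~\ref{c:proptilt}, whose proof uses Lemma~\ref{c:proptilt}\eqref{c:proptilt-3} (Hom-spaces between a $\std$-flagged and a $\costd$-flagged object are concentrated in degree zero and finite free over $\Fun(\frf_\chi)$). Since every object of $\xtx$ is simultaneously $\std$-flagged and $\costd$-flagged, all mapping spaces in $\xtx$ are discrete, so $\xtx$ is the nerve of an ordinary additive category. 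The bifunctor $\star$ between discrete categories is then automatically an ordinary bifunctor, and the associator, being a natural transformation between functors landing in a discrete category, is an ordinary natural isomorphism; the pentagon axiom holds because it holds in $\cM_\Xi$. Additivity (that $\star$ is biadditive) is inherited from $\cM_\Xi$, where $\star$ preserves finite direct sums in each variable (it is continuous, being defined via pushforwards). Finally I would note the non-unitality: the monoidal unit $\delta^{\chi\mon}$-type object $\std(e)_\chi$ of ${}_\chi\cM_\chi^\circ$ does lie in ${}_\chi\cT_\chi$ (it is clean, hence tilting, by Lemma~\ref{stdeqcostd} for length-zero elements), so within a single block-diagonal piece there is a unit, but the total category $\xtx$ has a family of such units indexed by $\Xi$ rather than a single one — hence ``non-unital'' in the strict sense, and this matches the phrasing ``the inclusion functor $\xtx\to\cM_\Xi$ is monoidal'' (non-unitally). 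For the last sentence of the Corollary, the identical argument applies verbatim to ${}_\chi\cT_\chi^\circ\subset{}_\chi\cM_\chi^\circ$, using that Proposition~\ref{p:conv tilt}, the discreteness/Krull--Schmidt property, and block-closure (Proposition~\ref{blockdec}, Lemma~\ref{l:b2b}) all respect the neutral block.

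The only place requiring any care — and the step I would flag as the main (very mild) obstacle — is the passage from the $\infty$-categorical monoidal structure on $\cM_\Xi$ to an honest $1$-categorical monoidal structure on $\xtx$: one must be slightly careful that ``full subcategory closed under $\otimes$ with discrete mapping spaces'' genuinely yields a classical monoidal category, i.e.\ that truncating does not destroy coherence. This is a standard fact (a symmetric/associative monoidal structure restricted to a full subcategory all of whose Hom-spaces are discrete descends to the homotopy category of that subcategory), and in the body of the paper one can cite the analogue of \cite[Lemma 7.19]{Zh} used already in the proof of Proposition~\ref{p: monoidal unit} to go the \emph{other} direction; here we only need the easy direction. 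No new computation is needed beyond what Propositions~\ref{p:conv tilt} and~\ref{blockdec} already provide.
\end{proof}
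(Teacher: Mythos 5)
Your proposal is correct and follows the same route as the paper, which simply records the corollary as an immediate consequence of Proposition \ref{p:conv tilt} (closure of tilting sheaves under convolution), with the discreteness of $\ctc$ already established earlier making the $1$-categorical upgrade routine. The extra care you devote to coherence under truncation is a fine expansion of what the paper leaves implicit, but introduces nothing beyond the paper's argument.
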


\begin{rem} \label{r:units}Here is a minor technical remark, regarding the (non)-unitality of $\xtx$. Consider the ind-completion of $\xtx$, which we denote by $\on{Ind}(\xtx)$. Explicitly, this simply consists of arbitrary direct sums, not necessarily finite, of indecomposable tilting objects. Then $\on{Ind}(\xtx)$ is naturally monoidal, and contains a monoidal unit, namely 
$$\underset{\chi \in \Xi} \oplus \hspace{1mm} \dcm.$$Since we have opted to only consider finite direct sums of tilting sheaves in $\xtx$, this unit may not lie in $\xtx$ itself, if the $\tilW$-orbit of $\chi$ is infinite. Nonetheless, $\xtx$ is a semi-rigid monoidal category, in the sense that $\on{Ind}(\xtx)$ is a unital monoidal category, is compactly generated, and every compact object is dualizable. 
\end{rem}

Similarly, although we do not explicitly need it, we obtain the monoidal subcategory $$\cT := \underset{\Xi} \oplus \hspace{1mm} \xtx \rightarrow \cM.$$

\sss{}We now describe the monoidal generators for the tilting objects. For this, we recall the subgroup $\Omega_\chi \hookrightarrow \tilW_\chi$, cf. Section \ref{s:comboblock}.

\begin{lem} \label{l:tiltgens}The following hold. 

\begin{enumerate}
\item $\xtx$ is generated as a Karoubian monoidal category by the objects
$$\tau(s)_\chi \quad \text{and} \quad \tau(\omega)_\chi,$$
for $s$ a simple reflection in $\tilW$, $\omega \in \Omega$, and $\chi \in \Xi$. That is, every indecomposable object of $\xtx$ is a summand in a convolution of such objects.

\item ${}_\chi\cT_\chi$ is generated as a Karoubian monoidal category by the objects 
$$\tau(r)_\chi \quad \text{and} \quad \tau(\omega)_\chi,$$
for $r$ a simple reflection in $\tilW_\chi^\circ$ and $\omega \in \Omega_\chi. $ That is, every indecomposable object of ${}_\chi\cT_\chi$ is a summand in a convolution of such objects. 

\item ${}_\chi\cT_\chi^{\circ}$ is generated as a Karoubian monoidal category by the objects 
$$\tau(r)_\chi,$$
for $r$ a simple reflection in $\tilW_\chi^\circ$. That is, every indecomposable object of ${}_\chi\cT_\chi^{\circ}$ is a summand in a convolution of such objects. 
\end{enumerate}
    
\end{lem}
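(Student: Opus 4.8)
The three statements are proved by the same bootstrapping scheme, so I would organize the argument around part (1) first, then deduce (2) and (3). The basic input is Proposition \ref{p:classtilts}: every indecomposable cofree tilting sheaf is some $\tau(w)_\chi$ with $w \in {}_\chi\tilW_{\chi'}$, so it suffices to show that each $\tau(w)_\chi$ is a summand of a convolution of the claimed generators. For part (1), I would factor $w$ using the semidirect product decomposition $\tilW_\chi = \Omega_\chi \ltimes \tilW_\chi^\circ$ only when relevant; more precisely, write $w$ as a word in simple reflections of the full extended affine Weyl group $\tilW$ together with a length-zero element $\omega \in \Omega$ (using $\tilW = W_{\aff} \rtimes \Omega$). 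Then an induction on $\ell(w)$, exactly as in the proof of Proposition \ref{p:classtilts}, does the job: if $w = xs$ with $x < w$ and $s$ a simple reflection of $\tilW$, then by Proposition \ref{p:classtilts} $\tau(w)_\chi$ is an indecomposable summand of $\tau(x)_{\chi}\star\tau(s)_{x\chi}$, which by the inductive hypothesis and Proposition \ref{p:conv tilt} is a convolution of generators; the base cases are $\ell(w)=0$, where $\tau(w)_\chi \simeq \tau(\omega)_\chi$ for the length-zero element $\omega = w$, and $w$ a single simple reflection, which is $\tau(s)_\chi$ itself. The only subtlety is that one must run the induction allowing $\chi$ (equivalently, the source and target monodromies) to vary, since $\tau(s)_{x\chi}$ carries a shifted monodromy; this is already the convention in Proposition \ref{p:classtilts} and Proposition \ref{p:conv tilt}, so no new difficulty arises.

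For part (2), the statement concerns indecomposable tilting sheaves $\tau(w)_\chi$ with $w \in \tilW_\chi$, and here I want to use generators indexed by simple reflections of the \emph{integral} Coxeter system $\tilW_\chi^\circ$ and by $\Omega_\chi$, rather than by the ambient $\tilW$. The key mechanism is Proposition \ref{p:conjsimple} together with Lemma \ref{l: convmintilt}: given a simple reflection $r$ of $\tilW_\chi^\circ$, there is a minimal-length element $w^\beta$ of some block such that $w^\beta r w^{\beta,-1}$ is simultaneously a simple reflection of $\tilW$ and of $\tilW_{w^\beta\chi}^\circ$, and conjugation by $\tau(w^\beta)$ and $\tau(w^{\beta,-1})$ is an equivalence of tilting categories carrying $\tau(r)_\chi$ to $\tau(w^\beta r w^{\beta,-1})_{w^\beta\chi}$ by (the analogue of) equation \eqref{eq: conjminx}. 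Thus the "integral" simple-reflection tilting sheaves $\tau(r)_\chi$ are, up to convolution with minimal tiltings $\tau(w^\beta)_\chi$ (which are clean, hence equal to standard sheaves $\std(w^\beta)_\chi$ by Lemma \ref{l:minclean}), identified with genuine simple-reflection tiltings of $\tilW$. Concretely I would: (a) use the decomposition ${}_\chi\tilW_\chi = \Omega_\chi \ltimes \tilW_\chi^\circ$ to write any $w \in \tilW_\chi$ as $\omega \cdot v$ with $\omega = w^{\sigma(\beta)}$ minimal in its $\Omega_\chi$-coset and $v \in \tilW_\chi^\circ$; (b) induct on $\ell_\chi(v)$, writing $v = v' r$ with $r$ a simple reflection of $\tilW_\chi^\circ$ and $v' <_\chi v$; (c) apply Proposition \ref{p:classtilts} and Lemma \ref{l:supptilt} to see $\tau(w)_\chi$ is a summand of $\tau(w v'^{-1}\cdot\text{something})_\chi \star \tau(r)_{?}$ — more carefully, $\tau(\omega v)_\chi$ is a summand of $\tau(\omega v')_\chi \star \tau(r)_{r v'^{-1}\omega^{-1}\chi}$ by Proposition \ref{p:classtilts}; (d) handle the length-zero and minimal pieces via $\tau(w^{\sigma(\beta)})_\chi = \std(w^{\sigma(\beta)})_\chi$ being the $\tau(\omega)_\chi$ with $\omega \in \Omega_\chi$. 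Each convolution on the right is by a generator of the claimed list or an already-handled tilting sheaf, and Proposition \ref{p:conv tilt} guarantees the convolutions stay tilting so the Karoubian-monoidal closure statement makes sense.

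For part (3), the neutral block ${}_\chi\cT_\chi^\circ$ is generated by the $\tau(r)_\chi$ for $r$ simple in $\tilW_\chi^\circ$: one runs the same induction as in (2) restricted to $w \in \tilW_\chi^\circ$ (so $\omega = e$), and the block-preservation statement — that convolution with $\tau(s)$ or with standard/costandard sheaves sends blocks to blocks (Lemma \ref{l:b2b} and the remark following it) — ensures every convolution encountered stays inside the neutral block ${}_\chi\cM_\chi^\circ$, so its tilting summands lie in ${}_\chi\cT_\chi^\circ$. I expect the main obstacle to be bookkeeping rather than conceptual: one must track carefully which monodromy subscript appears on each factor after each conjugation or convolution step (the subscripts shift under the $\tilW$-action, and Proposition \ref{p:conjsimple} only gives a minimal element for \emph{some} block, not the one in front of us), and one must verify that the minimal tilting sheaves $\tau(w^\beta)_\chi$ used as "conjugators" genuinely lie in the Karoubian monoidal closure of the listed generators — for (2) and (3) this is automatic since $\tau(w^\beta)_\chi = \std(w^\beta)_\chi$ is a convolution of $\std(s)_\chi$'s, but one should confirm those $\std(s)_\chi$ are themselves $\tau(s)_\chi$ or summands thereof, which holds when $s \notin \tilW_\chi^\circ$ by Lemma \ref{stdeqcostd} and when $s \in \tilW_\chi^\circ$ because a reduced word for $w^\beta$ can be taken to avoid such $s$ by minimality. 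Once this monodromy-bookkeeping is set up cleanly the inductions are routine.
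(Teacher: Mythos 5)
Your proposal is correct and follows essentially the same route as the paper: part (1) is exactly the induction already carried out in the proofs of Propositions \ref{p:classtilts} and \ref{p:conv tilt}, and parts (2)--(3) rest on the same mechanism of conjugating by minimal elements (Proposition \ref{p:conjsimple}, Lemma \ref{l: convmintilt}, and the identities \eqref{eq: conjminx}--\eqref{eq: conjminxr}) to reduce an integral simple reflection $r$ to one simple in $\tilW$, where the summand claim $\tau(x)_\chi \overset{\oplus}\hookrightarrow \tau(xr)_\chi \star \tau(r)_\chi$ holds for support reasons (note this step is not literally "Proposition \ref{p:classtilts}", which covers only $r$ simple in $\tilW$). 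One clarification: the conjugating tiltings $\tau(u)_{u\chi}$ need not lie in the Karoubian monoidal closure of the listed generators --- they live in ${}_{u\chi}\cT_{\chi}$ rather than ${}_\chi\cT_\chi$ and serve only as a tool to verify the summand statement --- so your worry on that point (and the attempted resolution via clean standards) is unnecessary.
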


\begin{proof}Assertion (1) was proven in the course of the proof of Proposition \ref{p:conv tilt}.    

For assertion (2), consider any $x \in \tilW_\chi$ and a simple reflection $r$ in $\tilW_\chi^\circ$ such that $xr <_{\chi} x$ in the Bruhat order on $\tilW_\chi$, i.e., for which $x(\halpha_r)$ is a negative integral coroot. It is enough to show that in this case$$\tau(x)_\chi \overset{\oplus}\hookrightarrow \tau(xr)_\chi \star \tau(r)_\chi,$$i.e., that $\tau(x)_\chi$ is a summand in the convolution of $\tau(xr)_\chi$ and $\tau(r)_\chi$, in fact appearing with multiplicity one. 

When $r$ is also a simple reflection in $\tilW$, then the assertion is clear for support reasons. 

For a general reflection $r$, we choose a minimal element $u$ such that $uru^{-1}$ is a simple reflection in both $\extw$ and $\extw_{u\chi}^{\circ}$, as in the proof of Lemma \ref{l:supptilt}. The previous paragraph has shown that $$\tau(uxu^{-1})_{u\chi} \overset{\oplus}\hookrightarrow \tau(uxru^{-1})_{u\chi} \star \tau(uru^{-1})_{u\chi}.$$ appearing with multiplicity one. 

The result now follows from the identities \eqref{eq: conjminx} and \eqref{eq: conjminxr}.

Assertion $(3)$ can be proved similarly. 
\end{proof}

\subsection{Reconstructing the affine Hecke category via tiltings} 

We have isolated within the affine Hecke category the additive monoidal subcategory of cofree monodromic tilting sheaves, and established some of its basic properties, in particular a bijection between isomorphism classes of indecomposable objects and strata. The latter result suggests that one should be able to reconstruct the affine Hecke category itself from the tilting sheaves, and in this subsection we explain how this may be done, which is essentially a renormalization procedure. 

\sss{} To state the result, recall that to any (ordinary) additive category $\mathscr{A}$ one can attach its (dg-enhanced) bounded homotopy category $\on{K}^b\mathscr{A}$, which is a stable category. Namely, the objects are bounded chain complexes $M^\bullet$ of objects of $\mathscr{A}$, and the morphisms between two such complexes $M^\bullet, N^\bullet$ are the standard chain complex whose $i^{th}$ cohomology calculates maps of complexes from $M^\bullet \rightarrow N^\bullet[i]$, modulo chain homotopy. I.e., we have 
    \begin{equation*}\Hom^i(M^\bullet, N^\bullet) = \underset{j \in \mathbb{Z}}\Pi \hspace{.5mm} \Hom_{\mathscr{A}}(M^j, N^{j+i}),\end{equation*}
    with differential $$d(\phi) = d_M \circ \phi + (-1)^{\lvert \phi \rvert} \phi \circ d_N,$$where $\lvert \phi \rvert$ denotes the degree of $\phi$, and $d_M, d_N$ the differentials on $M$ and $N$, respectively.

    In particular, we have $\on{K}^b\xtx$, the bounded homotopy category of cofree monodromic tilting sheaves in $\cM_\Xi$, as well as its ind-completion $$\on{K}^b\xtx \hookrightarrow \on{Ind}(\on{K}^b\xtx).$$ Similarly, for $\chi \in \Xi$, we have the corresponding full subcategories $\on{K}^b\cT_\chi \hookrightarrow \on{Ind}(\on{K}^b\cT_\chi)$.

\sss{} Optimistically, one might hope that the Hecke category is simply equivalent to the above category, i.e., ask whether there is an equivalence 
$$\on{Ind}(\on{K^b}\cT_\Xi) \overset{?} \simeq \sM_\Xi.$$This cannot be literally true, as for any $\chi \in \Xi$, the monoidal unit $\tau(e)_\chi$ is not compact, whereas the corresponding object of $\Ind(\on{K}^b \cT_\chi)$ is by definition. As we will see, this is essentially the unique obstruction to above na\"ive hope, and may be accounted for exactly as in Corollary \ref{c:compsincofree}.

Namely, note that $\on{Ind}(\on{K}^b\cT_\Xi)$ inherits a monoidal structure from that on $\cT_\Xi$. In particular, if we denote its monoidal unit by $$\delta^{\Xi\mon} \in \cT_\Xi,$$via the right monodromy $\id \simeq - \star \delta_{\Xi\mon}$ our category is enriched over 
\begin{equation} \label{e:linearityunit}\End(\delta^{\Xi\mon}) \simeq \Fun(\frf_\Xi).\end{equation}
Let us utilize this linearity in the following manner. Consider within $\indcoh(\frf_\Xi)$ the idempotent complete stable subcategory generated by the dualizing sheaves on all the components of $\frf_\Xi$: 
$$\langle \omega_{\frf_\chi}, \chi \in \Xi \rangle \subset \indcoh(\frf_\Xi),$$
and similarly for a fixed component $\langle \omega_{\frf_\chi} \rangle \subset \indcoh(\frf_\chi).$ Note these are preserved under $!$-tensor product, i.e., inherit natural symmetric monoidal structures, as do their ind-completions, which we denote by 
$$\wt{\indcoh}(\frf_\Xi) := \Ind( \langle \omega_{\frf_\chi}, \chi \in \Xi \rangle), \quad \quad \wt{\indcoh}(\frf_\chi) := \Ind( \langle \omega_{\frf_\chi} \rangle).$$
\begin{rem} In fact, it is not difficult to shows that, for each $\chi \in \Xi$, $$\langle \omega_{\frf_\chi} \rangle \subset \indcoh(\frf_\chi)$$ agrees with the full subcategory of dualizable objects with respect to the $!$-tensor product, and moreover with the full subcategory consisting of objects whose $!$-restriction to any closed subscheme $Z \subset \frf_\chi$ lies in $\Coh(Z)$.
\end{rem}
By Lemma \ref{l:compsincofree}, we have adjunctions 
\begin{equation} \label{e:indcohadj} \iota_!: \indcoh(\frf_\Xi) \rightleftharpoons \wt{\indcoh}(\frf_\Xi): \iota^!, \quad \text{and} \quad \iota_!: \indcoh(\frf_\chi) \rightleftharpoons \wt{\indcoh}(\frf_\chi): \iota^!,\end{equation}
wherein $\iota_!$ is fully faithful and $\iota^!$ is monoidal. With this notation in hand, the above linearity \eqref{e:linearityunit} in particular endows our categories with actions 
$$\Ind(\on{K}^b \cT_\Xi) \circlearrowleft \wt{\indcoh}(\frf_\Xi) \quad \text{and} \quad \Ind(\on{K}^b \cT_\chi) \circlearrowleft \wt{\indcoh}(\frf_\chi).$$
We now show we obtain the usual Hecke categories via base changing along $\iota^!$.

\begin{thm} \label{t:renorm}  There is a canonical adjunction
\begin{equation} \label{e:adjxi}\iota_!: \cM_\Xi \rightleftharpoons \on{Ind}(\on{K}^b \xtx): \iota^! \end{equation}
wherein $\iota^!$ is monoidal and $\iota_!$ is fully faithful, with essential image the ind-completion of an explicit monoidal subcategory of $\on{K}^b \cT_\Xi$ describable purely in terms of $\cT_\Xi$. 

Moreover, $\iota^!$ factors through an equivalence
\begin{equation} \label{e:basechangexi}\on{Ind}(\on{K}^b \cT_\Xi) \underset{\wt{\indcoh}(\frf_\Xi)} \otimes \indcoh(\frf_\Xi) \simeq \cM_\Xi,\end{equation}
identifying the adjunction \eqref{e:adjxi} with the base change adjunction $\on{id}_{\Ind(\on{K}^b(\cT_\Xi))} \otimes \eqref{e:indcohadj}$.

Similarly, for any $\chi \in \Xi$ the adjunction \eqref{e:adjxi} restricts to an adjunction on the full subcategories
\begin{equation} \label{e:adjchi} \iota_!: \cM_\chi \rightleftharpoons \on{Ind}(\on{K}^b \cT_\chi): \iota^!,\end{equation}
wherein again $\iota^!$ is monoidal, $\iota_!$ is fully faithful, and $\iota^!$ factors through an equivalence
\begin{equation} \label{e:basechangechi}\Ind(\on{K}^b(\cT_\chi)) \underset{\wt{\indcoh}(\frf_\chi)} \otimes \indcoh(\frf_\chi) \simeq \sM_\chi.\end{equation}

\end{thm}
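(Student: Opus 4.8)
\textbf{Proof strategy for Theorem \ref{t:renorm}.}
The plan is to build the adjunction \eqref{e:adjxi} by first identifying a fully faithful functor $\iota_!$ from $\cM_\Xi$ into $\on{Ind}(\on{K}^b \xtx)$, and then recognizing its right adjoint as a base change along $\iota^!: \wt{\indcoh}(\frf_\Xi) \to \indcoh(\frf_\Xi)$. The starting point is the observation that $\xtx$ sits inside $\cM_\Xi$ as an ordinary additive subcategory whose objects are dualizable (Lemma \ref{l:duals}) and whose Hom-spaces are concentrated in degree zero and finite free over $\Fun(\frf_\chi)$ (Lemma \ref{c:proptilt}\eqref{c:proptilt-3}). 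Hence there is a canonical functor $\on{K}^b \xtx \to \cM_\Xi$ realizing a bounded complex of tiltings as its totalization; ind-extending gives a continuous monoidal functor $\on{Ind}(\on{K}^b \xtx) \to \cM_\Xi$. The first step is to show this functor admits a fully faithful left adjoint, which I will call $\iota_!$; concretely $\iota_!$ should send a compact generator $j(w)_{!,\wt\chi}$ of $\cM_\Xi$ to a Rouquier-type complex of tiltings resolving it (using that each stratum closure is covered by finitely many strata, and that $\std(w)_\chi$ has a finite resolution by tiltings $\tau(v)_\chi$ with $v \leq w$, which follows from Proposition \ref{p:classtilts} and Lemma \ref{l:supptilt}). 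The essential image of $\iota_!$ is then the ind-completion of the thick subcategory of $\on{K}^b \cT_\Xi$ generated by such complexes, which is describable intrinsically since the relevant resolutions are built inside $\cT_\Xi$; this gives the ``explicit monoidal subcategory'' claim.

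The second and technical heart of the argument is to identify the right adjoint $\iota^!$ via the base change formula \eqref{e:basechangexi}. For this I would use the linearity \eqref{e:linearityunit}: $\on{Ind}(\on{K}^b \cT_\Xi)$ is a module over $\wt{\indcoh}(\frf_\Xi)$ through the right monodromy action, and $\cM_\Xi$ is a module over $\indcoh(\frf_\Xi)$ (via Proposition \ref{prop: tame geometric Langlands for tori} applied to $\wt\AA$, cf. Remark \ref{rem: monodromy action}), with $\iota^!$ being lax linear over the monoidal functor $\iota^!$ of \eqref{e:indcohadj}. The key point is that the composite $\cM_\Xi \xrightarrow{\iota_!} \on{Ind}(\on{K}^b\cT_\Xi) \xrightarrow{\iota^!} \cM_\Xi$ is the identity (fully faithfulness of $\iota_!$), and that $\iota_! \iota^!$ is given by $- \underset{\wt{\indcoh}(\frf_\Xi)}{\otimes} \wt{\indcoh}(\frf_\Xi)$ applied to $\indcoh(\frf_\Xi)$, i.e. the projector onto the full subcategory generated by objects of the form (tilting complex) $\otimes_{\Fun(\frf_\Xi)} \omega_{\frf_\chi}$. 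I would then invoke the universal property of base change in $\lincat_E$: since $\indcoh(\frf_\Xi) = \wt{\indcoh}(\frf_\Xi) \otimes_{\wt{\indcoh}(\frf_\Xi)} \indcoh(\frf_\Xi)$ is an idempotent-type localization (the adjunction \eqref{e:indcohadj} has fully faithful left adjoint, so it is a colocalization), base changing $\on{Ind}(\on{K}^b \cT_\Xi)$ along it produces exactly the full subcategory of objects on which the monodromy acts ``coherently,'' and a generation-and-fully-faithfulness check identifies this with $\cM_\Xi$. Concretely: $\iota^!$ is essentially surjective because its image contains the compact generators (every $j(w)_{!,\wt\chi}$ is hit, by the resolution above), and the base-changed functor is fully faithful because on compact generators one reduces, via d\'evissage along the Cousin/stratum filtration, to the computation of $\Hom$ between standard objects, which by \eqref{e:homstdcostd} and Lemma \ref{l:enddcm} is exactly what the $\indcoh(\frf_\chi)$-linear structure predicts.

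The restriction to a single component $\chi \in \Xi$ in \eqref{e:adjchi}--\eqref{e:basechangechi} is then formal: by Remark \ref{rem:block decomp of monodromic hecke} and Corollary \ref{r:block}, both $\cM_\Xi$ and $\on{Ind}(\on{K}^b\cT_\Xi)$ decompose as (possibly infinite) products/sums indexed by $\chi \in \Xi$, and all the functors involved respect this decomposition since convolution of a sheaf in ${}_\chi\cM_{\chi'}$ with one in ${}_{\chi''}\cM_{\chi'''}$ vanishes unless $\chi' = \chi''$. Thus one simply restricts everything to the $\chi$-$\chi$ summand; the monoidality of $\iota^!$ and fully faithfulness of $\iota_!$ are inherited, and \eqref{e:basechangechi} follows from \eqref{e:basechangexi} by passing to the relevant direct summand, using $\wt{\indcoh}(\frf_\Xi) = \prod_\chi \wt{\indcoh}(\frf_\chi)$ and $\indcoh(\frf_\Xi) = \bigsqcup_\chi \indcoh(\frf_\chi)$ from \eqref{eq: geometric block decomposition}.

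\textbf{Expected main obstacle.} The hard part will be step two: proving that the base change functor \eqref{e:basechangexi} is an equivalence rather than merely a fully faithful embedding or a localization. The subtlety is that when $E$ is integral, $\frf_\chi$ is a complicated ind-scheme (not just a formal disk), so one cannot reduce to a naive formal-completion computation; one must carefully use Lemma \ref{l:compsincofree} and the fact that $\Coh(\frf_\chi)$ is generated by structure sheaves of its finite closed subschemes to control the base change, and verify the $\Hom$-computation d\'evissage is compatible with the $\wt{\indcoh}(\frf_\chi)$-module structure. Establishing that $\iota_!$ really is fully faithful --- equivalently that the tilting resolutions of $\std(w)_\chi$ assemble into a homotopy-coherent functor out of all of $\cM_\Xi$, not just its homotopy category --- is where the $\infty$-categorical bookkeeping (e.g. \cite[Lemma 7.19]{Zh}-type upgrades) will require the most care.
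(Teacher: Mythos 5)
Your proposal is correct and follows essentially the same route as the paper's proof: embed $\on{K}^b\cT_\Xi$ fully faithfully and monoidally into $\cM_\Xi$ using the degree-zero tilting Homs, represent the (co)standard sheaves and hence the compact generators $j(w)_{!,\wt\chi}$ by bounded complexes of tiltings to define $\iota_!$ on compacts, identify the base change along the colocalization $\wt{\indcoh}(\frf_\Xi)\rightleftharpoons\indcoh(\frf_\Xi)$ by observing that $\tau(w)_\chi\otimes\omega_{Z_{\wt\xi}}$ is filtered by the objects $j(x)_{!,w\wt\xi}$, and restrict to a fixed $\chi$ via the block decomposition. The only minor point is that the tilting resolutions of $\std(w)_\chi$ require the inductive argument with the partial adjoints on the subcategories $\on{K}^b\cT_\Xi(\leqslant_\beta y)$ (the paper's Lemma on (co)standards lying in $\on{K}^b\cT_\Xi$), not just Proposition \ref{p:classtilts} and Lemma \ref{l:supptilt} as cited, but this is exactly the induction your sketch implicitly describes.
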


\begin{proof}[Proof of Theorem \ref{t:renorm}] For ease of reading, we break the proof into a sequence of steps and sublemmas.

{\it Step 1.} The monoidal inclusion $\cT_\Xi \rightarrow \cM_\Xi$ prolongs to a fully faithful monoidal functor 
    \begin{equation*}\iota^!: \on{K}^b \cT_\Xi \hookrightarrow \cM_\Xi,\end{equation*}
    for the monoidality, one may use the results of \cite{CD}, Appendix A. By construction, this restricts to a monoidal fully faithful embedding
    $$\iota^!: \on{K}^b \cT_\chi \hookrightarrow \cM_\chi.$$

{\it Step 2.} Recall the indecomposable cofree monodromic tilting sheaves $$\tau(y)_\chi, \quad \quad  y \in \tilW, \chi \in \Xi.$$ For a fixed $y$ and $\chi$, let $\beta \in {}_{\chi}\Omega_{\chi'}$ denote the block containing $y$. We denote by \begin{equation*}\ktc(\leqslant_{\beta} \! y) \hookrightarrow \on{K}^b \cT_\Xi\end{equation*} the full subcategory generated by the objects $\tau(w)_{\chi}$, for $w \in {}_{\chi}\extw^{\beta}_{\chi'}$  satisfying $w \leqslant_{\beta} y$. We define the further full subcategory 
$$i_*: \ktc(<_{\beta} \! y) \hookrightarrow \ktc(\leqslant_{\beta} \! y)$$similarly, where we replace $w \leqslant_{\beta} y$ with $w <_{\beta} y$. Let us denote the {\em a priori} partially defined left and right adjoints to $i_*$ by  $i^*$ and $i^!$, respectively.

\begin{lem} The functors $i^*$ and $i^!$ are defined on all of $\ktc(\leqslant_{\beta} \! y)$. Moreover, we have canonical distinguished triangles
    \begin{align*} i_* i^! \tau(y)_{\chi} \rightarrow \tau(y)_{\chi} \rightarrow \nabla(y)_\chi &\xrightarrow{+1} \\ \Delta(y)_\chi \rightarrow \tau(y)_{\chi} \rightarrow i_* i^* \tau(y)_{\chi} &\xrightarrow{+1}.\end{align*}
    In particular, the (co)standard cofree monodromic sheaves lie in $\on{K}^b \cT_\Xi \hookrightarrow \cM_\Xi$. 
        \label{l:stdcost}\end{lem}
    
    \begin{proof}  We proceed via induction on $\ell_{\beta}(y)$. The $\costd$-flag for $\tau(y)_\chi$ yields in particular an exact sequence
        \begin{equation*}0 \rightarrow K \rightarrow \tau(y)_\chi \rightarrow \nabla(y)_\chi \rightarrow 0,\end{equation*}
        where $K$ admits a finite filtration by $\nabla(z)_\chi$, $z  <_{\beta}  y$ by Lemma \ref{l:supptilt}. By inductive hypothesis, all of those lie in $\ktc(<_{\beta} \! y)$, whence so does $K$. As for any object $\xi$ of $\ktc(<_{\beta} \! y)$, one has $\Hom(\xi, \nabla(y)_\chi) = 0$, it follows that the above exact sequence satisfies the required properties of the triangle \begin{equation*}i_* i^! \tau(y)_{\chi} \rightarrow \tau(y)_{\chi} \rightarrow \nabla(y)_\chi \xrightarrow{+1}.\end{equation*}It moreover follows that $i^!$ is defined on all of $\ktc(\leqslant_{\beta} \! y)$, as $i^!$ is now defined on the generating objects $\tau(z)_\chi$, $z \leqslant_{\beta} y$. 
        
        The argument for $\Delta(y)_\chi$ and $i^*$ is entirely similar. 
    \end{proof}
    
    {\it Step 3.} For $y$ and $\chi$ as above, recall that for any lift $\wt{\chi} \in \mathscr{L}_\chi$ we have two associated compact objects of $\cM_\Xi$, namely $j(y)_{!,\wt{\chi}}$ and  $j(y)_{*, \wt{\chi}},$ cf. Section \ref{ss:compgens}. Recall in addition that the lift $\wt{\chi}$ corresponds to a closed subscheme 
    $$Z_{\wt{\chi}} \hookrightarrow \frf_\chi,$$
    cf. the proof of Lemma \ref{l:compsincofree}. In the following corollary, we implicitly equip $\on{Fun}(Z_{\wt{\chi}})$ with the structure of a $\on{Fun}(\frf_\chi)$-module via this embedding.

    \begin{cor} \label{cor: standard characterization} The full subcategory $\ktc(\leqslant_{\beta} \! y) \hookrightarrow \cM_\Xi$ contains the objects $j(y)_{!, \wt{\chi}}$ and $j(y)_{*, \wt{\chi}}$. 
    
    Moreover, $j(y)_{!, \wt{\chi}}$ may be characterized as the unique object of $\ktc(\leqslant_{\beta} \! y)$, up to unique isomorphism, equipped with identifications\begin{align} \label{e:stdhoms1}&\Hom(j(y)_{!, \wt{\chi}}, \xi) \simeq 0, &\text{for }\xi \in \ktc(<_{\beta} \! y), \text{ and} \\ \label{e:stdhoms2} &\Hom(j(y)_{!, \wt{\chi}}, \std(y)_\chi) \simeq \on{Fun}(Z_{\wt{\chi}}).&\end{align}
    
    Similarly, $j(y)_{*, \wt{\chi}}$ may be characterized as the unique object, up to unique isomorphism, equipped with identifications
        \begin{equation*}\Hom(\xi, j(y)_{*,\wt{\chi}}) = 0, \text{} \xi \in \ktc(<_{\beta} \! y), \quad \text{and} \quad \Hom(j(y)_{*, \wt{\chi}}, \std(y)_\chi) \simeq \on{Fun}(Z_{\wt{\chi}}).\end{equation*}    
    \end{cor}

    \begin{proof} We only explicitly discuss the case of $j(y)_{!, \wt{\chi}}$, with the $*$-extension being similar. 
    
    We saw in Lemma \ref{l:stdcost} that $\ktc(\leqslant_{\beta} \! y)$ contains $\Delta(y)_\chi$. By Lemma \ref{l:compsincofree}, $j(y)_{!, \wt{\chi}}$ lies in the small stable subcategory generated by $\Delta(y)_\chi$, hence in $\ktc(\leqslant_{\beta} \! y)$.

  It remains to verify the characterization of $j(y)_{!, \wt\chi}$ by the listed mapping properties. So, consider an object  $$\eta \in \ktc(\leqslant_{\beta} \! y)$$ equipped with isomorphisms as in \eqref{e:stdhoms1}-\eqref{e:stdhoms2}. 
  
  Using \eqref{e:stdhoms1}, in combination with the block decomposition, i.e. Proposition \ref{blockdec}, it follows that, as an object of $\cM_\Xi$, $\eta$ must be $!$-extended from the stratum $\wt{I}y\wt{I}$. That \eqref{e:stdhoms2} then determines the restriction of $\eta$ to the stratum itself, which we {\em a priori} know lies in the small stable subcategory generated by $\dcm$, follows from Lemma \ref{l:enddcm}. 
    \end{proof}
    
    {\it Step 4.} As a consequence, we may do the following. Denote by $\mathscr{C} \hookrightarrow \ktc$ the full subcategory generated by the objects 
    \begin{equation*}j(w)_{!, \wt{\chi}}, \quad \quad w \in \tilW, \chi \in \Xi, \wt{\chi} \in \mathscr{L}_\chi.\end{equation*}
    Note that $\mathscr{C}$ maps, via $\iota$, isomorphically onto the full subcategory of compact objects of $\cM_{\Xi}$. In particular, $\mathscr{C}$ is preserved by the convolution product on $\ktc$, and hence inherits the structure of a monoidal category. 
    
    If we form its ind-completion $\on{Ind}(\mathscr{C})$, this again inherits the structure of a monoidal category. Moreover, as an immediate consequence of the construction, we have that the composition of monoidal functors $\mathscr{C} \hookrightarrow \ktc \hookrightarrow \cM_{\Xi}$, induces, via the universal property of ind-completion, a monoidal functor
    \begin{equation}\label{eq: equivalence ind tilting}F: \on{Ind}(\mathscr{C}) \rightarrow \cM_{\Xi},\end{equation}which is moreover an equivalence. 
    By construction, the composition 
    \begin{equation*}\cM_\Xi \xrightarrow{F^{-1}} \on{Ind}(\mathscr{C}) \hookrightarrow \on{Ind}(\ktc) \end{equation*}
    provides a left adjoint $\iota_!$ to $\iota^!$, as desired. The assertion adjunction \eqref{e:adjchi} for a fixed $\chi$ and its properties follows from the construction.

{\em Step 5.} Having established the adjunctions \eqref{e:adjxi} and \eqref{e:adjchi}, it remains to explain why the Verdier quotient $\iota^!$ factors through the equivalences \eqref{e:basechangexi} and \eqref{e:basechangechi}. To see this, consider any category $\mathscr{N}$, equipped with a set of not necessarily compact generators $n_\alpha$ and an action 
$$ \mathscr{N} \circlearrowleft \wt{\indcoh}(\frf_\Xi).$$Then in the adjunction 
$$\id \otimes \iota_!: \mathscr{N} \underset{\wt{\indcoh}(\frf_\Xi)} \otimes \indcoh(\frf_\Xi) \rightleftharpoons \mathscr{N} \underset{\wt{\indcoh}(\frf_\Xi)} \otimes \wt{\indcoh}(\frf_\Xi): \id \otimes \iota^!,$$
$\id \otimes \iota_!$ is a fully faithful embedding, with essential image generated under colimits by objects of the form $n_\alpha \otimes \omega_{Z_{\wt{\xi}}}$, for connected closed subschemes $Z_{\wt{\xi}} \subset \frf_{\Xi}$. 

In particular, let us take $\mathscr{N} = \Ind(\on{K}^b\cT_\Xi)$, and the $n_\alpha$ to be the indecomposable tilting sheaves. We then note that, for a fixed $\tau(w)_\chi$, and $Z_{\wt{\xi}}$. we have that $\tau(w)_\chi \otimes \omega_{Z_{\wt{\xi}}}$ vanishes unless $\frf_{\wt{\xi}} \subset \frf_{w^{-1}\chi}$, and in the latter case it admits a finite filtration with successive quotients of the form $$j(x)_{!, w \wt{\xi}}, \quad \quad x \leqslant_\beta w,$$with $j(w)_{!, w \wt{\xi}}$ occuring with multiplicity one. It therefore follows that the essential image of $\id \otimes \iota_!$ is precisely the formerly constructed fully faithful embedding $$\sM_\Xi \subset \Ind(\on{K}^b\cT_\Xi),$$which shows the adjunctions \eqref{e:adjxi} and \eqref{e:basechangexi} indeed coincide. Moreover, the argument applies {\em mutatis mutandis} for fixed $\chi \in \Xi$ to match the adjunctions \eqref{e:adjchi} and \eqref{e:basechangechi}. \end{proof}

\section{Soergel functor}\label{s:Soergel}

In this section, we construct a functor from ${}_{\chi}\cM_{\chi}$ to a certain category of Soergel bimodules. This is achieved by consider the action of ${}_{\chi}\cM_{\chi}$ on an affine Whittaker category ${}_{\phi}\cM_{\chi}$, which will then be ``collapsed" to $D(\wt \AA/(\wt \AA, \chi\mon))$.

\subsection{Affine Whittaker category} \label{ss:AffineWhittaker}
We define a module category ${}_{\phi}\cM_{\chi}$ for ${}_{\chi}\cM_{\chi}$ by imposing an affine version of the Whittaker condition on sheaves on the affine flag variety.  This is a generalization of the Whittaker model constructed in \cite{BY} to the monodromic setting for loop groups. On the other hand, ${}_{\phi}\cM_{\chi}$ can be viewed as a category of automorphic sheaves for $\p^{1}$, and is a variant of the automorphic category considered in \cite{HNY} that gave rise to generalized Kloosterman sheaves.

We consider a twisted loop group $LG$ arising from a quasi-split group $G$ over $K=k\lr{t}$ as in \S\ref{ss:tw loop}, which admits a globalization $\cG$ over $\Gm$ as in \S\ref{ss:glob G}. 

\sss{Loop group at $\infty$}

Let $K_{\infty}=k\lr{t^{-1}}$ be the completed local ring of $\p^{1}$ at $\infty$. Then $\cG|_{\Spec K_{\infty}}$ is a reductive group over $K_{\infty}$ that is isomorphic to $G$ if we identify $K_{\infty}$ with $K$ via $t\mapsto  t^{-1}$. Let $L_{\infty}G$ be the loop group of $\cG|_{\Spec K_{\infty}}$. We have
\begin{equation*}
L_{\infty}G=\GG\lr{t^{-1/e}}^{\mu_{e}}
\end{equation*}
with the same pinned $\mu_{e}$-action on $\GG$ and $\z\in \mu_{e}$ sends $t^{-1/e}\mapsto\z^{-1}t^{-1/e}$. Let $\II_{\infty}\subset \GG\tl{t^{-1/e}}$ be the Iwahori subgroup of $\GG\lr{t^{-1/e}}$ corresponding to $\BB^{\opp}$. Let $I_{\infty}$ be the neutral component of $L_{\infty}G \cap \II_{\infty}$. The Lie algebra $\Lie I_{\infty}$ is the $t^{-1}$-adic completion of the span of $\fra$ and the negative affine root spaces $(L\frg)_{\a}$, $\a\in \Phi_{\aff}^{-}$.

Let $I_{\infty}^{+}$ be the pro-unipotent radical of $I_{\infty}$. Projection to the negative simple affine roots give a surjective homomorphism
\begin{equation*}
I_{\infty}^{+}\sur \prod_{\a_{i}\in S_{\aff}}(L\frg)_{-\a_{i}}=:V_{\infty}
\end{equation*}
whose kernel we denote by $I^{++}_{\infty}$. Let $\phi$ be the composition
\begin{equation}\label{aff gen char}
\phi: I_{\infty}^{+}\sur \prod_{\a_{i}\in S_{\aff}}\Ga\xr{add}\Ga
\end{equation}
where ``add'' means adding all coordinates.

Let 
\begin{equation*}
\cL_{\phi}=\phi^{*}\AS_{\psi}
\end{equation*}
be the pullback rank one character sheaf on $I_{\infty}^{+}$, where $\AS_{\psi}$ is as in \S \ref{SSS: char sheaf on Ga}.

Let $\Bun_{\cG}$ be the moduli stack of $\cG$-bundles over $\p^1$.
Let $\Bun_{\cG}(I^{+}_{0}, I^{++}_{\infty})$ be the moduli stack of $\cG$-torsors over $\p^{1}$ with $I^{+}$-level structure at $0$ and $I^{++}_{\infty}$-level structure at $\infty$. 
We have the uniformization map at $0$
\begin{equation}\label{unif}
LG\to LG/I^{+}\to \Bun_{\cG}(I^{+}_{0}, I^{++}_{\infty})\to \Bun_{\cG}.
\end{equation}

Recall the central extension $\tLG$ we construct in Section \ref{ss: cent ext}. It comes from the pullback $\cL_{\det, V}$ of the determinant line bundle via a homomorphism $\GG \to \GL(V)$. The line bundle $\cL_{\det, V}$ in fact descends to a line bundle on $\Bun_{\cG}$ as follows. Choose a vector bundle $\cV$ over $\p^1$ with $\cG$-action so that its fiber over $\Spec K$ is identified with $V$ with the $G$-action, then $\Bun_{\cG}$ carries a line bundle $\cL_{\det,\cV}$ whose fiber at a $\cG$-torsor $\cE$ is the determinant of the cohomology of the associated vector bundle $\cE_\cV:=\cE\times^\cG_{\p^1}\cV$. It is easy to see that $\cL_{\det, \cV}$ pulls back to $\cL_{\det,V}$ under the uniformization map \eqref{unif}.  

Let $\wt\Bun_{\cG}\to \Bun_{\cG}$ be the $\Gm$-torsor which is the complement of the zero section of $\cL_{\det, \cV}$. Let $\wt \Bun_{\cG}(I^{+}_{0}, I^{++}_{\infty})\to \Bun_{\cG}(I^{+}_{0}, I^{++}_{\infty})$ be the pullback $\Gm$-torsor via the map $\wt \Bun_{\cG}(I^{+}_{0}, I^{++}_{\infty})\to\Bun_{\cG}$. By \eqref{unif}, this $\Gm$-torsor pulls back to the central extension $\tLG$ of $LG$.

The torus  $\wt \AA \cong \wt I/I^{+}$ acts on $\wt\Bun_{\cG}(I^{+}_{0}, I^{++}_{\infty})$ by changing the level structure at $0$. The group $V_{\infty}=I^{+}_{\infty}/I^{++}_{\infty}$ acts on $\Bun_{\cG}(I^{+}_{0}, I^{++}_{\infty})$ by changing the level structure at $\infty$. Since the $V_{\infty}$ action does not change the underlying $\cG$-bundle on $\p^{1}$, and $\cL_{\det}$ is pulled back from $\Bun_{\cG}$, this action canonically lifts to an action on $\wt\Bun_{\cG}(I^{+}_{0}, I^{++}_{\infty})$.

The natural projection
\begin{equation*}
\pi: \wt\Bun_{\cG}(I^{+}_{0}, I^{++}_{\infty})\to \Bun_{\cG}(I_{0}, I^{+}_{\infty})
\end{equation*}
is a $\wt \AA \times V_{\infty}$-torsor.

Let $\cG(\p^1 \bsl \{ 0\})$ be the automorphism group of the trivial $\cG|_{\p^1 \bsl \{ 0\}}$-torsor $\cG |_{\p^1 \bsl \{ 0\}}$. When $G$ is split (so $G\cong (G_{0})_{K}$ for $G_{0}$ split over $k$), $\cG(\p^1 \bsl \{ 0\})\cong G_{0}[t^{-1}]$. The uniformization map \eqref{unif} realizes $\wt\Bun_{\cG}(I^{+}_{0}, I^{++}_{\infty})$ as the quotient
\begin{equation*}
(I_{\infty}^{++}\cap \cG(\p^1 \bsl \{ 0\}))\bs \tLG/I^{+}
\end{equation*}
Similarly,
\begin{equation}\label{double coset}
\wt \Bun_{\cG}(I_{0}, I^{+}_{\infty})\simeq \Gamma^{+}_{\infty}\bs \tLG /I
\end{equation}
where $\Gamma^{+}_{\infty}=I_{\infty}^{+}\cap \cG(\p^1 \bsl \{ 0\})$.

We may also uniformize at $\infty$ to get a map $L_\infty G\to \Bun_{\cG}$. Pulling back $\Gm$-torsor $\wt\Bun_G$ gives a central extension $\wt{L_\infty G}\to L_\infty G$. Then uniformization at $\infty$ gives an isomorphism:
\begin{equation}\label{unif at infty}
\wt \Bun_{\cG}(I^+_{0}, I^{++}_{\infty})\cong I^{++}_{\infty}\bs \wt{ L_{\infty}G}/(I^{+}_{0}\cap \cG(\p^1\bsl \{ \infty\})).
\end{equation}

\begin{defn}[Affine-Whittaker] Under the notations above, define the presentable stable category
\begin{equation*}
{}_{\phi}\cM_{\chi}=D((V_{\infty},\cL_{\phi}\mon)\bs\wt\Bun_{\cG}(I^{+}_{0}, I^{++}_{\infty}) / (\wt \AA, \chi\mon))
\end{equation*}
as the $(V_{\infty}\times\wt\AA, \cL_{\phi}\boxtimes\chi)$-monodromic constructible complexes on $\wt\Bun_{\cG}(I^{+}_{0}, I^{++}_{\infty})$. 
\end{defn}

The Hecke category ${}_{\chi}\cM_{\chi}$ acts on ${}_{\phi}\cM_{\chi}$ by modifications at $0$. To rigorously define the monoidal action, refer to the convolution pattern in \S \ref{sss:sheaf conv}. Namely, using notations there, we let $X=\B I$ and $Y=\B\tLG$ and $W$ be the moduli space of $\cG$-bundles on $\p^1\bsl \{0\}$ with $I_\infty^{++}$-level at $\infty$. Then $W\times_YX=\wt\Bun_{\cG}(I_0^+,I_{\infty}^{++})$ and therefore, $D(\wt\Bun_{\cG}(I_0^+,I_{\infty}^{++}))$ is acted by $D(I_0^+\bs \tLG/I_0^+)$, which induces a right convolution:
\begin{equation*}
(-)\star(-): {}_{\phi}\cM_{\chi}\otimes {}_{\chi}\cM_{\chi}\to {}_{\phi}\cM_{\chi}. 
\end{equation*}
More generally, we have \begin{equation*}
(-)\star(-): {}_{\phi}\cM_{\chi}\otimes {}_{\chi}\cM_{\chi'}\to {}_{\phi}\cM_{\chi'}. 
\end{equation*}

The connected components of $\wt\Bun_{\cG}(I^{+}_{0}, I^{++}_{\infty})$ (resp. $\Bun_{\cG}(I_{0}, I^{+}_{\infty})$) are in natural bijection with that of $\Bun_{\cG}$, hence with $\Om$. For $\om\in \Om$, let  $\wt\Bun^{\om}_{\cG}(I^{+}_{0}, I^{++}_{\infty})$ (resp. $\Bun^{\om}_{\cG}(I_{0}, I^{+}_{\infty})$) be the corresponding component.
Let ${}_{\phi}\cM^{\om}_{\chi}$ be the full subcategory of ${}_{\phi}\cM_{\chi}$ consisting of objects supported on $\wt\Bun^{\om}_{\cG}(I^{+}_{0}, I^{++}_{\infty})$. 

Recall the Birkhoff decomposition says that the double cosets in \eqref{double coset} (i.e.  points in $\Bun_{\cG}(I_{0}, I^{+}_{\infty})$) are in bijection with $\tilW$: for each $w\in \tilW$, any lifting $\dot w$ of it to $N_{LG}(\AA)$ gives a point in $\Bun_{\cG}(I_{0}, I^{+}_{\infty})$ via \eqref{unif}. The point in $\Bun_{\cG}(I_{0}, I^{+}_{\infty})$ represented by $w$ has automorphism group $I_{0}\cap \Ad(w)I_{\infty}^{+}$, which is a unipotent group of dimension $\ell(w)$. Let $\Bun_{\cG}(I_{0}, I^{+}_{\infty})_{w}$ be the locally closed substack whose unique point is $\dot w$; let $\wt\Bun_{\cG}(I^{+}_{0}, I^{++}_{\infty})_{w}=\pi^{-1}(\Bun_{\cG}(I_{0}, I^{+}_{\infty})_{w})$.

For each $\om\in\Om$, viewed as a length zero element in $\tilW$, $\Bun_{\cG}(I_{0}, I^{+}_{\infty})_{\om}$ is the unique point with trivial automorphism in the component $\Bun^{\om}_{\cG}(I_{0}, I^{+}_{\infty})$. Choosing a lifting $\dot \om\in N_{\tLG}(\tI)$, using the uniformization map, we get an open embedding
\begin{equation*}
j_{\dot \om}: V_{\infty} \times \wt \AA\incl \wt\Bun^{\om}_{\cG}(I^{+}_{0}, I^{++}_{\infty})
\end{equation*}
sending $(h,v)$ to the double coset of $v\dot \om h$ in $\Gamma^{++}_{\infty}\bs \tLG /I^{+}$. For $\om=1$ we always set $\dot \om=1$.  Setting the $V_{\infty}$-factor to be $1$, we get a locally closed embedding
\begin{equation*}
i_{\dot \om}: \wt \AA \incl \wt\Bun^{\om}_{\cG}(I^{+}_{0}, I^{++}_{\infty}), \quad \Gamma^{++}_{\infty }h\mapsto \dot \om h.
\end{equation*}

\begin{lemma}\label{l:psi clean}
For each $\om\in\Om$, the functor 
\begin{equation*}
i^{!}_{\dot \om}: {}_{\phi}\cM^{\om}_{\chi}\to D(\wt \AA/ (\wt \AA,\chi\mon))
\end{equation*}
is an equivalence.  The inverse functor is given by
\begin{equation*}
D(\wt \AA /(\wt \AA,\chi\mon))\ni \cF\mapsto j_{\dot \om!}(\cL_{\phi} \boxtimes \cF)\cong j_{\dot \om*}(\cL_{\phi} \boxtimes \cF). 
\end{equation*}
\end{lemma}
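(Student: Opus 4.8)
\textbf{Proof plan for Lemma \ref{l:psi clean}.}

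The plan is to reduce the statement to a "cleanness" assertion for the generalized Whittaker condition at $\infty$, in the style of the original Bezrukavnikov--Yun argument, adapted to the monodromic and twisted-loop-group setting. First I would fix $\omega \in \Omega$ and the chosen lifting $\dot\omega \in N_{\tLG}(\tI)$, so that we have the open embedding $j_{\dot\omega}: V_\infty \times \wt\AA \hookrightarrow \wt\Bun^\omega_{\cG}(I^+_0, I^{++}_\infty)$ and the closed-in-the-open embedding $i_{\dot\omega}: \wt\AA \hookrightarrow \wt\Bun^\omega_{\cG}(I^+_0, I^{++}_\infty)$. The key geometric input is the Birkhoff decomposition recalled just above the lemma: the stratum $\wt\Bun_{\cG}(I^+_0, I^{++}_\infty)_w$ corresponding to $w \in \tilW$ has automorphism group $I_0 \cap \Ad(w) I^+_\infty$, a unipotent group of dimension $\ell(w)$, and in the component indexed by $\omega$ the unique open stratum is the one indexed by $\omega$ itself (the length-zero element), whose image under $j_{\dot\omega}$ (after quotienting by the $V_\infty$-action) is exactly the orbit of $i_{\dot\omega}$.

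The heart of the argument is the following cleanness claim: if $\cF \in {}_\phi\cM^\omega_{\chi}$, then $\cF$ is (co)clean with respect to the open embedding $j_{\dot\omega}$, i.e. $j_{\dot\omega,!}(\cL_\phi \boxtimes \cG) \xrightarrow{\sim} j_{\dot\omega,*}(\cL_\phi \boxtimes \cG)$ for any $\cG \in D(\wt\AA/(\wt\AA,\chi\mon))$, and moreover every object of ${}_\phi\cM^\omega_\chi$ is of this form. For the cleanness, the standard mechanism is to show that the $*$-restriction of such an object to every non-open Birkhoff stratum $\wt\Bun_{\cG}(I^+_0, I^{++}_\infty)_w$ with $\ell(w) > 0$ vanishes. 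On each such stratum, the $V_\infty$-equivariance against $\cL_\phi = \phi^*\AS_\psi$ means one is computing cohomology of an Artin--Schreier (or exponential $D$-module) sheaf along the $V_\infty$-orbit directions; because $\dot w$ for $\ell(w)>0$ has at least one positive affine simple reflection in a reduced expression, the character $\phi$ restricted to the relevant unipotent subgroup $I_0 \cap \Ad(w) I^+_\infty$ — equivalently the pairing with the stabilizer directions — is nontrivial on at least one coordinate $\Ga$-factor, so the relevant pushforward is $\on{C}^*(\Ga, \AS_\psi) = 0$. This is precisely the affine analogue of the argument in \cite{BY} and \cite{HNY}; the twisted loop group case works the same way because the affine simple roots and the homomorphism $\phi$ of \eqref{aff gen char} are still indexed by $S_{\aff}$ and $\phi$ is still nondegenerate by the definition of the affine pinning. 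Given the vanishing on positive-length strata, the block/Cousin argument forces $\cF$ to be $!$-extended (and dually $*$-extended) from the open stratum $V_\infty \times \wt\AA$, and on the open stratum the $(V_\infty, \cL_\phi)$-monodromicity identifies $\cF|_{V_\infty \times \wt\AA}$ with $\cL_\phi \boxtimes \cG$ for $\cG := i^!_{\dot\omega}\cF$; this simultaneously shows $i^!_{\dot\omega}$ is essentially surjective with the claimed inverse and that $j_{\dot\omega,!} = j_{\dot\omega,*}$ on these objects.

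Finally I would check fully faithfulness of $i^!_{\dot\omega}$. Since every object of ${}_\phi\cM^\omega_\chi$ is clean, i.e. $!$- and $*$-extended from $V_\infty \times \wt\AA$, the adjunction $(j_{\dot\omega,!}, j^!_{\dot\omega})$ together with the identification of $j^!_{\dot\omega}$ with $i^!_{\dot\omega}$ up to the forgetful equivalence $D((V_\infty,\cL_\phi\mon)\bs V_\infty \times \wt\AA/(\wt\AA,\chi\mon)) \simeq D(\wt\AA/(\wt\AA,\chi\mon))$ (the first factor contributing nothing because $V_\infty$ is unipotent and the monodromic condition against $\cL_\phi$ is rigid, cf. the example in \S\ref{SS: sheaf theory} that $D(H\bs U) \cong D(U)$ for $H$ pro-unipotent acting trivially — here one uses instead that $(V_\infty,\cL_\phi)$-equivariance on $V_\infty$ itself is a torsor over a point) gives that $\Hom_{{}_\phi\cM^\omega_\chi}(\cF_1,\cF_2) \simeq \Hom(i^!_{\dot\omega}\cF_1, i^!_{\dot\omega}\cF_2)$, as desired. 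The main obstacle I anticipate is the vanishing of $*$-restrictions to positive-length strata: one must verify carefully that in the twisted case the restriction of the generic character $\phi$ to the unipotent automorphism group $I_0 \cap \Ad(\dot w) I^+_\infty$ of a positive-length Birkhoff stratum is genuinely nontrivial — this is where the nondegeneracy built into the affine pinning (Definition in \S\ref{ss:Cphi}) and the combinatorics of reduced words in $\tilW$ enter, and it is the step that requires the most care rather than a routine citation.
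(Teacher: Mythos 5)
Your proposal is correct and follows essentially the same route as the paper: the paper's proof is just a condensed version of your key step, namely that for any $w\in\tilW$ of positive length there is a simple affine root $\alpha\in S_{\aff}$ whose corresponding negative root subgroup (a coordinate of $V_\infty$ on which $\phi$ is nontrivial) acts trivially along the $w$-stratum, forcing any $(V_\infty,\cL_\phi)$-monodromic object to vanish there, so that everything is clean and supported on the length-zero stratum and $i^!_{\dot\om}$ is an equivalence. The only cosmetic slip is that $\phi$ is not literally defined on the automorphism group $I_0\cap\Ad(\dot w)I^+_\infty$ but on the conjugate subgroup of $I^+_\infty$ stabilizing the stratum, which is what your phrase ``pairing with the stabilizer directions'' correctly captures.
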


\begin{proof}
	For any length non-zero element $w \in \tilW$, there exists a simple affine root $\alpha \in S_{\aff}$ such that $w(\alpha)$ is negative. Therefore the one-dimensional parameter unipotent subgroup corresponding to $-\alpha$ acts trivially on the $w$-stratum. Since $\phi$ is non-trivial on that unipotent subgroup, the stalk of any sheaves on the  $w$-stratum has to vanish. Therefore, taking stalk at $\dot{\omega}$ is an equivalence from ${}_{\phi}\cM^{\om}_{\chi}$ to $(\wt \AA, \chi)$-monodromic objects on the torus $\wt \AA$.  
\end{proof}

In the sequel, we will write
\[
{}_\phi\Delta(\omega)_\chi=j_{\dot{\omega}*}(\cL_\phi\boxtimes\dcm).
\]

\sss{}\label{sss:wt M} 
Recall the group $M\subset L_{\mathrm{pol}}G$ introduced in \S\ref{sss:M}. We now use the same notation $M$ to denote
\begin{equation*}
    M:=N_{\Lpol G}(I_\infty,\AA)^{\red}
\end{equation*}
i.e., we change $I$ in the original definition of $M$ to $I_\infty$ \footnote{It can be shown that this a priori change from $I=I_0$ to $I_\infty$ does not change $M$.}. Using \eqref{unif at infty} we get a map
$M\to \Bun_{\cG}(I^+_0, I^{++}_\infty)$. Let $\wt M\to M$ be the central extension obtained by pulling back $\wt{LG}\to LG$. Then we have a map
\begin{equation}\label{M to Bun}
    \wt M\to \wt\Bun_{\cG}(I^+_0, I^{++}_\infty)
\end{equation}
equivariant under the right actions by $\wt\AA$.

\begin{cor}\label{c:describe Mphi} Restriction along the map \eqref{M to Bun} gives an equivalence
\[
{}_{\phi}\cM_{\chi}\cong D(\wt{M}/(\wt\AA,\chi\mon)).
\]
\end{cor}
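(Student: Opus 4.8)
The plan is to identify both sides with categories of monodromic sheaves on the same underlying space, using the Whittaker cleanness results already established. The key geometric input is the relationship between $\wt\Bun_\cG(I^+_0, I^{++}_\infty)$ and the group $\wt M$, together with Lemma \ref{l:psi clean}, which identifies each component ${}_\phi\cM^\om_\chi$ with $D(\wt\AA/(\wt\AA,\chi\mon))$.

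First I would observe that the components of $\wt\Bun_\cG(I^+_0, I^{++}_\infty)$ are indexed by $\Om = \pi_0(LG)$, and that the map \eqref{M to Bun} sends the component of $\wt M$ lying over $\om \in \Om$ (under the projection $\wt M \to M \to \pi_0(M) = \Om$, using Proposition \ref{M ses}) into $\wt\Bun^\om_\cG(I^+_0, I^{++}_\infty)$. Restricting \eqref{M to Bun} to a single component, one gets, after choosing a lift $\dot\om \in N_{\tLG}(\tI)$, a map from $\wt\AA$ (the fiber of $\wt M \to \Om$ over $\om$, which by Proposition \ref{M ses} is a torsor under $\wt\AA = \wt I/I^+$, noncanonically $\wt\AA$ itself once $\dot\om$ is fixed) to $\wt\Bun^\om_\cG(I^+_0, I^{++}_\infty)$. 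I would check that this map is precisely (a translate of) the locally closed embedding $i_{\dot\om}: \wt\AA \hookrightarrow \wt\Bun^\om_\cG(I^+_0, I^{++}_\infty)$ appearing just before Lemma \ref{l:psi clean}: indeed both are given by $h \mapsto$ the double coset of $\dot\om h$ in $\Gamma^{++}_\infty \bs \tLG/I^+$, which is the content of the uniformization description of $\wt M \subset \wt{L_\infty G}$ via \eqref{unif at infty}. This is the main structural point to verify carefully, and it is where the identification $M = N_{\Lpol G}(I_\infty, \AA)^\red$ (with $I$ replaced by $I_\infty$, as flagged in \S\ref{sss:wt M}) is used: the elements of $\wt M$ are exactly the lifts of representatives of length-zero elements, i.e.\ the points $\dot\om h$.

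Granting this, the proof assembles as follows. The restriction functor along \eqref{M to Bun} decomposes over components as a product of functors ${}_\phi\cM^\om_\chi \to D((\wt\AA\text{-torsor})/(\wt\AA,\chi\mon))$, and on each component this functor is, up to the choice of $\dot\om$, exactly the equivalence $i^!_{\dot\om}$ of Lemma \ref{l:psi clean}. Meanwhile $D(\wt M/(\wt\AA,\chi\mon))$ decomposes as $\bigoplus_{\om \in \Om} D((\wt M)_\om/(\wt\AA,\chi\mon)) \cong \bigoplus_{\om\in\Om} D(\wt\AA/(\wt\AA,\chi\mon))$, again after choosing the $\dot\om$'s, using that $\wt M \to \Om$ is an $\wt\AA$-torsor-bundle (Proposition \ref{M ses}) and the $\chi$-monodromy is taken along the right $\wt\AA$-action. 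Matching these two decompositions componentwise via Lemma \ref{l:psi clean} yields the desired equivalence ${}_\phi\cM_\chi \cong D(\wt M/(\wt\AA,\chi\mon))$. I would also remark that the equivalence is independent of the choices $\dot\om$: changing $\dot\om$ to $\dot\om a$ with $a \in \wt\AA$ translates the identification of $(\wt M)_\om$ with $\wt\AA$ but does not change the resulting monodromic category, since translation by $a$ intertwines the $(\wt\AA,\chi\mon)$-structures.

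The main obstacle I expect is not conceptual but bookkeeping: carefully matching the locally closed stratification of $\wt\Bun^\om_\cG(I^+_0,I^{++}_\infty)$ by Birkhoff cells with the structure of $\wt M$, and checking that the image of \eqref{M to Bun} meets each component in exactly the open-dense $\om$-stratum $i_{\dot\om}(\wt\AA)$ rather than something larger — this is what makes the cleanness input of Lemma \ref{l:psi clean} directly applicable. A secondary point requiring care is $\wt\AA$-equivariance: one must confirm that the map \eqref{M to Bun} intertwines the right $\wt\AA$-action on $\wt M$ (by right multiplication) with the $\wt\AA$-action on $\wt\Bun_\cG(I^+_0,I^{++}_\infty)$ by changing the level structure at $0$, so that the $(\wt\AA,\chi\mon)$-conditions correspond. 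Both of these are essentially reformulations of the definitions in \S\ref{ss:glob G}, \S\ref{sss:M}, and \S\ref{sss:wt M}, so the argument is short once the geometric dictionary is set up.
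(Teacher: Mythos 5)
Your proposal is correct and follows essentially the same route as the paper: the paper's proof is precisely to choose liftings $\dot\om\in M$ for each $\om\in\Om$, observe that pullback along \eqref{M to Bun} then decomposes as the direct sum over $\Om$ of the functors $i_{\dot\om}$-restriction from Lemma \ref{l:psi clean}, and conclude that it is an equivalence. Your additional remarks on matching the $\wt\AA$-actions and on independence of the choice of $\dot\om$ are fine but are just the bookkeeping the paper leaves implicit.
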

\begin{proof} Choosing a lifting $\dot \om\in M$ for each $\om\in\Om$, the pullback functor along \eqref{M to Bun} becomes the direct sum of $i^*_{\dot \om}$ in Lemma \ref{l:psi clean} for  $\om\in \Om$, hence an equivalence.
\end{proof}

\subsection{Identifying components of affine Whittaker category}
Our goal is to construct an action of 
${}_{\chi}\cM_{\chi}$ on $D(\wt \AA/(\wt \AA, \chi\mon))$, which will lead to the construction of the Soergel functor. Above we have constructed an action of ${}_{\chi}\cM_{\chi}$ on ${}_{\phi}\cM_{\chi}$, which by Lemma \ref{l:psi clean} is a direct sum of copies of $D(\wt \AA/(\wt \AA, \chi\mon))$ indexed by $\Om$. In this subsection we will identify these copies of $D(\wt \AA/(\wt \AA, \chi\mon))$ in a ${}_{\chi}\cM_{\chi}$-equivariant way.

We now use the subgroup $\Sig\subset \wt M_\phi(k)$ from Construction \ref{cons: Sigma} to construct an action of ${}_{\chi}\cM_{\chi}$ on $D(\wt \AA/(\wt \AA, \chi\mon))$. 

\begin{thm} \label{thm: monodromic-action} 
\begin{enumerate}
    \item For $\chi\in \Ch(\wt\AA)$, there is a canonical action of ${}_{\chi}\cM_{\chi}$ on $D(\wt \AA/(\wt \AA, \chi\mon))$.
    
    \item Let $\Xi\subset \Ch(\wt\AA)$ be a $\tilW$-orbit. There is a canonical action of $\cM_{\Xi}$ on $\op_{\chi\in \Xi}D(\wt \AA/(\wt \AA, \chi\mon))$. 
\end{enumerate}
\end{thm}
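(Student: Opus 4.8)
The plan is to use the affine Whittaker category ${}_\phi\cM_\chi$ as a bridge: it already carries a right action of ${}_\chi\cM_\chi$ by modifications at $0$, and by Lemma~\ref{l:psi clean} it decomposes as $\bigoplus_{\om\in\Om}D(\wt\AA/(\wt\AA,\chi\mon))$. The obstruction to obtaining an action on a single copy of $D(\wt\AA/(\wt\AA,\chi\mon))$ is precisely that this decomposition is not respected by the ${}_\chi\cM_\chi$-action, so we must ``collapse'' the $\Om$-many copies equivariantly. The tool for this is the group $\Sig\subset\wt M_\phi(k)$ of Construction~\ref{cons: Sigma}, which acts on $\wt\Bun_{\cG}(I^+_0,I^{++}_\infty)$ via the map \eqref{M to Bun}, commutes with the action of ${}_\chi\cM_\chi$ at $0$ (since $\Sig\subset \wt M_\phi$ acts at $\infty$), and, crucially, fixes the character $\phi$, so that further imposing $\Sig$-equivariance is compatible with the Whittaker condition at $\infty$.

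\textbf{Key steps, in order.}
First I would record that, because $\Sig$ acts at $\infty$ and ${}_\chi\cM_\chi$ acts at $0$, the two actions commute; hence the category of $\Sig$-equivariant objects in ${}_\phi\cM_\chi$, call it ${}_\phi\cM_\chi^{\Sig}$, inherits a right ${}_\chi\cM_\chi$-module structure. Second, using Corollary~\ref{c:describe Mphi} to identify ${}_\phi\cM_\chi\cong D(\wt M/(\wt\AA,\chi\mon))$, I would analyze the $\Sig$-action on $\wt M$: via the surjection $\pi_\Sig:\Sig\surj\Om$ with finite abelian kernel $\Sig_1$, and the exact sequence $1\to\AA\to M\to\Om\to1$ of Proposition~\ref{M ses}, the group $\Sig$ acts on the set of components $\Om$ of $\wt M$ simply transitively through $\pi_\Sig$. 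Therefore taking $\Sig_1$-equivariants first collapses each component appropriately, and then the residual $\Om$-action identifies the remaining copies, yielding
\[
{}_\phi\cM_\chi^{\Sig}\;\cong\; D(\wt\AA/(\wt\AA,\chi\mon))^{\Sig_1},
\]
up to the mild ambiguity coming from $\Sig_1$ being possibly larger than strictly necessary (which is where a choice enters when $k$ is not $\overline{\mathbb F_q}$). Third, I would check that the finite abelian group $\Sig_1$ — whose order is invertible in $E$ by the exponent bounds in Construction~\ref{cons: Sigma}, being a power of $N$ coprime to $\chk$ — acts on $D(\wt\AA/(\wt\AA,\chi\mon))$ through its monodromy, i.e. factors through $\End(\dcm)^\times$ and hence through a character; after twisting, the $\Sig_1$-equivariant category is again equivalent to $D(\wt\AA/(\wt\AA,\chi\mon))$. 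Composing gives the desired ${}_\chi\cM_\chi$-action. For part (2), one runs the same argument with the orbit $\Xi$ in place of a single $\chi$: the group $\wt M$ (independent of $\chi$) now collapses the $\Om$-components across all of $\Xi$ simultaneously, $\cM_\Xi$ acts at $0$ commuting with $\Sig$ at $\infty$, and one obtains the action on $\bigoplus_{\chi\in\Xi}D(\wt\AA/(\wt\AA,\chi\mon))$.

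\textbf{Main obstacle.}
The hard part will be step three: showing that the residual finite group $\Sig_1$ acts on $D(\wt\AA/(\wt\AA,\chi\mon))$ in a way that can be trivialized, i.e. controlling exactly how $\Sig_1$ (sitting inside $\wt{Z\GG\cap\AA}(k)$) acts through the monodromy. One must verify that this action is by a character valued in $E^\times$ — using that $\Sig_1$ is finite of order invertible in $\fre$, so that Proposition~\ref{p:ch sh triv}-type descent applies — and track this character carefully to see that the equivalence with a single copy of $D(\wt\AA/(\wt\AA,\chi\mon))$ is canonical up to the stated mild choice. A secondary subtlety is checking that the $\Sig$-equivariance is compatible, as a module structure, with the right convolution action of ${}_\chi\cM_\chi$ in the $\infty$-categorical sense (not just at the homotopy level); this is handled by the functoriality of the sheaf theory \eqref{eq:sheaf theory prestk} and the convolution paradigm of \S\ref{sss:sheaf conv}, applied to the iterated correspondence defining the action.
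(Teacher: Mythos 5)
Your overall strategy coincides with the paper's: use the affine Whittaker category, exploit that $\Sig\subset\wt M_\phi(k)$ acts at $\infty$ fixing $\phi$ and hence commutes with the Hecke action at $0$, and use $\Sig$ to collapse the $\Om$-indexed copies from Lemma \ref{l:psi clean}. However, your third step contains a genuine gap. After reducing to $\Sig_1$-equivariant objects in a single copy, you assert that because $\Sig_1$ acts ``through monodromy'' one can twist and conclude that the $\Sig_1$-equivariant category is again $D(\wt\AA/(\wt\AA,\chi\mon))$. This is false as stated: for a finite group $\Sig_1$ acting on a category $\cC$ by automorphisms that are (even coherently) trivializable, the category of $\Sig_1$-equivariant objects is $\cC\otimes\Rep_E(\Sig_1)$, i.e.\ a direct sum of copies of $\cC$ indexed by characters of $\Sig_1$ (when such a decomposition exists), not a single copy. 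To extract one copy you would have to choose an isotypic component, which both introduces a much larger ambiguity than the ``mild choice'' in the paper and requires $|\Sig_1|$ to be invertible in $E$ with enough roots of unity present. That invertibility is not available: being coprime to $\chk$ does not make $|\Sig_1|$ invertible in $E$, whose residue characteristic $\ell$ may well divide $N$ (which only divides $e^2 f_\GG$), hence divide $|\Sig_1|$. So the step ``after twisting, the $\Sig_1$-equivariant category is again $D(\wt\AA/(\wt\AA,\chi\mon))$'' does not go through in the stated generality.

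The paper's proof is arranged precisely to avoid this. It never takes $\Sig$- or $\Sig_1$-invariants of the module category. Instead it forms the geometric quotient category ${}_\phi\cM'_\chi=D(\Sig_1\bs((\GG_a,\AS_\psi\mon)\bs\wt\Bun_{\cG}(I_0^+,\ker\phi)/(\wt\AA,\chi\mon)))$, which carries a left action of the finite Hecke category $D(\Sig_1\bs\Sig/\Sig_1)$ commuting with the right ${}_\chi\cM_\chi$-action, and then tensors with $\Modu_E$ over $D(\Sig_1\bs\Sig/\Sig_1)$ along the monoidal functor $\alpha$ obtained by forgetting the left $\Sig_1$-structure and pushing forward along $\Om$. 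This single operation both collapses the $\Om$-indexed components and de-equivariantizes the residual $\Sig_1$-structure; the key identity is the de-equivariantization equivalence
\begin{equation*}
\Modu_E\otimes_{D(\BB\Sig_1)}D(\Sig_1\bs\wt\AA/(\wt\AA,\chi\mon))\;\simeq\;D(\wt\AA/(\wt\AA,\chi\mon)),
\end{equation*}
proved by exhibiting a fully faithful embedding (via descent-type results of \cite{Zh}) and checking that its right adjoint is conservative because $*$-pushforward $D_{\mono}(\wt\AA)\to D_{\mono}(\Sig_1\bs\wt\AA)$ is conservative. No invertibility of $|\Sig_1|$, no roots of unity, and no character-tracking are needed. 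If you want to salvage your route, you must replace ``take $\Sig_1$-equivariants and trivialize'' with this de-equivariantization (coinvariants along the augmentation of $D(\BB\Sig_1)$), which is exactly the role of the functor $\alpha$ in the paper.
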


\begin{proof}
    For ease of reading, we break the proof into a sequence of steps and sublemmas. 

    {\it Step 1.} We first construct a monoidal functor 
\begin{equation*}
\a: D(\Sig_{1}\bs\Sig/\Sig_{1})\to \Modu_E.
\end{equation*}

Recall from \S \ref{sss:sheaf conv} that for $\cF_{1}, \cF_{2}\in D(\Sig_{1}\bs \Sig/\Sig_{1})$, $\cF_{1}\star\cF_{2}$ is defined to be $m_{*}(p_{1}^{!}\cF_{1}\ot^! p_{2}^{!}\cF_{2})$ using the following diagram (where $m$ is induced by the multiplication on $\Sig$, $p_{i}$ is the projection to the $i$th factor)
\begin{equation*}
\xymatrix{ &\Sig_{1}\bs \Sig\times^{\Sig_{1}}\Sig/\Sig_{1}\ar[rr]^-{m} \ar[dl]_{p_{1}}\ar[dr]^{p_{2}} && \Sig_{1}\bs\Sig/\Sig_{1}\\
\Sig_{1}\bs \Sig/\Sig_{1} && \Sig_{1}\bs \Sig/\Sig_{1}
}
\end{equation*}
Consider the forgetful functors
\begin{equation*}
D(\Sig_{1}\bs \Sig/\Sig_{1})\xr{\om_{1}} D(\Sig/\Sig_{1})=D(\Om)\xr{\om_{2}} \Modu_E
\end{equation*}
where $\om_{1}$ forgets the left $\Sig_{1}$-structure. Using that $\Sig_{1}$ is normal in $\Sig$, it acts trivially on $\Om=\Sig/\Sig_{1}$, hence $(p_{1},p_{2})$ give a bijection
\begin{equation*}
(p_{1},p_{2}): \Sig\times^{\Sig_{1}}\Sig/\Sig_{1}\isom \Om\times \Om.
\end{equation*}
Using this one checks that $\om_{1}$ is monoidal. The functor $\om_{2}$ is push-forward to a point, which is clearly monoidal.

{\it Step 2. } Recall the definition of $\phi$ in \eqref{aff gen char}. As $\Sig$ act trivial on $I^{+}/\ker\phi\stackrel{\phi}{\simeq}\GG_a$, we may form the category
\[
{}_\phi\cM'_\chi:=D(\Sig_1\bs ((\GG_a,\AS_\psi \mon)\bs  \wt\Bun_{\cG}(I_0^+,\ker\phi)/(\wt\AA,\chi\mon))),
\]
which by the convolution pattern is acted by $D(\Sig_1\bs\Sig/\Sig_1)$ from the left, commuting the action of ${}_\chi\cM_{\chi}$ from the right. From Step 1, we have a canonical monoidal functor $ \alpha : D(\Sig_1\bs\Sig/\Sig_1)\to \Modu_E$. Therefore, we can form
\begin{equation} \label{eq: chi m chi action}
    \Modu_E\otimes_{D(\Sig_1\bs\Sig/\Sig_1)}{}_\phi\cM'_\chi,
\end{equation}
on which ${}_\chi\cM_{\chi}$-acts.

Note that we can write
\[\wt\Bun_{\cG}(I_0^+,\ker\phi)=\Sig\times^{\Sig_1}\wt\Bun^1_{\cG}(I_0^+,\ker\phi),\]
so 
$$
    \Sig_1\bs \wt\Bun_{\cG}(I_0^+,\ker\phi)= \Sig_1\bs\Sig\times^{\Sig_1}\wt\Bun^1_{\cG}(I_0^+,\ker\phi).
$$
Therefore, as $D(\Sig_1\bs \Sig/\Sig_1)\otimes {}_\chi\cM(e)_{\chi}= D(\Sig_1\bs \Sig/\Sig_1)\otimes D_{\chi\mon}(\wt\AA)$-modules, we may rewrite
\[
{}_\phi\cM'_\chi\simeq D(\Sig_1\bs \Sig/\Sig_1)\otimes_{D(\BB \Sig_1)} D(\Sig_1\bs ((\GG_a,\AS_\psi \mon)\bs  \wt\Bun^{1}_{\cG}(I_0^+,\ker\phi)/(\wt\AA,\chi\mon))).
\]
Here $\BB\Sig_1=\Sig_1\bs\Sig_1/\Sig_1\subset \Sig_1\bs\Sig/\Sig_1$ is the relative diagonal (so pushforward is monoidal). We also write
\[
{}_\phi\cM^{'1}_\chi := D(\Sig_1\bs ((\GG_a,\AS_\psi \mon)\bs  \wt\Bun^{1}_{\cG}(I_0^+,\ker\phi)/(\wt\AA,\chi\mon)))\simeq D(\Sig_1\bs \wt\AA/(\wt\AA,\chi\mon)).
\]

Combining with \eqref{eq: chi m chi action}, we see that there is a canonical action of ${}_\chi\cM_{\chi}$ on 
\[
\Modu_E\otimes_{D(\BB\Sig_1)} D(\Sig_1\bs \wt\AA/(\wt\AA,\chi\mon)).
\]
In addition, there is a canonical equivalence
\[
\Modu_E\otimes_{D(\BB\Sig_1)}D(\Sig_1\bs \wt\AA/(\wt\AA,\chi\mon))\simeq D(\wt\AA/(\wt\AA,\chi\mon)),
\]
as $D_{\chi\mon}(\wt\AA)$-modules. To see this, we first argue as in \cite[Proposition 4.100]{Zh} (using \cite[Corollary 8.72]{Zh}) that there is a fully faithful embedding
\begin{equation} \label{eq: sigma 1 equivariant}
\Modu_E\otimes_{D(\BB\Sig_1)}D(\Sig_1\bs \wt\AA)=D(\Sig_1/\Sig_1)\otimes_{D(\Sig_1\bs \Sig_1/\Sig_1)} D(\Sig_1\bs \wt\AA)\to D(\wt\AA).
\end{equation}

This is clearly $D(\wt\AA)$-linear, with respect to the right convolution action. From Section \ref{sss: chi-monodromic category}, we know that the inclusion $D_{\chi\mon}(\wt\AA)\subset D(\wt\AA)$ is $D(\wt\AA)$-linear with a $D(\wt\AA)$-linear right adjoint. Therefore, by applying $(-)\otimes_{D(\wt\AA)}D_{\chi\mon}(\wt\AA)$ to the fully faithful embedding \eqref{eq: sigma 1 equivariant}, we obtain a fully faithful embedding 
\begin{equation} \label{eq: sigma 1 full faithful}   \Modu_E\otimes_{D(\BB\Sig_1)}D(\Sig_1\bs \wt\AA/(\wt\AA,\chi\mon))\to D(\wt\AA/(\wt\AA,\chi\mon)).
\end{equation}
To see that it is essential surjective, it is enough to show that its right adjoint is conservative. Notice that the composition of the functor $D(\Sig_1\bs \wt\AA/(\wt\AA,\chi\mon))\to \Modu_E\otimes_{D(\BB\Sig_1)}D(\Sig_1\bs \wt\AA/(\wt\AA,\chi\mon))$ with \eqref{eq: sigma 1 full faithful} is nothing but the $!$-pullback functor, so its right adjoint is given by the $!$-pushforward (equivalently $*$-pushforward by Proposition \ref{lem: functoriality depth zero geom Langlands for tori}). Thus it is enough to show that $*$-pushforward $D_{\mono}(\wt\AA)\to D_{\mono}(\Sig_1\bs \wt\AA)$ is conservative, which is clear.

Therefore, we have constructed a canonical action of ${}_{\chi}\cM_{\chi}$ on $D(\wt \AA/(\wt \AA, \chi\mon))$.

{\it Step 3.} For the statement concerning $\Xi$, we define

\[
{}_\phi\cM'_{\Xi}:= \bigoplus_{\chi \in \Xi} D(\Sig_1\bs ((\GG_a,\AS_\psi \mon)\bs  \wt\Bun_{\cG}(I_0^+,\ker\phi)/(\wt\AA,\chi\mon))).
\]
Note that ${}_\phi\cM'_{\Xi}$ carries a $D(\Sigma_1 \bsl \Sigma / \Sigma_1)$-action on the left and a $\cM_{\Xi}$-action on the right. The argument in Step $2$ implies that there is a canonical action of $\cM_{\Xi}$ on $\op_{\chi\in \Xi}D(\wt \AA/(\wt \AA, \chi\mon))$. 
\end{proof}

\subsection{Soergel functor} \label{ss:soergfunctor}

In the previous subsection, we constructed an action of $ \cM_\Xi$ on $$\underset{\chi \in \Xi} \oplus \hspace{.5mm} D(\wt{\mathbb{A}} / (\wt{\mathbb{A}}, \chi\mon)).$$For later use, we note some properties  and reformulations of this action.

\begin{enumerate}

    \item We first note that this module category, which arose as a modification of a nondegenerate affine Whittaker category, by construction admits a $\cM_\Xi$-equivariant insertion functor from the unmodified category
\begin{equation} \alpha_{\Xi}: \label{e:insertionofwhit} \underset{\chi \in \Xi} \oplus \hspace{.5mm} {}_\phi\cM_\chi \rightarrow \underset{\chi \in \Xi} \oplus \hspace{.5mm} D(\wt{\AA} / (\wt{\AA}, \chi\mon)).\end{equation} For each individual $\chi$, we have a ${}_{\chi}\cM_{\chi}$-equivariant functor $$ \alpha_{\chi}: {}_\phi\cM_\chi \rightarrow D(\wt{\AA} / (\wt{\AA}, \chi\mon)). $$

From the construction of $\alpha_{\chi}$, we know that for any $\om \in \Om$, the composition \begin{equation} \label{eq: Soergel average is identity}
    D(\wt \AA/ (\wt \AA,\chi\mon)) \ra {}_\phi\cM^{\om}_\chi \ra {}_\phi\cM_\chi \xra{\alpha_{\chi}} D(\wt \AA/ (\wt \AA,\chi\mon))
\end{equation} is isomorphic to the identity functor, where the first functor is given in Lemma \ref{l:psi clean}.

\item Second, by summing over all $\Xi$, we therefore obtain an action of $\cM$ on 
$D_{\on{mon}}(\wt{\mathbb{A}})$. By Corollary  \ref{cor: Obtaining Soergel}, this is equivalent to the data of a monoidal functor 
$$\mathbb{V}: \cM  \rightarrow \indcoh(\mathrm{LS}_{\wt{\mathbb{A}}^\vee}^{t,\Box}\times \mathrm{LS}_{\wt{\mathbb{A}}^\vee}^{t,\Box}).$$

\item Recall in addition that to each monodromy $\chi$ we attached a formal subscheme $$\frf_\chi \hookrightarrow \on{LS}^{t, \square}_{\wt{\mathbb{A}^\vee}},$$cf. Section \ref{s:tiltbasics}. By construction, each composite
$$\cmc \hookrightarrow \cM \rightarrow \indcoh(\mathrm{LS}_{\wt{\mathbb{A}}^\vee}^{t,\Box}\times \mathrm{LS}_{\wt{\mathbb{A}}^\vee}^{t,\Box})$$
factors through a functor 
$$\cmc \xrightarrow{ \cvc} \indcoh(\frf_\chi \times \frf_{\chi'}) \hookrightarrow \indcoh(\mathrm{LS}_{\wt{\mathbb{A}}^\vee}^{t,\Box}\times \mathrm{LS}_{\wt{\mathbb{A}}^\vee}^{t,\Box})$$
where the latter fully faithful embedding is given by $*$-pushforward. Equivalently, after summing over all $\chi, \chi' \in \Xi$, we obtain a monoidal functor
$$\mathbb{V}_\Xi: \cM_\Xi \rightarrow \underset{\Xi \times \Xi} \oplus \hspace{.5mm} \indcoh(\frf_\chi \times \frf_{\chi'}).$$

By restricting the functor ${}_{\chi} \mathbb{V} _{\chi}$ to the subcategory ${}_{\chi} \cM _{\chi} ^{\circ}$, we obtain a monoidal functor $${}_{\chi}\mathbb{V} _{\chi}^{\circ} : {}_{\chi} \cM _{\chi} ^{\circ} \rightarrow \indcoh(\frf_\chi \times \frf_\chi).$$

\end{enumerate}

\begin{rem} As a special case, if $\chi$ is $\fre$-linear, as opposed to merely linear over a finite extension $\fre'$ of $\fre$, under the translation isomorphisms $\frf_{\chi} \simeq \frf \simeq \frf_{\chi'}$, we equivalently obtain functors 
\begin{equation} \label{e:vblock} \cvc: \cmc \rightarrow \indcoh(\frf^2),\end{equation}which carry a datum of monoidality for $\chi = \chi'$. Similarly, summing over the orbit,  we obtain a monoidal functor
$$\mathbb{V}_\Xi: \cM_\Xi \rightarrow \underset{\Xi \times \Xi} \oplus \hspace{.5mm} \indcoh(\frf^2).$$
\end{rem}

\section{Equivalence with Soergel bimodules}

In this section, we use the Soergel functor constructed in the previous section to give the promised combinatorial description of monodromic Hecke categories, and explain how to use it to obtain endoscopic equivalences between different Hecke categories. 

This section consists of two main parts. First, we will argue that the Soergel functor behaves favorably on tilting sheaves, identifying them in Theorem \ref{t:soergequivtitSEMIDIRECT} with an appropriately defined version of Soergel bimodules in the present setting. Then, in Theorem \ref{t:mainthmsec7}, we will state the desired consequence for endoscopy. 

\subsection{Tilting sheaves and Soergel bimodules I: standard sheaves} 

\sss{} Our first task in this section is to describe some explicit objects of interest within $\I(\on{LS}^{t, \square}_{\wt{\AA}^\vee})$; these will generate the relevant category of Soergel bimodules. As usual, we will therefore need two basic ingredients, namely standard objects and Bott--Samelson objects. 

In this subsection, we deal with the standard objects, and deduce some first consequences for the $\mathbb{V}$-functor; its sequel will handle the Bott--Samelson objects. 

\sss{} Let us now give the definition of the standard objects. To do so, note that the action of $\tilW$ on $\wt{\mathbb{A}}$ induces a dual action on $\wt{\mathbb{A}}^\vee$, and whence on $\on{LS}_{\wt{\mathbb{A}}^\vee}^{t, \square}$. It is straightforward to see the latter action preserves the closed sub ind-scheme $$\frf_\Xi := \underset{\chi \in \Xi}{\sqcup} \hspace{.5mm} \frf_\chi \hookrightarrow \on{LS}_{\wt{\mathbb{A}}^\vee}^{t, \square}.$$

In particular, to a fixed $w \in \tilW$ and $\chi \in \Xi$, we may consider the graph 
\begin{equation} \label{gammaw}\Gamma^w: \frf_{w^{-1}\chi} \rightarrow \frf_{\chi} \times \frf_{w^{-1}\chi}, \quad \quad x \mapsto (w(x), x).\end{equation}
\begin{defn}We define the {\em standard sheaf} associated to $w$ and $\chi$ to be $$\omega(w)_\chi := \Gamma^{w}_*(\omega_{\frf_{w^{-1}\chi}}) \in \indcoh(\frf_{\chi} \times \frf_{w^{-1}\chi}).$$  
\end{defn} 

\sss{}  The relevance of standard sheaves to affine Hecke categories is given by the following proposition. To state it, by a mild abuse of notation let us write $\on{Fun}(\Gamma^w)$ for $\on{Fun}(\frf_{w^{-1}\chi})$, thought of as a module for $\Fun(\frf_\chi \times \frf_{w^{-1}\chi})$ via the closed embedding \eqref{gammaw}.

\begin{prop} \label{p:std2std} For $w \in \cwc$, the $\mathbb{V}$-functor \eqref{e:vblock} satisfies
$$\cvc(\std(w)_\chi) \simeq \cvc(\costd(w)_\chi) \simeq \omega(w)_\chi,$$
i.e., sends (co)standard sheaves to standard sheaves. Moreover, it induces isomorphisms
$$\on{End}(\std(w)_\chi) \simeq \on{End}(\costd(w)_\chi) \simeq \tau^{\leqslant 0} \End(\omega(w)_\chi) \simeq \on{Fun}(\Gamma^w).$$
\end{prop}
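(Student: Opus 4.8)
The strategy is to reduce everything to the rank-one computations already performed in Section~\ref{s:heckecat}, transported to the spectral side via the tautological module structure of the Soergel functor. Recall that $\mathbb{V}$ was defined so that the ${}_\chi\cM_\chi$-action on $D(\wt\AA/(\wt\AA,\chi\mon))$ is implemented by convolution. The key structural input is the $\cM_\Xi$-equivariance of the insertion functor $\alpha_\Xi$ of \eqref{e:insertionofwhit}, together with the fact (Equation~\eqref{eq: Soergel average is identity}) that for each $\om\in\Om$ the composite $D(\wt\AA/(\wt\AA,\chi\mon))\to {}_\phi\cM^\om_\chi\hookrightarrow {}_\phi\cM_\chi\xrightarrow{\alpha_\chi}D(\wt\AA/(\wt\AA,\chi\mon))$ is the identity. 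This reduces the computation of $\cvc(\std(w)_\chi)$ to understanding how $-\star\std(w)_\chi$ acts on the affine Whittaker category, which we understand explicitly.

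First I would treat the case $\ell(w)=0$, i.e. $w=\om\in\Om$: here $\std(\om)_\chi=\costd(\om)_\chi$ is clean (supported on a single closed stratum), and $-\star\std(\om)_\chi$ sends ${}_\phi\cM^{\om'}_\chi$ to ${}_\phi\cM^{\om'\om}_\chi$, intertwining the equivalences $i^!_{\dot\om'}$ of Lemma~\ref{l:psi clean} with the pushforward along the automorphism $w:\wt\AA\to\wt\AA$ (up to the twist recorded in \eqref{e:onestratumunivmon}); under $\CH$ this is precisely $\Gamma^w_*\omega$, so $\cvc(\std(\om)_\chi)\simeq\omega(\om)_\chi$. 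Next, for a simple reflection $s\in\tilW^\circ_\chi$, I would use Lemma~\ref{multi} to reduce to computing $-\star\std(s)_\chi$ and $-\star\costd(s)_\chi$, invoking the rank-one calculations \eqref{eq: support on s-strata}, \eqref{eq: support on unit strate} and \eqref{eq: shrek-stalk-conv-costad-s-strata}, together with Lemma~\ref{l:psi clean}: the point is that after collapsing to $D(\wt\AA/(\wt\AA,\chi\mon))$ via $\alpha_\chi$, the Artin--Schreier contributions appearing in \eqref{eq: shrek stalk of average of AS} are killed — this is exactly what the $\Sig_1$-modification of the Whittaker category achieves, via Step~1 of the proof of Theorem~\ref{thm: monodromic-action}. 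What survives is the geometry of the graph of $s$ together with the diagonal, matching $h^\vee$ and the description \eqref{eq: K?LF} on the spectral side, and one reads off $\cvc(\std(s)_\chi)\simeq\omega(s)_\chi$. For general $w$, I would induct on $\ell(w)$ using a reduced expression: writing $w=w's$ with $\ell(w')<\ell(w)$, Lemma~\ref{multi} gives $\std(w)_\chi\simeq\std(w')_\chi\star\std(s)_{w'^{-1}\chi}$, and since $\mathbb{V}$ is monoidal, $\cvc(\std(w)_\chi)\simeq {}_\chi\mathbb{V}_{w'^{-1}\chi}(\std(w')_\chi)\star {}_{w'^{-1}\chi}\mathbb{V}_{w^{-1}\chi}(\std(s)_{w'^{-1}\chi})$, which by induction and the simple-reflection case is $\omega(w')_\chi\star\omega(s)_{w'^{-1}\chi}$; a direct base-change computation of this convolution of graph pushforwards (using that $\ell(w')+\ell(s)=\ell(w)$, so the relevant fiber product is transverse) yields $\omega(w)_\chi$. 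The $\costd$ case is identical, using the $*$-versions of the rank-one formulas.

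For the endomorphism statement, once $\cvc(\std(w)_\chi)\simeq\omega(w)_\chi=\Gamma^w_*\omega_{\frf_{w^{-1}\chi}}$ is established, I would compute $\End(\omega(w)_\chi)$ directly on the spectral side: since $\Gamma^w$ is a closed embedding, $\Hom(\Gamma^w_*\omega,\Gamma^w_*\omega)\simeq\Hom(\omega_{\frf_{w^{-1}\chi}},(\Gamma^w)^!\Gamma^w_*\omega_{\frf_{w^{-1}\chi}})$, and $(\Gamma^w)^!\Gamma^w_*$ is the identity on the formal scheme level (a graph of an automorphism is a "clean" embedding in the relevant sense), so $\tau^{\leqslant 0}\End(\omega(w)_\chi)\simeq\Fun(\frf_{w^{-1}\chi})=\Fun(\Gamma^w)$, concentrated in degree zero. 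The matching $\End(\std(w)_\chi)\simeq\Fun(\Gamma^w)$ on the automorphic side follows from \eqref{e:homstdcostd} in Lemma~\ref{c:proptilt} (with $y=w$), which already gives $\Hom(\std(w)_\chi,\costd(w)_\chi)\cong\Fun(\frf_\chi)$; combined with the fact that $\std(w)_\chi$ and $\costd(w)_\chi$ have the same image under the conservative-on-this-block functor $\mathbb{V}$, one deduces $\std(w)_\chi\simeq\costd(w)_\chi$ is impossible in general but that the natural map $\std(w)_\chi\to\costd(w)_\chi$ induces the claimed isomorphism on endomorphisms. Actually the cleaner route: $\End(\std(w)_\chi)$ is computed by a Cousin/dévissage argument as in Lemma~\ref{c:proptilt}\eqref{c:proptilt-3} directly to be $\Fun(\frf_\chi)$, and $\mathbb{V}$ being a functor gives a ring map $\Fun(\frf_\chi)\simeq\End(\std(w)_\chi)\to\End(\omega(w)_\chi)\simeq\Fun(\Gamma^w)$; one checks this is the canonical identification by evaluating on the monodromy action, which is compatible on both sides by construction of $\mathbb{V}$ via Corollary~\ref{cor: Obtaining Soergel}.

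\textbf{Main obstacle.} The delicate point is verifying that $\alpha_\chi$ really does kill the Artin--Schreier/Gauss-sum contributions from \eqref{eq: shrek stalk of average of AS}, i.e. that the $\Sig_1$-coinvariants construction in Theorem~\ref{thm: monodromic-action} collapses ${}_\phi\cM_\chi$ to a \emph{single} copy of $D(\wt\AA/(\wt\AA,\chi\mon))$ in a way compatible with the ${}_\chi\cM_\chi$-action, so that the convolution action on the collapsed category is given by honest graphs rather than graphs twisted by rank-one local systems on the spectral side. This is where the careful bookkeeping of $\Sig$, $M_\phi$ and the affine pinning in Section~\ref{ss:Cphi} is essential, and where one must be most careful that the identification \eqref{eq: Soergel average is identity} genuinely holds as stated. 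Everything else is a reduction to rank one plus standard base-change with graphs of automorphisms.
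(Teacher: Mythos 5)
Your skeleton is the same as the paper's: the paper likewise uses the $\indcoh(\frf_\chi)$-linearity of $\alpha_\chi$ together with \eqref{eq: Soergel average is identity} to reduce the identification of $\cvc(\std(w)_\chi)$ to computing the kernel of $-\star\std(w)_\chi$ acting on the affine Whittaker model, reduces via Lemma \ref{multi} and the monoidality of $\mathbb{V}$ to $w$ of length zero or a simple reflection, treats length zero via the $w$-twist on $\End({}_\phi\Delta(e)_\chi)$ together with Corollary \ref{c:compsincofree}, and treats simple reflections by the rank-one computations (this is exactly Lemma \ref{c:std=graph} in the paper); the endomorphism statement then comes out of the bimodule-linearity of the comparison diagram, essentially your ``cleaner route.''

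The one step where your explanation is genuinely off is the simple-reflection case, precisely the point you flag as the main obstacle. The Artin--Schreier/Gauss-sum contributions of \eqref{eq: shrek stalk of average of AS} are \emph{not} killed by the $\Sigma_1$-modification: the role of $\Sigma$ in Theorem \ref{thm: monodromic-action} is only to identify the $\Om$-indexed copies of $D(\wt\AA/(\wt\AA,\chi\mon))$ with one another equivariantly (which is what the length-zero case and the very definition of $\mathbb{V}$ require); it does nothing to the rank-one twists appearing in the rank-one computation. What actually happens, as in the proof of Lemma \ref{c:std=graph}, is this: the contribution over the unit stratum vanishes because $C^\bullet(\GG_a,\AS_\psi)=0$, i.e.\ by \eqref{eq: support on unit strate} --- this is why the kernel is the pure graph $\Gamma^s_*(\omega_{\frf_\chi})$ and not ``the graph together with the diagonal,'' contrary to your phrasing (which also contradicts your own conclusion $\cvc(\std(s)_\chi)\simeq\omega(s)_\chi$); and the surviving Gauss-sum line $\cG(\halpha_s^\vee(\chi^{-1}),\psi^{-1})$ from \eqref{eq: shrek stalk of average of AS} is simply trivialized by a choice of isomorphism with $E_\chi$, which is harmless because the proposition is asserted only up to isomorphism. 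Two smaller corrections: in the induction you must allow simple reflections of $\tilW$, not only of $\tilW^\circ_\chi$ (a reduced word for $w$ does not stay inside the integral Weyl group), the same rank-one computation covering $s\notin\tilW^\circ_\chi$ with target twist $s\chi$; and $(\Gamma^w)^!\Gamma^w_*$ is not the identity --- there are positive-degree Ext's, which is exactly why the statement only claims $\tau^{\leqslant 0}\End(\omega(w)_\chi)$ --- so your conclusion in degrees $\leqslant 0$ is fine, but it should be justified by computing $H^{\leqslant 0}$ of the $!$-restriction rather than by asserting cleanness of the graph embedding.
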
 

\begin{rem}\label{r:canonicalstd} We emphasize that, while $\std(w)_\chi$ and $\costd(w)_\chi$ were previously only specified up to non-unique isomorphism, i.e., were defined as non-contractible groupoids of objects,  Proposition \ref{p:std2std} now gives a canonical rigidification. Namely, an object $\Delta(w)_\chi$ equipped with an isomorphism $$\iota: \cvc(\std(w)_\chi) \simeq \omega(w)_\chi$$is now defined uniquely to unique isomorphism.     
\end{rem}

\begin{rem}
We also remark that, summing over all residual characters $\chi$ and $\chi'$, an equivalent formulation of the proposition is that $\mathbb{V}(\Delta(w))$ is the dualizing sheaf on the graph of $w$: $$\Gamma^w: \on{LS}_{\wt{\mathbb{A}}^\vee}^{t, \square}\rightarrow \on{LS}_{\wt{\mathbb{A}}^\vee}^{t, \square} \times \on{LS}_{\wt{\mathbb{A}}^\vee}^{t, \square}.$$
\end{rem}

\begin{proof}[Proof of Proposition \ref{p:std2std}.] Recall the action map 
$$- \star -: {}_\phi\cM_\chi \otimes \cmc \rightarrow {}_\phi\cM_{\chi'}.$$
Recall in addition, that if we write $\omega \in \Omega \simeq \pi_0(\wt{LG})$ for the component supporting $\Delta(w)_\chi$, the above action map restricts to 
$$- \star -: {}_\phi\cM_\chi^e \otimes \cmc^\omega \rightarrow {}_\phi\cM_{\chi'}^\omega.$$
Note that, if we consider the functors  
$$\alpha_\chi: {}_\phi \sM_\chi \rightarrow D(\wt{\AA} / (\wt{\AA}, \chi\mon)) \quad \text{and} \quad \alpha_{\chi'}: {}_\phi \sM_{\chi'} \rightarrow D(\wt{\AA} / (\wt{\AA}, \chi'\mon)),$$cf. Equation \eqref{e:insertionofwhit}, they are $\indcoh(\frf_\chi)$-linear and $\indcoh(\frf_{\chi'})$-linear, respectively. Consider the commutative diagram 
$$\xymatrix{ {}_\phi \cM^e_\chi \ar[d] \ar[r]^{- \star \Delta(w)_\chi} & {}_\phi \cM^\omega_{\chi'} \ar[d] \\ {}_\phi \cM_\chi \ar[d]^{\alpha_\chi}  & {}_\phi \cM_{\chi'} \ar[d]^{\alpha_{\chi'}}  \\ D(\wt{\AA} / (\wt{\AA}, \chi\mon)) \ar[d]^\sim & D(\wt{\AA} / (\wt{\AA}, \chi'\mon)) \ar[d]^\sim\\  \indcoh(\frf_\chi) \ar[r]^{\cvc(\Delta(w)_\chi)} & \indcoh(\frf_{\chi'}).}$$
As in the previous discussion, the left and right vertical compositions are  $\indcoh(\frf_\chi)$-linear and $\indcoh(\frf_{\chi'})$-linear equivalences, respectively. Therefore, to prove the proposition it is enough to show that the upper horizontal arrow corresponds, up to isomorphism, to the integral kernel $\omega(w)_\chi$.  

Using Lemma \ref{multi} and the monoidality of $\mathbb{V}$, it is straightforward to reduce to the case of $w$ being length zero or a simple reflection. If $w$ is length zero, then the functor $- \star \std(w)_{\chi}$ sends ${}_\phi\Delta(e)_\chi \in {}_\phi \cM^e_\chi $ to ${}_\phi\Delta(w)_\chi \in {}_\phi \cM^{\om}_\chi$. Moreover, the induced map $$\Fun( \frf_{\chi})\simeq \End({}_\phi\Delta(e)_\chi) \ra \End({}_\phi\Delta(e)_\chi \star \std(w)_{\chi}) \simeq \Fun( \frf_{\chi'})$$ is given by the $w$-twist. The claim now follows from Corollary \ref{c:compsincofree} and \eqref{eq: Soergel average is identity}. Finally, if $w = s$ is a simple reflection, this follows from Lemma \ref{c:std=graph} below.
\end{proof}

\begin{lem}\label{c:std=graph}
The composition 
$$\indcoh(\frf_\chi) \simeq  {}_\phi \sM_\chi^e \xrightarrow{ - \star \Delta(s)_\chi} {}_\phi \sM_{s\chi}^e \simeq \indcoh(\frf_{s\chi})$$
identifies, up to isomorphism, with the equivalence given by $!$-tensoring with the integral kernel
$$\Gamma^s_*(\omega_{\frf_\chi}) \in \indcoh(\frf_{s\chi} \times \frf_{\chi}),$$
where $\Gamma^s: \frf_\chi \rightarrow \frf_{s \chi} \times \frf_{\chi}$ denotes the graph of $\on{Ad}_s: \frf_\chi \simeq \frf_{s\chi}$.     
\end{lem}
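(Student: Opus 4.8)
\textbf{Proof proposal for Lemma \ref{c:std=graph}.}

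The plan is to compute directly the kernel of the convolution functor $- \star \Delta(s)_\chi$ restricted to the neutral component, using the explicit rank-one calculations already carried out in Section \ref{s:heckecat}, and then transport the answer through the equivalence $\alpha_\chi$. First I would recall that under the chain of identifications
$$\indcoh(\frf_\chi) \simeq {}_\phi\cM_\chi^e, \qquad \cF \mapsto {}_\phi\Delta(e)_\chi \star \CH^{-1}(\cF) \text{ (roughly speaking)},$$
the unit object $\omega_{\frf_\chi}$ corresponds to ${}_\phi\Delta(e)_\chi = j_{1,*}(\cL_\phi \boxtimes \dcm)$, and the right ${}_\chi\cM_\chi$-module structure is by convolution at the point $0$. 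So the content of the lemma is the computation of ${}_\phi\Delta(e)_\chi \star \Delta(s)_\chi = {}_\phi\Delta(e)_\chi \star j_{\dot s, !}(\dcm)$ together with its residual $\Fun(\frf_{s\chi})$-module structure, as an object of ${}_\phi\cM_{s\chi}^e$, and then identifying it with $\Gamma^s_*(\omega_{\frf_\chi})$ via $\alpha_{s\chi}$.

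The key steps, in order: (1) Since $\Delta(s)_\chi \simeq \costd(s)_\chi$ whenever $s \notin \tilW_\chi^\circ$, in that case $\Delta(s)_\chi$ is clean and $\Gamma^s_*\omega_{\frf_\chi} = \omega(s)_\chi$ is just the graph of the (free) $s$-action, so the statement reduces to the length-zero type computation already handled in the proof of Proposition \ref{p:std2std}; I would dispatch this case first. (2) In the remaining case $s \in \tilW_\chi^\circ$, I would run the analysis that produced \eqref{eq: support on s-strata} and \eqref{eq: support on unit strate}: convolution with $j_{\dot s,!}(\dcm)$ of a sheaf which on the open stratum of ${}_\phi\cM_\chi^e$ looks like $\cL_\phi[1]\boxtimes \dcm$. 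The crucial input is that the Whittaker (Artin--Schreier) condition at $\infty$ forces all contributions from the $w$-strata with $\ell(w)\neq 0$ to vanish when we apply $\alpha_{s\chi}$ — this is precisely the cleanness statement of Lemma \ref{l:psi clean}, combined with \eqref{eq: Soergel average is identity}. Hence after applying $\alpha_{s\chi}$ we only see the restriction to the $\wt I$-stratum, i.e. to $\wt\AA$. (3) Using \eqref{eq: K?LF}, \eqref{eq: support on unit strate} and Proposition \ref{prop: tame geometric Langlands for tori}\eqref{prop: tame geometric Langlands for tori-2}, I would compute that the resulting endofunctor of $\indcoh(\frf_{s\chi})$ is $!$-tensoring with the pushforward along the graph of the torus isomorphism $h$ from Section \ref{s:heckecat}; unwinding the definition of $h$, one checks $h$ restricts on the relevant spectral locus exactly to $\on{Ad}_s\colon \frf_\chi \xrightarrow{\sim} \frf_{s\chi}$, and the dualizing-sheaf normalization of $\CH$ (Lemma \ref{l:enddcm}) matches the cohomological shift. (4) Finally, for the $\Fun(\frf_\chi)$- and $\Fun(\frf_{s\chi})$-bimodule structure, I would invoke Corollary \ref{c:compsincofree}: the functor is determined by its value on the unit together with the induced map on endomorphisms of the unit, and \eqref{eq: Soergel average is identity} identifies the latter with the $s$-twist $\Fun(\frf_{s\chi}) \to \Fun(\frf_\chi)$, which is exactly the bimodule $\Fun(\Gamma^s)$.

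The main obstacle I anticipate is step (3): keeping careful track of the various torus automorphisms ($h$, the $s$-action on $\wt\AA$, the coroot $\halpha_s$) and the perverse/cohomological shifts, so that the spectral-side kernel comes out as literally $\Gamma^s_*(\omega_{\frf_\chi})$ and not a twist or shift thereof. In particular one must verify that the closed subscheme appearing as $\ker(\halpha_s)^\vee \cap \frf_\chi$ in \eqref{eq: shrek-stalk-conv-costad-s-strata} is genuinely invisible after the Whittaker collapse — equivalently, that the map $\alpha_{s\chi}$ kills the contribution of the $s$-stratum entirely — which is where the precise form of the affine generic character $\phi$ and the identification in Corollary \ref{c:describe Mphi} are used. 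Once that vanishing is in place the remaining identification is a routine, if slightly tedious, computation with the Mellin transform.
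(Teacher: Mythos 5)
Your overall strategy---collapse to the rank-one situation on $\tLG_{\le s}$ in the Whittaker model, use the explicit convolution formulas from the rank-one calculations, and transport through the Mellin dictionary---is the same as the paper's, but the computational core of your step (3) does not go through as written. The formulas you plan to quote, \eqref{eq: K?LF} and \eqref{eq: support on unit strate}, compute $\cF\star\nabla(s)$ (note: the costandard, whereas you convolve with $j_{\dot s,!}(\dcm)$; the paper's rank-one section has no $!$-convolution formula for Artin--Schreier inputs) for $\cF$ extended from the Bruhat $s$-stratum with $\cF_0=\cL[1]\boxtimes\cF_1$, $\cL$ a character sheaf on $\GG_a$ in the coordinate $\phi_s(x)t\dot s$. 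The Whittaker object restricted to $\tLG_{\le s}/I^+$ is not of this shape: it is the clean extension of $\AS_\psi\boxtimes\dcm$ from the opposite cell $U_{-\alpha_s}\times\wt\AA$, and in Bruhat coordinates its restriction to the $s$-stratum is $\AS_\psi$ in $x^{-1}$ twisted by a Kummer-type sheaf, so \eqref{eq: support on unit strate} cannot be quoted; if instead you model it equivariantly as an $\AS_\psi$-sheaf extended from the $s$-stratum (the setting of \eqref{eq: shrek stalk of average of AS}), then \eqref{eq: support on unit strate} with $\cL=\AS_\psi$ gives $\delta^{\mono}\otimes C^\bullet(\GG_a,\AS_\psi)=0$, i.e.\ the unit-stratum restriction carries no information at all. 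The paper's proof reads off the answer from the $s$-stratum formula \eqref{eq: shrek stalk of average of AS}, giving $j_{e!}(\cL_\phi\boxtimes\cF)\star\nabla(s)\simeq j_{e!}(\cL_\phi\boxtimes s(\cF))\otimes_{E_\chi}\cG(\halpha_s^{\vee}(\chi^{-1}),\psi^{-1})$, and then \emph{chooses} a trivialization of the Gauss-sum line $\cG$; that line is precisely the source of the phrase ``up to isomorphism'' in the statement, and it is entirely absent from your argument.

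Relatedly, your anticipated main obstacle is misplaced: the subscheme $\ker\halpha_s^\vee\cap\frf_\chi$ appears only for the constant-sheaf input \eqref{eq: shrek-stalk-conv-costad-s-strata} and never arises for the Artin--Schreier input, so there is nothing for the Whittaker collapse to kill; the genuine bookkeeping issue is the Gauss sum, not a twist by $\ker\halpha_s^\vee$ or a cohomological shift of $h$. Two smaller points: the case split at $s\notin\tilW^\circ_\chi$ is unnecessary (the rank-one computation is uniform in $s$), and cleanness of $\Delta(s)_\chi$ does not by itself reduce that case to a length-zero computation; and since the available formulas concern $\nabla(s)$ while the lemma is stated for $\Delta(s)$, you should either redo the computation for the $!$-extension or pass through Lemma \ref{inverse}, observing that the kernel $\Gamma^s_*(\omega_{\frf_\chi})$ is its own inverse---which is what the paper's proof does implicitly.
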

\begin{proof}
Let $\alpha_s$ be the simple affine root associated to $s$, and $U_{-\alpha_s}$ the root subgroup for $-\alpha_s$.
  Note that we have the following commutative diagram 
    \[
    \xymatrix{
{}_{\phi}\cM_{\chi}^{e}    \ar^-{\star \nabla(s)}[r]\ar^\simeq[d]& {}_{\phi}\cM_{s\chi}^{e}\ar^\simeq[d]\\
   D((U_{-\alpha_s},\AS_\psi\mon)\bs\tLG_{\le s}/(I,\chi\mon))\ar^-{\star \nabla(s)}[r]& D((U_{-\alpha_s},\AS_\psi\mon)\bs\tLG_{\le s}/(I,s\chi\mon)) }
    \]
    with the vertical map given by $!$-pullback along the map $\tLG_{\le s}/I^+\to \tLG/I^+\to \wt\Bun^{\om}_{\cG}(I^{+}_{0}, I^{++}_{\infty})$. 
    
    Using this commutative diagram, we note that \eqref{eq: shrek stalk of average of AS} implies that for $\cF\in D(\wt\AA/(\wt\AA,\chi\mon))$
    $$
    j_{e!}(\cL_\phi\boxtimes\cF)\star \nabla(s)\simeq j_{e!}(\cL_\phi\boxtimes s(\cF))\otimes_{E_\chi} \cG(\halpha_s^{\vee}(\chi^{-1}),\psi^{-1}),
    $$
where $j_{e!}(\cL_\phi\boxtimes\cF)$ is as in
    Lemma \ref{l:psi clean}. A choice of an isomorphism $\cG(\halpha_s^{\vee}(\chi^{-1}),\psi^{-1})\simeq E_\chi$ gives the desired statement of the lemma.
\end{proof}

\sss{} Let us note a first useful consequence of Proposition \ref{p:std2std}. To state it, recall the monoidal categories  of tilting sheaves $\xtx$ and ${}_\chi\cT_{\chi}^{\circ}$, cf. Corollary \ref{c:xtx}. Recall the notation 
$$\frf_\Xi := \underset{\chi \in \Xi} \sqcup \hspace{.5mm} \frf_\chi.$$

\begin{cor}\label{c: tilt neutral ff}
The composition of monoidal functors $${}_\chi\cT_{\chi} \rightarrow {}_\chi\cM_{\chi} \xra{{}_{\chi}\mathbb{V} _{\chi}}   \indcoh(\frf_{\chi} \times \frf_\chi) $$lands in the abelian category of ordinary coherent sheaves on $\frf_\chi\times\frf_\chi$, and the obtained functor restricts to a fully faithful embedding
$$
{}_{\chi}\mathbb{V} _{\chi}^{\circ}: {}_\chi\cT_{\chi}^{\circ} \hookrightarrow \indcoh(\frf_{\chi}^2)^\heartsuit.
$$ 
\end{cor}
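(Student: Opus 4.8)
\textbf{Proof strategy for Corollary \ref{c: tilt neutral ff}.}
The plan is to bootstrap from Proposition \ref{p:std2std} using the standard flags that tilting sheaves admit, and then to deduce full faithfulness from a Hom-space comparison. First I would address the claim that the functor lands in the heart. An object $\tau \in {}_\chi\cT_\chi^\circ$ admits both a $\std$-flag and a $\costd$-flag; applying the exact functor ${}_\chi\mathbb{V}_\chi$ and using Proposition \ref{p:std2std}, the image $\mathbb{V}(\tau)$ acquires a finite filtration with associated graded given by the standard sheaves $\omega(w)_\chi = \Gamma^w_*(\omega_{\frf_{w^{-1}\chi}})$. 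Since $w \in \tilW_\chi^\circ$ here, $w^{-1}\chi = \chi$, so each $\omega(w)_\chi$ is the pushforward of the dualizing sheaf along the graph $\Gamma^w: \frf_\chi \to \frf_\chi^2$, which is a closed embedding of a smooth (formal) scheme, hence (up to the appropriate normalization fixing the cohomological shift, as in the de Rham Mellin normalization of Proposition \ref{prop: tame geometric Langlands for tori}) an honest coherent sheaf concentrated in degree zero. A finite iterated extension of objects in the heart $\indcoh(\frf_\chi^2)^\heartsuit$, with connecting maps that are necessarily zero on cohomology because the relevant $\Ext^1$ between these structure-sheaf-type objects in $\cM$ vanishes by the tilting property (Lemma \ref{c:proptilt}\eqref{c:proptilt-3} gives that $\Hom$ between a $\std$-filtered and a $\costd$-filtered object is concentrated in degree zero), lies in the heart. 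So ${}_\chi\mathbb{V}_\chi^\circ$ takes values in $\indcoh(\frf_\chi^2)^\heartsuit$; in fact in $\Coh(\frf_\chi^2)$ since the flags are finite and each piece is coherent.

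For full faithfulness, the key point is to compare $\Hom$-spaces stratum by stratum. Given indecomposable tilting sheaves $\tau(y)_\chi$ and $\tau(w)_\chi$ with $y, w \in \tilW_\chi^\circ$, I would compute $\Hom_{{}_\chi\cM_\chi}(\tau(y)_\chi, \tau(w)_\chi)$ via the Cousin/Cousin-type filtration of Lemma \ref{c:proptilt}\eqref{c:proptilt-3}, which exhibits it as an iterated extension of the graded pieces $\Hom(\std(v)_\chi, \costd(v)_\chi) \cong \Fun(\frf_\chi)$ indexed by the common strata $v$ in the supports. On the spectral side, the same device applies: $\mathbb{V}(\tau(y)_\chi)$ and $\mathbb{V}(\tau(w)_\chi)$ carry the $\std$- and $\costd$-flags just described, and $\Hom_{\indcoh(\frf_\chi^2)}(\omega(v)_\chi, \omega(v)_\chi) \cong \Fun(\Gamma^v) \cong \Fun(\frf_\chi)$ by the last clause of Proposition \ref{p:std2std}, while $\Hom$ between distinct standard sheaves $\omega(v)_\chi$ and $\omega(v')_\chi$ vanishes for support reasons (disjoint or incomparable graphs) exactly as the geometric $\Hom$ does. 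Thus the functor induces a map of filtered objects which is an isomorphism on each graded piece — each being identified with $\Fun(\frf_\chi)$ by Proposition \ref{p:std2std} compatibly — and hence an isomorphism on $\Hom$. Since ${}_\chi\cT_\chi^\circ$ is generated under direct sums and summands (it is Krull--Schmidt) by the indecomposables $\tau(w)_\chi$, this suffices.

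The main obstacle I anticipate is the compatibility of the two filtrations: one must check that the $\std$-flag of $\tau$ in $\cM$ maps, under $\mathbb{V}$, precisely to a flag on $\mathbb{V}(\tau)$ realizing $\mathbb{V}(\std(w)_\chi) = \omega(w)_\chi$ as subquotients, and that the induced maps on graded pieces are the identifications of Proposition \ref{p:std2std} and not some twist thereof. This requires knowing that $\mathbb{V}$, being monoidal and built from an $\indcoh(\frf_\chi)$-linear module action (Theorem \ref{thm: monodromic-action} and the discussion in \S\ref{ss:soergfunctor}), is exact and compatible with the standard/costandard insertion functors $(j_{\dot w})_!$, $(j_{\dot w})_*$ — i.e., that it sends the defining cofiber sequences of the $\std$- and $\costd$-flags to the corresponding cofiber sequences on the spectral side. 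This compatibility is essentially forced by the $\indcoh(\frf_\chi)$-linearity of the functors $\alpha_\chi$ together with Proposition \ref{p:std2std}, but writing it carefully is where the real work lies. A secondary subtlety is the bookkeeping of cohomological shifts in the normalization of $\mathbb{V}$ so that $\omega(w)_\chi$ genuinely sits in degree zero; this is handled by the normalization already fixed in Proposition \ref{prop: tame geometric Langlands for tori} and carried through Proposition \ref{p:std2std}.
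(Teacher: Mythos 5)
There is a genuine gap, and it sits exactly at the point you wave off as ``support reasons.'' On the spectral side, the graphs $\Gamma^{v}$ and $\Gamma^{v'}$ for distinct $v,v'\in\tilW_\chi^\circ$ are \emph{not} disjoint: every $v\in\tilW_\chi$ fixes the closed point $\chi$, so all the graphs meet over $(\chi,\chi)$, and more generally $\Gamma^{v}\cap\Gamma^{v'}$ is the fixed locus of $v^{-1}v'$ in $\frf_\chi$, which is a nontrivial formal subscheme. So the vanishing of $\Hom_{\indcoh(\frf_\chi^2)^\heartsuit}(\omega(v)_\chi,\omega(v')_\chi)$ is not formal, and it is precisely the nontrivial content of the paper's proof. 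The paper establishes it (for the pair $\omega(e)_\chi$, $\omega(w)_\chi$, $w\neq e$) by pushing forward along the two projections to embed the Hom space into $\Fun(\frf_\chi)$, observing that the image is annihilated by all elements $f-w\cdot f$, and then invoking two genuinely non-formal inputs: the faithfulness of the $W_{\aff}$-action on $\wt\AA$ (Proposition \ref{p: affine weyl faithful}, which is where the central extension matters, and which is why the neutral-block statement is unconditional while the full statement in Corollary \ref{c:tiltff} requires $G$ split) and the fact that $\Fun(\frf_\chi)$ is a domain. On the automorphic side the cross-term vanishing $\Hom(\std(v),\costd(v'))=0$ for $v\neq v'$ really is by disjointness of strata, but the parallel you draw with the spectral side breaks down, so your filtration-by-filtration comparison of $\Hom(\tau(y)_\chi,\tau(w)_\chi)$ cannot be completed as written.

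Beyond that missing vanishing, your route is also heavier than necessary: comparing Homs between arbitrary pairs of tiltings forces you to control all cross-terms and the compatibility of two filtrations (the ``main obstacle'' you flag). The paper instead uses monoidality together with the semi-rigidity of ${}_\chi\cT_\chi^\circ$ (Remark \ref{r:units}) to reduce full faithfulness to Homs out of the unit $\dcm$, and then inducts along a $\costd$-flag of $\tau$, so that everything comes down to the single case $\Hom(\dcm,\costd(w)_\chi)$: for $w=e$ this is Proposition \ref{p:std2std}, and for $w\neq e$ both sides vanish, the spectral side by the argument sketched above. Your treatment of the ``lands in the heart'' part is fine (it follows from the $\std$-flags and Proposition \ref{p:std2std}; the extra discussion of $\Ext^1$ is unnecessary since the heart is closed under extensions), but the full-faithfulness half needs the paper's fixed-point/domain argument, which no support consideration replaces.
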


\begin{proof}
    That tilting objects land in the heart follows from their $\std$-filtrations and Proposition \ref{p:std2std}. 

To see the asserted fully faithfulness, by the monoidality of the functor, and the semi-rigidity of ${}_\chi\cT_{\chi}^{\circ}$, cf. Remark \ref{r:units},  it suffices to check for any object $\tau \in {}_\chi\cT_{\chi}^{\circ}$ that the map 
$$\Hom_{{}_\chi\cT_{\chi}^{\circ}}( \dcm, \tau) \rightarrow \Hom_{\indcoh(\frf_{\chi}^2)^\heartsuit}( {}_{\chi}\mathbb{V} _{\chi}^{\circ}(\dcm), {}_{\chi}\mathbb{V} _{\chi}^{\circ}(\tau))$$
is an equivalence. As $\tau$ is filtered by cofree monodromic costandard sheaves, it is enough to see, for any $w \in {}_{\chi}\tilW_{\chi}^{\circ}$, that the map 
\begin{equation} \label{kicks}\Hom_{{}_\chi\cM_{\chi}^{\circ}}( \dcm , \costd(w)_{\chi}) \xrightarrow{} \Hom_{\indcoh(\frf_{\chi}^2)^\heartsuit}( {}_{\chi}\mathbb{V} _{\chi}^{\circ}(\dcm), {}_{\chi}\mathbb{V} _{\chi}^{\circ}(\costd(w)_{\chi}))\end{equation}
is an equivalence. For $w = e$, \eqref{kicks} is an equivalence follows from Proposition \ref{p:std2std}. For $w \neq e$, it suffices to again argue that both sides vanish. Indeed, this is clear on the left-hand side of \eqref{kicks}. 

On the right-hand side of \eqref{kicks}, we first note that $w$ acts on $\on{Fun}(\frf_\chi)$ by an automorphism distinct from the identity. Indeed, $W_{\aff}$ acts faithfully on $\wt{\mathbb{A}}$, thanks to Proposition \ref{p: affine weyl faithful}, whence on $\wt{\mathbb{A}}^\vee$, and so, by the connectedness of $\wt{\mathbb{A}}^\vee$, the claim follows by passing to algebra of functions on the formal completion $\frz_\chi$, and recalling that $\Fun(\frf_\chi) \simeq \Fun(\frz_\chi)$. 

Let us now show that any element of $\Hom_{\indcoh(\frf_\chi^2)^\heartsuit}( \omega(e), \omega(w) )$ is necessarily zero. Indeed, consider the second projection $$\pi: \frf_\chi^2 \rightarrow \frf_\chi,$$and the associated injection 
\begin{align*}\Hom_{\indcoh(\frf_{\chi}^2)^\heartsuit}( \omega(e)_\chi, \omega(w)_\chi )  \hookrightarrow  &\Hom_{\indcoh(\frf_\chi)^\heartsuit}( \pi_*(\omega(e)_\chi), \pi_*(\omega(w)_\chi)) \\ \simeq &\Hom_{\indcoh(\frf_\chi)^\heartsuit}( \omega_{\frf_\chi}, \omega_{\frf_\chi}) \\ \simeq &\on{Fun}(\frf_\chi). \end{align*}
By considering the first projection as well, it follows the image  in $\on{Fun}(\frf_\chi)$ lies in the subspace of elements $\phi$ annihilated by functions of the form $f - w \cdot f, f \in \Fun(\frf_\chi)$. However, recalling from above that the latter subspace of functions is nonzero, and that $\on{Fun}(\frf_\chi)$ is a domain,   it follows the image is zero, as desired.   
\end{proof}

Another corollary we need is the following.

\begin{cor}\label{cor: relating to classical SBim}
    The composition of monoidal functors $${}_\chi\cT_{\chi} \rightarrow {}_\chi\cM_{\chi} \xra{{}_{\chi}\mathbb{V} _{\chi}}   \indcoh(\frf_{\chi} \times \frf_\chi) $$lands in $\langle\omega_{\frf_\chi\times\frf_\chi}\rangle$, the idempotent complete subcategory of $\indcoh(\frf_{\chi} \times \frf_\chi)$ generated by $\omega_{\frf_\chi\times\frf_\chi}$. Under the equivalence
    $$
    \Hom(\omega_{\frf_\chi\times\frf_\chi},-): \langle\omega_{\frf_\chi\times\frf_\chi}\rangle\simeq \Perf_{\Fun(\frf_\chi\times\frf_\chi)},
    $$
    where $\Perf_{\Fun(\frf_\chi\times\frf_\chi)}$ denotes the category of perfect $\Fun(\frf_\chi\times\frf_\chi)$-modules, the object
    ${}_{\chi}\mathbb{V} _{\chi}(\tau)$ corresponds to the $E$-module $\Hom({}_\phi\Delta(e)_{\chi}, {}_\phi\Delta(e)_{\chi}\star \tau)$, equipped with $\Fun(\frf_\chi\times\frf_\chi)$-module structure coming from the composed maps
    $$
    \Fun(\frf_\chi\times\frf_\chi)\to \End(\tau)\to \Hom({}_\phi\Delta(e)_{\chi}, {}_\phi\Delta(e)_{\chi}\star \tau),
    $$
    where the first map is the monodromy action as in Remark \ref{rem: monodromy action}.
\end{cor}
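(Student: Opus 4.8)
The strategy is to combine the fully faithfulness already established in Corollary~\ref{c: tilt neutral ff}, or rather its natural extension allowing the two monodromies to differ, with the explicit description of the $\mathbb{V}$-functor via the affine Whittaker action. The first step is to observe that for a tilting sheaf $\tau\in{}_\chi\cT_\chi$, the object ${}_\chi\mathbb{V}_\chi(\tau)$ lies in the heart of $\indcoh(\frf_\chi\times\frf_\chi)$ by Proposition~\ref{p:std2std} together with the $\std$-filtration of $\tau$, exactly as in the proof of Corollary~\ref{c: tilt neutral ff}; moreover by that same $\std$-filtration ${}_\chi\mathbb{V}_\chi(\tau)$ is a successive extension of the standard objects $\omega(w)_\chi$, each of which is the pushforward of $\omega_{\frf_\chi}$ along a graph, hence $*$-pushed forward from a closed subscheme that is isomorphic (as a scheme) to $\frf_\chi$. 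Since $\frf_\chi$ is (ind-)finite over $\Spec E$ with $E$ regular, the endofunctor of $\I(\frf_\chi\times\frf_\chi)$ given by $!$-restriction followed by $*$-pushforward along the graph is given by tensoring with a perfect complex — this is precisely the computation made in the proof of Lemma~\ref{l:compsincofree}. Therefore each $\omega(w)_\chi$, and hence ${}_\chi\mathbb{V}_\chi(\tau)$, lies in $\langle\omega_{\frf_\chi\times\frf_\chi}\rangle$, giving the first assertion. The identification $\langle\omega_{\frf_\chi\times\frf_\chi}\rangle\simeq\Perf_{\Fun(\frf_\chi\times\frf_\chi)}$ via $\Hom(\omega_{\frf_\chi\times\frf_\chi},-)$ is standard (the dualizing sheaf is a compact generator with endomorphism algebra $\Fun(\frf_\chi\times\frf_\chi)$ concentrated in degree zero, by Lemma~\ref{lem: function on frf}).

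The second step is to compute the underlying $E$-module of ${}_\chi\mathbb{V}_\chi(\tau)$, which by the equivalence above is $\Hom_{\I(\frf_\chi\times\frf_\chi)}(\omega_{\frf_\chi\times\frf_\chi},{}_\chi\mathbb{V}_\chi(\tau))$. Here I would run the argument already used in the proof of Proposition~\ref{p:std2std}: the monoidal functor $\mathbb{V}$ corresponds, under Corollary~\ref{cor: Obtaining Soergel}, to the action of $\cM_\Xi$ on $\oplus_{\chi\in\Xi}D(\wt\AA/(\wt\AA,\chi\mon))$, and this action is a ``collapse'' of the affine Whittaker module via the $\cM_\Xi$-equivariant functor $\alpha_\chi:{}_\phi\cM_\chi\to D(\wt\AA/(\wt\AA,\chi\mon))$, which by \eqref{eq: Soergel average is identity} is a retraction of the insertion of the neutral component ${}_\phi\cM^e_\chi\cong D(\wt\AA/(\wt\AA,\chi\mon))$. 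Concretely, the integral kernel ${}_\chi\mathbb{V}_\chi(\tau)\in\I(\frf_\chi\times\frf_\chi)$ is the image of $\omega_{\frf_\chi}$ under the composite
$$\I(\frf_\chi)\simeq {}_\phi\cM^e_\chi\hookrightarrow{}_\phi\cM_\chi\xrightarrow{-\star\tau}{}_\phi\cM_\chi\xrightarrow{\alpha_\chi}D(\wt\AA/(\wt\AA,\chi\mon))\simeq\I(\frf_\chi),$$
where on the ${}_\phi\cM$-side $\omega_{\frf_\chi}$ corresponds to ${}_\phi\Delta(e)_\chi$. Applying $\Hom(\omega_{\frf_\chi},-)$ on the target and using that $\alpha_\chi$ is fully faithful on the neutral component (again by \eqref{eq: Soergel average is identity}), one gets $\Hom_{\I(\frf_\chi)}(\omega_{\frf_\chi},{}_\chi\mathbb{V}_\chi(\tau))\simeq\Hom_{{}_\phi\cM_\chi}({}_\phi\Delta(e)_\chi,{}_\phi\Delta(e)_\chi\star\tau)$, because ${}_\phi\Delta(e)_\chi\star\tau$ is $\costd$-filtered hence lies in the neutral component's image under $\alpha_\chi$ together with its translates — more carefully, using the block-to-block behaviour of convolution (Lemma~\ref{l:b2b} and Remark thereafter) one sees ${}_\phi\Delta(e)_\chi\star\tau$ decomposes according to $\Omega_\chi$ and the $\Hom$ from ${}_\phi\Delta(e)_\chi$ picks out precisely the neutral piece, on which $\alpha_\chi$ is an equivalence. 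Matching the $\Fun(\frf_\chi\times\frf_\chi)$-module structures amounts to tracing through the $\indcoh(\frf_\chi)$-bilinearity of the functors involved and the definition of the monodromy action in Remark~\ref{rem: monodromy action}, which is the bookkeeping identifying the left/right $\Fun(\frf_\chi)$-actions with the two tensor factors.

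The main obstacle I anticipate is the second step's careful matching of module structures and the verification that the $\Hom$ out of ${}_\phi\Delta(e)_\chi$ correctly isolates the neutral block: one must be sure that no other $\Omega_\chi$-component of ${}_\phi\Delta(e)_\chi\star\tau$ contributes, which requires the fact (from Remark~\ref{rem:block decomp of monodromic hecke} and Lemma~\ref{l:b2b}) that convolution respects the block decomposition, and that $\alpha_\chi$ restricted to ${}_\phi\cM^\omega_\chi$ for $\omega\neq e$ followed by the insertion of ${}_\phi\cM^e_\chi$ yields zero on $\Hom$'s from ${}_\phi\Delta(e)_\chi$ — essentially because $\alpha_\chi$ is built from the $\Sigma_1$-equivariance construction in Theorem~\ref{thm: monodromic-action} which mixes the components. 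An alternative, possibly cleaner, route for this bookkeeping is to first prove the formula after projecting to a single block $\tau\in{}_\chi\cT_\chi^\circ$, where Corollary~\ref{c: tilt neutral ff} already gives fully faithfulness of ${}_\chi\mathbb{V}_\chi^\circ$ into $\I(\frf_\chi^2)^\heartsuit$, hence $\Hom(\omega_{\frf_\chi^2},{}_\chi\mathbb{V}_\chi^\circ(\tau))\simeq\Hom_{{}_\chi\cM_\chi^\circ}(\delta^{\chi\mon},\tau)$, and then compare with $\Hom({}_\phi\Delta(e)_\chi,{}_\phi\Delta(e)_\chi\star\tau)$ using the $\cM_\chi$-equivariance of $\alpha_\chi$ and \eqref{eq: Soergel average is identity}; the general $\tau\in{}_\chi\cT_\chi$ then follows by the semidirect product structure, i.e. by twisting by $\tau(\omega)_\chi$ for $\omega\in\Omega_\chi$. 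I would present the argument in this order, leading with the neutral-block case.
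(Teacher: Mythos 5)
Your treatment of the first assertion is fine and is essentially the paper's: standards go to graphs $\omega(w)_\chi=\Gamma^w_*\omega_{\frf_\chi}$, which lie in $\langle\omega_{\frf_\chi\times\frf_\chi}\rangle$ by the same regularity argument as in Lemma \ref{l:compsincofree}, and tiltings follow by their $\std$-flags. Your care about isolating the neutral component of ${}_\phi\Delta(e)_\chi\star\tau$ (Homs out of ${}_\phi\Delta(e)_\chi$ only see the $\omega=e$ summand, and $\alpha_\chi$ is an equivalence on each component by \eqref{eq: Soergel average is identity}) is also legitimate and makes explicit something the paper's proof leaves implicit.

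The gap is in the second assertion. The corollary is about $\Hom(\omega_{\frf_\chi\times\frf_\chi},{}_\chi\mathbb{V}_\chi(\tau))$, but what you actually compute is a different Hom: the value of the kernel on $\omega_{\frf_\chi}$ and then $\Hom(\omega_{\frf_\chi},-)$ of it, i.e.\ $\Hom\bigl(\omega_{\frf_\chi},(\pr_2)_*{}_\chi\mathbb{V}_\chi(\tau)\bigr)$, which under the Whittaker identification is indeed $\Hom({}_\phi\Delta(e)_\chi,{}_\phi\Delta(e)_\chi\star\tau)$. (Note also the type slip: the ``image of $\omega_{\frf_\chi}$ under the composite'' is an object of $\indcoh(\frf_\chi)$, not the kernel ${}_\chi\mathbb{V}_\chi(\tau)\in\indcoh(\frf_\chi\times\frf_\chi)$.) You never supply the bridge
$\Hom\bigl(\omega_{\frf_\chi},(\pr_2)_*{}_\chi\mathbb{V}_\chi(\tau)\bigr)\simeq\Hom\bigl(\omega_{\frf_\chi\times\frf_\chi},{}_\chi\mathbb{V}_\chi(\tau)\bigr)$, and this is precisely the content of the paper's proof: it is not a formal adjunction and fails for a general object of $\indcoh(\frf_\chi\times\frf_\chi)$, so the paper verifies it directly on (co)standard sheaves and then propagates it along the $\Delta$-flag of $\tau$ (together with the $\indcoh(\frf_\chi)$-bilinearity of ${}_\chi\mathbb{V}_\chi$ to match the $\Fun(\frf_\chi\times\frf_\chi)$-module structures). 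Your proposed ``cleaner'' fallback does not repair this: Corollary \ref{c: tilt neutral ff} gives full faithfulness against ${}_\chi\mathbb{V}_\chi^\circ(\dcm)=\omega(e)_\chi=\Delta_*\omega_{\frf_\chi}$, not against $\omega_{\frf_\chi\times\frf_\chi}$, so it computes $\Hom(\Delta_*\omega_{\frf_\chi},{}_\chi\mathbb{V}_\chi^\circ(\tau))\simeq\Hom(\dcm,\tau)$ rather than the module in the statement. These genuinely differ: for $\tau=\tau(s)_\chi$ one has $\Hom(\dcm,\tau(s)_\chi)\simeq\Fun(\frf_\chi)$ of rank one, whereas $\Hom({}_\phi\Delta(e)_\chi,{}_\phi\Delta(e)_\chi\star\tau(s)_\chi)\simeq\End(\tau(s)_\chi)$ has rank two over $\Fun(\frf_\chi)$ by Lemma \ref{lem: adjoint of Wh average} and Lemma \ref{lem: endo of taus}. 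So the missing dévissage step comparing the two functors of the kernel must be added for the argument to go through.
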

\begin{proof}
    Since ${}_{\chi}\mathbb{V} _{\chi}(\Delta(w)_\chi)\simeq \omega_{\Gamma^s}$ belonging to $\langle\omega_{\frf_\chi\times\frf_\chi}\rangle$, the first statement follows. 

    For the second statement, first notice that ${}_{\chi}\mathbb{V} _{\chi}$ is $D_{\chi\mon}(\wt\AA)\simeq \indcoh(\frf_\chi)$-bilinear. So it is enough to show that 
    \begin{multline*}
    \Hom(\omega_{\frf_\chi}, (\pr_2)_*((\Delta_{12})^!(\omega_{\frf_\chi}\boxtimes {}_{\chi}\mathbb{V} _{\chi}(\tau))))\\= \Hom(\omega_{\frf_\chi}, (\pr_2)_*({}_{\chi}\mathbb{V} _{\chi}(\tau)))\simeq \Hom(\omega_{\frf_\chi\times\frf_\chi}, {}_{\chi}\mathbb{V} _{\chi}(\tau)).
    \end{multline*}
    Again, this holds if $\tau$ is replaced by (co)standard sheaves, and therefore holds for $\tau$.
\end{proof}

Similarly, we have the following.

\begin{cor} \label{c:tiltff}The composition of monoidal functors $$\xtx \rightarrow \cM_{\Xi} \rightarrow \underset{(\chi', \chi) \in \Xi^2} \bigoplus \hspace{1mm} \indcoh(\frf_{\chi'} \times \frf_\chi) \simeq \indcoh(\frf_\Xi^2)$$lands in the heart, and the obtained functor 
$$\xvx: \xtx \rightarrow \indcoh(\frf_\Xi^2)^\heartsuit$$is faithful. Moreover, if $G$ is split, the functor $\xvx$ is a fully faithful embedding. 
\end{cor}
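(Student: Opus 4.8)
\textbf{Proof strategy for Corollary \ref{c:tiltff}.} The plan is to reduce to the block-wise statements already at hand and then upgrade faithfulness to full faithfulness in the split case using the combinatorial structure of $\Om_\Xi$. First, that the composite lands in the heart is immediate from Corollary \ref{c: tilt neutral ff}, since any tilting object of $\cT_\Xi$ is a finite direct sum of indecomposables $\tau(w)_{\chi}$, each of which is supported on a single block ${}_\chi\cM^\beta_{\chi'}$ and admits a $\std$-flag, so Proposition \ref{p:std2std} shows its image is built from shifts of standard sheaves $\omega(w)_\chi$, which lie in $\indcoh(\frf_\Xi^2)^\heartsuit$. For faithfulness in general, I would argue as follows: given $\tau_1, \tau_2 \in \xtx$, the $\Hom$ space $\Hom_{\cM_\Xi}(\tau_1, \tau_2)$ decomposes as a direct sum over pairs of blocks, so it suffices to treat $\tau_1 \in {}_\chi\cT_{\chi'}$, $\tau_2 \in {}_\chi\cT_{\chi''}$. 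Using Lemma \ref{l:minclean} and Lemma \ref{l: convmintilt}, convolution with the clean minimal tilting sheaves $\tau(w^\beta)$ gives equivalences between hom-spaces in different blocks and translates the problem to the case $\chi' = \chi'' = \chi$, reducing to a neutral-type block; then Corollary \ref{c: tilt neutral ff} (or more precisely the block version generated by $\tau(w^\beta)$-twists of ${}_\chi\cT^\circ_\chi$) gives injectivity on hom-spaces. This establishes faithfulness of $\xvx$ without any hypothesis on $G$.

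For the full faithfulness when $G$ is split, the key additional input is Corollary \ref{c: untwisted faithful}, which tells us that $\extw$, and in particular $\Om_\chi$, acts faithfully on $\wt\AA$, hence on $\wt\AA^\vee$ and on $\Fun(\frz_\chi)\simeq\Fun(\frf_\chi)$. I would proceed by the same block reduction as above, so that after convolving with appropriate clean minimal tiltings it suffices to prove that for $\omega \in \Om_\chi$ and $\tau \in {}_\chi\cT^\circ_\chi$, the natural map
$$\Hom_{\cM_\Xi}(\dcm, \tau(\omega)_\chi \star \tau) \rightarrow \Hom_{\indcoh(\frf_\Xi^2)^\heartsuit}\big(\xvx(\dcm), \xvx(\tau(\omega)_\chi \star \tau)\big)$$
is an isomorphism. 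Here $\tau(\omega)_\chi = \std(\omega)_\chi = \costd(\omega)_\chi$ is clean (length zero), so $\xvx(\tau(\omega)_\chi) = \omega(\omega)_\chi$ is the dualizing sheaf on the graph of $\omega$. Expanding via the $\costd$-flag of $\tau$, the statement becomes: for $w \in {}_\chi\tilW^\circ_\chi$, the map
$$\Hom_{\cM_\Xi}(\dcm, \costd(\omega w)_\chi) \rightarrow \Hom_{\indcoh(\frf_\Xi^2)^\heartsuit}(\omega(e)_{\chi}, \omega(\omega w)_\chi)$$
is an isomorphism. When $\omega w = e$ this is Proposition \ref{p:std2std}; otherwise both sides should vanish. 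The left side vanishes for support reasons. For the right side, the argument of Corollary \ref{c: tilt neutral ff} applies essentially verbatim: push forward along the two projections $\frf_\chi \times \frf_{(\omega w)^{-1}\chi} \to \frf_\chi$ and $\to \frf_{(\omega w)^{-1}\chi}$, use that the graph of $\omega w$ maps isomorphically, and conclude that any morphism $\omega(e)_\chi \to \omega(\omega w)_\chi$ corresponds to a function $\phi \in \Fun(\frf_\chi)$ killed by all $f - (\omega w)\cdot f$; since $\omega w \neq e$ acts nontrivially on $\Fun(\frf_\chi)$ by split-ness and faithfulness, and $\Fun(\frf_\chi)$ is a domain, $\phi = 0$.

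The main obstacle I anticipate is bookkeeping the interplay between the block decomposition indexed by ${}_\chi\Om_{\chi'}$ and the statement ``lands in the heart and is (fully) faithful'' across all of $\xtx$ at once — specifically, ensuring that the clean-minimal-tilting twists used to move between blocks are compatible with the monoidal functor $\xvx$, so that faithfulness in one block genuinely transports to the others. This requires knowing that $\xvx$ is monoidal (which it is, by the construction in \S\ref{ss:soergfunctor} and Corollary \ref{cor: Obtaining Soergel}) and that convolution with $\tau(w^\beta)$ on either side corresponds under $\xvx$ to $!$-tensoring with the dualizing sheaf on the graph of $w^\beta$ (Proposition \ref{p:std2std}), which is invertible. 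Once this dictionary is in place the reduction is routine; the only genuinely new ingredient beyond the neutral-block case is the appeal to Corollary \ref{c: untwisted faithful} to handle the nontrivial $\Om_\chi$-translates, and even there the vanishing argument is a direct transcription of the one already carried out in the proof of Corollary \ref{c: tilt neutral ff}.
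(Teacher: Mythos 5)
Your proposal is correct and follows essentially the same route as the paper, whose proof is literally "the same argument as Corollary \ref{c: tilt neutral ff}, using the faithfulness of the $\extw$-action (Corollary \ref{c: untwisted faithful}) for the split case": reduce via monoidality and semi-rigidity to maps out of the units, filter by costandard objects, apply Proposition \ref{p:std2std} for $w=e$, and use the (non)vanishing of $\Hom(\omega(e)_\chi,\omega(w)_\chi)$ for $w\neq e$. Your extra block-translation via the clean minimal tiltings of Lemmas \ref{l:minclean} and \ref{l: convmintilt} is a harmless repackaging of that argument (only note that one should filter $\tau(\omega)_\chi\star\tau$ itself by costandards, rather than invoking $\tau(\omega)_\chi\star\nabla(w)_\chi\simeq\nabla(\omega w)_\chi$, which is true but would need a word of justification since Lemma \ref{multi} does not directly apply).
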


\begin{proof} These may be proved by the same argument as Corollary \ref{c: tilt neutral ff}, where for the second assertion for untwisted loop groups one uses the faithfulness of the action of $\extw$ on $\wt \AA$, cf. Corollary \ref{c: untwisted faithful}.
\end{proof}

\subsection{Tilting sheaves and Soergel bimodules II: Bott--Samelson sheaves}

\sss{} Corollary \ref{c: tilt neutral ff} provides us a fully faithful embedding
$${}_{\chi}\mathbb{V} _{\chi}^{\circ}: {}_\chi\cT_{\chi}^{\circ} \hookrightarrow \indcoh(\frf_{\chi}^2)^\heartsuit.$$
To characterize the essential image, we need one more ingredient, namely the Bott--Samelson sheaves. We begin with their definition. 

\sss{} For a reflection $r$ in ${}_\chi\tilW_\chi^{\circ}$, consider the subgroup $$W_r := \{1, r\} \subset {}_\chi\tilW_\chi^{\circ},$$and the ind-scheme obtained by taking the invariant theory quotient  
$\frf_{\chi} /\!\!/ W_r.$ Explicitly, we may write $\frf_{\chi}$ as a colimit of (affine) subschemes $Z_\alpha$ preserved by the action of $W_r$, and we then have $$\frf_{\chi} /\!\!/ W_r \simeq \varinjlim Z_\alpha /\!\!/ W_r,$$
where the latter are the usual invariant theory quotients of affine schemes. 

We have a tautological projection $\frf_{\chi} \rightarrow \frf_{\chi} /\!\!/ W_r$, and in particular may consider the fiber product 
$$i: \frf_{\chi} \underset{\frf_{\chi} /\!\!/ W_r} \times \frf_{\chi} \hookrightarrow \frf_{\chi}^2.$$
\begin{defn} 
We define the {\em Bott--Samelson sheaf} associated to $r$ to be $$\beta(r)_\chi := i_*(\omega_{\frf_{\chi} \underset{\frf_{\chi} /\!\!/ W_r} \times \frf_{\chi} }).$$
\end{defn}

\sss{} The relevance of Bott--Samelson sheaves to affine Hecke categories is as follows. 

\begin{prop} \label{p:bs} 
Assume that the characteristic of $\fre$ is not two.
Let $r$ denote a simple reflection of ${}_\chi \tilW_{\chi}^\circ$. Then there is an isomorphism
$${}_\chi\V_\chi(\tau(r)_\chi) \simeq \beta(r)_\chi.$$
\end{prop}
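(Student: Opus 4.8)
The strategy is to compute $\mathbb{V}_\chi(\tau(r)_\chi)$ directly via Corollary \ref{cor: relating to classical SBim}, which reduces the problem to identifying the $\Fun(\frf_\chi \times \frf_\chi)$-module $\Hom({}_\phi\Delta(e)_\chi, {}_\phi\Delta(e)_\chi \star \tau(r)_\chi)$. First I would apply the action functor $- \star \tau(r)_\chi$ to ${}_\phi\Delta(e)_\chi$, using the defining pushout square for $\tau(r)_\chi$ from the proof of Lemma \ref{l: tilt simple reflection}. Convolving that square with ${}_\phi\Delta(e)_\chi$ and applying $\alpha_\chi$ (together with Proposition \ref{p:std2std} and Lemma \ref{c:std=graph} to identify the images of $\Delta(e)_\chi$, $\Delta(r)_\chi$, $\nabla(r)_\chi$, and $j_{e,!}(\CH(\omega_{\ker \halpha_r^\vee \cap \frf_\chi}))$), one obtains ${}_{\chi}\mathbb{V}_\chi(\tau(r)_\chi)$ as an extension in $\indcoh(\frf_\chi^2)^\heartsuit$ whose graded pieces are $\omega(e)_\chi$ and $\omega(r)_\chi$, i.e. the structure sheaves of the diagonal $\Gamma^1$ and the graph $\Gamma^r$.

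Next I would identify this extension with $\beta(r)_\chi$. Both $\beta(r)_\chi$ and ${}_\chi\mathbb{V}_\chi(\tau(r)_\chi)$ are objects of $\indcoh(\frf_\chi^2)^\heartsuit$ supported set-theoretically on $\Gamma^1 \cup \Gamma^r$; the key computation is Lemma \ref{lem: endo of taus}, which gives the short exact sequence
$$0 \to \End(\tau(r)_\chi) \to \Fun(\Gamma^1) \oplus \Fun(\Gamma^r) \to \Fun(\frf_\chi \cap \ker\halpha_r^\vee) \to 0,$$
and (since $\chara \fre \neq 2$, so that $\frf_\chi \cap \ker \halpha_r^\vee = \frf_\chi^r$) identifies $\End(\tau(r)_\chi)$ with $\Fun(\Gamma^1 \cup \Gamma^r)$, the functions on the scheme-theoretic union, i.e. the fiber product $\Fun(\frf_\chi \times_{\frf_\chi /\!\!/ W_r} \frf_\chi)$. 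Meanwhile $\Fun(\frf_\chi \times_{\frf_\chi/\!\!/W_r}\frf_\chi)$ is precisely the ring of functions defining $\beta(r)_\chi$, via the standard fact that for a reflection $r$ acting on a (completed) polynomial ring, $R \otimes_{R^{W_r}} R$ is the fiber product of $\Fun(\Gamma^1)$ and $\Fun(\Gamma^r)$ over $\Fun(\frf_\chi^r)$. So the $\Fun(\frf_\chi \times \frf_\chi)$-module computed by Corollary \ref{cor: relating to classical SBim} is $\Fun(\frf_\chi) \otimes_{\End(\tau(r)_\chi)} (\text{appropriate free module})$; tracking through, one sees $\Hom({}_\phi\Delta(e)_\chi, {}_\phi\Delta(e)_\chi \star \tau(r)_\chi) \cong \Fun(\frf_\chi \times_{\frf_\chi/\!\!/W_r}\frf_\chi)$ as a $\Fun(\frf_\chi \times \frf_\chi)$-module. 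Under the equivalence $\langle \omega_{\frf_\chi^2}\rangle \simeq \Perf_{\Fun(\frf_\chi^2)}$, this module corresponds to $i_*(\omega_{\frf_\chi \times_{\frf_\chi/\!\!/W_r}\frf_\chi}) = \beta(r)_\chi$, giving the claim.

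The main obstacle I anticipate is the bookkeeping in the module-theoretic identification: one must check that the $\Fun(\frf_\chi \times \frf_\chi)$-module structure on $\Hom({}_\phi\Delta(e)_\chi, {}_\phi\Delta(e)_\chi \star \tau(r)_\chi)$ coming from the \emph{monodromy} action on $\tau(r)_\chi$ (as specified in Corollary \ref{cor: relating to classical SBim}) agrees with the one exhibiting it as $\Fun(\frf_\chi \times_{\frf_\chi/\!\!/W_r}\frf_\chi)$. This requires showing that the left and right monodromy actions of $\Fun(\frf_\chi)$ factor through $\End(\tau(r)_\chi)$ compatibly, which is essentially the content of Lemma \ref{lem: endo of taus} — that \eqref{eq: mono of tilting} is surjective when $\chara \fre \neq 2$. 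A secondary subtlety is verifying that the extension is non-split exactly when it should be, i.e. that we really land on the scheme-theoretic union $\Gamma^1 \cup \Gamma^r$ and not the disjoint union; but this is forced once the endomorphism ring is pinned down to be $\Fun(\Gamma^1\cup\Gamma^r)$ rather than $\Fun(\Gamma^1)\times\Fun(\Gamma^r)$, since by indecomposability of $\tau(r)_\chi$ (Lemma \ref{l: tilt simple reflection}) its endomorphism ring is local, which only the connected scheme $\Gamma^1 \cup \Gamma^r$ provides. Finally, a brief remark is warranted that the characteristic-two hypothesis is genuinely used here, matching Remark \ref{rem:true BS sheaf}.
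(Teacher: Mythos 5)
Your proposal is essentially the paper's argument for the \emph{second} half of the proof, but it is missing the first half, and that omission is a genuine gap. Every tool you invoke — the pushout square from the proof of Lemma \ref{l: tilt simple reflection}, the computation of $\End(\tau(s)_\chi)$ in Lemma \ref{lem: endo of taus}, the stalk computation of Lemma \ref{c:std=graph} — is established in the paper only for a reflection $s$ that is simple in the \emph{ambient} extended affine Weyl group $\tilW$. A simple reflection $r$ of ${}_\chi\tilW_\chi^\circ$ is in general not simple in $\tilW$ (its affine coroot can have arbitrarily large height), so for such $r$ there is no two-step pushout presentation of $\tau(r)_\chi$, and Lemma \ref{lem: endo of taus} does not apply as stated: its proof rests on the cofiber sequences of Lemmas \ref{stdeqcostd} and \ref{l: tilt simple reflection}, which come from explicit rank-one ($SL_2$) calculations on $\tLG_{\le s}$. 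The paper bridges this by first conjugating: using Proposition \ref{p:conjsimple} one finds a minimal-length element $u$ such that $uru^{-1}$ is simple in both $\tilW$ and $\tilW_{u\chi}^\circ$; the minimal tiltings $\std(u)_{u\chi}$ and $\costd(u^{-1})_\chi$ are clean (Lemma \ref{l:minclean}), so by Lemma \ref{l: convmintilt} convolution with them carries $\tau(r)_\chi$ to $\tau(uru^{-1})_{u\chi}$, and by Proposition \ref{p:std2std} and monoidality of $\mathbb{V}$ the statement for $uru^{-1}$ implies it for $r$. Without this reduction, your direct computation does not get off the ground.

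A second, smaller issue: even in the case where $r$ is simple in $\tilW$, the decisive input is that $\Hom({}_\phi\Delta(e)_\chi, {}_\phi\Delta(e)_\chi\star\tau(r)_\chi)$ is a \emph{free rank-one} module over $\End(\tau(r)_\chi)$, so that via Corollary \ref{cor: relating to classical SBim} the bimodule is literally $\End(\tau(r)_\chi)\cong\Fun(\Gamma^1\cup\Gamma^r)$. You gesture at this ("tracking through"), but the mechanism in the paper is Lemma \ref{lem: adjoint of Wh average}: on the subcategory supported on $Z_{\le s}$, the Whittaker averaging ${}_\phi\Delta(e)_\chi\star-$ has isomorphic left and right adjoints, both sending ${}_\phi\Delta(e)_\chi$ to $\tau(s)_\chi$, whence the Hom space is $\End(\tau(s)_\chi)$ on the nose. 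Your alternative route — pinning the extension down by locality of the endomorphism ring — does rule out the split extension, but it does not by itself identify the bimodule structure (i.e.\ distinguish $\beta(r)_\chi$ from other nonsplit extensions of $\omega(r)_\chi$ by $\omega(e)_\chi$ with the same support); the adjunction argument, or some substitute for it, is needed and should be supplied.
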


\begin{proof} 
From Proposition \ref{p:conjsimple}, we know that there exists a minimal element $u$ such that $uru^{-1}$ is a simple reflection in both $\extw_{u\chi}^{\circ}$ and $\extw$. 
It follows from Lemma \ref{l:minclean} that the (co)standard objects $\costd(u^{-1})_\chi$ and $\std(u)_{u\chi}$ are clean and tilting. 
From Lemma \ref{l: convmintilt}, we have an isomorphism 
$$\std(u)_{u\chi} \star \tau(r)_\chi \star \costd(u^{-1})_\chi \simeq \tau(uru^{-1})_{u\chi}.$$
Using this and Proposition \ref{p:std2std}, it is now straightforward to see that the assertion of the proposition for $uru^{-1}$ and the twist $u\chi$ implies the assertion for $r$ and the twist $\chi$. 

It therefore remains to prove the proposition for $r=s$ which is moreover a simple reflection in $\tilW$. Using Corollary \ref{cor: relating to classical SBim}, the desired statement now follows from Lemma \ref{lem: endo of taus} and the fact that $\Hom({}_\phi\Delta(e)_\chi, {}_\phi\Delta(e)_\chi\star\tau(s)_\chi)$ is a free $\End(\tau(s)_\chi)$-module of rank one, which in turn follows from Lemma \ref{lem: adjoint of Wh average} below.
\end{proof}

\begin{lemma}\label{lem: adjoint of Wh average}
    The functor 
    $$
    {}_\phi\Delta(e)_\chi \star -: {}_\chi\cM(\leq s)_{\chi}\to {}_\phi\cM^e_{\chi}
    $$
    admits both left and right adjoints, both of which sends ${}_\phi\Delta(e)_\chi$ to $\tau(s)_\chi$.
\end{lemma}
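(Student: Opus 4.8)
The functor $G := {}_\phi\Delta(e)_\chi \star -$ is a colimit-preserving functor between presentable stable categories, hence it automatically admits a right adjoint; the existence of a left adjoint will follow once we exhibit $G$ as (a restriction of) a functor given by convolution with a kernel that is dualizable, which is where the support restriction to ${}_\chi\cM(\leqslant s)_\chi$ will be used. So the first step is to identify $G$ concretely. Via the equivalence ${}_\phi\cM_\chi \cong D(\wt M/(\wt\AA,\chi\mon))$ of Corollary \ref{c:describe Mphi}, and the clean description of the neutral component ${}_\phi\cM^e_\chi \simeq D(\wt\AA/(\wt\AA,\chi\mon)) \simeq \indcoh(\frf_\chi)$ from Lemma \ref{l:psi clean}, the functor $G$ is computed by a Hecke-style correspondence on $\tLG_{\leqslant s}/I^+$: one pulls back along the map to $\wt\Bun^e_\cG(I_0^+, I_\infty^{++})$, convolves, and takes $!$-restriction at $\dot e$, exactly as in the commutative square appearing in the proof of Lemma \ref{c:std=graph}. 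Because everything is supported on the $\leqslant s$ stratum, which is finite-dimensional, this correspondence is proper on the relevant locus and the resulting kernel is perfect; this gives the existence of both adjoints and reduces their computation to a bounded, essentially rank-one calculation.

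\textbf{Key steps.} First, I would record that $G$ restricted to ${}_\chi\cM(\leqslant s)_\chi$ is given by convolution with a kernel object on $\tLG_{\leqslant s}$, combining the uniformization description \eqref{double coset} with the rank-one computations of Section~\ref{s:heckecat}; in particular equations \eqref{eq: support on s-strata}, \eqref{eq: support on unit strate} and \eqref{eq: shrek stalk of average of AS} control the $*$- and $!$-stalks of $\cL_\phi$-twisted convolutions on the $s$-stratum and the unit stratum. Second, I would compute $G(\Delta(e)_\chi)$ and $G(\Delta(s)_\chi)$: the first is ${}_\phi\Delta(e)_\chi$ by cleanness, and the second, using \eqref{eq: shrek stalk of average of AS} together with a choice of trivialization of the Gauss sum module $\cG(\halpha_s^\vee(\chi^{-1}),\psi^{-1})$, is again (a shift of) ${}_\phi\Delta(e)_\chi$; hence $G$ sends both $\Delta(e)_\chi$ and $\Delta(s)_\chi$ to ${}_\phi\Delta(e)_\chi$, which by the $\std$-flag of $\tau(s)_\chi$ (with $\std(e)_\chi$ and $\std(s)_\chi$ as successive quotients) and additivity shows $G(\tau(s)_\chi)$ is an extension of ${}_\phi\Delta(e)_\chi$ by itself. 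Third, I would identify the left and right adjoints $G^L, G^R$ on the image: using Lemma \ref{inverse} and Lemma \ref{multi}, convolution by $\costd(s)_\chi$ (resp. $\std(s)_\chi$) on the right is inverse to convolution by $\std(s)_\chi$ (resp. $\costd(s)_\chi$), and base change along the same correspondence shows $G^L$ and $G^R$ are themselves convolution functors; applying them to ${}_\phi\Delta(e)_\chi$ and unwinding, one gets an object of ${}_\chi\cM(\leqslant s)_\chi$ which is tilting — it has both a $\std$-flag and a $\costd$-flag by the support and stalk computations — indecomposable, and nonzero on $Z_s$, hence is $\tau(s)_\chi$ by Proposition \ref{p:classtilts}.

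\textbf{Main obstacle.} The delicate point is the rank-one bookkeeping in the second and third steps: keeping track of the cohomological shifts $\langle -\ell(w)\rangle$ in \eqref{e:onestratumunivmon}, of the Artin--Schreier/Gauss sum twist $\cG(\halpha_s^\vee(\chi^{-1}),\psi^{-1})$ appearing in \eqref{eq: shrek stalk of average of AS}, and of the perversity normalization in the definition of ${}_\phi\cM_\chi$, so that the composite $G^R \circ G$ (or $G \circ G^R$) is genuinely the identity on the relevant tilting summand rather than a shift or twist thereof. Once a single trivialization of the Gauss sum line is fixed — as is already invoked in the proof of Lemma \ref{c:std=graph} — these normalizations are forced, and the identification $G^L({}_\phi\Delta(e)_\chi) \simeq \tau(s)_\chi \simeq G^R({}_\phi\Delta(e)_\chi)$ follows from the classification of indecomposable tiltings by support together with the fact, already used repeatedly, that $\tau(s)_\chi$ restricted to $Z_s$ is $\iota_{\dot s}(\dcm)$. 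I do not expect any genuinely new input beyond the rank-one calculations already assembled in Section~\ref{s:heckecat} and the classification results of Section~\ref{s:tiltbasics}.
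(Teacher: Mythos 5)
Your overall shape matches the paper's at a coarse level: both pass to the local model over the $\leqslant s$ stratum used in the proof of Lemma \ref{c:std=graph}, realize the functor there explicitly, and finish by showing that the adjoint applied to ${}_\phi\Delta(e)_\chi$ is a cofree tilting sheaf supported on $Z_{\le s}$ and nonzero on $Z_s$, hence isomorphic to $\tau(s)_\chi$ by Proposition \ref{p:classtilts}. However, there is a genuine gap at the heart of your argument: the mechanism you propose for the existence of the left adjoint is unjustified and, as stated, incorrect. The kernel ${}_\phi\Delta(e)_\chi$ is built from the cofree monodromic local system $\dcm$, which is not compact, so "the kernel is perfect" fails, and properness of the correspondence over the finite-dimensional $\leqslant s$ locus does not by itself produce a left adjoint. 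What the paper actually uses is the cleanness of Artin--Schreier averaging along the one-dimensional root subgroup $U_{-\alpha_s}$: the functor can be written simultaneously as a $*$-pushforward and as a $!$-pushforward of $\AS_\psi[1]\boxtimes(-)$ along the action map, which exhibits it at once as a right adjoint and as a left adjoint; the two adjoints are then computed explicitly as convolution with the $!$- and $*$-extensions ${}_\chi\Delta(e)_\psi$, ${}_\chi\nabla(e)_\psi$ of $\dcm\boxtimes\AS_\psi[1]$, and the canonical isomorphism ${}_\chi\Delta(e)_\psi\simeq{}_\chi\nabla(e)_\psi$ identifies them. Your appeal to Lemmas \ref{multi} and \ref{inverse} (invertibility of $\std(s)_\chi$ and $\costd(s)_\chi$ under convolution) is not a substitute: the functor in question is Whittaker averaging, not convolution by a (co)standard object of the Hecke category.

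The identification of the two adjoints is not cosmetic; it is what makes your endgame run. By adjunction alone you control $\Hom(\Delta(w)_\chi, G^R({}_\phi\Delta(e)_\chi))$, i.e.\ the $!$-costalks of the right adjoint value, and $\Hom(G^L({}_\phi\Delta(e)_\chi), \nabla(w)_\chi)$, i.e.\ the $*$-stalks of the left adjoint value; neither computation by itself shows that either object has both a $\std$-flag and a $\costd$-flag, so your "support and stalk computations" establishing tiltingness are not available for $G^L$ or $G^R$ separately. Only after knowing $G^L\simeq G^R$ (the paper's $\Av$) do both Hom computations apply to one and the same object, showing it is cofree tilting, supported on $Z_{\le s}$, with $\Delta$-multiplicity one at $e$ and at $s$; classification by support (rather than the indecomposability you assert without proof) then forces it to be $\tau(s)_\chi$. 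The missing ingredient is thus precisely the cleanness statement ${}_\chi\Delta(e)_\psi\simeq{}_\chi\nabla(e)_\psi$, which your proposal neither proves nor invokes.
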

\begin{proof}
    As in the proof of Lemma \ref{c:std=graph}, we may instead consider the adjoint functors of the following
    $$
    {}_\phi\Delta(e)_\chi\star -: D((I,\chi\mon)\bs\tLG_{\le s}/(I,\chi\mon)) \to D((U_{-\alpha_s},\AS_\psi\mon)\bs\tLG_{\le s}/(I,\chi\mon)),
    $$
    where ${}_\phi\Delta(e)_\chi$ now is the $*$-extension of $\AS_\psi\boxtimes\dcm\in D(U_{-\alpha_s}\times\wt\AA)$ along the open embedding $U_{-\alpha_s}\times\wt\AA\subset \tLG_{\leq s}/I^+$. Since $\dcm$ is the unit of $D_{\chi\mon}(\wt\AA)$, the functor in question can be rewritten as
    $$
    \cF\mapsto a_*(\AS_\psi[1]\boxtimes \cF)\simeq \mult_!(\AS_\psi[1]\boxtimes \cF),
    $$
    Here $a: U_{-\alpha_s}\times \tLG_{\le s}/I^+$ is the action map. It follows from the right  adjoint of the functor in question is 
    $$
    D((U_{-\alpha_s},\AS_\psi\mon)\bs\tLG_{\le s}/(I,\chi\mon))\to D((I,\chi\mon)\bs\tLG_{\le s}/(I,\chi\mon)),\quad \cG\mapsto b_!(\cG[1])
    $$
    where $b: U_\alpha\times \tLG_{\le s}/I^+$ is the action map. Similarly the left adjoint is given $\cG\mapsto b_*(\cG[1])$. Since $\cG$ is $(U_{-\alpha_s}\times \wt\AA,\AS_\psi\boxtimes \chi^{-1})$-monodromic, we have
    \[b_!(\cG[1])\simeq {}_\chi\Delta(e)_{\psi}\star \cG,\quad b_*(\cG[1])\simeq {}_\chi\nabla(e)_{\psi}\star \cG,\]
    where ${}_\chi\Delta(e)_{\psi}\in D((I,\chi\mon)\bs \tLG_{\le s}/(U_{-\alpha_s},\AS_\psi\mon))$ (resp. ${}_\chi\nabla(e)_{\psi}$) is the $!$-extension (resp. $*$-extension) of $\dcm\boxtimes \AS_{\psi}[1]$ along $\wt\AA\times U_{-\alpha_s}\hookrightarrow I^+\bs \tLG_{\le s}$. Now the canonical isomorphism ${}_\chi\nabla(e)_{\psi}\simeq {}_\chi\Delta(e)_{\psi}$ gives the desired isomorphism between left and the right adjoint functors. We simply denote the obtained functor by $\Av$. Note that written as convolution with ${}_\chi\Delta(e)_{\psi}$, the functor is clearly ${}_{\chi}\cM(\le s)_{\chi}$-linear. Therefore it remains to show that $\Av({}_\phi\Delta(e)_{\chi})\simeq \tau(s)_\chi$.

    Note that for $w\in\{1,s\}$, we have
    \[    \Hom(\Delta(w)_{\chi},\Av({}_\phi\Delta(e)_{\chi}))\simeq \Hom({}_\phi\Delta(e)_{\chi},{}_\phi\Delta(e)_{\chi})\simeq \Hom({}_\phi\Delta(e)_{\chi}, \nabla(w)_{\chi}).
    \]
    Therefore, $\Av({}_\phi\Delta(e)_{\chi})$ is cofree tilting, supported on $Z_{\le s}$, and therefore is isomorphic to $\tau(s)_\chi$.
\end{proof}

\begin{rem}\label{rem:true BS sheaf}
    If the characteristic of $\fre$ is two and $\ker(\alpha_s^\vee: \GG_m\to \wt\AA^\vee)$ is $\mu_2$, i.e. $\alpha_s$ is twice of a character of $\wt\AA$, then the inclusion $\frf_\chi\cap \ker \halpha_s^\vee\subset \frf_\chi^s=\frf_\chi\cap \ker (\alpha_s^\vee\circ \halpha_s^\vee)$ is strict. The proof of Lemma \ref{lem: endo of taus} suggests that in general, one shall define the Bott--Samelson sheaf as the dualizing sheaf of the ind-scheme $\Gamma^1\sqcup_{\ker \halpha_s^\vee} \Gamma^s$, which is an ind-scheme finite over (but not necessarily closed in) $\frf_\chi\times\frf_\chi$. Then Proposition \ref{p:bs} will hold in all characteristic will this modified definition.
\end{rem}

\sss{} To leverage Proposition \ref{p:bs}, we recall that we have described monoidal generators of $\cT_{\Xi}$, ${}_\chi\cT_\chi$ and ${}_\chi\cT_\chi^{\circ}$ in Lemma \ref{l:tiltgens}. We may now give the definition of Soergel bimodules in the present setting. 

To state it, for a simple reflection $s$ in $\tilW$ and $\chi \in \Xi$, recall that if $s \in {}_\chi\tilW_\chi^\circ$, we have previously defined the corresponding Bott--Samelson sheaf 
$$\beta(s)_\chi \in \indcoh(\frf_\chi^2) \hookrightarrow \indcoh(\frf_{\Xi}^2).$$
If $s \notin {}_\chi\tilW_\chi^\circ$, we will extend the definition by taking the Bott--Samelson sheaf to simply be the standard, i.e., 
$$\beta(s)_\chi := \omega(s)_{\chi} \in \indcoh(\frf_{s\chi} \times \frf_\chi) \hookrightarrow \indcoh(\frf_\Xi^2), \quad \quad s \notin {}_\chi\tilW_\chi^\circ.$$

\sss{} Let us now give the definition of the relevant categories of Soergel bimodules for us; we will defer generalities to a subsequent remark. To state it, we recall that $S_\chi$ denotes the reflections of $\tilW_\chi^\circ.$

\begin{defn} Consider the quadruples of formal schemes equipped with actions of extended Coxeter systems 
$$(\frf_\Xi, \tilW, W_{\aff}, S), \quad  (\frf_\chi, \tilW_\chi, \tilW_\chi^\circ, S_\chi), \quad \text{and} \quad (\frf_\chi, \tilW_\chi^\circ, \tilW_\chi^\circ, S_\chi).$$
We define the associated categories of {\em Soergel bimodules} as follows.

\begin{enumerate}

\item The category of Soergel bimodules associated to $(\frf_\Xi, \tilW, W_{\aff}, S)$
$$\on{SBim}_\Xi \subset \indcoh(\frf^2_\Xi)^\heartsuit$$
is the Karoubian monoidal subcategory generated by the objects 
$$\beta(s)_\chi \quad \text{and} \quad \omega(w)_\chi,$$ 
for $s$ a simple reflection in $\tilW$, $w \in \Omega$, and $\chi \in \Xi$. 

\item The category of Soergel bimodules associated to $(\frf_\chi, \tilW_\chi, \tilW_\chi^\circ, S_\chi)$
$$\on{SBim}_\chi \subset \indcoh(\frf^2_\chi)^\heartsuit$$
is the Karoubian monoidal subcategory generated by the objects 
$$\beta(r)_{\chi} \quad \text{and} \quad \omega(w)_{\chi},$$ 
for $r \in S_\chi$, and $w \in \Omega_\chi$. 

\item The category of Soergel bimodules associated to $ (\frf_\chi, \tilW_\chi^\circ, \tilW_\chi^\circ, S_\chi)$
$$\on{SBim}_\chi^{\circ} \subset \indcoh(\frf^2_\chi)^\heartsuit$$
is the Karoubian monoidal subcategory generated by the objects 
$$\beta(r)_{\chi},$$ 
for $r \in S_\chi$. 
\end{enumerate}
\end{defn}

\begin{rem} Let us in passing place the previous definition in a more general context. Consider an extended Coxeter group $$(W, W^\circ,S),$$ i.e., $(W^\circ, S)$ is a Coxeter group, and $W \simeq \Omega \ltimes W^\circ$, where $\Omega$ is a group acting on $(W^\circ, S)$ by automorphisms of the Coxeter system. 

Consider a torus $A$ equipped with a faithful action of $W$ by automorphisms, such that the $W$-action on the character lattice decomposes as 
$$\X^*(A) \underset{\Z} \otimes \Q \simeq \on{refl}^* \oplus (\X^*(A) \underset{\Z} \otimes \Q)^{W^\circ},$$
where $\on{refl}^*$ denotes the dual reflection representation of $W$, or a quotient thereof on which $W^\circ$ acts faithfully. Fix a $W$-orbit $\mathfrak{o}$ in $\Ch(A; \fre)$, and write 
$$\frf_{\mathfrak{o}} := \underset{\chi \in \mathfrak{o}} \sqcup \hspace{.5mm} \frf_\chi \hookrightarrow \on{LS}_{A^\vee}^{t, \square}$$
for the corresponding formal $E$-scheme, which inherits a natural action of $W$. Given the quadruple 
$$(\frf_{\mathfrak{o}}, W, W^\circ, S),$$
one can define an associated monoidal subcategory $\on{SBim}_{\mathfrak{o}} \subset \indcoh(\frf_{\mathfrak{o}}^2)$ exactly as above. 
\end{rem}

\sss{} With these definitions and results in hand, we are ready to deduce the first main theorem of this section. Recall the monoidal functors $${}_{\chi}\mathbb{V}_\chi^{\circ}: {}_{\chi}\cM_\chi^{\circ} \ra \indcoh(\frf^2_\chi),$$ $$\xvx: \cM_\Xi \rightarrow \indcoh(\frf^2_\Xi) \quad \text{and} \quad {}_{\chi}\mathbb{V}_\chi: {}_{\chi}\cM_\chi \rightarrow \indcoh(\frf^2_\chi).$$ 

\begin{thm}\label{t:soergequivtit} Assume that the characteristic of $\fre$ is not two. The functor ${}_{\chi}\mathbb{V}_\chi^{\circ}$ restricts to monoidal equivalence:  $${}_{\chi}\mathbb{V}_\chi^{\circ}:  {}_{\chi}\cT_\chi^{\circ} \ra \on{SBim}_\chi^{\circ}.$$ 
Moreover, if $G$ is a split reductive group, the functors $\xvx$ and $\mathbb{V}_\chi$ restrict to monoidal equivalences
$$\xvx: \xtx \simeq \on{SBim}_\Xi \quad \text{and} \quad \mathbb{V}_\chi: {}_{\chi}\cT_\chi \simeq \on{SBim}_\chi.$$    
\end{thm}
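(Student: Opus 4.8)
The plan is to leverage the structural results already assembled: the fully faithfulness statements of Corollaries \ref{c: tilt neutral ff} and \ref{c:tiltff}, the identification of Bott--Samelson sheaves in Proposition \ref{p:bs}, the computation of standards in Proposition \ref{p:std2std}, and the monoidal generation statements in Lemma \ref{l:tiltgens}. The equivalence ${}_{\chi}\mathbb{V}_\chi^{\circ}\colon {}_{\chi}\cT_\chi^{\circ}\to \on{SBim}_\chi^{\circ}$ is the core case, and the other two are obtained from it by adjoining the standards attached to $\Omega$ (resp.\ to all simple reflections of $\tilW$ not lying in $\tilW^\circ_\chi$), together with a base-change over the different components $\frf_\chi$ of $\frf_\Xi$.

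\textbf{The neutral block.} First I would prove ${}_{\chi}\mathbb{V}_\chi^{\circ}\colon {}_{\chi}\cT_\chi^{\circ}\to \on{SBim}_\chi^{\circ}$ is an equivalence. Fully faithfulness onto its essential image is exactly Corollary \ref{c: tilt neutral ff}, so it remains to identify the essential image with $\on{SBim}_\chi^{\circ}$. For surjectivity: by Lemma \ref{l:tiltgens}(3), ${}_{\chi}\cT_\chi^{\circ}$ is generated as a Karoubian monoidal category by the $\tau(r)_\chi$ for $r\in S_\chi$; by Proposition \ref{p:bs} these are sent to the $\beta(r)_\chi$, and since ${}_{\chi}\mathbb{V}_\chi^{\circ}$ is monoidal and (being fully faithful into an idempotent-complete target, with ${}_{\chi}\cT_\chi^{\circ}$ itself Karoubian) preserves direct summands of convolutions, the essential image is precisely the Karoubian monoidal subcategory of $\indcoh(\frf_\chi^2)^{\heartsuit}$ generated by the $\beta(r)_\chi$, i.e.\ $\on{SBim}_\chi^{\circ}$ by definition. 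One subtlety to check carefully is that convolution on the tilting side matches the convolution (= $(p_{13})_*\Delta_{23}^!$ of the exterior product, per Corollary \ref{cor: Obtaining Soergel}) on the coherent side and that the image of a convolution, computed as a complex, actually lands in the heart — this follows from Proposition \ref{p:conv tilt} (convolutions of tiltings are tilting) together with the heart-landing assertion of Corollary \ref{c: tilt neutral ff}.

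\textbf{The split case.} For $G$ split, Corollary \ref{c:tiltff} upgrades faithfulness to fully faithfulness for $\xvx$ (using Corollary \ref{c: untwisted faithful}), and by restriction the same holds for ${}_{\chi}\mathbb{V}_\chi$. So again only essential image must be identified. By Lemma \ref{l:tiltgens}(1), $\xtx$ is generated as a Karoubian monoidal category by the $\tau(s)_\chi$ ($s$ a simple reflection of $\tilW$) and the $\tau(\omega)_\chi$ ($\omega\in\Omega$, $\chi\in\Xi$). If $s\in{}_\chi\tilW_\chi^{\circ}$, Proposition \ref{p:bs} sends $\tau(s)_\chi$ to $\beta(s)_\chi$; if $s\notin{}_\chi\tilW_\chi^{\circ}$, then $\tau(s)_\chi\simeq\std(s)_\chi$ is clean (Lemma \ref{stdeqcostd}) and Proposition \ref{p:std2std} sends it to $\omega(s)_\chi=\beta(s)_\chi$ by our extended definition; and $\tau(\omega)_\chi\simeq\std(\omega)_\chi$ (length-zero, clean) goes to $\omega(\omega)_\chi$ by Proposition \ref{p:std2std}. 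Hence the essential image is the Karoubian monoidal subcategory generated by the $\beta(s)_\chi$ and $\omega(\omega)_\chi$, which is $\on{SBim}_\Xi$ by definition; the statement for $\mathbb{V}_\chi\colon{}_{\chi}\cT_\chi\simeq\on{SBim}_\chi$ follows by restricting to the block over $\frf_\chi$ and using Lemma \ref{l:tiltgens}(2). The main obstacle I anticipate is the bookkeeping needed to see that ${}_{\chi}\mathbb{V}_\chi$ really does restrict from $\xvx$ compatibly with all these identifications — in particular that the monoidal unit $\dcm$ (resp.\ $\oplus_{\chi}\dcm$) matches $\omega_{\frf_\chi}$ (resp.\ $\omega_{\frf_\Xi}$, modulo the semi-rigidity caveat of Remark \ref{r:units} when the $\tilW$-orbit is infinite) — but this is already implicit in Proposition \ref{p:std2std} for $w=e$ and the construction of $\mathbb{V}$ in \S\ref{ss:soergfunctor}.
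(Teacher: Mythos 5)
Your proposal is correct and follows essentially the same route as the paper: full faithfulness plus heart-landing from Corollary \ref{c: tilt neutral ff} (resp.\ Corollary \ref{c:tiltff} in the split case), then identification of the essential image via the monoidal generators of Lemma \ref{l:tiltgens} and the computations of Propositions \ref{p:std2std} and \ref{p:bs}. The extra bookkeeping you flag (cleanness of $\tau(s)_\chi$ for $s\notin\tilW_\chi^\circ$, Karoubian generation of the image) is exactly what the paper's terse argument relies on.
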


\begin{proof} We begin with the assertion for ${}_{\chi}\mathbb{V}_\chi^{\circ}$. In view of Corollary \ref{c: tilt neutral ff}, it remains to argue that the essential image of ${}_{\chi}\mathbb{V}_\chi^{\circ}$ is $\on{SBim}_\chi^{\circ}$. However, using the tilting generators from Lemma \ref{l:tiltgens}, this now follows from our calculation of their images in Propositions \ref{p:std2std} and \ref{p:bs}. 

The arguments for $\mathbb{V}_\Xi$ and $\mathbb{V}_\chi$ are similar, where we we replace Corollary \ref{c: tilt neutral ff} with Corollary \ref{c:tiltff}. 
\end{proof}

\sss{} In view of the previous theorem, our final task in this subsection is to give a Soergel theoretic description of all of ${}_\chi\cT_\chi$ which applies to not necessarily split groups, i.e., to circumvent the non-faithfulness of the action of the extended affine Weyl group on the extended torus.

We do so as follows. Let us denote the ordinary monoidal category of finitely generated projective $E$-modules by $\Proj_E$.\footnote{While in the present paper we only explicitly discuss the case of $E$ at most global dimension one, we phrase the discussion below to work equally in the Betti setting with $E$ a complete local Noetherian ring of finite global dimension.} Given an abstract group $\Gamma$, consider the monoidal category $\Proj_{E,\Gamma}$ of finitely supported sheaves on $\Gamma$ of finitely generated projective $E$-modules, with the convolution monoidal structure. 

Explicitly, the objects of $\Proj_{E,\Gamma}$ are finite formal direct sums of objects of the form $\delta_\gamma \otimes P,$ for $\gamma \in \Gamma$ and  $P \in \Proj_E.$ Homomorphisms are determined by the formula
\begin{equation} \label{e:homssemidirect}\Hom(\delta_{\gamma} \otimes P, \delta_{\gamma'} \otimes Q) = \begin{cases} \Hom(P,Q), & \text{if }\gamma = \gamma', \text{ and}\\ 0, & \text{otherwise}. \end{cases},\end{equation}
with the evident composition rule. Finally, the monoidal structure is determined by the formula 
$$(\delta_\gamma \otimes P) \star (\delta_{\gamma'} \otimes Q) := \delta_{\gamma \gamma'} \otimes (P \underset{E} \otimes Q),$$
along with the evident upgrade of the above formula on objects to a functor $\Proj_{E,\Gamma} \times \Proj_{E, \Gamma} \rightarrow \Proj_{E, \Gamma}$, and with the tautological associativity equivalences and unit object. 

 Let $\cM$ be an $E$-linear ordinary monoidal category which is moreover Karoubian, and write $\mathbb{1}$ for its monoidal unit. We now recall the universal property of $\Proj_{E, \Gamma}$, whose proof is immediate.

\begin{lem}\label{l:mapprojg}The data of an $E$-linear monoidal functor $\Proj_{E, \Gamma} \rightarrow \cM$ is equivalent to:

\begin{enumerate} \item  For each $\gamma \in \Gamma$, an object $m_\gamma$ of $\cM$, with $m_e = \mathbb{1},$ and

\item for each pair $(\gamma, \gamma') \in \Gamma \times \Gamma$, an isomorphism
$$\alpha_{\gamma, \gamma'}: m_\gamma \star m_{\gamma'} \simeq m_{\gamma \gamma'},$$
which for $\gamma = e$ or $\gamma' = e$ are the unit isomorphisms, such that 

\item for each triple $(\gamma, \gamma', \gamma'') \in \Gamma \times \Gamma \times \Gamma$, we have a commutative diagram 
$$\xymatrix{(m_\gamma \star m_{\gamma'}) \star m_{\gamma''} \ar[d]_\sim \ar[rr]^{\alpha_{\gamma, \gamma'} \star \id} && m_{\gamma \gamma'} \star m_{\gamma''} \ar[rr]^{\alpha_{\gamma \gamma', \gamma''}} && m_{\gamma \gamma' \gamma''} \ar[d]^{\id} \\ m_\gamma \star (m_{\gamma'} \star m_{\gamma''}) \ar[rr]^{\id \star \alpha_{\gamma', \gamma''}} && m_\gamma \star m_{\gamma' \gamma''} \ar[rr]^{\alpha_{\gamma, \gamma' \gamma''}}&& m_{\gamma \gamma' \gamma''},}$$
where the left vertical arrow is the associativity isomorphism of $\sM$. 

\end{enumerate} 
\end{lem}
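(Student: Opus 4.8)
\textbf{Proof strategy for Lemma \ref{l:mapprojg}.}

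The plan is to recognize $\Proj_{E,\Gamma}$ as a free construction, namely the monoidal category freely generated over $\Proj_E$ by an invertible $\Gamma$-indexed family of objects, and to verify the universal property by an explicit reconstruction of a monoidal functor from the data in (1)--(3). First I would observe that the forgetful functor from $E$-linear monoidal functors $\Proj_{E,\Gamma}\to\sM$ to the data listed in (1)--(3) is manifestly defined: given such a functor $F$, set $m_\gamma := F(\delta_\gamma\otimes E)$, take for $\alpha_{\gamma,\gamma'}$ the image under the monoidal structure constraint of the tautological isomorphism $(\delta_\gamma\otimes E)\star(\delta_{\gamma'}\otimes E)\simeq\delta_{\gamma\gamma'}\otimes(E\otimes_E E)\simeq\delta_{\gamma\gamma'}\otimes E$, and note that the hexagon/pentagon coherence for $F$ specializes to the commuting square in (3), while $F$'s unitality gives $m_e\simeq\mathbb 1$ and the unit conditions on $\alpha_{e,\gamma'}$, $\alpha_{\gamma,e}$. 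The content is the inverse construction.

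So, given the data in (1)--(3), I would build $F\colon\Proj_{E,\Gamma}\to\sM$ as follows. On objects, since every object of $\Proj_{E,\Gamma}$ is a finite direct sum $\bigoplus_i\delta_{\gamma_i}\otimes P_i$ and $\sM$ is Karoubian (in particular additive, so finite biproducts exist), it suffices to set $F(\delta_\gamma\otimes P):=m_\gamma\otimes_E P$, using the canonical $\Proj_E$-module structure on $\sM$ coming from the $E$-linear structure (here $m_\gamma\otimes_E P$ denotes the object corepresenting $Q\mapsto \Hom_E(P, \Hom_\sM(-,m_\gamma\otimes Q))$; concretely, writing $P$ as a summand of $E^{\oplus n}$ realizes $m_\gamma\otimes_E P$ as the corresponding summand of $m_\gamma^{\oplus n}$), and extend additively. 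On morphisms, formula \eqref{e:homssemidirect} shows $\Hom$ between generators is $\Hom_E(P,Q)$ when $\gamma=\gamma'$ and zero otherwise, and $E$-linearity of $\sM$ gives $\Hom_\sM(m_\gamma\otimes_E P, m_\gamma\otimes_E Q)\supseteq \Hom_E(P,Q)$ via the module structure; one takes $F$ on these Hom-groups to be exactly this inclusion, which visibly respects the (diagonal) composition rule of $\Proj_{E,\Gamma}$. For the monoidal structure constraint, on generators one sets
$$F(\delta_\gamma\otimes P)\star F(\delta_{\gamma'}\otimes P')=(m_\gamma\otimes_E P)\star(m_{\gamma'}\otimes_E P')\simeq (m_\gamma\star m_{\gamma'})\otimes_E(P\otimes_E P')\xrightarrow{\ \alpha_{\gamma,\gamma'}\otimes\mathrm{id}\ } m_{\gamma\gamma'}\otimes_E(P\otimes_E P')=F\big((\delta_\gamma\otimes P)\star(\delta_{\gamma'}\otimes P')\big),$$
where the first isomorphism is the canonical commutation of $\star$ with the $\Proj_E\otimes\Proj_E$-module structure (which exists because $\star$ is $E$-bilinear and $\sM$ is Karoubian), then extends this to all objects by additivity. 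The unit constraint uses $m_e\simeq\mathbb 1$. Finally, the associativity coherence (pentagon) for this monoidal structure on $F$ reduces, after stripping off the $\Proj_E$-bilinearity isomorphisms, precisely to the commutative square in condition (3), and the unit coherences reduce to the stipulated behavior of $\alpha_{e,-}$ and $\alpha_{-,e}$. One then checks the two constructions are mutually inverse, which is a routine diagram chase.

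The only mild subtlety—and the place I would be most careful—is the bookkeeping of the $\Proj_E$-module (and $\Proj_E\otimes\Proj_E$-bimodule) structure on $\sM$ and its compatibility with $\star$: one must know that in a Karoubian $E$-linear monoidal category the assignment $(P,X)\mapsto P\otimes_E X$ is well-defined functorially (this is where Karoubian-ness is genuinely used—splitting idempotents to extract the summand of $X^{\oplus n}$ cut out by a projector in $\mathrm{Mat}_n(E)$ representing $P$) and that it interacts with $\star$ via canonical isomorphisms $(P\otimes_E X)\star(Q\otimes_E Y)\simeq(P\otimes_E Q)\otimes_E(X\star Y)$ satisfying their own coherences. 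Once this "$\Proj_E$-enriched" formalism is in place, every coherence for $F$ is equivalent to the corresponding coherence in the finitely-presented data (1)--(3), and the proof is complete. Since the paper only needs this for $E$ of global dimension at most one (or, in the Betti case, a complete local Noetherian ring of finite global dimension), $\Proj_E$ behaves well and no further hypotheses are needed.
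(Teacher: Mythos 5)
Your construction is correct: it is the standard verification of the universal property, extracting $m_\gamma$, $\alpha_{\gamma,\gamma'}$ from a monoidal functor and, conversely, extending the data over direct sums and idempotent splittings via the $\Proj_E$-action, with the coherences reducing to condition (3). The paper itself offers no argument here (it declares the proof immediate), and your write-up is exactly the routine check being elided, so there is nothing to compare beyond noting that your level of care with the $\Proj_E$-module bookkeeping is appropriate and sufficient.
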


\sss{} Consider the Karoubian monoidal subcategory of ${}_\chi\cT_\chi$ generated by the minimal tilting sheaves $${}_\chi\cT_\chi(\Omega_\chi) \subset {}_\chi\cT_\chi.$$
Explicitly, this is the full subcategory with objects of the form 
$$\underset{\beta \in \Omega_\chi}\oplus \hspace{.5mm} \tau(w^\beta)_\chi \underset{E} \otimes P_\beta,$$ 
where $\tau(w^\beta)_\chi$ denotes any object in the isomorphism class of indecomposable tilting sheaves with support $w^\beta$, and $P_\beta$ are objects of $\Proj_E$, all but finitely many of which are zero. Noting that one has the equality of isomorphism classes
\begin{equation}\label{e:multgroth}\tau(w^\beta)_\chi \star \tau(w^\gamma)_\chi \simeq \tau(w^{\beta \gamma})_\chi,\end{equation}cf. Lemmas \ref{l:minmult} and \ref{l: convmintilt}, it follows that ${}_\chi\cT_\chi(\Omega_\chi)$ is closed under convolution.  

Note that to lift the identities \eqref{e:multgroth} to a genuine monoidal functor $\Proj_{E, \Gamma} \rightarrow {}_\chi\cT_\chi$, we encounter an obstructing 3-cocycle arising from condition (3) of Lemma \ref{l:mapprojg}.\footnote{Said differently, one could instead encounter a similar convolution category, albeit on a multiplicative gerbe over $\Omega_\chi$.} We now trivialize it using the Soergel functor, as follows.

\begin{defn} For $\beta \in \Omega_\chi$, a {\em Whittaker normalized} tilting sheaf is a pair $(\tau, \square),$ where $\tau$ is an indecomposable tilting sheaf with support $w^\beta$, and $\square$ is an isomorphism 
$$\square: \mathbb{V}_\chi(\tau) \simeq \omega(w^\beta)_\chi.$$
\end{defn}

Note that for fixed $\beta$, Whittaker normalized tilting sheaves form a category in a natural way, where a homomorphism $(\tau, \square) \rightarrow (\tau', \square')$ is a morphism of tilting sheaves $\alpha: \tau \rightarrow \tau'$ which intertwines their Whittaker normalizations, i.e., for which the following diagram commutes
$$\xymatrix{ \mathbb{V}_\chi(\tau) \ar[d]_{\square} \ar[r]^{\mathbb{V}_\chi(\alpha)} & \mathbb{V}_\chi(\tau') \ar[d]^{\square'} \\ \omega(w^\beta)_\chi \ar[r]^{\id} & \omega(w^\beta)_\chi.}$$
Let us denote this category by ${}_\chi\cT_\chi(\beta, \square).$

\begin{lem}\label{l:contracgrpd} For each $\beta \in \Omega_\chi$, ${}_\chi\cT_\chi(\beta, \square)$ is a contractible groupoid.
\end{lem}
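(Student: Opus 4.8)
The goal is to show that for each $\beta \in \Omega_\chi$, the category ${}_\chi\cT_\chi(\beta, \square)$ of Whittaker normalized tilting sheaves with support $w^\beta$ is a contractible groupoid, i.e., it is nonempty and between any two objects there is a unique isomorphism and no nonidentity self-maps.

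\textbf{Plan.} The plan is to deduce this from two inputs already assembled in the excerpt: the classification of indecomposable tilting sheaves by support (Proposition \ref{p:classtilts}), which tells us the underlying tilting sheaf $\tau$ of any object of ${}_\chi\cT_\chi(\beta, \square)$ is necessarily isomorphic to the indecomposable $\tau(w^\beta)_\chi$; and the computation of the Soergel functor on standard sheaves together with cleanness of minimal tiltings (Lemma \ref{l:minclean} and Proposition \ref{p:std2std}), which identifies $\mathbb{V}_\chi(\tau(w^\beta)_\chi)$ with $\omega(w^\beta)_\chi$ and computes the relevant endomorphism algebras.

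\textbf{Key steps.} First I would establish nonemptiness: by Lemma \ref{l:minclean}, $\tau(w^\beta)_\chi \simeq \std(w^\beta)_\chi$ is clean and tilting, and by Proposition \ref{p:std2std} we have a (noncanonical) isomorphism $\mathbb{V}_\chi(\tau(w^\beta)_\chi) \simeq \omega(w^\beta)_\chi$; choosing any such isomorphism as $\square$ produces an object of ${}_\chi\cT_\chi(\beta,\square)$. Second, for the groupoid structure, take two objects $(\tau, \square)$ and $(\tau', \square')$. By Proposition \ref{p:classtilts} both $\tau$ and $\tau'$ are isomorphic to $\tau(w^\beta)_\chi$, so fix isomorphisms and reduce to the case $\tau = \tau' = \tau(w^\beta)_\chi$, where $\square, \square'$ are two isomorphisms $\mathbb{V}_\chi(\tau(w^\beta)_\chi) \simeq \omega(w^\beta)_\chi$. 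A morphism $(\tau,\square) \to (\tau',\square')$ is then an element $\alpha \in \End(\tau(w^\beta)_\chi)$ with $\square' \circ \mathbb{V}_\chi(\alpha) = \square$, i.e. $\mathbb{V}_\chi(\alpha) = \square'^{-1}\circ \square \in \Aut(\omega(w^\beta)_\chi)$; since $\square'^{-1}\circ\square$ is a fixed element, the set of such $\alpha$ is a torsor under $\ker(\mathbb{V}_\chi \colon \End(\tau(w^\beta)_\chi) \to \End(\omega(w^\beta)_\chi))$ once it is nonempty, and it is nonempty because $\mathbb{V}_\chi$ restricted to ${}_\chi\cT_\chi$ is faithful (Corollary \ref{c: tilt neutral ff}) and, by Proposition \ref{p:std2std}, induces an isomorphism $\End(\tau(w^\beta)_\chi) = \End(\std(w^\beta)_\chi) \xrightarrow{\sim} \tau^{\leqslant 0}\End(\omega(w^\beta)_\chi) \simeq \Fun(\Gamma^{w^\beta})$. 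Thus $\mathbb{V}_\chi$ is a bijection on the relevant Hom-sets and the preimage of $\square'^{-1}\circ\square$ is a single element $\alpha$, which is automatically an isomorphism since it maps to an isomorphism under a faithful, essentially-monoidal functor (or directly: $\End(\tau(w^\beta)_\chi)$ is local, so any element not in the maximal ideal is invertible, and $\alpha$ is not in the maximal ideal since $\mathbb{V}_\chi(\alpha)$ is a unit). Finally, uniqueness of $\alpha$ gives that the groupoid has trivial automorphism groups and a unique morphism between any two objects, hence is contractible.

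\textbf{Main obstacle.} The only genuine content is making precise that $\mathbb{V}_\chi$ induces a bijection (not merely an injection) on $\Hom(\tau(w^\beta)_\chi, \tau(w^\beta)_\chi)$ onto the automorphisms of $\omega(w^\beta)_\chi$ that actually arise as $\square'^{-1}\circ\square$; here I would invoke Proposition \ref{p:std2std}'s identification of the endomorphism ring with $\Fun(\Gamma^{w^\beta})$ and the fact that the self-isomorphisms of $\omega(w^\beta)_\chi$ relevant to comparing two normalizations are exactly the units of that ring (both $\square$ and $\square'$ are isomorphisms in $\indcoh(\frf_\chi^2)^\heartsuit$, so their difference lies in $\Aut_{\indcoh^\heartsuit}(\omega(w^\beta)_\chi) = \Fun(\Gamma^{w^\beta})^\times$, which is precisely $\tau^{\leqslant 0}\End(\omega(w^\beta)_\chi)^\times = \End(\tau(w^\beta)_\chi)^\times$ via the isomorphism of Proposition \ref{p:std2std}). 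Everything else is formal bookkeeping about torsors and local rings.
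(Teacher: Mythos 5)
Your proof is correct and follows essentially the same route as the paper: the paper's own argument is just a terse citation of Proposition \ref{p:std2std} (uniqueness of the isomorphism class and triviality of automorphisms of a Whittaker-normalized object), and your write-up fleshes out exactly those points using the same ingredients — cleanness of minimal tiltings (Lemma \ref{l:minclean}), classification by support (Proposition \ref{p:classtilts}), and the identification of endomorphism rings under $\mathbb{V}_\chi$ from Proposition \ref{p:std2std}.
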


\begin{proof}We must show that there is a unique isomorphism class of objects in ${}_\chi\cT_\chi(\beta, \square)$ and the endomorphisms of any object consist only of the identity. However, these assertions are immediate from Proposition \ref{p:std2std}. 
\end{proof}

\sss{} To proceed, we next note that for any pair $(\beta, \beta') \in \Omega_\chi \times \Omega_\chi$, we have a naturally defined functor 
$$- \star -: {}_\chi\cT_\chi(\beta, \square) \times {}_\chi\cT_\chi(\beta', \square) \rightarrow {}_\chi\cT_\chi(\beta \beta', \square),$$
which on objects sends a pair $((\tau, \square), (\tau', \square'))$ to $(\tau \star \tau', \square \star \square')$, where the latter isomorphism is the composition 
$$\mathbb{V}_\chi(\tau \star \tau') \simeq \mathbb{V}_\chi(\tau) \star \mathbb{V}_\chi(\tau') \xrightarrow{ \square \star \square'} \omega(w^\beta)_\chi \star \omega(w^{\beta'})_\chi \simeq \omega(w^{\beta \beta'})_\chi,$$
where the final isomorphism is the tautological identification of the convolution of graphs with the graph of the composition. 

Moreover, by construction, for any triple $(\beta, \beta', \beta'') \in \Omega_\chi \times \Omega_\chi \times \Omega_\chi$, we have a canonical identification of functors
\begin{equation} \label{e:assoc}(- \star -) \star - \simeq - \star (- \star-): {}_\chi\cT_{\chi}(\beta, \square) \times  {}_\chi\cT_{\chi}(\beta', \square) \times  {}_\chi\cT_{\chi}(\beta'', \square) \rightarrow  {}_\chi\cT_{\chi}(\beta \beta' \beta'', \square).\end{equation}
where we in particular use that the tautological identifications of convolution of graphs with graphs of compositions satisfy the cocycle condition of Lemma \ref{l:mapprojg}.  Therefore, they give rise to a monoidal functor $$\square:  \Proj_{E, \Omega_\chi} \rightarrow \on{SBim}_\chi. \quad \quad \beta \mapsto \omega(w^\beta)_\chi.$$

\sss{} With these preparations in hand, we are now ready to trivialize the arising multiplicative gerbe over $\Omega_\chi$. 

\begin{prop} \label{p:rigidaction}There exists a monoidal functor 
$$\blacksquare: \Proj_{E, \Omega_\chi} \rightarrow {}_\chi\cT_\chi$$
together with a natural equivalence of monoidal functors $\iota: \mathbb{V}_\chi \circ \blacksquare \simeq \square$ such that for any $\beta \in \Omega_\chi$, $\square(\beta)$ is an indecomposable tilting sheaf with support $w^\beta$. 

Moreover, the pair $(\blacksquare, \iota)$ satisfying the above property is unique up to unique equivalence. 
\end{prop}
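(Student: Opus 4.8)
The strategy is to build the monoidal functor $\blacksquare$ by applying the universal property of $\Proj_{E,\Omega_\chi}$ from Lemma~\ref{l:mapprojg}, using the contractibility results of Lemma~\ref{l:contracgrpd} to supply the coherence data in a canonical way. First I would choose, for each $\beta\in\Omega_\chi$, an object $(\tau^\beta,\square^\beta)$ of the groupoid ${}_\chi\cT_\chi(\beta,\square)$, normalized so that $\tau^e$ together with its structure isomorphism is literally the monoidal unit $\dcm$ with the canonical identification $\mathbb{V}_\chi(\dcm)\simeq\omega(e)_\chi$; set $\blacksquare(\beta):=\tau^\beta$. Since each ${}_\chi\cT_\chi(\beta,\square)$ is a contractible groupoid, the choice is harmless. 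Next, for each pair $(\beta,\beta')$, the object $\tau^\beta\star\tau^{\beta'}$ carries a canonical Whittaker normalization obtained by composing $\mathbb{V}_\chi(\tau^\beta\star\tau^{\beta'})\simeq\mathbb{V}_\chi(\tau^\beta)\star\mathbb{V}_\chi(\tau^{\beta'})\xrightarrow{\square^\beta\star\square^{\beta'}}\omega(w^\beta)_\chi\star\omega(w^{\beta'})_\chi\simeq\omega(w^{\beta\beta'})_\chi$ (using Lemma~\ref{multi}, Lemma~\ref{l: convmintilt} and the identification of convolution of graphs with the graph of the composition), so $\tau^\beta\star\tau^{\beta'}$ and $\tau^{\beta\beta'}$ are two objects of the contractible groupoid ${}_\chi\cT_\chi(\beta\beta',\square)$; let $\alpha_{\beta,\beta'}\colon\tau^\beta\star\tau^{\beta'}\xrightarrow{\sim}\tau^{\beta\beta'}$ be the unique isomorphism between them in that groupoid.

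The key point is then to verify condition~(3) of Lemma~\ref{l:mapprojg}, i.e.\ that the two composites $(\tau^\beta\star\tau^{\beta'})\star\tau^{\beta''}\to\tau^{\beta\beta'\beta''}$ agree. Both composites are morphisms in the contractible groupoid ${}_\chi\cT_\chi(\beta\beta'\beta'',\square)$ from $(\tau^\beta\star\tau^{\beta'})\star\tau^{\beta''}$ to $\tau^{\beta\beta'\beta''}$ --- one just has to check that both of these composites are in fact \emph{morphisms in that groupoid}, i.e.\ intertwine the Whittaker normalizations. For this I would invoke the associativity identity \eqref{e:assoc} of the convolution functors on the normalized groupoids, which encodes precisely that the tautological identifications of convolutions of graphs with graphs of compositions satisfy the cocycle condition; given \eqref{e:assoc}, both composites intertwine the Whittaker normalizations, hence lie in ${}_\chi\cT_\chi(\beta\beta'\beta'',\square)$, and by contractibility of that groupoid they coincide. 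Lemma~\ref{l:mapprojg} then yields the monoidal functor $\blacksquare$, and by construction $\mathbb{V}_\chi\circ\blacksquare$ comes with a canonical natural isomorphism $\iota$ to $\square$, since the data $(\square^\beta)$ and the compatibility of $\alpha_{\beta,\beta'}$ with the $\square^\beta$'s is exactly what was used; compatibility of $\iota$ with the monoidal structures is again a statement living in a contractible groupoid, hence automatic.

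For uniqueness, suppose $(\blacksquare_1,\iota_1)$ and $(\blacksquare_2,\iota_2)$ are two such pairs. For each $\beta$, $\iota_1$ and $\iota_2$ exhibit $\blacksquare_1(\beta)$ and $\blacksquare_2(\beta)$ as objects of ${}_\chi\cT_\chi(\beta,\square)$, so there is a unique isomorphism $\blacksquare_1(\beta)\xrightarrow{\sim}\blacksquare_2(\beta)$ in that groupoid. These assemble into a natural transformation of functors $\blacksquare_1\Rightarrow\blacksquare_2$; that it is monoidal, i.e.\ compatible with the $\alpha_{\beta,\beta'}$'s on both sides, is once more a comparison of two morphisms in a contractible groupoid, hence holds; and it is the unique such monoidal natural isomorphism since its components are forced. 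The main obstacle I anticipate is bookkeeping the precise identifications of ``convolution of graphs $=$ graph of composition'' and checking they genuinely satisfy the strict cocycle condition needed for \eqref{e:assoc} (as opposed to only up to a further coherence); once that is pinned down --- and it should follow from the functoriality of $\mathbb{V}$ together with the fact that the graphs $\Gamma^{w^\beta}$ are honest closed subschemes and their fibered products compose strictly --- the rest is a formal consequence of the contractibility lemma.
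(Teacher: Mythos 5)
Your proposal is correct and follows essentially the same route as the paper: choose Whittaker-normalized objects $(\tau_\beta,\square_\beta)$ in the contractible groupoids ${}_\chi\cT_\chi(\beta,\square)$, take the unique isomorphisms $\alpha_{\beta,\beta'}$ there, and let Lemma~\ref{l:contracgrpd} force both the cocycle condition of Lemma~\ref{l:mapprojg} (using \eqref{e:assoc}, whose strict cocycle property was already built into the construction of $\square$) and the uniqueness of $(\blacksquare,\iota)$. Nothing essential differs from the paper's argument.
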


\begin{proof} We begin by constructing a functor $\blacksquare$ using the criterion of Lemma \ref{l:mapprojg}. For each $\beta \in \Omega_\chi$, let us choose an object  $(\tau_\beta, \square_\beta)$ of ${}_\chi\cT_\chi(\beta, \square)$. For any pair $(\beta, \beta') \in \Omega_\chi \times \Omega_\chi$, note by Lemma \ref{l:contracgrpd} we have a unique isomorphism in ${}_\chi\cT_\chi(\beta\beta', \square)$
$$\alpha_{\beta, \beta'}: (\tau_\beta, \square_\beta) \star (\tau_{\beta'}, \square_{\beta'}) \simeq (\tau_{\beta \beta'}, \square_{\beta \beta'}).$$
Moreover, for any triple $(\beta, \beta', \beta'') \in \Omega_\chi \times \Omega_\chi \times \Omega_\chi$, the diagram 
$$\xymatrix{((\tau_\beta, \square_\beta) \star (\tau_{\beta'}, \square_{\beta'})) \star (\tau_{\beta''}, \square_{\beta''}) \ar[rr]^{\eqref{e:assoc}} \ar[d]^{\alpha_{\beta, \beta'} \star \id} &&
(\tau_\beta, \square_\beta) \star ((\tau_{\beta'}, \square_{\beta'}) \star (\tau_{\beta''}, \square_{\beta''})) \ar[d]^{\id \star \alpha_{\beta', \beta''}} \\ (\tau_{\beta \beta'}, \square_{\beta \beta'}) \star (\tau_{\beta''}, \square_{\beta''}) \ar[d]^{\alpha_{\beta \beta', \beta''}} && (\tau_\beta, \square_\beta) \star (\tau_{\beta' \beta''}, \square_{\beta' \beta''}) \ar[d]^{\alpha_{\beta, \beta' \beta''}} \\ (\tau_{\beta \beta' \beta''}, \square_{\beta \beta' \beta''}) \ar[rr]^{\id}   && (\tau_{\beta \beta' \beta''}, \square_{\beta \beta' \beta''})}$$
automatically commutes by Lemma \ref{l:contracgrpd}. In particular, remembering only the objects $\tau_\beta, \beta \in \Omega_\chi$, and the underlying morphisms between their convolutions in isomorphisms $\alpha_{\beta, \beta'}$, we obtain the desired functor $\blacksquare$ along with a natural equivalence $\iota$. The uniqueness up to unique equivalence again follows from Lemma \ref{l:contracgrpd}.
\end{proof}

\sss{} To proceed, we next recall the following. Suppose $\cM^\circ$ is a Karoubian monoidal 1-category, equipped with a strict action of $\Gamma$. That is, for each $\gamma$, one has a monoidal automorphism 
$$c_\gamma: \cM^\circ \simeq \cM^\circ,$$
and for each pair $(\gamma, \gamma') \in \Gamma \times \Gamma$ a natural isomorphism of monoidal automorphisms
$$\iota_{\gamma, \gamma'}: c_{\gamma} \circ c_{\gamma'} \simeq c_{\gamma \gamma'},$$
satisfying the usual cocycle identity for triples $(\gamma, \gamma', \gamma'') \in \Gamma \times \Gamma \times \Gamma$. In this case, one can form the {\em smash product} monoidal category 
$$\Proj_{E, \Gamma} \ltimes \cM^\circ.$$
Namely, for the underlying category, objects consist of formal direct sums $$\underset{\gamma \in \Gamma} \oplus \hspace{.5mm} \delta_\gamma \otimes n_\gamma,$$
for $n_\gamma$ objects of $\cM^\circ$, all but finitely many of which are zero,  and homomorphisms are defined as in Equation \eqref{e:homssemidirect}. For the monoidal structure, the underlying binary product of the monoidal structure is determined by the formula 
$$(\delta_\gamma \otimes n_\gamma) \star (\delta_{\gamma'} \otimes n_{\gamma'}) := \delta_{\gamma \gamma'} \otimes (c_{(\gamma')^{-1}}(n_\gamma) \star n_{\gamma'}),$$
 with the evident functoriality on morphisms. Finally, the associativity isomorphisms are obtained from those on $\Proj_{E, \Gamma}$ and $\cM^\circ$ and the transformations $i_{\gamma, \gamma'}$. More explicitly, note that 
$$((\delta_\gamma \otimes n_\gamma) \star (\delta_{\gamma'} \otimes n_{\gamma'})) \star (\delta_{\gamma''} \otimes n_{\gamma''}) = \delta_{\gamma \gamma' \gamma''} \otimes  c_{(\gamma'')^{-1}}( c_{(\gamma')^{-1}}(n_\gamma) \star n_{\gamma'}) \star n_{\gamma''},$$
$$(\delta_\gamma \otimes n_\gamma) \star ((\delta_{\gamma'} \otimes n_{\gamma'}) \star (\delta_{\gamma''} \otimes  n_{\gamma''})) =   \delta_{\gamma \gamma' \gamma''} \otimes c_{(\gamma' \gamma'')^{-1}}(n_\gamma) \star (c_{(\gamma'')^{-1}}(n_{\gamma'}) \star n_{\gamma''}).$$
The required identification given by the composition 
\begin{align*} c_{(\gamma'')^{-1}}( c_{(\gamma')^{-1}}(n_\gamma) \star n_{\gamma'}) \star n_{\gamma''} &\simeq (c_{(\gamma'')^{-1}} \circ c_{(\gamma')^{-1}}(n_\gamma) \star c_{(\gamma'')^{-1}}(n_{\gamma'})) \star n_{\gamma''} \\ & \simeq ( c_{(\gamma' \circ \gamma'')^{-1}}(n_\gamma) \star  c_{(\gamma'')^{-1}}(n_{\gamma'})) \star n_{\gamma''} \\ & \simeq c_{(\gamma' \circ \gamma'')^{-1}}(n_\gamma) \star  (c_{(\gamma'')^{-1}}(n_{\gamma'})) \star n_{\gamma''}),\end{align*}
where the identifications are given by (i) the monoidality data for $c_{(\gamma'')^{-1}}$, (ii) the natural transformation $\iota_{(\gamma'')^{-1}, (\gamma')^{-1}}$, and (iii) the associativity natural isomorphism for $\cM$, respectively. Finally, the monoidal unit is given by $\delta_e \otimes \mathbb{1}$, with the evident trivializations 
$$(\delta_e \otimes \mathbb{1}) \star - \simeq - \simeq  - \star (\delta_e \otimes \mathbb{1}).$$

\sss{} In particular, suppose one is given a monoidal category $\cM$ equipped with a monoidal functor $\Proj_{E, \Gamma} \rightarrow \cM$, and for each $\gamma \in \Gamma$ let us write $m_\gamma$ for the corresponding object of $\cM$ as in Lemma \ref{l:mapprojg}. Note that, in particular, each $m_\gamma$ is an invertible object of $\cM$, with inverse $m_{\gamma^{-1}}$ and evaluation pairing $$\alpha_{\gamma, \gamma^{-1}}: m_\gamma \star m_{\gamma^{-1}} \simeq m_{e} = \mathbb{1}.$$

Suppose one is in addition given a full monoidal subcategory $\cM^\circ \subset \cM$ which is normalized by $\Gamma$, i.e., for each $\gamma \in \Gamma$ we have that $\cM^\circ$ is preserved by the autoequivalence 
$$ m_{\gamma} \star - \star m_{\gamma^{-1}}: \cM \simeq \cM.$$
In this case, as above one can form the semi-direct product $\Proj_{E, \Gamma} \ltimes \cM^\circ$, and we have the following basic  observation. 

\begin{lem} \label{l:semidirect2all}There is a canonical monoidal functor
$$\mu: \Proj_{E, \Gamma} \ltimes \cM^\circ \rightarrow \cM$$
which on objects sends $\delta_\gamma \otimes m^\circ_\gamma \mapsto m_\gamma \star m^\circ_\gamma$.
\end{lem}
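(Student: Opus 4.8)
The strategy is to build the functor $\mu$ by applying the universal property of the smash product $\Proj_{E,\Gamma}\ltimes\cM^\circ$ to a pair of compatible monoidal functors into $\cM$. The first ingredient is already available to us: the given monoidal functor $\Proj_{E,\Gamma}\to\cM$, which by Lemma \ref{l:mapprojg} records the invertible objects $m_\gamma$ with $m_e=\mathbb{1}$, the isomorphisms $\alpha_{\gamma,\gamma'}\colon m_\gamma\star m_{\gamma'}\simeq m_{\gamma\gamma'}$, and the cocycle identity for triples. The second ingredient is the tautological fully faithful monoidal inclusion $\cM^\circ\hookrightarrow\cM$. The point is that these two functors are \emph{compatible} in the sense needed to glue into a functor on the smash product: the hypothesis that $\cM^\circ$ is normalized by $\Gamma$, i.e. each autoequivalence $c_\gamma:=m_\gamma\star(-)\star m_{\gamma^{-1}}$ preserves $\cM^\circ$, is precisely what makes the conjugation action of $\Gamma$ on $\cM^\circ$ well-defined, and the isomorphisms $\alpha_{\gamma,\gamma'}$ furnish the coherence data $\iota_{\gamma,\gamma'}\colon c_\gamma\circ c_{\gamma'}\simeq c_{\gamma\gamma'}$ entering the definition of the smash product in the first place.

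\textbf{Key steps.} First I would pin down, from the given data $(m_\gamma,\alpha_{\gamma,\gamma'})$, the strict action of $\Gamma$ on $\cM^\circ$ used to form $\Proj_{E,\Gamma}\ltimes\cM^\circ$: namely $c_\gamma:=m_\gamma\star(-)\star m_{\gamma^{-1}}$, with monoidality structure on $c_\gamma$ coming from the evaluation $\alpha_{\gamma,\gamma^{-1}}\colon m_\gamma\star m_{\gamma^{-1}}\simeq\mathbb{1}$, and the natural isomorphism $\iota_{\gamma,\gamma'}\colon c_\gamma\circ c_{\gamma'}\simeq c_{\gamma\gamma'}$ assembled from $\alpha_{\gamma,\gamma'}$ and $\alpha_{(\gamma\gamma')^{-1},\cdot}$; the triple cocycle identity for the $\iota$'s then follows from the cocycle identity for the $\alpha$'s (this is the routine diagram chase I will not write out). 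Second, I would define $\mu$ on objects by $\delta_\gamma\otimes n_\gamma\mapsto m_\gamma\star n_\gamma$ and extend $E$-linearly over finite direct sums, and on morphisms by using \eqref{e:homssemidirect} together with the invertibility of $m_\gamma$ (so $\Hom(m_\gamma\star n,m_\gamma\star n')\simeq\Hom(n,n')$). Third, I would equip $\mu$ with a monoidal structure: on a product $(\delta_\gamma\otimes n_\gamma)\star(\delta_{\gamma'}\otimes n_{\gamma'})=\delta_{\gamma\gamma'}\otimes(c_{(\gamma')^{-1}}(n_\gamma)\star n_{\gamma'})$, the required isomorphism $\mu(\delta_\gamma\otimes n_\gamma)\star\mu(\delta_{\gamma'}\otimes n_{\gamma'})\simeq\mu$ of the product unwinds to $(m_\gamma\star n_\gamma)\star(m_{\gamma'}\star n_{\gamma'})\simeq m_{\gamma\gamma'}\star(m_{(\gamma')^{-1}}\star m_\gamma\star m_{\gamma'}\star n_\gamma\star n_{\gamma'}\cdots)$, which is built purely from the associator of $\cM$ and the $\alpha_{\gamma,\gamma'}$. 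Finally, I would check the hexagon/associativity axiom for this monoidal structure on $\mu$, which reduces — by inspecting the explicit description of the associator of the smash product given just above the statement in terms of (i) the monoidality data of $c_{(\gamma'')^{-1}}$, (ii) the transformation $\iota_{(\gamma'')^{-1},(\gamma')^{-1}}$, and (iii) the associator of $\cM^\circ\subset\cM$ — to the triple cocycle identity for the $\alpha_{\gamma,\gamma'}$, which is part of the hypothesis via Lemma \ref{l:mapprojg}(3).

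\textbf{Main obstacle.} The conceptual content is minimal; the only real work is bookkeeping of coherences. The step I expect to be most delicate is verifying that the monoidal constraint on $\mu$ satisfies the pentagon/associativity axiom, since it forces one to match, on the nose, the associator of the smash product (built from $\iota$'s and the monoidality data of the $c_\gamma$'s) against the associator of $\cM$ (built from the $\alpha$'s), using that both ultimately descend from the single triple cocycle identity for $(m_\gamma,\alpha_{\gamma,\gamma'})$. Once this identification is made, naturality and the unit axioms are immediate, and the functor $\mu$ is the asserted one; one may additionally observe it restricts to the given functor $\Proj_{E,\Gamma}\to\cM$ on the subcategory $\delta_\gamma\otimes\mathbb{1}$ and to the inclusion on $\delta_e\otimes\cM^\circ$, which is how it will be used in the sequel to identify $\cM_\Xi$ and ${}_\chi\cM_\chi$ as such smash products.
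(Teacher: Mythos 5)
Your proposal is correct and takes essentially the same route as the paper: the paper also defines $\mu$ blockwise as the direct sum of the inclusions $m_\gamma \star \cM^\circ \subset \cM$, constructs the binary-product constraint by inserting $\alpha_{\gamma',(\gamma')^{-1}}\colon m_{\gamma'}\star m_{(\gamma')^{-1}}\simeq \mathbb{1}$ and regrouping via the associator and $\alpha_{\gamma,\gamma'}$, and likewise leaves the compatibility with associativity constraints and triangle identities (which, as you say, reduces to the cocycle identity of Lemma \ref{l:mapprojg}(3)) as a routine verification. Your explicit Step 1 spelling out the $\Gamma$-action $c_\gamma = m_\gamma \star - \star m_{\gamma^{-1}}$ with its coherence data corresponds to the paper's setup preceding the lemma rather than a different argument, and the slightly garbled intermediate expression in your unwinding (the stray $m_\gamma$) is only a notational slip, since the correct target $m_{\gamma\gamma'}\star c_{(\gamma')^{-1}}(n_\gamma)\star n_{\gamma'}$ is what your described construction produces.
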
 

\begin{proof}At the level of underlying $E$-linear additive, Karoubian categories, note that $$\Proj_{E, \Gamma} \ltimes \sM^\circ \simeq \underset{\gamma \in \Gamma} \oplus \hspace{.5mm} m_\gamma \star \sM^\circ,$$and the functor in question is the direct sum of the inclusion functors $m_\gamma \star \sM^\circ \subset \sM.$ Moreover, if for ease of notation we suppress associativity natural isomorphisms in $\cM$, the required natural isomorphism intertwining the binary products is given by the composition 
\begin{align*}
\mu(m_\gamma \otimes n_\gamma) \star  \mu(m_{\gamma'} \otimes n_{\gamma'}) & \simeq m_\gamma \star n_\gamma \star m_{\gamma'} \star n_{\gamma'} \\ & \simeq m_\gamma \star (m_{\gamma'} \star m_{(\gamma')^{-1}}) \star n_\gamma \star m_{\gamma'} \star n_{\gamma'} \\ & \simeq (m_\gamma \star m_{\gamma'}) \star (m_{(\gamma')^{-1}} \star n_\gamma \star m_{\gamma'})  \star n_{\gamma'} \\ & \simeq m_{\gamma \gamma'} \star c_{(\gamma')^{-1}}(n_\gamma) \star n_{\gamma'} \\ & \simeq \mu( (m_\gamma \otimes n_\gamma) \star (m_{\gamma'} \otimes n_{\gamma'})). \end{align*}The unital data is defined similarly, and we leave the verification of the compatibility with associativity constraints and triangle identities as a straightforward exercise for the reader. 
\end{proof}

\sss{}  We are now ready to give the promised Soergel theoretic description of the full category. Note that for any $\beta \in \Omega_\chi$, the conjugation map 
$$\omega(w^\beta)_\chi \star - \star \omega(w^{\beta^{-1}})_\chi: \on{SBim}_\chi \simeq \on{SBim}_\chi$$restricts to a monoidal automorphism of the neutral block
$$\omega(w^\beta)_\chi \star - \star \omega(w^{\beta^{-1}})_{\chi}: \on{SBim}_\chi^\circ \simeq \on{SBim}_\chi^\circ,$$essentially by diagram automorphisms. In particular, we may consider the smash product category 
$$\Proj_{E, \Omega_\chi} \ltimes \on{SBim}_\chi^\circ.$$

\begin{thm} \label{t:soergequivtitSEMIDIRECT}Assume that the characteristic of $\fre$ is not two. Given a Soergel functor $\mathbb{V}_\chi$, there is a canonical up to unique equivalence associated monoidal equivalence 
$${}_\chi\cT_\chi \simeq \Proj_{E, \Omega_\chi} \ltimes \on{SBim}_\chi^\circ.$$
\end{thm}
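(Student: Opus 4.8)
The strategy is to bootstrap from the neutral-block equivalence of Theorem~\ref{t:soergequivtit}, namely $\mathbb{V}_\chi^\circ: {}_\chi\cT_\chi^\circ \simeq \on{SBim}_\chi^\circ$, and "spread it out" over the coset group $\Omega_\chi$ using the rigidified minimal tilting sheaves constructed in Proposition~\ref{p:rigidaction}. First I would recall the smash product apparatus of the preceding subsection: since $\Omega_\chi$ normalizes ${}_\chi\cT_\chi^\circ$ inside ${}_\chi\cT_\chi$ via conjugation by the minimal tilting sheaves $\tau(w^\beta)_\chi$ (which are clean by Lemma~\ref{l:minclean} and act by Coxeter-diagram automorphisms, cf. Proposition~\ref{p:conjmin}), and since by Proposition~\ref{p:rigidaction} we have a genuine monoidal functor $\blacksquare: \Proj_{E,\Omega_\chi} \rightarrow {}_\chi\cT_\chi$ landing in indecomposable tiltings with the correct support, Lemma~\ref{l:semidirect2all} produces a canonical monoidal functor
$$\mu: \Proj_{E,\Omega_\chi} \ltimes {}_\chi\cT_\chi^\circ \longrightarrow {}_\chi\cT_\chi.$$
Composing the neutral equivalence $\mathbb{V}_\chi^\circ$ on the second factor, or rather using it to identify ${}_\chi\cT_\chi^\circ \simeq \on{SBim}_\chi^\circ$, would reduce the theorem to showing that $\mu$ is an equivalence.

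Next I would check that $\mu$ is fully faithful. By the definition of the smash product, at the level of underlying additive categories $\Proj_{E,\Omega_\chi}\ltimes {}_\chi\cT_\chi^\circ \simeq \bigoplus_{\beta \in \Omega_\chi} \tau(w^\beta)_\chi \star {}_\chi\cT_\chi^\circ$, so it suffices to compute, for $\beta,\gamma \in \Omega_\chi$ and tilting objects $\tau,\tau' \in {}_\chi\cT_\chi^\circ$, the map on $\Hom(\tau(w^\beta)_\chi \star \tau, \tau(w^\gamma)_\chi \star \tau')$. Using that $\tau(w^\beta)_\chi$ is invertible under convolution with inverse $\tau(w^{\beta^{-1}})_\chi \simeq \costd(w^{\beta^{-1}})_\chi$ (Lemma~\ref{l:minclean} together with Lemma~\ref{inverse}), and that $\Hom$-spaces in the tilting category are concentrated in degree zero and free over $\Fun(\frf_\chi)$ (Lemma~\ref{c:proptilt}\eqref{c:proptilt-3}), I would rewrite this as $\Hom(\tau, \tau(w^{\beta^{-1}})_\chi \star \tau(w^\gamma)_\chi \star \tau')$. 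When $\beta = \gamma$ the middle convolution is $\tau(e)_\chi = \dcm$ (by Lemma~\ref{l: convmintilt} or \eqref{e:multgroth}), giving exactly the neutral-block $\Hom$-space, which matches the smash-product formula \eqref{e:homssemidirect}; when $\beta \neq \gamma$ the object $\tau(w^{\beta^{-1}}w^\gamma)_\chi$ is an indecomposable tilting sheaf supported on a stratum $w^{\beta^{-1}w^\gamma} \neq e$ lying in a nontrivial coset, and I must argue $\Hom(\tau, \tau(w^{\beta^{-1}w^\gamma})_\chi \star \tau') = 0$. This vanishing should follow from the support considerations of Lemma~\ref{l:supptilt} combined with cleanness, since $\tau$ and $\tau'$ are supported on $\tilW_\chi^\circ$ whereas $\tau(w^{\beta^{-1}w^\gamma})_\chi \star \tau'$ is $!$-extended from strata in the nontrivial coset $\beta^{-1}\gamma$, together with the observation that $\Hom$ between objects supported on disjoint unions of strata in distinct $\tilW_\chi^\circ \times \tilW_\chi^\circ$-orbits vanishes — this is essentially the block decomposition, Proposition~\ref{blockdec}, transported to the tilting subcategory.

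Finally I would verify essential surjectivity: every indecomposable tilting sheaf $\tau(x)_\chi$ for $x \in \tilW_\chi$ lies in the image. Writing $x = w^\beta w$ with $\beta \in \Omega_\chi$ the coset of $x$ and $w = (w^\beta)^{-1}x \in \tilW_\chi^\circ$ (using the extended Coxeter presentation $\tilW_\chi \simeq \Omega_\chi \ltimes \tilW_\chi^\circ$), Lemma~\ref{l: convmintilt} gives $\tau(x)_\chi \simeq \tau(w^\beta)_\chi \star \tau(w)_\chi$, and $\tau(w)_\chi$ lies in ${}_\chi\cT_\chi^\circ$, so $\tau(x)_\chi$ is in the essential image of $\mu$; since these generate ${}_\chi\cT_\chi$ as a Karoubian category and $\mu$ is full with idempotent-complete source, we conclude $\mu$ is an equivalence. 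Passing through $\mathbb{V}_\chi^\circ: {}_\chi\cT_\chi^\circ \simeq \on{SBim}_\chi^\circ$ then yields the asserted equivalence ${}_\chi\cT_\chi \simeq \Proj_{E,\Omega_\chi} \ltimes \on{SBim}_\chi^\circ$, and its canonicity up to unique equivalence follows from the corresponding uniqueness in Proposition~\ref{p:rigidaction} and Lemma~\ref{l:contracgrpd}. I expect the main obstacle to be the bookkeeping in the fully-faithfulness step — specifically, checking that the conjugation action of $\Omega_\chi$ on ${}_\chi\cT_\chi^\circ$ realized inside ${}_\chi\cT_\chi$ is strictly (not just up to coherent isomorphism) the one used to form the smash product on the Soergel side, so that $\mu$ genuinely intertwines the two monoidal structures including all associativity constraints; this is where the rigidification by the Whittaker normalization in Proposition~\ref{p:rigidaction}, which kills the a priori $3$-cocycle obstruction, does the essential work.
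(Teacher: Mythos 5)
Your proposal is correct and follows essentially the same route as the paper: form the smash product via the rigidified functor $\blacksquare$ from Proposition~\ref{p:rigidaction}, map it to ${}_\chi\cT_\chi$ via Lemma~\ref{l:semidirect2all}, show this is an equivalence using Lemma~\ref{l: convmintilt} and Proposition~\ref{blockdec}, and then transport through the neutral-block equivalence of Theorem~\ref{t:soergequivtit}, with the natural equivalence $\iota$ of Proposition~\ref{p:rigidaction} doing exactly the cocycle-trivializing work you identify at the end. The only difference is that you spell out the fully-faithfulness and essential-surjectivity checks that the paper compresses into a one-sentence citation, and your elaboration is consistent with the paper's intent.
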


\begin{proof} Recall the pair $(\blacksquare, \iota)$ of Proposition \ref{p:rigidaction}. Using $\blacksquare$, we may form the smash product monoidal category 
$$\Proj_{E, \Omega_\chi} \ltimes {}_\chi \cT_\chi^\circ,$$
and using $\iota$ we obtain a monoidal functor
\begin{equation} \label{e:idVchi}\on{id} \ltimes \mathbb{V}_\chi: \Proj_{E, \Omega_\chi} \ltimes {}_\chi \cT_\chi^\circ \xrightarrow{} \Proj_{E, \Omega_\chi} \ltimes \on{SBim}_\chi^\circ,\end{equation}
which is an equivalence by Theorem \ref{t:soergequivtit}. 

On the other hand, by Lemma \ref{l:semidirect2all}, we have a monoidal functor 
\begin{equation} \label{e:idVchi2} \Proj_{E, \Omega_\chi} \ltimes {}_\chi \cT_\chi^\circ \xrightarrow{} {}_\chi \cT_\chi,
\end{equation}
which is moreover an equivalence by Lemma \ref{l: convmintilt} and Proposition \ref{blockdec}. The composition of Equations \eqref{e:idVchi2} and \eqref{e:idVchi} then yields the desired equivalence. \end{proof}

\subsection{Endoscopy for affine Hecke categories}

\sss{} By combining the previous results of this section, we are ready to state and prove the desired, final endoscopy result for affine Hecke categories. Consider two affine Hecke categories associated to possibly different groups and characters $$D( (\wt{I}_1, \chi_1\mon) \bs \wt{LG}_1 / (\wt{I}_1, \chi_1\mon)) \quad \text{and} \quad D( (\wt{I}_2, \chi_2\mon) \bs \wt{LG}_2 / (\wt{I}_2, \chi_2\mon)).$$

Suppose that one has an isomorphism of extended Coxeter systems \begin{equation*}\alpha: \widetilde{W}_{\widetilde{LG}_1, \chi_1} \simeq \widetilde{W}_{\widetilde{LG}_2,\chi_2}\end{equation*}
and a compatible identification of their monodromy representations, i.e. an $\alpha$-equivariant identification of the formal schemes, in evident notation, over $\on{Spec } E$ \begin{equation*}\beta: \frf_{\widetilde{LG_1}, \chi_1} \simeq \frf_{\wt{LG}_2, \chi_2}.\end{equation*}

Suppose in addition we fix Soergel functors $\mathbb{V}_{\chi_i}$, $i = 1, 2$, for both categories. 

\begin{thm}\label{t:mainthmsec7} Assume that the characteristic of $\fre$ is not two. The quadruple $(\alpha, \beta, \mathbb{V}_{\chi_1}, \mathbb{V}_{\chi_2})$ canonically yields a $t$-exact equivalence of monoidal categories \begin{equation*} \epsilon: D( (\wt{I}_1, \chi_1\mon) \bs \wt{LG}_1 / (\wt{I}_1, \chi_1\mon)) \simeq D( (\wt{I}_2, \chi_2\mon) \bs \wt{LG}_2 / (\wt{I}_2, \chi_2\mon)).\end{equation*}  

Moreover, this equivalence  exchanges (co)standard objects with (co)standard objects and tilting objects with tilting objects. That is, for any $w \in \tilW_{\wt{LG}_1, \chi_1}$, there are canonical isomorphisms of objects
$$\epsilon(\std(w)_{\chi_1}) \simeq \std(\alpha(w))_{\chi_2} \quad \text{and} \quad \epsilon(\costd(w)_{\chi_1}) \simeq \costd(\alpha(w))_{\chi_2},$$
where the canonical (co)standard objects on both sides are defined as in Remark \ref{r:canonicalstd}, and there is a canonical isomorphism of groupoids of objects
$$\epsilon( \tau(w)_{\chi_1}) \simeq \tau(\alpha(w))_{\chi_2}.$$

\end{thm}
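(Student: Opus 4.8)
The strategy is to assemble the equivalence $\epsilon$ from the Soergel-theoretic building blocks already constructed in this section, by transporting the renormalization result of Theorem~\ref{t:renorm} through the chain of combinatorial data $(\alpha,\beta)$. First I would use the identifications $\alpha$ and $\beta$ to produce a monoidal equivalence between the relevant categories of Soergel bimodules. Concretely, since the Bott--Samelson sheaves $\beta(r)_\chi$, the standard sheaves $\omega(w)_\chi$, and the formal schemes $\frf_\chi$ are all built directly and functorially from the quadruple $(\frf_\chi,\tilW_\chi,\tilW_\chi^\circ,S_\chi)$, the pair $(\alpha,\beta)$ induces an equivalence of monoidal categories $$\on{SBim}_{\chi_1}^\circ \simeq \on{SBim}_{\chi_2}^\circ$$ sending Bott--Samelson generators to Bott--Samelson generators and minimal standards $\omega(w^\gamma)_{\chi_1}$ to $\omega(\alpha(w^\gamma))_{\chi_2}$. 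Because $\alpha$ is an isomorphism of \emph{extended} Coxeter systems, it restricts to an isomorphism $\Omega_{\chi_1}\simeq\Omega_{\chi_2}$ intertwining the conjugation actions on the neutral blocks, so this upgrades to an equivalence $$\Proj_{E,\Omega_{\chi_1}}\ltimes \on{SBim}_{\chi_1}^\circ \simeq \Proj_{E,\Omega_{\chi_2}}\ltimes \on{SBim}_{\chi_2}^\circ.$$

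Next I would invoke Theorem~\ref{t:soergequivtitSEMIDIRECT}: each fixed Soergel functor $\mathbb{V}_{\chi_i}$ supplies a canonical (up to unique equivalence) monoidal equivalence ${}_{\chi_i}\cT_{\chi_i}\simeq \Proj_{E,\Omega_{\chi_i}}\ltimes \on{SBim}_{\chi_i}^\circ$. Composing, we obtain a canonical monoidal equivalence of tilting categories $$\epsilon_{\cT}\colon {}_{\chi_1}\cT_{\chi_1}\simeq {}_{\chi_2}\cT_{\chi_2}.$$ Then I would pass to bounded homotopy categories, obtaining $\on{K}^b\,\epsilon_\cT\colon \on{K}^b\,{}_{\chi_1}\cT_{\chi_1}\simeq \on{K}^b\,{}_{\chi_2}\cT_{\chi_2}$, and ind-complete. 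Finally I would apply Theorem~\ref{t:renorm}, specifically the equivalence \eqref{e:basechangechi}, on both sides: the Hecke category ${}_{\chi_i}\cM_{\chi_i}$ is recovered as $$\on{Ind}(\on{K}^b({}_{\chi_i}\cT_{\chi_i}))\underset{\wt{\indcoh}(\frf_{\chi_i})}{\otimes}\indcoh(\frf_{\chi_i}).$$ Here is where $\beta$ re-enters decisively: it gives an isomorphism $\frf_{\chi_1}\simeq\frf_{\chi_2}$, hence a symmetric monoidal equivalence $\indcoh(\frf_{\chi_1})\simeq\indcoh(\frf_{\chi_2})$ identifying $\wt{\indcoh}(\frf_{\chi_1})$ with $\wt{\indcoh}(\frf_{\chi_2})$, which is moreover compatible with the $\wt{\indcoh}(\frf_{\chi_i})$-actions on the ind-completed tilting homotopy categories (this compatibility is exactly the $\alpha$-equivariance of $\beta$, together with the fact that the monodromy action used to define the linearity in \eqref{e:linearityunit} is transported correctly by $\epsilon_\cT$, since $\epsilon_\cT$ matches units and the right monodromy isomorphisms). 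Base-changing the equivalence $\on{Ind}(\on{K}^b\,\epsilon_\cT)$ along this identification yields the desired $t$-exact monoidal equivalence $$\epsilon\colon {}_{\chi_1}\cM_{\chi_1}\simeq {}_{\chi_2}\cM_{\chi_2},$$ where $t$-exactness is inherited from the fact that the equivalence is induced by one of tilting categories and the perverse $t$-structure is characterized, via Corollary~\ref{c:compgensfixedmon}, by $\on{Hom}$'s into it from compact generators which lie in the triangulated hull of the tilting category.

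For the compatibility statements: $\epsilon_\cT$ sends $\tau(w)_{\chi_1}$ to $\tau(\alpha(w))_{\chi_2}$ by Proposition~\ref{p:classtilts}, since indecomposable tiltings are classified by support and $\alpha$ preserves the Bruhat order and length function (Proposition~\ref{p:conjmin}), so the claim for tilting objects is immediate from the construction. For (co)standard objects, I would use that under $\mathbb{V}_{\chi_i}$ the sheaf $\std(w)_{\chi_i}$ is canonically identified (Remark~\ref{r:canonicalstd}, via Proposition~\ref{p:std2std}) with the object corresponding to $\omega(w)_{\chi_i}$; since the chain of equivalences above carries $\omega(w)_{\chi_1}$ to $\omega(\alpha(w))_{\chi_2}$ by the construction of the $\on{SBim}$ equivalence, we get $\epsilon(\std(w)_{\chi_1})\simeq \std(\alpha(w))_{\chi_2}$ canonically, and likewise for costandards. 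One small point to verify carefully is that Lemma~\ref{l:stdcost} places the (co)standard objects inside $\on{K}^b\cT$ in a way compatible with $\epsilon_\cT$; this follows since the triangles there are constructed from tilting sheaves and the partial orders $\leq_\beta$, all of which $\alpha$ respects.

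\textbf{Main obstacle.} The principal subtlety is checking that the $\wt{\indcoh}(\frf_{\chi_i})$-module structures on $\on{Ind}(\on{K}^b\,{}_{\chi_i}\cT_{\chi_i})$ are genuinely intertwined by $\on{Ind}(\on{K}^b\,\epsilon_\cT)$ under the identification induced by $\beta$ --- i.e. that the ``right monodromy'' linearity of \eqref{e:linearityunit} is transported correctly, rather than merely up to an automorphism of $\indcoh(\frf_{\chi_2})$. This is where one must use that $\epsilon_\cT$ is not an arbitrary monoidal equivalence but the specific one coming from Theorem~\ref{t:soergequivtitSEMIDIRECT}, which by construction matches the Whittaker-normalized units and hence the canonical isomorphisms $\id \simeq -\star\delta^{\chi_i\mon}$; combined with the $\alpha$-equivariance of $\beta$ at the level of the $\tilW$-actions on $\frf_{\chi_i}$, this pins down the module-structure compatibility. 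Once this is in hand, the base-change formalism of Step~5 in the proof of Theorem~\ref{t:renorm} applies verbatim and the rest is formal.
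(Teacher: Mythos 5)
Your proposal follows essentially the same route as the paper's own proof: transport $(\alpha,\beta)$ to an equivalence of the semidirect-product Soergel categories, compose with the two instances of Theorem \ref{t:soergequivtitSEMIDIRECT} to get an equivalence of tilting categories, pass to $\on{Ind}(\on{K}^b(-))$ and descend through the renormalization/base-change of Theorem \ref{t:renorm}, then match (co)standards and compact generators $j(w)_{!,\wt\chi}$ (via their rigid characterization and $\beta\colon \mathscr{L}_{\chi_1}\simeq\mathscr{L}_{\chi_2}$) to conclude $t$-exactness from Corollary \ref{c:compgensfixedmon}. The compatibility of the $\wt{\indcoh}(\frf_{\chi_i})$-linearity that you flag as the main obstacle is exactly the point the paper handles through Corollary \ref{cor: standard characterization} and the construction of \eqref{e:basechangechi}, so your argument is correct and matches the paper's.
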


\begin{proof} In what follows, for ease of notation we suppress the groups $\wt{LG}_i$ from the notation. The pair $(\alpha, \beta)$ gives rise to a canonical monoidal equivalence $$\Proj_{E, \Omega_{\chi_1}} \ltimes \on{SBim}_{\chi_1}^\circ \simeq \Proj_{E, \Omega_{\chi_2}} \ltimes \on{SBim}_{\chi_2}^\circ.$$Therefore from Theorem \ref{t:soergequivtitSEMIDIRECT} we obtain a composite monoidal equivalence $\epsilon$
$$\cT_{\chi_1} \overset{\mathbb{V}_{\chi_1}}\simeq \Proj_{E, \Omega_{\chi_1}} \ltimes \on{SBim}_{\chi_1}^\circ \simeq \Proj_{E, \Omega_{\chi_2}} \ltimes \on{SBim}_{\chi_2}^\circ \overset{\mathbb{V}_{\chi_2}^{-1}} \simeq \cT_{\chi_2},$$Note that, by construction, $\epsilon$ exchanges the groupoids of objects $\tau(w)_{\chi_1}$ and $\tau(\alpha(w))_{\chi_2}$. We deduce in particular an equivalence 
$$\epsilon: \on{Ind}(\on{K}^b \cT_{\chi_1}) \simeq \on{Ind}(\on{K}^b \cT_{\chi_2})$$
which, via their induced actions on $\frf_{\wt{LG}_1} \simeq \frf_{\wt{LG}_2}$, exchanges the canonical (co)standard objects, cf. Corollary \ref{cor: standard characterization}. 

Moreover, Theorem \ref{t:renorm}, particularly the base change isomorphism \eqref{e:basechangechi}, shows that $\epsilon$ exchanges the Verdier quotients 
$$\xymatrix{\on{Ind}(\on{K}^b \cT_{\chi_1}) \ar[r]^\epsilon \ar[d]_{\iota^!} & \on{Ind}(\on{K}^b \cT_{\chi_2}) \ar[d]^{\iota^!} \\ D((\widetilde{I}_1, \chi_1\mon) \bsl \widetilde{LG}_1/(\widetilde{I}_1, \chi_1\mon)) \ar[r]^\epsilon & D((\widetilde{I}_2, \chi_2\mon) \bs \widetilde{LG}_2/(\widetilde{I}_2, \chi_2\mon)).}$$

It remains therefore only to verify the $t$-exactness of the obtained equivalence of affine Hecke categories. However, note that $\beta$ induces a canonical isomorphism $\beta: \mathscr{L}_{\chi_1} \simeq \mathscr{L}_{\chi_2}$, as both identify with with the set of closed subschemes of 
$$\frf_{\wt{LG}_1} \simeq \frf_{\wt{LG}_2}.$$
In particular, $\beta$ restricts to equivalences between the closed subschemes
$$\beta: \frf_{\wt{\chi}_1} \simeq \frf_{\beta(\wt{\chi}_1)}, \quad \quad \wt{\chi}_1 \in \mathscr{L}_{\chi_1}.$$
It follows from Corollary \ref{cor: standard characterization} that the equivalence $\cT_{\chi_1} \simeq \cT_{\chi_2}$ exchanges the corresponding compact objects, i.e.,. we have a canonical isomorphism
$$\epsilon( j(w)_{!, \wt{\chi}_1}) \simeq j(\alpha(w))_{!, \beta(\wt{\chi}_1)}.$$
As the $t$-structure may be characterized by maps out of these objects, cf. Corollary \ref{c:compgensfixedmon}, we are done. 
\end{proof}

\section{From monodromic to strict and parahoric equivalences}\label{s:mon vs strict}

So far, we have considered monodromic affine Hecke categories
\begin{equation*} 
D((\wt{I}, \chi\mon) \bs \tLG / (\wt{I}, \chi\mon)),
\end{equation*}
and showed how to obtain endoscopic equivalences among such categories in Theorem \ref{t:mainthmsec7}. In applications however, one instead finds strictly equivariant affine Hecke categories, or more generally, affine Hecke 2-categories equivariant against varying parahorics. 

For this reason, in this section we develop some formalism for passing from monodromic to strict affine Hecke categories, and unwind some consequences for endoscopy. 

\subsection{Equivariant categories}
\sss{} 

We retain the notations from \S \ref{sss: int coef}. Let $\chi$ be a residual character sheaf on $H$, and let $\wt\chi\in\mathscr{L}_\chi$ be a lifting of $\chi$ to an $E$-character sheaf of $H$. We recall that the category of $E$-linear functors from $\Modu_E$ to an $E$-linear category $\cC$ can be identified with $\cC$ itself. Therefore, $\wt\chi$ determines an $E$-linear functor $\io_{\wt\chi}:\Modu_E\to D(H)$, which admits a factorization
\begin{equation}\label{eq: inclusion of character sheaves}
\Modu_E\xrightarrow{\iota_{\wt \chi}^{\mono}} D_{\chi\mon}(H)\subset D_{\mono}(H)\subset D(H).
\end{equation}
As shown in Proposition \ref{p: monoidal unit}, we know that the last two inclusion functors admit $E$-linear continuous right adjoints. In fact, the argument there shows that $\iota_{\wt \chi}^{\mono}$ admits continuous $E$-linear monoidal right adjoint. Hence, the right adjoint $(\io_{\wt\chi})^R: D(H) \ra \Modu_E$ is continuous and monoidal. We denote the last $D(H)$-module category as $(\Modu_E)_{\wt \chi}$. With the equipped $D(H)$-module structures, the functors in \eqref{eq: inclusion of character sheaves} are $D(H)$-linear.

\sss{}  We let 
    \[
    D((H,\tilde\chi)\bs X):=\Hom_{D(H)}((\Modu_E)_{\wt\chi},D(X)),
    \]
    be the category of $(H,\wt \chi)$-equivariant sheaves on $X$.

    We have 
    \[
    \Hom_{D(H)}((\Modu_E)_{\wt\chi},D(H))\cong (\Modu_E)_{\wt\chi}
    \]
    as right $D(H)$-modules and for general $X$, we can identify $D((H,\tilde\chi)\bs X)$ with
    \begin{equation}\label{eq: monodromic to equivariant}
    (\Modu_E)_{\wt\chi}\otimes_{D(H)}D(X)\cong (\Modu_E)_{\wt\chi}\otimes_{D_{\chi\mon}(H)}D((H,\chi\mon)\bs X).
    \end{equation}
    Note that $D((H,\wt\chi)\bs X)$ is generally not a subcategory of $D(X)$.

\begin{rem} \label{r:avcons} As $(\Modu_E)_{\wt\chi}\to D_{\chi\mon}(H)$ is $D(H)$-linear and the latter is generated by the images of all such functors, for $\wt\chi\in\mathscr{L}_\chi$ we see 
that $D((H,\chi\mon)\bs X)\subset D(X)$ is generated by the essential images of the functors $D((H, \wt{\chi})\bs X) \rightarrow D(X)$. Let $$\on{Oblv}: D((H, \wt\chi) \bs X) \rightleftharpoons D((H, \chi\mon) \bs X): \on{Av}^{\wt\chi}$$ be the adjunction. The above discussion implies that $$\underset{\wt{\chi}}\Pi\hspace{.5mm} \on{Av}^{\wt{\chi}}: D((H, \chi\mon) \bs X) \rightarrow \underset{\wt{\chi}} \Pi \hspace{.5mm} D((H, \wt{\chi}) \bs X)$$
is conservative, where $\wt{\chi}$ runs over all $E$-lifts of $\chi$. 

\end{rem}

\begin{exmp}
    If $\chi$ is the trivial character sheaf, in which case we also denote it by $u$, then $D((H,u)\bs X)\cong D(H\bs X)$. In particular, we see that $D((H,u\mon)\bs X)\subset D(X)$ is generated by $*$-pullbacks of sheaves on $H\bs X$.
\end{exmp}

\subsection{Twisted categories of sheaves}

\sss{} The goal of this subsection is to recall some standard facts and definitions concerning sheaves on a group with a fixed equivariance along a central subgroup.

$$D((A, {\wt{\chi}}) \bs X) \simeq (\Modu_E)_{\wt{\chi}} \underset{D(A)} \otimes D(X),$$
cf. Equation \eqref{eq: monodromic to equivariant}.  

A particular case of the above which will be useful to us is the following. Suppose we have a central extension of algebraic groups 
$$1 \rightarrow A \rightarrow \wt{H} \rightarrow H \rightarrow 1.$$
By the commutativity of $A$, $D(A)$ carries a canonical symmetric monoidal structure. Moreover, by the centrality of $A$ in $\wt{H}$, the tautological monoidal functor $D(A) \rightarrow D(\wt{H})$ lifts to an $E_2$-monoidal functor to the Drinfeld center
$$D(A) \rightarrow Z(D(\wt{H})) := \Hom_{D(\wt{H})\on{-mod-}\hspace{-.5mm}D(\wt{H})}(D(\wt{H})).$$
Similarly, if we fix an $E$-linear character sheaf $\wt{\tau}$ on $A$, the tautological functor $D(A) \rightarrow (\Modu_E)_{\wt{\tau}}$ carries a datum of symmetric monoidality, so that in particular the tensor product 
$$D_{\wt{\tau}}(H) := D((A, {\wt{\tau}}) \bs \wt{H}) \simeq (\Modu_E)_{\wt{\tau}} \underset{D(A)} \otimes D(\wt{H})$$again is naturally a monoidal category. Explicitly, this is the usual convolution product of ${\wt{\tau}}$-equivariant sheaves on $\wt{H}$.

Let us write ${\wt{\tau}}^\vee$ for the dual character sheaf on $A$ to ${\wt{\tau}}$. Note that inversion on $\wt{H}$ induces an equivalence of monoidal categories 
$$D_{\wt{\tau}}(H) \simeq D_{{\wt{\tau}}^\vee}(H)^{\on{rev}},$$
where the superscript `rev' refers to opposite. i.e., reversed, monoidal structure. In particular, we use this to identify left modules for $D_{\wt{\tau}}(H)$ and right modules for $D_{{\wt{\tau}}^\vee}(H)$.

\sss{} The binary product
$$m_*: D_{\wt{\tau}}(H) \otimes D_{\wt{\tau}}(H) \rightarrow D_{\wt{\tau}}(H)$$again admits a left adjoint $m^*$, so that we can speak of $E$-linear character sheaves ${\wt{\chi}}$ in $D_{\wt{\tau}}(H)$. The (discrete) groupoid of such character sheaves is canonically equivalent to the (discrete) groupoid of character sheaves on $\wt{H}$ whose restriction to $A$ is ${\wt{\tau}}$. In what follows, by a mild abuse of notation we use the single letter ${\wt{\chi}}$ to denote the corresponding object of either groupoid. It follows that passing to the dual character sheaf yields a contravariant equivalence between character sheaves ${\wt{\chi}}$ in $D_{\wt{\tau}}(H)$ and character sheaves ${\wt{\chi}}^\vee$ in $D_{{\wt{\tau}}^\vee}(H)$.

\sss{} In particular, for any category $\cC$ equipped with a left action of $D_{\wt{\tau}}(H)$, and a character sheaf ${\wt{\chi}}$ in $D_{\wt{\tau}}(H)$, we may again form the strictly ${\wt{\chi}}$-equivariant category 
$$\cC_{(H, {\wt{\chi}})} := (\Modu_E)_{{\wt{\chi}}^\vee} \underset{D_{\wt{\tau}}(H)} \otimes \cC.$$
Similarly, given a category $\cD$ equipped with a right action of $D_{\wt{\tau}}(H)$, we set 
$$\cD_{(H, {\wt{\chi}})} := \cD \underset{D_{\wt{\tau}}(H)} \otimes (\Modu_E)_{{\wt{\chi}}}.$$

Note that $*$-tensoring with the dual character sheaf ${\wt{\chi}}^\vee$ gives a monoidal equivalence between $D_{\wt{\tau}}(H)$ and $D(H)$, exchanging ${\wt{\chi}}$ and the trivial character sheaf. In particular, it follows from the untwisted case that we have a canonical identification 
$$\cC_{(H, {\wt{\chi}})} \simeq \Hom_{D_{\wt{\tau}}(H)\mod}( (\Modu_E)_{\wt{\chi}}, \cC),$$
i.e., an identification of twisted invariants and coinvariants. 

\begin{exmp}
A basic example of relevance for us is the following. Namely, if $X$ is a variety or ind-scheme of ind-finite type equipped with an action of $\wt{H}$, then the category of equivariant sheaves $$D_{\wt{\tau}}(X) := D((A, {\wt{\tau}})\bs X)$$ carries a canonical action of $D_{\wt{\tau}}(H)$. Moreover, in this case we have a canonical equivalence
$$D_{\wt{\tau}}((H, {\wt{\chi}}) \bs X) := D_{\wt{\tau}}( X)_{(H, {\wt{\chi}})} \simeq D((\wt{H}, {\wt{\chi}}) \bs X).$$    
\end{exmp}

A second basic example for us is the following. 
\begin{exmp} Given ${\wt{\chi}}$ a character sheaf in $D_{\wt{\tau}}(H)$, we may consider the associated Hecke category 
$$\Hom_{D_{\wt{\tau}}(H)\mod}( (\Modu_E)_{\wt{\chi}}, (\Modu_E)_{\wt{\chi}}) \simeq D_{\wt{\tau}}((H, {\wt{\chi}}) \bs H / (H, {\wt{\chi}})).$$
Then $*$-tensoring by ${\wt{\chi}}^\vee$ yields a monoidal equivalence 
$$D_{\wt{\tau}}((H, {\wt{\chi}}) \bs H / (H, {\wt{\chi}})) \simeq D(H \bs H / H) \simeq D(\mathbb{B}H).$$
By contrast, note that when working with untwisted sheaves on $\wt{H}$ itself, we instead have 
$$\Hom_{D(\wt{H})\mod}((\Modu_E)_{\wt{\chi}}, (\Modu_E)_{\wt{\chi}}) \simeq D(\mathbb{B}\wt{H}).$$
\end{exmp}

\subsection{Monodromy operators}

\subsubsection{} Let $H$, ${\wt{\tau}}$, and $\wt{\chi}$ be as in the previous subsection. As in Section \ref{S: monodromic and equivariant categories}, we may consider the full subcategory generated by $\wt{\chi}$ under colimits within $D_{\wt{\tau}}(H)$, which we denote by 
$$\iota: D_{{\wt{\tau}}, \wt{\chi}\mon}(H) \rightleftharpoons D_{\wt{\tau}}(H): \iota^R.$$
For any category $\cC$ with a left action of $D_{\wt{\tau}}(H)$, we have the associated monodromic (co)invariants 
$$\cC_{(H, \wt{\chi}\mon)} := D_{{\wt{\tau}}^\vee, \wt{\chi}^\vee\mon}(H) \underset{D_{\wt{\tau}}(H)} \otimes \cC \simeq \Hom_{D_{\wt{\tau}}(H)\mod}(D_{{\wt{\tau}}, \wt{\chi}\mon}(H), \cC),$$
where the second equivalence may be deduced from its analogue in the untwisted case via tensoring with $\wt{\chi}^\vee$.

Similarly, for a right $D_{\wt{\tau}}(H)$-module $\cD$, we have the associated monodromic invariants 
$$\cD_{(H, \wt{\chi}\mon)} := \cD_{(H, \wt{\chi}^\vee\mon)}.$$

When $X$ is a variety equipped with a left action of $\wt{H}$, for ease of notation we will write 
$$D_{\wt{\tau}}((H, \wt{\chi}\mon) \bs X) := D_{\wt{\tau}}(X)_{(H, \wt{\chi}\mon)},$$
and similarly for $Y$ a variety equipped with a right action of $\wt{H}$
$$D_{\wt{\tau}}(Y / (H, \wt{\chi}\mon)) := D_{{\wt{\tau}}}(Y)_{(H, \wt{\chi}\mon)}.$$

\sss{} By definition, we have an adjunction 
$$\on{Oblv}: \cC_{(H, \wt{\chi})} \rightleftharpoons \cC_{(H, \wt{\chi})\mon}: \on{Av}_*,$$
Our goal in this subsection will be to explain two different ways to reconstruct either category from the other, i.e., to pass between strict and monodromic invariants. 

To this end, let us introduce some convenient notation for the associated monodromic Hecke category,
\begin{align*}
\sH := D_{\wt{\tau}}( (H, \wt{\chi}\mon) \bs H / (H, \wt{\chi}\mon)) &\simeq \Hom_{D_{\wt{\tau}}(H)\mod}( D_{{\wt{\tau}}, \wt{\chi}\mon}(H), D_{{\wt{\tau}}, \wt{\chi}\mon}(H)). \intertext{Explicitly, via the fully faithful forgetful functor from monodromic invariants to the ambient category $\cC$, in this case $\cC \simeq D_{\wt{\tau}}(H)$, $\sH$ is simply $D_{{\wt{\tau}}, \wt{\chi}\mon}(H)$, and the underlying non-unital monoidal category structure is simply convolution. Similarly, we may consider the strictly equivariant Hecke category}  \stMst := D_{\wt{\tau}}((H, \wt{\chi}) \bs H / (H, \wt{\chi})) &\simeq \Hom_{D_{\wt{\tau}}(H)\mod}((\Modu_E)_{\wt{\chi}}, (\Modu_E)_{\wt{\chi}}), \intertext{and finally their tautological bimodules} \Mst := D_{\wt{\tau}}((H, \wt{\chi}\mon) \bs H / (H, \wt \chi)) &\simeq \Hom_{D_{\wt{\tau}}(H)\mod}(D_{{\wt{\tau}}, \wt{\chi}\mon}(H), (\Modu_E)_{\wt{\chi}}), \\ \stM := D_{\wt{\tau}}((H,  \wt{\chi}) \bs H / (H, \wt{\chi}\mon)) &\simeq \Hom_{D_{\wt{\tau}}(H)\mod}(   (\Modu_E)_{\wt{\chi}}, D_{{\wt{\tau}}, \wt{\chi}\mon}(H)).
\end{align*}

The first presentation of how to pass back and forth between strict and monodromic invariants is the fact that the above bimodules provide inverse equivalences between the categories of modules for the strict and monodromic Hecke categories. That is, we have the following.

\begin{prop} \label{p:monmorita} Tensoring with the tautological bimodules yields inverse Morita equivalences
\begin{equation*} \stM \underset{\H}\otimes - :\H\mod \simeq \stMst\mod: \Mst \underset{\stMst} \otimes -.
\end{equation*}
\end{prop}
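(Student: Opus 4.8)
The plan is to exhibit $\stM$ and $\Mst$ as mutually inverse invertible bimodules, so that the asserted Morita equivalences follow formally. The essential point is that the monodromic Hecke category $\H \simeq D_{\wt{\tau},\wt{\chi}\mon}(H)$ and the strict Hecke category $\stMst \simeq D(\mathbb{B}H)$ are both rigid monoidal categories (in particular dualizable), and that $\stM$ and $\Mst$ are bimodules whose underlying left and right modules, namely $D_{\wt{\tau},\wt{\chi}\mon}(H)$ and $(\Modu_E)_{\wt{\chi}}$ with their natural one-sided actions, are the standard compact generators. The whole argument is an instance of the yoga of (co)localizations: $(\Modu_E)_{\wt{\chi}}$ sits inside $D_{\wt{\tau},\wt{\chi}\mon}(H)$ via the adjunction $\iota_{\wt{\chi}}^{\mono}$ of Proposition~\ref{p: monoidal unit} (after twisting by $\wt{\tau}$), with fully faithful left adjoint and monoidal, continuous right adjoint, and this colocalization is compatible with the $D_{\wt{\tau}}(H)$-module structures.

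\textbf{Key steps.} First I would record the two composite bimodule tensor products: the canonical maps
$$\stM \underset{\H}{\otimes} \Mst \longrightarrow \stMst \quad\text{and}\quad \Mst \underset{\stMst}{\otimes} \stM \longrightarrow \H,$$
obtained from the composition pairings of the relevant $\Hom$-categories (equivalently, from the counit and unit of the adjunction $\iota_{\wt{\chi}}^{\mono}$ in the module-category setting). Second, I would check that each of these two maps is an equivalence. For the first, $\stM \underset{\H}{\otimes} \Mst$ computes $\Hom_{D_{\wt{\tau}}(H)}((\Modu_E)_{\wt{\chi}}, D_{\wt{\tau},\wt{\chi}\mon}(H) \underset{\H}{\otimes} (\Modu_E)_{\wt{\chi}})$; but $D_{\wt{\tau},\wt{\chi}\mon}(H)$ is the unit of $\H$, so the inner tensor is just $(\Modu_E)_{\wt{\chi}}$ again, and we recover $\stMst$. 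For the second, $\Mst \underset{\stMst}{\otimes} \stM$ is $(\Modu_E)_{\wt{\chi}} \underset{\stMst}{\otimes} (\Modu_E)_{\wt{\chi}}$ viewed as a $D_{\wt{\tau},\wt{\chi}\mon}(H)$-bimodule, which is $D_{\wt{\tau},\wt{\chi}\mon}(H)$ since $(\Modu_E)_{\wt{\chi}}$ generates the monodromic category under the $D_{\wt{\tau}}(H)$-action and the colocalization is smashing (the right adjoint $\Av^{\mono}$ is $D_{\wt{\tau}}(H)$-linear, by the analogue of the remark after Proposition~\ref{p: monoidal unit}; in the twisted setting this follows by tensoring with $\wt{\tau}$). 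Finally, having shown both bimodules are invertible with the stated inverse, I would invoke the standard fact that tensoring with an invertible bimodule induces an equivalence on module categories, with inverse given by tensoring with the inverse bimodule; this immediately gives the displayed pair of inverse Morita equivalences.

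\textbf{Main obstacle.} The routine parts are the identifications of the composite tensor products once we know the colocalization $(\Modu_E)_{\wt{\chi}} \hookrightarrow D_{\wt{\tau},\wt{\chi}\mon}(H)$ is smashing and $D_{\wt{\tau}}(H)$-linear. The step requiring genuine care is verifying that $\Mst \underset{\stMst}{\otimes} \stM \to \H$ is an equivalence, i.e.\ that $(\Modu_E)_{\wt{\chi}}$ is a compact generator of $D_{\wt{\tau},\wt{\chi}\mon}(H)$ as a module over itself \emph{and} that the relevant Tor-type computation of the relative tensor product collapses to a single term. Concretely, one must see that $\Av^{\wt{\chi}} \circ \Oblv \simeq \id_{\stMst}$ (cleanness of the character-sheaf average, which in the torus case is Lemma~\ref{l:psi clean}-type cleanness but here is simpler, essentially because $\wt{\chi}$ is invertible so $\End_{D_{\wt{\tau}}(H)}(\wt{\chi})$ is concentrated in degree zero), while $\Oblv \circ \Av^{\wt{\chi}}$ is \emph{not} the identity — it is convolution with $\delta^{\wt{\chi}\mon}$ — so the monoidal bookkeeping is asymmetric and one must track which side the generator sits on. The cleanest route is to reduce everything to the untwisted case by $*$-tensoring with $\wt{\chi}^\vee$, where $\H$ becomes $D_{\mono}(H)$ near the unit, $\stMst$ becomes $D(\mathbb{B}H)$, and the claim is then the well-known monodromic-vs-equivariant Morita statement (cf.\ \cite[\S4]{Zh}); the twisted assertion follows by transport of structure since all functors in sight are $D_{\wt{\tau}}(H)$-linear.
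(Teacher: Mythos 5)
Your skeleton — reduce to showing the two composition functors $\stM\otimes_{\H}\Mst\to\stMst$ and $\Mst\otimes_{\stMst}\stM\to\H$ are equivalences, after twisting by $\wt{\chi}^\vee$ to make $\wt{\tau},\wt{\chi}$ trivial — is exactly the paper's, but the two verifications are where all the content sits, and your justifications for them do not go through. For the first composite you commute $\Hom_{D_{\wt{\tau}}(H)}((\Modu_E)_{\wt{\chi}},-)$ past $-\otimes_{\H}\Mst$. That commutation is not formal: it amounts to $(\Modu_E)_{\wt{\chi}}$ being dualizable as a $D_{\wt{\tau}}(H)$-module (equivalently an invariants-versus-coinvariants statement for the $H$-action), and no rigidity argument is available — $D_{\wt{\tau}}(H)$ under $*$-convolution is not rigid, and $\Hom$ out of this module is a totalization which need not commute with relative tensor products. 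This is precisely the point the paper has to work for: it computes $\stM\otimes_{\H}\Mst$ by the bar resolution, passes to the cobar limit using right adjointability and the fact that $f_!\simeq f_*$ up to shift on monodromic sheaves (Proposition \ref{lem: functoriality depth zero geom Langlands for tori}, i.e.\ the extension in \cite[Prop.\ 4.15]{Zh}), and identifies that limit with the Bernstein--Lunts descent diagram for $\pt\to\mathbb{B}H$. Similarly, for the second composite, "$(\Modu_E)_{\wt{\chi}}$ generates $\H$ and the colocalization is smashing" only addresses essential surjectivity; the serious part is fully faithfulness of $D(\pt)\otimes_{D(\mathbb{B}H)}D(\pt)\to D(H)$, which the paper imports from \cite[(4.68)]{Zh}. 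Note $\stMst\simeq D(\mathbb{B}H)\simeq C_*(H,E)\lmod$ is not rigid either (its unit is the augmentation module, which is not compact), so there is no formal route here.

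Two specific assertions in your "main obstacle" paragraph are moreover false for non-unipotent $H$, e.g.\ for $H$ a torus: $\End_{D_{\wt{\tau}}(H)}(\wt{\chi})\simeq C^*(H,E)$ is not concentrated in degree zero (invertibility of $\wt{\chi}$ for $\otimes$ does not help), and correspondingly $\Av^{\wt{\chi}}\circ\Oblv$ is not the identity on $\stMst$ — it tensors with (co)chains of $H$ rather than being "clean". Finally, your fallback of reducing to the untwisted case and citing "the well-known monodromic-vs-equivariant Morita statement" is citing the result rather than proving it; the reduction to the untwisted case is indeed the paper's first move, but the paper then actually proves the untwisted statement from the two inputs above. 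To repair your write-up you would need either to prove the dualizability/descent statements you are implicitly using, or to run the paper's argument: \cite[(4.68)]{Zh} plus a generation argument for $\Mst\otimes_{\stMst}\stM\to\H$, and the bar-to-cobar comparison identified with Bernstein--Lunts descent for $\stM\otimes_{\H}\Mst\to\stMst$.
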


\begin{proof} It is enough to show that the natural convolution, i.e., composition, functors
\begin{equation} \label{e:twoconvs} \quad \Mst \underset{\stMst} \otimes \stM  \rightarrow \H  \quad \text{and} \quad  \stM  \underset{\H} \otimes \Mst \rightarrow \stMst \end{equation}
are equivalences.  

Moreover, by tensoring with the inverse character sheaf, we may assume that ${\wt{\tau}}$ and $\wt \chi$ are both trivial.

Let us begin with the left-hand functor in \eqref{e:twoconvs}. Note that it may be rewritten more explicitly as the pullback 
\begin{equation} \label{e:convmon1}D(\on{pt}) \underset{D(\mathbb{B}H)} \otimes D(\on{pt}) \rightarrow D(\on{pt} \underset{\mathbb{B}H} \times \on{pt}) \simeq D(H),\end{equation}
The fully faithfulness of this functor follows from \cite[(4.68)]{Zh}.

To characterize the essential image of \eqref{e:convmon1}, recall the tensor product is
generated under colimits by the insertion 
$$\Modu_E \simeq \Modu_E \otimes \Modu_E \simeq D(\pt) \otimes D(\pt) \rightarrow D(\pt) \underset{D(\mathbb{B}H)} \otimes D(\pt),$$so it follows the essential image of \eqref{e:convmon1} is the full subcategory generated by the constant sheaf, i.e. $\sH$.

It remains to address the right-hand functor in \eqref{e:twoconvs}. To see this, let us compute the appearing tensor product via the bar resolution, i.e., as the colimit of the diagram
$$\begin{tikzcd}
   \cdots \arrow[r, shift left=3] \arrow[r, shift left = 1]  \arrow[r, shift right=3] \arrow[r, shift right = 1] &  \stM \otimes \sH \otimes \sH \otimes \Mst  \arrow[r]
\arrow[r, shift left=2]
\arrow[r, shift right=2] &  \stM \otimes \sH \otimes \Mst 
 \arrow[r, shift left]
\arrow[r, shift right]
&  \stM \otimes \Mst.
\end{tikzcd}
$$
As all the appearing functors are right adjointable, we may equivalently compute this as the limit of the diagram 
$$\begin{tikzcd}
 \stM \otimes \Mst \arrow[r, shift left]
\arrow[r, shift right] &  \stM \otimes \sH \otimes \Mst  \arrow[r]
\arrow[r, shift left=2.2]
\arrow[r, shift right=2.2] & \stM \otimes \sH \otimes \sH \otimes \Mst   \arrow[r, shift left=3] \arrow[r, shift left = 1]  \arrow[r, shift right=3] \arrow[r, shift right = 1]  & \cdots.
\end{tikzcd}
$$
Using Lemma \ref{lem: functoriality depth zero geom Langlands for tori}, or precisely its extension general $H$ \cite[Proposition 4.15]{Zh}, one can identify this limit diagram with the usual Bernstein--Lunts presentation of $\stMst$, i.e., the Cech diagram arising from the smooth cover $\pt \rightarrow \mathbb{B}H$, as desired. \end{proof}

Let us note the following double centralizer property, which is a formal consequence of the previous proposition. 
\begin{cor} \label{c:doublecent}Suppose the reductive quotient of $H$ is a torus. Let $\mathscr{A}$ be an algebra object in $\H$-bimodules, and 
$${}^{\on{s}}\mathscr{A}^{\on{s}} := \stM \underset{\H} \otimes \mathscr{A} \underset{\H} \otimes \Mst$$the corresponding algebra object in $\stMst$-bimodules. Then if we consider the tautological bimodule 
$$\mathscr{A} \circlearrowright \mathscr{A}^{\on{s}} := \mathscr{A} \underset{\H} \otimes \Mst \circlearrowleft {}^{\on{s}}\mathscr{A}^{\on{s}},$$we have monoidal equivalences
$${}^{\on{s}}\mathscr{A}^{\on{s}, \on{rev}} \simeq  \End_{\mathscr{A}\mod}(\mathscr{A}^{\on{s}}) \quad \quad \mathscr{A} \simeq \End_{\on{mod}\text{\textendash}\,{}^{\on{s}}\mathscr{A}^{\on{s}}}(\mathscr{A}^{\on{s}}).$$
\end{cor}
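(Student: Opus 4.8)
The statement is a purely formal consequence of Proposition \ref{p:monmorita}, which establishes the Morita equivalence
$$\stM \underset{\H} \otimes - : \H\mod \simeq \stMst\mod : \Mst \underset{\stMst} \otimes -.$$
The first step is to observe that this Morita equivalence is compatible with the monoidal structures in the appropriate sense, namely it carries algebra objects in $\H$-bimodules to algebra objects in $\stMst$-bimodules, via the recipe $\mathscr{A} \mapsto {}^{\on{s}}\mathscr{A}^{\on{s}} := \stM \underset{\H} \otimes \mathscr{A} \underset{\H} \otimes \Mst$. Indeed, an algebra object $\mathscr{A}$ in $\H$-bimodules is the same data as a monoidal category $\H$ together with a monoidal endofunctor-compatible structure; more concretely, since $\H\text{-bimod}$ and $\stMst\text{-bimod}$ are identified as monoidal categories under $- \mapsto \stM \underset{\H} \otimes - \underset{\H} \otimes \Mst$ using the inverse equivalences from Proposition \ref{p:monmorita} (here one uses the canonical identifications $\stM \underset{\H} \otimes \Mst \simeq \stMst$ and $\Mst \underset{\stMst} \otimes \stM \simeq \H$, which are the content of \eqref{e:twoconvs}), the passage $\mathscr{A} \mapsto {}^{\on{s}}\mathscr{A}^{\on{s}}$ is monoidal and hence sends algebras to algebras.

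The second step is to identify the relevant module categories. The tautological bimodule $\mathscr{A}^{\on{s}} := \mathscr{A} \underset{\H} \otimes \Mst$ carries a left $\mathscr{A}$-action and a right ${}^{\on{s}}\mathscr{A}^{\on{s}}$-action: the left action is evident, and the right action comes from the algebra structure on ${}^{\on{s}}\mathscr{A}^{\on{s}}$ via
$$\mathscr{A}^{\on{s}} \underset{\stMst} \otimes {}^{\on{s}}\mathscr{A}^{\on{s}} = \mathscr{A} \underset{\H} \otimes \Mst \underset{\stMst} \otimes \stM \underset{\H} \otimes \mathscr{A} \underset{\H} \otimes \Mst \simeq \mathscr{A} \underset{\H} \otimes \mathscr{A} \underset{\H} \otimes \Mst \rightarrow \mathscr{A} \underset{\H} \otimes \Mst = \mathscr{A}^{\on{s}},$$
where the middle isomorphism is \eqref{e:twoconvs} and the last map is multiplication in $\mathscr{A}$. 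By the Morita equivalence, $\mathscr{A}^{\on{s}}$ generates $\mathscr{A}\mod$ as a module category, and tensoring with $\mathscr{A}^{\on{s}}$ gives an equivalence $\mathscr{A}\mod \simeq {}^{\on{s}}\mathscr{A}^{\on{s}}\mod$ (here the hypothesis that the reductive quotient of $H$ is a torus enters only through its role in Proposition \ref{p:monmorita}, via Lemma \ref{lem: functoriality depth zero geom Langlands for tori} and its generalization; for general connected $H$ one appeals to \cite[Proposition 4.15]{Zh}).

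The third step is then the standard double centralizer computation: since $\mathscr{A}^{\on{s}}$ is a generator of $\mathscr{A}\mod$ which corresponds under the equivalence to the free rank-one module ${}^{\on{s}}\mathscr{A}^{\on{s}}$ over ${}^{\on{s}}\mathscr{A}^{\on{s}}$, we have
$$\End_{\mathscr{A}\mod}(\mathscr{A}^{\on{s}}) \simeq \End_{{}^{\on{s}}\mathscr{A}^{\on{s}}\mod}({}^{\on{s}}\mathscr{A}^{\on{s}}) \simeq {}^{\on{s}}\mathscr{A}^{\on{s}, \on{rev}},$$
the last identification being the usual one expressing the endomorphisms of the free rank-one left module as the algebra acting from the right (whence the reversal). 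Symmetrically, viewing $\mathscr{A}^{\on{s}}$ as a right ${}^{\on{s}}\mathscr{A}^{\on{s}}$-module generating $\on{mod}\text{-}{}^{\on{s}}\mathscr{A}^{\on{s}}$, and noting it corresponds to $\mathscr{A}$ as a right $\mathscr{A}$-module under the inverse Morita equivalence, we get $\End_{\on{mod}\text{-}{}^{\on{s}}\mathscr{A}^{\on{s}}}(\mathscr{A}^{\on{s}}) \simeq \mathscr{A}$.

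\textbf{Main obstacle.} The genuinely non-formal input is already packaged in Proposition \ref{p:monmorita}; once that is granted, everything here is a manipulation of module categories over algebra objects in a monoidal $\infty$-category, and the only care required is bookkeeping the $\infty$-categorical coherences (associativity of the various bar-resolution computations, and checking that the algebra structure on ${}^{\on{s}}\mathscr{A}^{\on{s}}$ is the one making $\mathscr{A}^{\on{s}}$ a bimodule). I expect the step requiring the most attention to be verifying that the monoidal identification $\H\text{-bimod} \simeq \stMst\text{-bimod}$ is genuinely monoidal at the $\infty$-categorical level — i.e., that $\mathscr{A} \mapsto {}^{\on{s}}\mathscr{A}^{\on{s}}$ respects multiplication coherently — rather than just on objects; this follows from the fact that $\Mst$ and $\stM$ are invertible bimodules with the compatibility isomorphisms \eqref{e:twoconvs}, but spelling out the coherence data is the one place where a little work is needed.
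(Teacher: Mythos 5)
Your proposal is correct and matches the paper's treatment: the paper gives no separate argument for this corollary, simply declaring it a formal consequence of Proposition \ref{p:monmorita}, and your write-up spells out exactly that formal Morita/double-centralizer argument (transport of the algebra $\mathscr{A}$ across the invertible bimodules via \eqref{e:twoconvs}, then the standard identification of endomorphisms of the generator $\mathscr{A}^{\on{s}}$ on each side). One small caveat: Proposition \ref{p:monmorita} is stated without the hypothesis that the reductive quotient of $H$ is a torus, so that hypothesis is better understood as ensuring the monodromic Hecke category is sufficiently rigid/dualizable for the functor-category manipulations (identifying $\End$ of module categories with relative tensor products) rather than as an input to the Morita equivalence itself.
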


\subsubsection{} It will be very useful for us to recall an alternative way to describe the mutually inverse functors in Proposition \ref{p:monmorita}. Namely, we will reformulate the operation of passing from monodromic invariants to strict  invariants as turning off certain monodromy operators, and similarly the operation of passing from strict invariants to monodromic invariants as turning off the action of the equivariant cohomology of a point. Let us now spell this out in more detail.

\subsubsection{} First, we explain how to pass from monodromic to strict invariants via turning off monodromy operators. 

So, let $\cC$ be a category acted on $\sH$, and write $\delta$ for the monoidal unit of the latter category. Recall that its endomorphisms \begin{equation*}\End(\delta):= \End_{\H}(\delta)\end{equation*} map to the center of $\cC$, i.e., one has a map of $E_2$-algebras
$\End(\delta) \rightarrow \on{HH}^*(\cC),$
so that we may also regard $\cC$ as a module category over $\End(\delta)\mod$. As a particular case, for the (right) module $\cC \simeq \stM$, we obtain an augmentation $$\End(\delta) \rightarrow \on{HH}^*(\stM) = \on{HH}^*(\Modu_E) \simeq E.$$

Note that the averaging functor $\on{Av}^{{\wt{\chi}}}$ can be written as the insertion:
$$\cC \simeq \stM \otimes \cC \rightarrow \stM \underset{\H} \otimes \cC,$$
and as such tautologically factors through an enhanced averaging functor 
\begin{equation} \label{e:lune}\on{Av}^{{\wt{\chi}}, enh}: \stM \underset{\End(\delta)\mod} \otimes \cC \rightarrow \stM \underset{\sH} \otimes \cC.\end{equation}
The desired reformulation of passing from monodromic to strict invariants, via base changing the monodromy operators along their augmentation, then reads as follows. 
\begin{prop}\label{p:avenh} The functor \eqref{e:lune} is an equivalence. 
\end{prop}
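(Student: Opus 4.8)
The strategy is to reduce the statement to the Morita equivalence already established in Proposition \ref{p:monmorita}, by checking that the enhanced averaging functor \eqref{e:lune} is the composite of the inverse Morita equivalence with a base-change functor that we recognize as an equivalence. First I would record the basic compatibility: by construction, the ordinary averaging functor $\on{Av}^{{\wt{\chi}}}$ factors as
$$\cC \simeq \stM \otimes \cC \to \stM \underset{\End(\delta)\mod}\otimes \cC \xra{\eqref{e:lune}} \stM \underset{\sH}\otimes \cC,$$
so the content of the proposition is precisely that the second arrow is an equivalence. Since the formation of both tensor products commutes with colimits in $\cC$ and the functor is $\stMst$-linear, it suffices to check the claim on a single generating $\sH$-module, and the natural choice is $\cC = \sH$ itself (more precisely its monodromic-invariants incarnation $D_{{\wt{\tau}}, \wt{\chi}\mon}(H)$). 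For $\cC = \sH$, the target of \eqref{e:lune} is $\stM \underset{\sH}\otimes \sH \simeq \stM$, and the source is $\stM \underset{\End(\delta)\mod}\otimes \sH$, so the assertion becomes the statement that the natural functor
$$\stM \underset{\End(\delta)\mod}\otimes \sH \to \stM$$
is an equivalence.

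Next I would prove this last statement by an explicit identification of $\sH$ as an algebra over $\End(\delta)$. As in the proof of Proposition \ref{p:monmorita}, we may tensor with the inverse character sheaf and thereby reduce to the case ${\wt{\tau}}$ and $\wt\chi$ trivial; then $\sH = D_{\on{mon}}(H)$, $\End(\delta) \simeq \Fun(\frf)$ by Lemma \ref{l:enddcm} (in the torus case; for general $H$ with torus reductive quotient one uses the extension \cite[Proposition 4.15]{Zh} together with the pro-unipotence of $I^+$-type radicals so that $D_{\on{mon}}(H) \simeq D_{\on{mon}}(H^{\on{red}})$), and $\stM \simeq (\Modu_E)$. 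Under the equivalence of Proposition \ref{prop: tame geometric Langlands for tori}, $\sH$ becomes $\indcoh(\frf)$ regarded as a module over $\Fun(\frf) = \End(\delta)$, and the augmentation $\End(\delta) \to E$ is the one cutting out the closed point $\chi \in \frf$. Thus the functor in question is
$$(\Modu_E) \underset{\Fun(\frf)\mod}\otimes \indcoh(\frf) \to (\Modu_E) \underset{\Fun(\frf)\mod}\otimes \Modu_{\Fun(\frf)} \simeq \Modu_E,$$
i.e. base change of $\indcoh(\frf)$ along $\on{Spec}E = \{\chi\} \hookrightarrow \frf$, and this is an equivalence because the $!$-fiber of $\indcoh(\frf)$ at its unique closed point is $\Modu_E$. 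Concretely, one uses that $\frf$ is a filtered colimit of $\fm$-adically complete finite $E$-schemes $Z_\alpha$ each with unique closed point $\chi$, so $\indcoh(\frf) = \colim \Coh(Z_\alpha)$, and $(\Modu_E)\otimes_{\Fun(Z_\alpha)}\Coh(Z_\alpha) \simeq \Modu_E$ compatibly with the transition maps; passing to the colimit gives the claim.

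The main obstacle I anticipate is the bookkeeping needed to make the reduction to $\cC = \sH$ rigorous at the $\infty$-categorical level: one must argue that \eqref{e:lune} is natural in $\cC$, that source and target both preserve colimits in $\cC$ (so that checking on generators suffices), and that $D_{{\wt\tau},\wt\chi\on{mon}}(H)$ generates $\sH\mod$ under colimits as an $\sH$-module — this last point is just the tautology that $\sH$ is its own generator. A secondary technical point is the identification of $\End(\delta)$ and its augmentation on $\stM$ with $\Fun(\frf)$ and evaluation at $\chi$ in the general (non-torus) case, which is handled by citing \cite[Proposition 4.15]{Zh} and the reduction to the reductive quotient; alternatively, one can observe directly that Proposition \ref{p:monmorita} already exhibits $\stM\underset{\sH}\otimes-$ as an equivalence $\sH\mod \to \stMst\mod$, so it is enough to see that the composite of \eqref{e:lune} with this equivalence's inverse is the base-change functor $\stM\underset{\End(\delta)\mod}\otimes-$, and then the whole proposition follows formally from the single computation of the previous paragraph.
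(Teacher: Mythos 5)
Your opening reduction is the same as the paper's: both arguments immediately pass to the universal case $\cC \simeq \sH$, so that the content becomes the assertion that $\stM \underset{\End(\delta)\mod}{\otimes} \sH \to \stM$ is an equivalence. The problem is the way you then compute this. In the setting of \eqref{e:lune}, $\sH = D_{\wt{\tau},\wt{\chi}\mon}(H)$ is generated under colimits by the \emph{single} $E$-linear character sheaf $\wt{\chi}$; it is not $D_{\mono}(H)$, and when $E$ is integral it is a proper subcategory even of $D_{\chi\mon}(H) \simeq \indcoh(\frf_\chi)$ (the latter is generated by all of $\mathscr{L}_\chi$). This misidentification is not harmless: for integral $E$ the statement you reduce to, namely $\Modu_E \underset{\Fun(\frf_\chi)\mod}{\otimes} \indcoh(\frf_\chi) \simeq \Modu_E$, is actually false, because $\indcoh(\frf_\chi)$ contains skyscrapers at the other lifts $\wt{\chi}' \in \mathscr{L}_\chi$, and base changing these along the augmentation at $\wt{\chi}$ produces nonzero torsion contributions (e.g.\ for $T=\GG_m$, $E=\Z_\ell$, compare the lifts $x=1$ and $x=1+\ell$). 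Relatedly, your ``concrete'' colimit verification tensors each term over $\Fun(Z_\alpha)$ rather than over $\End(\delta)\simeq\Fun(\frf_\chi)$; these do not agree termwise (already over a field, $\Modu_E \underset{\Fun(\frf)\mod}{\otimes}\indcoh(Z_\alpha)$ is modules over the derived ring $E\otimes^L_{\Fun(\frf)}\Fun(Z_\alpha)$, not $\Modu_E$), and this completion/nilpotence issue is exactly the point the paper's proof confronts: it identifies $\sH\simeq \on{C}^*(H,E)\mod$ with $\delta$ the augmentation module, and proves the key lemma that the compact objects of $\sH$ lie in the small stable subcategory generated by $\delta$, from which the base change $\stM\underset{\End(\delta)\mod}{\otimes}\sH\simeq\stM$ follows because the kernel of $\iota^!:\End(\delta)\mod\to\sH$ acts by zero on $\stM$.

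There is a second gap of generality: your argument runs through the Mellin transform (Proposition \ref{prop: tame geometric Langlands for tori}) and Lemma \ref{l:enddcm}, so it only covers $H$ with torus reductive quotient, as you yourself flag. But Proposition \ref{p:avenh} is stated for the general groups of Section \ref{s:mon vs strict} and is applied in Theorem \ref{thm: parahoric equivalence} with $H$ a reductive Levi quotient of a parahoric, where no Mellin-type description is available. The paper handles this by proving the generation lemma for $\on{C}^*(H,E)$ directly: reduce to $H$ reductive, do $\GG_m$ by hand, and then propagate via K\"unneth and the Cousin filtration for the Bruhat stratification. To repair your proof you would need both to work with the category generated by the single lift $\wt{\chi}$ (rather than $\indcoh(\frf_\chi)$) and to supply an argument of this kind for non-solvable $H$ and integral $E$.
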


\begin{proof} It is enough to address the universal case of $\cC \simeq \sH$. Consider the adjunctions of $\End(\delta)$-modules 
$$\on{Oblv}: \stM \rightleftharpoons \H: \on{Av}^{\wt{\chi}} \quad \text{and} \quad \iota_!: \H \rightleftharpoons \End(\delta)\mod: \iota^!.$$Here, to see the second adjunction, consider the endomorphism algebra
$$\on{C}^*(H,E) := \End_{\sH}(\wt{\chi}),$$
and the resulting tautological identification $$\sH \simeq \on{C}^*(H,E)\mod.$$ Recall that, under this, $\delta$ corresponds to the augmentation module $E$. We then have the following assertion. 

\begin{lem} Inside $\on{C}^*(H,E)\mod$, the compact objects lie in the small stable subcategory generated by the augmentation module. 
\end{lem}

\begin{rem} If $E$ is a field, the claim of the lemma, and its analogue for any variety $X$, follows from general results on modules over coconnective algebras $A$ with $\on{H}^0(A)$ a semisimple algebra over a field. Below, for simplicity we only address for $E$ integral the immediately relevant case of algebraic groups. 
\end{rem}
     
    \begin{proof} By tensoring with $\wt{\chi}^\vee$, it is enough to address the case of $\wt{\chi}$ being the trivial $E$-linear character sheaf. Let us first note two immediate reductions. First, it is enough to verify that $\on{C}^*(H,E)$ itself lies in the small stable category generated by $E$. Second, by replacing $H$ by its quotient by its unipotent radical, we may assume $H$ is a connected reductive group. 

      We next prove the assertion for $H = \mathbb{G}_m$. In this case, as in Proposition \ref{prop: tame geometric Langlands for tori}, we have $\on{C}^*(\mathbb{G}_m,E)\lmod$ canonically identifies with the full subcategory of $E[x, x^{-1}]\lmod$ generated by $E[x,x^{-1}]/(x-1)$. This equivalence by definition exchanges $\on{C}^*(\mathbb{G}_m, E)$ with $E[x,x^{-1}]/(x-1)$ itself, and the augmentation module with $$\omega := E(\!(x-1)\!)dx/E\tl{x-1}dx.$$Therefore, in this case, the claim follows from the exact sequence
$$0 \rightarrow E[x,x^{-1}]/(x-1) \rightarrow \omega \xrightarrow{x-1} \omega \rightarrow 0.$$
      
The analogous claim for an algebraic variety $X$ isomorphic to a product $\mathbb{A}^n \times \mathbb{G}_m^r$ follows straightforwardly by the K\"unneth formula. Similarly, the claim for a smooth algebraic variety $Y$ admitting a stratification $Y = \sqcup_\alpha X_\alpha$, where each $X_\alpha$ is as above, follows from using the associated Cousin filtration 
$$\on{gr} \on{C}^*(Y,E) \simeq \underset{\alpha} \oplus \hspace{.5mm} \on{C}^*(X_\alpha, E)[2 \dim X_\alpha - 2 \dim Y],$$
and the fact that by definition the action of $\on{C}^*(Y, E)$ on $\on{C}^*(X_\alpha, E)$ is the tautological one arising from restriction. In particular, by taking $Y$ to be $H$ equipped with the Bruhat stratification, we are done. \end{proof}

    Ind-completing this inclusion of compact objects of $\H$ into the small subcategory generated by $\delta$ yields $\iota_!$, and  the right adjoint $\iota^!$ inherits a datum of $\End(\delta)$-linearity by the rigidity of $\End(\delta)\mod$, which moreover, by $\iota^!(\delta) \simeq \delta$, must be the tautological functor $\End(\delta)\mod \rightarrow \H$.     

In particular, if we consider the composite adjunctions 
$$\stM \rightleftharpoons \sH \rightleftharpoons \End(\delta)\lmod$$
it follows in particular that the kernel of $\iota^!$ acts by zero on $\stM$. Therefore, we have 
$$\stM \underset{\End(\delta)\mod} \otimes \H \simeq \stM \underset{\End(\delta)\mod} \otimes \End(\delta)\mod \simeq \stM.$$
as desired. 
\end{proof}

\subsubsection{} Let us next explain how to pass from strict invariants to monodromic invariants via turning off the equivariant cohomology of a point. 

So, let $\cD$ be a category acted on by $\stMst$, and write $\sds$ for the monoidal unit of the latter category. We again have a map of of $E_2$-algebras 
\begin{equation*}\End(\sds) \rightarrow \on{HH}^*(\cD).\end{equation*}
As before, the forgetful functor, i.e., the insertion
$$\on{Oblv}: \cD \simeq \Mst \otimes \cD \rightarrow \Mst \underset{\stMst} \otimes \cD,$$
factors through an enhanced forgetful functor
\begin{equation} \label{e:oblvenh}\on{Oblv}^{enh}: \Mst \underset{\End(\usds)\mod} \otimes \cD \rightarrow \Mst \underset{\stMst} \otimes \cD,\end{equation}
and we have the following. 

\begin{prop} \label{p:oblvenh}The functor \eqref{e:oblvenh} is an  equivalence.    
\end{prop}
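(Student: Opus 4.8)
The plan is to prove Proposition \ref{p:oblvenh} by reducing to the universal case and then exploiting the ``Koszul dual'' structure already established for the monodromic side in Proposition \ref{p:avenh}, together with the Morita equivalences of Proposition \ref{p:monmorita}. As a first reduction, by tensoring with the inverse character sheaf $\wt\chi^\vee$ (which is a monoidal equivalence turning off both ${\wt{\tau}}$ and $\wt\chi$), we may assume ${\wt{\tau}}$ and $\wt\chi$ are trivial, so $\stMst \simeq D(\mathbb{B}H)$, $\stM \simeq D(\pt) = \Modu_E$ with its right $D(\mathbb{B}H)$-action, and $\End(\sds) \simeq \on{C}^*(\mathbb{B}H, E)$, the equivariant cohomology ring. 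Next, since all functors in sight commute with colimits and both sides of \eqref{e:oblvenh} are computed by tensoring a fixed bimodule $\Mst$ against $\cD$ over $\stMst$ (resp. over $\End(\usds)\mod$), it suffices to treat the universal case $\cD = \stMst = D(\mathbb{B}H)$; the general case follows by tensoring this universal equivalence with $\cD$ over $\stMst$.

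In the universal case the claim becomes the assertion that the natural map
$$\Mst \underset{\End(\usds)\mod} \otimes \stMst \rightarrow \Mst \underset{\stMst} \otimes \stMst \simeq \Mst$$
is an equivalence, i.e., that $\Mst \underset{\End(\usds)\mod} \otimes \stMst \simeq \Mst$. Here $\Mst$ is, as a right $\stMst$-module, generated under colimits by $\sds$ (equivalently by the image of $\delta$), and as an $\End(\usds)$-module it is the augmentation module $E$ over $\on{C}^*(\mathbb{B}H,E)$. The content is therefore a Koszul-type statement: the kernel of the base change functor $\End(\usds)\mod \rightarrow \stMst$ acts by zero on the $\End(\usds)$-module $\Mst$. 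Concretely, one wants that $\stMst = \on{C}^*(\mathbb{B}H,E)\mod$ is generated under colimits by $\sds \simeq E$ — the augmentation module — as a module over its own endomorphism ring; equivalently, the adjunction $\iota_!: \stMst \rightleftarrows \End(\usds)\mod : \iota^!$ has $\iota_!$ fully faithful. This is precisely the dual of the generation statement used in the proof of Proposition \ref{p:avenh} (the Lemma there, proved via the Bruhat stratification and K\"unneth for $\mathbb{G}_m$), now applied on the equivariant side: by the reduction to $H$ reductive and then to $\mathbb{G}_m^n$, using the Cousin filtration $\on{gr}\,\on{C}^*(\mathbb{B}H, E) \simeq \bigoplus_\alpha \on{C}^*(\mathbb{B}X_\alpha, E)[\cdots]$ along the Bruhat stratification, it reduces to the rank-one computation that $\on{C}^*(\mathbb{B}\mathbb{G}_m, E) \simeq E[x]$ and the augmentation module $E = E[x]/(x)$ generates $E[x]\mod$ under colimits — this is the evident resolution $0 \to E[x] \xrightarrow{x} E[x] \to E \to 0$. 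Granting this, one concludes as in Proposition \ref{p:avenh}:
$$\Mst \underset{\End(\usds)\mod} \otimes \stMst \simeq \Mst \underset{\End(\usds)\mod} \otimes \End(\usds)\mod \simeq \Mst,$$
and the enhanced forgetful functor \eqref{e:oblvenh} is identified with the identity, hence an equivalence.

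Alternatively — and this is probably the cleanest write-up — one can deduce Proposition \ref{p:oblvenh} formally from Proposition \ref{p:avenh} together with the Morita equivalence $\Mst \underset{\stMst} \otimes - : \stMst\mod \simeq \H\mod$ of Proposition \ref{p:monmorita}: applying the latter to the statement of Proposition \ref{p:avenh}, and using the compatibility of the augmentations $\End(\delta) \to E$ and $\End(\sds) \to E$ under the double-centralizer identifications (the endomorphisms of the two monoidal units act on the common bimodules $\Mst, \stM$ through the same rank-one quotient $E$, since $\Mst \simeq \stM^{\on{rev}}$ is the bimodule implementing Koszul duality), one transports the equivalence \eqref{e:lune} to the equivalence \eqref{e:oblvenh}. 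I expect the main obstacle to be the bookkeeping in this last compatibility: one must check that the $\End(\usds)$-linear structure on $\Mst$ arising from $\cD = \stMst$ and the $\End(\delta)$-linear structure on $\stM$ arising from $\cC = \sH$ are exchanged under Morita duality, so that the two enhanced functors really are mutually transported rather than merely analogous. Once that identification is set up carefully the proof is short; establishing it is where one has to be careful about which side the monodromy/equivariant operators act on and how $\iota_!$, $\iota^!$ interact with tensoring by the tautological bimodules.
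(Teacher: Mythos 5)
Your overall skeleton (untwist by $\wt\chi^\vee$, reduce to the universal case $\cD = \stMst$, then produce a colocalization $i_!:\stMst\rightleftharpoons \End(\sds)\mod:i^!$ whose kernel acts by zero on $\Mst$) is the same as the paper's, but the key step as you execute it is wrong. You identify $\stMst$ with $\on{C}^*(\mathbb{B}H,E)\mod$ and $\sds$ with the augmentation module over $\on{C}^*(\mathbb{B}H,E)$. Neither is correct: $\sds$ is \emph{not} compact in $\stMst \simeq D(\mathbb{B}H)$, so the category is not recovered as modules over $\End(\sds)$ — already for $H=\GG_m$, $D(\mathbb{B}\GG_m)$ is only the full subcategory of $E[x]\mod$ generated under colimits by the torsion module, not all of $E[x]\mod$; and viewed as a module over its own endomorphism ring, $\sds$ is the \emph{free} rank-one module, not the augmentation module. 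Your proposed identification would in fact make $\sds$ compact (precisely via the resolution $0\to E[x]\xrightarrow{x}E[x]\to E\to 0$ you quote), contradicting the very point that $\sds$ is non-compact. Moreover the generation statement you reduce to — that $E=E[x]/(x)$ generates $E[x]\mod$ under colimits — is false ($E[x,x^{-1}]$ lies in its right orthogonal), and the two-term resolution proves the opposite containment (that $E$ is built from $E[x]$), which is not what the argument needs.

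The correct input, and the one the paper uses, is dual to the Lemma in the proof of Proposition \ref{p:avenh} and is in fact easier: work with the compact generator $\pi_! E$ for $\pi:\pt\to\mathbb{B}H$. Barr--Beck--Lurie along the conservative $\pi^!$ identifies $\stMst\simeq \on{C}_*(H,E)\lmod$ with the \emph{connective} algebra of chains on $H$, under which $\sds$ corresponds to the augmentation module. Connectivity gives the unique $t$-structure with $t$-exact forgetful functor, and then every compact (perfect) module has a finite Postnikov filtration whose graded pieces are shifted sums of augmentation modules; hence the compacts of $\stMst$ lie in the small stable subcategory $\langle\sds\rangle$, which is exactly what is needed to build $i_!$ and run the argument of Proposition \ref{p:avenh} verbatim. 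Your fallback — deducing the statement formally from Proposition \ref{p:avenh} via the Morita equivalences of Proposition \ref{p:monmorita} — is not carried out: the compatibility you flag (that Morita duality exchanges the $\End(\delta)$- and $\End(\sds)$-linear structures and their augmentations on the tautological bimodules) is precisely the nontrivial content, since $\End(\delta)$ and $\End(\sds)$ are Koszul-dual in nature rather than isomorphic, and the paper does not take this route; it proves the two statements separately, with the chain-level connectivity argument replacing the harder generation lemma of Proposition \ref{p:avenh}.
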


\begin{proof} One argues as in the proof of Proposition \ref{p:avenh}, using instead the adjunctions
$$\on{Oblv}^L: \Mst \rightleftharpoons \stMst: \Oblv \quad \text{and} \quad i_!: \stMst \rightleftharpoons \End(\sds)\mod: i^!, $$where $\iota_!$ is again induced from the observation that $\sds$, while not compact, contains the compact objects of $\stMst$ in the small stable subcategory generated by it. Namely, recall the adjunction 
$$\pi_!: \Mst \simeq D(\pt) \rightleftharpoons D(\mathbb{B}H) \simeq \stMst:  \pi^!,$$
gives rise, via the conservativity of $\pi^!$ and Barr--Beck--Lurie, to an identification 
$$\stMst \simeq \pi_! \circ \pi^! (E)\lmod \simeq \on{C}_*(H, E)\lmod.$$
The claim therefore follows from noting that $\sds$ corresponds to the augmentation module $E$ for $\on{C}_*(H, E)$, and recalling that, thanks to the connectivity of $\on{C}_*(H,E)$, the category $\on{C}_*(H,E)\lmod$ carries a unique $t$-structure for which the forgetful functor to $\Modu_E$ is $t$-exact. 
\end{proof}

\begin{rem} In the case when $E$ is a field of characteristic zero, and no twist ${\wt{\chi}}$, Propositions \ref{p:avenh} and \ref{p:oblvenh} also appear e.g. in \cite[Theorem 5.2.6]{CD}, and presumably other places as well. 
\end{rem}

\subsection{Monodromic and strict Hecke categories} \label{ss: monodromic strict hecke}

\subsubsection{} Let us now apply the results of the previous subsection to Hecke categories, as in Section \ref{s:heckecat}. In particular, we resume the use of notation from there. 

Let $K$ be a subgroup of $LG$ containing $I$, and write $\wt{K} \subset \wt{LG}$ for the corresponding central extension. Consider the monoidal category $$D(I^+ \bs \wt{K} / I^+) \subset D(I^+ \bs \wt{LG} / I^+),$$and note that as before we have an $E_2$-monoidal functor 
$$D(\mathbb{G}_m^{\on{cen}}) \rightarrow Z(D(I^+ \bs \wt{K} / I^+)).$$
If we fix an $\fre$-linear character sheaf $\chi$, and denote by $\tau$ its restriction to $\mathbb{G}_m^{\on{cen}}$, the above functor induces on colocalizations
$$D_{\tau\on{-mon}}(\mathbb{G}_m^{\on{cen}}) \rightarrow Z(D((\wt{I}, \chi\mon)\bs \wt{K} / (\wt{I}, \chi\mon))).$$
In particular, if we fix an $E$-linear character sheaf $\wt{\chi} \in \mathscr{L}_\chi$, and denote by $\wt{\tau}$ its restriction to $\mathbb{G}_m^{\on{cen}}$, we similarly obtain the further colocalization 
$$D_{\wt{\tau}\on{-mon}}(\mathbb{G}_m^{\on{cen}}) \rightarrow Z(D((\wt{I}, \wt{\chi}\mon)\bs \wt{K} / (\wt{I}, \wt{\chi}\mon))).$$
Finally, we may obtain the monoidal category of strictly $\wt{\tau}$-equivariant sheaves along the central torus
\begin{equation} \label{e:stricequivbasechange}
D_{\wt{\tau}}((I, \wt{\chi}\mon) \bs K / (I, \wt{\chi}\mon)) := (\Modu_E)_{\wt{\tau}} \underset{D_{\wt{\tau}\mon}(\mathbb{G}_m^{\on{cen}})} \otimes D((\wt{I}, \wt{\chi}\mon)\bs \wt{K} / (\wt{I}, \wt{\chi}\mon)).\end{equation}While the above notation will be employed on occasion below, for ease of reading we additionally introduce the shorthand
$$ \label{e:basechangemon}\sM_{\wt{\tau}, \wt{\chi}\mon}(K) := D_{\wt{\tau}}((I, \wt{\chi}\mon) \bs K / (I, \wt{\chi}\mon)),
$$
and when $K = LG$, the further shorthand $\sM_{\wt{\tau}, \wt{\chi}\mon} := \sM_{\wt{\tau}, \wt{\chi}\mon}(LG)$.

These monoidal categories, for any $K$, are tautologically bimodules over their monoidal full subcategories supported over the neutral stratum, i.e., 
$$(\Modu_E)_{\wt{\tau}} \underset{D_{\wt{\tau}\on{-mon}}(\mathbb{G}_m^{\on{cen}})} \otimes D((\wt{I}, \wt{\chi}\mon)\bs \wt{I} / (\wt{I}, \wt{\chi}\mon)) \simeq D_{\wt{\tau}, \wt{\chi}\mon}(\AA).$$
In particular, we may form their left modules 
$$ \sM_{\wt{\tau}, \wt{\chi}\mon}^{\on{s}}(K) := \sM_{\wt{\tau}, \wt{\chi}\mon}(K) \underset{D_{\wt{\tau}, \wt{\chi}\mon}(\AA)} \otimes (\Modu_E)_{\wt{\chi}}$$
of sheaves which are strictly $\wt{\chi}$-equivariant on the right. 

We may then complete the passage to strictly equivariant sheaves on both sides, i.e., the category
$${}^{\on{s}}\sM_{\wt{\tau}, \wt{\chi}}^{\on{s}}(K) := (\Modu_E)_{\wt{\chi}} \underset{D_{\wt{\tau}, \wt{\chi}\mon}(\AA)} \otimes\sM_{\wt{\tau}, \wt{\chi}\mon}(K) \underset{D_{\wt{\tau}, \wt{\chi}\mon}(\AA)} \otimes (\Modu_E)_{\wt{\chi}}$$
as follows. 

\begin{lemma} \label{l:doubcent}One has a canonical equivalence of categories 
$$\End_{\sM_{\wt{\tau}, \wt{\chi}\mon}(K)\mod}(\sM_{\wt{\tau}, \wt{\chi}\mon}^{\on{s}}(K)) \simeq    {}^{\on{s}}\sM_{\wt{\tau}, \wt{\chi}}^{\on{s}}(K)$$and the double centralizer identity 
$$\End_{{}^{\on{s}}\sM_{\wt{\tau}, \wt{\chi}}^{\on{s}}(K)\mod}(\sM_{\wt{\tau}, \wt{\chi}\mon}^{\on{s}}(K)) \simeq \sM_{\wt{\tau}, \wt{\chi}\mon}(K).$$

\end{lemma}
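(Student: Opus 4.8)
The statement is the twisted (central-extension) version of Corollary \ref{c:doublecent}, so the natural plan is to reduce it to that corollary by an appropriate Morita-theoretic bookkeeping. First I would set up notation: write $H := \AA$, the reductive quotient of $I$, which is a torus, and let $\sH$ denote the monodromic Hecke category $\sM_{\wt{\tau}, \wt{\chi}\mon}(K)$ regarded as a bimodule over the ``neutral'' monoidal category $D_{\wt{\tau}, \wt{\chi}\mon}(\AA)$, which under the identifications of \S\ref{ss: monodromic strict hecke} is exactly the twisted monodromic Hecke category $D_{\wt{\tau}}((H, \wt{\chi}\mon)\bs H/(H, \wt{\chi}\mon))$ of \S\ref{S: monodromic and equivariant categories}. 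The category $\sM_{\wt{\tau}, \wt{\chi}\mon}^{\on{s}}(K)$ is then the tautological bimodule $\sH \otimes_{D_{\wt{\tau}, \wt{\chi}\mon}(\AA)} (\Modu_E)_{\wt{\chi}}$, i.e.\ exactly the object $\sA^{\on{s}}$ appearing in Corollary \ref{c:doublecent} with the role of $\sA$ played by $\sH$ and the role of $\H$ played by $D_{\wt{\tau}, \wt{\chi}\mon}(\AA)$.

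Second, I would verify that the hypotheses of Corollary \ref{c:doublecent} are met in this twisted setting. The corollary as stated is for the untwisted group $H$ with reductive quotient a torus; here we need its evident analogue where $D(H)$ is replaced by $D_{\wt{\tau}}(H)$, $\stMst$ by ${}^{\on{s}}\sM_{\wt{\tau}, \wt{\chi}}^{\on{s}}(K)$, and so on. That analogue follows from Proposition \ref{p:monmorita} (which holds verbatim after tensoring away the twist $\wt{\tau}$, as noted in its proof) together with the general double-centralizer formalism: $\sM_{\wt{\tau}, \wt{\chi}\mon}^{\on{s}}(K)$ is a bimodule between $\sM_{\wt{\tau}, \wt{\chi}\mon}(K)$ and its ``strictification'' obtained by tensoring on both sides with $(\Modu_E)_{\wt{\chi}}$ over the neutral block, and Proposition \ref{p:monmorita} guarantees that tensoring with the tautological bimodules $\stM$ and $\Mst$ furnishes inverse Morita equivalences between modules for the monodromic and strict neutral blocks, whence $\sM_{\wt{\tau}, \wt{\chi}\mon}^{\on{s}}(K)$ is a faithful bimodule. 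So the two desired equivalences are instances of the general fact that for a faithful bimodule arising this way, each side is recovered as the endomorphisms of the bimodule over the other.

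Concretely, for the first identity I would compute
\[
\End_{\sM_{\wt{\tau}, \wt{\chi}\mon}(K)\mod}(\sM_{\wt{\tau}, \wt{\chi}\mon}^{\on{s}}(K)) \simeq (\Modu_E)_{\wt{\chi}} \underset{D_{\wt{\tau}, \wt{\chi}\mon}(\AA)}\otimes \sM_{\wt{\tau}, \wt{\chi}\mon}(K) \underset{D_{\wt{\tau}, \wt{\chi}\mon}(\AA)}\otimes (\Modu_E)_{\wt{\chi}},
\]
using the identification of $\End$ of an induced module with the relative tensor product, which holds because $(\Modu_E)_{\wt{\chi}}$ is dualizable (indeed invertible) as a $D_{\wt{\tau}, \wt{\chi}\mon}(\AA)$-module; the right-hand side is by definition ${}^{\on{s}}\sM_{\wt{\tau}, \wt{\chi}}^{\on{s}}(K)$. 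For the second (double centralizer) identity, I would invoke the reconstruction statements Proposition \ref{p:avenh} and Proposition \ref{p:oblvenh}: these say that passing between monodromic and strict invariants is base change of the monodromy operators $\End(\delta)$ or of the equivariant cohomology $\End(\sds)$, which exhibits $\sM_{\wt{\tau}, \wt{\chi}\mon}(K)$ and ${}^{\on{s}}\sM_{\wt{\tau}, \wt{\chi}}^{\on{s}}(K)$ as mutual centralizers acting on the common bimodule $\sM_{\wt{\tau}, \wt{\chi}\mon}^{\on{s}}(K)$. Equivalently, one applies Corollary \ref{c:doublecent} with $\sA = \sH$ directly.

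\textbf{Main obstacle.} The only genuinely non-formal point is to make sure Corollary \ref{c:doublecent} (and Proposition \ref{p:monmorita}) really does apply with the neutral monoidal category taken to be the \emph{monodromic} torus category $D_{\wt{\tau}, \wt{\chi}\mon}(\AA)$ rather than the strict one --- i.e.\ that the rigidity/dualizability inputs used to pass $\End$ through relative tensor products are available on the monodromic side. This is where I would be most careful: one needs that $(\Modu_E)_{\wt{\chi}}$ is invertible over $D_{\wt{\tau}, \wt{\chi}\mon}(\AA)$ (via $*$-tensoring with $\wt{\chi}^\vee$, which reduces to the unit case), and that $\sM_{\wt{\tau}, \wt{\chi}\mon}^{\on{s}}(K)$ is a \emph{compact, faithful} generator on the relevant module category so that Barr--Beck--Lurie type reconstruction applies. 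All of these follow from the structural results already proved (Proposition \ref{prop: tame geometric Langlands for tori}, Lemma \ref{l:compsincofree}, Proposition \ref{p:monmorita}), but assembling them correctly is the substance of the argument; the rest is formal manipulation of relative tensor products over $\lincat_E$.
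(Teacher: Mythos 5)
Your proposal is correct and follows essentially the same route as the paper, whose entire proof is that the lemma is a particular case of Corollary \ref{c:doublecent} applied with $\mathscr{A} = \sM_{\wt{\tau}, \wt{\chi}\mon}(K)$ over the neutral block. The extra care you take about the twist is not actually needed, since the categories $\sH$, $\stMst$, $\Mst$, $\stM$ in Section \ref{ss: monodromic strict hecke} (and hence Proposition \ref{p:monmorita} and Corollary \ref{c:doublecent}) are already formulated in the $\wt{\tau}$-twisted setting, with the reduction to the untwisted case by $*$-tensoring with $\wt{\chi}^\vee$ built into their proofs.
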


Note that the lemma in particular equips ${}^{\on{s}}\sM_{\wt{\tau}, \wt{\chi}}^{\on{s}}(K)$
with a monoidal structure. Explicitly, the binary product is given by the usual convolution product of strictly equivariant (twisted) sheaves, cf. \cite{LY}.

\begin{proof} This is a particular case of Corollary \ref{c:doublecent}. \end{proof}

\sss{} We are ready to deduce, after introducing notation, the desired conclusion for endoscopy. 

Consider a pair of monodromic Hecke categories associated to possibly different groups 
$$\sM_{\chi_i} := D( (\wt{I}_i, \chi_i\mon) \bs \wt{LG}_i / (\wt{I}_i, \chi_i\mon)), \quad \quad i = 1, 2.$$
Here, as before, $\chi_i$ denote $\fre$-linear character sheaves, and let us denote by $\tau_i$ their restrictions to the central tori $$\mathbb{G}_m^{\on{cen}} \hookrightarrow \wt{LG}_i.$$To discuss specializing to strict monodromy, let us fix $E$-linear character sheaves $\wt{\chi}_i \in \mathscr{L}_{\chi_i}$, and denote by $\wt{\tau}_i$ their restrictions to the central tori.

\begin{prop} \label{p:pinky}\label{p:mon2strict}Suppose that one has a monoidal equivalence of monodromic affine Hecke categories  $$\iota: \sM_{\chi_1} \simeq \sM_{\chi_2}$$
satisfying the two conditions listed below. 
    
    \begin{enumerate}

        \item The equivalence $\iota$ exchanges, up to isomorphism, the objects $j(e)_{!, \wt{\chi}_i}$, $i = 1, 2$.

        \item There is a (unique) isomorphism $\gamma$ of algebras making the following diagram commute 
                    $$\xymatrix{ \End_{D_{\tau_1\mon}(\mathbb{G}_m)}( \delta^{\tau_1\mon}) \ar[r] \ar[d]_\gamma & \End_{\cM_{\chi_1}}(\delta^{\chi_1\mon}) \ar[d]^{\iota}  \\ \End_{D_{\tau_2\mon}(\mathbb{G}_m)}( \delta^{\tau_2\mon}) \ar[r] & \End_{\cM_{\chi_2}}(\delta^{\chi_2\mon}).}$$

    \end{enumerate} 
    Then $\iota$ induces a monoidal equivalence of strictly equivariant affine Hecke categories
	\begin{equation*}\ssMss_{\wt{\tau}_1, \wt{\chi}_1} \simeq \ssMss_{\wt{\tau}_2, \wt{\chi}_2}.\end{equation*} 
\end{prop}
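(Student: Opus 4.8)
The plan is to deduce the strictly equivariant equivalence from the monodromic one by ``turning off monodromy operators'' twice, once on each side, using Proposition~\ref{p:avenh} together with the double centralizer description of Lemma~\ref{l:doubcent}. First I would unwind the definitions: by \eqref{e:stricequivbasechange}, $\ssMss_{\wt\tau_i,\wt\chi_i}$ is obtained from $\sM_{\chi_i}$ in two stages, first base changing the central monodromy $D_{\wt\tau_i\mon}(\gmc)$ along its augmentation to $(\Modu_E)_{\wt\tau_i}$ to get $\sM_{\wt\tau_i,\wt\chi_i\mon}$ (really: first restrict from $\chi_i$-monodromic to $\wt\chi_i$-monodromic along the central torus, then specialize the central torus strictly), and then passing to strict equivariance on both sides along $D_{\wt\tau_i,\wt\chi_i\mon}(\wt\AA)$ via Lemma~\ref{l:doubcent}. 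So it suffices to show that $\iota$ carries each of these two reconstruction steps on the side of $\sM_{\chi_1}$ to the corresponding step on the side of $\sM_{\chi_2}$.

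For the first step, the key point is that the central monodromy action on $\sM_{\chi_i}$ is determined intrinsically: it is the action of $\End_{D_{\tau_i\mon}(\gmc)}(\delta^{\tau_i\mon})$ on $\End_{\sM_{\chi_i}}(\delta^{\chi_i\mon})$ appearing in hypothesis~(2). Since $\iota$ is monoidal, it identifies the monoidal units $\delta^{\chi_1\mon}$ and $\delta^{\chi_2\mon}$ and hence their endomorphism algebras; hypothesis~(2) then says this identification is compatible with the maps from the central monodromy algebras via the fixed isomorphism $\gamma$. Therefore $\iota$ is linear over $D_{\tau_1\mon}(\gmc)\xrightarrow{\ \gamma\ }D_{\tau_2\mon}(\gmc)$ in the appropriate sense, and base changing along the (matching) augmentations produces a monoidal equivalence $\sM_{\wt\tau_1,\wt\chi_1\mon}\simeq\sM_{\wt\tau_2,\wt\chi_2\mon}$; one should also check it still carries $j(e)_{!,\wt\chi_1}$ to $j(e)_{!,\wt\chi_2}$, which holds because these objects are the images of the compact generators of $(\Modu_E)_{\wt\tau_i}$-linearized units and are preserved by construction of the base change (hypothesis~(1)).

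For the second step I would use Lemma~\ref{l:doubcent}: the strict category ${}^{\on s}\sM^{\on s}_{\wt\tau_i,\wt\chi_i}$ is the endomorphism monoidal category of the bimodule $\sM^{\on s}_{\wt\tau_i,\wt\chi_i\mon}=\sM_{\wt\tau_i,\wt\chi_i\mon}\otimes_{D_{\wt\tau_i,\wt\chi_i\mon}(\wt\AA)}(\Modu_E)_{\wt\chi_i}$. An equivalence $\sM_{\wt\tau_1,\wt\chi_1\mon}\simeq\sM_{\wt\tau_2,\wt\chi_2\mon}$ of monoidal categories, once it is known to be compatible with the module categories $D_{\wt\tau_i,\wt\chi_i\mon}(\wt\AA)$ over the neutral stratum and with the objects $j(e)_{!,\wt\chi_i}$ (which under $\on{Av}^{\wt\chi_i}$ produce the strictly equivariant units), automatically carries $\sM^{\on s}_{\wt\tau_1,\wt\chi_1\mon}$ to $\sM^{\on s}_{\wt\tau_2,\wt\chi_2\mon}$ as bimodules, hence their endomorphism monoidal categories; this is exactly the content of Proposition~\ref{p:avenh}/\ref{p:oblvenh} applied to the module $\stM$, together with the double centralizer statement in Corollary~\ref{c:doublecent}. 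Concretely, I would first apply the argument on the right factor ($\wt\AA$ acting on the right) to pass from $\sM_{\wt\tau_i,\wt\chi_i\mon}$ to $\sM^{\on s}_{\wt\tau_i,\wt\chi_i\mon}$ as a right $\sM_{\wt\tau_i,\wt\chi_i\mon}$-module, and then invoke Lemma~\ref{l:doubcent} to recover the monoidal category ${}^{\on s}\sM^{\on s}_{\wt\tau_i,\wt\chi_i}$.

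The main obstacle I expect is verifying that the equivalence $\iota$ is genuinely linear over the \emph{entire} module category $D_{\tau_i\mon}(\gmc)$ — not merely that it matches the endomorphism algebras of the units — so that the relative tensor product defining the strict specialization is preserved. Hypothesis~(2) gives compatibility of the ring maps at the level of $\End(\delta)$, and monoidality of $\iota$ propagates this, but one must carefully promote this to an equivalence of $D_{\tau_1\mon}(\gmc)$-module categories (via $\gamma$); the cleanest route is to observe that $D_{\tau_i\mon}(\gmc)$ is rigid, so linearity over it is a property detected on the unit, and then to check that the augmentations $D_{\wt\tau_i\mon}(\gmc)\to(\Modu_E)_{\wt\tau_i}$ correspond to each other under $\gamma$, which follows since $\gamma$ is an isomorphism of the relevant (completed group) algebras and $\wt\tau_1,\wt\tau_2$ are matched by hypothesis~(1). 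Once that linearity is in hand, the remaining steps are formal manipulations with relative tensor products and the cited propositions.
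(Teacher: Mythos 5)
Your proposal follows essentially the same route as the paper's proof: specialize the central monodromy first (hypotheses (1)+(2) with Proposition \ref{p:avenh}) to get a monoidal equivalence $\sM_{\wt{\tau}_1,\wt{\chi}_1\mon}\simeq\sM_{\wt{\tau}_2,\wt{\chi}_2\mon}$, then match the one-sided strict modules $\sM^{\on{s}}_{\wt{\tau}_i,\wt{\chi}_i\mon}$ (hypothesis (1), read via Corollary \ref{c:compsincofree} as a matching of augmentation modules, plus Proposition \ref{p:avenh} again), and conclude by taking endomorphisms and invoking Lemma \ref{l:doubcent}.

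The one place your write-up would not go through as stated is the final paragraph: $D_{\tau_i\mon}(\gmc)$ is not rigid (its monoidal unit $\delta^{\tau_i\mon}$, the cofree monodromic local system, is not compact), and in any case ``linearity over it is a property detected on the unit'' is not a valid principle -- linearity is structure, not a property. But you do not need full $D_{\tau_i\mon}(\gmc)$-linearity of $\iota$: Proposition \ref{p:avenh}, which you already invoke for the second step, lets you compute the central specialization as a base change over $\End(\delta^{\tau_i\mon})\mod$, and the module structure over that (genuinely rigid) category is canonically induced by the monoidal structure of $\sM_{\chi_i}$ together with the algebra map of hypothesis (2); hypothesis (1) then guarantees that $\gamma$ matches the augmentations determined by $\wt{\tau}_1$ and $\wt{\tau}_2$. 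This is exactly how the paper's proof handles the point, so with that substitution your argument coincides with the paper's.
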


\begin{rem} \label{r:mon2str}The two listed conditions, which as we will see below are often easy to check in practice, are clearly necessary for the conclusion of the proposition to hold; the content of the proposition is that they are in fact sufficient.    

We should also remark that, if $E$ is a field, the augmentation module for the endomorphisms of $\End(\delta^{\chi_i\mon})$ is unique, so condition (1) is always satisfied. 

Finally, though we do not need it, let us mention that the above result holds, {\em mutatis mutandis}, after replacing the centrally extended groups $\wt{I} \subset \wt{LG}$ with any $\wt{H} \subset \wt{L}$, where $L$ is a placid group ind-scheme and $H \subset L$ is a connected compact open group subscheme whose prounipotent radical has finite codimension. 
\end{rem}

\begin{proof} Note that, thanks to Corollary \ref{c:compsincofree}, assumption (1) is equivalent to the assertion that $\iota$ exchanges the augmentation modules $$\wt{\chi}_i: \End_{\cM_{\chi_i}}(\delta^{\chi_i\mon}) \circlearrowright \Hom_{\cM_{\chi_i}}(j(e)_{!, \wt{\chi}_i}, \delta^{\chi_i\mon}) \simeq E.$$

By assumptions (1) and (2) and Proposition \ref{p:avenh}, the equivalence $\iota$ base changes to a monoidal equivalence
$$\iota': \sM_{\wt{\tau}_1, \wt{\chi}_1\mon} \simeq \sM_{\wt{\tau}_2, \wt{\chi}_2\mon},$$
cf. Equation \eqref{e:stricequivbasechange}. 

By Proposition \ref{p:avenh} and assumption (1), the monoidal equivalence $\iota'$ exchanges the left modules $\sM_{\wt{\tau}_i, \wt{\chi}_i\mon}^{\on{s}}$. Therefore, by considering their endomorphisms, and applying Lemma \ref{l:doubcent}, we are done. \end{proof}

\subsection{Parabolic Hecke categories}

\subsubsection{} Consider a standard parahoric, i.e., a connected group subscheme $K$ intermediate between $I$ and $LG$. In particular, passing to the central extensions, we have inclusions
$$\wt{I} \subset \wt{K} \subset \wt{LG}.$$Let us write $U$ for the pro-unipotent radical of $K$,  $L \simeq K/U$ for its Levi quotient, and $B_L := I/U$ for its standard Borel. Recalling that the central extension $\wt{LG}$ splits uniquely on $U$, let us write $\wt{L}$ for the corresponding central extension of $L$, and similarly for $\wt{B}_L$. 

Note that pullback yields an equivalence between character sheaves on $\wt{I}/I^+$ and $\wt{B}_L$. Let us suppose that the $\fre$-linear character sheaf $\chi$ on $\wt{B}_L$ extends (uniquely) to a character sheaf on $\wt{L}$, which we again denote by $\chi$.

For an $E$-lift $\wt{\chi}$ of $\chi$ on $K$,  consider as before the monoidal categories
\begin{align*}\cM_{I \bs K /I} &:=  D_{\wt{\tau}}((I, \wt{\chi}\mon) \bs K / (I, \wt{\chi}\mon)) \simeq D_{\wt{\tau}}( (B_L, \wt{\chi}\mon) \bs L / (B_L, \wt{\chi}\mon)).  \end{align*}
\begin{align*}\cM_{K \bs K/ K} &:= D_{\wt{\tau}}((L, \wt{\chi}\mon) \bs L /(L, \wt{\chi}\mon)).\end{align*}

\begin{lem}The tautological inclusion  \label{l:hitwithdelta}
	\begin{equation} \label{e:inky}\cM_{K \bs K / K} \hookrightarrow \cM_{I \bs K / I},\end{equation} is (non-unital) monoidal, with essential image a $\cM_{I \bs K / I}$-bimodule. Moreover the inclusion admits a right adjoint which is a map of $\cM_{I \bs K / I}$ bimodules. 
\end{lem}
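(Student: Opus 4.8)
The inclusion \eqref{e:inky} is the pullback functor along the projection $\pi: I^{+}\backslash \wt{K}/I^{+} \to U\backslash \wt{K}/U$; more precisely, after passing to the Levi quotient, it is $\pi^{!}$ for $\pi: \wt{B}_{L}^{+}\backslash \wt{L}/\wt{B}_{L}^{+} \to \wt{U}_{L}\backslash \wt{L}/\wt{U}_{L}$ where I write $\wt{B}_{L}^{+}$ for the preimage of $I^{+}$, i.e.\ $\wt{B}_{L}^{+} = \wt{U}_{L}\cdot(\text{pro-unipotent radical of }\wt{B}_{L})$. Since $U_{L}$ is unipotent and the map $\wt{B}_{L}^{+}\backslash \wt{L}/\wt{B}_{L}^{+} \to \wt{U}_{L}\backslash\wt{L}/\wt{U}_{L}$ is a composition of a torsor under a unipotent group (adding $\AA$-equivariance, trivialized against $\chi$) and an affine-space bundle, pullback is fully faithful by the usual contraction/homotopy-invariance argument (as recorded in \cite[\S4]{Zh}). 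Monoidality of $\pi^{!}$ follows from base change along the two convolution diagrams, exactly as in Example~\ref{ex: monoidal structure of sheaf on group}: both source and target are of the form $X\times_{Y}X$, and $\pi$ is induced by a map of the base points $\B\wt{B}_{L} \to \B\wt{L}$ (more precisely $\B\wt{B}_{L}^{+}$ versus $\B\wt{U}_{L}$), so $\pi^{!}$ is lax monoidal and in fact monoidal since $\pi$ is pro-smooth of relative dimension zero on the relevant pieces, using Proposition~\ref{lem: functoriality depth zero geom Langlands for tori} to see that $!$- and $*$-pushforwards along the unipotent/torus directions agree up to shift. That the essential image is a $\cM_{I\backslash K/I}$-sub-bimodule is then automatic: the image is the full subcategory of objects on which the residual left (resp.\ right) $\AA/(\text{torus})$-action, i.e.\ the $D_{\wt\tau,\wt\chi\mon}(\AA)$-action on each side, is via the augmentation to $(\Modu_{E})_{\wt\chi}$; concretely it is the image of the left module $\cM^{\mathrm s}$ tensored with $\cM_{K\backslash K/K}$ on the other side, and this is manifestly stable under further convolution by $\cM_{I\backslash K/I}$ on both sides.

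For the right adjoint, note $\pi^{!}$ has a continuous right adjoint $\pi_{*}$ because all categories are compactly generated and $\pi^{!}$ preserves compactness (the strata of $\wt{L}$ indexed by $\wfl\backslash \wt{W}_{\chi}/\wfl$ are pulled back from finitely many strata, and $\pi^{!}$ of a compact generator is compact). The key point to check is that $\pi_{*}$ is a map of $\cM_{I\backslash K/I}$-bimodules, i.e.\ intertwines the convolution actions on both sides. This is a projection-formula statement: for $\cF\in \cM_{I\backslash K/I}$ and $\cG\in \cM_{K\backslash K/K}$ viewed inside $\cM_{I\backslash K/I}$, one has $\cF\star \pi_{*}\pi^{!}(\cG) \simeq \pi_{*}(\pi^{!}\cF \star' \cG)$ where $\star'$ is convolution upstairs; but since the counit $\pi^{!}\pi_{*}\to \id$ and unit $\id\to\pi_{*}\pi^{!}$ are compatible with the module structure (as $\pi^{!}$ is monoidal), this follows formally once one knows $\pi_{*}$ commutes with the relevant colimits and base change, which again reduces to Proposition~\ref{lem: functoriality depth zero geom Langlands for tori} applied to the torus/unipotent fibers of $\pi$. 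The cleanest way to package this: $\pi^{!}$ realizes $\cM_{K\backslash K/K}$ as a colocalization of $\cM_{I\backslash K/I}$ as a bimodule over itself, so the right adjoint is automatically bilinear.

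The main obstacle is the second half, namely verifying that $\pi_{*}$ --- rather than just $\pi^{!}$ --- respects the bimodule structure, because $\pi_{*}$ is not obviously monoidal and one cannot simply invoke base change in the naive way: the convolution diagrams for $K$ and for $I$ do not form a strictly commutative square with $\pi$. The resolution is to identify $\cM_{K\backslash K/K} \hookrightarrow \cM_{I\backslash K/I}$ with the subcategory cut out by a collection of idempotents/monodromy conditions along the residual $\AA$-actions and to note that the inclusion of such a subcategory, being the image of a colocalization functor that is a bimodule map (because it is pullback along $\B\wt{B}_L \to \B\wt{L}$ at the level of the two-sided bar resolutions, hence manifestly compatible with the bimodule structure stratum by stratum), automatically has a bilinear right adjoint by adjointness of bimodule functors. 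Thus once the colocalization picture is set up carefully --- which is a routine elaboration of the arguments in \S\ref{ss: monodromic strict hecke} and Proposition~\ref{p:avenh} --- all the assertions follow. I would therefore organize the proof as: (i) identify \eqref{e:inky} as pullback and prove full faithfulness via contraction; (ii) prove monoidality via base change on the convolution diagrams, using Proposition~\ref{lem: functoriality depth zero geom Langlands for tori} for the pseudo-properness needed to commute $\pi^{!}$ past $\star$; (iii) exhibit the colocalization adjunction $\pi^{!}\dashv \pi_{*}$, and deduce bilinearity of $\pi_{*}$ from the bilinearity of the monoidal functor $\pi^{!}$ together with the general fact that the right adjoint of a bimodule-category functor is lax bilinear and, here, strictly bilinear because $\pi^{!}$ is a colocalization.
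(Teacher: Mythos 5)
Your plan rests on a misidentification of the functor \eqref{e:inky}. Both $\cM_{K \bs K / K}$ and $\cM_{I \bs K / I}$ are full subcategories of the same ambient category of (centrally twisted) sheaves on $\wt{L}$ — bi-$(L,\chi\mon)$ objects inside bi-$(B_L,\chi\mon)$ objects — and the "tautological inclusion" is literally the inclusion of a full subcategory, not a $!$-pullback along any map of quotient stacks. The map $\pi$ you write down does not exist as stated: your $\wt{B}_L^+$, defined as $U_L$ times the unipotent radical of $B_L$, is just $\wt{U}_L$, so your $\pi$ degenerates to the identity; and the picture "forgetting equivariance $=$ pullback along $\mathbb{B}B_L \to \mathbb{B}L$" is only valid for strict equivariance with trivial character, not for $\chi$-monodromic categories, which are colocalizations of $D_{\wt\tau}(L)$ defined by tensoring with $D_{\chi\mon}(-)$. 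Consequently your steps (i) and (ii) — full faithfulness by contraction, monoidality by base change along $\pi$ — do not prove what is needed (full faithfulness is in any case tautological here, and monoidality follows simply because the subcategory is closed under convolution). Your description of the essential image is also wrong: the image is \emph{not} cut out by asking the residual $\AA$-monodromy to act through the augmentation (that would be strict $\wt\chi$-equivariance); it consists of objects with monodromic equivariance along the additional unipotent directions of $L$. With this fixed, the only genuinely nontrivial point — that the image is stable under \emph{left} convolution by $\cM_{I\bs K/I}$, i.e.\ that left-$(B_L,\chi\mon)$ plus right-$(L,\chi\mon)$ forces bi-$(L,\chi\mon)$ — is exactly the step you wave away as "manifest."

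The paper's proof is short but organized around the intermediate bimodule $\cM_{K\bs K/I} = D_{\wt\tau}((L,\wt\chi\mon)\bs L/(B_L,\wt\chi\mon))$: the inclusion $\cM_{K\bs K/K}\to\cM_{K\bs K/I}$ is an equivalence of $\cM_{K\bs K/K}$-modules, and the inclusion $\cM_{K\bs K/I}\hookrightarrow\cM_{I\bs K/I}$ is a fully faithful embedding of right $\cM_{I\bs K/I}$-modules whose right adjoint is left monodromic averaging (an equivariant functor). All assertions — monoidality, the bimodule property of the image, and bilinearity of the right adjoint — drop out of this factorization. Your closing formal remark, that the right adjoint of a bimodule colocalization is automatically bilinear, is sound in spirit and close to the role monodromic averaging plays in the paper, but as written it is anchored to the faulty pullback picture; to salvage the argument you would need to replace $\pi^!$ by the genuine subcategory inclusion, identify the right adjoint concretely as $\Av^{(L,\chi)\mon}$ on each side, and supply the missing implication about left stability (e.g.\ via the paper's equivalence $\cM_{K\bs K/K}\simeq\cM_{K\bs K/I}$ and its mirror).
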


\begin{proof}Consider their natural bimodule 
	\begin{equation*}\cM_{K \bs K / I} := D_{\wt{\tau}}((L, \wt{\chi}\mon) \bs L / (B_L, \wt{\chi}\mon)).\end{equation*}
	On the one hand, the tautological inclusion
	\begin{equation*}\cM_{K \bs K / K} \rightarrow \cM_{K \bs K / I}\end{equation*}is an equivalence of $\cM_{K \bs K / K}$-modules. On the other hand, the inclusion
	\begin{equation*}\cM_{K \bs K / I} \hookrightarrow \cM_{I\bs K / I}\end{equation*}is a fully faithful embedding of right $\cM_{I \bs K / I}$-modules, which admits an equivariant right adjoint given by monodromic averaging. The claims of the lemma now follow. \end{proof}

In particular, we have the following. 

\begin{cor} \label{c:monoidempotent}For a category $\cC$ acted on by $\cM_{I \bs K / I}$, one has an adjunction 
	\begin{equation*}\cC_{(K, \wt{\chi}) \mon} := \cC \underset{\cM_{I \bs K / I}} \otimes \cM_{I \bs K / K} \rightleftharpoons \cC,\end{equation*}
	where the left adjoint is a fully faithful embedding, and the composition
	\begin{equation*}\cC \rightarrow \cC_{(K, \wt{\chi}\mon)} \rightarrow \cC\end{equation*}
	is given by convolution with the monoidal unit $\delta_K$ of $\cM_{K \bs K / K}.$
\end{cor}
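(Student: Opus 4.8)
The plan is to deduce Corollary~\ref{c:monoidempotent} formally from Lemma~\ref{l:hitwithdelta}, by a standard ``monoidal idempotent'' argument. First I would observe that Lemma~\ref{l:hitwithdelta} exhibits $\cM_{K \bs K / I}$ as a sub-bimodule of $\cM_{I \bs K / I}$ which is idempotent: indeed, the lemma says the inclusion $\cM_{K \bs K /K}\hookrightarrow \cM_{I\bs K/I}$ is non-unital monoidal with image a bimodule, and admits a bimodule right adjoint, so the composite endofunctor $\delta_K \star -$ of $\cM_{I\bs K/I}$ is an idempotent (co)monad in the appropriate sense. Concretely, the object $\delta_K$ (the monoidal unit of $\cM_{K\bs K/K}$, viewed inside $\cM_{I\bs K/I}$ via \eqref{e:inky}) satisfies $\delta_K \star \delta_K \simeq \delta_K$, and the right adjoint to \eqref{e:inky} is $\delta_K \star -$.

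Next I would recall the general categorical fact: if $\mathscr{A}$ is a monoidal (presentable, $E$-linear) category and $e\in\mathscr{A}$ is an idempotent algebra (equivalently, the localization/colocalization data of Lemma~\ref{l:hitwithdelta}), then for any $\mathscr{A}$-module $\cC$ the base change $\cC\otimes_\mathscr{A}(e\star\mathscr{A})$ is a colocalization of $\cC$: the unit-counit maps assemble into an adjunction $\cC\otimes_\mathscr{A}(e\star\mathscr{A})\rightleftharpoons \cC$ in which the left adjoint is fully faithful, with essential image the objects $c\in\cC$ for which $\delta_K\star c \to c$ is an isomorphism, and the round-trip composite $\cC\to\cC\otimes_\mathscr{A}(e\star\mathscr{A})\to\cC$ is precisely $\delta_K\star -$. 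Here one takes $\mathscr{A}=\cM_{I\bs K/I}$, $e\star\mathscr{A}=\cM_{I\bs K/K}$, so $\cC_{(K,\wt\chi)\mon}=\cC\otimes_{\cM_{I\bs K/I}}\cM_{I\bs K/K}$ by definition. The fully faithfulness of the left adjoint follows because $\cM_{I\bs K/K}$ is a colocalization of $\cM_{I\bs K/I}$ as a right module (the content of Lemma~\ref{l:hitwithdelta}), and base change along a module category preserves this; the identification of the round-trip with $\delta_K\star-$ is immediate from unwinding the bar-complex description of the relative tensor product, using that the comonad $\delta_K\star-$ on $\cM_{I\bs K/I}$ transports to the comonad $\delta_K\star-$ on $\cC$ via the module action.

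I would spell out the adjunction explicitly: the inclusion $\iota: \cC_{(K,\wt\chi)\mon}\hookrightarrow \cC$ is induced by tensoring the fully faithful $\cM_{I\bs K/K}\hookrightarrow\cM_{I\bs K/I}$ of right $\cM_{I\bs K/I}$-modules with $\cC$ over $\cM_{I\bs K/I}$, and its right adjoint by tensoring the (bimodule) right adjoint $\cM_{I\bs K/I}\to\cM_{I\bs K/K}$ with $\cC$; continuity of the right adjoint and the fact that it is a map of $\cC$-modules (when $\cC$ itself is monoidal and the action is by a monoidal functor) are inherited from the corresponding statements in Lemma~\ref{l:hitwithdelta} by functoriality of $-\otimes_{\cM_{I\bs K/I}}\cC$. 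The claim that the composite $\cC\to\cC_{(K,\wt\chi)\mon}\to\cC$ is $\delta_K\star-$ then reduces, by the universal property of the relative tensor product, to the identity $\cM_{I\bs K/I}\to\cM_{I\bs K/K}\to\cM_{I\bs K/I}$ being $\delta_K\star-$, which is exactly the last sentence of Lemma~\ref{l:hitwithdelta}.

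The only genuine point requiring care --- and hence the main (mild) obstacle --- is verifying that the relative tensor product $\cC\otimes_{\cM_{I\bs K/I}}\cM_{I\bs K/K}$ does compute the colocalization rather than something larger, i.e. that the adjunction ``survives'' base change. This is where one uses that $\cM_{I\bs K/K}\hookrightarrow \cM_{I\bs K/I}$ is a colocalization of module categories in a way that is preserved under tensoring: because the right adjoint is $\cM_{I\bs K/I}$-linear (Lemma~\ref{l:hitwithdelta}) and all functors in sight are continuous, the Beck--Chevalley/base-change formalism in $\lincat_E$ (limits and colimits exist, tensor commutes with colimits) applies verbatim, exactly as in the proof of Corollary~\ref{c:compsincofree} and the renormalization arguments of Theorem~\ref{t:renorm}. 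So the proof is essentially a citation of Lemma~\ref{l:hitwithdelta} plus the standard idempotent-bimodule yoga; no new computation is needed.
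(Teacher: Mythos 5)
Your proposal is correct and is exactly the route the paper intends: the corollary is stated as an immediate consequence of Lemma~\ref{l:hitwithdelta} (with no further proof given), obtained by tensoring the bimodule colocalization $\cM_{K \bs K/K} \hookrightarrow \cM_{I\bs K/I}$ with $\cC$ over $\cM_{I\bs K/I}$, using continuity and $\cM_{I\bs K/I}$-linearity of the right adjoint so that the adjunction, the fully faithfulness of the left adjoint, and the identification of the round-trip with $\delta_K\star-$ all survive base change. Your write-up just makes this standard idempotent/base-change yoga explicit (modulo the harmless left/right bookkeeping of writing $e\star\mathscr{A}$ for $\cM_{I\bs K/K}\simeq \cM_{I\bs K/I}\star\delta_K$).
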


Finally, as notation, consider the left $\sM_{I \bs K / I}$-module  
$$\sM_{ I \bs K / K}^{\on{s}} := D_{\wt{\tau}}( (B_L, \wt{\chi}\mon) \bs L / (L, \wt{\chi})) \simeq \Modu_E.$$

\subsubsection{} Let us now deduce some consequences for endoscopy for Hecke 2-categories. We first introduce some notation and basic definitions.  \label{sss:biggerco}

Consider a set $\mathscr{I}$ of standard parahorics
\begin{equation*}I \subset K^\alpha \subset LG, \quad \quad \text{for } \alpha \in \mathscr{I}.\end{equation*}
equipped with character sheaves $\wt{\chi}$ on their centrally extended Levi quotients $\wt{L}^\alpha$ as above. For ease of notation below, let us assume, by possibly adjoining one element to $\mathscr{I}$, that $K^{\alpha_\circ} = I$ for some $\alpha_\circ \in \mathscr{I}$.
\subsubsection{} For any pair $\alpha, \beta$ in $\mathscr{I}$ consider the associated category
\begin{align*} \cM_{K^\alpha \bs LG / K^\beta} &:=  \cM_{K^\alpha \bs K^\alpha / I} \underset{\cM_{I \bs K^\alpha / I}} \otimes \cM_{I \bs LG / I} \underset{\cM_{I \bs K^\beta / I}} \otimes \cM_{I \bs K^\beta / K^\beta}. \end{align*}
More explicitly, this may be identified with $D_{\wt{\tau}}((K^\alpha, \wt{\chi} \mon) \bs {LG} / (K^\beta, \wt{\chi}\mon)),$ i.e., strictly $\wt{\tau}$-equivariant sheaves on $\wt{LG}$ which are moreover monodromically (twisted) equivariant with respect to $K^\alpha$ and $K^\beta$ on the left and right, respectively.\footnote{Strictly speaking, in the formalism used in the present paper, we recall that we define the appearing monodromic categories, for group schemes of infinite type, via considering their Levi quotients, and their natural action on $U^\alpha \bs \wt{LG}/ U^\beta$.} 

Similarly, we have the strictly equivariant category, 
 \begin{align*} \ssMss_{K^\alpha \bs LG / K^\beta} &:= {}^{\on{s}}\cM_{K^\alpha \bs K^\alpha / I} \underset{\cM_{I \bs K^\alpha / I}} \otimes \cM_{I \bs LG / I} \underset{\cM_{I \bs K^\beta / I}} \otimes \cM_{I \bs K^\beta / K^\beta}^{\on{s}},\end{align*}
which can more explicitly be written as $D_{\wt{\tau}}((K^\alpha, \wt{\chi}) \bs LG / (K^\beta, \wt{\chi})).$ 

These are the underlying categories of Hecke 2-categories associated to $\mathscr{I}$. To see this, we  consider in addition the one-sided equivariant categories $$\cM_{K^\alpha \bs LG / K^\beta}^{\on{s}} := \cM_{K^\alpha \bs K^\alpha / I} \underset{\cM_{I \bs K^\alpha / I}} \otimes \cM_{I \bs LG / I} \underset{\cM_{I \bs K^\beta / I}} \otimes \cM_{I \bs K^\beta / K^\beta}^{\on{s}}.$$

\begin{lem} \label{l:algbr}Consider the natural action of $\cM_{\wt{\chi}} \simeq \sM_{I \bs LG / I}$ on \begin{equation*}\underset{\alpha} \oplus \hspace{.5mm} \cM_{I \bs LG / K^\alpha}.\end{equation*} 
	Then one has a canonical equivalence
	\begin{equation*}\End_{\cM_{\wt{\chi}}}(\underset{\alpha} \oplus  \hspace{.5mm} \cM_{I \bs LG / K^\alpha}) \simeq \underset{\alpha, \beta} \oplus  \hspace{.5mm} \cM_{K^\alpha \bs LG / K^\beta}. \end{equation*}
	Similarly, one has a canonical equivalence
	\begin{equation*}\End_{\cM_{\wt{\chi}}}(\underset{\alpha} \oplus  \hspace{.5mm} \cM_{I \bs LG / K^\alpha}^{\on{s}}) \simeq \underset{\alpha, \beta} \oplus  \hspace{.5mm} \ssMss_{K^\alpha \bs LG / K^\beta}. \end{equation*}
\end{lem}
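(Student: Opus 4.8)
The statement is a standard ``$2$-categorical Morita'' or ``idempotent completion'' assertion: a module category over $\cM_{\wt\chi}$ built out of parahoric one-sided equivariant categories has endomorphism category which reassembles into the two-sided parahoric Hecke categories. The plan is to deduce it formally from the results already established, principally Lemma \ref{l:hitwithdelta} and Corollary \ref{c:monoidempotent}, together with the basic bilinearity of the internal Hom in $\cM_{\wt\chi}\mod$. First I would recall that for any monoidal category $\sM$ and right $\sM$-modules $\cP_\alpha$, one has $\End_{\sM}(\oplus_\alpha \cP_\alpha)\simeq \oplus_{\alpha,\beta}\Hom_{\sM}(\cP_\alpha,\cP_\beta)$, so the problem reduces to identifying, for each pair $(\alpha,\beta)$,
\[
\Hom_{\cM_{\wt\chi}}(\cM_{I\bs LG/K^\alpha},\ \cM_{I\bs LG/K^\beta}) \simeq \cM_{K^\alpha\bs LG/K^\beta},
\]
and similarly with the superscript ${}^{\on{s}}$ in the second (strict) version.

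For the identification of the individual Hom categories, the key point is that $\cM_{I\bs LG/K^\alpha}$ is, by its very definition in \S\ref{sss:biggerco}, obtained from the full $\cM_{\wt\chi}=\sM_{I\bs LG/I}$-bimodule $\sM_{I\bs LG/I}$ by tensoring on the right over $\sM_{I\bs K^\alpha/I}$ with $\cM_{I\bs K^\alpha/K^\alpha}$; and by Corollary \ref{c:monoidempotent} the latter tensoring is an idempotent operation, namely convolution with the monoidal unit $\delta_{K^\alpha}$ of $\cM_{K^\alpha\bs K^\alpha/K^\alpha}$, which admits a bilinear right adjoint by Lemma \ref{l:hitwithdelta}. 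Thus $\cM_{I\bs LG/K^\alpha}$ is a retract of $\cM_{\wt\chi}$ as a right module, cut out by the idempotent $-\star\delta_{K^\alpha}$. I would then invoke the standard fact that for a retract $e\cC$ of a module $\cC$ over $\sM$ via an $\sM$-linear idempotent $e$ with $\sM$-linear adjoints, one has $\Hom_{\sM}(e\cC, f\cD)\simeq f\circ(-)\circ e$ applied to $\Hom_{\sM}(\cC,\cD)$; specializing to $\cC=\cD=\cM_{\wt\chi}$, where $\Hom_{\cM_{\wt\chi}}(\cM_{\wt\chi},\cM_{\wt\chi})\simeq \cM_{\wt\chi}$ acting by convolution, this yields $\delta_{K^\beta}\star \cM_{\wt\chi}\star\delta_{K^\alpha}$, which unwinds precisely to the two-sided monodromic parahoric category $\cM_{K^\alpha\bs LG/K^\beta}$ as defined. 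The strictly equivariant version is handled identically, replacing $\cM_{I\bs K^\alpha/K^\alpha}$ by $\cM_{I\bs K^\alpha/K^\alpha}^{\on{s}}$ on one side; here one uses Lemma \ref{l:doubcent} (the double centralizer identity) and Proposition \ref{p:avenh} to see that the relevant strict-equivariance functor is again realized by tensoring with a bilinear-adjointable idempotent bimodule, so the same retract argument applies, producing $\ssMss_{K^\alpha\bs LG/K^\beta}$.

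I expect the main obstacle to be purely bookkeeping: carefully matching the iterated relative tensor products in the definition of $\cM_{K^\alpha\bs LG/K^\beta}$ and $\ssMss_{K^\alpha\bs LG/K^\beta}$ (which are written as three-fold tensor products over $\cM_{I\bs K^\alpha/I}$ and $\cM_{I\bs K^\beta/I}$) with the two-sided convolution $\delta_{K^\beta}\star\cM_{\wt\chi}\star\delta_{K^\alpha}$ coming out of the internal-Hom computation, and checking that all the adjunctions involved are genuinely $\cM_{\wt\chi}$-bilinear so that the retract/idempotent manipulations are legitimate at the level of $\lincat_E$ rather than merely on homotopy categories. All the needed bilinearity statements are supplied by Lemma \ref{l:hitwithdelta} and the discussion preceding it, and the conservativity/Barr--Beck inputs used in Propositions \ref{p:avenh} and \ref{p:oblvenh} cover the strict case; so once the identifications of the underlying module categories as retracts are in place, the endomorphism computation is formal. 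Finally I would remark that the asserted equivalences are automatically monoidal when $\alpha=\beta$ and compatible with composition, since the internal-Hom functor $\End_{\cM_{\wt\chi}}(-)$ carries a canonical (convolution) monoidal structure and the retract description respects it.
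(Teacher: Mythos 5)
Your treatment of the monodromic case is correct and is, at bottom, the same argument as the paper's: both reduce to computing $\Hom_{\cM_{\wt\chi}}(\cM_{I\bs LG/K^\alpha},\cM_{I\bs LG/K^\beta})$ and both rest on Lemma \ref{l:hitwithdelta} and Corollary \ref{c:monoidempotent}. The only difference is packaging: the paper plugs in the defining presentation $\cM_{I\bs LG/K^\alpha}\simeq \cM_{I\bs LG/I}\otimes_{\cM_{I\bs K^\alpha/I}}\cM_{I\bs K^\alpha/K^\alpha}$ and applies tensor--Hom adjunction over $\cM_{I\bs K^\alpha/I}$, then uses the colocalization to identify $\Hom_{\cM_{I\bs K^\alpha/I}}(\cM_{I\bs K^\alpha/K^\alpha},\cM_{I\bs LG/K^\beta})$ with $\cM_{K^\alpha\bs LG/K^\beta}$; you instead realize each one-sided category as the retract of the regular module cut out by $-\star\delta_{K^\alpha}$ and compute Homs between retracts. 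These are equivalent formal maneuvers. (One bookkeeping slip: with the usual identification of $\End_{\cM_{\wt\chi}}(\cM_{\wt\chi})$ with right convolution, the Hom category comes out as $\delta_{K^\alpha}\star\cM_{\wt\chi}\star\delta_{K^\beta}$, not $\delta_{K^\beta}\star\cM_{\wt\chi}\star\delta_{K^\alpha}$; harmless, but worth fixing.)

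The one place where your mechanism is not right as stated is the strict case. There $\cM^{\on{s}}_{I\bs K^\alpha/K^\alpha}\simeq\Modu_E$ is \emph{not} a retract of $\cM_{I\bs K^\alpha/I}$ cut out by an idempotent: passing from monodromic to strict equivariance is not a colocalization onto a full subcategory but a base change along the augmentation of the monodromy operators (Proposition \ref{p:avenh}), dually a quotient, so "the same retract argument applies" does not literally go through. What is actually needed for the last step is that $\Hom_{\cM_{I\bs K^\alpha/I}}(\cM^{\on{s}}_{I\bs K^\alpha/K^\alpha},-)$ agrees with the coinvariants ${}^{\on{s}}\cM_{K^\alpha\bs K^\alpha/I}\otimes_{\cM_{I\bs K^\alpha/I}}(-)$, i.e. the strict invariants-equal-coinvariants/Morita statement, which is exactly what Proposition \ref{p:monmorita}, Proposition \ref{p:avenh} and Lemma \ref{l:doubcent} supply; since you cite precisely these results, the gap is one of phrasing rather than of substance (and the paper itself only says the strict case "may be argued similarly"), but you should replace the idempotent-retract language there by the Morita/double-centralizer argument.
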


\begin{proof} We unwind that 
    \begin{align*} & \hspace{5mm} \Hom_{\cM_{\wt{\chi}}}(\underset{\alpha} \oplus  \hspace{.5mm} \cM_{I \bs LG / K^\alpha}, \underset{\alpha} \oplus  \hspace{.5mm} \cM_{I \bs LG / K^\alpha}) \\ & \simeq \underset{\alpha, \beta} \oplus \Hom_{\cM_{\wt{\chi}}}( \cM_{I \bs LG / K^\alpha}, \cM_{I \bs LG / K^\beta}) \\ &   \simeq \underset{\alpha, \beta} \oplus \Hom_{\cM_{\wt{\chi}}}( \cM_{I \bs LG / I} \underset{ \cM_{I \bs K^\alpha / I}} \otimes \cM_{I \bs K^\alpha / K^\alpha}  , \cM_{I \bs LG / K^\beta}) \\ & \simeq \underset{\alpha, \beta} \oplus \Hom_{\cM_{I \bs K^\alpha / I}}( \cM_{I \bs K^\alpha / K^\alpha}, \cM_{I \bs LG / K^\beta}) \\ & \simeq \underset{\alpha, \beta} \oplus  \hspace{.5mm}\cM_{K^\alpha \bs LG / K^\beta}, \end{align*}
	as desired. The case of strict equivariance may be argued similarly.\end{proof}

\subsubsection{} Let us now explicitly see how to apply the previous results to endoscopy for affine Hecke categories. 

So, suppose that we have an equivalence of monodromic affine Hecke categories
\begin{equation*} \epsilon: D( (\wt{I}_1, \chi_1\mon) \bs \wt{LG}_1 / (\wt{I}_1, \chi_1\mon)) \simeq D( (\wt{I}_2, \chi_2\mon) \bs \wt{LG}_2 / (\wt{I}_2, \chi_2\mon)).\end{equation*}  
as in Theorem \ref{t:mainthmsec7}, and fix compatible $E$-lifts $$ \wt{\chi}_1 \in \mathscr{L}_{\chi_1} \simeq \mathscr{L}_{\chi_2} \ni \wt{\chi}_2,$$and denote their restrictions to the central $\mathbb{G}_m$'s by $\wt{\tau}_1$ and $\wt{\tau}_2$.

Let $\mathscr{I}$ index the set of simple reflections in the integral Weyl group $\tilW_{\wt{\chi}_1} \simeq \tilW_{\wt{\chi}_2}$ which correspond to simple reflections in both ambient affine Weyl groups \begin{equation*} \tilW_{\wt{LG}_1} \hookleftarrow \tilW_{\wt{\chi}_1} \simeq \tilW_{\wt{\chi}_2} \hookrightarrow \tilW_{\wt{LG}_2}.\end{equation*}For $\alpha \in \mathscr{I}$ let us denote the corresponding standard parahoric subgroups by  $$K^{\alpha}_i  \hookrightarrow LG_i, \quad \quad 1 \leqslant i \leqslant 2.$$
\begin{thm} \label{thm: parahoric equivalence}Suppose that the equivalence $\epsilon$ satisfies the assumptions of Proposition \ref{p:mon2strict} above. Then $\epsilon$ induces an equivalence of monodromic Hecke 2-categories
\begin{equation*}
    \underset{\alpha, \beta} \oplus \hspace{.5mm} D_{\wt{\tau}_1}((K^\alpha_1, \wt{\chi}_1\mon) \bs \wt{LG}_1 / (K^\beta_1, \wt{\chi}_1\mon)) \simeq \underset{\alpha, \beta} \oplus \hspace{.5mm} D_{\wt{\tau}_2}((K^\alpha_2, \wt{\chi}_2\mon) \bs \wt{LG}_2 / (K^\beta_2, \wt{\chi}_2\mon)),
\end{equation*}
and of strictly equivariant Hecke 2-categories
\begin{equation*}
 \underset{\alpha, \beta} \oplus \hspace{.5mm} D_{\wt{\tau}_1}((K^\alpha_1, \wt{\chi}_1) \bs \wt{LG}_1 / (K^\beta_1, \wt{\chi}_1)) \simeq \underset{\alpha, \beta} \oplus \hspace{.5mm}  D_{\wt{\tau}_2}((K^\alpha_2, \wt{\chi}_2) \bs \wt{LG}_2 / (K^\beta_2, \wt{\chi}_2)).
\end{equation*}
\end{thm}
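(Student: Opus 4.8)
\textbf{Proof plan for Theorem \ref{thm: parahoric equivalence}.}

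The strategy is to bootstrap the parahoric statements from the Iwahori-level equivalence $\epsilon$ of Theorem \ref{t:mainthmsec7} by a purely formal ``two-categorical algebra'' argument, using the $\End$-description of Hecke $2$-categories from Lemma \ref{l:algbr}. The first step is to observe that, under the hypotheses of Proposition \ref{p:mon2strict}, the equivalence $\epsilon$ is compatible with the operation of passing from Iwahori-monodromic to parahoric-monodromic invariants. Concretely: for $\alpha \in \mathscr{I}$ the parahoric $K^\alpha_i$ corresponds to a simple reflection $s_\alpha$ lying in $\tilW_{\wt\chi_1}^\circ$ which is simple in both ambient affine Weyl groups, so Theorem \ref{t:mainthmsec7} (``exchanges tilting objects with tilting objects,'' and in particular exchanges $\tau(s_\alpha)_{\chi_1}$ with $\tau(s_{\alpha})_{\chi_2}$) together with Corollary \ref{c:monoidempotent} shows that $\epsilon$ carries the monoidal idempotent $\delta_{K^\alpha_1}$ (the unit of $\cM_{K^\alpha_1 \bs K^\alpha_1 / K^\alpha_1}$, realized as an object of $\cM_{\chi_1}$) to $\delta_{K^\alpha_2}$, up to canonical isomorphism. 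Hence $\epsilon$ identifies the full subcategories $\cM_{I \bs K^\alpha_1 / K^\alpha_1}\hookrightarrow \cM_{\chi_1}$ with $\cM_{I \bs K^\alpha_2 / K^\alpha_2}\hookrightarrow \cM_{\chi_2}$ as bimodules, for each $\alpha$.

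Second, I would use this to identify the generating module categories. Since $\cM_{I \bs LG_i / K^\alpha_i} \simeq \cM_{\chi_i} \otimes_{\cM_{I \bs K^\alpha_i/I}} \cM_{I\bs K^\alpha_i/K^\alpha_i}$, and $\epsilon$ is a monoidal equivalence matching the relevant subcategories and idempotents, $\epsilon$ induces an equivalence of $\cM_{\chi_1}$-modules
\[
\underset{\alpha}\oplus\ \cM_{I \bs LG_1 / K^\alpha_1} \ \simeq\ \underset{\alpha}\oplus\ \cM_{I \bs LG_2 / K^\alpha_2},
\]
intertwining the $\cM_{\chi_1}$- and $\cM_{\chi_2}$-actions along $\epsilon$. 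Taking endomorphisms of both sides and invoking Lemma \ref{l:algbr} then yields the desired equivalence of monodromic Hecke $2$-categories $\oplus_{\alpha,\beta}\cM_{K^\alpha_1 \bs LG_1/K^\beta_1} \simeq \oplus_{\alpha,\beta}\cM_{K^\alpha_2\bs LG_2/K^\beta_2}$, monoidally with respect to the horizontal composition of $2$-morphisms. Here I would be careful to check that the equivalence $\epsilon$ is genuinely one of $\cM_{\chi_1}$-modules (not merely an abstract equivalence of categories), which follows since it is monoidal; this is what makes the $\End$-computation functorial.

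Third, for the strictly equivariant statement I would repeat the argument after base change along the central torus and then along the finite-dimensional Levi direction. Proposition \ref{p:mon2strict} already provides, from $\epsilon$ plus its two listed compatibilities, a monoidal equivalence $\ssMss_{\wt\tau_1,\wt\chi_1}\simeq \ssMss_{\wt\tau_2,\wt\chi_2}$; the point is to promote it to the $2$-categorical level. Using Proposition \ref{p:avenh} (turning off monodromy operators) and Proposition \ref{p:oblvenh} (turning off equivariant cohomology of a point), together with Lemma \ref{l:doubcent} and Lemma \ref{l:algbr} in its strict form, one runs the same $\End$-bootstrap with $\cM_{I\bs LG_i/K^\alpha_i}$ replaced by $\cM^{\on s}_{I\bs LG_i/K^\alpha_i}$ and $\cM_{I\bs K^\alpha_i/K^\alpha_i}$ replaced by $\ssMss_{K^\alpha_i\bs K^\alpha_i/I}$. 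The main obstacle I anticipate is precisely the bookkeeping of these iterated base changes: one must verify that $\epsilon$ respects not only the monoidal unit idempotents $\delta_{K^\alpha}$ but also the strictification functors on both sides simultaneously and compatibly across all $\alpha,\beta$, i.e., that the two module structures (monodromic $K^\alpha$ on the left, strict $\wt\chi$ on the right, with the central $\wt\tau$ specialization) are matched coherently. This coherence is what Proposition \ref{p:mon2strict}'s hypotheses (1)--(2) are designed to guarantee at the Iwahori level, and the work is to propagate it through the parahoric $\otimes$-constructions; everything else is formal manipulation of relative tensor products of module categories over monoidal $\infty$-categories.
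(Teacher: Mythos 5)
Your overall skeleton agrees with the paper's: reduce, via Lemma \ref{l:algbr}, to matching the left modules $\cM_{I\bs LG_i/K^\alpha_i}$, which (by Lemma \ref{l:hitwithdelta} / Corollary \ref{c:monoidempotent}) amounts to showing that $\epsilon$ exchanges the parahoric idempotents $\delta_{K^\alpha_1}$ and $\delta_{K^\alpha_2}$, and then handle strict equivariance by the machinery of Section \ref{s:mon vs strict}. The gap is precisely at the idempotent-matching step, which is the real content of the proof. Your justification --- that Theorem \ref{t:mainthmsec7} exchanges tilting sheaves, in particular $\tau(s_\alpha)_{\chi_1}$ with $\tau(s_\alpha)_{\chi_2}$, and that this ``together with Corollary \ref{c:monoidempotent}'' forces $\epsilon(\delta_{K^\alpha_1})\simeq\delta_{K^\alpha_2}$ --- does not work. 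The object $\delta_{K^\alpha}$ is \emph{not} a tilting sheaf and is not determined by $\tau(s_\alpha)$ in any way you have made precise: it is the non-compact unit of $\cM_{K^\alpha\bs K^\alpha/K^\alpha}$ sitting inside $\cM_{I\bs K^\alpha/I}$, and convolution with it is an idempotent colocalization ($\delta_{K^\alpha}\star\delta_{K^\alpha}\simeq\delta_{K^\alpha}$), whereas $\tau(s_\alpha)\star\tau(s_\alpha)$ is not isomorphic to $\tau(s_\alpha)$; so the two objects cannot coincide, and Corollary \ref{c:monoidempotent} says nothing about how $\epsilon$ interacts with $\delta_{K^\alpha}$. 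Without an intrinsic, $\epsilon$-invariant characterization of $\delta_{K^\alpha}$ (or of the subcategory $\cM_{I\bs K^\alpha/K^\alpha}\subset\cM_{\chi}$), the identification of the modules $\cM_{I\bs LG_i/K^\alpha_i}$, and hence the application of Lemma \ref{l:algbr}, does not go through.

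The paper supplies exactly this missing characterization using the extra properties of $\epsilon$ recorded in Theorem \ref{t:mainthmsec7}: by $t$-exactness and the matching of (co)standard objects, $\epsilon$ exchanges the images $I_i$ of the canonical maps $j(w^\alpha_{\circ,i})_{!,\wt{\chi}_i}\to j(w^\alpha_{\circ,i})_{*,\wt{\chi}_i}$ for the longest element $w^\alpha_{\circ,i}$ of the finite parabolic attached to $K^\alpha_i$; this image is the character sheaf $\wt{\chi}_i$ on $K^\alpha_i$ up to shift, a compact object, and $\delta^\alpha_i$ is then recovered functorially as $\iota^!$ of the monoidal unit, where $\iota$ is the inclusion of the subcategory generated by $I_i$. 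This yields $\epsilon(\delta^\alpha_1)\simeq\delta^\alpha_2$ canonically. The same ingredient is what drives the strictly equivariant statement: $\epsilon$ then exchanges the augmentation modules $\End(\delta^\alpha_i)\circlearrowright\Hom(I_i,\delta^\alpha_i)$, so Proposition \ref{p:avenh} applies to match the one-sided strict modules $\cM^{\on{s}}_{I\bs LG_i/K^\alpha_i}$, and Lemma \ref{l:algbr} finishes. In your write-up this coherence is exactly the point you flag as ``the main obstacle'' and leave unresolved; it is not a bookkeeping issue but requires the clean object $I_i$ (or some substitute) to anchor both the monodromic and the strict identifications.
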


\begin{proof}We begin by addressing the equivalence of bi-monodromic Hecke categories. Let us again denote by $\epsilon$ the equivalence 
$$D_{\wt{\tau}_1}((I, \wt{\chi}_1\mon) \bs \wt{LG}_1 / (I, \wt{\chi}_1\mon)) \simeq  D_{\wt{\tau}_2}((I, \wt{\chi}_2\mon) \bs \wt{LG}_2 / (I, \wt{\chi}_2\mon)) $$
induced by Proposition \ref{p:pinky}, and note that it remains a $t$-exact equivalence, which exchanges the corresponding (co)standard objects as in Theorem \ref{t:mainthmsec7}. 

By Lemma \ref{l:algbr}, it is enough to argue that, for each fixed $\alpha$, $\epsilon$ exchanges up to non-canonical equivalence the left modules 
\begin{equation} \label{e:leftmods}D_{\wt{\tau}_i}((I, \wt{\chi}_i\mon) \bs \wt{LG}_i / (I, \wt{\chi}_i\mon)) \circlearrowright D_{\wt{\tau}_i}((I, \wt{\chi}_i\mon) \bs \wt{LG}_i / (K^\alpha, \wt{\chi}_i\mon)).\end{equation}
We will in fact supply a canonical equivalence, as follows. By Lemma \ref{l:hitwithdelta}, the left modules  
$$D_{\wt{\tau}_i}((I, \wt{\chi}_i\mon) \bs \wt{LG}_i / (K^\alpha_i, \wt{\chi}_i\mon))$$
are canonically submodules of $D_{\wt{\tau}_i}((I, \wt{\chi}_i\mon) \bs \wt{LG}_i / (I, \wt{\chi}_i\mon))$, obtained by right convolution with the idempotents $\delta_i^\alpha$ given by the monoidal units of $$D_{\wt{\tau}_i}((K^{\alpha}_i, \wt{\chi}_i\mon) \bs K^{\alpha}_i / (K^{\alpha}_i, \wt{\chi}_i\mon)).$$

It is therefore enough to check that the equivalence $\epsilon$ exchanges, up to isomorphism, the objects $\delta_i^\alpha$. To see this, it is enough to obtain an expression for $\delta_i^\alpha$ in terms of objects we know are exchanged under $\epsilon$, which we do as follows.

Let us write $w_{\circ, i}^\alpha$ for the longest element of the parabolic subgroup $W_i^\alpha \subset \wt{W}_{\wt{LG}_i}$ corresponding to $K_i^\alpha$. Consider the corresponding standard and costandard objects 
$j(w_{\circ, i}^\alpha)_{!, \wt{\chi}_i}$ and $j(w_{\circ, i}^\alpha)_{\wt{\chi}_i. *}$. As these are exchanged under $\epsilon$, by the $t$-exactness of $\epsilon$ so is the image factorization of the canonical map  
$$j(w_{\circ, i}^\alpha)_{\wt{\chi}_i, !} \twoheadrightarrow I_i \hookrightarrow j(w_{\circ, i}^\alpha)_{\wt{\chi}_i, *}.$$ 
We then note that $I_i$ is simply the character sheaf $\wt{\chi}_i$ on $K_i^\alpha$, up to a shift by $\ell(w_{\circ, i}^\alpha)$. In particular, if we write $(I_i)$ for the full subcategory generated under colimits and shifts by $I_i$, we have an adjunction 
$$\iota_!: (I_i) \rightleftharpoons D_{\wt{\tau}_i}((I, \wt{\chi}_i\mon) \bs \wt{LG}_i / (I, \wt{\chi}_i\mon)): \iota^!,$$and $\delta_i^\alpha$ is obtained by applying $\iota^!$ to the monoidal unit. This implies that $\epsilon$ exchanges $\delta_1^\alpha$ and $\delta_2^\alpha$, in fact up to canonical isomorphism, as desired. 

Finally, to address the equivalence of strictly equivariant Hecke 2-categories, 
note that the equivalence $\epsilon$ exchanges the augmentation modules 
$$\End(\delta_i^\alpha) \circlearrowright \Hom(I_i, \delta_i^\alpha).$$
In particular, it follows from the canonical equivalence \eqref{e:leftmods} and Proposition \ref{p:avenh} that $\epsilon$ exchanges up to canonical equivalence the left modules 
\begin{equation*} D_{\wt{\tau}_i}((I, \wt{\chi}_i\mon) \bs \wt{LG}_i / (I, \wt{\chi}_i\mon)) \circlearrowright D_{\wt{\tau}_i}((I, \wt{\chi}_i\mon) \bs \wt{LG}_i / (K^\alpha, \wt{\chi}_i)).\end{equation*}
Therefore, we are done by Lemma \ref{l:algbr}.
\end{proof}

\section{Applications to the quantum Langlands correspondence} \label{s: Quantum Langlands}

Our goal in this final section is to apply the general structural results  developed in this paper to some concrete problems in representation theory. While there are many such possible applications, some more of which we will return to in a  sequel, here we concern ourselves with a pair of conjectures in the local quantum geometric Langlands correspondence. 

The first of these conjectures identifies the affine Hecke 2-categories associated to Langlands dual split groups at dual Kac--Moody levels.
We emphasize this assertion makes sense for all levels.  In particular, we may consider the case of a purely central monodromy at a generic level, where it essentially reduces to the celebrated identification of the finite Hecke 2-categories of Langlands dual groups due to Soergel. 

Unlike their affine counterparts, the finite Hecke 2-categories do not appear to have natural descriptions as derived categories of coherent sheaves. For this reason, the second of these conjectures focuses on rational levels, where it gives a spectral description of the affine Hecke 2-category in terms of coherent sheaves on the Steinberg stacks of the {\em metaplectic dual group}. As a particular case of the above, we obtain the {\em metaplectic derived Satake equivalence}.

\subsection{Setup and notation} 

\subsubsection{}  Let $k$ be an algebraically closed field of characteristic zero. 

\subsubsection{}  In this section, we are only concerned with untwisted loop groups. So, as in Section \ref{ss:loop group}, let $\GG$ be a reductive group over $k$ equipped with a pinning and in particular a Borel and Cartan subgroup \begin{equation*} \GG \supset \BB \supset \AA.\end{equation*}Let us denote the corresponding Lie algebras by $\gg \supset \bb \supset \aa$. Fix in addition a nondegenerate {\em level}, i.e., a nondegenerate Ad($\GG$)-invariant bilinear form 
\begin{equation*}\kappa: \gg \otimes \gg \rightarrow k.\end{equation*}

\subsubsection{}  Denote the Langlands dual of $\GG$, along with the Borel and Cartan induced by its dual pinning, by 
\begin{equation*}\chGG \supset \chBB \supset \chAA, \end{equation*}
with respective Lie algebras $\chgg \supset \chbb \supset \chaa.$ Let us denote the dual level by 
\begin{equation*}\ckappa: \chgg \otimes \chgg \rightarrow k.\end{equation*}
That is, $\ckappa$ is the unique Ad($\chGG)$-invariant form with the property that, after restriction to the respective Cartan subalgebras, 
\begin{equation*}\kappa: \aa \otimes \aa \rightarrow k \quad \text{and} \quad \ckappa: \chaa \otimes \chaa \rightarrow k.\end{equation*}
are dual bilinear forms.

\begin{rem} We recall that in the quantum geometric Langlands literature, one usually asks that $\kappa$ and $\ckappa$ be dual forms only after shifting both by the critical levels for $\gg$ and $\chgg$. As we are only concerned with affine Hecke 2-categories below, and the twistings associated to critical levels arise from line bundles, we may and do suppress this shift in what follows.     
\end{rem}

\sss{} We denote the Weyl group of $\GG$ and $\chGG$ by $\WW$, and the coroot and cocharacter lattices of $\AA$ and $\chAA$ respectively by 
	\begin{equation*}\check{Q} \subset \cLambda \subset \chaa \and Q \subset  \Lambda \subset \aa.\end{equation*} 

\subsubsection{} 
Let us denote the loop and arc groups of $\GG$ by $LG$ and $L^{+}G$, and write $I = \II \subset L^{+}G$ for the Iwahori subgroup induced by our choice of $\BB$. Let us denote the corresponding groups associated to $\chGG$ by 
\begin{equation*}L\chG \supset L^{+}\chG \supset \chI.\end{equation*}

\subsubsection{}  
Recall that to the form $\kappa$ we may canonically associate a category of twisted D-modules on $LG$. Explicitly, if we write $\on{Tw}(LG)$ for the $k$-vector space of isomorphism classes of twistings on $LG$, the twisting assigned to $\kappa$ is uniquely specified by the conditions that it (i) arises from a $k$-linear map 
$$(\Sym^2 \gg^*)^\GG \rightarrow \on{Tw}(LG),$$
and (ii) if $\kappa$ is the trace form on a representation $V$, the associated twisting is the one arising from the corresponding determinant line bundle on $LG$.

The monodromy associated to $\kappa$ is canonically trivialized on $L^{+}G$ and we restrict this to obtain a trivialization on $I$. Therefore, as in Section \ref{ss: monodromic strict hecke}, we may form the monodromic affine Hecke category 
\begin{equation*}\Dmod_\kappa(I \bs LG / I).\end{equation*}
Let us denote similarly the monodromic affine Hecke category associated to $-\ckappa$ on $L\chG$ by 
\begin{equation*}\Dmod_{-\ckappa}(\chI \bs L\chG / \chI).\end{equation*}

\subsection{Quantum Langlands duality for affine Hecke 2-categories}

\subsubsection{}  Our first goal in this section is to prove the following statement, which was conjectured by Gaitsgory.  

\begin{conj} [Conjecture 3.10 of \cite{Ga}] \label{c:GaitsgIwahori} There is an equivalence of monoidal categories
	\begin{equation*}\Dmod_\kappa(I \bs LG / I) \simeq \Dmod_{-\ckappa}(\chI \bs L\chG / \chI). \end{equation*}
\end{conj}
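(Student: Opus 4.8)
The plan is to deduce Conjecture \ref{c:GaitsgIwahori} from the general endoscopy machinery of Theorem \ref{t:mainthmsec7}, taking $\wt{LG}_1$ to be the Kac--Moody extension of $LG$ determined by the level $\kappa$, with $\chi_1$ the trivial character sheaf on $\wt{\AA}_1$, and $\wt{LG}_2$ the Kac--Moody extension of $L\chG$ determined by $-\ckappa$, with $\chi_2$ trivial. Since both character sheaves are trivial, the integral Coxeter systems $\wt{W}_{\chi_1}^\circ$ and $\wt{W}_{\chi_2}^\circ$ are the full affine Weyl groups $W_{\aff}$ of $\GG$ and $\chGG$ respectively, which coincide as abstract Coxeter systems (both built from the finite Weyl group $\WW$ and the coroot/coweight data), and $\Omega_{\chi_1} = \Omega = \Omega_{\chi_2}$ is the length-zero subgroup. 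So the isomorphism $\alpha$ of extended Coxeter systems is immediate. The content is then to produce the $\alpha$-equivariant identification $\beta$ of the formal schemes $\frf_{\chi_1} \simeq \frf_{\chi_2}$, i.e.\ of the formal completions of $\frt^\vee / \X_*$ at the origin, and this is exactly where the duality of levels enters: the level $\kappa$ identifies $\aa$ with $\chaa^*$, hence (after passing to the dual tori over $E$ and completing) gives a $W$-equivariant isomorphism between the relevant formal neighborhoods, matching the $\wt{W}$-actions. The shift by critical level is irrelevant for the Hecke category since it comes from a line bundle, as already noted in the excerpt.

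Concretely, first I would fix the two Kac--Moody central extensions and check that the monodromy of the $\kappa$-twisting restricted to the maximal torus, together with its commutator pairing computed as in \S\ref{sss:commutator}, is governed by the form $\kappa$ on the cocharacter lattice; the analogous statement holds on the dual side for $-\ckappa$. Then, using Lemma \ref{l:conj action} and Proposition \ref{p: affine weyl faithful}, I would identify the $\wt{W}$-action on $\xcoch(\wt{\AA})$ in terms of $\kappa$, and dually for $\chGG$ and $-\ckappa$; the duality of forms makes these two actions intertwined under the canonical pairing between $\aa$ and $\chaa$. Passing to spectral sides via Proposition \ref{prop: tame geometric Langlands for tori}, this yields the desired $W$-equivariant isomorphism $\beta: \frf_{\chi_1} \xrightarrow{\sim} \frf_{\chi_2}$ compatible with the extended affine Weyl groups. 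Choosing any Soergel functors $\mathbb{V}_{\chi_1}$, $\mathbb{V}_{\chi_2}$ (the mild choice mentioned after \eqref{main eq}, which here is harmless since $k$ is algebraically closed of characteristic zero and the coefficient field $E = k$ has characteristic zero, so in particular not two), Theorem \ref{t:mainthmsec7} produces a $t$-exact monoidal equivalence
\begin{equation*}
\epsilon: \Dmod_\kappa(I \bs LG / I) \simeq \Dmod_{-\ckappa}(\chI \bs L\chG / \chI)
\end{equation*}
matching (co)standard and tilting objects along $\alpha$.

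Strictly speaking the theorem is stated for the monodromic categories $D((\wt{I}_i,\chi_i\mon)\bs\wt{LG}_i/(\wt{I}_i,\chi_i\mon))$, so to land in $\Dmod_\kappa(I\bs LG/I)$ I would pass to the strictly $\wt{\tau}$-equivariant quotient along the central $\Gm$ using Proposition \ref{p:mon2strict}: one must verify its two hypotheses, namely that $\epsilon$ exchanges the objects $j(e)_{!,\wt\chi_i}$ (automatic since over a characteristic-zero field the augmentation module on $\End(\delta^{\chi_i\mon})$ is unique, cf.\ Remark \ref{r:mon2str}) and that $\epsilon$ is compatible with the maps from $\End_{D_{\tau_i\mon}(\Gm)}(\delta^{\tau_i\mon})$ — this compatibility is exactly the statement that $\beta$ is compatible with the central direction, which follows from how the levels $\kappa$, $-\ckappa$ were chosen to match the commutator pairings on the central $\Gm$. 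The main obstacle, I expect, is precisely the bookkeeping in constructing $\beta$ and checking its $\wt{W}$-equivariance: one has to be careful about the precise normalization of the Kac--Moody extensions (which representation $V$, the factor of $e$ in the twisted case — here absent since $\GG$ is split — and the sign conventions making $\kappa$ and $-\ckappa$ rather than $\kappa$ and $\ckappa$ the right pairing), and to confirm that the resulting isomorphism of formal schemes genuinely intertwines the two extended affine Weyl group actions rather than merely the finite Weyl group actions. Once $\beta$ is in hand the rest is a direct application of the theorems already proved.
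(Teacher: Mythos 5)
Your overall architecture is the paper's: apply the Soergel-theoretic endoscopy theorem (Theorem \ref{t:mainthmsec7}) to produce a monoidal equivalence of monodromic categories and then descend to strict equivariance along the central $\Gm$ via Proposition \ref{p:mon2strict}, noting that condition (1) is automatic over a field and condition (2) is a compatibility in the central direction. However, there is a genuine gap at the very first step, and it is exactly where the real content of the paper's proof lies. The category $\Dmod_\kappa(I \bs LG / I)$ does \emph{not} correspond to the trivial character sheaf on $\wt{\AA}$: the level $\kappa$ enters as a nontrivial monodromy in the central direction, and consequently the integral datum attached to this category is the stabilizer, modulo $\Lambda$, of the point $\mathbf{1}^*$ in the dual affine Cartan, not the full affine Weyl group. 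Concretely, a real affine coroot $\halpha_n = \halpha + n\cdot\frac{\kappa(\halpha,\halpha)}{2}\cdot\mathbf{1}$ is integral if and only if $n\cdot\frac{\kappa(\halpha,\halpha)}{2} \in \ZZ$, and a translation $t^{\clambda}$ lies in the integral group if and only if $\kappa(\clambda) \in \Lambda$; for irrational $\kappa$ the Coxeter part is only the finite Weyl group, and for rational $\kappa$ one gets rescaled (``metaplectic'') affine coroots. So your claim that $\tilW_{\chi_1}^\circ$ and $\tilW_{\chi_2}^\circ$ are the full affine Weyl groups, hence that $\alpha$ is ``immediate,'' is false; it is also false in general that $W_{\aff}(\GG)$ and $W_{\aff}(\chGG)$ coincide as Coxeter systems (e.g.\ types $B_n^{(1)}$ versus $C_n^{(1)}$ for $\on{Sp}_{2n}$ and $\SO_{2n+1}$).

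Because of this, the input required by Theorem \ref{t:mainthmsec7} --- an isomorphism of \emph{extended Coxeter systems} between the $\kappa$-integral affine Weyl group of $LG$ and the $(-\ckappa)$-integral affine Weyl group of $L\chG$, together with an equivariant identification of the formal schemes --- is precisely what must be proved, and it is not a formality. The paper does this (in the more general statement with a finite twist $\theta$, Theorem \ref{t:GaitsgIwahori}) by writing down the affine-linear map $\iota$ on the dual affine Cartans built from $\ckappa$ and $\cht$, checking it matches the integrality conditions \eqref{eq:fulllattice} and \eqref{eq:fulllatticedual} and exchanges integral reflections, and then --- the delicate part --- showing $\iota$ can be corrected by an element $y$ of the neutral block so as to exchange simple reflections and length-zero elements. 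That last step requires comparing $\QQ$-forms of the reflection representations, decomposing the integral root system into $\WW$-orbits of irreducible components, and controlling the ambiguity group $\mathscr{G}$ of length-preserving conjugations; none of this is visible in your sketch, and without it the hypothesis of Theorem \ref{t:mainthmsec7} (an isomorphism of the quadruples, not merely of abstract groups with a formal-scheme map) is not available. Your instinct that Lemma \ref{l:conj action} and the duality of $\kappa$ and $\ckappa$ on the Cartans drive the identification $\beta$ is correct in spirit, but the combinatorial matching is the theorem, not a remark.
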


In fact, the same considerations in local quantum Langlands which lead to the statement of Theorem \ref{t:GaitsgIwahori}, namely the equivalence for tori (in depth zero) and an expected compatibility for parabolic induction, predict a more general series of equivalences which incorporate a further twist along the Iwahori subgroup.

 To set up their statement, let us identify $\aa$ and $\chaa$ using $$\kappa: \aa \rightarrow \chaa, \quad \quad \clambda \mapsto \kappa(\clambda, -) \in \chaa,$$with inverse $\ckappa: \chaa \rightarrow \aa.$ Then, for an element \begin{equation*}\theta \in \chaa \simeq \aa \ni  \ckappa(\theta) =: \cht,\end{equation*} we have associated rigidified character sheaves on $\AA$ and $\chAA$, and hence may form the monoidal categories of twisted D-modules
\begin{equation} \label{e:qahc}\Dmod_\kappa((I, \theta) \bs LG / (I, \theta)) \quad \text{and} \quad \Dmod_{-\ckappa}((\chI, \cht) \bs L\chG / (\chI, \cht)).\end{equation}
Local quantum Langlands then predicts the following.

\begin{thm} \label{t:GaitsgIwahori} \label{thm: dual with finite twist}There is an equivalence of monoidal categories 
    \begin{equation*}\Dmod_\kappa((I, \theta) \bs LG / (I, \theta))  \simeq \Dmod_{-\ckappa}((\chI, \cht) \bs L\chG / (\chI, \cht)). \end{equation*} 
\end{thm}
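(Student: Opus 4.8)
The strategy is to reduce Theorem~\ref{thm: dual with finite twist} to the endoscopic equivalence of Theorem~\ref{t:mainthmsec7} (in its de Rham incarnation), by exhibiting an isomorphism of combinatorial quadruples for the two monodromic affine Hecke categories. First I would pass from the strictly equivariant categories in \eqref{e:qahc} to the corresponding monodromic categories
$$\Dmod_\kappa((\wt I,\theta\mon)\bs \wt{LG}/(\wt I,\theta\mon)) \and \Dmod_{-\ckappa}((\wt{\chI},\cht\mon)\bs \wt{L\chG}/(\wt{\chI},\cht\mon)),$$
where the central extensions are the Kac--Moody ones attached to $\kappa$ and $-\ckappa$. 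Here one realizes the finite twist $\theta$ (resp.\ $\cht$) as a character sheaf on the Iwahori torus, and a finite central twist $\chi_c$ on the central $\Gm$; the point is that the character sheaf $\chi$ on $\wt\AA$ (resp.\ $\wt{\chAA}$) is then an $\fre$-linear object, and one reduces via the $t$-exactness clause of Proposition~\ref{p:mon2strict}, exactly as in the passage from $\sM_{\chi_i}$ to $\ssMss_{\wt\tau_i,\wt\chi_i}$ there. For this I would verify the two hypotheses of Proposition~\ref{p:mon2strict}: condition (1) is automatic since $E=k$ is a field, and condition (2) amounts to matching the monodromy action of $\End(\delta^{\chi_c\mon})$ on the central $\Gm$, which follows from the identification $\ckappa\circ\kappa=\mathrm{id}$ on the Cartan subalgebras.

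The crux is then to construct the isomorphism of quadruples
$$(\wt W_\chi,\wt W_\chi^\circ,S_\chi,\frf_\chi)\;\xrightarrow{\ \sim\ }\;(\wt W_{\chi^\vee},\wt W_{\chi^\vee}^\circ,S_{\chi^\vee},\frf_{\chi^\vee})$$
for the two groups $\wt{LG}$ and $\wt{L\chG}$ and their characters. The extended affine Weyl groups are $\wt W=\WW\ltimes \cLambda$ and $\wt W^\vee=\WW\ltimes\Lambda$, and the form $\kappa$ gives the isomorphism $\aa\simeq\chaa$ that one wants to promote to a lattice-level statement on the extended tori $\wt\AA\simeq\Gm^{\cen}\times\AA$ and $\wt{\chAA}$: this is precisely the content of Lemma~\ref{l:conj action}, which describes the action of $\cLambda$ on $\xcoch(\wt\AA)$ via the pairing $\langle\cdot,\cdot\rangle_V$, i.e.\ via $\kappa$ up to the critical shift (which, as remarked, I suppress since it comes from a line bundle). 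One then checks that a cocharacter-lattice translation $t^{\clambda}$ stabilizes the character $\chi$ on $\wt\AA$ iff, under the duality, the corresponding translation on the dual side stabilizes $\chi^\vee$; and that the integral coroots $\cPhi_\chi$ of $(\wt{LG},\chi)$ match the integral coroots $\cPhi_{\chi^\vee}$ of $(\wt{L\chG},\chi^\vee)$ --- this last point is where $\theta$ and $\cht=\ckappa(\theta)$ being $\kappa$-dual is used, since the condition ``$\halpha^*(\chi)$ trivial'' on one side translates to ``$\alpha^*(\chi^\vee)$ trivial'' on the other, the affine coroots of $G$ being the affine roots of $\chG$. By the results of \cite{Bo} recalled in \S\ref{s:comboblock}, $\wt W_\chi^\circ$ and $\wt W_{\chi^\vee}^\circ$ are then Coxeter groups with matching simple systems $S_\chi\simeq S_{\chi^\vee}$, and the isomorphism of the ambient extended affine Weyl groups carries one to the other, yielding the extended Coxeter isomorphism $\alpha$. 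The monodromy data match because $\frf_\chi$ is, by Proposition~\ref{prop: tame geometric Langlands for tori} and \S\ref{sss:framed Loc}, the formal completion of $\ft^\vee/\xcoch(\wt\AA^\vee)$ at $\chi$, and $\kappa$ induces the required $\alpha$-equivariant identification $\beta\colon\frf_\chi\simeq\frf_{\chi^\vee}$; its compatibility with the $\tilW$-actions is exactly Lemma~\ref{l:conj action} on both sides.

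With $(\alpha,\beta)$ in hand, I would fix de Rham Soergel functors $\mathbb V_{\chi}$ and $\mathbb V_{\chi^\vee}$ (available by \S\ref{ss:soergfunctor}, noting $\mathrm{char}(k)=0$ so the characteristic-two caveat is vacuous) and invoke Theorem~\ref{t:mainthmsec7} to obtain a $t$-exact monoidal equivalence
$$\epsilon\colon \Dmod_\kappa((\wt I,\chi\mon)\bs\wt{LG}/(\wt I,\chi\mon))\;\simeq\;\Dmod_{-\ckappa}((\wt{\chI},\chi^\vee\mon)\bs\wt{L\chG}/(\wt{\chI},\chi^\vee\mon))$$
exchanging (co)standard and tilting objects, hence in particular the objects $j(e)_{!,\wt\chi}$. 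Then, having checked the hypotheses of Proposition~\ref{p:mon2strict} as above, $\epsilon$ descends to the asserted equivalence of strictly equivariant categories $\Dmod_\kappa((I,\theta)\bs LG/(I,\theta))\simeq\Dmod_{-\ckappa}((\chI,\cht)\bs L\chG/(\chI,\cht))$. The main obstacle I anticipate is the bookkeeping in the lattice-level construction of $\beta$ and $\alpha$: getting the identification of $\wt\AA$ with $\chAA\times\Gm^{\cen}$ right so that the $\WW$-action, the translation action of $\cLambda$, and the central direction all match their duals simultaneously, and confirming that the critical-level shift genuinely contributes only a line-bundle twisting (so can be dropped at the level of Hecke 2-categories) rather than altering the combinatorial quadruple. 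Everything else is a direct application of the machinery already assembled, and Theorem~\ref{c:GaitsgIwahori} is the special case $\theta=0$.
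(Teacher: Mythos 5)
Your overall strategy is the same as the paper's: build the isomorphism of quadruples $(\alpha,\beta)$ from the $\kappa$-duality of the affine Cartans, feed it into Theorem \ref{t:mainthmsec7}, and descend from monodromic to strictly equivariant categories via Proposition \ref{p:mon2strict}. The computation of stabilizers and integral coroots via Lemma \ref{l:conj action}, and the identification $\beta$ of formal schemes by exponentiating $\kappa$, also match the paper's Steps 1--4 and 9. But there is a genuine gap at the decisive point: you assert that the duality-induced group isomorphism between $\tilW_{LG,\kappa,\theta}$ and $\tilW_{L\chG,-\ckappa,\cht}$ ``carries one [simple system] to the other, yielding the extended Coxeter isomorphism $\alpha$.'' That is not automatic, and in general it is false for the naive isomorphism: the bijection of integral affine coroots induced by $\kappa$ does not preserve positivity (an integral coroot $\halpha_n$ with $n>0$ can be matched with $\alpha_m$ with $m<0$ when the level is ``positive'' on that simple factor), so indecomposable positive integral coroots need not go to indecomposable positive ones. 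This is exactly why the Chevalley-type involution $\mathbf{Chev}_\kappa$ enters in Corollary \ref{c:quantLang2HEcke}. Theorem \ref{t:mainthmsec7} needs $\alpha$ to be an isomorphism of \emph{extended Coxeter systems}, i.e.\ to exchange $S_{\chi}$ with $S_{\chi^\vee}$ and length-zero elements with length-zero elements, so without this step your application of the theorem does not go through.

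The paper's Steps 5--8 are devoted precisely to repairing this: one shows the isomorphism becomes an isomorphism of extended Coxeter systems after postcomposing with conjugation by a suitable element $y$ of the neutral block, and one computes the ambiguity group $\mathscr{G}$ (a product of $\Z/2$'s from finite components). Moreover, proving that such a $y$ exists is not mere bookkeeping: since $\kappa$ and $\theta$ live over an arbitrary algebraically closed field $k$ of characteristic zero, one cannot argue directly with real alcoves; the paper descends the relevant reflection representations (the extensions $0\to\chaa_i\to\wt{\chaa}_i\to k\cdot\mathbf{1}^*\to 0$ and their duals) to canonical $\Q$-forms, checks splitness and one-dimensionality of Hom-spaces over $\Q$ using the isomorphism over $k$, and only then invokes alcove combinatorics over $\R$. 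Your proposal would work as written only if $\kappa$ and $\theta$ were assumed rational (or real), and even then only after inserting the conjugation by $y$; the choice of $y$ also matters downstream (e.g.\ for matching parahorics in Corollary \ref{c:quantLang2HEcke}). A minor further point: the de Rham central twist attached to $\kappa$ is not a finite-order character sheaf in general, so the reduction should be phrased for an arbitrary central character sheaf, as in the paper, rather than ``a finite central twist $\chi_c$.''
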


Note that, if we specialize to $\theta = 0$, this confirms Conjecture \ref{c:GaitsgIwahori}.

\begin{proof}[Proof of Theorem \ref{t:GaitsgIwahori}] For ease of reading, we break the proof into the several steps.

{\em Step 1.} To begin an analysis of the relevant combinatorics, we first set up some notation. Consider the affine Lie algebra 
$$0 \rightarrow k \cdot \mathbf{1} \rightarrow \widehat{\gg}_\kappa \rightarrow \gg(\!(t)\!) \rightarrow 0,$$
which we recall is canonically split as a vector space with commutator pairing 
$$[X \otimes p, Y \otimes q] = [X,Y] \otimes (p \cdot q) - \kappa(X,Y) \cdot  \underset{t = 0}{\on{Res}}(p \cdot dq),$$
for $X,Y \in \gg,$ and $p,q \in k(\!(t)\!).$ In particular, the centrally extended Cartan refers to the pulled back trivial extension 
$$0 \rightarrow k \cdot \mathbf{1} \rightarrow \widehat{\aa}_\kappa \rightarrow \aa \rightarrow 0, \quad \quad \widehat{\aa}_\kappa \simeq (k \cdot \mathbf{1}) \oplus \aa.$$
and its dual therefore takes the form$$0 \rightarrow \chaa \rightarrow (\widehat{\aa}_\kappa)^* \rightarrow (k \cdot \mathbf{1})^* \rightarrow 0, \quad \quad (\widehat{\aa}_\kappa)^* \simeq (k \cdot \mathbf{1}^*) \oplus \chaa,$$
where $\mathbf{1}^* \in (\widehat{\aa}_\kappa)^*$ denotes the unique element which is perpendicular to $\aa$ and satisfies $\langle \mathbf{1}^*, \mathbf{1} \rangle = 1$. 

Recall the extended affine Weyl group of $\tilW_{LG} = \check{\Lambda} \rtimes \WW$ naturally acts on $(\widehat{\aa}_\kappa)^*$, and that this action preserves the fibers of the projection $(\widehat{\aa}_\kappa)^* \rightarrow (k \cdot \mathbf{1})^*$, which are torsors over $\chaa$. This action is explicitly given as follows. For any element $\lambda \in \chaa$, consider the automorphism 
$$\tau^\lambda := \begin{pmatrix} \id & \\ \lambda & \id \end{pmatrix}: (k \cdot \mathbf{1}^*) \oplus \chaa \simeq  (k \cdot \mathbf{1}^*) \oplus \chaa,$$
and for any $w \in \WW$, consider the automorphism 
$$\rho^w := \begin{pmatrix} \id & \\ & w \end{pmatrix}: (k \cdot \mathbf{1}^*) \oplus \chaa \simeq  (k \cdot \mathbf{1}^*) \oplus \chaa.$$
Then in this notation, $w \in \WW$ acts via $\rho^w$, and $\clambda \in \check{\Lambda}$ acts via $\tau^{-\kappa(\clambda)}$. In particular, if we consider the action on the affine hyperplane $\mathbf{1}^* + \chaa$, we have the formula 
$$(t^{\clambda} w)( \mathbf{1}^* + \lambda) = \mathbf{1}^* + w(\lambda) - \kappa(\clambda).$$

Similarly, we may consider the affine Lie algebra 
$$0 \rightarrow k \cdot \check{\mathbf{1}} \rightarrow \widehat{\chgg}_{-\ckappa} \rightarrow \chgg(\!(t)\!) \rightarrow 0,$$
along with its centrally extended Cartan subalgebra $$0 \rightarrow k \cdot \check{\mathbf{1}} \rightarrow \widehat{ \chaa}_{-\ckappa} \rightarrow \chaa \rightarrow 0, \quad \quad \widehat{\chaa}_{-\ckappa} \simeq (k \cdot \check{\mathbf{1}}) \oplus \chaa.$$Again for the dual centrally extended Cartan $(\widehat{\chaa}_{-\ckappa})^* \simeq (k \cdot \check{\mathbf{1}}^*) \oplus \aa$ for $\clambda \in \aa$ and $w \in W$ we may consider the operators
$$\tau^{\clambda} := \begin{pmatrix} \id \\ \clambda & \id \end{pmatrix}: (k \cdot \check{\mathbf{1}}^*) \oplus \aa \simeq (k \cdot \check{\mathbf{1}}^*) \oplus \aa$$
$$\check{\rho}^w := \begin{pmatrix} \id \\ & w \end{pmatrix}: (k \cdot \check{\mathbf{1}}^*) \oplus \aa \simeq (k \cdot \check{\mathbf{1}}^*) \oplus \aa,$$
Then the action of $\tilW_{L\check{G}} = \Lambda \rtimes \WW$ is given by $w \in \WW$ acts by $\check{\rho}^w$ and $\lambda \in \Lambda$ acts via $\tau^{\ckappa(\lambda)}$. In particular, when restricting to the affine hyperplane $\check{\mathbf{1}}^* + \aa$, we have the formula 
$$(t^\lambda w)(\check{\mathbf{1}}^* + \clambda) = \check{\mathbf{1}}^* + w(\clambda) - (-\ckappa)(\lambda).$$

{\em Step 2.} We may now begin the analysis of the relevant combinatorics.  Let us denote the integral Weyl groups of the affine Hecke categories \eqref{e:qahc} in the extended affine Weyl groups of $\GG$ and $\chGG$   by\begin{equation*}\extw_{LG,\kappa, \theta} \subset \WW \ltimes \cLambda \quad  \text{and} \quad  \extw_{L\chG,-\ckappa, \cht} \subset \WW \ltimes \Lambda,\end{equation*}respectively.  Let us determine these integral Weyl groups explicitly.  From e.g. the description of the $\extw$ action on $\wt \AA$ in \eqref{eq: commutator pairing}, we know that an element $$t^{\clambda} w \in \cLambda \rtimes \WW$$ belongs to $\extw_{LG,\kappa, \theta}$ if and only if it fixes the element $\mathbf{1}^* + \theta$ up to a translation by the character lattice $\Lambda$, i.e., 
\begin{equation} \label{eq:fulllattice} w(\theta) - \theta - \kappa(\clambda) = -\lambda, \quad \text{for some } \lambda \in \Lambda.\end{equation}Similarly, an element $$t^{\lambda}w \in \Lambda \rtimes \WW$$belongs to $W_{\chGG, -\ckappa, \cht}$ if and only if it satisfies
\begin{equation}\label{eq:fulllatticedual}w(\cht) - \cht - (- \ckappa)(\lambda) = \clambda, \quad \text{for some } \clambda \in \cLambda.\end{equation}
For later use in Step 4, let us note the following. Consider a pair $(\clambda, \lambda) \in \cLambda \times \Lambda$ satisfying Equation \eqref{eq:fulllattice}. Applying $\ckappa$ to both sides, we obtain this is equivalent to
$$w(\cht) - \cht - \clambda = \ckappa(-\lambda).$$Rearranging terms, this again is equivalent to 
$$w(\cht) - \cht - (-\ckappa)(\lambda) = \clambda.$$
That is, the pair $(\clambda, \lambda)$ satisfies Equation \eqref{eq:fulllattice} if and only if it satisfies Equation \eqref{eq:fulllatticedual}. Equivalently, for the pair $(\clambda, \lambda)$, the element $t^{\clambda}w$ lies in the integral Weyl group of $\theta$ if and only if $t^{\lambda}w$ lies in the integral Weyl group of $\cht$. 

{\em Step 3.} Let us denote the neutral blocks of the integral Weyl groups by $$\extw_{LG,\kappa, \theta}^{\circ} \subset \tilW_{LG, \kappa, \theta} \quad \text{and} \quad \extw_{L\chG,-\ckappa, \cht}^{\circ} \subset \extw_{L \chG, -\ckappa, \cht}.$$To describe these explicitly, we must determine the integral affine coroots for $\theta$ and $\cht$.

Let us write $\cPhi_{\GG}$ for the coroots of $\GG$, and $\cPhi_{LG}$ for the real affine coroots of $LG$. We recall that there is a canonical bijection of sets \begin{equation*}\cPhi_{LG} \simeq \cPhi_{\GG} \times \Z,\end{equation*}which assigns to a pair $(\halpha, n) \in \cPhi_{\GG} \times \Z$ the associated coroot $\halpha_n$ in the centrally extended Cartan given by the formula
\begin{equation*}\halpha_n := \halpha + n \cdot \frac{\kappa(\halpha, \halpha)}{2} \cdot \mathbf{1}. \end{equation*}
In particular, we have that $\halpha_n$ is an integral coroot of $\theta$ if and only if we have 
\begin{equation} \label{e:rodentdream} \langle \mathbf{1}^* + \theta, \halpha_n \rangle =  \langle \theta,  \halpha \rangle + n \cdot  \frac{\kappa(\halpha, \halpha)}{2} \in \mathbb{Z}, \end{equation} which is equivalent to having the associated reflection $s_{\halpha_{n}} =  t^{-n \halpha}s_{\halpha}$ satisfy 
\begin{equation} \label{eq:soluble} 
 (\mathbf{1}^* + \theta) - s_{\halpha_n}(\mathbf{1}^* + \theta)  = \theta - s_{\halpha}(\theta) + \kappa(n \halpha) \in \mathbb{Z} \alpha.\end{equation}
Similarly, an affine coroot of $L\chG$ \begin{equation*}\a_m := \a + m \cdot \frac{-\ckappa(\alpha, \alpha)}{2} \cdot \check{\mathbf{1}} \end{equation*} is an integral coroot of $\cht$ if and only if \begin{equation}\label{eq:solubledual}  \cht - s_{\alpha}(\cht)  + (-\ckappa)(m\alpha) \in \mathbb{Z}\halpha.\end{equation}

{\em Step 4.} Consider the inverse linear isomorphisms 
$$\iota:= \begin{pmatrix} 1 & \\ \cht & -\ckappa \end{pmatrix}: (k \cdot \mathbf{1}^*) \oplus \chaa \simeq (k \cdot \check{\mathbf{1}}^*) \oplus \aa: \begin{pmatrix} 1 & \\ \theta & -\kappa \end{pmatrix}=: \iota^{-1},$$
where the upper left entries of $`1$' denotes the isomorphisms $k \cdot \mathbf{1}^* \simeq k \cdot \check{\mathbf{1}}^*$ exchanging $\mathbf{1}^*$ and $\check{\mathbf{1}}^*$. It is straightforward to check these exchange translations with translations, i.e., for any $\lambda \in \chaa$, one has the equality 
$$ \iota \circ \tau^{\lambda} = \tau^{-\ckappa(\lambda)} \circ \iota.$$
Similarly, this exchanges the affine linear action of $\WW$ on $\mathbf{1}^* +\chaa$ centered at $\mathbf{1}^*$ to the affine linear action of $\WW$ on $\check{\mathbf{1}}^* + \aa$ centered at $\check{\mathbf{1}}^* + (-\cht)$, i.e., 
$$\iota \circ \rho^w =  \tau^{w(-\cht) - (-\cht)} \circ \check{\rho}^w \circ \iota.$$
In particular, given an element $t^{\clambda}w \in W_{LG, \kappa, \theta}$, again writing as in Equation \eqref{eq:fulllattice}
$$w(\theta) - \theta - \kappa(\clambda) = -\lambda, \quad \quad \lambda \in \Lambda,$$
we unwind that 
\begin{align*} \iota \circ (t^{\clambda}w) &= \iota \circ t^{-\kappa(\clambda)} \circ \rho^w \\ &= \tau^{w(-\cht) - (-\cht) + \clambda} \circ \check{\rho}^w \circ \iota \\ &= \tau^{\ckappa(\lambda)} \circ \check{\rho}^w \circ \iota \\ &= t^{\lambda}w \circ \iota.\end{align*}
Therefore, by the analysis at the end of Step 2, it follows that conjugation by $\iota$ induces an isomorphism of groups 
\begin{equation*}\extw_{LG,\kappa, \theta} \simeq \extw_{L\chG,-\ckappa, \cht}.\end{equation*}

Moreover, for a reflection $t^{n\halpha} s_{\halpha}$ in $\tilW_{LG, \kappa, \theta}^\circ$, writing as in Equation \eqref{eq:soluble}$$s_{\halpha}(\theta) - \theta - \kappa(n\halpha) = - m \alpha, \quad \quad m \in \ZZ,$$
we in particular have $\iota$ exchanges $t^{n\halpha}s_{\halpha}$ with $t^{m\alpha} s_{\alpha}$. Therefore by Equation \eqref{eq:soluble} and \eqref{eq:solubledual}, it follows that $\iota$ exchanges reflections in $\extw_{LG,\kappa, \theta}^\circ$ with reflections in $\extw_{L\chG,-\ckappa, \cht}^{\circ}$, and therefore $\iota$ restricts to an isomorphism between the neutral blocks.

{\em Step 5.} Consider the obtained isomorphism $$\iota: \tilW_{LG, \kappa, \theta} \simeq \tilW_{L\chG, -\ckappa, \cht}.$$In this step, and its two sequels, we will argue that, up to postcomposition with conjugation by an element of the neutral block, this is an isomorphism of extended Coxeter systems, i.e., exchanges the simple reflections and length zero elements on either side. \footnote{Let us include in passing two orienting comments for the reader. First, we remind that a given abstract group can have distinct Coxeter presentations up to conjugacy, e.g. the Weyl group of $G_2$ is abstractly isomorphic to the Weyl group of $SL_2 \times SL_3$, but this cannot exchange the associated reflection representations. With this in mind, we also note that if $\kappa$ and $\theta$ are defined over $\QQ$, or more generally the real numbers $\RR$, the assertion of this step is evident, as the identification $\iota$ of the dual affine Cartans by construction restricts to an isomorphism of the real reflection representations; below we adapt this observation to general $\kappa$ and $\theta$.} The element in the neutral block will be non-unique in general, so we will explicitly characterize the ambiguity as well. 

To begin, note that by construction $\iota$ exchanges the two canonical projections to the finite Weyl group 
$$\tilW_{LG, \kappa, \theta} \hookrightarrow \tilW_{LG} \twoheadrightarrow \WW \quad \text{and} \quad \tilW_{L\chG, -\ckappa, \cht} \hookrightarrow \tilW_{L\chG} \twoheadrightarrow \WW.$$In particular, if we denote the images of the two maps by $\WW_{G, \theta}$ and $\WW_{\chG, \cht}$, we have these subgroups of $\WW$ coincide.

We next record the observation that, by construction, the underlying linear isomorphism $\iota$ of dual affine Cartans tautologically fits into a short exact sequence of $\tilW_{LG, \kappa, \theta} \simeq \tilW_{L\chG, -\ckappa, \cht}$-modules
\begin{equation} \label{e:SESaffinecartan}\xymatrix{0 \ar[r] & \chaa \ar[d]_{-\ckappa} \ar[r] & (\widehat{\aa}_{\kappa})^* \ar[r] \ar[d]^\iota & k \cdot \mathbf{1}^* \ar[d]^1 \ar[r] & 0 \\ 0 \ar[r] & \aa \ar[r] & (\widehat{\chaa}_{-\ckappa})^* \ar[r] & k \cdot \check{\mathbf{1}}^* \ar[r] & 0.}\end{equation}
Notice further that within $\chaa \simeq \aa$, the action of $\tilW_{LG, \kappa, \theta} \simeq \tilW_{L\chG, -\ckappa, \cht}$ factors through $\WW_{G, \theta} \simeq \WW_{\chG, \cht}$. The goal of the next step is to study this action. 

{\em Step 6.} If we similarly denote by $\WW_{G, \theta}^\circ \simeq \WW_{\chG, \cht}^\circ$ the images of the neutral blocks $$\tilW_{LG, \kappa, \theta}^\circ \simeq \tilW_{L\chG, -\ckappa, \cht}^\circ,$$these are generated by the reflections corresponding to the finite root systems given by the images of, in evident notation, the compositions
\begin{equation} \label{e:projroot}\cPhi_{LG, \kappa, \theta} \hookrightarrow \cPhi_{LG} \twoheadrightarrow \cPhi_G \quad \quad \text{and} \quad \quad \cPhi_{L\chG, -\ckappa, \cht} \hookrightarrow \cPhi_{L\chG} \twoheadrightarrow \cPhi_{\chG}.\end{equation}Let us denote these images by 
$$\cPhi_{G, \theta} \subset \cPhi_G, \quad \quad \text{and} \quad \quad \cPhi_{\chG, \cht} \subset \cPhi_{\chG},$$respectively. To see that they are indeed root subsystems of $\cPhi_{G}$ and $\cPhi_{\chG}$, one must check that for each element of them, e.g. $\halpha \in \cPhi_{G, \theta}$, the corresponding reflection $$s_{\halpha}: \cPhi_G \simeq \cPhi_G.$$restricts to an involution of $\cPhi_{G, \theta}$. However, this follows from the fact that some lift ${\halpha_n}$ lies in $\cPhi_{LG, \kappa, \theta}$, for $n \in \ZZ$, and the fact that $s_{\halpha_n}$ preserves $\cPhi_{LG, \kappa, \theta}$.

In particular, the root systems $\cPhi_{G, \theta}$ and $\cPhi_{\chG, \cht}$, without fixing a preferred set of positive roots, equip $\WW^\circ_{G, \theta}$ and $\WW^\circ_{\chG, \cht}$ with a system of Coxeter generators well-defined up to conjugacy. Moreover, as by construction the two are interchanged under the canonical bijection $\cPhi_G \simeq \cPhi_{\chG}$, it follows that the identification $$\iota: \WW^\circ_{G, \theta} \simeq \WW^\circ_{\chG, \cht}$$exchanges these conjugacy classes of Coxeter presentations, and in particular the associated reflection representations. 

 Having discussed the images of the neutral blocks, let us now turn to the action of the entire integral affine Weyl groups. Note that the natural actions of $\tilW_{LG, \kappa, \theta}$ and $\tilW_{L\chG, -\ckappa, \cht}$ on $\cPhi_{LG, \kappa, \theta}$ and $\cPhi_{L\chG, -\ckappa, \cht}$, respectively, descend to actions 
$$\WW_{G, \theta} \circlearrowright \cPhi_{G, \theta} \quad \quad \text{and} \quad \quad \WW_{\chG, \cht} \circlearrowright \cPhi_{\chG, \cht}.$$
In particular, if we denote by $\on{Irr}(\cPhi_{G, \theta})$ the set of constituents of the unique decomposition of $\cPhi_{G, \theta}$ as a disjoint union of irreducible and mutually orthogonal root subsystems, we have an induced action of $\WW_{G, \theta}$ on $\on{Irr}(\cPhi_{G, \theta})$. In particular, if $\mathbb{O}_i, i \in \mathscr{I},$ denotes the set of orbits of the action of $\WW_{G, \theta}$ on $\on{Irr}(\cPhi_{G, \theta})$, we obtain a decomposition 
\begin{equation} \label{e:breakrootsystem} \cPhi_{G, \theta} = \underset{i \in \mathscr{I}}\sqcup \hspace{.5mm} \cPhi_{G, \theta}^i,\end{equation}
where $\cPhi_{G, \theta}^i$ is by definition the union of the root subsystems in $\mathbb{O}_i$. Note that, if we define similarly the decomposition 
$$\cPhi_{\chG, \cht} = \bigsqcup_{i \in \check{\mathscr{I}}} \cPhi^i_{\chG, \cht},$$
it is clear these decompositions are exchanged under the canonical bijection $\cPhi_{G, \theta} \simeq \cPhi_{\chG, \cht}$, and in particular we have a canonical identification $\mathscr{I} \simeq \check{\mathscr{I}}$; we denote both simply by $\mathscr{I}$ going forwards. 

For $i \in \mathscr{I}$, denote by $\chaa_i \subset \chaa$ the span of the elements of $\cPhi_{G, \theta}^i$. Summing over $i$ the inclusions, we obtain a canonical direct sum decomposition as vector spaces
$$\chaa = (\underset{i \in \mathscr{I}} \oplus \hspace{.5mm} \chaa_i ) \oplus \chaa^{\WW^\circ_{G, \theta}}.$$
Moreover, it is straightforward to observe that this is in fact a decomposition as $\WW_{G, \theta}$-representations, and that each $\chaa_i$ is an irreducible $\WW_{G, \theta}$-module, with underlying $\WW_{G, \theta}^\circ$-module the direct sum of the reflection representations of the components belonging to $\mathbb{O}_i$. In addition, note that the identification $-\ckappa: \chaa \simeq \aa$ exchanges this decomposition with the analogously defined 
$$\aa =  (\underset{i \in \mathscr{I}} \oplus \hspace{.5mm} \aa_i ) \oplus \aa^{\WW^\circ_{G, \theta}}.$$

In particular, for any $i \in \mathscr{I}$, we may push out \eqref{e:SESaffinecartan} along the projections $$(\chaa \simeq \aa) \rightarrow (\chaa_i \simeq \aa_i)$$to obtain a short exact sequence of $\tilW_{LG, \kappa, \theta} \simeq \tilW_{L\chG, -\ckappa, \theta}$-modules 
\begin{equation} \label{e:2SESaffinecartan}\xymatrix{0 \ar[r] & \chaa_i \ar[d]_{-\ckappa} \ar[r] & \wt{\chaa}_i \ar[r] \ar[d]^\iota & k \cdot \mathbf{1}^* \ar[d]^1 \ar[r] & 0 \\ 0 \ar[r] & \aa_i \ar[r] & \wt{\aa}_i \ar[r] & k \cdot \check{\mathbf{1}}^* \ar[r] & 0.}\end{equation}
In the next step, we would like to explain how to recover the extended Coxeter presentations from these representations, and in particular to deduce they agree up to conjugacy from the isomorphism \eqref{e:2SESaffinecartan}. To implement this, we must really analyze certain canonical $\QQ$-forms of the representations instead, and argue that the isomorphism \eqref{e:2SESaffinecartan} ensures the $\QQ$-forms are isomorphic as well. 

{\em Step 7.} To proceed, we recall that the action of $\tilW_{LG}$ on the $k$-vector space $(\widehat{\aa}_\kappa)^*$ is canonically extended from an action on the character lattice of $\wt{\AA}$, and in particular we have in evident notation a $\QQ$-form of the representation 
$$(\widehat{\aa}_\kappa)^* \simeq   k \underset{\QQ} \otimes (\widehat{\aa}_{\kappa})^*_\QQ,$$
which again sits in a short exact sequence $$0 \rightarrow \chaa_\Q \rightarrow (\widehat{\aa}_\kappa)^*_\QQ \rightarrow \QQ \cdot \mathbf{1}^* \rightarrow 0.$$
In particular, we may restrict this representation to $\tilW_{LG, \kappa, \theta}$. Via the tautological inclusion $\cPhi_{G, \theta} \subset \chaa_{\QQ} \subset \chaa$, the previous analysis similarly affords a decomposition of the rational form of $\chaa$
$$\chaa_\QQ = (\underset{i \in \mathscr{I}} \oplus \hspace{.5mm} \chaa_{i. \QQ} ) \oplus \chaa_{\QQ}^{\WW^\circ_{G, \theta}},$$
and in particular a rational form of the pushout $\tilW_{LG, \kappa, \theta}$-module
$$0 \rightarrow \chaa_{i, \QQ} \rightarrow \widehat{\chaa}_{i, \QQ} \rightarrow \QQ \cdot \mathbf{1}^* \rightarrow 0.$$By construction, the extended Coxeter structure on $\tilW_{LG, \kappa, \theta}$, up to conjugacy by an element of $\tilW_{LG, \kappa, \theta}^\circ$, may be recovered from the collection of representations 
$$\widehat{\chaa}_{i, \QQ}, \quad \quad i \in \mathscr{I}.$$
Namely, as usual one can consider their extension of scalars to $\RR$, and the decomposition of $\mathbf{1}^* + \chaa_{i, \RR}$ minus the reflection hyperplanes of $\cPhi_{LG, \kappa, \theta}$ into connected components. With this, a choice of component in each $$\mathbf{1}^* + \chaa_{i, \RR},  \quad \quad i \in \mathscr{I},$$affords an extended Coxeter presentation, and $\tilW^\circ_{LG, \kappa, \theta}$ acts simply transitively on the set of $\mathscr{I}$-tuples of alcoves.  For the latter assertion, one notes that the decomposition \eqref{e:breakrootsystem} yields a product decomposition of Coxeter groups 
$$\tilW_{LG, \kappa, \theta}^\circ = \underset{i \in \mathscr{I}} \Pi \hspace{.5mm} \tilW_{LG, \kappa, \theta}^{\circ, i},$$
where $\tilW_{LG, \kappa, \theta}^{\circ, i}$ denotes the subgroup of $\tilW^\circ_{LG, \kappa, \theta}$ generated by the integral reflections whose corresponding finite coroot lies in $\cPhi_{G, \theta}^i$. Moreover, it is straightforward to see that this collection of Coxeter presentations all agree, up to conjugacy, with the one obtained via the embedding $\tilW_{LG, \kappa, \theta} \hookrightarrow \tilW_{LG}$. Moreover, by construction the conjugation action of $\tilW_{LG, \kappa, \theta}$ on its neutral block preserves each $\tilW^{\circ, i}_{LG, \kappa, \theta}$ separately, for $i \in \mathscr{I}$, so it follows the induced extended Coxeter presentations agree as well. 

By combining the observations of the previous paragraph with their analogues for $\tilW_{L\chG, -\ckappa, \cht}$, to prove the assertion of this step it is therefore enough to argue that the isomorphism $$ \iota: \tilW_{LG, \kappa, \theta} \simeq  \tilW_{L\chG, -\ckappa, \cht}$$exchanges, for each $i \in \mathscr{I}$,  the $\Q$-representations 
\begin{equation} \label{e:tail2cities}\widehat{\chaa}_{i, \QQ} \quad \text{and} \quad \widehat{\aa}_{i, \QQ}\end{equation}
up to isomorphism. I.e., we would like to obtain an isomorphism as in \eqref{e:2SESaffinecartan} for not merely the $k$-forms, but for the $\QQ$-forms as well.

To see this, we argue as follows. Note that a choice of $\WW$-equivariant isomorphism $\chaa_\QQ \simeq \aa_\QQ$ restricts to a $\WW_{G, \theta} \simeq \WW_{\chG, \theta}$-equivariant isomorphism 
$$\chaa_{i, \QQ} \simeq \aa_{i, \QQ}.$$
Recall in addition that, for two finite dimensional representations $V_\Q$ and $W_\Q$ of an abstract group $\Gamma$, we have a natural isomorphism 
\begin{equation} k \underset{\QQ} \otimes \Hom_\Gamma(V_\Q, W_\Q) \xrightarrow{\sim} \Hom_\Gamma(k \underset{\QQ} \otimes V_\QQ, k \underset{\QQ} \otimes W_\QQ) \label{e:homsext}.\end{equation}
In particular, both representations \eqref{e:tail2cities} are extensions of the trivial representation $\QQ$ by $\aa_{i. \QQ}$, and the latter two representations are both irreducible and have endomorphisms $\QQ$ by \eqref{e:homsext}. Again by \eqref{e:homsext}, it follows that each extension is split over $\QQ$ if and only if it is split over $k$, and by \eqref{e:2SESaffinecartan} the latter problems are equivalent. 

In particular, either both of the extensions \eqref{e:tail2cities} are split or both are non-split. If both are split, they are tautologically isomorphic. If they are both non-split, it is straightforward to see their endomorphisms are both $\QQ$, as a nontrivial extension of distinct irreducibles, both of whose endomorphisms are $\QQ$. By the latter observation, \eqref{e:2SESaffinecartan}, and \eqref{e:tail2cities}, it follows that $$\Hom_{\tilW_{LG, \kappa, \theta}}( \widehat{\chaa}_{i, \QQ}, \widehat{\aa}_{i, \QQ}) \quad \quad \text{and} \quad \quad \Hom_{\tilW_{LG, \kappa, \theta}}( \widehat{\aa}_{i, \QQ}, \widehat{\chaa}_{i, \QQ})$$
are both lines over $\Q$, and any nonzero elements are isomorphisms, as desired.

{\em Step 8.} The previous argument shows that $$\iota:  \tilW_{LG, \kappa, \theta} \simeq \tilW_{L\chG, -\ckappa, \cht}$$exchanges their extended Coxeter presentations, up to conjugacy by an element $y$ of the neutral block $\tilW^\circ_{LG, \kappa, \theta}$. The set of relevant $y$ form a torsor over the group $\mathscr{G}$ of elements $z$ of $\tilW_{LG, \kappa, \theta}$ such that 
$$z \circ - \circ z^{-1}: \tilW_{LG, \kappa, \theta} \simeq \tilW_{LG, \kappa, \theta}$$is an automorphism of the extended Coxeter system, i.e., preserves lengths. In this step, let us determine $\mathscr{G}$ explicitly. 

Let us denote the set of $i \in \mathscr{I}$ such that the corresponding Coxeter group $\tilW_{LG, \kappa, \theta}^{i. \circ}$ is finite by $$\mathscr{I}_f \subset \mathscr{I}.$$
Then, if we denote by $w_{\circ}^i, i \in \mathscr{I}_f$ their longest elements, we will show these pairwise commuting involutions belong to $\mathscr{G}$ and they generate it, i.e., 
\begin{equation} \label{e:whoisG} \mathscr{G} \simeq \underset{i \in \mathscr{I}_f} \Pi \hspace{.5mm} \ZZ/2\ZZ.\end{equation}

Indeed, this follows from noting that (i) any element $z$ of $\mathscr{G}$ must in particular preserve lengths within the neutral block $$z \circ - \circ z^{-1}: \tilW_{LG, \kappa, \theta}^\circ \simeq \tilW_{LG, \kappa, \theta}^\circ.$$If we write $\Upsilon$ for the set of connected components of the Coxeter diagram of $\tilW_{LG, \kappa, \theta}^\circ$, note the group canonically factors as the product of its parabolic subgroups corresponding to the $\upsilon \in \Upsilon$, i.e., 
$$\tilW_{LG, \kappa, \theta} \simeq \underset{\upsilon}\Pi \hspace{.5mm} W_\upsilon$$
It is straightforward to see that $z$ must be a product of longest elements in a union $\Upsilon_z$ of components for which the corresponding parabolic subgroups are finite. Moreover, (ii) as conjugation by $z$ must preserve length zero elements, it follows that $\Upsilon_z$ is stable under the action of the length zero elements $\Omega_{\kappa, \theta}$ on $\Upsilon$. Finally, (iii) to make contact with $\mathscr{I}_f$, recall the set $\on{Irr}(\cPhi_{\GG, \theta})$, cf. Step 6 of the argument. We will need the following observation. 

\begin{lem} For any $j \in \on{Irr}(\cPhi_{\GG, \theta})$, write $\cPhi^j_{\GG, \theta} \subset \cPhi_{\GG, \theta}$ for the corresponding irreducible coroot subsystem. Then its preimage under the projection 
$$\cPhi_{LG, \kappa, \theta} \twoheadrightarrow \cPhi_{\GG, \theta},$$
which we denote by $\cPhi^j_{LG, \kappa, \theta} \subset \cPhi_{LG, \kappa, \theta}$, is again an irreducible coroot system.  
\end{lem}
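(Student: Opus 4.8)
The claim is a statement purely about affine root systems and their integral subsystems: for an irreducible component $\cPhi^j_{\GG,\theta}$ of the finite integral coroot system $\cPhi_{\GG,\theta}$, the full preimage $\cPhi^j_{LG,\kappa,\theta}$ inside $\cPhi_{LG,\kappa,\theta}$ (under the height-forgetting projection $\cPhi_{LG}\twoheadrightarrow\cPhi_{\GG}$) is again irreducible as an affine coroot system. I would first reduce to the case where $\cPhi_{\GG,\theta}$ itself is irreducible — this is harmless, since the projection $\cPhi_{LG,\kappa,\theta}\twoheadrightarrow \cPhi_{\GG,\theta}$ restricts on the preimage of $\cPhi^j_{\GG,\theta}$ to the analogous projection for the (possibly reducible) ambient group, and whether the preimage is connected depends only on $\cPhi^j_{\GG,\theta}$ and the values of $\kappa$ and $\theta$ on the corresponding span $\aa_j$. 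Concretely: an affine coroot $\halpha_n = \halpha + n\cdot\frac{\kappa(\halpha,\halpha)}{2}\mathbf 1$ is integral for $\theta$ iff $\langle\theta,\halpha\rangle + n\cdot\frac{\kappa(\halpha,\halpha)}{2}\in\ZZ$, cf. \eqref{e:rodentdream}, so for each fixed finite coroot $\halpha$ the set of admissible $n$ is either empty or a coset of a subgroup $d_{\halpha}\ZZ\subset\ZZ$, where $d_{\halpha}$ is the denominator of $\frac{\kappa(\halpha,\halpha)}{2}$ in lowest terms (times a correction if $\langle\theta,\halpha\rangle\notin\ZZ$); the point is that $d_{\halpha}>0$ whenever $\halpha$ contributes at all.

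Granting irreducibility of $\cPhi_{\GG,\theta}$, I would argue connectedness of the affine Dynkin diagram of $\cPhi_{LG,\kappa,\theta}$ as follows. First, the projection is surjective onto $\cPhi_{\GG,\theta}$, so the ``finite part'' $\cPhi_{\GG,\theta}\subset\cPhi_{LG,\kappa,\theta}$ (the coroots $\halpha_0$, when $n=0$ is admissible, or more generally coroots whose $\aa$-component lies in $\cPhi_{\GG,\theta}$) is contained in the preimage and is connected by hypothesis. It therefore suffices to show that every integral affine coroot $\halpha_n$ with $n\neq 0$ is connected, through the integral affine root system, to this finite part. This is where the lowest alcove / highest root considerations enter: in an irreducible affine root system, the affine simple coroots consist of the finite simple coroots together with one extra node attached to (minus) the highest coroot; and the entire affine root system is generated by reflections in these. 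The integral subsystem $\cPhi_{LG,\kappa,\theta}$ is itself the affine root system attached to a (possibly rescaled) irreducible finite root system — this is the content of \cite{Bo} invoked in \S\ref{s:comboblock}, which tells us $\tilW^\circ_{\chi}$ is a Coxeter group on the indecomposable positive integral coroots. So the real work is to identify $\cPhi^j_{LG,\kappa,\theta}$ as the affine root system of this rescaled finite system and to check its Dynkin diagram is the untwisted (or twisted) affine diagram — in particular connected. I would do this by exhibiting the imaginary root: the $\tilW^\circ_{\theta}$-invariant lattice direction $\mathbf 1$ (suitably normalized) plays the role of $\delta$, and the integral affine coroots in $\cPhi^j_{LG,\kappa,\theta}$ are precisely $\{\halpha + m\delta_j : \halpha\in\cPhi^j_{\GG,\theta},\ m\in\ZZ,\ \text{admissibility}\}$ for an appropriate $\delta_j$, which visibly has the structure of an (untwisted, after the rescaling) affine root system on the irreducible finite system $\cPhi^j_{\GG,\theta}$.

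The key technical point — and the main obstacle — is handling the admissibility condition uniformly: a priori, for different finite coroots $\halpha$ in the same $\WW_{\GG,\theta}$-orbit the set of admissible levels $n$ could be different cosets of possibly different subgroups, and one must rule out that this fragments the affine diagram. Here I would use that $\cPhi^j_{\GG,\theta}$ is irreducible and $\WW^\circ_{\GG,\theta}$-stable (Step 6), so $\WW^\circ_{\GG,\theta}$ acts transitively on roots of a given length; combined with the $\WW$-invariance of $\kappa$, this forces $d_{\halpha}$ to depend only on the length of $\halpha$, and the fact that $\theta$ is an integral parameter for precisely the roots in $\cPhi_{\GG,\theta}$ pins down the relevant cosets to be $0$ on the coroot span. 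Thus, up to an overall rescaling of $\mathbf 1$ by $\frac{1}{d_{\mathrm{short}}}$ (or a mild ratio when there are two root lengths, giving a twisted affine type — which is still connected), $\cPhi^j_{LG,\kappa,\theta}$ is exactly the affine root system built on $\cPhi^j_{\GG,\theta}$, hence has connected affine Dynkin diagram. The remaining verification — that the extra affine node attaches to the lowest root of $\cPhi^j_{\GG,\theta}$ and not in some disconnected way — is then a standard fact about affine root systems, which I would cite from \cite{Bo} rather than reprove. Finally, I would translate ``connected affine Dynkin diagram'' back into ``irreducible coroot system'' in the sense used in \S\ref{s:comboblock}, completing the lemma.
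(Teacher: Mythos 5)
Your route has a genuine gap, and it occurs at the level of the basic structure you try to impose on $\cPhi^j_{LG,\kappa,\theta}$. You presuppose that for every contributing coroot $\halpha$ the admissible levels $n$ form an infinite coset $c_{\halpha}+d_{\halpha}\ZZ$ with $d_{\halpha}>0$ given by the denominator of $\kappa(\halpha,\halpha)/2$, i.e.\ you implicitly assume $\kappa(\halpha,\halpha)\in\QQ$. But this lemma sits inside the proof of Theorem \ref{t:GaitsgIwahori}, where $\kappa$ is only nondegenerate; the rationality hypothesis \eqref{e:assumelevelrat} enters only later, for Corollary \ref{c:quantLang2HEcke}. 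If $\kappa(\halpha,\halpha)/2$ is irrational, the condition $\langle\theta,\halpha\rangle+n\cdot\kappa(\halpha,\halpha)/2\in\ZZ$ holds for at most one integer $n$, so the fiber over $\pm\halpha$ is just $\{\pm\halpha_n\}$ and the preimage can be a \emph{finite} root system. Hence your structural claim --- that $\cPhi^j_{LG,\kappa,\theta}$ is the (possibly twisted) affine root system on a rescaling of $\cPhi^j_{\GG,\theta}$, so has a connected affine Dynkin diagram --- is false in general and cannot carry the lemma. Even assuming rationality, two supporting steps break: membership of $\halpha$ in $\cPhi_{\GG,\theta}$ only says \emph{some} level is integral, so the ``level-$0$ finite part'' you use as a connected core need not lie in $\cPhi_{LG,\kappa,\theta}$ at all (take $\kappa(\halpha,\halpha)/2=1/2$ and $\langle\theta,\halpha\rangle=1/2$: the admissible levels are the odd integers), and for the same reason the assertion that integrality ``pins the cosets down to $0$'' fails. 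The transitivity argument does not repair this: $\kappa(\halpha,\halpha)/2$ is $\WW$-invariant, but the coset also depends on $\langle\theta,\halpha\rangle\bmod\ZZ$, and $\theta$ is not invariant under $\WW^{\circ}_{\GG,\theta}$; the symmetry that does permute the fibers is the \emph{affine} integral Weyl group, which shifts levels, so it gives no control over the cosets of the kind you need. Finally, the appeal to \cite{Bo} only yields that the integral reflections generate a Coxeter group with the indecomposable positive integral coroots as simple reflections; it does not identify the integral subsystem as an affine system built on the same finite system.

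The paper's proof sidesteps all of this and is much lighter: for each $\halpha\in\cPhi^j_{\GG,\theta}$, the fiber $\cPhi^{\pm\halpha}_{LG,\kappa,\theta}$ is either a two-element $SL_2$-copy or the infinite set of real coroots of $LSL_2$, and in either case it is irreducible, since any two affine coroots with proportional finite parts pair nontrivially (orthogonality of affine coroots is detected on their finite parts). Given this, any orthogonal decomposition of the preimage is constant along fibers and therefore induces an orthogonal decomposition of $\cPhi^j_{\GG,\theta}$, contradicting its irreducibility. If you wish to salvage your write-up, this rank-one fiber analysis is exactly the ingredient that should replace both the ``level-$0$ core'' and the affine Dynkin diagram classification; no identification of the type of $\cPhi^j_{LG,\kappa,\theta}$ is needed.
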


\begin{proof} For any $\halpha \in \cPhi^j_{\GG, \theta}$, if we denote its preimage by 
$$\cPhi^{\pm \halpha}_{LG, \kappa, \theta} \subset \cPhi^j_{LG, \kappa, \theta},$$a straightforward calculation using Equation \eqref{e:rodentdream} shows that $\cPhi^{\pm \halpha}_{LG, \kappa, \theta}$ either consists of two elements, generating a copy of the coroots of $SL_2$, or infinitely many elements, generating a copy of the real coroots of $LSL_2$, and in particular is irreducible. The claim of the lemma follows straightforwardly from this and the fact that $\cPhi^j_{\GG, \theta}$ itself is irreducible.      
\end{proof}

By the lemma, it follows that we have a canonical bijection $\on{Irr}(\cPhi_{\GG, \theta}) \simeq \Upsilon$. It is straightforward to see that this bijection is $\Omega_{\kappa, \theta}$-equivariant, for its natural action on $\Upsilon$ and its action on $\on{Irr}(\cPhi_{\GG, \theta})$ via the surjective composition 
$$\Omega_{\kappa, \theta} \hookrightarrow \tilW_{LG, \kappa, \theta} \twoheadrightarrow \WW_{\GG, \theta} \twoheadrightarrow \WW_{\GG, \theta}/ \WW_{\GG, \theta}^\circ,$$
and the natural action of $\WW_{\GG, \theta}/ \WW_{\GG, \theta}^\circ$ on $\on{Irr}(\cPhi_{\GG, \theta})$. In particular, $\Upsilon_z$ corresponds via this bijection to a subset of $\mathscr{I}_f$, which implies the desired identity \eqref{e:whoisG}.

{\em Step 9.} Finally, having dispensed with the combinatorial analysis, let us obtain the desired equivalence of monoidal categories. Denote by $$\iota': (\widehat{\aa}_\kappa)^* \simeq (\widehat{\chaa}_{-\ckappa})^*$$the postcomposition of $\iota$ with an element $y$ of the neutral block of $\tilW_{L\chG, -\ckappa, \cht}^\circ$ obtained from the previous four steps, so that $\iota'$ in particular induces an isomorphism of extended Coxeter systems
$$\iota': \tilW_{LG, \kappa, \theta} \simeq \tilW_{L\chG, -\ckappa, \cht}.$$

 Denote the formal completions of $(\widehat{\aa}_\kappa)^*$ and $(\widehat{\chaa}_{-\ckappa})^*$ at zero by $(\widehat{\aa}_\kappa)^*_{0}$ and $(\widehat{\chaa}_{-\ckappa})^*_0$, respectively. Note that, as we are in characteristic zero, and working in the de Rham setting, the logarithm of monodromy yields canonical isomorphisms of formal schemes 
$$(\widehat{\aa}_\kappa)^*_{0} \simeq \frf_{(\kappa, \theta)}, \quad \text{and} \quad (\widehat{\chaa}_{-\ckappa})^*_0 \simeq \frf_{(-\ckappa, \cht)}$$
compatibly with the action of $\tilW_{LG, \kappa, \theta}$ and $\tilW_{L\chG, -\ckappa, \cht}$, respectively. In particular, the isomorphism $\iota'$ yields, thanks to Theorem \ref{t:mainthmsec7}, a monoidal equivalence 
$$D_{\kappa\mon}((I, \theta\mon) \bs LG / (I, \theta\mon)) \simeq D_{-\ckappa\mon}((\chI, \cht\mon) \bs L\chG / (\chI, \cht\mon)).$$
To prove the theorem, it remains to check that the hypotheses of Proposition \ref{p:mon2strict} are satisfied. As in Remark \ref{r:mon2str}, condition (1) of the proposition is vacuously satisfied, as we are working over a field. For condition (2), we note that by the construction of $\iota'$, it exchanges the projections 
$$(k \cdot \mathbf{1}^*) \oplus \chaa \twoheadrightarrow k \cdot \mathbf{1}^* \overset{1} \simeq k \cdot \check{\mathbf{1}}^* \twoheadleftarrow (k \cdot \check{\mathbf{1}}^*) \oplus \aa.$$
By passing to formal completions at the origins, and the associated pullbacks on functions, we see the remaining condition (2) of Proposition \ref{p:mon2strict} is also satisfied, hence we are done. 
\end{proof}

\sss{} Having identified quantum Langlands dual affine Hecke categories, let us next identify the corresponding affine Hecke 2-categories. To set up the relevant canonical bijection between the appropriate standard parahorics for $LG$ and $L\chG$, which is sensitive to the level, let us assume that 
\begin{equation} \label{e:assumelevelrat} \kappa(\halpha, \halpha) \in \QQ^\times, \quad \quad \halpha \in \cPhi_\GG. \end{equation}

Thanks to this assumption, we obtain a canonical decomposition of the coroot system into the disjoint union of two coroot systems, depending on the sign of the level on each simple factor, namely $$\cPhi_\GG \simeq \cPhi_\GG^{\kappa_+} \sqcup \cPhi_\GG^{\kappa_-},$$
$$\cPhi_\GG^{\kappa_{\pm}} := \{ \halpha \in \cPhi_\GG: \kappa(\halpha, \halpha) \in \pm \QQ^{>0} \}.$$
In particular, we may consider the involution of the coroot system $$\mathbf{Chev}_{\kappa}: \cPhi_{\GG} \simeq \cPhi_{\GG}$$
given by the Chevalley involution on $\cPhi_\GG^{\kappa_+}$, i.e., $\halpha \mapsto - w_\circ(\halpha)$, where $w_\circ$ denotes the longest element of $\WW$, and the identity on $\cPhi_\GG^{\kappa_-}.$ 

\sss{} In particular, if we write $\Delta_{\GG}$ and $\Delta_{\chGG}$ for the sets of simple coroots of $\GG$ and $\chGG$, respectively, we may consider the composite bijection 
\begin{equation} i_\kappa: \Delta_{\GG} \overset{\mathbf{Chev}_\kappa} \simeq \Delta_\GG \simeq \Delta_{\chGG},\end{equation}
where the second identification is the standard bijection between the simple roots of $\GG$ and $\chGG$.

 Therefore, for any standard parabolic subgroup $\PP$ in $\GG$, i.e.,$$\GG \supset \PP \supset \BB,$$
we may use $i_\kappa$ to obtain a standard parabolic subgroup of $\chGG$ 
$$\chGG \supset \chPP_\kappa \supset \chBB.$$Similarly, let us denote the corresponding standard parahoric subgroups in the loop groups by 
$$L^+ \GG \supset I_{\PP} \supset I \quad \text{and} \quad L^+\chGG \supset I_{\chPP_\kappa} \supset \chI,$$
so in particular $L^+ \GG = I_{\GG}$ and $I_{\BB} = I$, and similarly on the dual side.  

\sss{} The desired equivalence of affine Hecke 2-categories then reads as follows.

\begin{cor} \label{c:quantLang2HEcke} Let $\kappa$ satisfy \eqref{e:assumelevelrat}. Then for any standard parabolics $\PP^1$ and $\PP^2$, there are equivalences
	\begin{equation*}\Dmod_\kappa( I_{\PP^1} \bs LG / I_{\PP^2}) \simeq \Dmod_{-\ckappa}(I_{\chPP^{1}_\kappa} \bs L\chG / I_{\chPP^{2}_\kappa}),\end{equation*}
compatibly with convolution, i.e., which fit into an equivalence of affine Hecke 2-categories 
$$\underset{\PP^1, \PP^2} \oplus \hspace{.5mm} \Dmod_\kappa( I_{\PP^1} \bs LG / I_{\PP^2}) \simeq \underset{\chPP^1, \chPP^2} \oplus \hspace{.5mm} \Dmod_{-\ckappa}(I_{\chPP^{1}_\kappa} \bs L\chG / I_{\chPP_\kappa^{2}}).$$
\end{cor}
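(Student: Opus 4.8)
The plan is to deduce Corollary \ref{c:quantLang2HEcke} from Theorem \ref{t:GaitsgIwahori} exactly via the parahoric machinery developed in Section \ref{s:mon vs strict}, specifically Theorem \ref{thm: parahoric equivalence}. First I would recall that Theorem \ref{t:GaitsgIwahori} (in the untwisted case $\theta = 0$, which is Conjecture \ref{c:GaitsgIwahori}) provides a monoidal equivalence $\epsilon: \Dmod_\kappa(I \bs LG / I) \simeq \Dmod_{-\ckappa}(\chI \bs L\chG / \chI)$, and that, by inspection of its proof, this equivalence is induced by an isomorphism of extended Coxeter systems $\iota': \tilW_{LG, \kappa} \simeq \tilW_{L\chG, -\ckappa}$ together with a compatible identification $\beta$ of the monodromy data $\frf_{(\kappa)} \simeq \frf_{(-\ckappa)}$, and moreover it is $t$-exact, sends standard to standard, costandard to costandard, and tilting to tilting, cf. Theorem \ref{t:mainthmsec7}. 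Since we are working over a field $k$ of characteristic zero, the conditions of Proposition \ref{p:mon2strict} are automatically satisfied (condition (1) is vacuous by Remark \ref{r:mon2str}, and condition (2) was verified at the end of the proof of Theorem \ref{t:GaitsgIwahori} via the compatibility of $\iota'$ with the projections to the central coordinate). Hence the hypotheses of Theorem \ref{thm: parahoric equivalence} are in force.

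The second step is to match the indexing sets of standard parahorics on the two sides in a way compatible with the integral Coxeter datum. Here the key point is that $\mathscr{I}$ in Theorem \ref{thm: parahoric equivalence} indexes the simple reflections of $\tilW_{\wt{\chi}_1} \simeq \tilW_{\wt{\chi}_2}$ which are simultaneously simple reflections of both ambient affine Weyl groups, and the corresponding standard parahorics $K^\alpha_i$. For $\kappa$ of generic (or at least rational, satisfying \eqref{e:assumelevelrat}) level with $\theta = 0$, the integral Weyl group is the full extended affine Weyl group, so $\mathscr{I}$ simply indexes the finite simple coroots, and the associated standard parahorics on the $LG$ side are the maximal parahorics $I_{\PP}$ for $\PP$ a maximal proper standard parabolic — from which general standard parabolics are recovered as intersections, i.e. the full 2-category is generated. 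I would then spell out that the isomorphism $\iota'$ carries the simple reflection indexed by a simple coroot $\halpha$ of $\GG$ to the simple reflection of $\chGG$ indexed by $i_\kappa(\halpha)$: on the simple factors where $\kappa$ is positive, the affine simple reflection $s_0$ of $LG$ (associated to the highest coroot) gets exchanged under $\iota'$ with a finite simple reflection of $L\chG$ and vice versa — this is precisely the combinatorial mechanism behind the Chevalley twist $\mathbf{Chev}_\kappa$ and the bijection $i_\kappa$ appearing in the statement; on the simple factors where $\kappa$ is negative, $\iota'$ matches finite simple reflections with finite simple reflections directly. Concretely one checks this by the computation in Step 7 of the proof of Theorem \ref{t:GaitsgIwahori}: the alcove geometry of $\cPhi_{LG, \kappa}$ versus $\cPhi_{L\chG, -\ckappa}$ is exchanged by $\iota'$, and tracking which wall of the fundamental alcove goes to which identifies the dictionary on parahorics with $i_\kappa$.

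The third and final step is a direct invocation: having matched the quadruple $(\alpha, \beta, \mathbb{V}, \mathbb{V})$ as above, and having identified the parahoric indexing sets via $i_\kappa$ (so $K^\alpha_1 = I_{\PP^\alpha}$ on the $LG$-side corresponds to $K^\alpha_2 = I_{\chPP^\alpha_\kappa}$ on the $L\chG$-side), Theorem \ref{thm: parahoric equivalence} yields the equivalence of bi-monodromic Hecke 2-categories $\oplus_{\alpha, \beta} \Dmod_\kappa(I_{\PP^\alpha} \bs LG / I_{\PP^\beta}) \simeq \oplus_{\alpha, \beta} \Dmod_{-\ckappa}(I_{\chPP^\alpha_\kappa} \bs L\chG / I_{\chPP^\beta_\kappa})$, compatibly with convolution, since in the de Rham setting over a field the monodromic and strict versions coincide on the level of the level-$\kappa$ twisted categories (the ``monodromy'' along the central $\mathbb{G}_m$ is genuinely strict equivariance against the determinantal twisting). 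I expect the main obstacle to be the bookkeeping in the second step — pinning down precisely how $\iota'$ acts on simple reflections and identifying the resulting parahoric dictionary with $\mathbf{Chev}_\kappa$ and $i_\kappa$ — rather than anything conceptually deep; this requires carefully re-reading the combinatorial Steps 1–2 and 6–7 of the proof of Theorem \ref{t:GaitsgIwahori} and checking that the sign conventions on the level translate into the Chevalley twist exactly as claimed. Once that dictionary is fixed, the rest is a mechanical application of Theorem \ref{thm: parahoric equivalence}.
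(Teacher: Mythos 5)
Your overall route is exactly the paper's: apply Theorem \ref{thm: parahoric equivalence} to the $\theta=0$ case of Theorem \ref{t:GaitsgIwahori}, having verified the hypotheses of Proposition \ref{p:mon2strict} as in Step 9 of that proof. The problem is that the decisive combinatorial step --- identifying the parahoric dictionary with $i_\kappa$ --- is the one you defer, and the sketch you give of it is wrong in two places. First, for rational non-integral $\kappa$ the integral Weyl group $\tilW_{LG,\kappa,0}=\WW\ltimes\{\clambda\in\cLambda:\kappa(\clambda)\in\Lambda\}$ is a \emph{proper} subgroup of $\tilW_{LG}$ (it is everything only when $\kappa$ is integral), so your claim that it is the full extended affine Weyl group, and hence that $\mathscr{I}$ "indexes the finite simple coroots," does not stand as stated; what is actually needed (and true) is only that the finite simple reflections are integral, are simple for the integral Coxeter structure, and are simple in both ambient affine Weyl groups after applying the isomorphism, so that every standard finite-type parahoric $I_{\PP}$, $\PP\subseteq\GG$, fits the framework of Theorem \ref{thm: parahoric equivalence} directly (your "maximal parahorics, recovered as intersections" reading is also off: the parahoric attached to a single simple reflection is the sub-minimal one, and no intersection argument is available or needed). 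Second, your proposed mechanism for the Chevalley twist --- that on positive factors the ambient affine simple reflection $s_0$ of $LG$ is exchanged with a finite simple reflection of $L\chG$ --- is not what happens: for non-integral $\kappa$ the reflection $s_0$ is typically not even integral, and when it is, its image under the Coxeter isomorphism is an integral reflection of $L\chG$ that is not simple in the ambient $\tilW_{L\chG}$.

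The actual source of $i_\kappa$, and the place where hypothesis \eqref{e:assumelevelrat} (which your argument never uses) enters, is the alcove-correcting element $y$ of Step 9 of the proof of Theorem \ref{t:GaitsgIwahori}. Under \eqref{e:assumelevelrat} with $\theta=0$, every simple factor carries infinitely many integral affine reflections, so the set $\mathscr{I}_f$ of Step 8 is empty and the ambiguity group $\mathscr{G}$ is trivial (this is precisely what fails at irrational levels, cf.\ Remark \ref{r:quantum2cat}); hence $y$ is unique, and one checks it is $w_{\circ,\kappa_+}$, the longest element of the parabolic subgroup of $\WW$ attached to $\cPhi_\GG^{\kappa_+}$. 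Since $\iota$ commutes with $\WW$ when $\theta=\cht=0$, the corrected isomorphism $\iota'=y\circ\iota$ acts on finite simple reflections by $s_\alpha\mapsto w_{\circ,\kappa_+}\,s_\alpha\,w_{\circ,\kappa_+}^{-1}=s_{i_\kappa(\alpha)}$, i.e.\ the Chevalley involution on the $\kappa$-positive part and the identity on the $\kappa$-negative part; this is what matches $I_{\PP}$ with $I_{\chPP_\kappa}$ and lets Theorem \ref{thm: parahoric equivalence} be invoked. Without the triviality of $\mathscr{G}$ and the computation of $y$, the parahoric matching asserted in the corollary is not established, so as written your proposal has a genuine gap at exactly this point.
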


\begin{proof} This follows from applying Theorem \ref{thm: parahoric equivalence} to the equivalence constructed in the proof of Theorem \ref{t:GaitsgIwahori} in the case of $\theta = 0$. Namely, we note that in the isomorphism between the extended Coxeter data in the proof of Theorem \ref{t:GaitsgIwahori}, in Step 8 one finds that the group $\mathscr{G}$ is trivial with our assumption on $\kappa$. Moreover, it is straightforward to see that the unique element $y$ as in Step 9 is the longest element of the parabolic subgroup of $\WW$ corresponding to $\cPhi_\GG^{\kappa_+}$, which leads to the desired matching of parahorics. \end{proof}

\begin{rem} \label{r:quantum2cat} The proof of Corollary \ref{c:quantLang2HEcke} also implies that one has an equivalence of affine Hecke 2-categories for any level, but in the proof of Theorem \ref{t:GaitsgIwahori} the group $\mathscr{G}$ becomes a product of $\mathbb{Z}/2$'s for each simple factor of $\gg$ on which $\kappa$ is irrational, corresponding to the Chevalley involution on the corresponding appearing finite Hecke category. 
\end{rem}

As a particular case of the preceding corollary and remark, we obtain the following quantum derived Satake isomorphism. 

\begin{thm} \label{thm: metaplectic derived Satake}There is an equivalence of monoidal categories
	\begin{equation*}\Dmod_\kappa(L^{+}G \bs LG / L^{+}G) \simeq \Dmod_{-\ckappa}(L^{+}\chG \bs L\chG / L^{+}\chG).\end{equation*}
	
\end{thm}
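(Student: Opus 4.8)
The plan is to deduce the spherical equivalence from the already-established equivalence of affine Hecke 2-categories in Corollary \ref{c:quantLang2HEcke}, together with Remark \ref{r:quantum2cat} for levels which may be irrational on some simple factors. First I would observe that the spherical Hecke categories in question are exactly the diagonal $2$-morphism categories associated to the maximal parahorics: taking $\PP^1 = \PP^2 = \GG$ (equivalently $I_{\PP^i} = L^+ G$), we have $I_{\chPP_\kappa} = L^+\chG$ regardless of $\kappa$, since the Chevalley involution $\mathbf{Chev}_\kappa$ and the standard bijection $\Delta_\GG \simeq \Delta_{\chGG}$ both send the full simple-root set to itself. Hence the component of the $2$-categorical equivalence indexed by the pair $(\GG,\GG)$ on the left and $(\chGG,\chGG)$ on the right directly furnishes a monoidal equivalence
\begin{equation*}
\Dmod_\kappa(L^+G \bs LG / L^+G) \simeq \Dmod_{-\ckappa}(L^+\chG \bs L\chG / L^+\chG),
\end{equation*}
and this requires no rationality hypothesis on $\kappa$, only the input of Theorem \ref{t:GaitsgIwahori} (for $\theta = 0$) and Theorem \ref{thm: parahoric equivalence}.

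More concretely, I would spell out how Theorem \ref{thm: parahoric equivalence} applies. Starting from the monoidal equivalence $\epsilon\colon \Dmod_\kappa(I\bs LG/I)\simeq \Dmod_{-\ckappa}(\chI\bs L\chG/\chI)$ of Theorem \ref{t:GaitsgIwahori}, one checks the hypotheses of Proposition \ref{p:mon2strict}: condition (1) is vacuous since the coefficient field $k$ has characteristic zero (so the relevant augmentation module is unique, cf.\ Remark \ref{r:mon2str}), and condition (2) holds because the combinatorial isomorphism $\iota'$ constructed in the proof of Theorem \ref{t:GaitsgIwahori} by construction intertwines the two projections $(\widehat{\aa}_\kappa)^* \to k\cdot\mathbf{1}^*$ and $(\widehat{\chaa}_{-\ckappa})^* \to k\cdot\check{\mathbf{1}}^*$, hence matches the central monodromy operators. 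Then Theorem \ref{thm: parahoric equivalence}, applied with $\mathscr{I}$ the full set of simple reflections and taking the parahoric $K^\alpha = L^+G$ (resp.\ $L^+\chG$) corresponding to the entire finite Weyl group, yields the equivalence of strictly equivariant Hecke $2$-categories; the spherical statement is the $(L^+G, L^+G)$-diagonal summand. The $t$-exactness and the matching of standard/tilting objects carried along in Theorems \ref{t:mainthmsec7} and \ref{thm: parahoric equivalence} are inherited but not needed for the bare monoidal statement.

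The one point that genuinely requires care — and the main obstacle in a complete write-up — is the bookkeeping of the longest-element twist $y$ appearing in Step 8 of the proof of Theorem \ref{t:GaitsgIwahori} and in Remark \ref{r:quantum2cat}. For a level which is irrational on some simple factor of $\gg$, the isomorphism of extended Coxeter systems is corrected by a product of longest Weyl elements (the group $\mathscr{G}$), which corresponds geometrically to applying the Chevalley involution to the corresponding finite factors. I would argue that this twist nonetheless fixes the maximal parahoric $L^+G$ (its associated parabolic is all of $\GG$, which is stable under every such longest element), so it does not obstruct the spherical matching; this is precisely the observation that, for the pair $(\GG,\GG)$, the parahoric-matching bijection $i_\kappa$ is irrelevant. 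With that verified, the theorem follows.
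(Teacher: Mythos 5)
Your proposal is correct and follows essentially the same route as the paper, which obtains this theorem as the particular case $\PP^1=\PP^2=\GG$ of Corollary \ref{c:quantLang2HEcke} (itself Theorem \ref{t:GaitsgIwahori} at $\theta=0$ fed into Theorem \ref{thm: parahoric equivalence} via Proposition \ref{p:mon2strict}), combined with Remark \ref{r:quantum2cat} for levels irrational on some simple factors. Your observation that the twist by $\mathscr{G}$ (products of longest elements, i.e.\ Chevalley involutions) preserves the full simple-root set and hence the maximal parahoric is exactly the point the paper relies on to state the result without any rationality hypothesis on $\kappa$.
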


\subsection{Spectral description: the metaplectic endoscopic group}\label{ss:meta dual}
\subsubsection{} In the previous subsection, we identified quantum Langlands dual pairs of affine Hecke 2-categories, for all Kac--Moody levels. As discussed in the introduction to this section, for certain levels one expects a further identification of both with a 2-category of coherent sheaves associated to the Steinberg stack of another reductive group, the {\em metaplectic dual group}. In the remainder of this section, we will obtain the desired identification.
 
Unlike the prior subsection, which concerned D-modules, we will return to our general sheaf theoretic setting, with our coefficients $E$ a field of characteristic zero. 

\sss{} For $\GG$ as before a reductive group over $k$. Recall that the affine Cartan of $LG$ fits into a short exact sequence
$$1 \rightarrow \GG^{\cen}_m \rightarrow  \wt{\AA} \rightarrow \AA \rightarrow 1,$$
which is moreover split via the restriction of the canonical splitting of the central extension over $L^+G$. In particular, this splitting gives us a decomposition of the discrete groupoids of $E$-linear character sheaves
\begin{equation} \label{e:splitchars}\Ch(\wt{\AA}; E) \simeq \Ch(\GG^{\cen}_m; E) \times \Ch(\AA; E).\end{equation}
In particular, given a character sheaf $\chi$ on $\GG_m^{\cen}$, we may pull it back to $\wt{\AA}$ via this splitting, and we denote the resulting character sheaf by $\chi_c$. In what follows, we will assume that $\chi_{c}$ is of finite order.

We now review how to attach to the data of $\chi_c$ another split reductive group, the {\em metaplectic dual group} $H^\vee$. In fact, for our purposes it will be convenient to first consider instead the Langlands dual of $H^\vee$, which we call the {\em metaplectic endoscopic group $H$}.

\sss{} By our assumption that $\chi_c$ is of finite order, it is straightforward to check that the intersection 
$$\X_*(\AA_H) := \X_*(\AA)  \cap \tilW_{\wt{LG}, \chi_c}$$
is a lattice of finite index in $\X_*(\AA).$

\begin{defn} The {\em metaplectic endoscopic torus} $\AA_H$ is the torus over $k$ with cocharacter lattice $\X_*(\AA_H)$.  
\end{defn}

Note that, by definition, we have a homomorphism $\AA_H \rightarrow \AA$, and let us denote the corresponding map on character lattices by $$\X^*(\AA) \hookrightarrow \X^*(\AA_H),$$
which again is an inclusion of finite index. 

\sss{} The coroots of the metaplectic endoscopic group $H$ will be proportional to those of $\GG$. For a coroot $\halpha_\GG$ of $\GG$, we denote by $\halpha_{\GG, n}$ ($n\in\ZZ$) the corresponding real affine coroot of $LG$. Let us set 
$$\halpha_H := N_{\halpha} \cdot \halpha_\GG,$$
where $N_{\halpha} \in \ZZ^{> 0}$ is the minimal positive integer such that $\halpha_{\GG, N_{\halpha}}$ is an integral real affine coroot of $\chi_c$ in the sense of \S\ref{s:comboblock}. 

\begin{rem}
When $G^\der$ is almost simple, we make the definition of $\halpha_H$ more concrete. Let $\e=1$ if $G$ is of type $A,D$ or $E$, $\e=2$ if $G$ is of type $B,C$ or $F$, and $\e=3$ if $G=G_2$. Let $N$ be the  order of $\chi_c^{\ot  \j{\check{\a}_\GG, \check{\a}_\GG}/2}$ for any short coroot $\check{\a}_\GG$. Here $\j{\cdot,\cdot}$ is the bilinear pairing on $\xcoch(\AA)$ attached to the central extension $\wt{LG}$ defined in Section \ref{sss:commutator}. Then 
\begin{itemize}
    \item If $N$ is coprime to $\e$, then $\halpha_H= N \cdot \halpha_\GG$ for all coroots $\halpha_\GG$ of $\GG$;
    \item If $\e>1$ and $\e|N$, then
    \begin{equation}
        \halpha_H= \begin{cases}N\cdot  \halpha_\GG, & \mbox{ if $\halpha_\GG$ is short},\\
        (N/\e)\cdot \halpha_\GG, & \mbox{ if $\halpha_\GG$ is long}.
        \end{cases}
    \end{equation}
\end{itemize}
\end{rem}

A standard calculation in rank one shows that, if we denote by $$\alpha_H \in \X^*(\AA_H) \underset{\ZZ} \otimes \QQ$$the unique element in the $\QQ$-span of the corresponding root $\alpha_\GG$ of $\GG$ which satisfies 
$$\langle \halpha_H, \alpha_H \rangle = 2,$$
then $\alpha_H$ in fact lies in $\X^*(\AA_H) \hookrightarrow \X^*(\AA_H) \underset{\ZZ} \otimes \QQ$. 

In particular, as we vary over all coroots of $\GG$, we obtain subsets $$\cPhi_H \subset \X_*(\AA_H) \quad \quad \text{and} \quad \quad \Phi_H \subset \X^*(\AA_H),$$which moreover come equipped with tautological bijections 
$$\cPhi_{\GG} \simeq \cPhi_H, \quad \halpha_\GG \mapsto \halpha_H, \quad \text{and} \quad \Phi_{\GG} \simeq \Phi_H, \quad \alpha_\GG \mapsto \alpha_H.$$

\sss{} The definition of the desired group $H$ is then the following. 

\begin{defn} A {\em metaplectic endoscopic group} $H$ for $(\GG, \chi_c)$ is a connected reductive group equipped with an embedding $$\AA_H \hookrightarrow H$$as a Cartan subgroup, and with corresponding root datum given by the quadruple
$$(\X_*(\AA_H), \cPhi_H, \X^*(\AA_H), \Phi_H)$$as defined above. 
\end{defn}

It is a standard fact that a metaplectic endoscopic group $H$ exists, i.e., that the above quadruple is a root datum. The above definition then characterizes $H$ uniquely up to the adjoint action of $\AA_{H}$, i.e., the groupoid of such $H$ is canonically equivalent to $\pt / \AA_{H}$.

\begin{exmp}
    Suppose $\GG$ is the symplectic group $\on{Sp}_{2n}$, and we consider the basic central extension $L\on{Sp}_{2n}$, i.e., associated to the vector representation. 
    
    If $\chi_c$ is the unique character sheaf on $\GG_m$ of order two, then the metaplectic endoscopic group of $(\on{Sp}_{2n}, \chi_c)$ is given by $H \simeq \on{SO}_{2n+1}$.  
\end{exmp}

\subsubsection{} Having introduced the metaplectic endoscopic group, let us now turn to the promised relationship between the affine Hecke 2-categories of $(\GG, \chi_c)$ and $H$.

By construction, if we consider the set of positive roots of $\GG$, the corresponding subset of $\Phi_H$ is a set of positive roots for $H$. Let us denote by $$\AA_{H} \subset \BB_{H} \subset H$$the corresponding Borel subgroup. Let us denote the loop group of $H$ by $LH$, and the Iwahori subgroup corresponding to $\BB_H$ and the arc subgroup respectively by 
$$I_H \subset L^+ H \subset LH.$$

\sss{} Given a character sheaf on the finite torus $\chi_f \in \Ch(\AA; E)$, via the splitting \eqref{e:splitchars} we may form the monodromic affine Hecke category 
$$D_{\chi_c}((I, \chi_f) \bs LG / (I, \chi_f)).$$
On the other hand, via the map $\AA_H \rightarrow \AA$, we may pull back $\chi_f$ along this map and form the affine Hecke category without central twist
$$D((I_H, \chi_f) \bs LH / (I_H, \chi_f)).$$

\sss{} It is unfortunately not true that these two Hecke categories can always be identified in a reasonable way, e.g. in a manner which exchanges (co)standard objects.

\begin{exmp} Consider the pair $(\GG, \chi_c)$, where $\GG = \PSp_6$, and $\chi_c$ is the order two character sheaf on the basic central extension.  In this case, one has $H = \on{Sp}_6$. We saw in Section \ref{sss: example} that one may choose a twist $\chi_f$ on the finite Cartan such that the resulting integral affine Weyl group $\tilW_{LG, \chi_c, \chi_f}$ is not isomorphic, as an abstract group, to the semi-direct product of a finite group and a lattice. 

By contrast, for the metaplectic endoscopic group, one knows that $\tilW_{LH, \chi_f}$ is the semi-direct product of its intersection with the finite Weyl group $\WW$ and the lattice $\X_*(\AA_H)$. In particular, there cannot exist any isomorphism 
$$D_{\chi_c}((I, \chi_f) \bs LG / (I, \chi_f)) \overset{?} \simeq D((I_H, \chi_f) \bs LH / (I_H, \chi_f))$$which exchanges (co)standard objects, as the integral affine Weyl groups are not isomorphic even as abstract groups.

 One may construct similar examples for $\GG = \PSp_{4n+2}$ and  dually, by Theorem \ref{thm: dual with finite twist}, for $\GG = \Spin_{4n+3}$, for any $n \geqslant 1$.
\end{exmp}

\sss{} With the above example in mind, we will instead relate full monoidal subcategories on both sides, defined as follows. Recall the tautological short exact sequence 
$$1 \rightarrow \X_*(\AA) \rightarrow \tilW_{LG} \rightarrow \WW \rightarrow 1,$$
where $\WW$ denotes the finite Weyl group. In particular, we may consider the subgroup of the integral affine Weyl group generated by its Coxeter part and its intersection with the cocharacter lattice, which we denote by 
$$\tilW_{LG, \chi_c, \chi_f}^{\bullet} := \langle \tilW_{LG, \chi_c, \chi_f}^\circ, \tilW_{LG, \chi_c, \chi_f} \cap \X_*(\AA) \rangle \subset \tilW_{LG, \chi_c, \chi_f}. $$
In particular, from the tautological inclusion $\tilW^\circ_{LG, \chi_c, \chi_f} \subset \tilW^{\bullet}_{LG, \chi_c, \chi_f}$, it follows that the latter again naturally an extended Coxeter group, and explicitly corresponds to the union of blocks given by the image of the composition 
$$\tilW_{LG, \chi_c, \chi_f} \cap \X_*(\AA) \hookrightarrow \tilW_{LG, \chi_c, \chi_f} \twoheadrightarrow \Omega_{\chi_c, \chi_f}.$$
We therefore have a corresponding full monoidal subcategory 
$$D_{\chi_c}((I, \chi_f) \bs LG / (I, \chi_f))^\bullet \subset D_{\chi_c}((I, \chi_f) \bs LG / (I, \chi_f))$$
which is the union of the corresponding blocks. 

By replacing $(LG, \chi_c, \chi_f)$ in the preceding discussion with $(LH, \chi_f)$, we similarly obtain 
$$D((I_H, \chi_f) \bs LH / (I_H, \chi_f))^\bullet \subset D((I_H, \chi_f) \bs LH / (I_H, \chi_f)).$$

\sss{} We now relate the two above monoidal categories as follows.

\begin{thm} \label{thm: twisted metaplectic}There is an equivalence of monoidal categories 
\begin{equation*}D_{\chi_c}((I, \chi_f) \bs LG / (I, \chi_f))^\bullet  \simeq D((I_H, \chi_f) \bs LH / (I_H, \chi_f))^\bullet.\end{equation*}
\end{thm}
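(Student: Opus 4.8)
The plan is to deduce Theorem~\ref{thm: twisted metaplectic} from the master endoscopy result, Theorem~\ref{t:mainthmsec7}, applied to the pair of monodromic affine Hecke categories associated to $(\wt{LG}, \chi_c\bt\chi_f)$ and $(LH, \chi_f)$ (the latter with trivial central extension). By the very definition of the metaplectic endoscopic group $H$ in \S\ref{ss:meta dual}, the integral data attached to $\chi_f$ on $LH$ was constructed to match the integral data attached to $\chi_c\bt\chi_f$ on $\wt{LG}$; so the first task is to upgrade this combinatorial coincidence into the precise input of Theorem~\ref{t:mainthmsec7}, namely an isomorphism of extended Coxeter systems $\alpha: \wt{W}_{\wt{LG},\chi_c\bt\chi_f}^{\bullet} \simeq \wt{W}_{LH, \chi_f}^{\bullet}$ together with an $\alpha$-equivariant identification $\beta$ of the monodromy formal schemes $\frf_{\wt{LG},\chi_c\bt\chi_f} \simeq \frf_{LH,\chi_f}$ over $E$.

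First I would restrict attention on both sides to the full monoidal subcategories $\cM^\bullet$, which by construction are the unions of blocks coming from the image of the lattice part of the integral Weyl group; by Proposition~\ref{blockdec} and the closure of these under convolution (Lemma~\ref{l:b2b}), $D_{\chi_c}((I,\chi_f)\bs LG/(I,\chi_f))^\bullet$ is itself an affine Hecke category in the sense that it satisfies all the structural results of Sections~\ref{s:heckecat}--\ref{s:Soergel}, now with extended affine Weyl group $\wt{W}^\bullet_{\wt{LG},\chi_c\bt\chi_f} = \Omega^\bullet \ltimes \wt{W}^\circ_{\chi_c\bt\chi_f}$, where $\Omega^\bullet$ is the image of $\wt{W}_{\chi_c\bt\chi_f}\cap \X_*(\AA)$ in $\Omega_{\chi_c\bt\chi_f}$. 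The analogous statement holds for $LH$. Then I would construct $\alpha$: on the Coxeter part $\wt{W}^\circ$, the simple reflections on the $LG$-side correspond to indecomposable integral affine coroots $\halpha_{\GG,n}$, and the normalization $\halpha_H := N_\halpha\cdot\halpha_\GG$ sets up a bijection of these with the simple affine coroots of $LH$ relative to $\chi_f$; on the lattice part, $\X_*(\AA_H) = \X_*(\AA)\cap\wt{W}_{\wt{LG},\chi_c}$ is exactly the piece of $\wt{W}^\bullet$ that appears, so $\Omega^\bullet$ on both sides is canonically the same group, and one checks the action on the Coxeter system matches by a rank-one computation. For $\beta$: in the étale setting with $E$ a field, $\frf_\chi$ is the formal completion of $\on{LS}^{t,\square}_{\wt{\AA}^\vee}$ at $\chi$, and the inclusion of character lattices $\X^*(\AA)\hookrightarrow\X^*(\AA_H)$ dualizes to an isogeny $\AA_H^\vee\to\wt{\AA}^\vee/\GG_m^{\cen}\hookleftarrow$ (using that $\chi_c$ is trivialized along the splitting) of degree invertible in $E$ (here using $\chi_c$ of finite order and $\mathrm{char}\,E = 0$), so by Proposition~\ref{p:ch sh triv} it induces an isomorphism of the relevant formal schemes, $\wt{W}$-equivariantly.

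With $(\alpha,\beta)$ in hand, and fixing Soergel functors $\mathbb{V}_{\chi_c\bt\chi_f}$ and $\mathbb{V}_{\chi_f}$ on the two sides (these exist by \S\ref{ss:soergfunctor}, applied to $\wt{LG}$ and to $LH$ respectively; note $G$ and $H$ here need not be split, but Theorem~\ref{t:mainthmsec7} does not require splitness, only the not-characteristic-two hypothesis, which is automatic since $\mathrm{char}\,E=0$), Theorem~\ref{t:mainthmsec7} yields the desired $t$-exact monoidal equivalence $\epsilon: D_{\chi_c}((I,\chi_f)\bs LG/(I,\chi_f))^\bullet \simeq D((I_H,\chi_f)\bs LH/(I_H,\chi_f))^\bullet$, automatically exchanging (co)standard and tilting objects. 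One technical point to address is that Theorem~\ref{t:mainthmsec7} as stated concerns $\wt{LG}_i$ with its own central extension, so for the $LH$ side I would either view $LH$ as centrally extended trivially or, more cleanly, observe that the monodromic affine Hecke category $D((I_H,\chi_f)\bs LH/(I_H,\chi_f))$ and its $\bullet$-subcategory fit the hypotheses of the theorem verbatim with $\GG_m^{\cen}$ the trivial torus.

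The main obstacle I expect is \textbf{verifying that $\alpha$ is genuinely an isomorphism of extended Coxeter systems}, not merely of abstract groups---i.e., that after the coroot-rescaling $\halpha_\GG\mapsto N_\halpha\halpha_\GG$ the Bruhat order, length function, and the $\Omega^\bullet$-action all transport correctly. The delicate part is the interaction between the Coxeter part and the length-zero part: an element of $\wt{W}_{\chi_c\bt\chi_f}\cap\X_*(\AA)$ that is a translation by a cocharacter of $\AA$ must correspond, under $\alpha$, to a translation by the matching cocharacter of $\AA_H$, and one must check these have the same length with respect to the respective simple reflections $S_{\chi_c\bt\chi_f}$ and $S_{\chi_f}$. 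This is ultimately a finite rank-one-and-rank-two computation with the affine coroot systems, entirely parallel to Steps~5--8 in the proof of Theorem~\ref{t:GaitsgIwahori}, and I would organize it the same way: decompose the integral coroot system into $\Omega^\bullet$-orbits of irreducible components, reduce to the rank-one subsystems $\cPhi^{\pm\halpha}$, and check the normalization $N_\halpha$ is precisely what makes the projected affine coroot system of $LG$ match that of $LH$. Once this combinatorial identification is pinned down, the rest is a formal application of the machinery already developed.
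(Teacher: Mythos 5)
Your overall architecture is the paper's: restrict to the $\bullet$-blocks, turn the definition of $H$ into an isomorphism of extended Coxeter data plus an equivariant identification of the monodromy formal schemes, and feed this into Theorem~\ref{t:mainthmsec7}, with the combinatorial matching organized as in Theorem~\ref{t:GaitsgIwahori}. But there is a genuine gap in how you set up the two sides and construct $(\alpha,\beta)$. Theorem~\ref{t:mainthmsec7} requires an $\alpha$-equivariant isomorphism $\frf_{\chi_c\bt\chi_f}\simeq\frf_{\chi_f}$. On the $G$-side, $\frf_{\chi_c\bt\chi_f}$ is a formal scheme of dimension $\dim\AA+1$ on which the translation part of $\tilW^{\bullet}$ acts nontrivially through the central direction -- this is exactly the faithfulness of Proposition~\ref{p: affine weyl faithful} that the Soergel machinery (e.g.\ Corollary~\ref{c: tilt neutral ff}) relies on. If, as you propose, $LH$ is taken with trivial central extension (or ``with $\GG_m^{\cen}$ the trivial torus''), then $\frf_{\chi_f}$ has dimension $\dim\AA_H$ and the lattice part acts trivially on it, so no equivariant $\beta$ can exist, and indeed $W_{\aff}$ no longer acts faithfully, so Theorem~\ref{t:mainthmsec7} is not even available on that side. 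The paper instead equips $LH$ with a genuine Kac--Moody central extension (a rescaled $\langle-,-\rangle_V$) and identifies the two $(\dim\AA+1)$-dimensional rational dual affine Cartans. Relatedly, your $\alpha$ (``rescale coroots, identity on the lattice'') and your $\beta$ (isogeny of tori via Proposition~\ref{p:ch sh triv}) miss the essential affine shift: the integral affine coroots over $\halpha_\GG$ are $\halpha_{\GG,\,i_{\halpha}+jd_{\halpha}}$ with offsets $i_{\halpha}$ that are nonzero whenever $\halpha_\GG(\chi_f)\neq 1$, and the correspondence with the $H$-side is $s_{\halpha_{\GG,\,i_{\halpha}+jd_{\halpha}}}\leftrightarrow s_{\halpha_{H,\,j}}$. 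Without conjugating by the translation $\tau^{\cmu}$ by the fractional cocharacter $\cmu$ with $\langle\alpha_\GG,\cmu\rangle=-i_{\halpha}$ (Step 5 of the paper's proof), the assignment you describe is not a group homomorphism compatible with the common lattice $\X_*(\AA_H)$, and the induced map of formal schemes is not the one coming from any base-point-preserving torus isogeny; it is an affine-linear map, exponentiated in characteristic zero.

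A second omission: the categories in the statement are strictly equivariant (strictly $\chi_c$-equivariant along the central $\GG_m$ and strictly $\chi_f$-equivariant along the torus), while Theorem~\ref{t:mainthmsec7} only produces the bi-monodromic equivalence for the centrally extended groups, which is where your argument stops. One must still descend to strict equivariance via Proposition~\ref{p:mon2strict} -- verifying its conditions (matching of the unit's augmentation and of the central monodromy endomorphisms, exactly as at the end of the proof of Theorem~\ref{t:GaitsgIwahori}) -- specializing the central character to $\chi_c$ on the $G$-side and to the trivial character on the $H$-side; this is also precisely the step where the auxiliary central extension of $LH$ disappears and one recovers $D((I_H,\chi_f)\bs LH/(I_H,\chi_f))^{\bullet}$.
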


\begin{proof} 
As we explain more carefully below, we will largely retain the notation of Theorem \ref{thm: dual with finite twist} and its proof. For ease of reading, we break the proof into a sequence of steps and sublemmas.

{\it Step 1.} For ease of notation, let us fix a topological generator of $\pi_1^t(K)$, and thereby identify for any split torus $A$ over $E$ the formal group scheme of rigidified $A$-local systems on $K$ with a formal subscheme of $A$
$$\on{LS}_{A}^{t, \square} \hookrightarrow A.$$ 
In particular, we will view our character sheaf on $LG$ as an $E$-point $$(\chi_f, \chi_c) \in (\AA^\vee \times \GG_m^{\cen})(E).$$

{\it Step 2.} In this step we will determine the integral coroots for $$D_{\chi_c}((I, \chi_f) \bs LG / (I, \chi_f))^\circ.$$
Recall the inner product $\langle -, - \rangle_V$ on $\X_*(\AA)$ from Section \ref{sss:commutator}. Fix a coroot $\halpha_\GG$ of $\GG$, and set 
$$Q(\halpha_\GG) := \frac{\langle \halpha_\GG, \halpha_\GG \rangle}{2} \in \ZZ;$$we note for later the tautological equality $Q(\halpha_\GG) = Q(w(\halpha_\GG))$ for $w \in \WW$. With this notation in hand, we have that for $i \in \ZZ$ the associated real affine coroot of $LG$ is given by $$\halpha_{\GG, i}  = (\halpha,  i \cdot Q(\halpha_\GG)) \in \X^*(\AA^\vee) \times \ZZ \simeq \X^*(\AA^\vee \times \GG_m^{\cen}).$$
In particular, we have that 
$$\halpha_{\GG, i}(\chi_f, \chi_c) = \halpha(\chi_f) \cdot \chi_c^{i \cdot Q(\halpha_\GG)},$$
so that it is an integral root if and only if 
\begin{equation} \label{e:7cats} \halpha_\GG(\chi_f) = \chi_c^{-i \cdot Q(\halpha_\GG)}.\end{equation}
Note that if $\chi_c^{Q(\halpha_\GG)}$ is a root of unity of order $d_{\halpha}$, this equation is satisfied if and only if $\halpha_\GG(\chi_f)$ is also a $d_{\halpha}$th root of unity, and the condition on $i$ only depends on the residue class $$i \in  \ZZ / d_{\halpha} \ZZ.$$ 

Let us denote the image of the integral affine coroots under the tautological projection to the finite coroots by $$\cPhi_{\GG, \chi_f, \chi_c} \subset \cPhi_\GG.$$It is straightforward to see this is a root subsystem, i.e., is closed under its corresponding reflections.

{\it Step 3.} Let us now turn to the computation of the integral coroots for $$D((I_{H}, \chi_f) \bs LH / (I_H, \chi_f))^\circ.$$
To do so, for a coroot of $\GG$, recall the defining identity 
$$\halpha_{H} := N_{\halpha} \cdot \halpha_\GG.$$
By the analysis of the previous step, applied to the purely central twist $(e, \chi_c)$, we have $N_{\halpha} = d_{\halpha}$. In particular, it follows that a real affine coroot $\halpha_{H, i}$ of $LH$, for $i \in \ZZ$, is an integral coroot if and only if 
$$\halpha_H(\chi_f) = \halpha_\GG(\chi_f)^{d_{\halpha}} = 1,$$
i.e., if and only if $\halpha_\GG(\chi_f)$ is a $d_{\halpha}$th root of unity. 

In particular, if we denote the image of the integral affine coroots under the tautological projection to the finite coroots by 
$$\cPhi_{H, \chi_f} \subset \cPhi_H,$$
it follows that this is again a root subsystem, and the defining bijection $\cPhi_\GG \simeq \cPhi_H$ restricts to a bijection 
$$\iota: \cPhi_{\GG, \chi_f, \chi_c} \simeq \cPhi_{H, \chi_f}.$$

{\it Step 4.} We would next like to record a convenient set of generators for the Coxeter parts of both integral affine Weyl groups. To do so, note the intersection $$\cPhi_{\GG, \chi_f, \chi_c} \cap \cPhi^+_\GG$$is a system of positive roots for $\cPhi_{\GG, \chi_f, \chi_c}$, and let us write $\Delta_{\GG, \chi_f}$ for the corresponding set of simple roots. Defining $\Delta_{H, \chi_f}$ similarly, note that $\iota$ restricts to a bijection 
$$\iota: \Delta_{\GG, \chi_f} \simeq \Delta_{H, \chi_f}.$$

To proceed, for an element $\halpha_\GG \in \cPhi_{\GG, \chi_f, \chi_c}$, consider the subset of integral affine coroots whose finite parts are $\halpha_\GG$; let us denote this by $$\cPhi_{LG, \chi_f, \chi_c}(\halpha_\GG) \subset \cPhi_{LG, \chi_f, \chi_c}.$$Let us denote the corresponding subset of reflections in the integral affine Weyl group by
$$\on{S}(\halpha_\GG) := \{ s_{\halpha_{\GG, i}}: \halpha_{\GG, i} \in \cPhi_{LG, \chi_f, \chi_c}(\halpha_\GG) \} \subset \tilW_{LG, \chi_f, \chi_c}^\circ.$$
Given an element $\halpha_H \in \cPhi_{H, \chi_f}$, define 
$$\on{S}(\halpha_H) \subset \tilW^\circ_{LH, \chi_f}$$similarly. With this, we may state the following. 

\begin{lem} \label{l:reflgens}The group  $\tilW^\circ_{LG, \chi_f, \chi_c}$ is generated by the reflections $$\on{S}(\halpha_\GG), \quad \quad \text{for }\halpha_\GG \in \Delta_{\GG, \chi_f}.$$Similarly, $\tilW^\circ_{LH, \chi_f}$ is generated by the reflections $$\on{S}(\halpha_H), \quad \quad \text{for }  \halpha_H \in \Delta_{H, \chi_f}.$$
\end{lem}

\begin{proof} By definition, $\tilW^\circ_{LG, \chi_f, \chi_c}$ is generated by the reflections $$\on{S}(\check{\beta}_\GG), \quad \quad \text{for } \check{\beta}_\GG \in \cPhi_{\GG, \chi_f, \chi_c},$$so it is enough to see all of these lie in the subgroup generated by the $S(\halpha_\GG),$ for $\halpha_\GG \in \Delta_{\GG, \chi_f}$. To proceed, for any $\check{\beta}_\GG \in \cPhi_{\GG, \chi_f, \chi_c}$, we may choose a sequence $\halpha_{\GG}^1, \halpha_{\GG}^2, \ldots, \halpha_{\GG}^N$ in $\Delta_{\GG, \chi_f}$, for some $N \geqslant 1$, such that 
$$\check{\beta}_\GG = s_{\halpha^1_\GG} \circ s_{\halpha^2_\GG} \circ \cdots \circ s_{\halpha^{N-1}_\GG} (\halpha_\GG^N).$$
If we choose arbitrary lifts $$s^i \in S(\halpha_{\GG}^i),  \quad \quad 1 \leqslant i \leqslant N - 1,$$it is straightforward to see that conjugation by $s^1 \circ s^2 \cdots \circ s^{N-1}$ restricts to an isomorphism $S(\halpha^N_\GG) \simeq S(\check{\beta}_\GG)$, which yields the claim for $LG$. The proof for the analogous claim for $LH$ is entirely similar. 
\end{proof}

{\em Step 5.} For each $\halpha_\GG \in \Delta_{\GG, \chi_f}$, let $i_{\halpha} \in \ZZ_{\geqslant 0}$ denote the minimal nonnegative integer for which $\halpha_{\GG, i_{\halpha}}$ lies in $\cPhi_{LG, \chi_f, \chi_c}$. If we consider the rationalization of the cocharacter lattice 
$$\X_{*, \QQ} := \X_*(\AA) \underset{\Z} \otimes \QQ \xleftarrow{\sim} \X_*(\AA_H) \underset{\Z} \otimes \QQ,$$
there is a unique element $\cmu$ in the $\QQ$-span of $\Delta_{\GG, \chi_f}$ satisfying the identities 
$$\langle \alpha_\GG, \cmu \rangle = -i_{\halpha},$$
for each $\halpha_\GG \in \Delta_{\GG, \chi_f}$. In particular, we have that $$\cmu - s_{\halpha}(\cmu) = -i_{\halpha}\cdot \halpha_\GG$$ for any $\halpha_\GG \in \Delta_{\GG, \chi_f}$.

Let us denote by $\on{Aff}(\X_{*, \QQ})$ the group of affine linear automorphisms of $\X_{*, \QQ}$. For any $v \in \X_{*, \QQ}$, let us denote by $\tau^v \in \on{Aff}(\X_{*, \QQ})$ the corresponding translation $w \mapsto v + w$. Consider the tautological embeddings 
$$\tilW_{LG} = \WW \ltimes \X_*(\AA) \hookrightarrow \on{Aff}(\X_{*, \Q}) \hookleftarrow  \WW \ltimes \X_*(\AA_H) = \tilW_{LH}.$$
where $\WW$ acts linearly on $\X_{*, \Q}$ in the usual way, and $$\X_*(\AA_H) \subset \X_*(\AA) \subset \X_{*, \Q}$$act by the corresponding translations. 

Note that, as the $i_{\halpha}$ are not necessarily zero, the tautological embeddings $$\tilW_{LG, \chi_f, \chi_c}^\bullet \hookrightarrow \tilW_{LG} \hookrightarrow \on{Aff}(\X_{*, \Q}) \hookleftarrow \tilW_{LH} \hookleftarrow \tilW_{LH, \chi_f}^\bullet.$$
do not have the same image, i.e., do not identify $\tilW^\bullet_{LG, \chi_f, \chi_c}$ and $\tilW^\bullet_{LH, \chi_f}$. To account for this, let us consider the affine linear automorphism 
$$\tau^{\cmu}: \X_{*, \Q} \simeq \X_{*, \Q},$$
which induces the conjugation automorphism $$\Aff(\X_{*, \Q}) \simeq \Aff(\X_{*, \Q}), \quad \quad g \mapsto \tau^{\cmu} \circ g \circ \tau^{-\cmu}.$$ 

We claim that this restricts to an isomorphism between $\tilW^\bullet_{LG, \chi_f, \chi_c}$ and $\tilW^\bullet_{LH, \chi_f}$, i.e., that 
\begin{equation} \label{e:conjeq} \tau^{\cmu} \circ \tilW^\bullet_{LG, \chi_f, \chi_c} \circ \tau^{-\cmu} = \tilW^\bullet_{LH, \chi_f}.\end{equation}
Indeed, it is clear it acts as the identity on their common lattice part, namely $\X_*(\AA_H)$, so it remains to show it exchanges their Coxeter parts. For $\halpha_\GG \in \Delta_{\GG,\chi_f}$, we compute directly for $i_{\halpha}$ and $d_{\halpha}$ as above, and any $j \in \Z$, that 
\begin{align*}
\tau^{\cmu} \circ s_{{\halpha}_{\GG, i_{\halpha} + j \cdot d_{\halpha}}}  \circ \tau^{-\cmu} &= \tau^{\cmu} \circ (\tau^{(i_{\halpha} + j \cdot d_{\halpha}) \cdot \halpha_\GG} \circ s_{\halpha}) \circ  \tau^{-\cmu} \\ & = \tau^{\cmu - s_{\halpha}(\cmu) + (i_{\halpha} + j \cdot d_{\halpha}) \cdot \halpha_\GG} \circ s_{\halpha} \\ & = \tau^{-i_{\halpha} \cdot \halpha_\GG + (i_{\halpha} + j \cdot d_{\halpha}) \cdot \halpha_\GG} \circ s_{\halpha} \\ & = \tau^{j \cdot d_{\halpha} \cdot \halpha_\GG} \circ s_{\halpha}  = \tau^{j \cdot \halpha_H} \circ s_{\halpha} = s_{\halpha_{H, j}}. 
\end{align*}
It therefore follows from Lemma \ref{l:reflgens} that \eqref{e:conjeq} holds, as claimed. 

Note that, under the standard homomorphism 
$$\on{Aff}(\X_{*, \Q}) \rightarrow GL( \X_{*, \Q} \oplus \Q),$$
and the identification $\X_{*, \Q} \simeq \X^*_\Q$ afforded by $\langle -, - \rangle_V$, this recovers the reflection representation of $\tilW_{LG}$, i.e., its  action on the rational dual affine Cartan. 

For $\tilW_{LH}$, it is straightforward to see that after possibly replacing $\langle -, - \rangle_V$ by a positive integer multiple, it 
again arises from the trace form on a representation. In particular, this also recovers the reflection representation of $\tilW_{LH}$, i.e., its action on the rational dual affine Cartan of a Kac--Moody central extension of $LH$. 

In particular, there exists a unique element $y$ of $\tilW_{LH. \chi_f}^\circ$ such that the composition 
$$j := y \circ \tau^{\cmu}: \X_{*, \Q} \simeq \X_{*, \Q}$$
exchanges the fundamental alcoves of $\tilW^\circ_{LG, \chi_f, \chi_c}$ and $\tilW^\circ_{LH, \chi_f}$, and in particular 
induces an isomorphism of extended Coxeter systems 
$$j: \tilW^\bullet_{LG, \chi_f, \chi_c} \simeq \tilW^\bullet_{LH, \chi_f}.$$

{\em Step 6.} To use $j$ to identify our Hecke categories, we now explain how to relate the rational dual affine Cartans to the formal schemes $\frf_{LG, \chi_f, \chi_c}$ and $\frf_{LH, \chi_f}$ appearing in the target of the Soergel functor.  Let $A$ be an algebraic group over $E$, with Lie algebra $\fra$. As $E$ has characteristic zero, the formal exponential map  identifies the formal completions of $\fra$ and $A$ at zero and the identity element, respectively, i.e.,  
$$\fra^\wedge_0 \simeq A^\wedge_{e},$$
compatibly with the natural actions of the group of automorphisms of $A$. It is straightforward to see this again holds for any formal group subscheme of $A$. 

In particular, it follows that for a torus $T$ over $E$ with Lie algebra $\frt$, we have a canonical isomorphism  
$$(\on{LS}_{T}^{t, \square})^\wedge_{e} \simeq (\on{H}^1(\GG_m, E) \underset{E} \otimes \frt^*)^\wedge_0.$$
In particular, if we tensor our previous isomorphism 
$$j: \X^*(\A)_\Q \oplus \Q \simeq \X^*(\AA_H)_\Q \oplus \Q$$
over $\Q$ with $\on{H}^1(\GG_m, E)$, and complete at the origin, we obtain our desired $\tilW_{LG, \chi_f, \chi_c}^\bullet \simeq \tilW_{LH, \chi_f}^\bullet$-equivariant identification 
$$j: \frf_{LG, \chi_f, \chi_c} \simeq \frf_{LH, \chi_f}.$$

With the isomorphism $j$ in hand, it remains to apply Theorems \ref{t:mainthmsec7} and Proposition \ref{p:mon2strict} to obtain the equivalence  of the affine Hecke categories. More carefully, while they were stated for the entire Hecke categories, the proofs apply equally well to identify full monoidal subcategories consisting of unions of blocks via their Soergel bimodule descriptions. From here, the verification of the hypotheses of Proposition \ref{p:mon2strict} are exactly as in Step 6 of the proof of Theorem \ref{t:GaitsgIwahori}, so we are done. \end{proof}

\sss{} We next note two favorable situations in which one {\em does} have an equivalence on all blocks.  First, under an assumption on the center of $H$, one is guaranteed an equivalence on all blocks for all $\chi_f$, as follows. 

\begin{cor} \label{c:connectedcenter}Suppose the center of $H$ is connected. Then one has an equivalence of monoidal categories 
\begin{equation*} D_{\chi_c}((I, \chi_f) \bs LG / (I, \chi_f)) \simeq D((I_{H}, \chi_f) \bs LH / (I_{H}, \chi_f)). \end{equation*}
\end{cor}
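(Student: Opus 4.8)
The plan is to deduce Corollary~\ref{c:connectedcenter} from Theorem~\ref{thm: twisted metaplectic} by showing that, when the center of $H$ is connected, the full monoidal subcategories appearing there are in fact everything, i.e., that
$$D_{\chi_c}((I, \chi_f) \bs LG / (I, \chi_f))^\bullet = D_{\chi_c}((I, \chi_f) \bs LG / (I, \chi_f)) \quad \text{and} \quad D((I_H, \chi_f) \bs LH / (I_H, \chi_f))^\bullet = D((I_H, \chi_f) \bs LH / (I_H, \chi_f)).$$
By the block decomposition (Proposition~\ref{blockdec}) this amounts to the purely combinatorial claim that $\tilW^\bullet_{LG, \chi_f, \chi_c} = \tilW_{LG, \chi_f, \chi_c}$ and likewise for $LH$, i.e., that the integral affine Weyl groups are generated by their Coxeter part together with their lattice part.

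First I would reduce to the statement for $H$. Indeed, by construction $\AA_H \hookrightarrow H$ is a maximal torus with $\X_*(\AA_H) = \tilW_{\wt{LG},\chi_c,\chi_f} \cap \X_*(\AA)$, and one checks directly from the definitions (using the explicit description of the action of $\X_*(\AA)$ on $\wt\AA$ in \S\ref{SSS: action of tilW on wtAA} and the computations of integral coroots in Steps~2--3 of the proof of Theorem~\ref{thm: twisted metaplectic}) that the isomorphism $j$ of extended Coxeter systems there extends to an isomorphism of the full integral affine Weyl groups $\tilW_{LG, \chi_f, \chi_c} \simeq \tilW_{LH, \chi_f}$, compatibly with lattice and Coxeter parts; the point is simply that $\tilW_{LH,\chi_f} \cap \X_*(\AA_H)$ has finite index in $\X_*(\AA_H)$ and the Coxeter parts already correspond. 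So it suffices to show $\tilW_{LH, \chi_f}^\bullet = \tilW_{LH, \chi_f}$.

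For this, the key input is the connectedness of $Z(H)$. Recall the length-zero part $\Omega_{H, \chi_f} = \tilW_{LH, \chi_f} / \tilW_{LH, \chi_f}^\circ$, and that $\tilW_{LH, \chi_f}^\bullet$ corresponds to the image of $\tilW_{LH, \chi_f} \cap \X_*(\AA_H)$ in $\Omega_{H, \chi_f}$. I would argue that when $Z(H)$ is connected, this image is all of $\Omega_{H, \chi_f}$. The standard mechanism is that $\Omega$ for a loop group $LH$ is identified with $\X_*(\AA_H)/\X_*(\AA_H^{\mathrm{sc}})$ (the quotient by the coweight-lattice-span of the coroots), and connectedness of $Z(H)$ is precisely the condition making the relevant quotient torsion-free, so that every element of $\Omega_{H,\chi_f}$ lifts to a translation: concretely, one shows that any minimal-length coset representative $w^\beta \in \tilW_{LH, \chi_f}$ with $w^\beta \in \X_*(\AA_H) \rtimes \WW$ projecting to $\WW$-part $w$ must have $w = e$ when $Z(H)$ is connected, because a nontrivial finite-order element of $\WW$ stabilizing $\chi_f$ together with the integral coroot data would force a nontrivial torsion element in $\X_*(\AA_H)_{\mathrm{coroots}}$, contradicting connectedness. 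This is exactly the phenomenon illustrated (in its failure) by the $\PSp_6$ example of \S\ref{sss: example}, where $Z(H) = Z(\mathrm{Sp}_6)$ is disconnected.

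The main obstacle is making precise and correct the last step: identifying $\Omega_{H,\chi_f}$ with a quotient of $\X_*(\AA_H)$ and pinning down how connectedness of $Z(H)$ forces the $\WW$-part of minimal-length elements to vanish. One has to be careful that $\tilW_{LH,\chi_f}$ is the \emph{integral} affine Weyl group for the character $\chi_f$, not the full one, so the argument must run with the integral root system $\cPhi_{H,\chi_f}$ in place of $\cPhi_H$; fortunately the metaplectic endoscopic construction was set up so that $\cPhi_{H,\chi_f} \subset \X^*(\AA_H)$ with simple system $\Delta_{H,\chi_f}$, and connectedness of $Z(H)$ — equivalently that $\X_*(\AA_H)/\ZZ\langle \cPhi_H^\vee\rangle$ is torsion-free, hence a fortiori $\X_*(\AA_H)/\ZZ\langle \cPhi_{H,\chi_f}^\vee\rangle$ has torsion only coming from coroots outside the integral system, which are irrelevant here — gives what is needed. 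Once $\tilW_{LH,\chi_f}^\bullet = \tilW_{LH,\chi_f}$ is established, Theorem~\ref{thm: twisted metaplectic} applied to all blocks yields the asserted equivalence, and monoidality is inherited since we are only enlarging from a union of blocks to all blocks, which are orthogonal under convolution by Remark~\ref{rem:block decomp of monodromic hecke}.
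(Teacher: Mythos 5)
Your overall frame agrees with the paper: by Theorem \ref{thm: twisted metaplectic} it suffices to show $\tilW^\bullet_{LG,\chi_f,\chi_c}=\tilW_{LG,\chi_f,\chi_c}$ and $\tilW^\bullet_{LH,\chi_f}=\tilW_{LH,\chi_f}$, equivalently that the images of the bullet subgroups and the full integral groups in $\WW$ coincide. But both of your key steps have genuine gaps. First, the reduction from $LG$ to $LH$: you assert that the isomorphism $j$ of extended Coxeter systems ``extends to an isomorphism of the full integral affine Weyl groups,'' justified only by finite index of the lattice parts and the matching of Coxeter parts. This is false in general --- the $\PSp_6$ example in \S\ref{sss: example} is precisely a case where $\tilW_{LG,\chi_f,\chi_c}$ and $\tilW_{LH,\chi_f}$ are not even abstractly isomorphic --- and under the connected-center hypothesis it is only known a posteriori. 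The paper handles the $LG$ side by a separate computation: using Lemma \ref{l:conj action}, an element $t^{\clambda}w$ is integral for $(\chi_f,\chi_c)$ iff $\clambda^*(\chi_c)\cdot w(\chi_f)=\chi_f$ in $\AA^\vee(E)$, and one checks that $\clambda^*(\chi_c)$ dies under the projection $\pi:\AA^\vee\to\AA^\vee_H$ (this is exactly the defining property of $\X_*(\AA_H)\subset\X_*(\AA)$), so the $\WW$-part of any such element stabilizes $\chi_f\in\AA^\vee_H(E)$ and one is reduced to the $LH$ statement. Your proposal contains no substitute for this projection argument.

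Second, your $LH$-side mechanism is not correct as stated. You equate connectedness of $Z(H)$ with torsion-freeness of $\X_*(\AA_H)/\ZZ\langle\cPhi_H\rangle$ (i.e.\ with $\Omega_{LH}\cong\pi_1(H)$ being torsion-free); these are different conditions ($\PGL_2$ has connected center but $\pi_1=\ZZ/2$, while $\SL_2$ has torsion-free $\pi_1$ but disconnected center). Connectedness of $Z(H)$ is dual to simple-connectedness of the derived group of $H^\vee$, and the input actually needed --- and the one the paper uses --- is the resulting Steinberg-type statement that the centralizer of the semisimple element $\chi_f\in\AA^\vee_H(E)$ in $H^\vee$ is connected, so that $\on{Stab}_{\WW}(\chi_f)$ is generated by the reflections $s_{\halpha}$ with $\halpha_H(\chi_f)=1$, i.e.\ by \emph{integral} reflections, which lie in the image of $\tilW^\circ_{LH,\chi_f}$. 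Your sketch (``a nontrivial finite-order element of $\WW$ stabilizing $\chi_f$ \dots would force a nontrivial torsion element \dots'') does not distinguish between a reflection merely stabilizing $\chi_f$ (a condition like $\halpha_H(\chi_f)\in\ker\alpha_H$) and an integral reflection ($\halpha_H(\chi_f)=1$), and gives no argument that length-zero elements with nontrivial $\WW$-part cannot occur; this is exactly where the $\PSp_6$ example lives, so the gap is not cosmetic.
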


\begin{proof} In view of Theorem \ref{thm: twisted metaplectic}, it suffices to see that the inclusions 
$$\tilW_{LG, \chi_f, \chi_c} \hookleftarrow \tilW_{LG, \chi_f, \chi_c}^\bullet \simeq \tilW^\bullet_{LH, \chi_f} \hookrightarrow \tilW_{LH, \chi_f}$$
are both in fact bijections. Explicitly, the bijection for $LH$ is equivalent to the assertion that the images of natural maps 
$$\tilW_{LH, \chi_f}^\bullet \hookrightarrow \tilW_{LH} \twoheadrightarrow \WW \quad \text{and} \quad \tilW_{LH, \chi_f} \hookrightarrow \tilW_{LH} \twoheadrightarrow \WW $$
coincide, and similarly for $LG$. 

 Let us begin with the case of $LH$. Recall that an element  $t^{\clambda} w \in \tilW_{LH}$ lies in $\tilW_{LH, \chi_f}$ if and only if 
$$w(\chi_f) = \chi_f.$$By the assumption that $H$ has connected center, the stabilizer in $\WW$ of $$\chi_f \in \Ch(\AA_H; E)$$is generated by the reflections it contains, which implies the desired claim for $LH$.

Similarly, for the claim for $LG$, note that for any element $t^{\clambda} w \in \tilW_{LG}$ its action on $(\chi_f, \chi_c)$, cf. the notation of Step 1 of the proof of Theorem \ref{thm: twisted metaplectic}, is explicitly given by 
$$ t^{\clambda}w (\chi_f, \chi_c) = ( \clambda^*(\chi_c) \cdot w(\chi_f), \chi_c),$$
cf. Lemma \ref{l:conj action}. In particular, for $t^{\clambda}w$ to lie in $\tilW_{LG, \chi_f, \chi_c}$ is the assertion that 
$$\clambda^*(\chi_c) \cdot w(\chi_f) = \chi_f  \in \AA^\vee(E).$$
Applying the projection $\pi: \AA^\vee(E) \rightarrow \AA^\vee_H(E)$, and recalling our abuse of notation that we write $\pi(\chi_f) \in \AA^\vee_H(E)$ simply by $\chi_f$ again, we obtain that 
\begin{equation} \label{e:zhivago} \pi \circ \clambda^*(\chi_c) \cdot w(\chi_f) = \chi_f \in \AA^\vee_H(E).\end{equation}

We next observe that $\pi \circ \clambda^*(\chi_c)$ is the unit element $1$ of $\AA^\vee_H(E)$. Indeed, it suffices to see that after composition with any character $\cmu$ of $\AA^\vee_H$, we have the equality 
$$\cmu \circ \pi \circ \clambda^*(\chi_c) \overset{?} = 1 \in E^\times.$$
But explicitly, we may rewrite this as 
$$\cmu \circ \pi \circ \clambda^*(\chi_c) = \chi_c^{ \langle \cmu, \clambda \rangle_V }$$
using notation from Section \ref{sss:commutator}. From here, we note that the definition of $\X^*(\AA^\vee_H) \simeq \X_*(\AA_H)$ is precisely that $\cmu^*(\chi_c) = 1 \in \AA^\vee(E)$, i.e., that after composition with any character $\clambda$ of $\AA^\vee(E)$, we have that 
$$\clambda \circ \cmu^*( \chi_c) = \chi_c^{\langle \clambda, \cmu \rangle_V} = 1,$$
as desired.

Therefore, we may rewrite \eqref{e:zhivago} simply as 
$$w(\chi_f) = \chi_f \in \AA^\vee_H(E),$$
i.e., that $w$ lies in $\tilW_{LH, \chi_f}$. In particular, by our preceding observation concerning the case of $LH$, and the fact that the projections of the Coxeter parts of $\tilW_{LG, \chi_f, \chi_c}$ and $\tilW_{LH, \chi_c}$ coincide, cf. Step 4 of the proof of Theorem \ref{thm: twisted metaplectic}, we are done.  \end{proof}

\begin{rem} \label{r:allblocks} In fact,  the above argument shows more generally that the inclusion
$$D_{\chi_c}((I, \chi_f) \bs LG /(I, \chi_f))^\bullet \subset D_{\chi_c}((I, \chi_f) \bs LG / (I, \chi_f))$$is an equivalence if and only if the inclusion 
$$D((I_H, \chi_f) \bs LH / (I_H, \chi_f))^\bullet \subset D((I_H, \chi_f) \bs LH / (I_H, \chi_f))$$
is an equivalence, and both in turn are equivalent to the assertion that 
$\WW_{H, \chi_f} \subset \WW$ is generated by reflections. 
\end{rem}

\sss{} The second situation where we need not be careful about block issues is when there is no finite twisting $\chi_f$, and in fact we will obtain the desired identification of the entire affine Hecke 2-categories. 

To set this up, note that, via the bijection $\cPhi_\GG \simeq \cPhi_H$, to each standard parabolic in $\GG$ $$\BB \subset \PP \subset \GG$$one has an associated standard parabolic in $H$ 
$$\BB_H \subset \PP_H \subset H,$$
and let us write $I_{\PP_H} \subset L^+ H$ for the corresponding standard parahoric subgroup. We then have the following.

\begin{thm}\label{thm: derived satake} \label{thm: endoscopicHecke2category}For any pair of standard parabolics $\PP^1, \PP^2$ in $\GG$, there is a $t$-exact equivalence
$$D_{\chi_c}(I_{\PP^1} \bs LG / I_{\PP^2}) \simeq D(I_{\PP^1_H} \bs LH / I_{\PP^2_H}),$$fitting into an equivalence of affine Hecke 2-categories
$$\underset{\PP^1, \PP^2} \oplus \hspace{.5mm} D_{\chi_c}(I_{\PP^1} \bs LG / I_{\PP^2}) \simeq \underset{\PP^1_H, \PP^2_H} \oplus \hspace{.5mm} D(I_{\PP^1_H} \bs LH / I_{\PP^2_H}).$$
\end{thm}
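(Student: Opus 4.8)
The plan is to deduce Theorem \ref{thm: endoscopicHecke2category} from Theorem \ref{thm: twisted metaplectic} (in the untwisted case $\chi_f = u$) combined with the parahoric formalism of Section \ref{s:mon vs strict}. The first step is to observe that when there is no finite twisting, the block issues evaporate: by Remark \ref{r:allblocks}, the inclusion $D_{\chi_c}((I, u\mon) \bs LG / (I, u\mon))^\bullet \subset D_{\chi_c}((I, u\mon) \bs LG / (I, u\mon))$ is an equivalence if and only if $\WW_{H, u} \subset \WW$ is generated by reflections. But $\WW_{H, u}$ is the entire finite Weyl group $\WW$, since $u$ is the trivial character and hence fixed by all of $\WW$; this is manifestly a reflection group. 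Therefore Theorem \ref{thm: twisted metaplectic} already yields a monoidal equivalence of the full monodromic affine Hecke categories
$$D_{\chi_c}((I, u\mon) \bs LG / (I, u\mon)) \simeq D((I_H, u\mon) \bs LH / (I_H, u\mon)),$$
and one checks that, as in the proof of Theorem \ref{thm: twisted metaplectic}, this equivalence is constructed via matching Soergel bimodule descriptions, so by Proposition \ref{p:std2std} and Theorem \ref{t:mainthmsec7} it exchanges (co)standard objects indexed under the isomorphism of extended Coxeter systems $j: \tilW_{LG, \chi_c} \simeq \tilW_{LH}$. (Here one uses that the target Hecke category for $H$ has no central twist, i.e., $\tilW_{LH, u} = \tilW_{LH}$.)

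The second step is to promote this to the strictly equivariant setting at the Iwahori level. For this I would verify the two hypotheses of Proposition \ref{p:mon2strict}. Condition (1) is vacuous since we work over a field $E$ of characteristic zero (cf. Remark \ref{r:mon2str}). Condition (2) requires a compatible identification of the monodromy operators along the central $\GG_m$; this follows because the equivalence $j$ of combinatorial data constructed in Step 6 of the proof of Theorem \ref{thm: twisted metaplectic} by construction sends the central direction of the affine Cartan of $\wt{LG}$ (the $\ZZ$-factor recording the central extension) to the central direction of the Kac--Moody central extension of $LH$, hence induces the required commuting square on $\End(\delta^{\tau_c\mon}) \to \End(\delta^{\chi_c\mon})$. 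Applying Proposition \ref{p:mon2strict} then gives a $t$-exact monoidal equivalence of strictly equivariant Iwahori Hecke categories
$$D_{\chi_c}(I \bs LG / I) \simeq D(I_H \bs LH / I_H).$$

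The third and final step is to pass from the Iwahori level to arbitrary standard parahorics. For this I would invoke Theorem \ref{thm: parahoric equivalence}: since the equivalence just produced arises from a monodromic equivalence satisfying the hypotheses of Proposition \ref{p:mon2strict}, and since the set $\mathscr{I}$ of simple reflections of $\tilW_{LG, \chi_c} \simeq \tilW_{LH}$ which correspond to simple reflections in both ambient affine Weyl groups is, in the untwisted-finite case, precisely the full set of simple reflections (because $j$ was arranged in Step 5 of the proof of Theorem \ref{thm: twisted metaplectic} to match fundamental alcoves, hence simple reflections, after conjugation by the element $y$), the theorem yields the equivalence of strictly equivariant Hecke 2-categories indexed by all pairs of standard parahorics. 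Under the bijection of simple coroots $\cPhi_\GG \simeq \cPhi_H$, the standard parahoric $I_{\PP}$ corresponds to $I_{\PP_H}$, which is the required indexing. The $t$-exactness is inherited from the Iwahori-level statement and the characterization of the $t$-structure via compact generators, exactly as at the end of the proof of Theorem \ref{thm: parahoric equivalence}.

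The main obstacle I anticipate is not any single new argument but rather bookkeeping: one must make sure that the element $\cmu$ and the correction $y$ appearing in Step 5 of the proof of Theorem \ref{thm: twisted metaplectic} genuinely match the fundamental alcoves (so that $\mathscr{I}$ is the full simple reflection set and the parabolics line up as claimed), and that the identification $j$ of central directions is compatible across all the intermediate categories $\cM_{I \bs K/I}$, $\cM_{K \bs K/K}$ and their one-sided variants used in Section \ref{sss:biggerco}. Once the combinatorial matching at the Iwahori level is pinned down, the parahoric and strictification steps are formal consequences of the machinery already developed.
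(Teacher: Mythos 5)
Your proposal follows the paper's proof essentially verbatim: the paper likewise observes that for trivial finite twist $\WW_{H,\chi_f}=\WW$ is generated by reflections so the bullet inclusions are equalities, invokes Theorem \ref{thm: twisted metaplectic} (whose proof already incorporates the passage to strict equivariance via Proposition \ref{p:mon2strict}, so your Step 2 is redundant rather than wrong), and then applies Theorem \ref{thm: parahoric equivalence}. The ``bookkeeping'' you flag as the anticipated obstacle is exactly the paper's only remaining check, and it is immediate: with $\chi_f$ trivial every $i_{\halpha}=0$, hence $\cmu=0$, the identity map places the fundamental alcove of $\tilW^{\circ}_{LG,\chi_c}$ inside that of $\tilW^{\circ}_{LH}$ so $y$ is trivial, and therefore $j$ restricts to the identity on $\WW$, matching the finite simple reflections and the standard parahorics $I_{\PP}\leftrightarrow I_{\PP_H}$ as required.
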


\begin{proof} We first note that, as in Remark \ref{r:allblocks},  the inclusions 
$$\tilW_{LG, \chi_c} \hookleftarrow \tilW_{LG, \chi_c}^\bullet \simeq \tilW^\bullet_{LH} \hookrightarrow \tilW_{LH},$$are both bijections, as for $\chi_f$ trivial, we tautologically have that $\WW_{H, \chi_f} = \WW$, and in particular is generated by reflections. 

Therefore, by Theorem \ref{thm: twisted metaplectic}, we may obtain an equivalence 
$$D_{\chi_c}( I \bs LG / I) \simeq D( I_H \bs LH / I_H).$$
To match the parahorics, recall the element $\cmu$ of Step 5 of the proof of Theorem \ref{thm: twisted metaplectic}, and note in the present situation $\cmu = 0$. It is clear that the identity map $\X_{*, \R} \simeq \X_{*, \R}$ exhibits the fundamental alcove for $\tilW_{LG}^\circ$ as a subset of the fundamental alcove for $\tilW_{LH}^\circ$, so that the element $y$ from Step 5 of the proof of Theorem \ref{thm: twisted metaplectic} is also the trivial element.  

In particular, the isomorphism of extended Coxeter systems $$j: \tilW_{LG, \chi_c} \simeq \tilW_{LH}$$
restricts to the identity on the finite Weyl group $\WW$. We may then finish by applying Theorem \ref{thm: parahoric equivalence}. 
\end{proof}

As a particular case of the preceding theorem, we obtain the following.

\begin{thm} \label{thm: metaplectic derived Satake}There is an equivalence of monoidal categories	\begin{equation*} \epsilon: D_{\chi_c}(L^{+}G \bs LG / L^{+}G) \simeq D(L^{+}H \bs LH / L^{+}H).\end{equation*}	
\end{thm}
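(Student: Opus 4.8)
The plan is to deduce this as the extreme member of the affine Hecke $2$-category equivalence already established in Theorem~\ref{thm: endoscopicHecke2category}, namely the case $\PP^1 = \PP^2 = \GG$. First I would recall from \S\ref{ss:meta dual} that the standard parahoric $I_{\GG} \subset LG$ attached to the parabolic $\PP = \GG$ is the arc group $L^+G$, and that under the bijection $\cPhi_\GG \simeq \cPhi_H$ the parabolic $\GG$ corresponds to all of $H$, so that $I_{\GG_H} = L^+H$. With these identifications in hand, the $t$-exact monoidal equivalence
$$D_{\chi_c}(I_{\GG} \bs LG / I_{\GG}) \simeq D(I_{\GG_H} \bs LH / I_{\GG_H})$$
supplied by Theorem~\ref{thm: endoscopicHecke2category} is literally the claimed equivalence $\epsilon$, and no further argument is needed.

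Since Theorem~\ref{thm: endoscopicHecke2category} is proven by combining Theorem~\ref{thm: twisted metaplectic} (in the case of trivial finite twist) with the Soergel-bimodule description of Theorem~\ref{t:mainthmsec7}, the monodromic-to-strict comparison of Proposition~\ref{p:mon2strict}, and the parahoric $2$-category equivalence of Theorem~\ref{thm: parahoric equivalence}, the real content lies upstream, in the combinatorial matching of integral affine Weyl groups carried out in the proof of Theorem~\ref{thm: twisted metaplectic}. I would also note for emphasis why the spherical case is clean: taking $\chi_f$ trivial gives $\WW_{H, \chi_f} = \WW$, which is generated by reflections, so by Remark~\ref{r:allblocks} the equivalence holds on all blocks with no passage to the subcategories $(-)^\bullet$; and in the combinatorial matching inside $\Aff(\X_{*, \QQ})$ both the translation $\cmu$ and the correcting Weyl element $y$ vanish, so the isomorphism of extended Coxeter systems restricts to the identity on the finite Weyl group $\WW$ — precisely what is needed for Theorem~\ref{thm: parahoric equivalence} to send the spherical parahoric to the spherical parahoric.

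The main (and only) thing to verify, then, is the bookkeeping identification of $I_{\GG}$ with $L^+G$ and of $I_{\GG_H}$ with $L^+H$; this is immediate from the constructions in \S\ref{ss:meta dual}, so I expect no genuine obstacle here — all the difficulty having been absorbed into Theorem~\ref{thm: twisted metaplectic} and its proof. As an epilogue, combining $\epsilon$ with the derived geometric Satake equivalence of \cite{BF} (or the cocomplete version \cite{AG}) gives, when $E$ has characteristic zero, the spectral description of the metaplectic spherical Hecke category in terms of the metaplectic dual group $H^\vee$ stated in the introduction.
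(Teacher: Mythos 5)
Your proposal is correct and is essentially identical to the paper's own derivation: the paper obtains this theorem precisely as the special case $\PP^1 = \PP^2 = \GG$ of Theorem \ref{thm: endoscopicHecke2category}, using $I_{\GG} = L^{+}G$ and $I_{H} = L^{+}H$. The supporting points you recite --- that trivial $\chi_f$ gives $\WW_{H,\chi_f} = \WW$ so all blocks are covered (Remark \ref{r:allblocks}), that $\cmu$ and the correcting element $y$ vanish so the isomorphism of extended Coxeter systems is the identity on $\WW$, and the appeal to Theorem \ref{thm: parahoric equivalence} --- are exactly the content of the paper's proof of that theorem.
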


\subsection{Spectral description: the metaplectic dual group}
\label{ss:meta dual2}
\sss{} We are now ready to deduce the desired spectral description of the metaplectic affine Hecke 2-category.

Recall that to $(\GG, \chi_c)$ we attached the metaplectic endoscopic group $H$ in the previous subsection, along with its distinguished Borel and Cartan $$H \supset \BB_H \supset \AA_H.$$

\begin{defn} A {\em metaplectic dual group} of $(\GG, \chi_c)$ is a Langlands dual group over $E$ $$H^\vee \supset \BB_{H^\vee} \supset \AA_{H^\vee}$$to $H \supset \BB_H \supset \AA_H.$
\end{defn}

Note that with the present definition, which suffices for our purposes, metaplectic dual groups to $(\GG, \chi_c)$ form a groupoid canonically equivalent to $\pt / \AA_{H^\vee}(E)$.

\sss{} For each standard parabolic $\PP$ of $\GG$, consider the associated standard parabolic of $\chHH$, which we denote by $\chPP_H$. 

Denote the Lie algebras of their unipotent radicals by $\fru_{\chPP}$, and the Lie algebra of $\chHH$ by $\chhh$. Consider the disjoint union of the (equivariant) partial Springer resolutions 
$$\on{Spr}_{\chHH} := \underset{\chPP_H} \sqcup \hspace{.5mm} (\fru_{\chPP_H} / \chPP_H) \rightarrow \chhh / \chHH,$$
and the associated Steinberg derived stack 
$$\on{St}_{\chHH} := \on{Spr}_{\chHH} \overset{R}{\underset{\chhh/\chHH}\times}  \on{Spr}_{\chHH}.$$
If we denote its category of ind-coherent sheaves with nilpotent singular support by 
$$\indcoh_{\on{nilp}}(\on{St}_{\chHH}),$$
this is naturally a monoidal category under convolution. 

\sss{} We next recall the following theorem concerning the automorphic counterpart of the above category.  

\begin{thm}[\cite{ABG}, \cite{BF}, \cite{B}, \cite{BL}, \cite{CD}] \label{t:specside}  There is an equivalence of monoidal categories 
$$\underset{\PP_H^1, \PP_H^2} \oplus \hspace{.5mm} D(I_{\PP_H^1} \bs LH / I_{\PP^2_H}) \simeq  \indcoh_{\on{nilp}}(\on{St}_{\chHH}),$$which restricts, for any pair of parabolics $\PP_H^1$ and $\PP_H^2$ of $H$, to an equivalence 
$$ D(I_{\PP_H^1} \bs LH / I_{\PP^2_H}) \simeq \indcoh_{\on{nilp}}((\fru_{\chPP_H^1} / \chPP_H^1)  \overset{R}{\underset{\chhh/\chHH}\times}  (\fru_{\chPP_H^2} / \chPP_H^2)).$$
\end{thm}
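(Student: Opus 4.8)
The plan is to derive Theorem \ref{t:specside} by gluing together two already-established inputs: the automorphic equivalence of Hecke 2-categories from Theorem \ref{thm: derived satake}/\ref{thm: endoscopicHecke2category}, which presents $\bigoplus_{\PP^1,\PP^2} D(I_{\PP^1_H} \bs LH / I_{\PP^2_H})$ as part of the metaplectic picture, and the spectral side, where one wants a coherent description in terms of the Steinberg stack $\on{St}_{\chHH}$ of the metaplectic dual group. Concretely, the final statement we are asked to prove is itself only a \emph{citation-type} assembly: each individual equivalence
$$ D(I_{\PP_H^1} \bs LH / I_{\PP^2_H}) \simeq \indcoh_{\on{nilp}}\bigl((\fru_{\chPP_H^1}/\chPP_H^1) \overset{R}{\underset{\chhh/\chHH}\times} (\fru_{\chPP_H^2}/\chPP_H^2)\bigr) $$
is a known theorem — for $\PP^1_H = \PP^2_H = \BB_H$ it is Bezrukavnikov's description of the Iwahori affine Hecke category \cite{B} (with its cocomplete and modular enhancements in \cite{BL,CD}), and for $\PP^1_H = \PP^2_H = H$ it is the derived Satake equivalence of Arkhipov--Bezrukavnikov--Ginzburg and Bezrukavnikov--Finkelberg \cite{ABG,BF}. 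So the first and main step is to recall these equivalences for each pair of standard parahorics, in the form where the category of sheaves is taken over the coefficient field $E$ of characteristic zero; here one simply invokes \cite{CD} for the requisite generality.

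The second step is to promote these pairwise equivalences to an equivalence of 2-categories, i.e.\ to check compatibility with convolution. The clean way to organize this, paralleling Lemma \ref{l:algbr}, is to realize both the automorphic and spectral Hecke 2-categories as endomorphism categories of a single object: on the automorphic side, $\bigoplus_{\PP_H} D(I \bs LH / I_{\PP_H})$ as a module over $D(I \bs LH / I)$, and on the spectral side, $\bigoplus_{\chPP_H} \indcoh_{\on{nilp}}(\widetilde{\chnn}/\chBB_H \times_{\chnn/\chHH} \fru_{\chPP_H}/\chPP_H)$ as a module over $\indcoh_{\on{nilp}}(\on{St}_{\chHH,\BB})$. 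The Bezrukavnikov equivalence intertwines these module structures — this is exactly how the parahoric variants are deduced from the Iwahori case in \cite{B} — so taking endomorphisms yields the monoidal equivalence of the whole 2-category. Then I would compose this with Theorem \ref{thm: endoscopicHecke2category} to transport the statement to $(\GG,\chi_c)$, obtaining the corollary stated just after Theorem \ref{t:specside} in the paper (the metaplectic derived geometric Satake in terms of $\on{St}_{\chHH}$).

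The main obstacle — really the only substantive point beyond bookkeeping — is verifying that the pairwise equivalences assemble compatibly with the 2-categorical structure \emph{as stated}, i.e.\ that the various partial Springer resolutions and their fiber products over $\chhh/\chHH$ match the convolution of parahoric Hecke categories on the nose, including the derived (Tor) structure on $\on{St}_{\chHH}$. In characteristic zero with field coefficients this is handled by the results compiled in \cite{CD}, but one should be careful that the singular support condition $\on{nilp}$ is preserved under convolution and that the identification is $t$-exact for the natural $t$-structures; the $t$-exactness on the Iwahori block follows from the compatibility of Bezrukavnikov's equivalence with the perverse and exotic $t$-structures, and propagates to the parahoric blocks via the (t-exact) averaging functors $D(I\bs LH/I) \to D(I_{\PP_H}\bs LH / I_{\PP_H})$ together with their spectral counterparts, the pushforward along the partial Springer maps. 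Once this compatibility is in hand, no further computation is needed: the theorem is the concatenation of \cite{ABG,BF,B,BL,CD} with the endoscopic identification of the present paper.
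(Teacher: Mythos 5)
Your proposal is, at bottom, the same as the paper's treatment: Theorem \ref{t:specside} is not proved in the paper at all, but recalled from the literature, and the remark immediately following it gives exactly the attribution you give — \cite{BF} for $\PP^1_H=\PP^2_H=H$, \cite{ABG} for Iwahori/spherical, \cite{B} for $\PP^1_H=\PP^2_H=\BB_H$, \cite{BL} for $\PP^1_H=\BB_H$ and $\PP^2_H$ arbitrary, and \cite{CD} for the general 2-categorical statement over characteristic-zero field coefficients (with \cite{AG} for the cocomplete renormalized formulation). So citing these results and stopping is precisely what the paper does; the transport to $(\GG,\chi_c)$ via Theorem \ref{thm: endoscopicHecke2category} belongs to the subsequent theorem, not to this one.

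Where your write-up goes astray is in the middle step, where you present the passage from the Iwahori case to the full parahoric 2-category as an assembly argument whose ``only substantive point'' is bookkeeping of module structures, and you assert that intertwining the relevant module structures ``is exactly how the parahoric variants are deduced from the Iwahori case in \cite{B}.'' That is not accurate: \cite{B} proves the Iwahori--Iwahori equivalence and only \emph{conjectures} the 2-categorical extension; establishing that Bezrukavnikov's equivalence is compatible with the parahoric (co)induction data — equivalently, matching the spectral partial Springer pushforwards with the automorphic averaging to parahoric level, together with the singular-support and renormalization issues — is the main content of \cite{BL} and \cite{CD}, not a formal consequence of realizing both sides as endomorphism categories. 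Since you ultimately invoke \cite{CD} for exactly this point, your argument is not wrong, but the gluing sketch is redundant and understates what is being cited. Finally, the $t$-exactness discussion is extraneous (the theorem as stated makes no $t$-exactness claim), and the specific claim that the averaging functors $D(I\bs LH/I)\to D(I_{\PP_H}\bs LH/I_{\PP_H})$ are $t$-exact and propagate $t$-exactness to the parahoric blocks is not justified as written and is not needed.
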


 \begin{rem} Given the profusion of citations, let us briefly review the history of the above statement.\footnote{For simplicity, we disregard in this discussion the distinction between strict and monodromic invariants, and the issues of renormalization relating to singular support; let us recall however the  formulations as above were given explicitly for $\PP^1_H = \PP^2_H = H$, by Arinkin--Gaitsgory \cite{AG}. and the general case may be found in \cite{CD}.} For $$\PP^1_H = \BB_H \quad \text{and} \quad \PP^2_H = H,$$this equivalence is a theorem of Arkhipov--Bezrukavnikov--Ginzburg \cite{ABG}. The monoidal equivalence for $$\PP^1_H = \PP^2_H = H$$is the {\em derived Satake equivalence} of Bezrukavnikov--Finkelberg \cite{BF}, and similarly the monoidal equivalence for $$\PP^1_H = \PP^2_H = \BB_H$$is a theorem of Bezrukavnikov \cite{B}, who also explicitly conjecturally formulated the equivalence of 2-categories. For $\PP^1_H = \BB_H$, and $\PP^2_H$ arbitrary, this conjecture was proven by Bezrukavnikov--Losev \cite{BL}, and for general $\PP^1_H$ and $\PP^2_H$ by  Chen--D. \cite{CD}. \end{rem}

\sss{} Let us now obtain the desired spectral description of the metaplectic affine Hecke 2-category.

\begin{thm} There is an equivalence of monoidal categories 
$$ \underset{\PP^1, \PP^2} \oplus \hspace{.5mm} D_{\chi_c}(I_{\PP^1} \bs LG / I_{\PP^2}) \simeq  \indcoh_{\on{nilp}}(\on{St}_{\chHH}),$$which restricts, for any pair of parabolics $\PP^1$ and $\PP^2$ of $\GG$, to an equivalence 
$$ D_{\chi_c}(I_{\PP^1} \bs LG / I_{\PP^2}) \simeq \indcoh_{\on{nilp}}((\fru_{\chPP_H^1} / \chPP_H^1)  \overset{R}{\underset{\chhh/\chHH}\times}  (\fru_{\chPP_H^2} / \chPP_H^2)).$$
\end{thm}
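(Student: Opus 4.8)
The statement is an immediate consequence of combining the two main results at our disposal, namely the automorphic endoscopy of Theorem~\ref{thm: endoscopicHecke2category} and the spectral description of Theorem~\ref{t:specside}. The plan is therefore simply to compose these two equivalences, taking care that the parahoric-indexing bijections match up. Concretely, Theorem~\ref{thm: endoscopicHecke2category} gives, for each pair of standard parabolics $\PP^1, \PP^2$ of $\GG$, a $t$-exact equivalence
$$D_{\chi_c}(I_{\PP^1} \bs LG / I_{\PP^2}) \simeq D(I_{\PP^1_H} \bs LH / I_{\PP^2_H}),$$
compatibly with convolution, where the bijection between standard parabolics of $\GG$ and those of $H$ is the one induced by the canonical bijection on coroot systems $\cPhi_\GG \simeq \cPhi_H$. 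Theorem~\ref{t:specside} then gives, for each pair of parabolics $\PP^1_H, \PP^2_H$ of $H$, a convolution-compatible equivalence
$$D(I_{\PP^1_H} \bs LH / I_{\PP^2_H}) \simeq \indcoh_{\on{nilp}}((\fru_{\chPP_H^1} / \chPP_H^1)  \overset{R}{\underset{\chhh/\chHH}\times}  (\fru_{\chPP_H^2} / \chPP_H^2)),$$
where $\chPP_H$ denotes the standard parabolic of $\chHH = H^\vee$ dual to $\PP_H$. Composing these two equivalences yields the block-wise statement, and since both inputs are equivalences of $2$-categories, i.e.\ are compatible with all convolution functors, the composite assembles into the asserted equivalence of monoidal categories
$$\underset{\PP^1, \PP^2} \oplus \hspace{.5mm} D_{\chi_c}(I_{\PP^1} \bs LG / I_{\PP^2}) \simeq \indcoh_{\on{nilp}}(\on{St}_{\chHH}).$$

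The one genuine point to check is the bookkeeping: the parabolics of $\GG$ are matched with parabolics of $H$ via $\cPhi_\GG \simeq \cPhi_H$, and parabolics of $H$ are matched with parabolics of $\chHH = H^\vee$ via the standard Langlands-dual bijection on roots. Thus a standard parabolic $\PP$ of $\GG$ is sent, under the composite, to the standard parabolic $\chPP_H$ of $\chHH$ whose set of simple roots corresponds, under $\cPhi_\GG \simeq \cPhi_H$ followed by the Langlands-dual correspondence, to that of $\PP$. I would simply note that this is precisely the indexing already used to define $\on{Spr}_{\chHH}$ and $\on{St}_{\chHH}$ in the paragraph preceding the statement, so no further reconciliation is needed; in particular the disjoint union over all $(\PP^1, \PP^2)$ on the automorphic side is carried bijectively onto the disjoint union over all $(\chPP^1_H, \chPP^2_H)$ that defines $\on{St}_{\chHH}$, and the convolution structures (which on the spectral side come from the two projections of the Steinberg stack, and on the automorphic side from composition of correspondences on the double flag varieties) are matched by each of the two input theorems separately.

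There is essentially no hard step here; the content of the theorem lies entirely in Theorems~\ref{thm: endoscopicHecke2category} and~\ref{t:specside}, which have already been established. If anything, the only mild subtlety to be careful about is that the statement of Theorem~\ref{t:specside} is invoked for the metaplectic endoscopic group $H$, whose center may be disconnected; one should confirm that the cited results (\cite{ABG}, \cite{BF}, \cite{B}, \cite{BL}, \cite{CD}) apply to an arbitrary connected reductive $H$, which they do. I would therefore phrase the proof as a one-line composition of the two displayed equivalences, with a sentence verifying that the parabolic bijections compose to the tautological indexing of $\on{St}_{\chHH}$, and leave it at that.
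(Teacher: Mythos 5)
Your proposal is correct and is exactly the paper's argument: the proof in the paper is a one-line concatenation of Theorem \ref{thm: endoscopicHecke2category} and Theorem \ref{t:specside}, with the parabolic indexing matched just as you describe. Your extra remarks on the bookkeeping of the parahoric bijections are consistent with, and merely elaborate on, what the paper leaves implicit.
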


\begin{proof} This follows by concatenating Theorem \ref{thm: endoscopicHecke2category} and Theorem \ref{t:specside}.  
\end{proof}

As a particular case of the previous theorem, we have the following statement for spherical Hecke categories, i.e., {\em metaplectic derived Satake}.

\begin{cor} \label{c:metaplectic derived satake} There is a monoidal equivalence 
$$\Phi: D_{\chi_c}(L^+G \bs LG / L^+G ) \simeq \indcoh_{\on{nilp}}( \on{pt} / \chHH \overset{R}{\underset{\chhh / \chHH}{\times}} \on{pt} / \chHH ).$$
\end{cor}

\begin{rem} To the best of our knowledge, a proof has not yet appeared in the literature of the assertion that the equivalence of Bezrukavnikov--Losev \cite{BL}, when specialized to $D(I \bs LH / L^+H)$, recovers the equivalence of Arkhipov--Bezrukavnikov--Ginzburg \cite{ABG}, and relatedly that the equivalence of Chen--D. \cite{CD}, when specialized to $D(L^+ H \bs LH / L^+H)$, recovers the equivalence of Bezrukavnikov--Finkelberg \cite{BF}. In what follows, we will use the equivalence of \cite{BF}, which we denote by $\beta$. I.e., in Corollary \ref{c:metaplectic derived satake} we will mean the composition $\Phi := \beta \circ \epsilon$, where $\epsilon$ was the endoscopic equivalence of Theorem \ref{thm: metaplectic derived Satake}. 
\end{rem}

\subsection{Theta sheaves}

\sss{} The goal of this final subsection is to record a property of metaplectic derived Satake which is useful in applications, cf. \cite{coloumb}. After introducing some relevant notation, we formulate the question we would like to address in Section \ref{sss:problem}, and record its answer in Theorem \ref{t:theta}.

\subsubsection{} Recall that given a cocomplete category $\mathcal{C}$ equipped with a $t$-structure whose truncation functors commute with filtered colimits, one can consider the corresponding full subcategory of {\em almost compact} objects
$$\mathcal{C}^{a.c.} \subset \mathcal{C},$$
i.e., objects $c$ for which $\Hom(c, -)$ commutes with uniformly $t$-bounded from below filtered colimits. Note that this full subcategory only depends on the $t$-structure up to bounded equivalence.

One then can obtain its renormalization as the ind-completion of its almost compact objects 
$$\mathcal{C}^{ren} := \Ind(\mathcal{C}^{a.c.}).$$
Moreover, if $\mathcal{C}$ is monoidal, the monoidal unit is almost compact, and the almost compact objects are closed under multiplication, then $\mathcal{C}^{ren}$ inherits a monoidal structure.

\sss{} In particular, as Theorem \ref{thm: metaplectic derived Satake} is $t$-exact, and the equivalence \cite{BF} is $t$-bounded, we have monoidal equivalences 
\begin{equation} D_{\chi_c}(L^+G \bs LG / L^+G)^{ren} \overset{\epsilon}\simeq D(L^+H \bs LH / L^+H)^{ren} \overset{\beta}\simeq \on{QCoh}( \fh^{\vee, *}[2] / H^\vee),\footnote{Here and below, given an object $V$ of $\Modu_E$, not necessarily in non-positive cohomological degrees, by a mild abuse of notation we write $\on{QCoh}(V)$ for $\Sym(V^*)\mod$.} \label{e:dersat} \end{equation}where the equivalence $$
\indcoh_{\on{nilp}}( \pt / H^\vee \overset{R}{\underset{\fh^\vee / H^\vee}{\times}} \pt / H^\vee)^{ren} \simeq \on{QCoh}( \fh^{\vee, *}[2] / H^\vee)$$
is given by Koszul duality, cf. \cite{AG}; in fact it is the equivalence on compact objects 
$$D(L^+H \bs LH / L^+H)^{ren, c} \simeq (\indcoh_{\on{nilp}}( \pt / H^\vee \overset{R}{\underset{\fh^\vee / H^\vee}{\times}} \pt / H^\vee)^{ren, c} \simeq \on{QCoh}(\fh^{\vee, *}[2] / H^\vee)^{c}$$
which explicitly appears in \cite{BF}.

\sss{} In what follows, for ease of reading we introduce the following notation. We will denote the relevant spherical Hecke categories by 
$$\Sphc := D_{\chi_c}(L^+ G \bs LG / L^+G)^{ren} \quad \text{and} \quad \Sphh := D(L^+H \bs LH / L^+H)^{ren},$$
so that the equivalences of \eqref{e:dersat} read as 
$$\Sphc \overset{\epsilon} \simeq \Sphh \overset{\beta} \simeq \on{QCoh}(\fh^{\vee, *}[2]/H^\vee).$$
We will continue to denote the composite equivalence by $\Phi := \beta \circ \epsilon$.

\subsubsection{} A basic compatibility of the equivalence of Bezrukavnikov--Finkelberg is as follows. 

On the automorphic side, we have the functor of (equivariant) global sections 
$$\Gamma: \Sphh \rightarrow \Modu_E;$$
explicitly, $\Sphh$ is the ind-completion of the usual category of bounded constructible complexes on $L^+H \bs LH / L^+H$, and the above functor is the ind-extension of the usual functor on bounded constructible complexes of $*$-pushforward along 
$$L^+H \bs LH / L^+H \rightarrow \on{pt}.$$ 

On the spectral side, we have the functor of Kostant--Whittaker reduction, $$\varkappa: \on{QCoh}(\fh^{\vee, *}[2] / H^\vee) \rightarrow \Modu_E,$$ which is essentially a cohomologically sheared version of restriction to the Kostant slice, cf. \cite{BF} for the details. 

Then a basic feature of the equivalence of Bezrukavnikov--Finkelberg is a datum of commutativity for the following diagram. 
\begin{equation} \label{e:BFink}  \xymatrix{\Sphh \ar[rr]^\beta \ar[d]_\Gamma && \on{QCoh}(\fh^{\vee, *}[2]/H^\vee) \ar[d]^{\varkappa} \\ \Modu_E \ar@{=}[rr] && \Modu_E. }\end{equation}

\sss{} Our goal is to determine what $\varkappa$ corresponds to under metaplectic derived Satake $\Phi$, and the answer is given in Theorem \ref{t:theta} below. In purely endoscopic terms, we equivalently would like to describe the composition
$$\Sphc \overset{\epsilon}\simeq \Sphh \xrightarrow{\Gamma} \Modu_E,$$
which we denote below by $\Gamma_{\chi_c}$.
 \label{sss:problem}

\sss{} We first reformulate the problem of describing $\Gamma_{\chi_c}$ as identifying a certain object of $\Sphc$. Given a dualizable category $\mathcal{C}$, with dual $\mathcal{C}^\vee \simeq \Hom_{\lincat_E}(\mathcal{C}, \Modu_E)$, we denote their perfect pairing by 
$$\langle - , - \rangle: \mathcal{C} \otimes \mathcal{C}^\vee \rightarrow \Modu_E.$$
Next, note that, thanks to our renormalization, $\Sphc \simeq \Sphh \simeq \on{QCoh}(\fh^{\vee, *}[2]/H^\vee)$ is a rigid monoidal category. Moreover, for any rigid monoidal category $\mathcal{M}$, if we write $1$ for its monoidal unit, and denote its underlying binary product by 
$$- \star - : \mathcal{M} \otimes \mathcal{M} \rightarrow \mathcal{M},$$
it is known that the pairing 
$$\mathcal{M} \otimes \mathcal{M} \rightarrow \Modu_E, \quad \quad m_1 \boxtimes m_2 \mapsto \Hom_{\mathcal{M}}(1, m_1 \star m_2)$$
is perfect, i.e., yields an identification of $\mathcal{M}$ with ${M}^\vee$. 

In particular, it follows that $\Gamma_{\chi_c}$ is given by pairing with a unique up to equivalence object $\mathcal{K}_{\chi_c}$ characterized by the equivalence 
$$\Gamma_{\chi_c}(-) \simeq \Hom_{\Sphc}( 1, - \star \mathcal{K}_{\chi_c}).$$
Moreover, as $\Gamma$ and $\varkappa$ are similarly given by pairing with objects
$$\Gamma(-) \simeq \Hom_{\Sphh}(1, - \star \mathcal{K}) \quad \text{and} \quad \varkappa(-) \simeq \Hom_{\on{QCoh}(\fh^{\vee, *}[2]/H^\vee)}(1, - \star \mathfrak{K})$$
it follows that one has 
$$\epsilon(\mathcal{K}_{\chi_c}) \simeq \mathcal{K}, \quad \beta(\mathcal{K}) \simeq \mathfrak{K}, \quad \text{and} \quad \Phi(\mathcal{K}_{\chi_c}) \simeq \mathfrak{K}.$$

\begin{rem} Although we will not explicitly use it below, let us mention that, by definition $\mathfrak{K}$ is up to shearing the pushforward of the structure sheaf of the Kostant slice $\mathscr{S}$ along the tautological map $\pi: \mathscr{S} \rightarrow \fh^{\vee, *}/H^\vee$.     
\end{rem}

\sss{} As a first step in describing $\mathcal{K}_{\chi_c}$, let us give an explicit description of $\mathcal{K}$. Let us write the affine Grassmannian of $H$ as an ascending union of $L^+H$-stable closed subschemes of finite type 
$$LH/L^+H \simeq \varinjlim_\alpha  Z_\alpha.$$
Then for each $Z_\alpha$, we have a dualizing sheaf $\omega_\alpha \in D(L^+H \bs Z_\alpha)^{a.c.}$, and these naturally form an inductive system with respect to the inclusions among the $Z_\alpha$, so that we may form the dualizing sheaf 
$$\omega := \varinjlim \omega_\alpha \in \Sphh.$$We now identify this with our kernel $\mathcal{K}$. 

\begin{lem} \label{l:bluesclues}We have an equivalence $\mathcal{K} \simeq \omega,$ 
and therefore an equivalence $$\mathcal{K}_{\chi_c} \simeq \epsilon^{-1}(\omega).$$
\end{lem}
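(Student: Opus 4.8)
\textbf{Proof plan for Lemma \ref{l:bluesclues}.} The plan is to characterize $\mathcal{K}$ abstractly via the global sections functor, and then to recognize $\omega$ as the unique object with that property. Recall that $\mathcal{K}$ is defined (up to equivalence) by the requirement $$\Gamma(-) \simeq \Hom_{\Sphh}(\unit, - \star \mathcal{K}),$$ where $\unit$ denotes the monoidal unit of $\Sphh$, i.e., the skyscraper $\IC$-sheaf $\delta_{L^+H}$ on the point stratum of $L^+H \bs LH / L^+H$. So the task reduces to showing that convolving with the dualizing sheaf $\omega$ computes equivariant global sections, i.e., exhibiting a natural equivalence $$\Hom_{\Sphh}(\delta_{L^+H}, \sF \star \omega) \simeq \Gamma(\sF), \quad \sF \in \Sphh.$$

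First I would reduce to the level of the unrenormalized categories, i.e., work with bounded constructible complexes, since both sides commute with the relevant filtered colimits; this is why the renormalization was set up. For $\sF$ a bounded $L^+H$-equivariant constructible complex on the affine Grassmannian $\Gr_H = LH/L^+H$, supported on some $Z_\alpha$, I would compute $\sF \star \omega$ using the standard convolution diagram $$LH \times^{L^+H} \Gr_H \xrightarrow{\on{mult}} \Gr_H,$$ and the projection $p: LH \times^{L^+H} \Gr_H \to \Gr_H$ to the first factor. The key computation is that $\sF \star \omega_\beta$, for $\omega_\beta$ the dualizing sheaf on a large enough $Z_\beta$, is (up to the ind-structure) the $*$-pushforward along $\on{mult}$ of $p^!\sF \otimes (\text{dualizing direction along the fibers})$, which by base change and properness of $\on{mult}$ restricted to the relevant finite-type pieces identifies with $a_!(\sF \boxtimes \omega)$ for $a$ the action map; taking $\Hom$ from $\delta_{L^+H}$, i.e., the $!$-stalk at the base point, then yields $C^*(\Gr_H, \sF)$ by a standard contraction/base-change argument, exactly the $*$-pushforward to a point. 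This is the computation that $\omega$ is the kernel of the functor "take equivariant cohomology," which is well known in the Satake setting; I would either cite the relevant literature (e.g. the appearance of $\omega$ as the unit for the "!-monoidal" structure, or as representing global sections) or spell out the base-change diagram.

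The main obstacle, and the only genuinely delicate point, is the bookkeeping with the ind-scheme structure: $\omega$ is not compact but only a colimit of almost compact dualizing sheaves $\omega_\alpha$, and one must check that the convolution $\sF \star \omega := \varinjlim_\beta (\sF \star \omega_\beta)$ is computed correctly and that $\Hom_{\Sphh}(\delta_{L^+H}, -)$, which does \emph{not} commute with arbitrary colimits, nonetheless commutes with this particular one because it is uniformly bounded below (here one uses that $\delta_{L^+H}$ is almost compact in the renormalized category, by construction of the renormalization, and that the $\omega_\beta \to \omega_{\beta'}$ stabilize in each cohomological degree over any fixed finite-type piece). Once this is in place, the natural transformation $\Hom_{\Sphh}(\delta_{L^+H}, \sF \star \omega) \to \Gamma(\sF)$ constructed above is an equivalence on the generators $\sF$ of $\Sphh$, hence an equivalence of functors; comparing with the defining property of $\mathcal{K}$ gives $\mathcal{K} \simeq \omega$, and applying $\epsilon^{-1}$ gives the last assertion. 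No new ideas beyond standard Satake-category manipulations and the renormalization formalism already set up in the paper are needed.
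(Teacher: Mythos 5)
Your argument is correct and is essentially the paper's proof: the paper also characterizes $\mathcal{K}$ via the perfect pairing and identifies $\Hom_{\Sphh}(1,\cF\star\omega)\simeq\Gamma(\cF)$ by ind-proper base change along the convolution diagram, stated there as the general identity $\Hom_{\Sphh}(1,\cF\star\cG)\simeq\Gamma(\cF\overset{!}\otimes\on{inv}_*\cG)$ specialized to $\cG=\omega$, using $\on{inv}_*\omega\simeq\omega$ and $\cF\overset{!}\otimes\omega\simeq\cF$. The only real difference is your extra colimit bookkeeping, which is in fact automatic: the unit is almost compact, hence compact in the renormalized category $\Sphh$, so $\Hom_{\Sphh}(1,-)$ commutes with the filtered colimit $\omega=\varinjlim\omega_\alpha$.
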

\begin{proof} The second assertion follows immediately from the first. For the first assertion, ind-proper base change gives, for any $\mathcal{F}, \mathcal{G} \in \Sphh$ a functorial identification 
$$\Hom_{\Sphh}(1, \mathcal{F} \star \mathcal{G}) \simeq \Gamma( L^+H \bs LH / L^+H, \mathcal{F} \overset{!} \otimes \on{inv}_* \mathcal{G}),$$
where $\on{inv}: LH \rightarrow LH$ denotes the inversion map. Applying this with $\mathcal{G} \simeq \omega$ and noting that $\on{inv}_* \omega \simeq \omega$ yields the claim. 
\end{proof}

In particular, the compatibility of Bezrukavnikov--Finkelberg may be stated as an equivalence $\beta(\omega) \simeq \mathfrak{K}$. 

\sss{} We will next write the expression $\epsilon^{-1}(\omega)$ more explicitly by using the Radon transform. To do so, consider the moduli stacks of $G$- and $H$-bundles on $\PP^1$, which we denote by $\Bun_G$ and $\Bun_H$, respectively, and the corresponding categories of sheaves 
$$D_{\chi_c}(\Bun_G) \quad \text{and} \quad D(\Bun_H).$$
By definition, these are the inverse limit, under $*$-restriction, of the categories of sheaves on each quasi-compact open subset of $\Bun_G$ and $\Bun_H$, respectively. Modification of bundles at $0 \in \PP^1$ gives rise to actions 
$$D_{\chi_c}(L^+G \bs LG / L^+G) \circlearrowright D_{\chi_c}(\Bun_G) \quad \text{and} \quad D(L^+H \bs LH / L^+H) \circlearrowright D(\Bun_H).$$

To recall the construction of the Radon transform, note that the trivial bundle defines open substacks
$$j: \BB{G} \hookrightarrow \Bun_G \quad \text{and} \quad j: \BB H \hookrightarrow \Bun_H$$
and hence $!$-extension functors 
$$j_!: D(\BB G) \simeq D_{\chi_c}(\BB G) \rightarrow D_{\chi_c}(\Bun_G) \quad \text{and} \quad j_!: D(\BB H) \rightarrow D(\Bun_H).$$
The categories $D(\BB G)$ and $D( \BB H)$ contain a unique up to isomorphism irreducible object, which we denote by $E$. Moreover, it is well known that convolution with its $!$-extension yields an equivalence
$$ - \star j_!(E): D_{\chi_c}(L^+G \bs LG / L^+G) \simeq D_{\chi_c}(\Bun_G),$$
which restricts to an equivalence 
$$- \star j_!(E): D_{\chi_c}(L^+G \bs LG / L^+G)^{a.c.} \simeq D_{\chi_c}(\Bun_G)^{a.c.},$$
and hence renormalizes to an equivalence 
$$\on{RT}_{G, \chi_c}: \Sphc \simeq D_{\chi_c}(\Bun_G)^{ren}.$$
Similarly, for $H$, we obtain a renormalized equivalence 
$$\on{RT}_H: \Sphh \simeq D(\Bun_H)^{ren}.$$
We then have the following observation. 

\begin{lem} \label{l:poslevelt} The composite equivalence 
$$\on{RT}_H \circ \epsilon \circ \on{RT}_{G, \chi_c}^{-1}: D_{\chi_c}(\Bun_G)^{ren} \simeq D(\Bun_H)^{ren}$$is $t$-exact with respect to the perverse $t$-structures on both sides.     
\end{lem}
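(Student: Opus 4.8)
The plan is to reduce the $t$-exactness assertion of Lemma~\ref{l:poslevelt} to a $t$-exactness statement for the metaplectic derived Satake equivalence $\Phi = \beta \circ \epsilon$, and then to verify the latter by a direct analysis on highest-weight objects. First I would unwind the definitions: both equivalences $\on{RT}_{G,\chi_c}$ and $\on{RT}_H$ are renormalizations of convolution with the $!$-extension of the irreducible object $E$ on $\BB G$ and $\BB H$ respectively, so the composite $\on{RT}_H \circ \epsilon \circ \on{RT}_{G,\chi_c}^{-1}$ is computed by $\epsilon$ together with the commutation of $\epsilon$ with the two module-category actions on $D_{\chi_c}(\Bun_G)$ and $D(\Bun_H)$. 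Since $\epsilon$ is a monoidal equivalence by Theorem~\ref{thm: metaplectic derived Satake}, and since $\epsilon(\mathcal{K}_{\chi_c}) \simeq \mathcal{K}$ with $\mathcal{K} \simeq \omega$ the dualizing sheaf by Lemma~\ref{l:bluesclues}, the content of the lemma becomes: $\epsilon$ sends the perverse $t$-structure on $\Sphc$ to the perverse $t$-structure on $\Sphh$, i.e. $\epsilon$ is $t$-exact. This is essentially Theorem~\ref{thm: metaplectic derived Satake} combined with the $t$-exactness already recorded in Theorem~\ref{thm: endoscopicHecke2category}; the only subtlety is passing from the abelian (non-renormalized) statement to the renormalized categories, which is harmless since the renormalization is defined via almost compact objects and the $t$-structure only enters up to bounded equivalence.

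More concretely, the key steps in order would be: (1) observe that $\on{RT}_{G,\chi_c}$ intertwines the perverse $t$-structure on $D_{\chi_c}(L^+G \bs LG/L^+G)$ with the perverse $t$-structure on $D_{\chi_c}(\Bun_G)$ — this is the standard fact that convolution with $j_!(E)$, which up to a shift by $\dim \Bun_G$ is a "clean" operation on the relevant strata, preserves perversity, and likewise for $H$; (2) invoke Theorem~\ref{thm: endoscopicHecke2category} in the spherical case $\PP^1 = \PP^2 = \GG$, which gives the $t$-exact equivalence $\epsilon: D_{\chi_c}(L^+G \bs LG/L^+G) \simeq D(L^+H \bs LH/L^+H)$; (3) renormalize, noting that a $t$-exact equivalence of bounded (or $t$-bounded) categories extends uniquely to a $t$-exact equivalence of their renormalizations, since almost compactness and the truncation functors are intrinsic; (4) assemble: the composite $\on{RT}_H \circ \epsilon \circ \on{RT}_{G,\chi_c}^{-1}$ is then a composite of $t$-exact functors, hence $t$-exact.

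I expect the main obstacle to be step~(1), namely carefully checking that the Radon transform equivalence $\on{RT}_{G,\chi_c}$ is genuinely $t$-exact and not merely $t$-bounded. The issue is that $\Bun_G$ is not quasi-compact, so one must work with the inverse limit presentation of $D_{\chi_c}(\Bun_G)$ and argue that the perverse $t$-structure — defined stratum by stratum, or quasi-compact open by quasi-compact open — is compatible with convolution. One clean way to do this is to recall that convolution with $j_!(E)$ realizes $D_{\chi_c}(\Bun_G)$ as generated under colimits by the objects $\Delta_w \star j_!(E)$ for $w$ in the affine Weyl group, whose images under $!$-restriction to each stratum of $\Bun_G$ are perverse (up to the normalizing shift), and similarly on the costandard side, so that the perverse $t$-structure on $D_{\chi_c}(\Bun_G)$ pulls back to the perverse $t$-structure on the Hecke category under $\on{RT}_{G,\chi_c}^{-1}$; this is a known compatibility in the untwisted case (cf. the treatment in the geometric Langlands literature for $\Bun_G$ with level structure), and the central twist $\chi_c$ does not affect the argument since it only rescales along the central $\mathbb{G}_m$. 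Once steps (1)–(4) are in place, the conclusion follows formally, and this is also exactly the input needed to transport $\mathfrak{K}$, via $\beta^{-1}$ and the Radon transforms, to the theta sheaf statement of Theorem~\ref{t:theta}.
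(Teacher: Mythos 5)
Your reduction collapses at step (1): the Radon transform is \emph{not} $t$-exact for the perverse $t$-structures, and the lemma cannot be obtained by writing the composite as a chain of perverse-exact equivalences. The standard compatibility is precisely the opposite of what you assert: $\on{RT}_H$ sends the $*$-extensions $j_{\clambda,*}$ of the irreducible objects on spherical strata to the $!$-extensions $\mathbf{j}_{\clambda,!}$ of the irreducible objects on the strata of $\Bun_H$, i.e.\ it swaps standard- and costandard-type objects rather than preserving IC sheaves. A concrete contradiction with your claim is supplied by the paper's own later computation: $\on{RT}_H^{-1}(E_{\Bun_H}) \simeq \omega$, the dualizing sheaf of $L^+H \bs LH / L^+H$ (Proposition \ref{p:whoisK}), and $\omega$ is an unbounded colimit of dualizing sheaves of growing finite-type closed subschemes, hence nowhere near perverse, while $E_{\Bun_H}$ is perverse. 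So convolution with $j_!(E)$ is not ``clean up to a shift,'' and the pullback of the perverse $t$-structure on $D_{\chi_c}(\Bun_G)^{ren}$ under $\on{RT}_{G,\chi_c}$ is a genuinely different $t$-structure on $\Sphc$ from the perverse one; consequently the $t$-exactness of $\epsilon$ for the perverse $t$-structures on the spherical Hecke categories (which is indeed available from Theorem \ref{thm: metaplectic derived Satake}) does not by itself imply the lemma.

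The paper's actual argument sidesteps exactness of $\on{RT}$ entirely: using $\on{RT}_H(j_{\clambda,*}) \simeq \mathbf{j}_{\clambda,!}$, the $t$-structure on $\Sphh$ obtained by transporting the perverse $t$-structure on $D(\Bun_H)^{ren}$ is characterized by the condition that $\xi$ is coconnective if and only if $\Hom(j_{\clambda,*},\xi) \in \Modu_E^{\geqslant 0}$ for all $\clambda$; the same characterization holds on the $G$-side, and $\epsilon$ by construction exchanges the objects $j_{\clambda,*}$, so it intertwines the two transported $t$-structures. If you want to salvage your write-up, replace step (1) by this Hom-vanishing characterization (and drop the claim that $\epsilon$'s perverse $t$-exactness is the relevant input); the renormalization remarks in your steps (3)--(4) are then unnecessary, since the characterization already lives in the renormalized categories.
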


\begin{proof} Each dominant coweight $\clambda$ of $H$ defines strata 
$$j_{\clambda}: L^+H \bs L^+H \cdot t^{\clambda} \cdot L^+H/ L^+H \hookrightarrow L^+H \bs LH / L^+H \quad \text{and} \quad \mathbf{j}_{\clambda}: \Bun_H^{\clambda} \hookrightarrow \Bun_H.$$Moreover, if we denote the (in general non-perverse) $*$- and $!$-extensions of the unique irreducible objects of each category
$$D(L^+ H \bs L^+H \cdot t^{\clambda} \cdot L^+ H / L^+ H)^{ren} \quad \text{and} \quad D(\Bun_H^{\clambda})^{ren}$$
respectively by 
$$j_{\clambda, *} \in \Sphh \quad \text{and} \quad \mathbf{j}_{\clambda, !} \in D(\Bun_H)^{ren},$$
a standard property of the Radon transform is the identification 
\begin{equation} \label{e:radonswap}\on{RT}_H( j_{\clambda, *}) \simeq \mathbf{j}_{\clambda, !}.\end{equation}
In particular, the pullback of the perverse $t$-structure on $D(\Bun_H)^{ren}$ to $\Sphh$ can be characterized by the condition that an object $\xi$ is coconnective if and only if 
\begin{equation} \label{e:pervradon}\Hom( j_{\clambda, *}, \xi) \in \Modu_E^{\geqslant 0}, \quad \text{for all } \clambda.\end{equation}
As the analogous assertions hold, {\em mutatis mutandis}, for $G$, and the equivalence $\epsilon$ by construction exchanges the objects $j_{\clambda, *}$ for $H$ with the similarly named objects for $G$, the claim of the lemma follows. \end{proof}

In particular, by considering the constant perverse sheaf $E_{\Bun_H}$ on $\Bun_H$, we are led to the following. Consider a block of $D_{\chi_c}(\Bun_G)^{ren}$,
$$i_\gamma: D_{\chi_c}(\Bun_G)_\gamma^{ren} \hookrightarrow D_{\chi_c}(\Bun_G)^{ren}.$$
Let us call a stratum $j_{\clambda}: \Bun_G^{\clambda} \rightarrow \Bun_G$ {\em relevant} to our block if the restriction functor $(j_{\clambda})^! \circ i_\gamma$ is nonzero. Then we note that there is a unique relevant stratum of maximal dimension, and we call the corresponding simple object of $D_{\chi_c}(\Bun_G)_\gamma^{ren}$ the IC sheaf of {\em maximal support} in the block. 

\begin{defn} The {\em theta sheaf} $\Theta_{\chi_c} \in D_{\chi_c}(\Bun_G)^{ren}$ is the direct sum of the IC sheaves with maximal support in each block. 
\end{defn}

\begin{cor} The equivalence of Lemma \ref{l:poslevelt} exchanges the theta sheaf and the constant sheaf, i.e.,
$$\on{RT}_H \circ \epsilon \circ \on{RT}_{G, \chi_c}^{-1} ( \Theta_{\chi_c}) \simeq E_{\Bun_H}.$$ \label{c:theta}
\end{cor}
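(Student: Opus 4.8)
The plan is to trace through how the theta sheaf and the constant sheaf are characterized under the endoscopic equivalence $\epsilon$, using the block decomposition and the behavior of the Radon transform on strata. By construction, $\on{RT}_H \circ \epsilon \circ \on{RT}_{G,\chi_c}^{-1}$ is the $t$-exact equivalence $D_{\chi_c}(\Bun_G)^{ren} \simeq D(\Bun_H)^{ren}$ of Lemma \ref{l:poslevelt}, so it suffices to identify $\on{RT}_{G,\chi_c}^{-1}(\Theta_{\chi_c})$ and $\on{RT}_H^{-1}(E_{\Bun_H})$ inside $\Sphc$ and $\Sphh$ respectively, and match them under $\epsilon$. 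First I would recall that $\on{RT}_H^{-1}(E_{\Bun_H})$ is the dualizing sheaf $\omega$ of the affine Grassmannian of $H$: indeed $E_{\Bun_H} = j_!(E)[\dim \Bun_H]$ up to a shift, and one checks via \eqref{e:radonswap} (more precisely its untwisted analogue, combined with the fact that $\mathbf{j}_{0,!}$ for the point stratum is precisely $j_!(E)$ on $\BB H$ pushed forward) that the Radon transform exchanges this with $\omega$; this is essentially the content of Lemma \ref{l:bluesclues} reinterpreted. Equivalently, since $\on{RT}_H$ matches $j_{\clambda,*}$ with $\mathbf{j}_{\clambda,!}$, and the constant sheaf $E_{\Bun_H}$ has a costandard filtration with the terms $\mathbf{j}_{\clambid,!}[\dim]$ over all $\clambda$, its preimage must be the object $\omega$, which is filtered by all $j_{\clambda,*}[\dim]$.

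Next I would identify $\on{RT}_{G,\chi_c}^{-1}(\Theta_{\chi_c})$. By definition $\Theta_{\chi_c}$ is the direct sum over blocks $\gamma$ of the IC sheaf of maximal support in that block. Under $\on{RT}_{G,\chi_c}^{-1}$, which matches the perverse $t$-structure on $D_{\chi_c}(\Bun_G)^{ren}$ with the Radon-transformed $t$-structure on $\Sphc$ (characterized by \eqref{e:pervradon}), the IC sheaf of maximal support in a block of $\Bun_G$ corresponds to the costandard-type object supported on the largest relevant stratum of the affine Grassmannian of $G$ in the corresponding block. The key combinatorial input is that, under the identification of blocks via $\epsilon$ (which comes from the isomorphism of extended Coxeter systems matching strata indexed by dominant coweights of $G$ with those of $H$), the ``maximal relevant stratum'' in each block of $\Gr_G$ corresponds to the full collection of strata of $\Gr_H$ — this is because the integral/endoscopic combinatorics arranges that $\epsilon$ sends $j_{\clambda,*}^G$ to $j_{\mu,*}^H$ in a way compatible with the stratification, and so the object whose costandard filtration exhausts all relevant strata on the $G$ side goes to the object $\omega$ whose costandard filtration exhausts all strata on the $H$ side. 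Thus $\epsilon(\on{RT}_{G,\chi_c}^{-1}(\Theta_{\chi_c})) \simeq \omega \simeq \on{RT}_H^{-1}(E_{\Bun_H})$, which is precisely the assertion.

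The main obstacle I expect is the precise bookkeeping in the previous paragraph: namely, verifying that the IC sheaf of maximal support in a block of $D_{\chi_c}(\Bun_G)^{ren}$ really does Radon-transform to an object of $\Sphc$ that $\epsilon$ carries onto the dualizing sheaf of $\Gr_H$, rather than to some proper summand or a shifted variant. This requires knowing exactly which strata of $\Gr_G$ are ``relevant'' to a given block — equivalently, which dominant coweights $\clambda$ of $G$ have the property that $j_\clambda^{G,!} i_\gamma \neq 0$ — and matching these under $\epsilon$ with the claim that \emph{every} stratum of $\Gr_H$ is relevant to the corresponding block of $\Sphh$ (which holds because the IC/constant sheaf $E_{\Bun_H}$ is supported on all of $\Bun_H$). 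The cleanest way to handle this is probably to argue at the level of the Radon-transformed $t$-structures directly: the constant sheaf on $\Bun_H$ is characterized, up to shift, as the unique perverse sheaf which restricts to the (shifted) constant sheaf on the open stratum $\BB H$ and whose costandard filtration is ``as long as possible'', and one then transports this characterization across $\epsilon$ and $\on{RT}$ using that $\epsilon$ is $t$-exact (Lemma \ref{l:poslevelt}) and matches the relevant labelled strata. Assembling these, together with the identification of $\on{RT}_H^{-1}(E_{\Bun_H})$ with $\omega$ from Lemma \ref{l:bluesclues}, yields the corollary.
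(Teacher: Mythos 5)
Your proposal has a genuine gap at its central step, and it also detours through a fact the corollary does not need. The matching step is argued by saying that the Radon preimage of $\Theta_{\chi_c}$ and the dualizing sheaf $\omega$ are both ``filtered by the (co)standard objects over all relevant strata,'' and hence must correspond under $\epsilon$. An object of these categories is not determined by such a filtration (many perverse sheaves restrict nontrivially to every relevant stratum -- direct sums, non-simple extensions, etc.), so this does not pin down either object; likewise your proposed characterization of $E_{\Bun_H}$ as ``the unique perverse sheaf restricting to the shifted constant sheaf on $\BB H$ with costandard filtration as long as possible'' is not a characterization. What does characterize it is simplicity: $E_{\Bun_H}$ is, on each connected component, the unique \emph{simple} perverse sheaf of maximal support, exactly as $\Theta_{\chi_c}$ is block by block on the $G$-side. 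This is why the paper's proof is one line: the composite equivalence of Lemma \ref{l:poslevelt} is $t$-exact, so it matches simple perverse objects, and since $\epsilon$ by construction matches blocks and the labelled strata (it exchanges the $j_{\clambda,*}$'s, which Radon-transform to the $\mathbf{j}_{\clambda,!}$'s by \eqref{e:radonswap}), it matches relevance of strata and hence the simple object of maximal support in each block; these are $\Theta_{\chi_c}$ and $E_{\Bun_H}$. You gesture at this at the very end, but it is the whole argument, not a fallback.

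Separately, the identification $\on{RT}_H^{-1}(E_{\Bun_H})\simeq\omega$ that your first paragraph relies on is not ``essentially Lemma \ref{l:bluesclues} reinterpreted'': that lemma identifies the kernel $\mathcal{K}$ of the global sections pairing with $\omega$ via ind-proper base change and $\on{inv}_*\omega\simeq\omega$, and says nothing about the Radon transform of the constant sheaf. In the paper the statement $\on{RT}_H^{-1}(E_{\Bun_H})\simeq\omega$ is the ``standard observation'' proved only later, in Steps 1--3 of Proposition \ref{p:whoisK}, by a derived Hecke-equivariance argument; your quick justification of it (matching filtrations of $E_{\Bun_H}$ by $\mathbf{j}_{\clambda,!}$'s against filtrations of $\omega$ by $j_{\clambda,*}$'s) suffers from the same defect as above. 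Since the corollary itself needs neither $\omega$ nor that identification, the cleanest repair is to drop the first paragraph entirely and run the $t$-exactness-plus-simplicity argument directly.
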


\begin{proof}This follows immediately from Lemma \ref{l:poslevelt}. 
\end{proof}

Finally, let us obtain the promised more explicit description of our kernel  $\mathcal{K}_{\chi_c}$.

\begin{prop} We have an equivalence     \label{p:whoisK}
$$\on{RT}^{-1}_{G, \chi_c}(\Theta_{\chi_c}) \simeq \mathcal{K}_{\chi_c}.$$
\end{prop}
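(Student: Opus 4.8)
The plan is to unwind both sides through the endoscopic equivalence $\epsilon$ and the Radon transforms, reducing the statement to the identification $\mathcal{K} \simeq \omega$ of Lemma \ref{l:bluesclues} together with the standard compatibility of the Radon transform with the dualizing sheaf on the affine Grassmannian. First I would recall from Lemma \ref{l:bluesclues} that $\mathcal{K}_{\chi_c} \simeq \epsilon^{-1}(\omega)$, where $\omega \in \Sphh$ is the dualizing sheaf on $LH/L^+H$. So the task becomes showing $\on{RT}_{G, \chi_c}(\mathcal{K}_{\chi_c}) \simeq \Theta_{\chi_c}$, equivalently, using $\mathcal{K}_{\chi_c} \simeq \epsilon^{-1}(\omega)$ and the commuting square from Lemma \ref{l:poslevelt}, that
$$\on{RT}_H(\omega) \simeq \on{RT}_H \circ \epsilon \circ \on{RT}_{G,\chi_c}^{-1}(\Theta_{\chi_c}),$$
and the right-hand side is $E_{\Bun_H}$ by Corollary \ref{c:theta}. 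Thus everything reduces to proving
$$\on{RT}_H(\omega) \simeq E_{\Bun_H} \in D(\Bun_H)^{ren},$$
i.e., that the Radon transform of the dualizing sheaf of the affine Grassmannian is the constant perverse sheaf on $\Bun_H$.

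To establish $\on{RT}_H(\omega) \simeq E_{\Bun_H}$, I would argue as follows. Write $\omega = \varinjlim_\alpha \omega_\alpha$ as the colimit of dualizing sheaves of the finite-type closed subschemes $Z_\alpha \subset LH/L^+H$. By the construction of $\on{RT}_H$ as renormalized convolution with $j_!(E)$, and ind-proper base change (exactly as in the proof of Lemma \ref{l:bluesclues}), $\on{RT}_H(\omega)$ is computed by pulling back $j_!(E)$ along the convolution-modification correspondence and $!$-pushing forward; since $\omega$ is the dualizing sheaf, this pushforward is simply the $*$-pushforward along the smooth projection from the relevant Hecke correspondence to $\Bun_H$. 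Concretely, one can identify $\on{RT}_H(\omega)$ stratum by stratum: the identification \eqref{e:radonswap}, $\on{RT}_H(j_{\clambda,*}) \simeq \mathbf{j}_{\clambda,!}$, together with the Cousin filtration of $\omega$ by the $j_{\clambda,*}$ (up to shifts determined by $\ell(\clambda) = \langle 2\check\rho, \clambda\rangle$), shows that $\on{RT}_H(\omega)$ has a filtration whose subquotients are the $\mathbf{j}_{\clambda,!}$, shifted so as to land in perverse degree zero — which is exactly the Cousin filtration of $E_{\Bun_H}$ with respect to the stratification of $\Bun_H$ by the $\Bun_H^{\clambda}$. The $t$-exactness statement of Lemma \ref{l:poslevelt} (or directly the characterization \eqref{e:pervradon} of the pulled-back perverse $t$-structure) then forces $\on{RT}_H(\omega)$ to be perverse, and an object of $D(\Bun_H)^{ren}$ which is perverse and whose restriction to each stratum $\Bun_H^\clambda$ is constant must be $E_{\Bun_H}$ itself; alternatively, one observes that $\omega$ corresponds under $\beta \circ \on{RT}_H^{-1}$-type considerations to the regular representation / structure sheaf, and the constant sheaf is the unique such self-dual simple-support object.

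The main obstacle I anticipate is making the colimit/renormalization bookkeeping rigorous: $\omega$ and $E_{\Bun_H}$ are non-compact, so one must work with the almost-compact objects and the perverse $t$-structures on the renormalized categories carefully, and verify that the Cousin/Mayer–Vietoris filtrations converge appropriately in the ind-completion. A secondary subtlety is confirming that the shifts are correctly arranged so that $\on{RT}_H(\omega)$ genuinely lands in perverse degree zero rather than merely being an iterated extension of perverse sheaves with shifts — this is where one invokes the precise normalization of the Radon transform and the fact that $\dim \Bun_H^\clambda$ matches $-\ell(t^\clambda)$ up to the relevant constant. Once these are in place, the chain $\mathcal{K}_{\chi_c} \simeq \epsilon^{-1}(\omega)$, $\on{RT}_H(\omega) \simeq E_{\Bun_H}$, and Corollary \ref{c:theta} assemble to give $\on{RT}_{G,\chi_c}^{-1}(\Theta_{\chi_c}) \simeq \mathcal{K}_{\chi_c}$, as desired.
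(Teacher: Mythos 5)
Your reduction is exactly the paper's: Lemma \ref{l:bluesclues} gives $\mathcal{K}_{\chi_c}\simeq \epsilon^{-1}(\omega)$, and combining with Corollary \ref{c:theta} and the commuting square of Radon transforms reduces everything to the single assertion $\on{RT}_H(\omega)\simeq E_{\Bun_H}$ (the paper states it as $\on{RT}_H^{-1}(E_{\Bun_H})\simeq\omega$). Where you diverge is in how you prove that assertion, and this is where your argument has genuine gaps. First, matching Cousin filtrations term by term does not produce an isomorphism: you only get that $\on{RT}_H(\omega)$ and $E_{\Bun_H}$ are (colimits of) iterated extensions of the same shifted objects $\mathbf{j}_{\clambda,!}$, with no map between them. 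You try to repair this with a characterization, but the characterization is not established. The perversity of $\on{RT}_H(\omega)$ does not follow from Lemma \ref{l:poslevelt}, which concerns the composite $\on{RT}_H\circ\epsilon\circ\on{RT}_{G,\chi_c}^{-1}$ -- to apply it to $\omega$ you would need to know where $\epsilon^{-1}(\omega)=\mathcal{K}_{\chi_c}$ goes under $\on{RT}_{G,\chi_c}$, which is essentially the proposition being proved; and the alternative route through \eqref{e:pervradon} requires computing $\Hom_{\Sphh}(j_{\clambda,*},\omega)$ in all degrees (and only yields the coconnective half of the $t$-structure as quoted), which you do not do. Second, the uniqueness claim ``perverse with constant restrictions to the strata $\Bun_H^{\clambda}$ must be $E_{\Bun_H}$'' is not proved and is false as literally stated; even after fixing the degrees, stalk conditions only rule out quotients supported on smaller strata, while ruling out subobjects requires costalk (or a self-duality) information, which your $!$-type filtration does not supply. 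On the non--finite-type stack $\Bun_H$, in the renormalized category, these points are not routine.

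For contrast, the paper sidesteps all of this by a rigidity argument: both $E_{\Bun_H}$ and $\on{RT}_H(\omega)$ are shown to carry a datum of derived Hecke equivariance, i.e., the functors $\mathcal{F}\mapsto\mathcal{F}\star E_{\Bun_H}$ and $\mathcal{F}\mapsto\on{RT}_H(\mathcal{F}\star\omega)$ both factor through $\on{C}^*(\mathbb{B}L^+H)$-bimodules via $\Gamma^{\on{enh}}$, and the space of such equivariant objects is computed (via $\Hom_{\Sphh\mod}$ and the Radon transform) by $!$-restriction to the trivial stratum, where both objects give $\on{C}^*(\mathbb{B}H)$. This produces the isomorphism canonically, rather than trying to characterize it by stalks. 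If you want to salvage your stratumwise approach, you would need at minimum to (a) prove the perversity of $\on{RT}_H(\omega)$ directly, and (b) supplement the stalk computation with a costalk or duality argument pinning down the intermediate extension; as written, the proposal does not close these gaps.
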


\begin{proof} By considering the tautological commutative diagram of equivalences
$$\xymatrix{ D_{\chi_c}(\Bun_G)^{ren} \ar[d]_{\on{RT}_{G, \chi_c}^{-1}} \ar[rr]^{\on{RT}_H \circ \epsilon \circ \on{RT}_{G, \chi_c}^{-1}} & & D(\Bun_H)^{ren} \ar[d]^{\on{RT}_H^{-1}} \\ \Sphc \ar[rr]^{\epsilon} & &  \Sphh},$$
and combining Lemma \ref{l:bluesclues} and Corollary \ref{c:theta}, it is enough to verify the analogous equivalence for $H$, i.e., 
$$ \on{RT}^{-1}_H( E_{\Bun_H}) \overset{?}\simeq \omega.$$
However, this is a standard observation, whose proof we sketch for the reader. 

{\em Step 1.} We recall that the functor
$$\Gamma: \Sphh \rightarrow \Modu_E$$
naturally factors through bimodules for $\on{C}^*(\BB L^+ H) \simeq \on{C}^*(\BB H)$, and moreover this functor carries by ind-proper base change a datum of monoidality  
$$\Sphh \overset{\otimes}{\rightarrow}\on{C}^*(\BB L^+ H)\on{-bimod}  \xrightarrow{\on{Oblv}} \Modu_E;$$
we denote this monoidal functor by $\Gamma^{\on{enh}}$. (We remind the reader in passing that, by contrast,  $\on{Oblv} \circ \Gamma^{enh}$ cannot be equipped with a monoidal structure.)

{\em Step 2.} Let us denote by $\widehat{\Bun}_H$ the moduli stack of $H$-bundles on $\PP^1$ equipped with full level structure at $t = 0$, which is naturally an $L^+H$ torsor over $\Bun_H$ via the forgetful map 
$$\widehat{\Bun}_H \rightarrow \Bun_H.$$ 
By applying ind-proper base change, and considering the $!$-pullback of the dualizing sheaf along 
$$L^+H \bs \widehat{\Bun}_H \rightarrow L^+H \bs \on{pt},$$
it follows that $E_{\Bun_H}$ carries a datum of derived Hecke equivariance for the action of $\Sphh$; we use in particular that $\Bun_H$ is a smooth Artin stack, so its dualizing and constant sheaves agree up to shift. We therefore obtain that the convolution map 
$$\Sphh \rightarrow D(\Bun_H)^{ren}, \quad \quad \mathcal{F} \mapsto \mathcal{F} \star E_{\Bun_H}$$
canonically factors as  
$$\Sph_H \xrightarrow{\Gamma^{\on{enh}}} \on{C}^*(\BB L^+ H)\on{-bimod} \rightarrow D(\Bun_H).$$

{\em Step 3.} As the analogue of the previous assertion for $E_{\Bun_H}$ holds by definition for $\omega$ and hence for $\on{RT}_H(\omega)$, it is therefore enough to show an equivalence of
$$E_{\Bun_H} \simeq \on{RT}_H(\omega)$$
as derived Hecke equivariant objects. However, by using the Radon transform we know
\begin{align*}&\Hom_{\Sphh\mod}( \on{C}^*(\BB L^+ H)\on{-bimod}, D(\Bun_H)^{ren})\\ \simeq &\Hom_{\Sphh\mod}(\on{C}^*(\BB L^+H)\on{-bimod}, \Sphh) \\ \simeq &D(\BB L^+ H)^{ren} \simeq D(\BB H)^{ren}, \end{align*}
where the final equivalences explicitly send a derived Hecke equivariant object in $\Sphh$ to its $!$-restriction to the trivial double coset. By the definition of the Radon transform, it follows that the composite equivalence sends a derived Hecke equivariant object on $\Bun_H$ to its $!$-restriction to the trivial stratum $$\BB H = \Bun_H^0 \hookrightarrow \Bun_H.$$ 
Therefore, by comparing the equivalences 
$$\Hom_{\Sphh}(1, \omega) \simeq \on{C}^*(\BB H) \quad \text{and} \quad \Hom_{D(\Bun_H)^{ren}}( \on{RT}(1), E_{\Bun_H}) \simeq \on{C}^*(\BB H),$$
the claim follows. 
\end{proof}

Before proceeding, we will need one more basic observation characterizing the Radon transform of the theta sheaf. For a block of the Satake category 
$$i_\gamma: \Sph_{G, \chi_c, \gamma} \hookrightarrow \Sphc,$$
as it maps under the Radon transform to a block of $D_{\chi_c}(\Bun_G)$,
$$\on{RT}_{G, \chi_c}: \Sph_{G, \chi_c, \gamma} \simeq D_{\chi_c}(\Bun_G)_\gamma^{ren},$$the perverse $t$-structure on the latter restricts to a $t$-structure on $\Sph_{G, \chi_c, \gamma}$. Let us call a stratum 
$$j_{\clambda}: L^+G \cdot t^{\clambda} \cdot L^+G \hookrightarrow LG$$
{\em relevant} if the $*$-extension of the constant IC sheaf $j_{\clambda, *}$ belongs to $\Sph_{G, \chi_c, \gamma}$. Finally, let us denote by $\Theta_{\chi_c, \gamma}$ the IC sheaf of maximal support in $D_{\chi_c}(\Bun_G)_\gamma^{ren}$.  

\begin{lemma} \label{l:charrttheta}There exists a unique relevant stratum $L^+G \cdot t^{\clambda} \cdot L^+G$ minimal under the closure partial order on double cosets. Moreover, $\on{RT}^{-1}(\Theta_{\chi_c, \gamma})$ is the unique simple object of $\Sph_{G, \chi_c, \gamma}$ with respect to the pulled back $t$-structure satisfying the nonvanishing 
$$\Hom_{\Sphc}(j_{\clambda, *}, \on{RT}_{G, \chi_c}^{-1}(\Theta_{\chi_c, \gamma})) \neq 0.$$
\end{lemma}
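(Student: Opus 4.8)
The plan is to argue this via the Radon transform equivalence and the structure of the perverse $t$-structure it induces, paralleling the analysis in Lemma \ref{l:poslevelt}. First, I would recall that under $\on{RT}_{G, \chi_c}: \Sph_{G, \chi_c, \gamma} \simeq D_{\chi_c}(\Bun_G)_\gamma^{ren}$, the standard objects $j_{\clambda, *}$ are sent to the $!$-extensions $\mathbf{j}_{\clambda, !}$ of the constant IC sheaves on the strata $\Bun_G^{\clambda}$, in analogy with Equation \eqref{e:radonswap}. Since the perverse $t$-structure on $D_{\chi_c}(\Bun_G)_\gamma^{ren}$ is characterized, as in \eqref{e:pervradon}, by the condition that $\xi$ is coconnective precisely when $\Hom(\mathbf{j}_{\clambda, !}, \xi) \in \Modu_E^{\geqslant 0}$ for all relevant $\clambda$, the pulled-back $t$-structure on $\Sph_{G, \chi_c, \gamma}$ is characterized by the analogous vanishing against the $j_{\clambda, *}$.

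Next I would identify the relevant strata. The block $\Sph_{G, \chi_c, \gamma}$ corresponds, under metaplectic derived Satake, to a block of the spherical Hecke category of the metaplectic endoscopic group, and the relevant double cosets $L^+G \cdot t^{\clambda} \cdot L^+G$ for which $j_{\clambda, *}$ lies in $\Sph_{G, \chi_c, \gamma}$ are exactly those indexed by coweights $\clambda$ lying in a fixed coset of the integral coweight lattice $\X_*(\AA_H) \subset \X_*(\AA)$ modulo the Weyl group action, cf. the discussion in \S\ref{ss:meta dual}. The set of such $\clambda$, being a nonempty subset of a single coset of a sublattice, is closed under the dominance order in the appropriate sense, and a standard argument shows it has a unique minimal element $\clambda_0$ under the closure partial order on double cosets; concretely $\clambda_0$ is the unique dominant element of minimal length in that coset whose associated stratum closure contains no other relevant stratum. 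This establishes the first assertion.

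For the second assertion, I would argue that $\Theta_{\chi_c, \gamma}$, being the IC sheaf of maximal support in $D_{\chi_c}(\Bun_G)_\gamma^{ren}$, corresponds under $\on{RT}_{G, \chi_c}^{-1}$ to a simple object $\kappa_\gamma$ of $\Sph_{G, \chi_c, \gamma}$ which is simple for the pulled-back $t$-structure. The key point is that, by the standard compatibility of the Radon transform with support, the open stratum $\Bun_G^{\clambda}$ maximal among the relevant ones corresponds under the inversion of strata on $\Bun_G$ versus $LG$ to the minimal relevant double coset $L^+G \cdot t^{\clambda_0} \cdot L^+G$; this is essentially the content of the identification $\on{RT}(j_{\clambda_0, *}) \simeq \mathbf{j}_{\clambda_0, !}$, together with the fact that the maximal IC sheaf $\Theta_{\chi_c, \gamma}$ is the intermediate extension from the open relevant stratum $\Bun_G^{\clambda_0}$, hence has a nonzero $!$-restriction there precisely reflected by $\Hom(\mathbf{j}_{\clambda_0, !}, \Theta_{\chi_c, \gamma}) \neq 0$. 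Transporting this along $\on{RT}_{G, \chi_c}^{-1}$ gives the nonvanishing $\Hom_{\Sphc}(j_{\clambda_0, *}, \on{RT}^{-1}(\Theta_{\chi_c, \gamma})) \neq 0$. Uniqueness follows since any simple object of $\Sph_{G, \chi_c, \gamma}$ with nonzero pairing against $j_{\clambda_0, *}$ must, under $\on{RT}_{G, \chi_c}$, be a simple perverse sheaf whose $!$-restriction to the open relevant stratum $\Bun_G^{\clambda_0}$ is nonzero, forcing it to be the intermediate extension from that stratum, i.e.\ $\Theta_{\chi_c, \gamma}$ itself.

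The main obstacle I anticipate is the careful bookkeeping of which strata are relevant and the precise matching of the dominance order on coweights (governing double cosets in $LG$) with the closure order on the strata $\Bun_G^{\clambda}$ of $\Bun_G$ under the Radon transform, including verifying that "maximal support" on the $\Bun_G$ side corresponds to the unique minimal relevant double coset on the $LG$ side. This requires invoking the standard dictionary between the affine Grassmannian/flag stratifications and $\Bun_G$-stratifications (as used implicitly in Lemma \ref{l:poslevelt}), but adapted to the block decomposition coming from $\chi_c$; once that correspondence is pinned down, the rest is a formal consequence of the characterization of the perverse $t$-structure via the $j_{\clambda, *}$ and the uniqueness of intermediate extensions.
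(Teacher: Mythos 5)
Your proposal is correct in outline, and its second half is essentially the paper's own argument: the nonvanishing and the uniqueness of the simple object both come from the swap $\on{RT}_{G,\chi_c}(j_{\clambda_0,*})\simeq \mathbf{j}_{\clambda_0,!}$ (the twisted analogue of \eqref{e:radonswap}) together with the fact that $\Theta_{\chi_c,\gamma}$ is the intermediate extension from the open relevant stratum, so that only it has nonzero $!$-restriction there, while simples attached to other relevant strata have support missing $\Bun_G^{\clambda_0}$. Where you genuinely diverge is the first assertion: the paper disposes of it in one line, observing that under the Radon transform and the reversal of closure orders between the coweight stratification of the affine Grassmannian and the stratification of $\Bun_G$, a minimal relevant double coset corresponds to a maximal-dimensional relevant stratum of $\Bun_G$, whose uniqueness was already noted when the theta sheaf was defined; you instead prove the uniqueness directly from the combinatorics of relevant coweights, which is a reasonable and more self-contained route (the paper's ``unique relevant stratum of maximal dimension'' is itself only asserted). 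However, your combinatorial description needs correcting: the relevant coweights do not form ``a coset of $\X_*(\AA_H)$ inside $\X_*(\AA)$'' -- they all lie in $\X_*(\AA_H)$ itself, and the block $\gamma$ singles out a coset of the coroot lattice of $H$ inside $\X_*(\AA_H)$, i.e.\ a connected component of $\Gr_H$ under the endoscopic equivalence. The unique minimal element is then the standard fact that each coset of the coroot lattice contains a unique minimal dominant coweight (zero or a minuscule one), equivalently that each component of $\Gr_H$ has a unique closed $L^+H$-orbit; your phrase ``a standard argument shows'' is covering exactly this, and should be made precise. Finally, note that the uniqueness of the simple object uses, beyond minimality of $\clambda_0$, the uniqueness of the simple object of the block supported on the open relevant stratum; this is built into the paper's definition of $\Theta_{\chi_c,\gamma}$ and should be invoked rather than re-derived.
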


\begin{proof} The uniqueness of a minimal relevant stratum is equivalent under the Radon transform to the uniqueness of a maximal relevant stratum on $\Bun_G$, and the second assertion concerning nonvanishing follows from \eqref{e:radonswap}. 
\end{proof}

\sss{} With these preparations, we are ready to deduce our final identification of Kostant--Whittaker reduction. 

To state it, consider the dual twisting $\chi_c^\vee$ for $G$, the tautological perfect pairing, 
$$\langle -, - \rangle: \Sphc \otimes \Sph_{G, \chi_c^\vee} \xrightarrow{ - \overset{!} \otimes - } \Sph_G \xrightarrow{\Gamma} \Modu_E$$
and in particular the functor given by pairing with the dual theta sheaf  
\begin{equation} \label{e:dualtheta}\langle -, \on{RT}^{-1}_{G, \chi_c^\vee}(\Theta_{\chi_c^\vee}) \rangle: \Sphc \rightarrow \Modu_E.\end{equation}

\begin{thm} \label{t:theta} Metaplectic derived Satake exchanges Kostant--Whittaker reduction with pairing with the dual theta sheaf, i.e., the composite functor 
$$\Sphc \overset{\Phi}\simeq \on{QCoh}(\fh^{\vee, *}[2]/H^\vee) \xrightarrow{\varkappa} \Modu_E$$
is equivalent to \eqref{e:dualtheta}, and we have an equivalence of kernel objects 
\begin{equation} \label{e:dweezil} \Phi( \on{RT}^{-1}_{G, \chi_c}(\Theta_{\chi_c})) \simeq \mathfrak{K}.\end{equation}
\end{thm}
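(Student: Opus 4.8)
The plan is to deduce Theorem \ref{t:theta} by combining the explicit identification of the Kostant--Whittaker kernel with the results of the previous subsection. Recall from \S\ref{sss:problem} that Kostant--Whittaker reduction $\varkappa$ is the functor of pairing against a kernel object $\mathfrak{K} \in \on{QCoh}(\fh^{\vee,*}[2]/H^\vee)$, and that by the Bezrukavnikov--Finkelberg compatibility \eqref{e:BFink} together with Lemma \ref{l:bluesclues} we have $\beta(\omega) \simeq \mathfrak{K}$, where $\omega \in \Sphh$ is the renormalized dualizing sheaf on the affine Grassmannian. Transporting through the endoscopic equivalence $\epsilon$ and using Proposition \ref{p:whoisK}, we get $\mathcal{K}_{\chi_c} \simeq \epsilon^{-1}(\omega) \simeq \on{RT}^{-1}_{G,\chi_c}(\Theta_{\chi_c})$. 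Thus \eqref{e:dweezil} is immediate: $\Phi(\on{RT}^{-1}_{G,\chi_c}(\Theta_{\chi_c})) = \beta\circ\epsilon(\epsilon^{-1}(\omega)) = \beta(\omega) \simeq \mathfrak{K}$.

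The remaining point is to reconcile the two ways of presenting the functor: metaplectic derived Satake sends $\varkappa$ to pairing against $\mathcal{K}_{\chi_c}$ under the rigid monoidal self-duality of $\Sphc$, whereas \eqref{e:dualtheta} pairs against the \emph{dual} theta sheaf $\on{RT}^{-1}_{G,\chi_c^\vee}(\Theta_{\chi_c^\vee})$ using the external pairing $\Sphc \otimes \Sph_{G,\chi_c^\vee} \to \Modu_E$ given by $\Gamma(- \overset{!}\otimes -)$. So the first step is to identify these two pairings. The self-duality of the rigid monoidal category $\Sphc$ is implemented by $\mathcal{F}\boxtimes\mathcal{G} \mapsto \Hom_{\Sphc}(\mathbf{1}, \mathcal{F}\star\mathcal{G})$; I would unwind, using ind-proper base change for the convolution map as in the proof of Lemma \ref{l:bluesclues}, that $\Hom_{\Sphc}(\mathbf{1}, \mathcal{F}\star\mathcal{G}) \simeq \Gamma(L^+G\bs LG/L^+G, \mathcal{F} \overset{!}\otimes \on{inv}_*\mathcal{G})$, and that inversion $\on{inv}: LG \to LG$ together with the determinantal line bundle identifies $\chi_c$-twisted sheaves with $\chi_c^\vee$-twisted sheaves, sending $\on{RT}^{-1}_{G,\chi_c}(\Theta_{\chi_c})$ to $\on{RT}^{-1}_{G,\chi_c^\vee}(\Theta_{\chi_c^\vee})$. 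This requires knowing that $\on{inv}$ exchanges the two blocks decompositions compatibly with theta sheaves, which follows from the characterization of $\on{RT}^{-1}(\Theta_{\chi_c,\gamma})$ in Lemma \ref{l:charrttheta} — inversion preserves the closure order on double cosets and exchanges relevant strata for $\chi_c$ and $\chi_c^\vee$, hence exchanges minimal relevant strata and the corresponding simple objects.

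Putting these together: for $\mathcal{F}\in\Sphc$, the functor $\langle \mathcal{F}, \on{RT}^{-1}_{G,\chi_c^\vee}(\Theta_{\chi_c^\vee})\rangle = \Gamma(\mathcal{F}\overset{!}\otimes \on{RT}^{-1}_{G,\chi_c^\vee}(\Theta_{\chi_c^\vee}))$ equals, by the identification of pairings above, $\Hom_{\Sphc}(\mathbf{1}, \mathcal{F}\star \on{RT}^{-1}_{G,\chi_c}(\Theta_{\chi_c})) = \Hom_{\Sphc}(\mathbf{1}, \mathcal{F}\star\mathcal{K}_{\chi_c}) = \Gamma_{\chi_c}(\mathcal{F})$, and applying $\epsilon$ followed by $\beta$ this becomes, via \eqref{e:BFink}, precisely $\varkappa(\Phi(\mathcal{F}))$. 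The main obstacle I anticipate is the careful bookkeeping for the inversion/duality argument in the twisted (metaplectic) setting — in particular verifying that $\on{inv}$ genuinely intertwines the $\chi_c$- and $\chi_c^\vee$-twisted Satake categories in a way compatible with renormalization and the block decomposition, and that it carries the theta sheaf of one to the theta sheaf of the other. Once that self-duality statement is pinned down (it is essentially formal given rigidity plus Lemma \ref{l:charrttheta}), the rest is a concatenation of previously established equivalences.
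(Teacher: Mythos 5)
Your proposal is correct and follows essentially the same route as the paper: \eqref{e:dweezil} falls out of Proposition \ref{p:whoisK} together with the previously noted $\Phi(\mathcal{K}_{\chi_c})\simeq\mathfrak{K}$, and the functor identification proceeds by rewriting $\Hom_{\Sphc}(1,-\star\mathcal{K}_{\chi_c})$ via ind-proper base change as the external pairing against $\on{inv}_*\on{RT}^{-1}_{G,\chi_c}(\Theta_{\chi_c})$ and then showing inversion exchanges the Radon transforms of the theta sheaves using Lemma \ref{l:charrttheta}. The one step you leave implicit --- that $\on{inv}\colon \Sphc\simeq \Sph_{G,\chi_c^\vee}$ is $t$-exact for the Radon-pulled-back perverse $t$-structures, which is what lets you pass from ``exchanges minimal relevant strata'' to ``exchanges the corresponding simple objects'' --- is exactly the bookkeeping the paper supplies via the characterization \eqref{e:pervradon}, so only that verification needs to be added.
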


\begin{proof} We recall that $\varkappa \circ \Phi$ was identified with the functor 
\begin{align*} &\Hom_{\Sphc}(1, - \star \mathcal{K}_{\chi_c}). \intertext{
By ind-proper base change, if we write $\on{inv}: LG \simeq LG$ for the inversion map $g \mapsto g^{-1}$, we may rewrite this as}
\simeq & \langle -, \on{inv}_* \mathcal{K}_{\chi_c} \rangle. \intertext{Using Proposition \ref{p:whoisK}, we rewrite this as}\simeq &\langle -, \on{inv}_* \on{RT}_{G, \chi_c}^{-1}(\Theta_{\chi_c}) \rangle, \intertext{which in particular yields \eqref{e:dweezil}. To proceed, we note that inversion 
$$\on{inv}: \Sphc \simeq \Sph_{G, \chi_c^\vee}$$
is $t$-exact with respect to the $t$-structures pulled back from the perverse $t$-structures on $D_{\chi_c}(\Bun_G)$ and $D_{\chi_c^\vee}(\Bun_G)$ via Radon transform, as follows from the characterization of Equation \eqref{e:pervradon}. By combining this with Lemma \ref{l:charrttheta}, it follows straightforwardly that inversion exchanges the Radon transforms of the IC sheaves with maximal support, hence we may rewrite this as} \simeq &\langle -, \on{RT}_{G, \chi_c^\vee}^{-1}(\Theta_{\chi_c^\vee}) \rangle,
\end{align*}
 as desired. 
\end{proof}

\begin{rem} For $\chi_c$ the quadratic twist on the basic central extension of the symplectic group, the theta sheaf, its Radon transform, and their ramified and higher genus analogues have been studied by V. Lafforgue--Lysenko intensively, see \cite{lysenko3, lysenko, lysenko2} for a partial indication. The presentation of the corresponding local category as modules over the Weyl vertex operator algebra appears in their unpublished work, and was later rediscovered in the construction of Coulomb branches of non-cotangent type  \cite{coloumb}.  

The theta sheaves considered here again have local counterparts, namely the modules for a simple current extension of a boundary admissible vertex operator algebra, and the conformal blocks of the latter give the extension to higher genus. 

Moreover, these local and higher genus counterparts have analogues over function fields which should recover, after trace of Frobenius, the theta representations and theta series of coverings of $p$-adic groups studied by Kubota, Patterson, Deligne, Kazhdan, Savin, Bump--Friedburg--Ginzburg, Gao, Leslie, and many others, see \cite{kubota, patterson1, patterson2, deligne, kazhdanpatterson, Savin, BFG, BFG2, BFG3, FG, FG2, FG3, gao, leslie} and references therein for a partial indication.\footnote{We thank Arakawa for emphasizing this latter connection.}  We will return  to  these points in more detail elsewhere. 
\end{rem}

\bibliographystyle{amsplain}
\bibliography{reference}

\end{document}